\def\newcor{\global\advance\Cor by 1
\par\medskip\noindent \Err(\romannumeral\Cor)~ -- ~}
\newcommand{\Err}{Err~}
\newtheorem{theorem}{\textsc{Th\'eor\`eme}}[section]
\newtheorem{proposition}[theorem]{\textsc{Proposition}}
\newtheorem{lemma}[theorem]{\textsc{Lemme}}
\newtheorem{corollary}[theorem]{\textsc{Corollaire}}
\newtheorem{remark}[theorem]{\textsc{Remarque}}
\newtheorem{convention}[theorem]{\textsc{Convention}}
\newtheorem{definition}[theorem]{\textsc{D\'efinition}}
\newtheorem{hypoths}[theorem]{\textsc{Hypoth\`eses}}
\def\add#1{{\color{magenta}#1}}
\newcommand{Ž}{\'e}
\newcommand{}{\`e}
\newcommand{ˆ}{\`a}
\newcommand{}{\`u}
\newcommand{}{\^e}
\newcommand{‰}{\^a}
\newcommand{™}{\^o}
\newcommand{"}{\^\i}
\newcommand{•}{\"\i}
\newcommand{ž}{\^u}
\newcommand{}{\c c}
\newcommand{'}{\"e}
\newcommand{Ë}{\`A}
\newcommand{:}{~\string:}
\newcommand{;}{~\string;}
\newcommand{!}{~\string!}
\newcommand{\A}{A}
\newcommand{\G}{G}
\newcommand{\Y}{Y}
\newcommand{\AM}{\mathbb{A}}
\newcommand{\CM}{\mathbb C}
\newcommand{\FM}{\mathbb F}
\newcommand{\GM}{\mathbb G}
\newcommand{\NM}{\mathbb N}
\newcommand{\RM}{\mathbb R}
\newcommand{\QM}{\mathbb Q}
\newcommand{\UM}{\mathbb U}
\newcommand{\ZM}{\mathbb Z}
\newcommand{\adef}{\AM_F}
\newcommand{\bsbbc}{\mathbbm{c}}
\newcommand{\bfHom}{\mathrm{Hom}}
\newcommand{\vol}{\mathrm{vol}}
\newcommand{\brT}[1]{[T]_{#1}}
\newcommand{\brTo}[1]{[T_0]_{#1}}
\newcommand{\brTX}[1]{[T-X]_{#1}}
\newcommand{\ES}[1]{\EuScript{#1}}
\newcommand{\wt}[1]{\widetilde{#1}}
\newcommand{\bs}[1]{\boldsymbol{#1}}
\renewcommand{\AA}{\mathfrak A}
\newcommand{\BB}{\mathfrak B}
\newcommand{\EE}{\mathfrak E}
\newcommand{\FF}{\mathfrak F}
\newcommand{\HH}{\mathfrak H}
\newcommand{\LL}{\mathfrak L}
\newcommand{\NN}{\mathfrak N}
\newcommand{\OO}{\mathfrak O}
\newcommand{\UU}{\mathfrak U}
\newcommand{\VV}{\mathfrak V}
\newcommand{\TT}{\mathfrak T}
\newcommand{\XX}{\mathfrak X}
\newcommand{\YY}{\mathfrak Y}
\newcommand{\ZZ}{\mathfrak Z}
\newcommand{\Siegel}{\bs{\mathfrak{S}}}
\newcommand{\ag}{\mathfrak{a}}
\newcommand{\agp}{\ag_{P}}
\newcommand{\cc}{\mathfrak c}
\newcommand{\uu}{\mathfrak u}
\newcommand{\oo}{\mathfrak o}
\newcommand{\pp}{\mathfrak p}
\newcommand{\bfA}{{\mathbf A}}
\newcommand{\bfC}{{\mathbf C}}
\newcommand{\bfD}{{\mathbf D}}
\newcommand{\bfH}{{\mathbf H}}
\newcommand{\bfM}{{\mathbf M}}
\newcommand{\bfW}{{\mathbf W}}
\newcommand{\Rho}{\bs\rho}
\newcommand{\tRho}{\wt{\Rho}}
\newcommand{\bsmu}{\bs{\mu}}
\newcommand{\bsPi}{\bs{\Pi}}
\newcommand{\bsigma}{\bs{\sigma}}
\newcommand{\tbsrho}{\wt{\bs{\rho}}}
\newcommand{\EV}{\bs E}
\newcommand{\bsA}{\bs{A}}
\newcommand{\bsJ}{\bs{J}}
\newcommand{\bsK}{\bs{K}}
\newcommand{\bsS}{\bs{S}}
\newcommand{\bsX}{\bs{X}}
\newcommand{\ovbsX}{\overline{\bsX}}
\newcommand{\bsY}{\bs{Y}}
\renewcommand{\bsc}{\bs{c}}
\newcommand{\bsd}{\bs{d}}
\newcommand{\bscMF}{\bsc_{M,F}}
\newcommand{\bscM}{\bsc_{M}}
\newcommand{\bsO}{\bs{\Omega}}
\newcommand{\bso}{\bs{\omega}}
\newcommand{\brabsO}{[\bsO]}
\newcommand{\brabso}{[\bso]}
\newcommand{\bESP}{\bs{\ESP}}
\newcommand{\bPI}{\bs{\Pi}}
\newcommand{\fun}{f}
\newcommand{\tdf}{f}
\newcommand{\tG}{{\widetilde G}}
\newcommand{\tL}{{\widetilde L}}
\newcommand{\tM}{{\widetilde M}}
\newcommand{\tP}{{\widetilde P}}
\newcommand{\tQ}{{\widetilde Q}}
\newcommand{\tR}{{\widetilde R}}
\newcommand{\tS}{{\widetilde S}}
\newcommand{\tZ}{{\widetilde Z}}
\newcommand{\tu}{{\widetilde u}}
\newcommand{\ESA}{\ES A}
\newcommand{\ESC}{\ES C}
\newcommand{\ESB}{\ES B}
\newcommand{\ESD}{\ES D}
\newcommand{\ESE}{{\ES E}}
\newcommand{\ESF}{\ES F}
\newcommand{\ESH}{\ES H}
\newcommand{\ESL}{\ES L}
\newcommand{\ESM}{\ES M}
\newcommand{\ESO}{\ES O}
\newcommand{\ESP}{\ES P}
\newcommand{\ESR}{\ES R}
\newcommand{\ESS}{\ES S}
\newcommand{\Dcal}{\mathcal{D}}
\newcommand{\hag}{\wh{\ag}}
\newcommand{\hagp}{\wh{\ag}_P}
\newcommand{\bsl}{\backslash}
\newcommand{\dd}{\mskip 2mu \mathrm{d}}
\newcommand{\cM}{\mid \wh\bsbbc_M\mid }
\newcommand{\Stab}{\mathrm{Stab}_M(\sigma)}
\newcommand{\stab}{\vert \Stab\vert}
\newcommand{\unstab} {\frac{\cM}{\stab}}
\newcommand{\frGM}{\frac{1}{w^G(M)}}
\newcommand{\frPpM}{\frac{1}{w^{\nP}(M)}}
\newcommand{\gammaMF}{\gamma_{M,F}}
\newcommand{\Jres}{\mathfrak J}
\newcommand{\bJres}{\bs{\Jres}}
\newcommand{\ttM}{{_tM}}
\newcommand{\ttS}{{_tS}}
\newcommand{\mf}{m}
\newcommand{\ZTX}{H_Z^{T-X}} 
\newcommand{\ZTSs}{H^T_{Z,S''}}
\newcommand{\nP}{P} 
\newcommand{\Pp}{P'}
\newcommand{\Qo}{{Q_0}}
\newcommand{\Kappa}[1]{\kappa^{#1}}
\newcommand{\kopa}{\chi}
\newcommand{\cMsig}{\wh c_M(\sigma)}
\newcommand{\cMSsig}{\wh{c}_{M_S}(\sigma)}
\newcommand{\bsESEsigma}{\bs{\ESE}(\sigma)}
\newcommand{\disc}{\mathrm{disc}}
\newcommand{\cusp}{\mathrm{cusp}}
\newcommand{\spec}{\mathrm{spec}}
\newcommand{\geom}{\mathrm{g\acute{e}om}}
\newcommand{\trace}{\mathrm{trace}}
\newcommand{\spur}{\mathfrak{Sp}}
\newcommand{\spurs}{\spur_\sigma}
\newcommand{\Autom}{\bs{\mathcal{A}}}
\newcommand{\Automd}{\Autom}
\newcommand{\Base}{\bs\Psi}
\newcommand{\fetu}{\ESL(\sigma,\tu)}
\newcommand{\Fetu}{\bs{\mathfrak{l}}(\sigma,\tu)}
\newcommand{\st}{{\mathrm{st}}}
\newcommand{\prim}{{\mathrm{prim}}}
\newcommand{\bmMtunu}{\bsmu_{M,\tu}(\nu)}
\newcommand{\bmMtunuo}{\bsmu_{M,\tu}(\nu,0)}
\newcommand{\azx}{a_{\tZ,\mu},}
\newcommand{\dmu}{\dot\mu}
\newcommand{\oD}{\Delta}
\newcommand{\V}{V}
\newcommand{\wh}{\widehat}
\newcommand{\mun}{^{-1}}
\newcommand{\com}[1]{\quad\hbox{#1}\quad}
\newcommand{\comm}[1]{\qquad\hbox{#1}\qquad}
\newcommand{\vtt}{\vartheta}
\newcommand{\bydef}{\buildrel \mathrm{d\acute{e}f}\over{=}}
\newcommand{\Lbra}{[\mskip -2mu[}
\newcommand{\Rbra}{]\mskip -2mu]}
\renewcommand{\ni}{\noindent}
\newcommand{\pni}{\par\noindent}
\newcommand{\ptf}{\mskip 2mu .}
\newcommand{\vg}{\mskip 2mu ,}
\newcommand{\vgq}{\mskip 2mu ,\quad}
\title{La formule des traces tordue pour un corps global de caract\'eristique $p>0$}
\author{Jean-Pierre Labesse \& Bertrand Lemaire\\
Institut de Math\'ematique de Marseille, CNRS, UMR 7373\\
Aix-Marseille Universit\'e, France}
\begin{document}

\frontmatter

\maketitle

%------
% Insert an abstract.
%------

\begin{abstract}
Dans ce travail nous adaptons
au cas d'un corps global $F$ de caract\'eristique positive, c'est-\`a-dire un corps de fonctions 
sur un corps fini $\FM_q$,
les r\'esultats prouv\'es pour un corps de nombres dans le livre de Labesse-Waldspurger:
 \textit{La formule des traces tordue  d'aprs le Friday Morning Seminar}.
En d'autres termes, nous \'etablissons la formule des traces 
pour un $G$-espace tordu $\wt G$ o $G$ 
est un groupe r\'eductif connexe d\'efini sur $F$.
C'est une premire Žtape vers la forme stabilisŽe de la formule des traces tordue,
nŽcessaire pour la plupart des applications, qui elle est l'objet de travaux en cours.
\end{abstract}

%------
% Optional: Dedication.
%------
%\dedication{This memoir is dedicated to XXX.}

%------
% Insert a list of keywords.
% -- Separate keywords with comma.
% -- Capitalize only the first keyword in the list.
% -- No final full stop.
%------
\keywords{Corps de fonctions, adles, formule des traces, sŽries d'Eisenstein, opŽrateur de troncature, opŽrateur d'entrelacement}

%------
% Insert MSC 2020 codes according to www.ams.org/msc/msc2020.html.
% -- There must be exactly *one* primary code.
% -- The number of secondary codes is not specified.
% -- No final full stop.
%------
\classification[20G35, 22E55]{11F72}

%------
% Insert acknowledgments.
%------
\begin{ack}
Nous remercions J.-L. Waldspurger pour d'utiles critiques et remarques
sur une version pr\'eliminaire de ce texte.
\end{ack}

%------
% Insert information regarding funding.
%------
%\begin{funding}
%Insert sources of financial support for all authors (incl. grant numbers) here.
%\end{funding}

%------
% Insert the body of the book here.
%------

%\setcounter{tocdepth}{3}
%\tableofcontents

%%%%%%%%%%%%%%%%%%%%%%%%%%%%%%%%%%%%%

\tableofcontents

%%%%%%%%%%%%%%%%%%%%%%%%%%%%%%%%%%%%%

\chapter*{Introduction}

 \section*{Un peu d'histoire} 
 
 \subsection*{DŽcomposition spectrale et formule des traces}
 
La formule des traces, pour l'action ˆ droite de l'algbre des fonctions sphŽriques sur l'espace localement 
symŽtrique $\Gamma\bsl \mathfrak h$ quotient du demi-plan de PoincarŽ 
$\mathfrak h$  par le groupe modulaire $\Gamma=\mathrm{SL}(2,\ZM)$,
est devenue depuis l'exposŽ de Selberg \cite{Se} ˆ Bombay en 1956
un des outils fondamentaux de la th\'eorie des formes modulaires. 
  
  L'Žtablissement de la formule des traces suppose connu la description de la dŽcomposition spectrale 
de l'espace de Hilbert $L^2(\Gamma\backslash \mathfrak h)$ au moyen du prolongement mŽromorphe
des SŽries d'Eisenstein. Ds 1964, le cas gŽnŽral de la d\'ecomposition spectrale de 
$L^2(\Gamma\backslash G(\RM))$, o 
 $\Gamma$ est un sous-groupe arithmŽtique de $G(\QM)$ avec $G$ rŽductif connexe dŽfini sur $\QM$,
 Žtait obtenu par Langlands (ce travail ne fut publi\'e que 12 ans plus tard \cite{Lan}).
 
 L'\'etude des op\'erateurs de Hecke am\`ene \`a travailler avec des limites projectives de revtements 
d\'efinis par des sous-groupes de congruence, ce qui revient 
essentiellement \`a consid\'erer le quotient $G(\QM)\backslash G(\AM_\QM)$ o $\AM_\QM$ est l'anneau des adles de $\QM$. 
Le cas des groupes arithmŽtiques qui ne sont pas de congruence
relve plus de la gŽomŽtrie diffŽrentielle que de l'arithmŽtique. Tout ce qui suit est dans le cadre adlique.
 
 La thŽorie gŽnŽrale des SŽries d'Eisenstein ouvrait la voie 
 ˆ la g\'en\'eralisation de la formule des traces  ˆ tous les groupes rŽductifs
mais aussi ˆ la dŽcouverte, par Langlands, des fonctions $L$ attachŽes aux reprŽsentations automorphes
et ˆ ses conjectures sur la fonctorialitŽ.

 \subsection*{La formule des traces d'Arthur-Selberg}
 
Soit $G$ un groupe r\'eductif connexe d\'efini sur un corps de nombres $F$. 
On note $\adef$ l'anneau des adles de $F$.
La formule des traces  -- dite d'Arthur-Selberg --
pour le quotient $G(F)\backslash G(\adef)$ 
est une identit\'e entre deux expressions pour une {\og trace renormalisŽe \fg} 
de l'opŽrateur de convolution dŽfini par une fonction $f\in C_c^\infty(\G(\adef))$
dans $L^2(\G(F)\backslash \G(\adef))$.
La {\og trace \fg} doit tre {\og renormalisŽe \fg} car l'opŽrateur de convolution n'est ˆ trace que dans le spectre discret et
une correction, via des troncatures, est nŽcessaire pour tenir compte du spectre continu.
On aura d'une part un d\'eveloppement g\'eom\'etrique indexŽ par les classes de conjugaison dans $G(F)$
et d'autre part un d\'eveloppement spectral:
\begin{equation*} J_{\mathrm {g\acute{e}om}}(f)=J_{\mathrm {spec}}(f)\ptf
\end{equation*}
L'Žtablissement de la formule des traces 
peut tre divis\'e en quatre \'etapes principales: Arthur obtient successivement
\begin{enumerate}[(1)]
\item la forme grossi\`ere (\og coarse form\fg) \cite{A1,A2}; 
\item la forme fine (\og fine form\fg) \cite{A3,A4,A5,A6};
\item la forme invariante (\og invariant form\fg) \cite{A7,A8}; 
\item la forme stabilis\'ee (\og stable form\fg) \cite{A10,A11,A12}. 
\end{enumerate}
Chacune des \'etapes fournit des d\'eveloppements g\'eom\'etriques et spectraux
de plus en plus pr\'ecis. C'est bien sžr le d\'eveloppement ultime, la formule des traces stabilis\'ee, 
qui permet le plus d'applications. 

\subsection*{La variante tordue}

La variante tordue de la formule des traces est apparue  
dans le travail de Saito et Shintani, immŽdiatement gŽnŽralisŽ par
Langlands, pour Žtudier le changement de base cyclique pour $\mathrm{GL}(2)$ \cite{Lan2}. 
On sait le r™le jouŽ par ce travail dans la preuve du ThŽorme de Fermat.
Le cadre g\'en\'eral de la \textit{formule des traces tordue} est le suivant: on consid\`ere un $G$-espace tordu 
$\wt{G}$ d\'efini sur $F$ 
et de plus on tord les reprŽsentations par un caract\`ere unitaire $\omega$ de $G(\adef)$ 
trivial sur $G(F)$. 
Dans la majoritŽ des applications   $\wt{G}=G\rtimes\theta$ o $\theta$ 
est un automorphisme d'ordre fini de $G$ et $\omega$ est trivial; par exemple pour le changement
de base cyclique $\wt{G}(F)=GL(n,E)\rtimes\theta$ o $\theta$ est un gŽnŽrateur du groupe
de Galois d'une extension finie cyclique $E/F$.
Toutefois le cas de la torsion des reprŽsentations
par  un caractre $\omega$ non trivial mais sans torsion sur le groupe ($\theta$ trivial)
intervient aussi naturellement. 

{La formule des traces tordue pour les corps de nombres}
a fait l'objet du \textit{Friday Morning Seminar} de Princeton en 1983-1984. 
On y donnait le dŽveloppement spectral fin mais seulement le dŽveloppement
gŽomŽtrique grossier. Les notes de ce s\'eminaire ont \'et\'e reprises et compl\'et\'ees dans \cite{LW}. 
La forme fine du dŽveloppement gŽomŽtrique a ŽtŽ obtenue plus tard par Arthur
en combinant les rŽsultats du Friday Morning Seminar avec
des rŽsultats d'analyse harmonique locale. 

\subsection*{Lemme fondamental et stabilisation}

La formule des traces, sous sa forme invariante, est une ŽgalitŽ entre deux
sommes de distributions invariantes par conjugaison sous $G(F_S)$ o $S$ est une ensemble fini
de places fixŽ, mais arbitrairement grand. 
Or, pour Žtablir certains cas de fonctorialitŽ on est amenŽ ˆ comparer des distributions 
qui ne sont invariantes que
sous une forme plus grossire de la conjugaison appelŽe \textit{conjugaison stable}.
La stabilisation consiste en la rŽcriture, au moyen de \textit{transferts endoscopiques}, de
chaque terme de la formule des traces invariante comme une somme de
distributions stablement invariantes sur des groupes auxiliaires appelŽs \textit{groupes endoscopiques}.
On en dŽduit des transferts de reprŽsentations automorphes entre groupes diffŽrents.
Les premiers exemples sont le transfert de Jacquet-Langlands \cite[Chapter 16]{JL},
la formule des traces stable pour $\mathrm{SL}(2)$ \cite{L, LL}
et le changement de base cyclique pour $\mathrm{GL}(2)$ \cite{Lan2}.
La stabilisation suppose en particulier
connu le \textit{Lemme fondamental} aux places en dehors de $S$. 
Aprs les travaux de nombreux auteurs --
parmi lesquels il faut citer (par ordre alphabŽtique)
Arthur, Kottwitz, Langlands,  Laumon, Ng™ B\'au Ch‰u, Shelstad et Waldspurger --
la stabilisation de la formule des traces tordue pour les corps de nombres a \'et\'e achev\'ee  
par M\oe glin et Waldspurger \cite{MW2}. Pour une bibliographie complte sur ce sujet
nous renvoyons aux deux tomes de cet ouvrage.

C'est au moyen de la  \textit{formule des traces tordue stabilis\'ee}
qu'Arthur a pu d\'ecrire dans \cite{A13} les repr\'esentations automorphes discr\`etes des groupes classiques 
par comparaison avec celles du groupe lin\'eaire \textit{lorsque $F$ est un corps de nombres}. 
De fait, les groupes classiques quasi-d\'eploy\'es se r\'ealisent comme des \textit{groupes endoscopiques} 
de l'espace tordu $\wt{G}=\mathrm{GL}(n)\rtimes\theta$ o
$\theta$ est l'automorphisme  $g\mapsto  {^t\!g^{-1}}$.

\section*{Le cas des corps globaux de caractŽristique $p>0$} 

\subsection*{Etat des lieux}

On s'attend  \`a ce qu'un analogue de tout ce qui prŽcde 
existe aussi lorsque $F$ est un corps global de caractŽristique $p>0$, c'est-ˆ-dire
un corps de fonctions sur un corps fini, et on espre en dŽduire 
des rŽsultats similaires ˆ ceux obtenus pour les reprŽsentations automorphes
sur les corps de nombres. 

Dans toute la suite de ce mŽmoire nous dirons {\og corps de fonctions \fg} 
pour {\og corps de fonctions sur un corps fini \fg}.
Examinons l'Žtat de la littŽrature sur ce sujet.

La d\'ecomposition spectrale pour un groupe r\'eductif connexe g\'en\'eral
a \'et\'e \'etendue au cas des corps de fonctions par Morris \cite{Mo1,Mo2}, 
en prolongement de travaux de Harder \cite{H1,H2,H3}. Le livre de 
M\oe glin-Waldspurger \cite{MW1} en redonne une preuve pour tout corps global. 

La formule des traces (sous sa forme fine)
pour le groupe $\mathrm{GL}(2)$ -- et ses formes int\'erieures -- est
ŽnoncŽe dans Jacquet-Langlands \cite[Chapter 16]{JL} pour les corps globaux 
\textit{en toute caractŽristique};
toutefois la dŽmonstration en est seulement esquissŽe.
Pour le groupe $\mathrm{GL}(n)$ 
(ou une forme int\'erieure), on dispose de plusieurs versions en caractŽristique $p>0$: 
Drinfeld pour $n=2$ \cite{D1,D2}, Laumon \cite{Lau}, Lafforgue \cite{Laf1,Laf2}. 
En adaptant la m\'ethode de Lafforgue, 
 Ng™ Dac Tu‰n a obtenu dans \cite{N} le d\'eveloppement spectral fin de la formule des traces 
non tordue  pour les groupes r\'eductifs connexes d\'eploy\'es. 

On ne dispose donc, dans la littŽrature, que de quelques cas particuliers du 
d\'eveloppement spectral fin en caractŽristique $p>0$. 
Et pour le dŽveloppement gŽomŽtrique, en dehors de $\mathrm{GL}(n)$, 
il n'y a que tr\`es peu de r\'esultats.

 \subsection*{Objet du prŽsent M\'emoire et perspectives}
 
Notre projet est d'Žtablir une formule des traces tordue stabilis\'ee 
pour les corps globaux sans restriction sur la caractŽristique.
Ce mŽmoire est une premire Žtape o 
nous suivons pas ˆ pas la preuve donnŽe dans \cite{LW}, 
en l'adaptant au cas d'un corps global de caract\'eristique $p>0$. 
Nous obtenons le d\'eveloppement spectral fin mais, comme dans \cite{LW},
nous n'obtenons que le d\'eveloppement grossier du c™t\'e g\'eom\'etrique. 

Pour le dŽveloppement spectral fin, la caractŽristique $p$ ne semble jouer pratiquement 
aucun r™le et les formules que nous obtenons pour les corps de fonctions sont 
essentiellement identiques ˆ celles pour les corps de nombres. 

Il n'en est pas de mme du c™tŽ gŽomŽtrique. 
Si $p$ est grand par rapport au rang de $G$, rien de vraiment nouveau n'appara"t,
mais le traitement des petites caractŽristiques nŽcessite des idŽes nouvelles.

Pour le dŽveloppement gŽomŽtrique grossier, donnŽ ici, 
la principale diffŽrence avec le cas des corps de nombres est que
l'on doit remplacer la notion d'ŽlŽment semi-simple elliptique rŽgulier
pas celle plus gŽnŽrale d'ŽlŽment primitif englobant ainsi, par exemple, des ŽlŽments 
qui deviennent unipotents sur une extension insŽparable
mais ne sont contenus dans le radical unipotent d'aucun 
sous-groupe parabolique propre dŽfini sur $F$.  De tels ŽlŽments, qui
n'apparaissent  que pour des caractŽristiques assez petites,
ne gŽnrent toutefois pas de difficultŽs sŽrieuses ˆ ce stade.

Par contre des phŽnomnes nouveaux compliquent considŽrablement 
la preuve et la structure mme du dŽveloppement gŽomŽtrique fin si $p$ est petit. 
Par exemple, pour $G=\mathrm{SL}(2)$, $p=2$ et $v$ une 
place de $F$, le nombre de $G(F_v)$-orbites unipotentes est infini. 
La forme fine de la contribution unipotente aura donc une expression de nature fort diff\'erente de celle 
donnŽe par Arthur pour les corps de nombres. 
L'ins\'eparabilit\'e perturbe aussi violemment la descente centrale \`a la Harish-Chandra.
On espre qu'il sera possible d'en Žtablir une variante
permettant de ramener le d\'eveloppement g\'eom\'etrique fin \`a celui 
de la contribution unipotente. 

L'extension au cas des corps de fonctions des rŽsultats d'Arthur \cite{A5,  A6},
nŽcessaires pour le dŽveloppement gŽomŽtrique fin, est 
l'objet d'un travail en cours du second auteur.

Il restera encore ˆ stabiliser la formule des traces mais nous sommes loin
d'avoir une vue claire du travail ˆ faire. Dans la littŽrature, seuls le transfert 
de Jacquet-Langlands \cite{JL} et la stabilisation pour $\mathrm{SL}(2)$  \cite{L} ont ŽtŽ 
traitŽs en toute caractŽristique.
 
\section*{Structure du texte}

\subsection*{DiffŽrences techniques}

Nos r\'ef\'erences principales seront \cite{LW} et \cite{W} 
et on supposera que le lecteur a ces deux textes sous la main. 
L'organisation de cet article suit celle de \cite{LW} et
on se contentera souvent de citer sans d\'emonstration les r\'esultats de cet ouvrage
 pour peu qu'ils passent sans autre forme de proc\`es \`a la caract\'eristique positive.
 
Toutefois, lorsqu'on essaie de remplacer dans \cite{LW} le corps de nombres par un corps de fonctions, 
on rencontre imm\'ediatement  une diff\'erence technique, 
analogue \`a celle qui existe entre les cas archim\'ediens et non-archim\'ediens
dans les travaux d'Arthur \cite{A9} et Waldspurger \cite{W} sur la formule des traces locale. 
En effet, pour un corps global $F$ quelconque, 
on dispose pour chaque sous-groupe parabolique $P$ de $G$ dŽfini sur $F$ d'un espace vectoriel r\'eel $\ag_P$
et d'un morphisme 
\begin{equation*}\bfH_P: P(\adef)\rightarrow\ag_P\ptf\end{equation*}
On note $\ESA_P$ son image
et  $\ESB_P$ l'image de $A_P(\adef)$ o $A_P$ est le tore d\'eploy\'e maximal dans le centre de $P/U_P$. 
On a les inclusions
\begin{equation*}\ESB_P\subset\ESA_P\subset\ag_P\ptf\end{equation*}
Pour les corps de nombres, ces trois groupes sont \'egaux et
on dispose d'un rel\`evement canonique $\AA_P$ de 
$\ESA_P$ dans $A_P(\adef)$. Il est usuel de
se limiter \`a traiter l'espace des formes automorphes pour $P$ qui sont invariantes
par $\AA_P$; cela ne restreint pas la g\'en\'eralit\'e puisque l'on peut par torsion
par un caract\`ere automorphe de $P(\adef)$ passer \`a un caract\`ere quelconque sur $\AA_P$.
Mais, pour les corps de fonctions, les morphismes $\bfH_P$ ne sont plus surjectifs: ${\ESA_P}$
est un r\'eseau de $\ag_P$ et en g\'en\'eral $\ESA_P\ne\ESB_P$ de sorte
qu'un rel\`evement central de $\ESA_P$ n'existe pas.
Ceci complique donc certains arguments et a des cons\'equences sur la pr\'esentation des r\'esultats.
On prendra garde en particulier ˆ ce que les espaces $\bsX_G$ et $\bsY_{\!Q}$ qui interviennent 
ici jouent un r™le analogue mais ne sont pas identiques
aux espaces ${\mathbf X}_G$ et ${\mathbf Y}_{\mskip -2mu Q}$ de \cite{LW}.

L'autre diffŽrence technique -- venant de ce que des rŽseaux ont remplacŽ les espaces vectoriels 
pour la combinatoire -- est que les polyn™mes en la variable de troncature $T$, qui jouent un r™le essentiel
dans l'Žtablissement de la formule des traces pour les corps de nombres, sont ici remplacŽs
par des fonctions de type {\og Polyn™mes-exponentielles \fg}. Des passages ˆ la limite sont nŽcessaires
pour les Žliminer et, par exemple, obtenir le dŽveloppement spectral fin. Ceci complique encore les preuves.
Par contre la compacitŽ du dual de Pontryagin des rŽseaux simplifie le contr™le des convergences d'intŽgrales.
 
 Comme dans \cite{LW} le texte est divis\'e en quatre parties. Un appendice corrige
 des bŽvues de \cite{LW}.

 \subsection*{Partie I. G\'eom\'etrie et combinatoire}
Dans le chapitre 1, apr\`es le rappel des notations usuelles,
on adapte \`a notre cadre le calcul des transform\'ees de Laplace (ou anti-Laplace) 
des fonctions caract\'eristiques de polytopes \cite[1.9]{LW} 
ainsi que les r\'esultats de \cite[1.10]{LW} sur les $(G,M)$-familles. 
Les int\'egrales sur les polytopes qui apparaissent dans \cite{LW} 
doivent ici tre remplac\'ees par des sommes sur l'intersection de ces polytopes
avec des r\'eseaux. La combinatoire des polytopes est certes identique
mais la manipulation des intersections est plus d\'elicate; on doit en particulier tenir compte 
des groupes finis $\bsbbc_P=\ESB_P \backslash \ESA_P$. 
Leur traitement requiert l'utilisation
de divers outils techniques emprunt\'es \`a Arthur \cite{A9} et Waldspurger \cite{W}. 
Pour les corps de nombres, les $(G,M)$-familles fournissent des polyn™mes en la variable
de troncature $T$ qui permettent de contr™ler le comportement asymptotique des
divers termes de la formule des traces. Ici les $(G,M)$-familles fournissent, pour les $T$ {\og rationnels \fg},
des expressions du type PolExp c'est-\`a-dire des combinaisons lin\'eaires de polyn™mes et
d'exponentielles. Par un passage \`a la limite on d\'efinit un polyn™me qui
est l'analogue des polyn™mes asymptotiques pour les corps de nombres.
Dans le chapitre 2, on g\'en\'eralise au cas tordu les relations et 
propri\'et\'es du chapitre 1. L'analogue de l'\'etude dans le cas tordu de la notion d'\'el\'ement 
semi-simple \cite[2.6]{LW} est renvoy\'ee ici au chapitre suivant.
Le chapitre 3 contient des rappels sur la th\'eorie de la r\'eduction. 
Sur un corps global de caract\'eristique $p>0$, 
cette th\'eorie est essentiellement due \`a Harder \cite{H2}. En reprenant les id\'ees de Harder 
sur la descente galoisienne, Springer \cite{Sp} a donn\'e un traitement uniforme 
(valable pour tout corps global) des principaux r\'esultats de cette th\'eorie. 
L\`a o c'est n\'ecessaire, on remplace donc par \cite{Sp} la r\'ef\'erence au livre de Borel \cite{B1} dans \cite[ch.~3]{LW}. 
Par ailleurs, on d\'efinit un ersatz de la d\'ecomposition de Jordan (inutilisable ici car
en g\'en\'eral elle n'est pas rationnelle): on remplace la notion d'\'el\'ement 
quasi semi-simple r\'egulier elliptique par celle d'\'el\'e\-ment \textit{primitif} (qui d'ailleurs appara"t en \cite[3.7]{LW}). 
Toute paire $(\tM,\delta)$ form\'ee d'un facteur de Levi $\tM$ de $\tG$ d\'efini sur $F$ 
et d'un \'el\'e\-ment primitif $\delta$ de $\tM(F)$, d\'efinit un sous-ensemble $\ESO_\oo$ de 
$\tG(F)$ stable par $G(F)$-conjugaison. Ces ensembles $\ESO_\oo$
 joueront le r™le des classes de ss-conjugaison de \cite{LW}.
 
 \subsection*{Partie II. Th\'eorie spectrale, troncatures et noyaux}
Le chapitre 4 concerne l'op\'erateur de troncature $\bs\Lambda^T$. 
Ses propri\'et\'es sont essentiellement les mmes que dans le cas des corps de nombres, 
except\'ees les propri\'et\'es de d\'ecroissance qui sont ici beaucoup plus fortes. On pose
\begin{equation*}\bsX_G=G(F)\backslash G(\adef)\quad \hbox{et}
 \quad \overline{\bsX}_G= 
A_G(\adef)G(F)\backslash G(\adef)\ptf\end{equation*}
Pour un corps de fonctions, l'op\'erateur de troncature $\bs\Lambda^T$ appliqu\'e \`a une fonction 
lisse et $K$-finie $\varphi$ sur $\bsX_G$ fournit une fonction $\bs\Lambda^T\varphi$ sur $\bsX_G$ 
\`a support d'image compacte dans $\overline{\bsX}_G$. 
De plus la d\'ecomposition \begin{equation*}\bs\Lambda^T = \bfC^T+(\bs\Lambda^T - \bfC^T)\end{equation*}
o $\bfC^T$ est la multiplication par la fonction caract\'eristique d'un domaine de Siegel tronqu\'e se simplifie ici car,
si $T$ est assez r\'egulier, on a $(\bs\Lambda^T-\bfC^T)\varphi = 0$.
Le chapitre 5 introduit les op\'erateurs d'entrelacement et les s\'eries d'Eisenstein dont les propri\'et\'es, 
en particulier leur prolongement m\'eromorphe, ont \'et\'e \'etablies par Morris \cite{Mo1,Mo2} et reprises
dans \cite[II, IV]{MW1}.
La preuve de la formule pour le produit scalaire de deux s\'eries d'Eisenstein tronqu\'ees 
reprend celle de \cite[5.4.3]{LW}. Dans le cas o les fonctions induisantes sont cuspidales, on obtient 
une formule exacte essentiellement due \`a Langlands. 
Dans le cas o les fonctions ne sont pas cuspidales, on dispose d'une formule asymptotique 
qui se d\'eduit du cas cuspidal. Cette formule asymptotique fournit une majoration uniforme
lorsque les param\`etres $\lambda$ et $\mu$ sont imaginaires purs.
En effet, ici, les espaces de param\`etres sont compacts, ce qui n'\'etait pas le cas pour les 
corps de nombres.
Dans le chapitre 6 est introduit le noyau int\'egral. Le th\'eor\`eme de factorisation de Dixmier-Malliavin, 
et la propri\'et\'e de $\Autom$-admissibilit\'e, sont trivialement vrais ici. 
On \'enonce la principale propri\'et\'e du noyau tronqu\'e (l'op\'erateur de troncature agissant sur la premi\`ere variable): sa restriction \`a 
$\Siegel^*\times\Siegel^*$ est born\'ee et \`a support compact, o $\Siegel^*$ 
 est un domaine de Siegel pour le quotient $\overline{\bsX}_G$.
Dans le chapitre 7 on rappelle la d\'ecomposition spectrale de $L^2(\bsX_G)$ 
due \`a Langlands pour les corps de nombres \cite{Lan} et Morris pour les corps de fonctions \cite{Mo1,Mo2} puis
r\'edig\'ee pour tout corps global par M\oe glin et Waldspurger \cite{MW1}. 
On remarquera que si $A_G$, le tore d\'eploy\'e maximal dans le centre de $G$, n'est pas trivial,
il n'y a pas de spectre discret dans $L^2(\bsX_G)$ et, pour parer ce fait,
il est usuel de donner la d\'ecomposition spectrale en ayant fix\'e un caract\`ere unitaire d'un sous-groupe 
co-compact de $A_G(F)\backslash A_G(\adef)$. Pour les corps de nombres on 
se limite aux fonctions sur $\bsX_G$ qui sont invariantes par le rel\`evement $\AA_G$ de $\ESA_G$
dans $A_G(\adef)$.
Comme d\'ej\`a observ\'e ci-dessus, pour les corps de fonctions, en g\'en\'eral un tel rel\`evement n'existe pas.
En l'absence d'un choix naturel
nous ne ferons aucune hypoth\`ese sur le comportement des fonctions
sur $A_G(\adef)$ et la d\'ecomposition spectrale
comprendra, comme premi\`ere \'etape, la d\'ecomposition suivant les caract\`eres 
unitaires de $A_G(F)\backslash A_G(\adef)$.

 \subsection*{Partie III. La formule des traces grossi\`ere}
Dans le chapitre 8 on donne, \`a titre d'exemple, la formule des traces dans le cas compact. 
Comme nous ne faisons aucune hypoth\`ese sur l'action du centre de $G$
l'int\'egrale du noyau sur la diagonale porte ici sur 
\begin{equation*}\bsY_{\!G}=A_\tG(\adef)G(F)\backslash G(\adef)\end{equation*}au lieu de 
$\AA_GG(F)\backslash G(\adef)$ dans \cite{LW}.
Puis on \'enonce l'identit\'e fondamentale entre les deux troncatures, pour le noyau, 
que l'on doit consid\'erer pour traiter le cas g\'en\'eral. 
Le chapitre 9 d\'ecrit le d\'eveloppement g\'eom\'etrique dit \og grossier\fg. 
Le r\'esultat principal est la convergence de l'expression 
\begin{equation*}\sum_{\oo}\int_{\bsY_G}\vert k_\oo^T(x)\vert \dd x\end{equation*}o $\oo$ 
parcourt les classes d'\'equivalence de paires primitives dans $\tG(F)$. 
Il implique en particulier que les int\'egrales orbitales des \'el\'ements primitifs sont absolument convergentes. 
Comme en \cite[9.3]{LW}, on ne donnera des formules explicites que pour les orbites primitives et pour les orbites 
quasi semi-simples. Le d\'eveloppement g\'eom\'etrique fin qui explicite les contributions unipotentes
n'est pas abord\'e ici. 
Le chapitre 10 \'etablit la convergence d'une premi\`ere forme du d\'eveloppement spectral. Il s'agit de montrer la convergence
de chaque terme d'une somme index\'ee par des sous-groupes paraboliques.
Les r\'esultats principaux sont les propositions \ref{[LW,10.1.6]} et \ref{cot\'espec} analogues
de \cite[10.1.6, 10.2.3]{LW}. 
Dans le chapitre 11, pour chaque terme des d\'eveloppements g\'eom\'etriques et spectraux,
 un \'el\'ement de l'ensemble PolExp est d\'efini ainsi que le polyn™me limite. On obtient
 la premi\`ere forme, dite \og grossi\`ere\fg, de la formule des traces. 

 \subsection*{Partie IV. Forme fine des termes spectraux}
Il reste \`a exploiter la d\'ecomposition spectrale pour obtenir le d\'eveloppement spectral fin. 
Les preuves des \'enonc\'es des chapitres 12 et 13 suivent pas \`a pas celles des chapitres 12 et 13 de \cite{LW}, 
\`a deux diff\'erences (simplificatrices) pr\`es: d'une part 
il est inutile d'introduire une fonction $B$ car ici
les param\`etres spectraux \'evoluent dans un espace compact; d'autre part (comme dit plus haut) la d\'ecomposition 
de l'op\'erateur de troncature en $\bs\Lambda^T= \bfC^T+(\bs\Lambda^T - \bfC^T)$ devient ici, 
pour $T$ assez r\'egulier, $\bs\Lambda^T=\bfC^T$.
Le chapitre 14 contient la combinatoire finale donnant lieu aux formules explicites. 
La preuve de la proposition \ref{propcombfin} est plus technique que celle de
son analogue \cite[14.1.8]{LW} 
et fournit une formule moins simple 
\`a cause d'une inversion de Fourier relative \`a un accouplement qui n'est pas parfait.
C'est un probl\`eme d\'ej\`a pr\'esent dans le cas local. On s'inspire 
du traitement de cela dans \cite{W} pour obtenir les formules finales.

\subsection*{Appendice}
On trouvera en appendice un Erratum corrigeant les lapsus et erreurs que nous avons 
pu rep\'erer dans \cite{LW}.

\mainmatter

\part{G\'eom\'etrie et combinatoire}

\chapter{Racines, convexes et $(G,M)$-familles}

\renewcommand{\adef}{\AM}

\section{Le corps $F$}\label{le corps de base}
Dans tout cet article $F$ est un corps global et, sauf mention expresse
du contraire lorsque nous faisons le parall\`ele avec le cas des corps de nombres, il est de caract\'eristique $p>0$.

Soit ${\FM}_q$ {\og le\fg} corps fini \`a $q$ \'el\'ements, pour une puissance $q$ du nombre premier $p$. 
Soit $\ES V$\index{V@$\ES V$, $\vert \ES V \vert$} une courbe projective lisse et 
g\'eom\'etriquement connexe sur ${\FM}_q$, de corps de fonctions $F$. L'ensemble $\vert\ES V\vert$ 
des points ferm\'es de $\ES V$ est en bijection avec l'ensemble des places de $F$. Pour $v\in\vert\ES V\vert$, 
on note $F_v$ le corps compl\'et\'e de $F$ en $v$, $\oo_v$ l'anneau des entiers de $F_v$, $\pp_v$ 
l'id\'eal maximal de $\oo_v$, et $\kappa_v$ le corps r\'esiduel $\oo_v/\pp_v$. 
Ce dernier est une extension finie de ${\FM}_q$, de degr\'e $\deg(v)$ appel\'e \og degr\'e de $v$\fg, 
et de cardinal $q_v= q^{\deg(v)}$. Pour $v\in \vert \ES V \vert$, on note encore $v$ la valuation sur $F_v$ 
normalis\'ee par $v(F_v^\times)=\ZM$, c'est-\`a-dire par $v(\varpi_v)=1$ pour une uniformisante 
$\varpi_v$ de $F_v$, et on note $\vert\;\vert_v$ la valeur absolue sur $F_v$ d\'efinie par
\begin{equation*}\vert x\vert_v = q_v^{-v(x)}\vgq x\in F_v\ptf\end{equation*}
Soit $\adef$\index{A@$\AM$, $\AM^1$} l'anneau des ad\`eles\footnote{Le corps $F$ Žtant fixŽ dans toute la suite 
nous allgeons la notation en notant simplement $\AM$
l'anneau $\AM_F$.} et $\adef^{\times}$ le groupe des id\`eles de $F$. 
On dispose de l'application degr\'e\index{daeg@$\deg$}
\begin{equation*}\deg: \adef^{\times}\rightarrow \ZM\end{equation*}
d\'efinie par
\begin{equation*}\deg(a)=\sum_{v\in\vert\ES V\vert} -v(a_v)\deg(v)\qquad\hbox{pour}\qquad
a= (a_v)_{v\in\vert\ES V\vert}\in \adef^{\times}\end{equation*}
et on pose $\vert a\vert =\prod_v\vert a_v\vert_v=q^{\deg (a)}$. 
Le groupe $F^\times$ est un sous-groupe discret de $\adef^{\times}$ contenu dans 
\begin{equation*}\adef^1=\{a\in \adef^{\times}:\deg(a)=0\}\vg\end{equation*}
et le quotient $F^\times\bsl \adef^1$ est un compact.

%%%%%%%%%%%%%%%%%%%
 \section{Espaces vectoriels et r\'eseaux}
\label{espace vectoriel et r\'eseau}

Soit $P$ un groupe alg\'ebrique lin\'eaire connexe d\'efini sur $F$.
On note $X_F(P)$ le groupe des \textit{caract\`eres alg\'ebriques} de $P$ d\'efinis sur $F$.
Si $R$ est un anneau on pose
\begin{equation*}\ag_{P,R}\bydef\bfHom(X_F(P),R)\ptf\end{equation*}
\index{aaagpr@$\ag_{P,R}$}
Le $\ZM$-module libre de type fini $\ag_{P,\ZM}$
est un r\'eseau de l'espace vectoriel r\'eel  
\begin{equation*}\agp\bydef \ag_{P,\RM}\ptf\end{equation*}
\index{aaagp@$\ag_P$}
Pour $x\in P(\adef)$ on note $\bfH_P(x)$ \index{Haap@$\bfH_P$}
l'\'el\'ement de $\ag_{P}$ tel que
\begin{equation*}\langle \chi,\bfH_P(x)\rangle= \deg \chi(x) 
\quad
\hbox{pour tout}\quad \chi\in X_F(P)\ptf\end{equation*}
L'application $x\mapsto\bfH_P(x)$ est un morphisme
\begin{equation*}\bfH_P: P(\adef)\rightarrow\ag_{P}\end{equation*}
dont l'image -- not\'ee $\ag_{P,F}$ dans dans \cite{A9} et \cite{W} --
sera ici not\'ee $\ESA_P$:
\index{AabESAP@$\ESA_P$}
\begin{equation*}\ESA_P\bydef \bfH_P(P(\adef))\ptf\end{equation*}
 Pour un corps de fonctions, 
 $\ESA_P$ est un sous-groupe d'indice fini de $\ag_{P,\ZM}$
et donc un r\'eseau de $\ag_P$ (alors que $\ESA_P=\ag_P$ pour un corps de nombres).
Une inclusion de $F$-groupes alg\'ebriques lin\'eaires connexes $P\subset Q$ induit des homomorphismes
\begin{equation*}\ag_{P,\bullet}\to\ag_{Q,\bullet}\ptf\end{equation*}
On pose
\begin{equation*}\ag^Q_P\bydef\ker[\agp\to\ag_Q]\index{aaagpq@$\ag_P^Q$}\quad \hbox{et} \quad \ESA_P^Q\bydef\ker[\ESA_P\to\ESA_Q]=\ESA_P\cap \ag_P^Q\index{AaxESAPQ@$\ESA_P^Q$}\ptf\end{equation*}

Pour $H\in\ESA_P$ on notera $P(\adef;H)$
l'image r\'eciproque de $H$. Le noyau de $\bfH_P$, usuellement not\'e $P(\adef)^1$, 
n'est autre que $P(\adef;0)$. Le groupe $P(\adef)^1=P(\adef;0)$
 op\`ere par translations sur $P(\adef;H)$ ainsi que le groupe
$P(F)$ des points $F$-rationnels de $P$ qui est un sous-groupe de 
 $P(\adef)^1$, et donc $P(\adef)^1$
op\`ere \`a droite sur le quotient $P(F)\bsl P(\adef;H)$.

Supposons que le radical unipotent $U_P$ de $P$ soit d\'efini sur $F$ et qu'il existe une section 
$\iota: \overline{P} =P/U_P\rightarrow P$ elle aussi d\'efinie sur 
$F$ (c'est toujours le cas si $P$ est un $F$-sous-groupe parabolique d'un 
groupe alg\'ebrique r\'eductif connexe d\'efini sur $F$). On note $Z_P$ le centre {\og sch\'ematique \fg} 
de $\overline{P}$, et $A_P\index{AaaaP@$A_P$}
\subset Z_P$ le tore $F$-d\'eploy\'e maximal de $Z_P$. On identifie $A_P$ \`a $\iota(A_P)$. 
L'homomorphisme naturel 
\begin{equation*}\ESA_{A_P}\to\ESA_{P}\end{equation*}
est bien d\'efini (il ne d\'epend pas de la section $\iota$). Il est injectif mais n'est pas surjectif en g\'en\'eral; 
son image sera not\'ee 
\begin{equation*}\ESB_P\index{Baesp@$\ESB_P$}
\bydef\bfH_P(A_P(\adef))\ptf\end{equation*}
C'est un sous-groupe d'indice fini de $\ESA_P$; on pose
\begin{equation*}\bsbbc_P \index{cbsp@$\bsbbc_P$}
\bydef \ESB_P\bsl\ESA_P\simeq \A_P(\adef)P(\adef)^1\bsl P(\adef)\ptf\end{equation*}
Pour $P\subset Q$ deux $F$-sous-groupes paraboliques d'un groupe alg\'ebrique r\'eductif 
connexe d\'efini sur $F$, l'inclusion $A_P \subset A_Q$ 
(d\'efinie via le choix de sections $\overline{P}\rightarrow P$ et $\overline{Q}\rightarrow Q$ d\'efinies sur $F$) 
induit une section $\ag_Q\rightarrow \ag_P$ de l'homomorphisme surjectif $\ag_P \rightarrow \ag_Q$ 
et donc une d\'ecomposition
\begin{equation*}\ag_P= \ag_Q \oplus \ag_P^Q\ptf\end{equation*}
Pour $X=X_P\in \ag_P$, cette d\'ecomposition s'\'ecrit $X= X_Q+X^Q$. 

%%%%%%%%%%%%%%%%%%%%%
%%%%%%%%%%%%%%%%%%%%%
%%%%% Dualit\'e et mesures %%%%%%
%%%%%%%%%%%%%%%%%%%%%
%%%%%%%%%%%%%%%%%%%%%
%%%%%%%%%%%%%%%%%%%%%

 \section{Dualit\'e et mesures}\label{dualit\'e}

On appelle \textit{caract\`ere} d'un groupe topologique 
un homomorphisme continu dans $\CM^\times$, et \textit{caract\`ere unitaire} un caract\`ere \`a valeurs 
dans le groupe ${\UM}$ des nombres complexes de module $1$. Le dual de Pontryagin d'un groupe
ab\'elien localement compact est le groupe topologique de ses caract\`eres unitaires.

Si $\ag$ est un espace vectoriel r\'eel de dimension finie on notera $\ag^*$ l'espace vectoriel r\'eel dual,
$\langle \Lambda,X\rangle$ le produit scalaire de $X\in\ag$ et $\Lambda\in\ag^*$, 
et $\wh\ag$ le dual de Pontryagin de $\ag$ que l'on peut identifier, au moyen de l'exponentielle, avec le sous-espace 
$i\ag^*$ des vecteurs imaginaires purs dans $\ag^*\otimes\CM$. Si $\ESR$ est un r\'eseau
de $\ag$ on notera $\ESR^\vee$ l'ensemble des $\Lambda\in i\ag^*$ 
tels que $\langle \Lambda,X\rangle\in{2i\pi}Z$ pour tout $X\in \ESR$.
C'est l'orthogonal de $\ESR$ du point de vue de la dualit\'e de Pontryagin: 
le groupe compact $\wh\ag/\ESR^\vee$ s'identifie au dual de Pontryagin de $\ESR$:
\begin{equation*}\wh\ESR\simeq\wh\ag/\ESR^\vee\ptf\end{equation*}
Pour une fonction \`a d\'ecroissance rapide $f$ sur $\ESR$, on note 
$\wh{f}$ la fonction lisse sur $\wh{\ESR}$ d\'efinie par
\begin{equation*}\wh{f}(\Lambda) = \sum_{X\in \ESR}{f}(X)e^{\langle\Lambda , X\rangle}\vgq \Lambda \in \wh{\ESR}\ptf\end{equation*}
Si $\wh\ESR$ est muni de la mesure duale de la mesure de comptage sur
$\ESR$, c'est-\`a-dire telle que $\vol(\wh\ESR)=1$, on a la formule d'inversion
\begin{equation*}f(X)= \int_{\wh{\ESR}}
\wh{f}(\Lambda) e^{-\langle \Lambda,X\rangle}\dd \Lambda\vgq X\in \ESR\ptf\end{equation*}
Pour all\'eger l\'eg\`erement les notations, pour $P$ comme en \ref{espace vectoriel et r\'eseau}, 
on se permettra d'\'ecrire $\wh{\ag}_P$\index{abgphat@$\wh{\ag}_P$}, $\wh\ESA_P$\index{AbESAPhat@$\wh\ESA_P$}, etc. en place de $\wh{\ag_P}$, $\wh{\ESA_P}$, etc.
On pose
\begin{equation*}\bsmu_P\index{maup@$\bsmu_P$}
\bydef \wh\ESA_P\ptf\end{equation*}

Notons $\Xi(P)$, resp. $\Xi(P)^1$\index{XiP@$\Xi(P)$, $\Xi(P)^1$}, l'ensemble des caract\`eres unitaires de $A_P(\adef)$, resp. $A_P(\adef)^1$, 
qui sont triviaux sur $A_P(F)$. Ce sont deux groupes ab\'eliens localement compact, et $\Xi(P)^1$ \index{XiPp@$\Xi(P)^+$}
est un quotient discret de $\Xi(P)$. On notera $\Xi(P)^+$ l'ensemble des caractres non nŽcessairement unitaires.
Le groupe $\Xi(P)$ s'ins\`ere dans la suite exacte courte 
\begin{equation*}0 \rightarrow \wh{\ESB}_P \rightarrow \Xi(P)\rightarrow \Xi(P)^1 \rightarrow 0\ptf\end{equation*}
\begin{convention}\label{convention-mesures} 
Pour les mesures de Haar sur les groupes ab\'eliens localement compacts, on adoptera les normalisations suivantes. 
On impose la compatibilit\'e aux suites exactes courtes et \`a la dualit\'e de Pontryagin. 
Les r\'eseaux de $\ag_P$ sont munis de la mesure de comptage. Cela implique que le groupe fini 
$\bsbbc_P$ est lui aussi muni de la mesure de comptage, et que l'on a\footnote{On observera que 
cette convention n'est pas celle utilis\'ee dans \cite{A9}.}
\begin{equation*}\vol (\bsmu_P) =\vol (\wh\ESB_P)= \vol ( \wh\bsbbc_P )=1\ptf\end{equation*}
On impose aussi que la mesure de Haar sur $A_P(F)\backslash A_P(\adef)$ v\'erifie
\begin{equation*}\vol (A_P(F)\bsl A_P(\adef)^1)=1\ptf\end{equation*}
Ceci implique en particulier que le groupe discret
$\Xi(P)^1$, dual du groupe compact $A_P(F)\bsl A_P(\adef)^1$,
est muni de la mesure de comptage.
\end{convention}

Tout caract\`ere $\chi$ de $P(\adef)$ s'\'ecrit de mani\`ere unique
\begin{equation*}\chi=\chi_\mathrm{u} \vert \chi \vert\end{equation*}
avec $\vert \chi \vert (g) = \vert \chi(g) \vert$ et $\chi_\mathrm{u}$ unitaire.
Le caract\`ere $\vert \chi\vert$ est trivial sur $P(\adef)^1$. 
On note $\chi^1=\chi^1_\mathrm{u}$ la restriction de $\chi$ \`a $P(\adef)^1$.
Tout \'el\'ement $\nu\in \ag_{P,\CM}^*$ d\'efinit un caract\`ere de $P(\adef)$ trivial sur $P(\adef)^1$:
\begin{equation*}p\mapsto e^{\langle \nu,\bfH_P(p)\rangle}\vgq p\in P(\adef)\ptf\end{equation*}
Ce caract\`ere ne d\'epend que de l'image de $\nu$ dans $\ag_{P,\CM}^*/\ESA_P^\vee$ 
et sa restriction \`a $A_P(\adef)$ ne d\'epend que 
de l'image de $\nu$ dans $\ag_{P,\CM}^*/\ESB_P^\vee$. 
La suite exacte courte\index{cbsphat@$\wh\bsbbc_P$}
\begin{equation*}0 \rightarrow \wh\bsbbc_P \rightarrow \ag_{P,\CM}^*/\ESA_P^\vee \rightarrow 
\ag_{P,\CM}^*/\ESB_P^\vee \rightarrow 0\end{equation*}
correspond \`a la restriction des caract\`eres de $P(\adef)$ \`a $A_P(\adef)$.
Si $\pi$ est une repr\'esentation de $P(\adef)$, pour $\nu \in \ag_{P,\CM}^*/\ESA_P^\vee$
on note $\pi_\nu$ la repr\'esentation de $P(\adef)$ d\'efinie par
\begin{equation*}\pi_\nu(p)= \pi(p)e^{\langle \nu,\bfH_P(p)\rangle}\vgq p\in P(\adef)\ptf\end{equation*}
On la notera aussi parfois $\pi \star \nu$\index{painu@$\pi\star \nu = \pi_\nu$}. 
De mme, si $\xi$ est un caract\`ere de $A_P(\adef)$, pour $\nu \in \ag_{P,\CM}^*/\ESB_P^\vee$ 
on note $\xi_\nu=\xi\star\nu$ 
le caract\`ere $a \mapsto \xi(a)e^{\langle \nu, \bfH_P(a)\rangle}$ de $A_P(\adef)$.

%%%%%%%%%%%%%%%%%%%%

 \section{Sous-groupes paraboliques}
Soit $G$ un groupe alg\'ebrique r\'eductif connexe d\'efini sur $F$. 
Tous les sous-groupes paraboliques de $G$, ainsi que leurs composantes de Levi,
consid\'er\'es dans la suite sont suppos\'es d\'efinis sur $F$. 
On fixe un sous-groupe parabolique minimal $P_0$ de $G$ et une composante de Levi $M_0$ de $P_0$. 
On note $A_0$ le tore $F$-d\'eploy\'e maximal du centre $Z_{M_0}$ de $M_0$. Ainsi $M_0$ est 
le centralisateur de $A_0$ dans $G$. Un sous-groupe parabolique de $G$ est dit 
\og standard\fg, resp. \og semi-standard\fg, s'il contient $P_0$, resp. $M_0$. Un facteur de Levi de $G$, 
c'est-\`a-dire une composante de Levi d'un sous-groupe parabolique de $G$, est dit {\og semi-standard\fg} 
s'il contient $M_0$, et il est dit {\og standard\fg} si c'est la composante de Levi semi-standard d'un sous-groupe 
parabolique standard. Dans la suite tous les sous-groupe paraboliques et
tous les sous-groupe de Levi seront semi-standards; nous omettrons parfois de le pr\'eciser.

On note $\ESP=\ESP^G$,\index{Parond@$\ESP$, $\ESP_{\mathrm {st}}$}
resp. $\ESL=\ESL^G$\index{Lrond@$\ESL$}, 
l'ensemble des sous-groupes paraboliques, resp. facteurs de Levi, de $G$,
(semi-standards) et $\ESP_\st=\ESP^G_\st$ 
le sous-ensemble de $\ESP$ form\'e des \'el\'ements standards. 
Pour $P\in \ESP$, 
on note $M_P$ ou simplement $M$
la composante de Levi (semi-standard) de $P$, et $U_P$ ou simplement $U$ le radical unipotent de $P$; 
on a les identifications
\begin{equation*}A_P = A_{M}\vgq \ag_P=\ag_{M}\quad\hbox{et} \quad\ESA_P=\ESA_{M}\ptf\end{equation*}
On pose
\begin{equation*}\quad a_P\index{abP@$a_P$}
=\dim(\ag_P)\ptf\end{equation*}
Pour $P,\mskip 2mu  Q\in\ESP$ tels que $P\subset Q$, on a not\'e $\ag_P^Q$ le noyau de l'homomorphisme 
naturel $\ag_P\rightarrow \ag_Q$. On pose\index{abPQ@$a_P^Q$}
\begin{equation*}a_P^Q= \dim(\ag_P^Q)= a_P-a_Q\ptf\end{equation*}

Pour $M\in\ESL$ on note\index{PbrondM@$\ESP(M)$} $\ESP(M)$, resp. $\ESF(M)$,\index{FrondM@$\ESF(M)$}
le sous-ensemble des $Q\in\ESP$ avec comme composante de Levi
 $M_Q=M$, resp. $M_Q\supset M$. Pour $Q\in\ESF(M)$, on pose
\begin{equation*}
\ESP^Q(M)=\{P\in\ESP(M): P\subset Q\},\index{PbQrondM@$\ESP^Q(M)$}
\quad\ESF^Q(M)=\{P\in\ESF(M): P\subset Q\}\ptf\index{FQrondM@$\ESF^Q(M)$}\end{equation*}
On se permettra de remplacer l'indice $P$ par un indice $M$ 
dans les objets qui ne d\'ependent pas du choix de $P\in\ESP(M)$. Par exemple, pour $Q\in\ESF(M)$ et 
$P\in\ESP^Q(M)$, on \'ecrira parfois $\ag_M^Q$ au lieu de $\ag_P^Q$, 
et $X_M^Q$ au lieu de $X_P^Q$ pour $X\in\ag_{P_0}$. 
On remplacera souvent l'indice {\og$P_0$\fg} par un indice \og 0\fg: ainsi on 
\'ecrira $A_0$ pour $A_{P_0}$, 
$\ag_0$ pour $\ag_{P_0}$, $\ag_0 =\ag_Q\oplus\ag_0^Q$, etc.

On fixe une forme quadratique d\'efinie positive $(\cdot,\cdot)$ sur $\ag_0$, 
invariante par le groupe de Weyl $\bfW= N^G(A_0)/M_0$, o $N^G(A_0)$ est 
le normalisateur de $A_0$ dans $G$. Pour tout 
$X\in\ag_0$, on note $\lVert  X\rVert =(X,X)^{1/2}$ la norme de $X$. 
Pour $M\in\ESL$, la forme 
$(\cdot ,\cdot)$ induit par restriction une forme quadratique d\'efinie positive sur $\ag_M$, 
invariante par le groupe de Weyl $\bfW^M\index{WaM@$\bfW^M$}
=N^M(A_0)/M_0$. 
Pour $Q\in\ESF(M)$, $\ag_M^Q$ n'est autre que l'orthogonal de $\ag_Q$ 
dans $\ag_M$ pour cette forme.

%%%%%%%%%%%%%%%%

Pour $P\in\ESP_\st$, on dispose des racines de $A_0$ dans $M_P$
et des coracines, qui sont de ŽlŽments de $\ag_0^P$. Pour toute racine $\alpha$ 
et toute coracine $\check\beta$ 
on a \begin{equation*}\langle\alpha,\check\beta\rangle=N_{\alpha,\beta}\in\ZM\ptf\end{equation*}
Les coracines appartiennent donc \`a $\ag_{0,\QM}^P$.
Soit  $\Delta^P_0$\index{DeltaOP@$\Delta^P_0$}
l'ensemble des racines simples de $A_0$ dans $M_P$ 
pour l'ordre d\'efini par $P_0\cap M_P$; 
on note $\check{\Delta}_0^P$\index{DeltacheckOP@$\check\Delta^P_0$}
la base de $\ag_0^P$ form\'ee par les coracines simples
et $\hat{\Delta}^P_0$\index{DeltahatOP@$\hat\Delta^P_0$} 
la base des poids pour $M_P$ c'est-\`a-dire la base de $(\ag_0^P)^*$ 
duale de $\check{\Delta}_0^P$.  
Lorsque $P=G$ on Žcrira souvent $\Delta_0$ pour $\Delta_0^G$. 
On observe que $\Delta^P_0\subset\Delta_0$ et que
$\hat{\Delta}^P_0$ est
 l'ensemble des restrictions non nulles des \'el\'ements de $\hat{\Delta}_0$ 
au sous-espace $\ag_0^P$ de $\ag_0^G$.

Plus g\'en\'eralement, pour $P,\mskip 2mu  Q\in\ESP_\st$ tels que $P\subset Q$, on note $\Delta^Q_P$ 
l'ensemble des restrictions non nulles des \'el\'ements de $\Delta^Q_0$ au sous-espace 
$\ag_P^Q$ de $\ag_0^Q$. On note $\check{\Delta}^Q_P$\index{DeltacheckPQ@$\check{\Delta}^Q_P$} l'ensemble 
des projections non nulles des \'el\'ements de $\check{\Delta}^Q_0$ sur l'espace $\ag^Q_P$ 
par rapport \`a la d\'ecomposition $\ag_0^Q =\ag_0^P\oplus\ag_P^Q$. On note 
 $\hat{\Delta}^Q_P$ la base de $(\ag_P^Q)^*$ duale de $\check{\Delta}^Q_P$:
c'est le sous-ensemble des \'el\'ements de $\hat{\Delta}^Q_0$ nuls sur $\ag_0^P$. 
On consid\`ere les \'el\'ements 
de $\Delta^Q_P$\index{DeltaPQ@$\Delta_P^Q$} et $\hat{\Delta}^Q_P$\index{DeltahatPQ@$\hat{\Delta}_P^Q$} comme des formes lin\'eaires sur 
$\ag_0$ gr‰ce \`a la d\'ecomposition 
\begin{equation*}\ag_0 =\ag_0^P\oplus\ag_P^Q\oplus\ag_Q\ptf\end{equation*}
 Rappelons que deux r\'eseaux $\ESR_1$ et $\ESR_2$ d'un ${\RM}$-espace vectoriel de 
 dimension finie sont dits 
\textit{commensurables} si leur intersection $\ESR_1\cap \ESR_2$ est d'indice fini dans chacun d'eux. 
En particulier on a le
\begin{lemma}\label{commensurable}
Les r\'eseaux ${\ZM}(\check{\Delta}^Q_P)$ sont commensurables aux $\ESA_P^Q$. 
\end{lemma}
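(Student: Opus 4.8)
The plan is to reduce the statement to a rationality check. Both $\ZM(\check\Delta^Q_P)$ and $\ESA_P^Q$ are full lattices of the real vector space $\ag_P^Q$: the first because, by construction, $\check\Delta^Q_P$ is a basis of $\ag_P^Q$ (it is the basis dual to $\hat\Delta^Q_P$); the second because $\ESA_P^Q=\ESA_P\cap\ag_P^Q$ is discrete, and we shall see it has full rank. Recall the elementary fact that two full lattices $\ESR_1,\ESR_2$ of a finite-dimensional $\RM$-vector space are commensurable as soon as they have the same $\QM$-span: writing a basis of $\ESR_1$ in terms of a basis of $\ESR_2$ gives a rational matrix, so $N\ESR_1\subset\ESR_2$ for some integer $N$, and symmetrically $M\ESR_2\subset\ESR_1$, whence $\ESR_1\cap\ESR_2$ has finite index in each. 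Thus it suffices to prove that both lattices span, over $\QM$, the same rational subspace of $\ag_P^Q$, namely $\ag_{P,\QM}^Q\bydef\ker[\ag_{P,\QM}\to\ag_{Q,\QM}]$, the set of rational points of $\ag_P^Q$.

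First I would treat $\ESA_P^Q$. Since $\ESA_P$ is of finite index in $\ag_{P,\ZM}=\bfHom(X_F(P),\ZM)$, its $\QM$-span is all of $\ag_{P,\QM}$, and in particular $\ESA_P\subset\ag_{P,\QM}$. The map $\ag_P\to\ag_Q$ is induced by the restriction of characters $X_F(Q)\to X_F(P)$, so it is defined over $\ZM$; hence $\ag_P^Q$ is a rational subspace, i.e. $\ag_P^Q=\ag_{P,\QM}^Q\otimes_\QM\RM$. Intersecting a lattice with rational span $\ag_{P,\QM}$ against a rational subspace $\ag_P^Q$ yields a lattice whose rational span is exactly $\ag_{P,\QM}\cap\ag_P^Q=\ag_{P,\QM}^Q$: indeed $\ESA_P\cap\ag_P^Q\subset\ag_{P,\QM}^Q$, while clearing denominators in any $\QM$-basis of $\ag_{P,\QM}^Q$ produces $\dim\ag_P^Q$ independent vectors of $\ESA_P\cap\ag_P^Q$. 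This simultaneously shows that $\ESA_P^Q$ has full rank and that $\ESA_P^Q\otimes\QM=\ag_{P,\QM}^Q$.

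Next I would handle $\ZM(\check\Delta^Q_P)$. The coroots lie in $\ag_{0,\QM}^G$, as recorded just before the statement, and the decomposition $\ag_0^Q=\ag_0^P\oplus\ag_P^Q$ used to define the projections is a decomposition into rational subspaces (each summand being the kernel of a $\QM$-defined map); hence the projection onto $\ag_P^Q$ is a $\QM$-linear map and $\check\Delta^Q_P\subset\ag_{P,\QM}^Q$. As $\check\Delta^Q_P$ is an $\RM$-basis of $\ag_P^Q$ consisting of rational vectors, it is a $\QM$-basis of $\ag_{P,\QM}^Q$, so $\ZM(\check\Delta^Q_P)\otimes\QM=\ag_{P,\QM}^Q$. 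The two lattices therefore share the rational span $\ag_{P,\QM}^Q$, and the elementary fact quoted above gives their commensurability. The only non-formal input is the rationality of the coroots and of the subspaces $\ag_P^Q$; once that is in hand the argument is purely a matter of comparing two $\QM$-structures, so the proof carries no real obstacle beyond keeping track of which objects are defined over $\QM$.
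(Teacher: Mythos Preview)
Your argument is correct: both lattices are full in $\ag_P^Q$ and have the same $\QM$-span $\ag_{P,\QM}^Q$, which forces commensurability. The paper states the lemma without proof, treating it as immediate from the rationality of the coroots and of the decompositions $\ag_0=\ag_0^P\oplus\ag_P^Q\oplus\ag_Q$; your write-up makes these standard verifications explicit and is exactly the intended justification.
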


Ces d\'efinitions s'\'etendent \`a toute paire de sous-groupes paraboliques $(P,Q)$ de $G$ 
tels que $P\subset Q$ : on choisit un \'el\'ement $g\in G(F)$ tel que $g^{-1}Pg\supset P_0$ et, 
par transport de structures via le 
$F$-automorphisme $\mathrm{Int}_g$ de $G$, on d\'efinit les analogues des objets 
ci-dessus; cela ne d\'epend pas du choix de $g$.

On note $\ag_0^+$\index{ag0+@$\ag_0^+$} l'ensemble des $X\in\ag_0$ tels que $\langle\alpha,X\rangle >0$ 
pour tout $\alpha\in\Delta_0$. Un \'el\'ement de $\ag_0$ est dit \textit{r\'egulier} 
s'il appartient \`a $\ag_0^+$. Pour $X\in\ag_0$, on pose\index{dbX@$\bs{d}_0(X)$}
\begin{equation*}\bsd_0(X)=\inf_{\alpha\in\Delta_0}\langle\alpha,X\rangle\ptf\end{equation*}
Ainsi $X$ est r\'egulier si et seulement si $\bsd_0(X)>0$.

Soit $M$ un facteur de Levi de $G$.
Pour $P\in \ESP(M)$, les homomorphismes de $\RM$-espaces vectoriels $\ag_{M}\to\ag_P$
et de $\ZM$-modules $ \ESA_M\to\ESA_P $ sont des isomorphismes. 
Pour $P,\mskip 2mu Q \in \ESP$ tels que $P\subset Q$, on a not\'e $\ESA_P ^Q$ le r\'eseau 
$\ker[\ESA_P \rightarrow \ESA_Q] =\ESA_P \cap \ag_P^Q$ de $\ag_P^Q$.

% lemme
\begin{lemma}\label{surj}
On a une suite exacte courte de r\'eseaux:
\begin{equation*}0\rightarrow\ESA_P ^Q\rightarrow\ESA_P \rightarrow\ESA_Q\rightarrow 0\end{equation*}
\end{lemma}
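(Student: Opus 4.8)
The plan is to notice that the only genuinely non-formal content of the statement is the surjectivity on the right. Indeed, the map $\ESA_P\to\ESA_Q$ in the sequence is by construction the restriction of the projection $\ag_P\to\ag_Q$, and the text has already defined $\ESA_P^Q=\ker[\ESA_P\to\ESA_Q]=\ESA_P\cap\ag_P^Q$; so exactness at $\ESA_P^Q$ (injectivity of an inclusion) and at $\ESA_P$ (image equals kernel) is immediate. Everything therefore reduces to showing that $\ESA_P\to\ESA_Q$ is onto.

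First I would make the arrow explicit and check it is well defined. The inclusion $P\subset Q$ induces by restriction an injection $X_F(Q)\hookrightarrow X_F(P)$, whose $\RM$-dual is precisely the projection $\ag_P\to\ag_Q$. For $x\in P(\AM)$, viewed in $Q(\AM)$, and for $\chi\in X_F(Q)$ one has $\langle\chi,\bfH_Q(x)\rangle=\deg\chi(x)=\langle\chi|_P,\bfH_P(x)\rangle$, so the image of $\bfH_P(x)$ under $\ag_P\to\ag_Q$ equals $\bfH_Q(x)$. Consequently the projection carries $\ESA_P=\bfH_P(P(\AM))$ into $\bfH_Q(P(\AM))\subset\bfH_Q(Q(\AM))=\ESA_Q$, which both shows the homomorphism is well defined and reduces surjectivity to the equality $\bfH_Q(P(\AM))=\bfH_Q(Q(\AM))$.

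The heart of the argument is this last equality. I would first invoke the Levi decomposition $Q(\AM)=M_Q(\AM)U_Q(\AM)$: as the unipotent radical $U_Q$ carries no nontrivial $F$-character, $\bfH_Q$ is trivial on $U_Q(\AM)$, whence $\bfH_Q(Q(\AM))=\bfH_Q(M_Q(\AM))$; and since $X_F(Q)=X_F(M_Q)$ and $\ag_Q=\ag_{M_Q}$, the restriction of $\bfH_Q$ to $M_Q(\AM)$ is nothing but $\bfH_{M_Q}$, so this image is $\ESA_{M_Q}=\ESA_Q$. Because $P\subset Q$ forces $U_Q\subset U_P\subset P$, one has $P=(P\cap M_Q)U_Q$ with $R\bydef P\cap M_Q$ a parabolic subgroup of $M_Q$; applying $\bfH_Q$ and killing $U_Q$ again gives $\bfH_Q(P(\AM))=\bfH_{M_Q}(R(\AM))$. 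Finally, the Iwasawa decomposition $M_Q(\AM)=R(\AM)K_{M_Q}$ from reduction theory, together with the vanishing of $\bfH_{M_Q}$ on the maximal compact $K_{M_Q}$ (since $|\chi(k)|=1$ for $k$ in a compact subgroup), yields $\bfH_{M_Q}(M_Q(\AM))=\bfH_{M_Q}(R(\AM))$, that is $\ESA_Q=\bfH_Q(P(\AM))$, as desired.

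The one substantive step is thus the surjectivity, and inside it the use of the Iwasawa decomposition for $M_Q$ relative to $R$; all the rest is bookkeeping with the homomorphisms $\bfH_\bullet$. I expect this to be the real obstacle precisely because it cannot be replaced by a formal computation on the integral lattices: the dual of the injection $X_F(Q)\hookrightarrow X_F(P)$ need not carry $\ag_{P,\ZM}$ onto $\ag_{Q,\ZM}$ when $X_F(P)/X_F(Q)$ has torsion, so one genuinely needs the group-theoretic surjectivity of $\bfH_Q$ on $P(\AM)$ rather than a lattice argument.
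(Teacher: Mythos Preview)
Your proof is correct and uses the same key ingredient as the paper, namely the Iwasawa decomposition, to establish surjectivity. The paper's version is slightly more direct: it applies Iwasawa in $G$ for $P$, writing any $q\in Q(\AM)$ as $q=pk$ with $p\in P(\AM)$ and $k\in\bsK$ (hence $k\in\bsK\cap Q(\AM)$, on which $\bfH_Q$ vanishes), so that $\bfH_Q(q)=\bfH_Q(p)$ is the image of $\bfH_P(p)$; your reduction to the Levi $M_Q$ and the parabolic $R=P\cap M_Q$ amounts to the same argument one level down.
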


\begin{proof} 
Il convient d'\'etablir la surjectivit\'e de la fl\`eche $\ESA_P\rightarrow\ESA_Q$.
Pour cela on invoque la d\'ecomposition d'Iwasawa (rappel\'ee en \ref{d\'ecomposition d'Iwasawa}): 
tout $q\in Q(\adef)$ peut s'\'ecrire $q=pk$ avec $k\in\bs{K}$ o $\bs{K}$ est un bon sous-groupe compact 
maximal dans $G(\adef)$
et $p\in P(\adef)$, donc $\bfH_Q(q)=\bfH_Q(p)=\bfH_P(p)$.
\end{proof}

On pose dualement\index{maupq@$\bsmu_P^Q$}
\begin{equation*}\bsmu_P^Q\bydef\bsmu_P/\bsmu_Q=\wh\ESA_P^{\mskip 2mu Q}\ptf\end{equation*}
Notons qu'en g\'en\'eral il n'y a pas de section canonique relevant $\ESA_Q$ dans
$\ESA_P$. En revanche, on a toujours l'inclusion
\begin{equation*}\ESB_Q= \bfH_Q(A_Q(F))= \bfH_P(A_Q(F))\subset\ESB_P\ptf\end{equation*}
Puisque $A_Q$ est un sous-tore de $A_P$, on a l'\'egalit\'e:
\begin{equation*}\ESB_Q= \ESB_P \cap \ag_Q\ptf\end{equation*}
En r\'esum\'e, les inclusions
\begin{equation*}A_Q(\adef) \subset A_P(\adef) \subset M_P(\adef) \subset M_Q(\adef)\end{equation*}
donnent le diagramme commutatif suivant:
\begin{equation*}\xymatrix{\ESA_P \ar[r]&\ESA_Q\\
\ESB_P \ar[u] & \ESB_Q \ar[u] \ar[l]}\end{equation*}
o la fl\`eche horizontale du haut est surjective alors que les trois autres sont injectives. 
On pose
\begin{equation*}\ESB_P^Q\index{Baespq@$\ESB_P^Q$}
\bydef \ESB_Q\backslash \ESB_P \quad \hbox{et} \quad\ESC_P^Q\index{Cespq@$\ESC_P^Q$}
 \bydef \ESB_Q \backslash \ESA_P\ptf\end{equation*}
On observe que $\ESB_P^Q$ est un r\'eseau de $\ag_P^Q$ et que $\ESC_P^Q$ est un $\ZM$-module 
de type fini qui s'ins\`ere dans la suite exacte courte
\begin{equation*}0 \rightarrow \ESB_P^Q \rightarrow \ESC_P^Q \rightarrow \bsbbc_P\rightarrow 0\ptf\end{equation*}

%%%%%%%%%%%%%%%%%%%%%

 \section{Familles orthogonales et $(G,M)$-familles}\label{ortho}\label{GMF}
Fixons un facteur de Levi $M\in\ESL$ et consid\'erons une famille 
\begin{equation*}\XX= (X_{P})_{P\in\ESF(M)}\end{equation*}
d'\'el\'ements $X_P\in\ag_P$. 
Une telle famille est dite {$M$-\textit orthogonale} (ou simplement \textit{orthogonale})
si pour tous $P$ et $Q$ dans $\ESF(M)$ tels que $P\subset Q$, 
la projection $(X_P)_Q$ de $X_P$ dans $\ag_Q$ est \'egale \`a $X_Q$. 
Pour $M'\in\ESL$ tel que $M\subset M'$, 
une famille $M$-orthogonale d\'etermine par restriction une famille $M'$-orthogonale. 
Il suffit, pour d\'efinir une famille $M$-orthogonale, de se donner des $X_P\in\ag_M$ pour chaque
${P\in\ESP(M)}$ v\'erifiant la propri\'et\'e suivante: si $P$ et $P'\in \ESP(M)$
sont adjacents et si $\alpha$ est l'unique racine de $A_M$ positive pour 
$P$ et n\'egative pour $P'$ alors $X_P-X_{P'}$ est un multiple de $\check{\alpha}$. 
La famille est dite \textit{r\'eguli\`ere} si les $X_P-X_{P'}$ sont des multiples positifs de $\check{\alpha}$. 
On dit que la famille est \textit{enti\`ere}, resp. \textit{rationnelle}, 
si $X_P\in\ESA_M$, resp. $X_P\in\ag_{M,\QM}$, pour tout $P\in\ESP(M)$. 
En ce cas $X_Q\in\ESA_Q $, resp. $X_Q\in \ag_{Q,\QM}$, pour tout $Q\in\ESF(M)$.

On notera $\HH_{G,M}$ ou simplement $\HH_M$\index{HbgothM@$\HH_M$} 
si aucune confusion n'en r\'esulte, le $\RM$-espace vectoriel de dimension finie form\'e 
des familles $M$-orthogonales, et
\begin{equation*}\ESH_M\index{HbesM@$\ESH_M$}
= \ESH_{G,M}\subset\HH_{M}\end{equation*}
le r\'eseau form\'e 
des familles qui sont enti\`eres. On note
\begin{equation*}\wh\HH_M= i \HH_M^*\end{equation*}
le dual de Pontryagin de $\HH_M$. Le groupe compact 
\begin{equation*}\wh\ESH_M=\wh\HH_M/\ESH_M^\vee\end{equation*}
s'identifie au dual de Pontryagin de $\ESH_M$. 
Pour chaque $P\in\ESF(M)$, 
on dispose d'une application 
\begin{equation*}\pi_P:\HH_{M}\rightarrow\agp\vg\; \XX\mapsto X_P\end{equation*}
qui est surjective et on note \begin{equation*}\iota_P:\hagp\rightarrow \wh\HH_M\end{equation*}
l'application injective transpos\'ee de $\pi_P$. 

Une mani\`ere tr\`es simple de construire une famille $M_0$-orthogonale est de fixer un \'el\'ement $T\in\ag_0$. 
Pour $P\in\ESP(M_0)$, si $w$ est l'unique \'el\'ement de $\bfW$ tel que 
${^w(P_0)}= P$, on pose $\brT{P}= wT$.\index{Tabbrap@ $\brT{P}$}
On a donc $\brT{P_0}=T$ et l'ensemble 
\begin{equation*}\TT=(\brT{P})_{P\in\ESP(M_0)}\end{equation*}
d\'efinit une famille $M_0$-orthogonale. Pour $Q\in\ESF(M_0)$, on a
$\brT{Q}= (\brT{P})_Q$ pour un (i.e. pour tout) $P\in\ESP^Q(M_0)$. Cette famille est r\'eguli\`ere, 
resp. rationnelle, si et seulement si $T\in \ag_0^+$, resp. 
$T\in\ag_{0,\QM}$.
 Soit $\XX=(X_P)$ une famille $M$-orthogonale, et soit $T$ un \'el\'ement de $\ag_{0}$.
On d\'efinit une autre famille $M$-orthogonale en posant:
\begin{equation*}\XX(T)=\XX+\TT\index{XmmT@$\XX(T)$}\qquad \hbox{c'est-\`a-dire}\qquad\XX(T)_{P}= X_P+\brT{P} \ptf\end{equation*}

Une $(G,M)$-famille est la donn\'ee d'une famille de fonctions 
\begin{equation*}\bsc = (\Lambda\mapsto\bsc(\Lambda,P)\mskip 2mu \mid \mskip 2mu {P\in\ESF(M)})\end{equation*}
\`a valeurs dans $\CM$, ou plus g\'en\'eralement \`a valeurs
dans un espace vectoriel de dimension finies $\EV$, 
v\'erifiant les conditions:
\begin{itemize}
\item pour tout $P\in\ESF(M)$, la fonction $\Lambda\mapsto\bsc(\Lambda,P)$ est lisse sur $\wh\ag_P$ ;
\item pour tous $P,\mskip 2mu  Q\in\ESF(M)$ tels que $P\subset Q$ (i.e. $P\in\ESF^Q(M)$), on a
\begin{equation*}\bsc(\cdot ,P)\vert_{\wh\ag_Q }=\bsc(\cdot , Q)\ptf\end{equation*}
\end{itemize}
Pour chaque $P\in\ESF(M)$, on prolonge $\bsc(\cdot ,P)$ en une fonction 
sur $\wh\ag_0$ constante sur les fibres de la projection
$\wh\ag_0\rightarrow \wh\ag_P$.
Comme pour les familles $M$-orthogonales, il suffit pour d\'efinir une $(G,M)$-famille 
de se donner 
des fonctions lisses 
\begin{equation*}\bsc(\cdot ,P):\wh\ag_M \rightarrow\EV\qquad \hbox{pour}\qquad P\in\ESP(M)\end{equation*}
qui v\'erifient la propri\'et\'e suivante:
pour $P,\mskip 2mu P'\in \ESP(M)$ deux \'el\'ements adjacents correspondant \`a des chambres s\'epar\'ees 
par le mur $\ag_R$, o $R$ est l'\'el\'ement de $\ESF(M)$ engendr\'e par $P$ et $P'$, on a 
l'\'egalit\'e
\begin{equation*}\bsc(\cdot ,P)\vert_{\wh\ag_R }=\bsc(\cdot ,P')\vert_{\wh\ag_R }\ptf\end{equation*}
Les fonctions $c(\cdot,Q)$ pour $Q\in\ESF(M)\smallsetminus\ESP(M)$ s'en d\'eduisent par restriction. 

Une $(G,M)$-famille $\bsc=({\bs c}(\cdot,P))$ est dite \textit{p\'eriodique} 
si pour tout $P\in\ESF(M)$,
 la fonction $\Lambda\mapsto\bsc(\Lambda,P)$ est invariante par translation par 
 $\ESA_P^\vee$, i.e. se factorise 
par $\bsmu_P\bydef\wh{\ESA}_P$. 
Pour qu'une $(G,M)$-famille $\bsc=({\bs c}(\cdot,P))$ soit p\'eriodique, 
il suffit que pour tout $P\in\ESP(M)$, la 
fonction $\Lambda\mapsto\bsc(\Lambda,P)$ soit $\ESA_M^\vee$-p\'eriodique. On notera 
$\Dcal(G,M)$\index{Drond@$\Dcal(G,M)$} le $\CM$-espace vectoriel form\'e des $(G,M)$-familles p\'eriodiques.

Soit $m$ une mesure de Radon \`a d\'ecroissance rapide sur $\HH_M$
et \`a valeurs dans $\EV$. On lui associe une
$(G,M)$-famille en posant, pour $\Lambda\in \hagp$:
\begin{equation*}\bsc(\Lambda,P)=\int_{\HH_M}e^{\langle\Lambda,X_P\rangle}\dd m(\XX)\end{equation*}
o $X_P=\pi_P(\XX)$. 
La $(G,M)$-famille $\bsc$ est p\'eriodique si et seulement si 
la mesure $m$ est produit d'une fonction \`a d\'ecroissance rapide sur le r\'eseau $\ESH_M$
par la mesure de comptage. Par abus de notation nous noterons encore $\mf$ cette fonction.
Sa transform\'ee de Fourier
\begin{equation*}\wh\mf(\LL)=
\sum_{\UU\in\ESH_M} e^{\langle\LL,\UU\rangle} m(\UU)\quad \hbox{pour}\quad \LL\in \wh{\mathfrak{H}}_M\end{equation*}
se factorise par $\wh\ESH_M$ 
c'est-\`a-dire est invariante par $\ESH_M^\vee$. 
On notera $\bsc_\mf$ la $(G,M)$-famille p\'eriodique 
d\'efinie par \begin{equation*}\bsc_m(\Lambda,P)=\sum_{\UU\in\ESH_M} e^{\langle\Lambda,U_P\rangle}\mf(\UU)=
( \wh\mf\circ\iota_P)(\Lambda)\ptf\end{equation*}

 On munit l'espace vectoriel de dimension finie $\HH_M$ d'une structure euclidienne et
on note $\lVert \XX\rVert $ la norme du vecteur $\XX\in\HH_M$. 
L'espace $\ESS(\ESH_M)$ des fonctions $m$ \`a d\'ecroissance rapide sur $\ESH_M$ est muni
d'une structure d'espace de Fr\'echet au moyen des normes $n_d$ pour $d\in\NM$:
\begin{equation*}n_d(m)=\sup_{\UU\in\ESH_M}(1+ \lVert \UU\rVert )^d \mid  m(\UU) \mid  \ptf\end{equation*}
Tout op\'erateur diff\'erentiel \`a coefficients constants $D$ sur $\hag_M$ permet de d\'efinir
une semi-norme $N_D$ sur l'espace $\Dcal(G,M)$ en posant
\begin{equation*}N_D(\bsc)=\sum_{P\in \ESP(M)}\sup_{\Lambda\in \bsmu_M} \vert D\bsc(\Lambda,P)\vert\vg\end{equation*}
munissant ainsi $\Dcal(G,M)$ d'une structure d'espace de Fr\'echet. 
 L'application lin\'eaire
 \begin{equation*}\bsS :\ESS(\ESH_M)\to\Dcal(G,M)\end{equation*}
 d\'efinie par $m\mapsto \bsc_\mf$ est continue. 
 % lemme
\begin{lemma}\label{invfour}
Toutes les $(G,M)$-familles 
p\'eriodiques sont obtenues de cette mani\`ere. En d'autres termes, l'application $\bsS$ est surjective.
\end{lemma}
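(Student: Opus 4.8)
La preuve repose sur l'identification, via la transformation de Fourier, de $\bsS$ avec une application de restriction, puis sur un argument de recollement sur le tore compact $\wh\ESH_M$. On se ram\`ene d'abord au cas $\EE=\CM$ en raisonnant composante par composante. L'application $\mf\mapsto\wh\mf$ est alors un isomorphisme de l'espace de Fr\'echet $\ESS(\ESH_M)$ sur l'espace $C^\infty(\wh\ESH_M)$ des fonctions lisses sur le tore compact $\wh\ESH_M$ (transformation de Fourier sur un r\'eseau et son dual de Pontryagin, l'inverse \'etant donn\'ee par la formule d'inversion de \ref{dualit�}). Comme $\bsc$ est p\'eriodique, $\iota_P$ se factorise en une application injective $\bar\iota_P:\bsmu_M\to\wh\ESH_M$, d'image un sous-tore $T_P$: en effet $\pi_P(\ESH_M)=\ESA_M$ (r\'ealis\'e par les familles \emph{constantes}, qui sont orthogonales et enti\`eres), d'o\`u $\iota_P\mun(\ESH_M^\vee)=\ESA_P^\vee$. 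Ainsi $\bsS$ s'identifie \`a l'application $\phi\mapsto(\phi\circ\bar\iota_P)_{P\in\ESP(M)}$ de $C^\infty(\wh\ESH_M)$ dans $\Dcal(G,M)$, et il suffit de prouver la surjectivit\'e de cette application de restriction.

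Puisque $\iota_R=\iota_P\circ(\wh\ag_R\hookrightarrow\hagp)$ pour $P\subset R$, les conditions de compatibilit\'e d'une $(G,M)$-famille p\'eriodique disent exactement que la donn\'ee $(\bsc(\cdot,P))_{P\in\ESP(M)}$ d\'efinit une fonction coh\'erente sur la r\'eunion $\bigcup_{P\in\ESP(M)}T_P$: sur le sous-tore $T_R$ associ\'e \`a $\wh\ag_R$, toutes les restrictions $\bsc(\cdot,P)$ avec $P\subset R$ co\"incident avec $\bsc(\cdot,R)$. De plus les $T_P$ engendrent $\wh\ESH_M$, car $\sum_{P\in\ESP(M)}\iota_P(\hagp)=\wh\HH_M$ par dualit\'e de l'injection $\HH_M\hookrightarrow\bigoplus_{P\in\ESP(M)}\ag_P$. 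Le point crucial est le suivant: l'\'egalit\'e de deux fonctions lisses sur le sous-tore $T_R$ \'equivaut \`a l'\'egalit\'e de leurs jets infinis le long de $T_R$; les conditions de compatibilit\'e assurent donc l'accord de tous les jets tangents aux intersections, et il n'y a par cons\'equent aucune obstruction diff\'erentielle au recollement.

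Il reste \`a construire $\phi\in C^\infty(\wh\ESH_M)$ dont la restriction \`a chaque $T_P$ est $\bsc(\cdot,P)$. On proc\`ede par r\'ecurrence sur l'ensemble ordonn\'e $\ESF(M)$ des strates $T_R$, des plus petites vers les plus grandes: ayant d\'ej\`a r\'ealis\'e les restrictions aux sous-tores de dimension $<d$, on \'etend aux strates de dimension $d$ au moyen de voisinages tubulaires et de fonctions plateau lisses, en retranchant la partie d\'ej\`a construite. La compatibilit\'e des jets \'etablie ci-dessus garantit que l'extension obtenue demeure lisse au travers des intersections multiples. Enfin $\mf$, transform\'ee de Fourier inverse de $\phi$, est \`a d\'ecroissance rapide puisque $\phi$ est lisse sur le groupe compact $\wh\ESH_M$, et v\'erifie $\bsS(\mf)=\bsc$ par construction.

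Le principal obstacle sera la mise en \oe uvre de la r\'ecurrence du dernier paragraphe: il faut organiser les choix de voisinages tubulaires et de partitions de l'unit\'e de fa\c con compatible avec toute la combinatoire des intersections $T_R$, afin que les extensions successives ne d\'etruisent pas les restrictions d\'ej\`a fix\'ees sur les strates inf\'erieures. La structure de treillis des sous-espaces $\wh\ag_R$, gouvern\'ee par les d\'ecompositions $\ag_0=\ag_0^P\oplus\ag_P^Q\oplus\ag_Q$, est ce qui permet de mener ce recollement \`a terme.
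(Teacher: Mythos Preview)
Your Fourier-transform reduction is correct and is also, implicitly, the paper's starting point: the problem becomes that of extending a compatible family of smooth functions on the subtori $T_P=\bar\iota_P(\bsmu_M)$ to a single smooth function on $\wh\ESH_M$. The gap is in the extension step.

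Your ``crucial point'' about jets is confused. The compatibility conditions say only that $\bsc(\cdot,P)$ and $\bsc(\cdot,P')$ agree on $T_R$ as functions \emph{on} $T_R$; this controls all tangential derivatives but says nothing about derivatives in directions normal to $T_R$ inside $T_P$ or $T_{P'}$. The absence of differential obstruction does not follow --- it is precisely what has to be proved by exhibiting an extension. Your inductive tubular-neighbourhood scheme is where the real content lies, and as you yourself flag in the last paragraph, it is delicate: a correction supported near one $T_P$ of dimension $d$ will in general disturb the restriction to another $T_{P'}$ of the same dimension, since the $T_P$ are far from disjoint. You have not said how to organise the corrections to avoid this.

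The paper sidesteps the gluing problem entirely with an explicit global formula. One constructs a basis $\mathcal{B}$ of $\wh\HH_M$ (the images under $\iota_G$ of a basis of $\hag_G$, together with the vectors $e_Q=\iota_Q(i\varpi_Q)$ for the maximal proper parabolics $Q\supset M$) with the property that every $\iota_P(\hag_P)$, $P\in\ESF(M)$, is a \emph{coordinate} subspace relative to $\mathcal{B}$. Writing $\lambda=\lambda_P+\lambda^P$ for the associated decomposition and choosing a smooth periodic $\chi$ on $\RM$ with $\chi(0)=1$ (for instance $\chi=1$), one sets
\[
f(\lambda)=\sum_{P\in\ESF(M)}(-1)^{a_P-a_M}\,\bsc(\lambda_P,P)\,\chi_P(\lambda^P),
\]
an alternating sum over the poset $\ESF(M)$. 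That $f\circ\bar\iota_{P'}=\bsc(\cdot,P')$ follows directly from the M\"obius-type combinatorics of this poset. This is the adaptation of \cite[1.10.1]{LW}: the compactly supported cutoff $\chi$ used over a number field is replaced here by a periodic one so that $f$ descends to the compact torus $\wh\ESH_M$.
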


% preuve
\begin{proof} C'est une variante de \cite[1.10.1]{LW}. La preuve en est identique 
\`a ceci pr\`es que la fonction $\chi$ sur $\RM$ servant \`a construire la globalisation
sur $\wh{\HH}_M$ de la $(G,M)$-famille $\bsc$ donn\'ee, qui ici est p\'eriodique,
doit ici tre prise p\'eriodique au lieu de lui imposer d'tre \`a support compact. 
Plus pr\'ecis\'ement, on fixe une base ${\mathcal B}_G$ de $\hag_G= i \ag_G^*$, et 
pour chaque $P\in \ESF(M)$, on pose
\begin{equation*}{\mathcal B}_P= \iota_P({\mathcal B}_G \cup i \wh{\Delta}_P)\subset \wh{\HH}_M \ptf\end{equation*}
On note ${\mathcal B}$ la base de $\wh{\HH}_M$ form\'ee par l'union de ces ensembles ${\mathcal B}_P$.
C'est l'union de ${\mathcal B}_G$ et des $e_Q= \iota_Q(i\varpi_Q)$ o $Q$ parcourt l'ensemble 
$\ESF_\mathrm{max}(M)$ des sous-groupes paraboliques maximaux propres de $G$ contenant $M$
et $\varpi_Q$ est 
l'unique \'el\'ement de $\wh{\Delta}_Q$. Pour chaque $P\in \ESF(M)$, on a une partition de ${\mathcal B}$ en
\begin{equation*}{\mathcal B}= {\mathcal B}_P\cup {\mathcal B}^P\quad \hbox{avec} \quad {\mathcal B}^P=
 \{e_Q \mskip 2mu \vert\mskip 2mu  Q \in \ESF_\mathrm{max}(M),\mskip 2mu  Q \not\supset P\}\end{equation*}
qui induit une d\'ecomposition de $\wh{\HH}_M$ en somme directe. Pour $\lambda\in \wh{\HH}_M$, 
on note $\lambda= \lambda_P+ \lambda^P$ la 
d\'ecomposition associ\'ee et l'on pose
\begin{equation*}\chi_P(\lambda^P)= \prod_{Q \not\supset P} \chi(x_Q(\lambda))\quad \hbox{si} \quad \lambda^P = 
\sum_{Q\not\supset P} x_Q(\lambda) e_Q\ptf\end{equation*}
On d\'efinit la fonction
\begin{equation*}f(\lambda)= \sum_{P\in \ESF(M)} (-1)^{a_Q-a_M} \bs{c}(\lambda_P,P)\chi_P(\lambda^P)\end{equation*}
o l'on a identifi\'e $\lambda_P\in \iota_P(\hag_P)$ \`a un \'el\'ement de $\hag_P$.
Notons $\EuScript{Z}$ le r\'eseau de $\RM$ engendr\'e par les $x_Q(\lambda)$ pour $\lambda \in \ESH_M^\vee$ et $Q\in \ESF_\mathrm{max}(M)$. 
Il suffit de prendre pour $\chi$ une fonction lisse et $\EuScript{Z}$-invariante sur $\RM$ telle que $\chi(0)=1$. N'importe quel caract\`ere 
de $\EuScript{Z}\backslash \RM$ convient, par exemple $\chi= 1$.
\end{proof}

Ce r\'esultat permet de d\'efinir d'autres normes sur l'espace $\Dcal(G,M)$:

% d\'efinition
\begin{definition}\label{normes} Pour $d\in\NM$ on pose
\begin{equation*}
N_d(\bsc)=\inf\{n_d(m)\mskip 2mu \mid \mskip 2mu  \bsc_\mf=\bsc\}\ptf
\end{equation*}
\end{definition}

%%%%%%%%%%%%%%%%%%%%

 \section{Fonctions caract\'eristiques de c™nes et de convexes}
\label{fonctions caract\'eristiques}

Pour $P,\mskip 2mu  Q\in\ESP$ tels que $P\subset Q$, on note\footnote{Comme dans \cite{LW}, 
toutes les fonctions index\'ees par $P$ sont des fonctions sur $\ag_0$ qui se factorisent \`a travers 
$\ag_0^P\backslash \ag_0 \;(\simeq\ag_P)$. Dans \cite{W}, 
elles sont consid\'er\'ees comme des fonctions sur $\ag_P$.} $\tau^Q_P$,\index{taupq@$\tau^Q_P$} 
resp. $\wh{\tau}^{\hspace{0.7pt}Q}_P$,\index{taupqh@$\wh{\tau}^Q_P$} 
la fonction caract\'eristique du c™ne ouvert dans $\ag_0$ d\'efini par $\Delta^Q_P$, 
resp. $\hat{\Delta}^Q_P$:
\begin{equation*}
\tau^Q_P(X)=1\Leftrightarrow\langle\alpha,X\rangle >0,\mskip 2mu \forall\alpha\in\Delta^Q_P\vg
\end{equation*}
\begin{equation*}
\wh{\tau}^{\hspace{0.7pt}Q}_P(X)=1\Leftrightarrow\langle\varpi,X\rangle >0,\mskip 2mu \forall\varpi\in\hat{\Delta}^Q_P\ptf
\end{equation*}
Pour $Q=G$, on \'ecrira souvent $\tau_P=\tau_P^G$, $\wh{\tau}_P=\wh{\tau}_P^{\hspace{0.7pt}G}$.
La propri\'et\'e essentielle pour la combinatoire est que les matrices, index\'ees par les couples 
de sous-groupes paraboliques standards, $\tau =(\tau_{P,Q})$ et $\wh{\tau}=(\wh{\tau}_{P,Q})$
d\'efinies par
\begin{equation*}\tau_{P,Q}=\left\{\begin{array}{ll}(-1)^{a_P}\tau^Q_P &\hbox{si $P\subset Q$}\\
0 &\hbox{sinon}\end{array}\right.\qquad
\hbox{et}\qquad
\wh{\tau}_{P,Q}=\left\{\begin{array}{ll}(-1)^{a_P}\wh{\tau}^{\hspace{0.7pt}Q}_P &\hbox{si $P\subset Q$}\\
0 &\hbox{sinon}\end{array}\right.\end{equation*}
sont inverses l'une de l'autre: $\tau\wh{\tau} =\wh{\tau}\tau =1$ (cf. \cite[1.7.2]{LW}).

Soit $M\in\ESL$. Fixons un \'el\'ement $Q\in\ESP(M)$. 
Cet \'el\'ement d\'efinit un ordre sur l'ensemble des racines de $A_M$ dans $G$. 
On \'ecrit $\alpha >_Q 0$, resp. $\alpha <_Q 0$, pour signifier qu'une racine 
$\alpha$ est positive, resp. n\'egative, pour cet ordre. Pour $P\in \ES{P}(M)$, 
notons $\phi_{P,Q}^G$\index{phigpq@$\phi_{P,Q}^G$} la 
fonction caract\'eristique des $X\in\ag_P^G$ suivante:
\begin{equation*}\phi^{G}_{P,Q}(X)=1\Leftrightarrow
\left\{\begin{array}{ll}\langle\varpi_\alpha,X\rangle\leq 0 
&\hbox{pour $\alpha\in\Delta_P$ tel que $\alpha>_Q 0$}\\
\langle\varpi_\alpha,X\rangle >0&\hbox{pour $\alpha\in\Delta_P$ tel que $\alpha <_Q 0$}
\end{array}
\right.\end{equation*}
o $\{\varpi_\alpha :\alpha\in\Delta_P\}=\hat{\Delta}_P$ est la base de $(\ag_P^G)^*$ 
duale de $\{\check{\alpha}:\alpha\in\Delta_P\}=\check{\Delta}_P$. On note $a(P,Q)$
le nombre des $\alpha \in\Delta_P$ tels que $\alpha <_Q 0$. 
Par d\'efinition, $\phi_{P,Q}^G$ se factorise par $\ag_P^G$. 
Observons que $\phi_{P,Q}^G$ est not\'e $\phi_{M,s}$ dans \cite{LW} lorsque $P=s(Q)$
pour un \'el\'ement $s$ dans le groupe de Weyl. Plus g\'en\'eralement pour $R\in\ESF(M)$ et 
$P,\mskip 2mu Q\in\ESP^R(M)$, on d\'efinit la fonction $\phi_{Q,P}^R$ comme ci-dessus en rempla\c{c}ant 
$G$ par $L= M_R$, $P$ par $P\cap L$ et $Q$ par $Q\cap L$.
Nous aurons aussi besoin de $\phi^Q_P$\index{pphipq@$\phi^Q_P$}
 la fonction caract\'eristique du c™ne ferm\'e dans $\ag_0$ 
d\'efini par $-\hat{\Delta}^Q_P$:
\begin{equation*}\phi^Q_P(X)=
1\Leftrightarrow\langle\varpi,X\rangle\leq 0,\mskip 2mu \forall\varpi\in\hat{\Delta}^Q_P\ptf\end{equation*}
On observera que $\phi_P^Q=\phi_{P,P}^Q$.

Nous allons citer des r\'esultats emprunt\'es \`a la section 1.8 de \cite{LW}. Leurs
preuves reposent sur le lemme \cite[1.8.1, page 22]{LW} dont la d\'emonstration est incorrecte.  
Cette erreur a \'et\'e observ\'ee par P.-H. Chaudouard. Une preuve alternative
en est donn\'ee dans l'Annexe \ref{Erratum} (cf.   \Err(i) Lemme~\ref{cpqrx}).

Pour $P$ et $R$ dans $\ESP$ tels que $P\subset R$, on d\'efinit la fonction 
$\Gamma^R_P$\index{Gammarp@$\Gamma^R_P$} sur $\ag_0\times\ag_0$ par
\begin{equation*}\Gamma^R_P(H,X)=
\sum_{\{Q\mid P\subset Q\subset R\}}(-1)^{a_Q-a_R}\tau_P^Q(H)\wh{\tau}_Q^{R}(H-X)\ptf\end{equation*}
D'apr\`es \cite[1.8.3]{LW} la projection dans $\ag_P^R$ 
du support de la fonction $H\mapsto\Gamma_P^R(H,X)$ est 
une union d'ensembles $C(P,Q,R,X)$ et est donc compacte. 
Pr\'ecis\'ement, il existe une constante $c>0$ telle que si 
$\Gamma_P^R(H,X)\neq 0$ on a
\begin{equation*}\lVert  H_P^R\rVert \leq c\lVert  X_P^R\rVert \end{equation*}
o $ X\mapsto X_P^R$ 
est la projection sur l'orthogonal de $\ag_0^P\oplus\ag_R$. 
De plus, si $P$ est standard et $X$ est r\'egulier, on a
\begin{equation*}\Gamma_P^R(H,X)=\tau_P^R(H)\phi_P^R(H-X)\ptf\end{equation*}

Soit $M\in\ESL$. Pour une famille $M$-orthogonale $\XX= (X_P)$ et 
pour $R\in\ESF(M)$, on note $\Gamma_M^R(\cdot ,\XX)$
\index{GammaRMX@$\Gamma^R_M(\cdot,\XX)$} la fonction 
sur $\ag_0$ d\'efinie par
\begin{equation*}\Gamma_M^R (H,\XX)=\sum_{P\in\ESF^R(M)}(-1)^{a_P-a_R}\wh{\tau}^R_P(H-X_P)\ptf\end{equation*}
D'apr\`es \cite[1.6.5]{LW}, si la famille $\XX$ est r\'eguli\`ere, alors $\Gamma_M^R(\cdot ,\XX)$ 
est la fonction caract\'eristique de l'ensemble 
des $H\in\ag_0$ dont la projection $H^R_M$ dans $\ag^R_M$ appartient \`a l'enveloppe 
convexe des points $X_P^R$ pour $P\in\ESP^R(M)$. En g\'en\'eral, la projection du support de la fonction 
$\Gamma^R_M(\cdot,\XX)$ dans $\ag_M^R$ est compacte \cite[1.8.5]{LW}. 
Pr\'ecis\'ement, il existe une constante $c>0$ (ind\'ependante de la famille $\XX$) tel que pour 
tout $H\in\ag_0$ tel que $\Gamma^R_M(H,\XX)\neq 0$, on ait
\begin{equation*}\lVert  H^R_M\rVert \leq c\sup_{P\in\ESP^R(M)}\lVert X_P^R\rVert \ptf\end{equation*}
On se limite maintenant au cas $R=G$. 
\begin{lemma}\label{gammaphi} Soit $Q\in \ESP(M)$. On a
\begin{equation*}\Gamma^G_M(H,\XX)= \sum_{P\in \ESP(M)}(-1)^{a(P,Q)}\phi^G_{P,Q}(H- X_P)\ptf\end{equation*}
\end{lemma}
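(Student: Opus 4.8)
Le plan est de d\'evelopper chaque fonction $\phi^G_{P,Q}$ en combinaison altern\'ee de fonctions $\wh\tau^G_R$, puis de montrer qu'apr\`es regroupement on retombe exactement sur la d\'efinition de $\Gamma^G_M(H,\XX)=\sum_{R\in\ESF(M)}(-1)^{a_R-a_G}\wh\tau^G_R(H-X_R)$. Quitte \`a conjuguer par un \'el\'ement de $G(F)$, on se ram\`ene au cas o\`u $M$ est standard, de sorte que pour $P\in\ESP(M)$ les poids $\hat{\Delta}_P=\{\varpi_\alpha\mid\alpha\in\Delta_P\}$ forment la base de $\ag_P^{G,*}$ duale de $\check{\Delta}_P$. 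L'ingr\'edient de d\'epart est le dictionnaire habituel : pour $R\in\ESF(M)$ contenant $P$ on a $\hat{\Delta}^G_R\subset\hat{\Delta}_P$, la correspondance $R\mapsto\hat{\Delta}^G_R$ identifiant l'ensemble des tels $R$ aux sous-ensembles de $\Delta_P$, et $\wh\tau^G_R$ est, en restriction \`a $\ag_P^G$, la fonction caract\'eristique de $\{Y\mid\langle\varpi,Y\rangle>0,\ \forall\varpi\in\hat{\Delta}^G_R\}$.

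On fixe $P\in\ESP(M)$ et l'on \'ecrit, pour chaque $\alpha>_Q 0$, la condition $\langle\varpi_\alpha,\cdot\rangle\le 0$ de la d\'efinition de $\phi^G_{P,Q}$ sous la forme $1-\mathbbm{1}[\langle\varpi_\alpha,\cdot\rangle>0]$. En d\'eveloppant le produit, on obtient
$$\phi^G_{P,Q}(Y)=\sum_{T}(-1)^{|T|-a(P,Q)}\,\wh\tau^G_{R(P,T)}(Y)\vg$$
o\`u $T$ parcourt les sous-ensembles de $\Delta_P$ contenant $\{\alpha\in\Delta_P\mid\alpha<_Q 0\}$, o\`u $R(P,T)$ est l'unique \'el\'ement de $\ESF(M)$ contenant $P$ tel que $\hat{\Delta}^G_{R(P,T)}=\{\varpi_\alpha\mid\alpha\in T\}$, et o\`u l'on a utilis\'e que $a(P,Q)=\#\{\alpha\in\Delta_P\mid\alpha<_Q 0\}$ est le cardinal de la partie impos\'ee de $T$. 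En reportant dans le membre de droite de l'\'enonc\'e, le facteur $(-1)^{a(P,Q)}$ se simplifie et il reste $\sum_{P\in\ESP(M)}\sum_{T}(-1)^{|T|}\wh\tau^G_{R(P,T)}(H-X_P)$. L'orthogonalit\'e de $\XX$ autorise \`a remplacer $X_P$ par $X_{R(P,T)}$ dans l'argument (car $X_P-X_R\in\ag_P^R$ et tout $\varpi\in\hat{\Delta}^G_R$ s'annule sur $\ag_P^R$), et $(-1)^{|T|}=(-1)^{|\hat{\Delta}^G_R|}=(-1)^{a_R-a_G}$.

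Il reste alors \`a v\'erifier que $(P,T)\mapsto R(P,T)$ induit une bijection des couples consid\'er\'es sur $\ESF(M)$. Pour $R$ fix\'e, se donner un tel couple revient \`a choisir $P\in\ESP^R(M)$, le sous-ensemble $T=\{\alpha\in\Delta_P\mid\varpi_\alpha\in\hat{\Delta}^G_R\}$ \'etant alors impos\'e ; la contrainte $T\supset\{\alpha\in\Delta_P\mid\alpha<_Q 0\}$ \'equivaut \`a ce que $P\cap M_R$ soit le sous-groupe parabolique minimal de $M_R$, de composante de Levi $M$, positif pour l'ordre d\'efini par $Q$. Un tel $P$ est unique, puisque l'ordre $>_Q$ restreint aux racines de $A_M$ dans $M_R$ d\'etermine une seule chambre positive. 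Ainsi chaque $R\in\ESF(M)$ re\c{c}oit exactement un couple, d'o\`u
$$\sum_{P\in\ESP(M)}(-1)^{a(P,Q)}\phi^G_{P,Q}(H-X_P)=\sum_{R\in\ESF(M)}(-1)^{a_R-a_G}\wh\tau^G_R(H-X_R)=\Gamma^G_M(H,\XX)\ptf$$

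La principale difficult\'e sera la mise en place soigneuse du dictionnaire entre sous-ensembles de $\Delta_P$, poids $\hat{\Delta}^G_R$ et paraboliques $R\supset P$, puis la preuve de l'unicit\'e de $P$ ci-dessus, o\`u intervient de fa\c{c}on essentielle l'ordre d\'efini par $Q$. C'est aussi ce qui assure que l'identit\'e vaut pour toute famille $M$-orthogonale, r\'eguli\`ere ou non, et pour tout $H$ (y compris sur les murs), gr\^ace au choix pr\'ecis des in\'egalit\'es strictes et larges dans la d\'efinition de $\phi^G_{P,Q}$ ; une approche purement g\'eom\'etrique via l'enveloppe convexe (qui identifierait $\Gamma^G_M(\cdot,\XX)$ \`a une d\'ecomposition de type Lawrence--Varchenko en c\^ones aux sommets) ne traiterait directement que le cas r\'egulier.
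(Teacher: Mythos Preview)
Your argument is correct. The paper itself gives no proof here beyond the one-line citation ``Ceci r\'esulte de \cite[1.8.7\,(2)]{LW}'', so there is no detailed argument in the text to compare against; what you have written is a self-contained unpacking of that reference.

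The inclusion--exclusion expansion of $\phi^G_{P,Q}$ in terms of the $\wh\tau^G_R$ and the bijection $(P,T)\mapsto R$ are both right. For the uniqueness step, the fact you need --- that among all $P\in\ESP^R(M)$ exactly one has $\Delta_P^R$ consisting entirely of $Q$-positive roots --- follows from the general principle that a base of a root system contained in a given positive system must be \emph{the} set of simple roots of that system (evaluate on a regular element of the $Q$-chamber). Your phrase ``le sous-groupe parabolique \emph{minimal} de $M_R$, de composante de Levi $M$'' is a small slip: the elements of $\ESP^R(M)$ all have Levi $M$ and none is more minimal than another; you mean the one whose positive roots in $M_R$ coincide with the $Q$-positive ones. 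The mathematics is unambiguous regardless.

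Your closing remark is well taken: the identity is invoked later (notably in the proof of \ref{lissb}) for arbitrary orthogonal families and arbitrary $H$, including on walls, where a purely convex-geometric argument via the enveloppe convexe would not directly apply; the careful bookkeeping of strict versus large inequalities in the definition of $\phi^G_{P,Q}$ is precisely what makes the combinatorial proof work in full generality.
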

\begin{proof} Ceci r\'esulte de \cite[1.8.7~(2)]{LW}.
\end{proof} 

Pour les (nombreuses) autres \'egalit\'es reliant les fonctions $\tau_P^Q$, $\wh{\tau}_P^{\hspace{0.7pt}Q}$, 
$\phi_{P,Q}^G$, $\Gamma_P^Q$ et $\Gamma_M^Q$, 
on renvoie \`a \cite[1.7, 1.8]{LW} et \cite[1.3]{W}. 

\label{epsi}\label{les fonctions gamma}
Pour $P,\mskip 2mu Q\in\ESP$ tels que $P\subset Q$ les fonctions m\'eromorphes 
 $\epsilon^Q_P$ sur $\ag_{0,\CM}^*$ sont d\'efinies dans \cite[1.9]{LW}. 
On rappelle leur d\'efinition:

% d\'efinition
\begin{definition}\label{epspq}
On munit l'espace vectoriel $\ag_P^Q$ d'une mesure de Haar. 
Pour $\Lambda\in \ag_{0,\CM}^*$ en dehors des murs, on pose
\begin{equation*}\epsilon_P^Q\index{eapsilonpq@$\epsilon_P^Q$}
(\Lambda) = \vol (\ag_P^Q/\ZM(\check{\Delta}_P^Q))
\prod_{\alpha\in\Delta_P^Q}\langle \Lambda,\check{\alpha}\rangle^{-1}\end{equation*}
o $\ZM(\check{\Delta}_P^Q)$ d\'esigne le r\'eseau de $\ag_P^Q$ engendr\'e par $\check{\Delta}_P^Q$, 
et $\ag_P^Q/\ZM(\check{\Delta}_P^Q)$ est muni de la mesure quotient de la mesure sur $\ag_P^Q$ 
par la mesure de comptage sur $\ZM(\check{\Delta}_P^Q)$. 
\end{definition}
On observera que, si $\langle \Re(\Lambda),\alpha \rangle>0$ pour tout $\alpha \in \Delta_P^Q$, on a
\begin{equation*}\epsilon_P^Q(\Lambda) =\int_{\agp^Q}
\phi_P^Q(H)e^{\langle\Lambda,H\rangle}\dd H\ptf\end{equation*}
Pour $X_P\in \ag_P$ on pose aussi\index{eapsilonpqX@$\epsilon_P^{Q,X_P}$}
\begin{equation*}\epsilon_P^{Q,X_P}(\Lambda) =\int_{\agp^Q}
\phi_P^Q(H-X_P)e^{\langle\Lambda,H\rangle}\dd H= 
e^{\langle \Lambda,X_P^Q\rangle}\epsilon_P^Q(\Lambda)\ptf\end{equation*}
En sommant sur des r\'eseaux au lieu d'espaces vectoriels 
on d\'efinit, comme en \cite[1.5]{W}, des variantes $\varepsilon_P^{Q,X_P}(Z;\Lambda)$
des fonctions $\epsilon_P^{Q,X_P}$. Pour $Z\in\ESA_Q $, 
on pose\index{AmbESAPQZ@$\ESA_P^Q(Z)$}
\begin{equation*}\ESA_P^Q(Z)\bydef\{H\in\ESA_P : H_Q =Z\}\ptf\end{equation*}
Si $Z'\in\ESA_P$ est tel que
$Z'_Q=Z$, on a alors
\begin{equation*}\ESA_P^Q(Z) = Z' +\ESA_P^Q\ptf\end{equation*}
Pour $\Lambda\in\ag_{0,\CM}^*$, $Z\in\ESA_Q $ et $X_P\in\ag_P$, on pose
\begin{equation*}\varepsilon_P^{Q,X_P}\index{eapsilonvarpqXZ@$\varepsilon_P^{Q,X_P}(Z;\cdot)$}
(Z;\Lambda)=
\sum_{H\in\ESA_P^Q(Z)}\phi^Q_{P}(H-X_P)e^{\langle \Lambda,H\rangle}\ptf\end{equation*}

% lemme
\begin{lemma}\label{serievareps}
La s\'erie d\'efinissant $\varepsilon_P^{Q,X_P}(Z;\Lambda)$
est absolument convergente si $\langle\Re \Lambda,\check{\alpha}\rangle >0$ pour tout 
$\alpha\in\Delta^Q_P$.
Dans ce domaine, la s\'erie ne d\'epend que de la projection de $\Lambda$ dans 
$\ag^{*}_{P,\CM}/ \ESA^{\vee}_P$. Pour $X_P\in \ag_{P,\QM}$, 
la fonction \begin{equation*}\Lambda\mapsto\varepsilon_P^{Q,X_P}(Z;\Lambda)\end{equation*}se prolonge m\'eromorphiquement 
\`a tout $\Lambda\in\ag^*_{0,\CM}$.
\end{lemma}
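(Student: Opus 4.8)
The plan is to reduce the sum to a finite combination of one-dimensional geometric series indexed by the coroots $\check\Delta_P^Q$, the passage from the lattice $\ESA_P^Q$ to the coroot lattice being controlled by Lemme \ref{commensurable}. First I would fix $Z'\in\ESA_P$ avec $Z'_Q=Z$, de sorte que $\ESA_P^Q(Z)=Z'+\ESA_P^Q$. Comme $\phi_P^Q$ se factorise par $\ag_P^Q$, pour $H=Z'+U$ avec $U\in\ESA_P^Q$ on a $\phi_P^Q(H-X_P)=\phi_P^Q(U-Y)$ o\`u $Y\bydef X_P^Q-(Z')^Q$; en extrayant $e^{\langle\Lambda,Z'\rangle}$ on se ram\`ene \`a
$$S(\Lambda)\bydef\sum_{U\in\ESA_P^Q}\phi_P^Q(U-Y)\,e^{\langle\Lambda,U\rangle}\ptf$$
Puisque $\{\varpi_\alpha:\alpha\in\Delta_P^Q\}=\hat\Delta_P^Q$ est duale de $\check\Delta_P^Q$, la condition $\phi_P^Q(U-Y)=1$ s'\'ecrit $\langle\varpi_\alpha,U-Y\rangle\le 0$ pour tout $\alpha\in\Delta_P^Q$, de sorte que le support du terme g\'en\'eral est l'intersection de $\ESA_P^Q$ avec le translat\'e par $Y$ du c\^one simplicial $-\sum_{\alpha\in\Delta_P^Q}\RM_{\ge 0}\check\alpha$.

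Sur ce c\^one $\langle\Re\Lambda,U\rangle$ d\'ecro\^it g\'eom\'etriquement dans chaque direction extr\'emale $-\check\alpha$ exactement lorsque $\langle\Re\Lambda,\check\alpha\rangle>0$; en majorant la somme sur le r\'eseau par la s\'erie g\'eom\'etrique associ\'ee on obtient la convergence absolue annonc\'ee. L'invariance modulo $\ESA_P^\vee$ est alors imm\'ediate et a lieu terme \`a terme: tout $H\in\ESA_P^Q(Z)$ appartient \`a $\ESA_P$, donc $e^{\langle\lambda,H\rangle}=1$ pour $\lambda\in\ESA_P^\vee$, d'o\`u le fait que $S$ ne d\'epend que de l'image de $\Lambda$ dans $\ag_{P,\CM}^*/\ESA_P^\vee$.

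Pour le prolongement m\'eromorphe je voudrais aligner le r\'eseau sur les coracines. D'apr\`es Lemme \ref{commensurable} les r\'eseaux $\ESA_P^Q$ et $\ZM(\check\Delta_P^Q)$ sont commensurables, on peut donc choisir des entiers $n_\alpha>0$ tels que $M'\bydef\bigoplus_{\alpha\in\Delta_P^Q}\ZM\,n_\alpha\check\alpha\subset\ESA_P^Q$, sous-r\'eseau d'indice fini. En \'ecrivant $\ESA_P^Q=\bigsqcup_j(v_j+M')$ et en param\'etrant $U=v_j+\sum_\alpha m_\alpha n_\alpha\check\alpha$ avec $m_\alpha\in\ZM$, les relations $\langle\varpi_\beta,U-Y\rangle\le 0$ se d\'ecouplent en $m_\alpha\le t_{\alpha,j}$ o\`u $t_{\alpha,j}\bydef\langle\varpi_\alpha,Y-v_j\rangle/n_\alpha$, et le terme g\'en\'eral se factorise. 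C'est ici qu'intervient l'hypoth\`ese $X_P\in\ag_{P,\QM}$: comme $Z'$, $v_j$, $\check\alpha$ et $\varpi_\alpha$ sont rationnels, chaque $t_{\alpha,j}$ est un rationnel fix\'e, donc $\lfloor t_{\alpha,j}\rfloor$ est un entier bien d\'efini. Chaque facteur unidimensionnel est alors la s\'erie g\'eom\'etrique
$$\sum_{m\le\lfloor t_{\alpha,j}\rfloor}e^{m\,n_\alpha\langle\Lambda,\check\alpha\rangle}=\frac{e^{\lfloor t_{\alpha,j}\rfloor\,n_\alpha\langle\Lambda,\check\alpha\rangle}}{1-e^{-n_\alpha\langle\Lambda,\check\alpha\rangle}}\vg$$
fonction m\'eromorphe de $\Lambda$ dont les p\^oles sont situ\'es l\`a o\`u $n_\alpha\langle\Lambda,\check\alpha\rangle\in\iipiZ$. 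En sommant sur les classes $j$ en nombre fini et en r\'etablissant $e^{\langle\Lambda,Z'\rangle}$ on obtient une fonction m\'eromorphe sur $\ag_{0,\CM}^*$ tout entier, co\"incidant avec $S$ sur le domaine de convergence.

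L'\emph{obstacle principal} est pr\'ecis\'ement le non-alignement de $\ESA_P^Q$ sur les coordonn\'ees coracinaires: sur $\ZM(\check\Delta_P^Q)$ la somme se factoriserait aussit\^ot en s\'eries g\'eom\'etriques, mais $\ESA_P^Q$ ne le permet pas, et c'est le sous-r\'eseau rectangulaire $M'$ qui l\'egitime la factorisation. La rationalit\'e de $X_P$ est ce qui force les seuils $t_{\alpha,j}$ \`a \^etre des constantes, donc la forme close \`a \^etre une vraie expression m\'eromorphe (du type PolExp), comme dans le traitement local de \cite[1.5]{W}. On v\'erifie enfin que les hyperplans de p\^oles $\langle\Lambda,\check\alpha\rangle\in\frac{\iipi}{n_\alpha}\ZM$ sont compatibles avec la $\ESA_P^\vee$-p\'eriodicit\'e obtenue plus haut, ce qui est automatique par le principe d'identit\'e pour le prolongement.
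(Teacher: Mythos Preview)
Your proof is correct, but it takes the route dual to the paper's. You pass to a \emph{coarser} rectangular sublattice $M'=\bigoplus_\alpha \ZM\, n_\alpha\check\alpha\subset\ESA_P^Q$ and decompose into finitely many cosets, on each of which the sum factorises into one-dimensional geometric series. The paper instead passes to a \emph{finer} lattice $\ESD_k=k^{-1}\ZM(\check\Delta_P^Q)\supset\ESA_P^Q$ (with $k$ chosen so that $(X_P-Z')^Q\in\ESD_k$ as well) and recovers the sum over $\ESA_P^Q$ by Fourier inversion on the finite abelian group $\ESA_P^Q\backslash\ESD_k$, yielding
$$\varepsilon_P^{Q,X_P}(Z;\Lambda)=\frac{e^{\langle\Lambda,Z'\rangle}}{[\ESD_k:\ESA_P^Q]}\sum_{\nu\in\ESA_P^{Q,\vee}/\ESD_k^\vee}e^{\langle\Lambda+\nu,(X_P-Z')^Q\rangle}\varepsilon_{P,k}^Q(\Lambda+\nu)\ptf$$
Your approach is more elementary (no characters of finite groups). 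The paper's approach, however, produces a formula in which the dependence on $X_P$ is transparent --- it sits only in the exponential $e^{\langle\Lambda+\nu,(X_P-Z')^Q\rangle}$ --- and this is exactly what is exploited in Lemmes \ref{croisspol} and \ref{passlim} to read off the PolExp structure in $T$.

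One point deserves correction: your explanation of where the hypothesis $X_P\in\ag_{P,\QM}$ enters is wrong. The floor $\lfloor t_{\alpha,j}\rfloor$ is a perfectly well-defined integer for any real $t_{\alpha,j}$, and your closed form is meromorphic regardless. In fact your argument proves the meromorphic continuation for \emph{all} $X_P\in\ag_P$, which is the content of Lemme \ref{aussi}. In the paper's approach, by contrast, rationality of $X_P$ is genuinely needed so that the shift $(X_P-Z')^Q$ lies in the refined lattice $\ESD_k$; the irrational case is then handled separately (Lemme \ref{aussi}) by replacing $X_P$ with a rational $Y_P$ giving the same values of $\phi_P^Q(H-\,\cdot\,)$ on $\ESA_P$.
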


\begin{proof}
Montrons le prolongement m\'eromorphe, pour $X_P\mskip -2mu\in\ag_{P,\QM}$. 
La preuve ci-apr\`es est classique (cf. \cite{A9} et \cite{W}). 
Pour tout entier $k\geq 1$ on pose
\begin{equation*}\ESD_k=k\mun \ESD \qquad \hbox{o}\qquad\ESD=\ZM(\check{\Delta}^Q_P)
\subset \ag^Q_P\ptf\end{equation*}
Choisissons $k$ tel que $\ESD_k$ contienne $\ESA_P^Q$ et $(X_P-Z')^Q$. 
Le r\'eseau dual $\ESD_k^\vee$, form\'e des $\Lambda\in\ag^{Q,*}_P\otimes\CM$ 
tels que $\langle \Lambda,Y\rangle\in {2i\pi}Z$ pour tout 
$Y\in\ESD_k$, v\'erifie l'inclusion
\begin{equation*}\ESD_k^\vee\subset\ESA_P^{Q,\vee}\ptf\end{equation*}
Consid\'erons les fonctions m\'eromorphes
\footnote{Dans \cite{A9}, cette fonction est not\'ee $(\theta^Q_{P,k^{-1}})^{-1}$.}
\begin{equation*}\varepsilon^Q_{P,k}(\Lambda)= \prod_{\alpha\in\Delta^Q_P}
 (1- e^{-k\mun\langle\Lambda,{ \check{\alpha}}\rangle})^{-1}
  \ptf\end{equation*}
 L'ensemble des $H\in\ESD_k$ tels que $\phi^Q_P(H)=1$ est celui form\'e des 
\begin{equation*}\sum_{\alpha\in\Delta^Q_P}{n_\alpha }{ k}\mun \check{\alpha}\end{equation*}
pour des entiers $n_\alpha\leq 0$. 
Pour $\Lambda\in\ag^*_{0,\CM}$ 
tel que $\langle\Re \Lambda,\check{\alpha}\rangle >0$ pour tout $\alpha\in\Delta_P^Q$,
on a donc: 
\begin{equation*}\varepsilon^Q_{P,k}(\Lambda)=\sum_{H\in\ESD_k}\phi^Q_P(H)e^{\langle \Lambda,H\rangle}\ptf\end{equation*}
Par inversion de Fourier sur le groupe ab\'elien fini $\ESA_P^Q\backslash \ESD_k$, on obtient
\begin{align*}
\varepsilon_P^{Q,X_P}(Z;\Lambda) &=&\frac{e^{\langle \Lambda,Z'\rangle}}{ [\ESD_k:\ESA_P^Q]}
\sum_{\nu\in \ESA_P^{Q,\vee}/\ESD_k^\vee}
\sum_{H\in\ESD_k}\phi^Q_P(H+Z'-X_P)e^{\langle \Lambda+\nu,H\rangle}\\
& =&\frac{e^{\langle \Lambda,Z'\rangle}}{ [\ESD_k:\ESA_P^Q]}
\sum_{\nu\in \ESA_P^{Q,\vee}/\ESD_k^\vee}
\sum_{H\in\ESD_k}\phi^Q_P(H)e^{\langle\Lambda+\nu, H+(X_P-Z')^Q\rangle}
\end{align*}
et donc
\begin{equation*}\varepsilon_P^{Q,X_P}(Z;\Lambda)=
\frac{e^{\langle \Lambda,Z'\rangle}}{ [\ESD_k:\ESA_P^Q]}
\sum_{\nu\in \ESA_P^{Q,\vee}/\ESD_k^\vee}
e^{\langle \Lambda+\nu,(X_P-Z')^Q\rangle}\varepsilon^Q_{P,k}(\Lambda+\nu)\ptf\end{equation*}
Le lemme en r\'esulte.
\end{proof}

% remarque
\begin{lemma}\label{aussi} 
Le lemme \ref{serievareps} reste vrai pour tout $X_P\in \ag_P $. 
\end{lemma}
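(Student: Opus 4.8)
The plan is to reduce the general case to the rational case already established in Lemma~\ref{serievareps}, exploiting that every piece of data entering the definition is rational and that rational points are dense in each stratum of the associated arrangement of walls. The essential observation is that $\varepsilon_P^{Q,X_P}(Z;\Lambda)$ depends on $X_P$ only through its projection onto $\ag_P^Q$, since $\phi_P^Q$ factors through $\ag_P^Q$ and the weights $e^{\langle\Lambda,H\rangle}$ do not involve $X_P$.

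First I would analyse how the individual terms vary. For a fixed $H\in\ESA_P^Q(Z)$ and a fixed $\varpi\in\hat{\Delta}_P^Q$, the value of $\phi_P^Q(H-X_P)$ can change only across the affine hyperplane $\langle\varpi,X_P\rangle=\langle\varpi,H\rangle$. Because the coroots lie in $\ag_{0,\QM}$, the weights $\varpi\in\hat{\Delta}_P^Q$ are rational, and by Lemma~\ref{commensurable} together with the rationality of the lattice $\ESA_P$ the coset $\ESA_P^Q(Z)=Z'+\ESA_P^Q$ consists of rational vectors; hence every such hyperplane is rational, and for each fixed $\varpi$ the numbers $\langle\varpi,H\rangle$, $H\in\ESA_P^Q(Z)$, form a coset of a discrete subgroup of $\RM$. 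Consequently the full family of walls is a locally finite arrangement of rational affine hyperplanes in $\ag_P^Q$, and $X_P\mapsto\varepsilon_P^{Q,X_P}(Z;\Lambda)$ is locally constant off this arrangement whenever $\Lambda$ lies in the convergence domain.

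Given an arbitrary $X_P\in\ag_P$, I would then choose a rational representative $X_P'\in\ag_{P,\QM}$ lying in exactly the same stratum as $X_P$, that is, satisfying precisely the same equalities $\langle\varpi,X_P'\rangle=\langle\varpi,H\rangle$ and precisely the same strict inequalities as $X_P$, for every $\varpi\in\hat{\Delta}_P^Q$ and every $H\in\ESA_P^Q(Z)$. This is possible because the equality constraints active at $X_P$ cut out a rational affine subspace, the remaining constraints are open, and rational points are dense in any rational affine subspace; fixing the $\ag_Q$-component of $X_P'$ to be rational as well ensures $X_P'\in\ag_{P,\QM}$. For such an $X_P'$ one has $\phi_P^Q(H-X_P')=\phi_P^Q(H-X_P)$ for every $H\in\ESA_P^Q(Z)$, so that $\varepsilon_P^{Q,X_P'}(Z;\Lambda)=\varepsilon_P^{Q,X_P}(Z;\Lambda)$ term by term on the nonempty open cone $\{\langle\Re\Lambda,\check{\alpha}\rangle>0\mid\alpha\in\Delta_P^Q\}$ where both series converge absolutely.

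Finally, Lemma~\ref{serievareps} applies to $X_P'$ and yields a meromorphic continuation of $\Lambda\mapsto\varepsilon_P^{Q,X_P'}(Z;\Lambda)$ to all of $\ag_{0,\CM}^*$; since this function coincides with $\Lambda\mapsto\varepsilon_P^{Q,X_P}(Z;\Lambda)$ on the open convergence domain, uniqueness of meromorphic continuation forces the latter to extend meromorphically to the same function. The absolute convergence and the $\ESA_P^\vee$-invariance asserted in Lemma~\ref{serievareps} are insensitive to the rationality of $X_P$ and so transfer verbatim. The main obstacle, and the only delicate point, is the handling of the boundary of the \emph{closed} cone $\phi_P^Q$: because $\phi_P^Q$ is defined by the non-strict inequalities $\langle\varpi,\cdot\rangle\le 0$, one cannot merely take $X_P'$ close to $X_P$ but must place it in the identical stratum, and it is precisely the rationality of the entire wall arrangement that guarantees the existence of such a rational $X_P'$.
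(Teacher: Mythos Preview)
Your argument is correct and follows essentially the same route as the paper: both replace the arbitrary $X_P$ by a rational $Y_P$ (your $X_P'$) for which $\phi_P^Q(H-Y_P)=\phi_P^Q(H-X_P)$ holds for every $H\in\ESA_P^Q(Z)$, and then invoke Lemma~\ref{serievareps}. The paper phrases this coordinate by coordinate in the basis $\hat{\Delta}_P^Q$, separating those $\varpi$ for which $\langle\varpi,X_P\rangle$ already lies in the lattice $\ES{Z}\subset\QM$ generated by the pairings $\langle\varpi,H\rangle$ (where one keeps $y_\varpi=x_\varpi$) from those where it does not (where one picks a nearby rational $y_\varpi$ on the same side of every wall); your stratum language packages exactly this dichotomy.
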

\begin{proof} Notons $\ES{Z}$ le r\'eseau de $\RM$ engendr\'e par les $\langle \varpi,H \rangle$ pour $\varpi \in \hat{\Delta}_P^Q$ et $H\in \ESA_P$. 
Pour $\varpi\in \hat{\Delta}_P^Q$, $X_P\in\agp$ et $H\in \ESA_P$, posons
$x_\varpi=\langle \varpi,X_P\rangle\in \RM$ et $h_\varpi= \langle \varpi,H\rangle\in \ES{Z}$.
Notons $\hat{\Delta}_1$ l'ensemble des $\varpi \in \hat{\Delta}_P^Q$ avec $x_\alpha\in\ES{Z}$.
Il existe un $Y_P\in\ag_{P,\QM}$ tel que les coordonn\'ees $y_\varpi=\langle \varpi,Y_P\rangle$ v\'erifient:
\begin{itemize}
\item $y_\varpi=x_\varpi$ pour tout $\varpi\in\hat{\Delta}_1$;
\item $(y_\varpi-h_\varpi)(x_\varpi-h_\varpi)>0$ pour tout $\varpi \in \hat{\Delta}_P^Q\smallsetminus \hat{\Delta}_1$ 
et tout $H\in \ESA_P$.
\end{itemize}
Pour un tel $Y_P$ on a
\begin{equation*}\phi^Q_{P}(H-Y_P)=\phi^Q_{P}(H-X_P)\end{equation*}
et donc 
\begin{equation*}\varepsilon_P^{Q,Y_P}(Z;\Lambda)=\varepsilon_P^{Q,X_P}(Z;\Lambda)\ptf\end{equation*}
\end{proof}
Pour $P,\mskip 2mu  Q \in \ESP(M)$, $Z\in \ESA_Q$ et $X_P\in \ag_P$, on pose
\begin{equation*}\varepsilon^{G,X_P}_{P,Q}(Z;\Lambda)=
\sum_{H\in \ESA_P^G(Z)}\phi^G_{P,Q}(H-X_P)e^{\langle H,\Lambda \rangle}\end{equation*}
la s\'erie \'etant absolument convergente si $\langle \Re \Lambda,\check{\alpha}\rangle >0$ 
pour tout $\alpha\in \Delta_Q$. Elle ne d\'epend que de la projection de 
$\Lambda$ dans $\ag_{P,\CM}^*/\ESA_P^\vee$. 

% lemme
\begin{lemma} \label{app}\label{extres} Pour $X_P\in \ag_{P}$, la fonction 
$\Lambda\mapsto \varepsilon^{G,X_P}_{P,Q}(Z;\Lambda)$ 
ne d\'epend que de l'image de $\Lambda$ dans $\ag_{P,{\CM}}^*/\ESA_P^\vee$. Elle 
se prolonge m\'eromorphiquement \`a tout $\Lambda\in \ag^*_{0,\CM}$, et on a l'\'egalit\'e
\begin{equation*}\varepsilon^{G,X_P}_{P,Q}(Z;\Lambda)=(-1)^{a(P,Q)}\varepsilon^{G,X_P}_P(Z;\Lambda)\ptf\end{equation*}
\end{lemma}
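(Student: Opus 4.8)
We must prove Lemma \ref{extres}: for arbitrary $X_P \in \ag_P$, the function $\Lambda \mapsto \varepsilon^{G,X_P}_{P,Q}(Z;\Lambda)$ depends only on the image of $\Lambda$ in $\ag^*_{P,\CM}/\ESA_P^\vee$, extends meromorphically to all of $\ag^*_{0,\CM}$, and satisfies the identity $\varepsilon^{G,X_P}_{P,Q}(Z;\Lambda) = (-1)^{a(P,Q)} \varepsilon^{G,X_P}_P(Z;\Lambda)$, where the right side is the already-understood object of Lemma \ref{serievareps}/\ref{aussi} (the case $P=Q$, up to the notational identification $\varepsilon^{G,X_P}_P = \varepsilon^{G,X_P}_{P,P}$). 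So there are really two distinct claims: periodicity/meromorphic continuation, and the comparison with the $\phi^G_P$-variant.

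**Strategy.** The cleanest approach is to reduce everything to Lemma \ref{serievareps} (combined with Lemma \ref{aussi}, which removes the rationality hypothesis on $X_P$) by a direct comparison of the two characteristic functions $\phi^G_{P,Q}$ and $\phi^G_P$ appearing in the respective series.

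Let me first recall the two combinatorial gadgets. The function $\phi^G_P = \phi^G_{P,P}$ is the characteristic function of the cone $\{H : \langle \varpi_\alpha, H\rangle \le 0 \ \forall \alpha \in \Delta_P\}$, while $\phi^G_{P,Q}$ flips the sign condition precisely on those $\alpha \in \Delta_P$ with $\alpha <_Q 0$ (asking $\langle \varpi_\alpha, H\rangle > 0$ there), and $a(P,Q)$ counts exactly that set of flipped coordinates. The plan is to exploit that in the coordinates $h_\alpha = \langle \varpi_\alpha, H - X_P\rangle$ indexed by $\alpha \in \Delta_P$, both $\phi^G_P(H-X_P)$ and $\phi^G_{P,Q}(H-X_P)$ factor as products over $\alpha \in \Delta_P$ of conditions on the individual $h_\alpha$; the only difference is that on the $a(P,Q)$ flipped coordinates one replaces the condition $h_\alpha \le 0$ by $h_\alpha > 0$, i.e.\ $h_\alpha \ge 0$ complementarily.

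**The key step.** First I would handle the periodicity and convergence statement, which is essentially built into the definition and parallels the proof of Lemma \ref{serievareps}: the series runs over $H \in \ESA_P^G(Z)$, and translating $H$ by an element of $\ESA_P^G \subset \ESA_P$ multiplies $e^{\langle \Lambda, H\rangle}$ by $e^{\langle \Lambda, \cdot\rangle}$, which is invariant under $\ESA_P^\vee$; this gives dependence only on the image in $\ag^*_{P,\CM}/\ESA_P^\vee$, and the stated half-space on $\Re\Lambda$ guarantees absolute convergence. For the meromorphic continuation and the comparison identity together, I would argue coordinate by coordinate over $\alpha \in \Delta_P$: passing from $\phi^G_P$ to $\phi^G_{P,Q}$ amounts, for each of the $a(P,Q)$ flipped coordinates, to replacing a geometric-type sum over one sign of $h_\alpha$ by the sum over the opposite sign. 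The elementary identity is that for a single coordinate the sum over $n \ge 1$ and the sum over $n \le 0$ of a geometric series are each rational in $e^{-\langle\Lambda,\check\alpha\rangle}$ and are related by an overall sign (they sum to the trivial/constant contribution, so each is the negative of the other as meromorphic functions, after continuation). Iterating this flip over the $a(P,Q)$ coordinates produces exactly the factor $(-1)^{a(P,Q)}$, while the continuation is inherited from the continuation of the $\varepsilon^Q_{P,k}$ factors established in Lemma \ref{serievareps}. Concretely, I would reduce to the rational case $X_P \in \ag_{P,\QM}$ as in Lemma \ref{serievareps}, carry out the finite Fourier inversion over $\ESA_P^G \bsl \ESD_k$ exactly as there, and then invoke Lemma \ref{aussi} to extend to general $X_P \in \ag_P$.

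**The main obstacle.** The delicate point is not the overall sign but making the coordinatewise factorization rigorous when the flipped sums are only defined by meromorphic continuation rather than by convergent series in a common domain. The two series $\varepsilon^{G,X_P}_{P,Q}$ and $\varepsilon^{G,X_P}_P$ converge in different (in fact nearly opposite) cones of $\Lambda$, so the identity $\varepsilon^{G,X_P}_{P,Q} = (-1)^{a(P,Q)}\varepsilon^{G,X_P}_P$ is genuinely an identity of meromorphically continued functions, not of convergent sums. I expect the cleanest way around this is to run the single-variable computation at the level of the rational functions $\varepsilon^Q_{P,k}(\Lambda)$ from Lemma \ref{serievareps}: factor each such rational function as a product over $\alpha \in \Delta_P$, observe that flipping the inequality on a coordinate $\alpha$ sends the local factor $(1 - e^{-\langle\Lambda,\mu_{\alpha,k}\rangle})^{-1}$ to its negative times an explicit monomial shift, and track the monomial shifts through the exponential prefactor $e^{\langle\Lambda+\nu,(X_P-Z')^G\rangle}$ so that they cancel, leaving only the sign $(-1)^{a(P,Q)}$. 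Once this is verified factor-by-factor for the rational building blocks, the full identity follows by the same Fourier-inversion bookkeeping as in Lemma \ref{serievareps}, and the appeal to Lemma \ref{aussi} disposes of the non-rational case. This last monomial-tracking is where the real care is required, and it is the analogue, in the lattice setting, of the sign computation referenced in \cite[1.9]{LW} and \cite[1.5]{W}.
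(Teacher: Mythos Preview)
Your approach is correct and is essentially an unpacking of the reference the paper cites: the paper's proof is the single sentence ``Compte tenu de \ref{aussi}, c'est l'assertion (2) de \cite[1.5]{W}'', and what you outline is precisely the content of that assertion in \cite{W}. Two minor remarks. First, your worry about ``monomial shifts'' is unfounded: with $w=e^{\langle\Lambda,\mu_{\alpha,k}\rangle}$, the flipped factor is $\sum_{n\ge 1}w^n=w/(1-w)$, while the unflipped one is $(1-w^{-1})^{-1}=-w/(1-w)$, so each flip contributes exactly $-1$ with no extra monomial, and the exponential prefactor $e^{\langle\Lambda+\nu,(X_P-Z')^G\rangle}$ is identical on both sides. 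Second, Lemma~\ref{aussi} is stated for $\phi^Q_P$, but its proof (perturb the non-lattice coordinates of $X_P$ to rational values without crossing any lattice hyperplane) applies verbatim to $\phi^G_{P,Q}$, so the reduction to rational $X_P$ goes through for $\varepsilon^{G,X_P}_{P,Q}$ as well.
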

\begin{proof} Compte tenu de \ref{aussi}, c'est l'assertion (2) de \cite[1.5]{W}.
\end{proof}

Soit $\XX=(X_P)_{P\in\ESF(M)}$ une famille $M$-orthogonale. On pose\footnote{L'indice $F$ et la lettre $Z$ indiquent que l'on somme sur le translatŽ 
$\ESA_M^Q(Z)=Z'+ \ESA_M^Q$ du rŽseau $\ESA_M^Q$ de $\ag_M^Q$; ˆ ne pas confondre avec l'intŽgrale 
$\int_{\ag_M^G}\Gamma_M^Q(H,\XX)e^{\langle \Lambda, H \rangle}\dd H$ (cf. \cite[1.9.3]{LW}) qui appara"tra dans la preuve de 
\ref{passlim}. Idem pour l'expression $\bscMF^{Q,\XX}(Z;\Lambda)$, voir plus loin. }:
\begin{equation*}\gammaMF^{Q,\XX}\index{gbmmamfqx@$\gammaMF^{Q,\XX}$}
(Z;\Lambda)=\sum_{H\in\ESA_M^Q(Z)}
\Gamma^Q_M(H,\XX)e^{\langle \Lambda,H\rangle}\ptf\end{equation*}

% lemme
\begin{lemma}\label{gamma-fini}
 La s\'erie
\begin{equation*}\sum_{H\in\ESA_M^Q(Z)}\Gamma^Q_M(H,\XX)e^{\langle \Lambda,H\rangle}\end{equation*}
est une somme finie. 
La fonction $\Lambda\mapsto\gammaMF^{Q,\XX}(Z;\Lambda)$ 
est une fonction enti\`ere de $\Lambda\in\ag^*_{0,\CM}$ qui ne d\'epend que de l'image de $\Lambda$ 
dans $\ag_{M,{\CM}}^*/\ESA_M^\vee$.
Pour $\Lambda$ en dehors des murs,
on a l'identit\'e suivante:
\begin{equation*}\gammaMF^{Q,\XX}(Z;\Lambda)=\sum_{P\in\ESP^Q(M)}\varepsilon^{Q,X_P}_P(Z;\Lambda)\ptf\end{equation*}
\end{lemma}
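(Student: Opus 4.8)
Le plan est de traiter s\'epar\'ement les trois assertions, la derni\`ere (l'identit\'e) \'etant le c\oe ur de l'\'enonc\'e. Pour la finitude, je commencerais par observer que $P\in\ESP^Q(M)$ entra\^\i ne $\ESA_P=\ESA_M$, de sorte que $\ESA_M^Q(Z)$ est une classe $Z'+\ESA_M^Q$ du r\'eseau $\ESA_M^Q$ dans $\ag_M^Q$, pour un $Z'\in\ESA_M$ v\'erifiant $Z'_Q=Z$. La majoration rappel\'ee d'apr\`es \cite[1.8.5]{LW} borne $\|H_M^Q\|$ sur le support de $H\mapsto\Gamma_M^Q(H,\XX)$ par une constante ind\'ependante de $H$; comme $H_M^Q$ d\'ecrit alors une classe du r\'eseau $\ESA_M^Q$ et que l'intersection d'une telle classe avec une boule est finie, seul un nombre fini de $H\in\ESA_M^Q(Z)$ contribuent. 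La somme est donc finie.

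La deuxi\`eme assertion en d\'ecoulerait aussit\^ot. \'Etant une somme finie de fonctions enti\`eres $\Lambda\mapsto\Gamma_M^Q(H,\XX)e^{\langle\Lambda,H\rangle}$, la fonction $\gammaMF^{Q,\XX}(Z;\Lambda)$ est enti\`ere. Pour la p\'eriodicit\'e, j'utiliserais que chaque $H$ appartient \`a $\ESA_M\subset\ag_M$: la valeur $\langle\Lambda,H\rangle$ ne d\'epend donc que de la composante de $\Lambda$ dans $\ag_{M,\CM}^*$, et pour $\nu\in\ESA_M^\vee$ on a $\langle\nu,H\rangle\in\iipiZ$, d'o\`u $e^{\langle\nu,H\rangle}=1$. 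La fonction ne d\'epend ainsi que de l'image de $\Lambda$ dans $\ag_{M,\CM}^*/\ESA_M^\vee$.

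Pour l'identit\'e, la premi\`ere \'etape serait de se ramener au cas $Q=G$ en rempla\c{c}ant $G$ par $L=M_Q$: les fonctions $\Gamma_M^Q$, $\phi_{P,Q'}^Q$, $\varepsilon_P^{Q,X_P}$ et $\varepsilon_{P,Q'}^{Q,X_P}$ ne sont autres que les objets analogues construits dans $L$, et $\XX$ s'y restreint en une famille $M$-orthogonale. On peut donc supposer $Q=G$. Fixant alors $Q'\in\ESP(M)$, le lemme \ref{gammaphi} donne $\Gamma_M^G(H,\XX)=\sum_{P\in\ESP(M)}(-1)^{a(P,Q')}\phi_{P,Q'}^G(H-X_P)$. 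Le point d\'elicat est le choix du domaine o\`u effectuer l'interversion des sommations. Les s\'eries $\varepsilon_P^{G,X_P}$ associ\'ees aux diff\'erents $P\in\ESP(M)$ n'ayant pas de domaine de convergence commun, je passerais par les fonctions $\varepsilon_{P,Q'}^{G,X_P}$, qui convergent toutes simultan\'ement d\`es que $\langle\Re\Lambda,\check{\alpha}\rangle>0$ pour tout $\alpha\in\Delta_{Q'}$. Dans ce domaine, en reportant l'expression ci-dessus dans la d\'efinition de $\gammaMF^{G,\XX}(Z;\Lambda)$ et en intervertissant la somme finie sur $P$ avec la somme sur $H\in\ESA_M^G(Z)$, l\'egitime par convergence absolue, on obtient $\gammaMF^{G,\XX}(Z;\Lambda)=\sum_{P\in\ESP(M)}(-1)^{a(P,Q')}\varepsilon_{P,Q'}^{G,X_P}(Z;\Lambda)$.

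Il resterait \`a invoquer le lemme \ref{extres}, qui fournit l'\'egalit\'e $\varepsilon_{P,Q'}^{G,X_P}(Z;\Lambda)=(-1)^{a(P,Q')}\varepsilon_P^{G,X_P}(Z;\Lambda)$: les deux signes se compensent et il vient $\gammaMF^{G,\XX}(Z;\Lambda)=\sum_{P\in\ESP(M)}\varepsilon_P^{G,X_P}(Z;\Lambda)$. Le membre de gauche \'etant entier et celui de droite m\'eromorphe, l'\'egalit\'e, d'abord obtenue sur le domaine de convergence, s'\'etend par prolongement analytique \`a tout $\Lambda$ en dehors des murs. L'obstacle principal n'est donc pas combinatoire mais tient \`a l'absence d'un domaine de convergence commun pour les s\'eries $\varepsilon_P^{G,X_P}$; le recours aux fonctions $\varepsilon_{P,Q'}^{G,X_P}$ et au lemme \ref{extres} est pr\'ecis\'ement ce qui permet de le contourner.
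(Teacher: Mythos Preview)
Votre démonstration est correcte et suit exactement la même stratégie que celle de l'article: finitude via la compacité du support de $\Gamma_M^Q(\cdot,\XX)$ rappelée en \cite[1.8.5]{LW}, puis identité via le lemme~\ref{gammaphi} et le lemme~\ref{extres}. Vous explicitez en outre le passage par un domaine de convergence commun pour les $\varepsilon_{P,Q'}^{G,X_P}$ et la réduction à $Q=G$, détails que l'article laisse implicites.
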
 
\begin{proof} 
La compacit\'e de la projection sur $\ag_M^Q$
du support de la fonction \begin{equation*}H\mapsto \Gamma^Q_M(H,\XX)\end{equation*}(\cite[1.8.5]{LW}) implique que la s\'erie
d\'efinissant $\gammaMF^{Q,\XX}$ est une somme finie. Elle d\'efinit donc une fonction enti\`ere.
 Pour la seconde assertion on invoque
l'expression \ref{gammaphi}\footnote{On observera que l'identit\'e \ref{gammaphi}, qui r\'esulte de \cite[1.8.7]{LW},
remplace (avantageusement) la d\'ecomposition \cite[1.9.3~(3)]{LW}
dont ni la formulation ni la preuve ne s'\'etendent au cas des corps de fonctions:
en effet, les murs peuvent contenir
des points du r\'eseau qui donnent alors une contribution non nulle.}
de $\Gamma_M$ au moyen des $\phi_{P,Q}$ et \ref{app}.
\end{proof}

Pour $P\in \ESP^Q(M)$ et $\lambda\in \ag_{P,\CM}^*$, notons $d(\lambda)$ le cardinal de
l'ensemble des $\alpha\in\Delta^Q_P$ tels que $\langle\lambda,\check\alpha\rangle\in 2ik\pi\ZM$.

% lemme
\begin{lemma} \label{croisspol} 
On suppose que la famille $M$-orthogonale $\XX$ est rationnelle. 
Choisissons un entier $k$ tel que, pour tout $P\in\ESP^Q(M)$, le r\'eseau
$\ESD_k$ dans $\ag_M^Q$ 
contienne $\ESA_M^Q$ et $(X_P-Z')^Q$. La valeur en $\Lambda$ de la fonction 
$\gamma_{M,F}^{Q,\XX}(Z; \Lambda)$ peut s'\'ecrire
\begin{equation*}\gamma_{M,F}^{Q,\XX}(Z; \Lambda)=\sum_{P\in \ESP^Q(M)}
\sum_{\nu} p_{P,\Lambda+\nu}(X_P^Q)e^{\langle\Lambda+\nu,X_P^Q\rangle}\end{equation*}
o les $\nu$ varient dans $\ESA_M^{Q,\vee}/\ESD_k^\vee$ et les $p_{P,\lambda}$ 
sont des polyn™mes en $X_P^Q$ de degr\'e $d(\lambda)$.
\end{lemma}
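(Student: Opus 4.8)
Le plan est de combiner la d\'ecomposition du lemme \ref{gamma-fini} avec la formule d'inversion de Fourier \'etablie dans la preuve du lemme \ref{serievareps}, puis d'extraire la structure PolExp par un passage \`a la limite dans l'esprit de \cite{A2} et \cite[1.5]{W}. Je partirais d'abord de l'identit\'e du lemme \ref{gamma-fini}, valable pour $\Lambda$ en dehors des murs,
$$\gammaMF^{Q,\XX}(Z;\Lambda)=\sum_{P\in\ESP^Q(M)}\varepsilon^{Q,X_P}_P(Z;\Lambda)\vg$$
en retenant que le membre de gauche est de plus une fonction enti\`ere de $\Lambda$. Pour chaque $P\in\ESP^Q(M)$, j'ins\'ererais la formule obtenue dans la preuve de \ref{serievareps} (licite puisque, par le choix de $k$, le r\'eseau $\ESD_k$ contient $\ESA_M^Q$ et $(X_P-Z')^Q$):
$$\varepsilon_P^{Q,X_P}(Z;\Lambda)=\frac{e^{\langle \Lambda,Z'\rangle}}{[\ESD_k:\ESA_P^Q]}\sum_{\nu\in \ESA_P^{Q,\vee}/\ESD_k^\vee}e^{\langle \Lambda+\nu,(X_P-Z')^Q\rangle}\varepsilon^Q_{P,k}(\Lambda+\nu)\ptf$$
On r\'eindexe la somme par $\ESA_M^{Q,\vee}/\ESD_k^\vee$ via l'isomorphisme canonique $\ESA_M^Q\simeq\ESA_P^Q$.

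J'observerais alors qu'en isolant le facteur $e^{-\langle\Lambda+\nu,(Z')^Q\rangle}$, chacun de ces termes est, vu comme fonction de $X_P^Q$, un multiple de l'exponentielle $e^{\langle\Lambda+\nu,X_P^Q\rangle}$, de coefficient $\varepsilon^Q_{P,k}(\Lambda+\nu)$. Le point conceptuel est donc que les facteurs polynomiaux ne peuvent provenir d'aucun $P$ isol\'e: ils doivent na\^\i tre de la compensation des p\^oles dans la somme sur $P$. Pour les $\nu$ tels que $d(\Lambda+\nu)=0$, \cad tels que $\langle\Lambda+\nu,\check\alpha\rangle\notin 2ik\pi\ZM$ pour tout $\alpha\in\Delta_P^Q$, le facteur $\varepsilon^Q_{P,k}(\Lambda+\nu)$ est fini et fournit imm\'ediatement un polyn\^ome constant $p_{P,\Lambda+\nu}$, donc de degr\'e $d(\Lambda+\nu)=0$.

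Le c\oe ur de la preuve concerne les $\nu$ tels que $d(\Lambda+\nu)>0$, pour lesquels
$$\varepsilon^Q_{P,k}(\Lambda+\nu)=\prod_{\alpha\in\Delta^Q_P}(1-e^{-\langle\Lambda+\nu,k\mun\check\alpha\rangle})^{-1}$$
pr\'esente un p\^ole d'ordre $d(\Lambda+\nu)$ le long des directions singuli\`eres. Je traiterais ces termes comme dans \cite[1.5]{W} par perturbation: remplacer $\Lambda$ par $\Lambda+\zeta$ avec $\zeta$ en position g\'en\'erale, puis faire tendre $\zeta$ vers $0$. Comme $\gammaMF^{Q,\XX}(Z;\cdot)$ est enti\`ere, $\gammaMF^{Q,\XX}(Z;\Lambda+\zeta)$ tend vers $\gammaMF^{Q,\XX}(Z;\Lambda)$. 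Dans chaque facteur singulier j'\'ecrirais $(1-e^{-\langle\Lambda+\zeta+\nu,k\mun\check\alpha\rangle})^{-1}=\langle\zeta,k\mun\check\alpha\rangle^{-1}(1+O(\zeta))$ et je d\'evelopperais simultan\'ement $e^{\langle\zeta,(X_P-Z')^Q\rangle}$ en s\'erie enti\`ere en $\zeta$; le produit admet un d\'eveloppement de Laurent en $\zeta$ dont la partie singuli\`ere dispara\^\i t apr\`es sommation sur $P\in\ESP^Q(M)$ (c'est exactement l'holomorphie du membre de gauche fournie par \ref{gamma-fini}), et dont le terme homog\`ene de degr\'e $0$ donne la limite voulue.

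La partie la plus d\'elicate sera le contr\^ole du degr\'e et la compensation effective des p\^oles. Pour un $\nu$ singulier, le terme de degr\'e $0$ en $\zeta$ s'obtient en appariant le p\^ole d'ordre $d(\Lambda+\nu)$ avec les termes du d\'eveloppement de $e^{\langle\zeta,(X_P-Z')^Q\rangle}$; la puissance maximale de $X_P^Q$ qui subsiste provient du mon\^ome $\langle\zeta,(X_P-Z')^Q\rangle^{d(\Lambda+\nu)}$, d'o\`u un polyn\^ome de degr\'e $d(\Lambda+\nu)$ en $X_P^Q$. Apr\`es r\'eorganisation du facteur $e^{\langle\Lambda+\nu,X_P^Q\rangle}$ on obtiendrait l'\'ecriture annonc\'ee. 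Je m'attends \`a ce que le point technique principal soit la v\'erification que la partie singuli\`ere se compense bien \emph{sur la somme} sur $P$ et non terme \`a terme, comme dans le calcul analogue des $(G,M)$-familles d'Arthur.
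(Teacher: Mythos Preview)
Your proposal is correct and matches the paper's proof essentially line by line: the paper too starts from the decomposition of \ref{gamma-fini}, inserts the Fourier inversion formula from the proof of \ref{serievareps}, perturbs $\Lambda$ to $\Lambda+t\xi$ with $\xi$ in general position, writes each term as $t^{-d(\lambda)}e^{\langle t\xi,X_P^Q\rangle}f_P(t,\xi,\lambda)e^{\langle\lambda,X_P^Q\rangle}$ with $f_P$ smooth and $f_P(0,\xi,\lambda)\neq 0$, and reads off $p_{P,\lambda}$ as the constant Laurent coefficient, invoking the entireness of $\gamma_{M,F}^{Q,\XX}(Z;\cdot)$ from \ref{gamma-fini} to guarantee that the polar parts compensate. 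The only cosmetic differences are that the paper uses a one-parameter perturbation $t\xi$ rather than a generic $\zeta$, and does not bother to separate out the non-singular $\nu$.
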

\begin{proof} Pour $\Lambda$ en dehors des murs, on a
\begin{equation*}\gamma_{M,F}^{Q,\XX}(Z;\Lambda)=
\sum_{P\in\ESP^Q(M)}\varepsilon_P^{Q,X_P}(Z;\Lambda)\ptf\end{equation*}
 On a vu dans la preuve de \ref{serievareps} que
\begin{equation*}\varepsilon_P^{Q,X_P}(Z;\Lambda)=
\frac{e^{\langle \Lambda,Z'\rangle}}{ [\ESD_k:\ESA_M^Q]}
\sum_{\nu\in \ESA_M^{Q,\vee}/\ESD_k^\vee}
e^{\langle \Lambda+\nu,(X_P-Z')^Q\rangle}\varepsilon^Q_{P,k}(\Lambda+\nu)\ptf\end{equation*}
Fixons $\lambda = \Lambda + \nu$.
Pour $t\in\RM$ et $\xi$ en position g\'en\'erale,
on dispose du d\'eveloppement de Laurent au voisinage de $t=0$ des fonctions $\varepsilon^Q_{P,k}(t\xi + \lambda)$. 
On rappelle que
\begin{equation*}\varepsilon^Q_{P,k}(\lambda)= \prod_{\alpha\in\Delta^Q_P}
 (1- e^{-k\mun\langle\lambda,{\check{\alpha}}\rangle})^{-1}
  \end{equation*}
et donc
\begin{equation*}e^{\langle t\xi + \lambda ,(X_P-Z')^Q\rangle}\varepsilon^Q_{P,k}(t\xi+\lambda)=
t^{-d(\lambda)}e^{\langle t\xi,X_P^Q\rangle}f_P(t,\xi,\lambda)e^{\langle \lambda,X_P^Q\rangle}\end{equation*}
o $d(\lambda)$ est le nombre de racines $\alpha\in\Delta^Q_P$ telles que 
$e^{k\mun\langle\lambda,{\check{\alpha}}\rangle}=1$
c'est-\`a-dire $\langle\lambda,\check\alpha\rangle\in 2ik\pi\ZM$ et o, pour $\lambda$ et $\xi$ fix\'e, $f_P(t,\xi,\lambda)$ 
est une fonction de $t$ lisse au voisinage de $t=0$, ind\'ependante de $X_P$,
v\'erifiant $f_P(0,\xi,\lambda)\ne0$. 
La d\'eriv\'ee par rapport \`a $t$ d'ordre $d(\lambda)$ de 
\begin{equation*}e^{\langle t\xi,X_P^Q\rangle}f_P(t,\xi,\lambda)\end{equation*}est un polyn™me en $X_P^Q$
de degr\'e $d(\lambda)$. Le terme de degr\'e z\'ero dans le d\'eveloppement de Laurent au voisinage de $t=0$ de la fonction
\begin{equation*}\frac{e^{\langle t\xi +\lambda,Z'\rangle}}{ [\ESD_k:\ESA_M^Q]}
e^{\langle t\xi +\lambda,(X_P-Z')^Q\rangle}\varepsilon^Q_{P,k}(t\xi +\lambda)\end{equation*}
 est donc de la forme
$ p_{P,\lambda}(X_P^Q)e^{\langle\lambda,X_P^Q\rangle}$.
Comme d'apr\`es \ref{gamma-fini} la fonction
\begin{equation*}t\mapsto\gamma_{M,F}^{Q,\XX}(Z; \Lambda+t\xi)\end{equation*}
est lisse, les parties polaires se compensent
\footnote{On remarquera que contrairement au cas des corps de nombres les polyn™mes obtenus
ne sont en g\'en\'eral pas homog\`enes.}.
\end{proof}

Soit $\bsc=({\bs c}(\cdot,P))$ une $(G,M)$-famille. Pour $Q\in\ESF(M)$, 
$Z\in\ESA_Q$ et une famille $M$-orthogonale $\XX= (X_P)$, on note 
$\bscMF^{Q,\XX}(Z;\cdot)$ la fonction d\'efinie pour $\Lambda\in \hag_0$ en dehors des murs
 par\footnote{Notons que cette fonction ne d\'epend que de la $(Q,M)$-famille 
$(\bsc(\cdot, P))_{P\in\ESF^Q(M)}$ et de la famille $(Q,M)$-orthogonale $(X_P)_{P\in \ESF^Q(M)}$ 
d\'eduites de $\bsc$ et $\XX$ par restriction.}\index{cMFQXZ@$\bscMF^{Q,\XX}(Z;\cdot)$}
\begin{equation*}\bscMF^{Q,\XX}(Z;\Lambda)=
\sum_{P\in\ESP^Q(M)}\varepsilon_P^{Q,X_P}(Z;\Lambda)\bsc(\Lambda,P)\ptf\end{equation*}

% proposition
\begin{proposition}\label{lissb}
Soient $Q\subset R$ deux sous-groupes paraboliques dans $\ESF(M)$.
Consid\'erons une $(R,M)$-famille {\rmfamily p\'eriodique} $\bsc$ associ\'ee \`a une fonction $\mf$
\`a d\'ecroissance rapide sur $\ESH_{R,M}$, et une famille $M$-othogonale 
$\XX$. Alors,
\begin{equation*}\bscMF^{Q,\XX}(Z;\Lambda)=
\sum_{\UU\in\ESH_{R,M}}\mf(\UU)\gammaMF^{Q,\XX+\UU}(Z+U_Q;\Lambda)\end{equation*}
et la fonction \begin{equation*}\Lambda\mapsto\bscMF^{Q,\XX}(Z; \Lambda)\end{equation*}
est une fonction lisse sur $\bsmu_M$. 
\end{proposition}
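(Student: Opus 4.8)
The plan is to reduce everything to a termwise translation identity for the functions $\varepsilon_P^{Q,X_P}$, and then to assemble the pieces using the periodicity of $\bsc$ together with Lemma \ref{gamma-fini}. The crucial elementary computation is the following \emph{shift identity}: for $\UU=(U_P)_{P\in\ESF^R(M)}\in\ESH_{R,M}$ and $P\in\ESP^Q(M)$, one has
$$\varepsilon_P^{Q,X_P+U_P}(Z+U_Q;\Lambda)=e^{\langle\Lambda,U_P\rangle}\,\varepsilon_P^{Q,X_P}(Z;\Lambda)\ptf$$
This follows from the change of summation variable $H\mapsto H+U_P$ in the series defining the left-hand side: since $\UU$ is integral we have $U_P\in\ESA_P$ with $(U_P)_Q=U_Q$, so $H\mapsto H+U_P$ is a bijection of $\ESA_P^Q(Z)$ onto $\ESA_P^Q(Z+U_Q)$; it carries $\phi_P^Q(H-X_P)$ to $\phi_P^Q(H-X_P-U_P)$ and multiplies $e^{\langle\Lambda,H\rangle}$ by the constant $e^{\langle\Lambda,U_P\rangle}$. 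Both sides are meromorphic in $\Lambda$ and agree in the cone of absolute convergence of Lemma \ref{serievareps}, hence everywhere by Lemma \ref{aussi}.

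Next I would apply Lemma \ref{gamma-fini} to the translated family $\XX+\UU$, which is again $M$-orthogonale: for $\Lambda$ en dehors des murs,
$$\gammaMF^{Q,\XX+\UU}(Z+U_Q;\Lambda)=\sum_{P\in\ESP^Q(M)}\varepsilon_P^{Q,X_P+U_P}(Z+U_Q;\Lambda)=\sum_{P\in\ESP^Q(M)}e^{\langle\Lambda,U_P\rangle}\varepsilon_P^{Q,X_P}(Z;\Lambda)\ptf$$
Multiplying by $\mf(\UU)$ and summing over $\UU\in\ESH_{R,M}$, the inner sum over $P$ being finite I may interchange the two summations. For $\Lambda\in\bsmu_M$ the series $\sum_\UU\mf(\UU)e^{\langle\Lambda,U_P\rangle}$ converges absolutely (here $\mf$ is à décroissance rapide and $|e^{\langle\Lambda,U_P\rangle}|=1$ for $\Lambda$ imaginaire pur), and it is exactly the periodic value $\bsc(\Lambda,P)$; regrouping gives
$$\sum_{\UU\in\ESH_{R,M}}\mf(\UU)\gammaMF^{Q,\XX+\UU}(Z+U_Q;\Lambda)=\sum_{P\in\ESP^Q(M)}\varepsilon_P^{Q,X_P}(Z;\Lambda)\,\bsc(\Lambda,P)=\bscMF^{Q,\XX}(Z;\Lambda)\vg$$
which is the asserted identity for $\Lambda$ en dehors des murs.

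Finally, for smoothness I would exploit that each term $g_\UU(\Lambda)\bydef\gammaMF^{Q,\XX+\UU}(Z+U_Q;\Lambda)$ is, by Lemma \ref{gamma-fini}, une fonction entière factoring through $\bsmu_M$, so the right-hand side is an infinite sum of smooth functions on the compact group $\bsmu_M$. The point is to bound $g_\UU$ and all its derivatives uniformly on $\bsmu_M$ by a polynomial in $\|\UU\|$, after which the décroissance rapide of $\mf$ yields normal convergence of the differentiated series, hence smoothness on all of $\bsmu_M$ (walls included, the poles of the individual $\varepsilon_P$ cancelling in $\gammaMF$). The values $\Gamma_M^Q(H,\XX+\UU)$ are bounded by $|\ESF^Q(M)|$ independently of $\UU$, and by \cite[1.8.5]{LW} the finite support of $H\mapsto\Gamma_M^Q(H,\XX+\UU)$ projects into a ball of $\ag_M^Q$ of radius $c\sup_{P\in\ESP^Q(M)}\|X_P+U_P\|$; thus both the number of contributing $H$ and the factors $\langle\cdot,H\rangle$ produced by differentiating $e^{\langle\Lambda,H\rangle}$ grow at most polynomially in $\|\UU\|$, the component $H_Q=Z+U_Q$ contributing only a linear term. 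Checking that the constant in \cite[1.8.5]{LW} is independent of $\UU$ and that these estimates survive differentiation uniformly in $\Lambda\in\bsmu_M$ is the main technical obstacle; the rest is a formal rearrangement.
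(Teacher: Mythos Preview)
Your proof is correct and follows essentially the same route as the paper's: expand $\bsc$ via $\mf$, use a change of summation variable $H\mapsto H+U_P$ to relate the $\varepsilon$-functions at shifted parameters, and establish smoothness via the compact-support estimate \cite[1.8.5]{LW} for $\Gamma_M^Q$ together with the d\'ecroissance rapide of $\mf$. The only cosmetic difference is that the paper carries out the shift inside the $\phi^Q_{P,P'}$ representation (Lemme~\ref{extres}) in one fixed cone and then reassembles via~\ref{gammaphi}, whereas you prove the shift identity directly on each $\varepsilon_P^{Q,X_P}$ and invoke~\ref{gamma-fini}; this is the same computation read in reverse order.
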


\begin{proof}
On exprime la $(R,M)$-famille $\bsc$ au moyen de la fonction $m$: pour $P\in \ESP(M)$ et $\Lambda \in \bsmu_M$ on a
\begin{equation*}\bsc(\Lambda,P)=\sum_{\UU\in\ESH_{R,M}}
 e^{\langle\Lambda,U_P\rangle}\mf(\UU)\end{equation*}
et donc
\begin{equation*}\bscMF^{Q,\XX}(Z;\Lambda)=\sum_{P\in\ESP^Q(M)}
\sum_{\UU\in\ESH_{R,M}} \mf(\UU)e^{\langle\Lambda,U_P\rangle}\varepsilon^{Q,X_P}_P(Z;\Lambda)\ptf\end{equation*}
Fixons un $P'\in \ESP(M)$. Pour $\Lambda$ dans le c™ne positif associ\'e \`a $P'$ on a
\begin{equation*}\varepsilon_P^{Q,X_P}(Z;\Lambda)=(-1)^{a(P,P')}
\sum_{H\in\ESA_M^Q(Z)}\phi^Q_{P,P'}(H-X_P)e^{\langle \Lambda,H\rangle}\ptf\end{equation*}
Donc $\bscMF^{Q,\XX}(Z;\Lambda)$ est \'egal \`a
\begin{equation*}\sum_{\UU\in\ESH_{R,M}} \mf(\UU)\sum_{P\in\ESP^Q(M)}(-1)^{a(P,P')}
\sum_{H\in\ESA_M^Q(Z)}\phi^Q_{P,P'}(H-X_P)e^{\langle \Lambda,H+U_P\rangle}\end{equation*}
qui est encore \'egal \`a
\begin{equation*}\sum_{\UU\in\ESH_{R,M}} \mf(\UU)
\sum_{P\in\ESP^Q(M)}(-1)^{a(P,P')}
\sum_{H\in\ESA_M^Q(Z+U_Q)}\phi^Q_{P,P'}(H-(X_P+U_P))e^{\langle \Lambda,H\rangle}\end{equation*}
Donc, vu \ref{gammaphi},
\begin{equation*}\bscMF^{Q,\XX}(Z;\Lambda)=\sum_{\UU\in\ESH_{R,M}} \mf(\UU)
\sum_{H\in\ESA_M^Q(Z+U_Q)}\Gamma_M^Q(H,\XX+\UU)e^{\langle \Lambda,H\rangle}\end{equation*}
et on obtient la formule de l'\'enonc\'e gr‰ce \`a \ref{gamma-fini}. 
On observe maintenant que pour $\Lambda\in\bsmu_M$, la fonction sur $\ESH_{R,M}$ 
\begin{equation*}\UU\mapsto \gamma_{M,F}^{\XX+\UU}(Z+U_Q;\Lambda)= 
\sum_{H\in\ESA_M^Q(Z+U_Q)}\Gamma_M^Q(H,\XX+\UU)e^{\langle \Lambda ,H \rangle}\end{equation*}
est major\'ee par
\begin{equation*}\UU\mapsto\sum_{H\in\ESA_M^Q(Z+U_Q)} \vert \Gamma_M^Q(H, \XX + \UU)\vert \ptf\end{equation*}
La fonction \begin{equation*}H \mapsto \vert \Gamma_M^Q(H, \XX + \UU) \vert 
\qquad\hbox{pour}\qquad H\in\ESA_M\end{equation*}
ne prend qu'un nombre fini de valeurs enti\`eres, born\'ees ind\'ependamment de $\UU$. Elle est
\`a support compact inclus dans une boule de rayon major\'e par un polyn™me en $\UU$. 
Sa somme sur $H$ est donc 
born\'ee par un polyn™me en $\UU$. Par ailleurs $m$ est \`a d\'ecroissance rapide 
ce qui prouve la convergence absolue, uniforme en $\Lambda$, de la s\'erie en $\UU$. 
L'expression $\bscMF^{Q,\XX}(Z;\Lambda)$ est donc une fonction continue en $\Lambda$.
Plus g\'en\'eralement, les d\'eriv\'ees en $\Lambda$ correspondent \`a des s\'eries analogues 
o l'op\'erateur diff\'erentiel sur $\bsc$ se traduit en transform\'ee de Fourier par la multiplication 
par un polyn™me en $\UU$.
On a encore la convergence uniforme des s\'eries vu la d\'ecroissance rapide de $m$, d'o la lissit\'e de la 
fonction $\Lambda \mapsto \bscMF^{Q,\XX}(Z;\Lambda)$.
\end{proof}

% lemme
\begin{lemma} \label{croisspolb} On reprend les hypoth\`eses de \ref{croisspol}. 
La valeur en $\Lambda$ de la fonction 
$\bsc_{M,F}^{Q,\XX}(Z; \Lambda)$
 peut s'\'ecrire
\begin{equation*}\bsc_{M,F}^{Q,\XX}(Z; \Lambda)=\sum_{P\in \ESP^Q(M)}\sum_{\nu} q_{P,\Lambda+\nu}(X_P^Q)
e^{\langle\Lambda+\nu,X_P^Q\rangle}\end{equation*}
o les $\nu$ varient dans $\ESA_M^{Q,\vee}/\ESD_k^\vee$,
les $q_{P,\Lambda + \nu}$ sont des polyn™mes en $X_P^Q$ de degr\'e inf\'erieur ou \'egal \`a
$d(\Lambda+\nu)$.
\end{lemma}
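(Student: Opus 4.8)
The plan is to derive this from Lemma~\ref{croisspol} by inserting the resummation identity of Proposition~\ref{lissb} and then collecting terms over the lattice $\ESH_{R,M}$. First I would invoke \ref{lissb} to write
\begin{equation*}
\bscMF^{Q,\XX}(Z;\Lambda)=\sum_{\UU\in\ESH_{R,M}}\mf(\UU)\,\gammaMF^{Q,\XX+\UU}(Z+U_Q;\Lambda)\ptf
\end{equation*}
For each $\UU$ the shifted family $\XX+\UU$ is again rational (since $\UU$ is entire), and choosing the representative $(Z+U_Q)'=Z'+U_P$ one gets $(X_P+U_P-(Z+U_Q)')^Q=(X_P-Z')^Q$; hence the very same integer $k$ fixed in \ref{croisspol} is admissible for all the families $\XX+\UU$ simultaneously, which is exactly why \og on reprend les hypoth\`eses de \ref{croisspol}\fg\ suffices.

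The second step is to apply \ref{croisspol} to each summand. In the proof of \ref{croisspol} the polynomials produced depend only on $P$, on $\lambda=\Lambda+\nu$, on $k$ and on the auxiliary direction $\xi$, but neither on $X_P$ nor on the orthogonal family; so they are literally the same $p_{P,\Lambda+\nu}$, merely evaluated at the translated argument $(X_P+U_P)^Q=X_P^Q+U_P^Q$. This yields
\begin{equation*}
\gammaMF^{Q,\XX+\UU}(Z+U_Q;\Lambda)=\sum_{P\in\ESP^Q(M)}\sum_{\nu}p_{P,\Lambda+\nu}(X_P^Q+U_P^Q)\,e^{\langle\Lambda+\nu,X_P^Q+U_P^Q\rangle}\vg
\end{equation*}
the inner sum running over $\nu\in\ESA_M^{Q,\vee}/\ESD_k^\vee$. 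Factoring $e^{\langle\Lambda+\nu,X_P^Q+U_P^Q\rangle}=e^{\langle\Lambda+\nu,X_P^Q\rangle}e^{\langle\Lambda+\nu,U_P^Q\rangle}$ and interchanging the finite sums over $P,\nu$ with the sum over $\UU$, I am led to set
\begin{equation*}
q_{P,\Lambda+\nu}(X_P^Q)=\sum_{\UU\in\ESH_{R,M}}\mf(\UU)\,p_{P,\Lambda+\nu}(X_P^Q+U_P^Q)\,e^{\langle\Lambda+\nu,U_P^Q\rangle}\ptf
\end{equation*}
(Any overall factor of modulus one coming from the $Z$-dependence, of the form $e^{\langle\Lambda,U_Q\rangle}$, is harmless and may be absorbed here.)

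It then remains to check that $q_{P,\Lambda+\nu}$ is a genuine polynomial in $X_P^Q$ of degree $\le d(\Lambda+\nu)$. Expanding $p_{P,\Lambda+\nu}(X_P^Q+U_P^Q)$ in powers of $X_P^Q$, the coefficient of any monomial of degree $j$ in $X_P^Q$ is a polynomial in $U_P^Q$ of degree $\le d(\Lambda+\nu)-j$, while the coefficient of total degree $d(\Lambda+\nu)$ is independent of $U_P^Q$. Since $\Lambda\in\bsmu_M$ and $\nu\in\ESA_M^{Q,\vee}/\ESD_k^\vee$ are purely imaginary, one has $|e^{\langle\Lambda+\nu,U_P^Q\rangle}|=1$, so each such coefficient, summed against the rapidly decreasing $\mf$, converges absolutely — this is the single place where rapid decrease of $\mf$ is essential, and the only real obstacle. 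Consequently $q_{P,\Lambda+\nu}$ is a polynomial of degree $\le d(\Lambda+\nu)$; the inequality, rather than the equality of \ref{croisspol}, reflects the fact that its top coefficient is the leading coefficient of $p_{P,\Lambda+\nu}$ multiplied by $\sum_{\UU}\mf(\UU)e^{\langle\Lambda+\nu,U_P^Q\rangle}$, which may vanish. Substituting back gives the announced expression for $\bscMF^{Q,\XX}(Z;\Lambda)$.
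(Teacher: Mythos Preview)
Your argument is correct, but it follows a different path from the paper's proof. The paper argues by directly adapting the proof of \ref{croisspol}: in place of the function $f_P(t,\xi,\lambda)$ one uses
\[
g_P(t,\xi,\Lambda+\nu)=f_P(t,\xi,\Lambda+\nu)\,\bsc(\Lambda+t\xi,P),
\]
and then invokes the smoothness of $t\mapsto\bscMF^{Q,\XX}(Z;\Lambda+t\xi)$ (from \ref{lissb}) to conclude that the polar parts of the individual $\varepsilon_P^{Q,X_P}(Z;\Lambda)\,\bsc(\Lambda,P)$ cancel. The possible drop in degree is then immediately visible: $g_P(0,\xi,\Lambda+\nu)=f_P(0,\xi,\Lambda+\nu)\,\bsc(\Lambda,P)$ vanishes precisely where $\bsc(\Lambda,P)=0$.

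Your route instead treats \ref{croisspol} and \ref{lissb} as black boxes, summing the polynomial identities of \ref{croisspol} over $\UU\in\ESH_{R,M}$ against $m(\UU)$. This is legitimate, but two points deserve tightening. First, the polynomials $p_{P,\lambda}$ from \ref{croisspol} do depend on $Z$ (through the prefactor $e^{\langle\Lambda,Z'\rangle}$), so ``literally the same'' is slightly too strong; your parenthetical remark about absorbing the factor $e^{\langle\Lambda,U_Q\rangle}$ is exactly what repairs this. Second, your identification relies on the polynomials for $\XX+\UU$ being the translates of those for $\XX$, which is clean only when the auxiliary direction $\xi$ lies in $\ag_M^{Q,*}$ (so that $\langle\xi,U_Q\rangle=0$ and no cross-terms arise from Leibniz); this choice is permissible since the poles of $\varepsilon_{P,k}^Q$ constrain only $\xi^Q$. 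With these caveats your proof goes through; the paper's version is simply shorter and makes the mechanism for the degree inequality more transparent.
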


\begin{proof} La preuve est analogue \`a celle de \ref{croisspol}: les fonctions
$f_P(t,\xi,\lambda)$ doivent tre remplac\'ees par les
\begin{equation*}g_P(t,\xi,\Lambda+\nu)=f_P(t,\xi,\Lambda+\nu)c(\Lambda+t\xi,P)\ptf\end{equation*}
Il convient ensuite d'observer que l\`a aussi les singularit\'es des $\varepsilon_P^{Q,X_P}(Z;{\Lambda})$
se compensent puisque la fonction \begin{equation*}t\mapsto\bscMF^{Q,\XX}(Z; \Lambda+t\xi)\end{equation*}
est lisse d'apr\`es \ref{lissb}. Mais les degr\'es des polyn™mes peuvent s'abaisser l\`a o les
$\bsc(\Lambda,P)$ ont des z\'eros.
\end{proof}

Pour une $(G,M)$-famille $\bsc$ (p\'eriodique ou non) et une famille $M$-orthogonale quelconque $\XX$, 
on pose pour 
$\Lambda \in \hag_0$ en dehors des murs\footnote{Rappelons que la fonction $\epsilon_P^{Q,X_P}({\Lambda})$ 
d\'epend du choix d'une mesure de Haar 
sur $\ag_P^Q=\ag_M^Q$. On prend bien sžr la mme mesure pour tous les $P\in \ESP^Q(M)$.}
\begin{equation*}\bsc_M^{Q,\XX}(\Lambda)= \sum_{P\in \ESP^Q(M)}\epsilon_P^{Q,X_P}(\Lambda) \bsc(\Lambda,P)\ptf\end{equation*}
Cela d\'efinit une fonction lisse sur $\hag_0$ (les singularit\'es des fonctions $\epsilon$ sur les murs 
sont compens\'ees par des annulations dues aux propri\'et\'es des $(G,M)$-familles).
Si $\XX$ est la famille triviale, on \'ecrit simplement
\begin{equation*}\bsc_M^G(\Lambda) = \bsc_M^{G,\XX=0}(\Lambda)\ptf\end{equation*}
Pour $\UU \in \mathfrak{H}_M$, on note $\bsc(\UU)$ la $(G,M)$-famille d\'efinie par
\begin{equation*}\bsc(\UU;\Lambda,P)= e^{\langle \Lambda,U_P\rangle} \bsc(\Lambda,P)\ptf\end{equation*}
Pour $\Lambda\in \wh{\ag}_0$ en dehors des murs, on pose
\begin{equation*}\bsc_M^Q(\UU;\Lambda)= \sum_{P\in \ESP^Q(M)}\epsilon_P^Q(\Lambda) \bsc(\UU;\Lambda,P)\ptf\end{equation*}
On a\footnote{Notons que dans \cite{W}, c'est la $(G,M)$-famille $\bsc(\UU^G)$ qui est not\'ee \og $\bsc(\UU)$\fg. }
\begin{equation*}\bsc_M^{Q}(\UU;\Lambda)= e^{\langle \Lambda , U_Q \rangle} \bsc_M^{Q}(\UU^Q;\Lambda)\end{equation*}
o $\UU^Q$ est la famille $M$-orthogonale $(U_P^Q)$ dans $M_Q$. 
Plus g\'en\'eralement, pour $\XX$ et $\UU$ deux familles $M$-orthogonales quelconques, 
on pose \begin{equation*}\bsc_M^{Q,\XX}(\UU;\Lambda)\bydef 
\sum_{P\in\ESP^Q(M)}\epsilon_P^{Q,X_P}({\Lambda}) \bsc(\UU;{\Lambda},P)\ptf\end{equation*}
 Observons que
 \begin{equation*}\bsc_M^{Q,\XX}(\UU;\Lambda) = \bsc_M^{Q}(\XX^Q + \UU;\Lambda)\ptf\end{equation*}
 Si $\bsc$ est une $(G,M)$-famille p\'eriodique et $\UU$ une famille
$M$-orthogonale rationnelle, la $(G,M)$-famille $\bsc(\UU)$ 
est p\'eriodique si et seulement si la famille $\UU$ est enti\`ere. Auquel cas, 
si $\bsc= \bsc_m$ pour une fonction 
$m$ \`a d\'ecroissance rapide sur $\ESH_M$, alors $\bsc(\UU)= \bsc_{{m'}}$ o 
\begin{equation*}m'(\VV)= m (\VV-\UU)\ptf\end{equation*}
Pour $\UU$ enti\`ere\footnote{On observera que la famille $\UU^Q$ est \`a priori seulement 
rationnelle.} et $\XX$ quelconque
on pose
\begin{equation*}\bscMF^{Q,\XX}(Z,\UU;{\Lambda})= \sum_{P\in\ESP^Q(M)}
\varepsilon_P^{Q,X_P}(Z;{\Lambda}) \bsc(\UU;\Lambda,P)\ptf\end{equation*}
On a
\begin{equation*}\bscMF^{Q,\XX}(Z,\UU;{\Lambda})= \bscMF^{Q,\XX+\UU}(Z+ U_Q;{\Lambda})\ptf\end{equation*}
On pose aussi
\begin{equation*}\gamma_{M,F}^{Q,\XX}(Z,\UU;\Lambda)\buildrel\mathrm{d\acute{e}f}\over{=} 
\gamma_{M,F}^{Q,\XX+\UU}(Z+ U_Q;\Lambda)=
\sum_{P\in \ESP^Q(M)}\varepsilon_P^{Q,X_P}(Z;\Lambda)e^{\langle \Lambda,U_P\rangle}\ptf\end{equation*}
Si $\XX = \TT$ pour un $T\in \ag_{0}$, on remplacera l'exposant $\TT$ par un simple 
$T$ dans les expressions ci-dessus. Avec cette convention on a le

% corollaire
\begin{corollary}\label{formule d'inversion pour les (G,M)-familles}
Soit $\bsc= \bsc_\mf$ une $(G,M)$-famille p\'eriodique donn\'ee par 
une fonction $\mf$ \`a d\'ecroissance rapide sur $\ESH_M$. Pour $T\in \ag_{0}$ on a
\begin{equation*}\bscMF^{Q,T}(Z;\Lambda)= \sum_{\UU\in \ESH_M}\mf(\UU) \gammaMF^{Q,T}(Z,\UU;\Lambda)\ptf\end{equation*}
\end{corollary}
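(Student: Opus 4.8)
The plan is to obtain this identity as the special case $R=G$, $\XX=\TT$ of Proposition \ref{lissb}. First I would recall that the family $\TT$ attached to $T\in\ag_0$, although initially defined as an $M_0$-orthogonal family, restricts to an $M$-orthogonal family, so that it is a legitimate choice of $\XX$ in Proposition \ref{lissb}; moreover for $R=G$ the lattice $\ESH_{G,M}$ is precisely $\ESH_M$, and the hypothesis $Q\subset R$ is automatic since $Q\in\ESF(M)$. With these identifications, Proposition \ref{lissb} applied to the periodic $(G,M)$-family $\bsc=\bsc_\mf$ given by the rapidly decreasing function $\mf$ on $\ESH_M$ yields at once
$$\bscMF^{Q,\TT}(Z;\Lambda)=\sum_{\UU\in\ESH_M}\mf(\UU)\,\gammaMF^{Q,\TT+\UU}(Z+U_Q;\Lambda)\ptf$$

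It then remains to rewrite both sides using the notational conventions introduced just above the statement. On the left, the convention replacing the exponent $\TT$ by a plain $T$ gives $\bscMF^{Q,\TT}(Z;\Lambda)=\bscMF^{Q,T}(Z;\Lambda)$. On the right, the definition $\gammaMF^{Q,\XX}(Z,\UU;\Lambda)\bydef\gammaMF^{Q,\XX+\UU}(Z+U_Q;\Lambda)$, specialized to $\XX=\TT$ and with the same replacement of the exponent, identifies each summand $\gammaMF^{Q,\TT+\UU}(Z+U_Q;\Lambda)$ with $\gammaMF^{Q,T}(Z,\UU;\Lambda)$. Substituting these two rewritings into the displayed formula produces exactly the asserted identity.

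Since the statement is thus a specialization of Proposition \ref{lissb} followed by an unravelling of the conventions, there is no genuine analytic obstacle; the only point requiring care is the bookkeeping, namely checking that the restriction of $\TT$ to an $M$-orthogonal family and the two conventions (for the exponent $T$ and for the supplementary argument $\UU$ in $\gammaMF$) are applied consistently. In particular the finiteness of the inner series and the smoothness in $\Lambda$ are already guaranteed by Proposition \ref{lissb} together with Lemma \ref{gamma-fini}, and they carry over verbatim, so nothing further need be verified.
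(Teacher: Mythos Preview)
Your proposal is correct and takes essentially the same approach as the paper, which simply states that the result is a corollary of Proposition~\ref{lissb}. You have spelled out the specialization $R=G$, $\XX=\TT$ and the unwinding of the notational conventions exactly as intended.
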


\begin{proof}
C'est un corollaire de \ref{lissb}.
\end{proof}

Soit $\HH_{Q,M}$ le ${\RM}$--espace vectoriel form\'e des familles $M$--orthogonales dans $M_Q$ -- 
c'est-\`a-dire qu'on remplace les conditions sur 
$P\in \ESP(M)$ par des conditions sur $P\in \ESP^Q(M)$. Soit $\ESH_{Q,M}\subset \HH_{Q,M}$ 
le r\'eseau form\'e des familles qui sont enti\`eres. L'application naturelle (qui n'est 
\`a priori pas surjective)
\begin{equation*}\HH_M=\HH_{G,M}\rightarrow \HH_{Q,M}\end{equation*}
envoie $\ESH_M=\ESH_{G,M}$ dans $\ESH_{Q,M}$. Elle donne par dualit\'e une application
\begin{equation*}\wh\HH_{Q,M}\rightarrow \wh\HH_{M}\end{equation*}
qui se factorise en une application
\begin{equation*}\wh\ESH_{Q,M} \rightarrow \wh\ESH_{M}\ptf\end{equation*}
Toute fonction lisse $h$ sur $\wh\HH_M = i\HH_{M}^*$ 
d\'efinit donc par composition une fonction lisse $h_Q$ sur $\wh\HH_{Q,M}= i\HH_{Q,M}^*$, et si 
$h$ est p\'eriodique alors $h_Q$ l'est aussi. En ce cas, $h= \wh\mf$ pour une (unique) fonction \`a d\'ecroissance rapide $\mf$ 
sur le r\'eseau $\ESH_M$, et on note $m_Q$ la fonction \`a d\'ecroissance rapide sur le r\'eseau $\ESH_{Q,M}$ d\'efinie par 
$\wh{m_Q}= h_Q$. Cette fonction vaut $0$ en dehors de l'image de l'application 
$\ESH_{G,M} \rightarrow \ESH_{Q,M}$, et pour $\UU$ dans cette image on a
\begin{equation*}m_Q(\UU) = \sum_{\VV \in \ESH_{Q,M}^G(\UU)}m(\VV)\end{equation*}
o $\ESH_{Q,M}^G(\UU)\subset \ESH_{G,M}$ est la fibre au-dessus de $\UU$.

% corollaire
\begin{corollary}\label{deuxi\`eme formule d'inversion}
Soit $\bsc= \bsc_\mf$ une $(G,M)$-famille p\'eriodique donn\'ee par
 une fonction $\mf$ \`a d\'ecroissance rapide sur $\ESH_{G,M}$. Pour $T\in \ag_{0}$
on a
\begin{equation*}\bscMF^{Q,T}(Z;\Lambda)= \sum_{\UU\in \ESH_{Q,M}}{\mf_Q}(\UU) 
\gammaMF^{Q,T}(Z,\UU;\Lambda)\end{equation*}
o
\begin{equation*}\gammaMF^{Q,T}(Z,\UU;\Lambda)=\gammaMF^{Q,\UU(T)}(Z+ U_Q;\Lambda)\ptf\end{equation*}
\end{corollary}
\begin{proof} C'est encore un corollaire de \ref{lissb}.
\end{proof}
 Par inversion de Fourier ceci se reformule comme suit:
% lemme
\begin{lemma}\label{TMGMF}
Soit $\XX$ une famille $(Q,M)$-orthogonale, et soit $\bsc$ 
une $(Q,M)$-famille p\'eriodique donn\'ee par une fonction \`a d\'ecroissance rapide 
$m$ sur $\ESH_{Q,M}$. Pour $Z\in \ESA_Q$ et $V\in \ESA_M$ on pose
\begin{equation*}\wh{\bsc}_{M,F}^{\mskip 2mu Q,\XX}(Z;V)= 
\int_{\bsmu_M} \bsc_{M,F}^{Q,\XX}(Z;\Lambda)e^{-\langle \Lambda ,V\rangle} \dd \Lambda\ptf\end{equation*}
On a alors
\begin{equation*}\wh{\bsc}_{M,F}^{\mskip 2mu Q,\XX}(Z;V)=
\sum_{\substack{\UU \in \ESH_{Q,M}\\ Z+ U_Q=V_Q}}m(\UU) \Gamma_M^Q(V,\XX + \UU)\ptf\end{equation*}
\end{lemma}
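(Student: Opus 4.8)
The plan is to reduce everything to Proposition~\ref{lissb} followed by the character orthogonality on the compact group $\bsmu_M=\wh\ESA_M$. Since $\bsc$ is a periodic $(Q,M)$-family attached to $m$ on $\ESH_{Q,M}$, Proposition~\ref{lissb} applies with $R=Q$ and gives
$$\bscMF^{Q,\XX}(Z;\Lambda)=\sum_{\UU\in\ESH_{Q,M}}m(\UU)\,\gammaMF^{Q,\XX+\UU}(Z+U_Q;\Lambda)\vg$$
the series converging absolutely and uniformly in $\Lambda\in\bsmu_M$. First I would substitute this expression into the integral defining $\wh{\bsc}_{M,F}^{\,Q,\XX}(Z;V)$ and interchange the sum over $\UU$ with the integration over $\bsmu_M$, which is licit by the uniform convergence noted above. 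This reduces the computation to evaluating, for each $\UU$, the integral
$$I(\UU)=\int_{\bsmu_M}\gammaMF^{Q,\XX+\UU}(Z+U_Q;\Lambda)\,e^{-\langle\Lambda,V\rangle}\dd\Lambda\ptf$$

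Next I would unfold the definition $\gammaMF^{Q,\XX+\UU}(Z+U_Q;\Lambda)=\sum_{H\in\ESA_M^Q(Z+U_Q)}\Gamma_M^Q(H,\XX+\UU)\,e^{\langle\Lambda,H\rangle}$. By Lemma~\ref{gamma-fini} this sum over $H$ is finite, so it may be pulled outside the integral, yielding
$$I(\UU)=\sum_{H\in\ESA_M^Q(Z+U_Q)}\Gamma_M^Q(H,\XX+\UU)\int_{\bsmu_M}e^{\langle\Lambda,H-V\rangle}\dd\Lambda\ptf$$
Because $\bsmu_M=\wh\ESA_M$ has total mass $1$ by Convention~\ref{convention-mesures}, the inner integral is exactly the Fourier inversion (orthogonality) relation for $\wh\ESA_M$: for $H,V\in\ESA_M$ it equals $1$ if $H=V$ and $0$ otherwise. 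Hence $I(\UU)=\Gamma_M^Q(V,\XX+\UU)$ precisely when $V\in\ESA_M^Q(Z+U_Q)$, i.e. when $V_Q=Z+U_Q$, and $I(\UU)=0$ in all other cases.

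Summing $m(\UU)\,I(\UU)$ over $\UU\in\ESH_{Q,M}$ then produces
$$\wh{\bsc}_{M,F}^{\,Q,\XX}(Z;V)=\sum_{\substack{\UU\in\ESH_{Q,M}\\ Z+U_Q=V_Q}}m(\UU)\,\Gamma_M^Q(V,\XX+\UU)\vg$$
which is the asserted formula. The only delicate point is the interchange of the sum over $\UU$ with the integral; I expect this to be the main thing to justify carefully, but it is already supplied by the argument in the proof of Proposition~\ref{lissb}, where $\UU\mapsto\gammaMF^{Q,\XX+\UU}(Z+U_Q;\Lambda)$ is bounded independently of $\Lambda$ by a quantity of polynomial growth in $\UU$, against which the rapid decay of $m$ forces absolute convergence. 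Everything else is the finite interchange granted by Lemma~\ref{gamma-fini} and the normalization $\vol(\bsmu_M)=1$.
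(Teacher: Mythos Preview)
Your proof is correct and follows essentially the same route as the paper: apply Proposition~\ref{lissb} with $R=Q$, take the Fourier transform term by term, and identify $\wh{\gamma}_{M,F}^{\,Q,\XX+\UU}(Z+U_Q;V)$ via orthogonality on $\bsmu_M$. You have simply spelled out in detail the Fourier-inversion computation that the paper records as an observation.
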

\begin{proof}
On sait d'apr\`es \ref{lissb} que
\begin{equation*}{\bsc}_{M,F}^{\mskip 2mu Q,\XX}(Z;V)= 
\sum_{\UU \in \ESH_{Q,M}} m(\UU){\gamma}_{M,F}^{\mskip 2mu Q,\XX+\UU}(Z + U_Q;V)\end{equation*}
 donc
\begin{equation*}\wh{\bsc}_{M,F}^{\mskip 2mu Q,\XX}(Z;V)= 
\sum_{\UU \in \ESH_{Q,M}} m(\UU) \wh{\gamma}_{M,F}^{\mskip 2mu Q,\XX+\UU}(Z + U_Q;V)\end{equation*}
et on observe que
\begin{equation*}\wh{\gamma}^{\mskip 2mu Q,\XX+\UU}_{M,F}(Z+U_Q;V)=
\left\{\begin{array}{ll}\Gamma_M^{Q}( V,\XX+\UU)
 & \hbox{si $ Z +U_Q=V_Q $}\\
0 & \hbox{sinon}
\end{array}\right.\ptf\end{equation*}
\end{proof}
%

%%%%%%%%%%%%%%%%%%%%%%%%%%%%

 \section{L'ensemble PolExp} \label{l'ensemble PolExp}

Nous avons besoin de deux lemmes \'el\'ementaires. Faute de r\'ef\'erence nous en donnons une preuve.

% lemme
\begin{lemma}\label{prepar}
On consid\`ere, pour $k=1,\ldots , m$, des nombres complexes $a_k$ et des nombres complexes $b_k$ deux \`a deux distincts 
de module $1$. On suppose que
\begin{equation*}\lim_{n\to+\infty}\sum_{k=1}^m a_k b_k^n=0\ptf\leqno(1)\end{equation*}
Alors les $a_k$ sont tous nuls. 
\end{lemma}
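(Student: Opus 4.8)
The plan is to isolate each coefficient $a_j$ separately by a Cesàro averaging argument that exploits the two standing hypotheses on the $b_k$: that they lie on the unit circle and that they are pairwise distinct. Set $s_n = \sum_{k=1}^m a_k b_k^n$, so that hypothesis $(1)$ reads simply $s_n \to 0$ as $n \to +\infty$.

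First I would fix an index $j \in \{1,\dots,m\}$ and multiply $s_n$ by $\bar b_j^{\,n}$. Since $|b_j| = 1$ we have $|\bar b_j^{\,n}| = 1$, hence $s_n \bar b_j^{\,n} \to 0$ as well. I would then form the Cesàro means $\frac{1}{N}\sum_{n=0}^{N-1} s_n \bar b_j^{\,n}$. On the one hand, the Cesàro mean of any sequence tending to $0$ tends to $0$, so this quantity tends to $0$.

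On the other hand, substituting the definition of $s_n$ and interchanging the (finite) sum over $k$ with the average over $n$ gives $\frac{1}{N}\sum_{n=0}^{N-1} s_n \bar b_j^{\,n} = \sum_{k=1}^m a_k \bigl(\frac{1}{N}\sum_{n=0}^{N-1}(b_k \bar b_j)^n\bigr)$. The crucial point, where the hypotheses on the $b_k$ enter, is that $|b_k \bar b_j| = 1$ and $b_k \bar b_j = 1$ if and only if $k = j$ (by distinctness of the $b_k$). For $k = j$ the inner average equals $1$ and contributes exactly $a_j$; for $k \neq j$ the inner sum is geometric with ratio $b_k \bar b_j \neq 1$, hence bounded in absolute value by $2/|b_k \bar b_j - 1|$, and divided by $N$ it tends to $0$. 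Thus the right-hand side tends to $a_j$.

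Comparing the two computations of the same limit yields $a_j = 0$, and since $j$ was arbitrary all the $a_k$ vanish, as claimed. I do not expect a genuine obstacle here: every sum involved is finite, so the only step demanding a little care is the elementary bound on the off-diagonal geometric averages, which is immediate once one records that $b_k \bar b_j \neq 1$ for $k \neq j$.
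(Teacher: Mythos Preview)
Your proof is correct. You use the orthogonality of the characters $n\mapsto b_k^n$ under Ces\`aro averaging to isolate each coefficient $a_j$ directly, and every step is justified: the off-diagonal geometric sums are bounded because $b_k\bar b_j\neq 1$ for $k\neq j$, and division by $N$ kills them.

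The paper takes a different, purely algebraic route: induction on $m$. It normalises by dividing through by $b_m^n$ (setting $d_k=b_k/b_m$), then subtracts the relation at $n+1$ from the relation at $n$; this produces $\sum_{k=1}^{m-1} a_k(1-d_k)d_k^n\to 0$, a sum with one fewer term since $d_m=1$. The inductive hypothesis then gives $a_k(1-d_k)=0$ for $k<m$, hence $a_k=0$, and the last coefficient falls out. Your averaging argument is more conceptual (it is essentially Weyl orthogonality on the circle) and avoids induction entirely; the paper's differencing trick is slightly more elementary in that it never leaves finite algebra, but it trades that for the inductive scaffolding. Both are short and clean; either would be acceptable here.
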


% preuve
\begin{proof}
On le d\'emontre par r\'ecurrence sur $m$. Si $m=1$ c'est \'evident. 
Supposons-le vrai pour $m-1$. En posant $d_k=b_k/b_m$ et $c_k=a_k(1-d_k)$
la condition (1) implique
\begin{equation*}\lim_{n\to+\infty}\bigg(\sum_{k=1}^{m-1} a_k d_k^n-\sum_{k=1}^{m-1} a_k d_k^{n+1}\bigg)
=\lim_{n\to+\infty}\sum_{k=1}^{m-1} c_k d_k^n=0\ptf\end{equation*}
Les $b_k$ sont tous diff\'erents donc les $d_k$ sont tous diff\'erents.
L'hypoth\`ese de r\'ecurrence impose $c_k=0$ pour $1\le k\le m-1$.
Comme $1-d_k\ne0$ les $a_k$ sont nuls pour $1\le k\le m-1$; ceci implique $a_m=0$.
\end{proof}

Soit $\ag$ un espace vectoriel r\'eel de dimension finie, et soit $\ESR$ un r\'eseau de $\ag$.
On consid\`ere pour $T\in\ESR$ une combinaison lin\'eaire de produits de polyn™mes et d'exponentielles:
\begin{equation*}\phi(T)=\sum_{\nu\in E} p_\nu(T)e^{\langle \nu, T\rangle }\end{equation*}
o $E$ est un sous-ensemble fini de $\wh{\ESR}= \hag/ \ESR^\vee$ et les $p_\nu$ sont des polyn™mes sur $\ag$.

% lemme
\begin{lemma} \label{unicit\'e}
Soit $C$ un c™ne ouvert non vide de $\ag$, et soit $T_\star\in \ESR$.
On suppose que $\phi(T)$ tend vers $0$ lorsque $\lVert T\rVert \mskip -2mu$ tend vers l'infini
pour $T$ dans $T_\star+(C\cap \ESR)$. Alors $p_\nu=0$ pour tout $\nu$.
\end{lemma}

% preuve du lemme
\begin{proof} La preuve se fait par r\'ecurrence sur la dimension de $\ag$. 
Le cas de la dimension z\'ero est fourni par \ref{prepar}. 
Le r\'eseau $\ESR$ peut tre d\'ecompos\'e en une somme directe 
$\ESR=\ZM v\oplus\ESR_1$ avec 
$v\in C\cap\ESR$ primitif 
(c'est-\`a-dire que $v=nv_1$ avec $v_1\in\ESR$ et $n\in\ZM$ implique $n=\pm1$).
Consid\'erons $T=nv+T_1\in\ESR$ avec $n\in\NM$ et $T_1\in\ESR_1$. On peut \'ecrire
$\phi(nv+T_1)$ sous la forme
\begin{equation*}\phi(nv+T_1)=\sum_{\mu\in E(v)} q_\mu(n, T_1)b_\mu^n\end{equation*}
o les $b_\mu=e^{\langle\mu,v\rangle}$ sont des nombres complexes de module $1$ deux \`a deux distincts et
$E(v)$ est le quotient de $E$ d\'efini par la restriction \`a $\ZM v$. Les fonctions $q_\mu$ sont de la forme:
\begin{equation*}q_\mu(n, T_1)=\sum_{s=0}^{d_\mu} \sum_{\tau\in E_\mu} r_{s,\tau}(T_1)e^{\langle \tau,T_1\rangle}n^s\end{equation*}
o les $s$ sont entiers, les $r_{s,\tau}$ sont des polyn™mes sur $\ag_1$, l'espace vectoriel engendr\'e 
par $\ESR_1$, et $E_\mu$ est un sous-ensemble fini de $\wh{\ESR}_1 = \hag_1/\ESR_1^\vee$.
Soit $d$ est le degr\'e maximal des polyn™mes $q_\mu$ en $n$, et soit $a_\mu(T_1)$
 le coefficient de $n^{d}$ dans $q_\mu$: 
 \begin{equation*}a_\mu(T_1)\bydef \sum_{\tau\in E_\mu} r_{d,\tau}(T_1)e^{\langle \tau,T_1\rangle}\end{equation*}
 o, par convention, $r_{d,\tau}(T_1)=0$ si $d>d_\mu$. On a
\begin{equation*}\lim_{n\to+\infty}\bigg(n^{-d}\phi(nv+T_1)-\sum_\mu a_\mu(T_1) b_\mu^n\bigg)=0\ptf\leqno(2)\end{equation*}
On observe que pour $T_1$ fix\'e et $n$ assez grand on a
\begin{equation*}nv+T_1\in T_\star +( C\cap\ESR)\ptf\end{equation*}
Par hypoth\`ese \begin{equation*}\lim_{n\to+\infty}\phi(nv+T_1)=0\ptf\leqno(3)\end{equation*}
On d\'eduit de (2) et (3) que \begin{equation*}\lim_{n\to+\infty}\sum_\mu a_\mu(T_1) b_\mu^n=0\ptf\end{equation*}
D'apr\`es le lemme \ref{prepar} les $a_\mu(T_1)$ sont tous nul.
Par r\'ecurrence descendante sur le degr\'e on obtient que pour tout entier $s$ et tout $T_1\in\ESR_1$:
\begin{equation*}\sum_{\tau\in E_\mu}r_{s,\tau}(T_1)e^{\langle \tau,T_1\rangle }=0\ptf\end{equation*}
Par hypoth\`ese de r\'ecurrence sur la dimension de $\ag$ cela implique que les $r_{s,\tau}$ sont nuls.
On en d\'eduit que les $p_\nu$ le sont.
\end{proof}

Nous introduisons maintenant comme dans \cite[1.7]{W} l'ensemble PolExp:

% d\'efinition
\begin{definition}\label{polexp}
On note {\rmfamily PolExp} l'espace vectoriel des fonctions 
$\phi:\ag_{0,\QM}\rightarrow \CM$ telles que,
pour tout r\'eseau $\ESR$ de $\ag_{0,\QM}$, il existe 
 une famille index\'ee par les $\nu\in \wh\ESR$ de polyn™mes sur $\ag_0$ 
 \begin{equation*}T\mapsto p_{\ESR,\nu}(\phi,T)\end{equation*}
avec les propri\'et\'es suivantes:
\begin{itemize}
\item les $\nu\in \wh\ESR$ tels que $p_{\ESR,\nu}\neq 0$ sont en nombre fini;
\item pour $T\in\ESR$, on a l'\'egalit\'e 
$\phi(T)=\sum_{\nu\in \wh\ESR} p_{\ESR,\nu}(\phi,T)e^{\langle\nu,T\rangle}$.
\end{itemize}
\end{definition}

D'apr\`es \ref{unicit\'e}, les $p_{\ESR,\nu}$ sont uniquement d\'etermin\'es par une 
approximation de $\phi_\ESR=\phi\vert_{\ESR}$, sur l'intersection de $\ESR$ et 
 d'un c™ne ouvert non vide de $\ag_0$. Observons que si $\ESR$ et $\ESR'$ 
 sont deux r\'eseaux de $\ag_{0,\QM}$ tels que
 $\ESR\subset\ESR'$, pour $\nu\in \wh{\ESR}$ on a 
\begin{equation*}p_{\ESR,\nu}(\phi,T)=\sum_{\nu'\in \ESR^\vee/\ESR'^\vee}p_{\ESR'\mskip -2mu,\nu +\nu'}(\phi,T)\ptf\end{equation*}
 La famille $(p_{\ESR,\nu})_{\nu\in \wh\ESR}$ 
se d\'eduit donc de la famille $(p_{\ESR'\mskip -2mu,\nu'})_{\nu'\in \wh{\ESR'}}$.

% proposition
\begin{proposition}\label{passlim}
Soient $\bsc$ une $(Q,M)$-famille p\'eriodique \`a valeurs scalaires, 
$\XX$ une famille $M$-orthogonale rationnelle, $Z\in \ESA_Q$ et $\Lambda\in \wh{\ag}_M$. 
On note $\mu$ l'image de ${\Lambda^Q}$ dans $\bsmu_M^Q= \bsmu_M/\bsmu_Q$. 
Soit $\ESR$ un r\'eseau de $\ag_{0,\QM}$, et pour $k\in \NM^*$ 
soit $\ESR_k= k^{-1}\ESR$. Alors:
\begin{enumerate}[(i)]
\item La fonction
$T\mapsto\phi( T)= \bscMF^{Q,{\XX(T)}}(Z;{\Lambda})$ appartient \`a $\mathrm{PolExp} $. 
\item Si $\mu\ne0$, il existe un entier 
$k_0\geq 1$, ne d\'ependant que de $\ESR$, tel que $p_{\ESR_k,0}(\phi,T)=0$ 
pour tout entier $k\geq k_0$.
\item Si $\mu=0$, i.e. ${\Lambda}\in \Lambda_Q + \ESA_M^\vee$, 
alors
\begin{equation*}\lim_{k\to +\infty} p_{\ESR_k,0}(\phi,T)={\vol}(\ESA_M^Q\backslash \ag_M^Q)\mun
e^{\langle \Lambda_Q,Z\rangle}\bscM^Q(\XX(T)^Q;\Lambda_Q)\end{equation*}
et en particulier, cette limite est ind\'ependante de $\ESR$.
Plus pr\'ecis\'ement, il existe un r\'eel $b>0$ ne d\'ependant que de $\ESR$, $\XX$ et $T$ 
tel que pour tout entier $k\geq 1$ on ait la majoration
\begin{equation*}\vert p_{\ESR_k,0}(\phi,T)-{\vol}(\ESA_M^Q\backslash \ag_M^Q)\mun
e^{\langle \Lambda_Q,Z\rangle}\bscM^Q(\XX(T)^Q;\Lambda_Q) \vert\leq b\mskip 2mu N_d(\bsc)\mskip 2mu k\mun\end{equation*}
o $N_d$ est la norme pour les $(G,M)$-familles p\'eriodiques d\'efinie en \ref{normes} et $d$
la dimension de $\ag_M^Q$.
\end{enumerate}
\end{proposition}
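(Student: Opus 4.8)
Cette proposition combine trois affirmations dont la clé commune est le comportement asymptotique, lorsque le pas du réseau $\ESR_k=k^{-1}\ESR$ tend vers zéro, du coefficient constant $p_{\ESR_k,0}(\phi,T)$ d'une fonction de type PolExp. Le point de départ est la formule de sommation finie établie dans le corollaire \ref{formule d'inversion pour les (G,M)-familles} (ou \ref{deuxi\`eme formule d'inversion}), qui exprime $\bscMF^{Q,\XX(T)}(Z;\Lambda)$ comme une somme finie sur le réseau $\ESH_{Q,M}$. Pour le point (i), le plan est d'appliquer le lemme \ref{croisspolb} : il fournit directement l'écriture de $\bscMF^{Q,\XX(T)}(Z;\Lambda)$ comme somme de termes $q_{P,\Lambda+\nu}(X_P^Q(T))e^{\langle\Lambda+\nu,X_P^Q(T)\rangle}$. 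Comme $\XX(T)=\XX+\TT$ et que $X_P^Q(T)=X_P^Q+\brT_P^Q$ dépend polynomialement de $T$ via la famille orthogonale $\TT$, chaque terme est, en $T$, un polynôme multiplié par une exponentielle $e^{\langle \Lambda+\nu,\brT_P^Q\rangle}$ ; regrouper selon la valeur de la fréquence donne l'appartenance à PolExp.

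\textbf{Le coefficient $p_{\ESR_k,0}$ et le couplage de fréquences.} Le cœur technique réside dans (ii) et (iii), où il faut isoler le terme de fréquence nulle. D'après \ref{croisspolb} la fréquence portée par le terme indexé par $(P,\nu)$ est (la classe de) $\Lambda+\nu$ restreinte à $\ag_P^Q$, couplée à $\brT_P^Q$. On raffine $\ESR$ en $\ESR_k$ : en notant que la fréquence effective vit dans $\wh{\ESR}_k$, augmenter $k$ \emph{sépare} les classes de fréquences qui coïncidaient sur $\ESR$. Je commencerais par écrire précisément quelles fréquences contribuent à $p_{\ESR_k,0}$, à savoir celles dont la projection appartient à $\ESR_k^\vee$. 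Pour (ii), si $\mu\neq 0$ (i.e. $\Lambda^Q\notin \ESA_M^{Q,\vee}$), la fréquence $\Lambda^Q$ est non nulle modulo $\ESA_M^\vee$ ; il s'agit de montrer qu'il existe $k_0$ tel que pour $k\geq k_0$ aucune des fréquences apparaissant dans l'écriture PolExp ne tombe dans $\ESR_k^\vee$, ce qui force $p_{\ESR_k,0}=0$. L'argument repose sur la finitude de l'ensemble des $\nu\in\ESA_M^{Q,\vee}/\ESD_k^\vee$ intervenant et sur le fait que $\bigcap_k \ESR_k^\vee=\{0\}$ : la condition de fréquence nulle impose $\Lambda^Q\in\ESR_k^\vee$, impossible dès que $k$ est assez grand puisque $\mu\neq 0$.

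\textbf{Le cas $\mu=0$ et l'approximation de Riemann.} Pour (iii), lorsque $\mu=0$ seule survit la contribution $\nu=0$ (et $P$ tel que $\Lambda^Q$ s'annule convenablement), et $p_{\ESR_k,0}(\phi,T)$ devient une somme de Riemann sur le réseau $\ESD_k=k^{-1}\ESD$ approchant une intégrale sur $\ag_M^Q$. Concrètement, je reviendrais à l'expression de $\varepsilon_P^{Q,X_P}(Z;\Lambda)$ via les $\varepsilon^Q_{P,k}(\Lambda+\nu)$ obtenue dans la preuve de \ref{serievareps} : le terme de fréquence nulle fait intervenir $\vol(\ag_M^Q/\ESD_k)\,\varepsilon_P^{Q}(\Lambda_Q)$-type de facteurs, dont la limite quand $k\to+\infty$ remplace la somme discrète $\varepsilon^Q_{P,k}$ par l'intégrale $\epsilon_P^Q(\Lambda_Q)=\int_{\ag_P^Q}\phi_P^Q(H)e^{\langle\Lambda_Q,H\rangle}\dd H$. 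En sommant sur $P\in\ESP^Q(M)$ et en factorisant le volume $\vol(\ESA_M^Q\backslash\ag_M^Q)$, on reconnaît exactement $\bscM^Q(\XX(T)^Q;\Lambda_Q)$ assorti du facteur $e^{\langle\Lambda_Q,Z\rangle}$. \textbf{L'obstacle principal} est la majoration quantitative du reste en $b\,N_d(\bsc)\,k^{-1}$ : il faut contrôler uniformément l'écart entre la somme de Riemann et l'intégrale, avec une constante $b$ ne dépendant que de $\ESR,\XX,T$ et une dépendance en $\bsc$ mesurée par la seule norme $N_d$. J'utiliserais la régularité de type $C^d$ des fonctions $\bsc(\cdot,P)$ (le degré $d=\dim\ag_M^Q$ correspond à l'ordre des pôles des $\epsilon_P^Q$ compensés par les propriétés de $(G,M)$-famille), et l'estimation standard du taux de convergence d'une somme de Riemann d'une fonction $C^d$ sur un réseau de pas $k^{-1}$, l'intervention de $N_d(\bsc)$ provenant de la définition \ref{normes} comme infimum des normes $n_d(m)$ sur les $m$ tels que $\bsc_m=\bsc$.
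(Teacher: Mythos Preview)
Pour (i) et (ii) votre plan co\"{\i}ncide avec celui de l'article : \ref{croisspolb} donne l'appartenance � PolExp, et l'ensemble des $\nu$ contribuant � la fr�quence nulle, � savoir $E_P=\{\nu\in\ESA_M^{Q,\vee}/[\ESR_k]_P^{Q,\vee}\mid \Lambda^Q+\nu=0\}$, est vide pour $k$ assez grand d�s que $\mu\neq 0$.

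Pour (iii) votre approche comporte une lacune. Vous proposez de \og remplacer la somme discr�te $\varepsilon_{P,k}^Q$ par l'int�grale $\epsilon_P^Q(\Lambda_Q)$\fg, mais lorsque $\mu=0$, c'est-�-dire lorsqu'on �value en $\Lambda^Q\equiv 0$ modulo $\ESA_M^{Q,\vee}$, chaque $\epsilon_P^Q(\Lambda_Q)$ est \emph{singulier} (p�le d'ordre $d=\dim\ag_M^Q$ sur les murs) : cette limite terme � terme en $P$ n'existe pas. Les singularit�s ne se compensent qu'apr�s sommation sur $P\in\ESP^Q(M)$, et votre argument ne pr�cise ni comment propager cette compensation au passage � la limite, ni comment en extraire la majoration quantitative en $k^{-1}$.

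L'article �vite cet �cueil en changeant de point de vue : au lieu des sommes coniques infinies $\varepsilon_P^Q$, il revient via \ref{lissb} � $\phi(T)=\sum_{\UU\in\ESH_M}m(\UU)\,\gammaMF^{Q,(\XX+\UU)(T)}(Z+U_Q;\Lambda)$ et exploite que chaque $\gammaMF^{Q,\VV(T)}$ est une somme \emph{finie} $\sum_H\Gamma_M^Q(H,\VV(T))e^{\langle\Lambda,H\rangle}$ port�e par un polytope compact. Par inversion de Fourier sur le groupe fini $\ESA_M^Q\backslash\ESR_M^Q$, le terme de fr�quence nulle s'identifie � une vraie somme de Riemann de $\Gamma_M^Q$ sur $k^{-1}\ESR_M^Q$, qui converge vers $\vol(\ESA_M^Q\backslash\ag_M^Q)^{-1}\int_{\ag_M^Q}\Gamma_M^Q(H,\VV(T))\dd H$. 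L'erreur est alors born�e \emph{g�om�triquement} par le volume d'un voisinage tubulaire de rayon $O(k^{-1})$ de la fronti�re des polytopes, donc par $k^{-1}$ fois un polyn�me en $\VV(T)$. La sommation en $\UU$ contre $|m(\UU)|$, suivie de l'infimum sur les $m$ r�alisant $\bsc$, fournit directement la factorisation $b\,N_d(\bsc)\,k^{-1}$ --- ce que votre \og estimation standard de Riemann pour une fonction $C^d$\fg\ ne fait pas appara�tre.
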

\begin{proof} 
L'assertion (i) est une cons\'equence imm\'ediate de \ref{croisspolb}.
Relevons $Z$ en un \'el\'ement $Z'$ de $\ESA_M$ et choisissons un entier $k'$ tel que, 
pour tout $P\in\ESP^Q(M)$, le r\'eseau $\ESD_{k'}$ de $\ag_M^Q$ 
contienne $\ESA_M^Q$ et $(X_P-Z')^Q$. Pour $P\in \ES{P}^Q(M)$, notons 
$[\ES{R}]_P^Q$ le r\'eseau de $\ag_M^Q$ image de $\ES{R}$ 
par l'application $T \mapsto [T]_P^Q =([T]_P)^Q$. 
On suppose $k$ assez grand de sorte que pour tout $P\in\ESP^Q(M)$ on ait
\begin{equation*}[\ESR_{k}]_P^{Q,\vee}\subset\ESD_{k'}^{\vee}\subset \ESA_M^{Q,\vee}\ptf\end{equation*}
 Avec les notations de \ref{croisspolb} on sait que:
\begin{equation*}p_{\ESR_k,0}(\phi;T)=\sum_{P\in \ES{P}^Q(M)}\sum_{\nu\in E_P} q_{P,\Lambda+\nu}(\brT{P}^Q)\end{equation*}
o
\begin{equation*}E_P=\{\nu\in \ESA_M^{Q,\vee}/[\ESR_{k}]_P^{Q,\vee}\mskip 2mu \mid \mskip 2mu \Lambda^Q+ \nu=0\}\ptf\end{equation*}
On voit que $E_P$ poss\`ede un unique \'el\'ement si $\Lambda^Q\in \ESA_M^{Q,\vee}$ c'est-\`a-dire si
$\mu=0$ et est vide sinon; (ii) en r\'esulte.
Lorsque ${\mu}=0$ on va esquisser une d\'emonstration de (iii), 
diff\'erente de celle de \cite[1.7, lemme~(ii)]{W}.
Pour all\'eger les notations on commence par traiter le cas $\Lambda_Q=0$. 
D'apr\`es la proposition \ref{lissb} on a
\begin{equation*}{\phi(T)}=\sum_{\UU\in\ESH_{M}}\mf(\UU)\gammaMF^{Q,\XX(T)}(Z, \UU;\Lambda)\end{equation*}
avec $\mf$ \`a d\'ecroissance rapide sur le r\'eseau $\ESH_M$ et 
\begin{equation*}\gammaMF^{Q,\XX(T)}(Z, \UU;\Lambda)= \gammaMF^{Q,(\XX+\UU)(T)}(Z+U_Q;\Lambda)\ptf\end{equation*}
Fixons $\UU \in \ESH_M$ et posons $\VV=\XX+\UU$. On rappelle que
\begin{equation*}\gammaMF^{Q,\VV(T)}(Z+ U_Q;\Lambda)= 
\sum_{H\in \ESA_M^Q(Z+U_Q)}\Gamma_M^Q(H,\VV(T))e^{\langle \Lambda ,H \rangle}\ptf\end{equation*}
Relevons $U_Q$ en un \'el\'ement $U'_Q$ de $\ESA_M$, et posons
$Z''=Z'+ U'_Q$. 
On obtient
\begin{equation*}\gammaMF^{Q,\VV(T)}(Z+U_Q;\Lambda)=e^{\langle \Lambda , Z'' \rangle}
\sum_{H \in \ESA_M^Q}\Gamma_M^Q(Z''+H,\VV(T))e^{\langle \Lambda, H \rangle}\ptf\end{equation*}
Notons $\ESR_M^Q$ l'image de $\ESR$ par l'application $H \mapsto H_M^Q$.
 On suppose $\ESR$ assez fin de sorte que $\ESR_M^Q$ contienne $\ESA_M^Q$ ainsi que
l'image $(Z'')^Q$ de $Z''$ dans $\ag_M^Q$. 
Par inversion de Fourier sur le groupe fini $\ESA_M^Q \backslash \ESR_M^Q$ on a
\begin{equation*}\gammaMF^{Q,\VV(T)}(Z+ U_Q;\Lambda)=\frac{e^{\langle \Lambda , Z''\rangle}}{[\ESR_M^Q: \ESA_M^Q]}
\sum_{\nu \in \ESA_M^{Q,\vee}/\ESR_M^{Q,\vee}} \sum_{H \in \ESR_M^Q}
\Gamma_M^Q(Z''+H,\VV(T))e^{\langle \Lambda+ \nu, H \rangle}\ptf\end{equation*}
Le polyn™me en $T$ attach\'e \`a $\Lambda=\nu=0$, que nous noterons $p_{\ESR,0}(\VV,T)$, vaut:
\begin{equation*}p_{\ESR,0}(\VV,T)=\frac{1}{[\ESR_M^Q: \ESA_M^Q]} \sum_{H \in \ESR_M^Q}\Gamma_M^Q(Z''+H,\VV(T))\ptf\end{equation*}
La restriction \`a $\ag_M^Q$ de la fonction \begin{equation*}H\mapsto\Gamma^Q_M(Z''+H,\VV(T))\end{equation*}
est, d'apr\`es \cite[1.8.4~(2), 1.8.3]{LW}, combinaison lin\'eaire \`a coefficients dans $\{-1,+1\}$ 
d'une famille finie de fonctions caract\'eristiques de polytopes du type $C(P,Q,R,X)$ qui sont 
convexes, born\'es (mais en g\'en\'eral non ferm\'es); en particulier elle est \`a support compact
de rayon born\'e par un polyn™me en $\VV(T)$.
Lorsque l'on remplace $\ESR$ par $\ESR_k=k\mun\ESR$ et que l'on fait tendre $k$ vers l'infini
 $p_{\ESR_k,0}(\VV,T)$ a pour limite une int\'egrale au sens de Riemann:
\begin{equation*}\lim_{k\to\infty}p_{\ESR_k,0}(\VV,T)=\vol(\ESA_M^Q\bsl\ag_M^Q)\mun
\int_{H\in\ag_M^Q}\Gamma^Q_M(H,\VV(T))\dd H\end{equation*}
soit encore
\begin{equation*}\lim_{k\to\infty}p_{\ESR_k,0}(\VV,T)=
\vol(\ESA_M^Q\bsl\ag_M^Q)\mun\gamma_M^Q(\VV(T);0)\ptf\end{equation*}
Le terme d'erreur est major\'e par le volume des hypercubes (les mailles du r\'eseau $k\mun\ESR_M^Q$)
rencontrant la fronti\`ere des polytopes; ces hypercubes sont inclus dans un voisinage tubulaire 
de la fronti\`ere des polytopes, de rayon $a/k$ o $a$ est une constante ne d\'ependant
que de la taille des mailles de $\ESR$. 
Le voisinage tubulaire a un volume born\'e par le produit de $a/k$ et de $r(\ESR,\VV(T))$ qui est
la somme des mesures des fronti\`eres des polytopes. Donc
\begin{equation*}\mid p_{\ESR_k,0}(\VV,T)-\vol(\ESA_M^Q\bsl\ag_M^Q)\mun\gamma_M^Q(\VV(T);0)\mid 
\le \frac{a}{k} r(\ESR,\VV(T))\ptf\end{equation*}
On passe de $\gamma_{M,F}^{Q,\XX}$ \`a $\bsc_{M,F}^{Q,\XX}$ en sommant sur $\UU$ 
cette in\'egalit\'e contre $m(\UU)$ d'o:
\begin{equation*}\vert p_{\ESR_k,0}(\phi,T)-{\vol}(\ESA_M^Q\backslash \ag_M^Q)\mun
\bscM^Q(\XX(T)^Q;0) \vert \le \frac{a}{k}
\sum_{\UU\in\ESH_M}r(\ESR,\UU+\XX(T))\mid m(\UU)\mid \ptf\end{equation*}
Comme $r(\ESR,\VV(T))$ est major\'e par un polyn™me en $\VV(T)$ 
on voit (avec les notations de \ref{normes}) qu'il existe un entier $d$ et une fonction $b(\ESR,\XX(T))$ telle que
\begin{equation*}\sum_{\UU\in\ESH_M}r(\ESR,\UU+\XX(T))\mid m(\UU)\mid \leq b(\ESR,\XX(T)) n_d(m)\ptf\end{equation*}
En prenant l'infimum sur les $m$, on obtient l'assertion souhait\'ee
lorsque $\Lambda_Q=0$. Maintenant on observe que
\begin{equation*}\bscMF^{Q,\XX(T)}(Z; \Lambda_Q)= 
e^{\langle \Lambda_Q,Z\rangle} \bsd_{M,F}^{\mskip 2mu Q,\XX(T)}(Z; 0)\end{equation*}
o $\bsd$ est la $(Q,M)$-famille p\'eriodique d\'eduite de $\bsc$ par translation par $\Lambda_Q$. 
Le cas g\'en\'eral en r\'esulte.
\end{proof}

L'expression \begin{equation*}{\vol}(\ESA_M^Q\backslash \ag_M^Q)\mun
e^{\langle \Lambda_Q,Z\rangle}\bscM^{Q,T_1}(\XX^Q;\Lambda)\end{equation*}
est ind\'ependante du choix de la mesure de Haar sur $\ag_M^Q$.
Dans les applications que nous avons en vue
la normalisation naturelle semble tre la suivante: pour chaque $M\in \ESL$, on munit $\ag_M$ 
de la mesure de Haar telle que
\begin{equation*}\mathrm{vol}(\ESB_M\backslash \ag_M)=1\end{equation*}
et pour $Q\in \ESF(M)$, on munit $\ag_M^Q$ de la mesure de Haar compatible aux mesures sur $\ag_M$ et 
$\ag_Q= \ag_{M_Q}$ et \`a la d\'ecomposition $\ag_M= \ag_Q \oplus \ag_M^Q$. Alors on a
\begin{equation*}\mathrm{vol}(\ESB_M^Q\backslash \ag_M^Q) =
1 \quad \hbox{et} \quad \mathrm{vol}(\ESA_M^Q\backslash \ag_M^Q)=
 \vert \bsbbc_M\vert^{-1} \vert \bsbbc_Q \vert\ptf\end{equation*}

%%%%%%%%%%%%%%%%%%%%%
%%%%%%%%%%%%%%%%%%%%%
%%%%%% Tordu %%%%%%%
%%%%%%%%%%%%%%%%%%%%%
%%%%%%%%%%%%%%%%%%%%%

 \chapter{Espaces tordus}\label{espaces tordus}

Tous les r\'esultats de \cite[ch.~2]{LW} sont vrais ici, \`a l'exception de 2.6 et 2.10. 
L'adaptation au cas tordu de la version {\og corps de fonctions\fg} des r\'esultats du chapitre
pr\'ec\'edent
sur les transform\'ees de Laplace des fonctions caract\'eristiques de c™nes et les $(G,M)$-familles
est imm\'ediat. Nous serons tr\`es succincts. 

%%%%%%%%%%%%%%%%%%%%%%%%
 \section{Hypoth\`eses}\label{hypoth\`eses}
Soit $(\tG,G)$ un $G$-espace tordu. On rappelle que $\tG$\index{Gtilde@$\tG$}
est une vari\'et\'e alg\'ebrique affine, 
munie d'une action alg\'ebrique de $G$ \`a gauche qui en fait un $G$-espace principal homog\`ene, et d'une application
\begin{equation*}\tG\rightarrow \mathrm{Aut}(G),\mskip 2mu \delta\mapsto \mathrm{Int}_\delta
\qquad\hbox{telle que}\qquad\mathrm{Int}_{g\delta} = \mathrm{Int}_g\circ \mathrm{Int}_\delta\end{equation*}
pour tout $g\in G$ et tout $\delta\in\tG$. 
On en d\'eduit une action \`a droite de $G$ sur $\tG$, donn\'ee par 
\begin{equation*}\delta g = \mathrm{Int}_\delta(g)\delta\ptf\end{equation*}
On suppose que $\tG$ est d\'efini sur $F$, c'est-\`a-dire que les actions \`a gauche et \`a droite de $G$ sur $\tG$ 
sont d\'efinies sur $F$, et que $\tG(F)$ est non vide.
L'ensemble $\tG(\adef)$ des points ad\'eliques de $\tG$ est un espace tordu sous $G(\adef)$, et on a
\begin{equation*}\tG(\adef)= G(\adef)\tG(F)=\tG(F)G(\adef)\ptf\end{equation*}
On notera souvent $\theta$ l'automorphisme de $G$ d\'efini par $\mathrm{Int}_\delta$ pour un $\delta\in \tG(F)$.
On observe que l'automorphisme induit par $\theta$ sur $\ag_G$ ne d\'epend que de $\tG$.
On pose\index{apgtilde@$\ag_\tG$}
\begin{equation*}\ag_\tG=\ag_G^\theta\quad\hbox{et}\quad a_\tG=\dim\ag_\tG\ptf\end{equation*}
On suppose, comme en \cite[2.5]{LW}\footnote{Dans \cite{W}, l'hypoth\`ese est un peu plus forte 
que celle de \cite[2.5]{LW}: le $F$-auto\-morphisme $\theta$ de $Z_G$ est suppos\'e d'ordre fini, 
ce qui assure l'existence d'un $F$-groupe alg\'ebrique affine $G^+$ de composante neutre $G$, tel que $\tG$ 
soit une composante connexe de $G^+$.},
que l'application naturelle \begin{equation*}\ag_G^\theta \rightarrow \ag_G/ (1-\theta)\ag_G\end{equation*}
est un isomorphisme. Dans ce cas on a une d\'ecomposition en somme directe
\begin{equation*}\ag_G = \ag_\tG \oplus \ag_G^\tG\quad\hbox{en posant}\quad\ag_G^\tG = (1-\theta)\ag_G\index{apgtildeg@$\ag_G^\tG$}\ptf\end{equation*}
On observe que \begin{equation*}\det(\theta-1 \vert \ag_G^\tG)\neq 0\ptf\end{equation*}
Notons $X$ le $\ZM$-module libre des caract\`eres du tore $A_G$. Soit $X_\theta$ le 
groupe des co-invariants sous $\theta$ dans $X$ et $\wt{X}$
le $\ZM$-module libre quotient de $X_\theta$ par son sous-groupe de torsion. On notera 
$A_\tG$ le tore d\'eploy\'e dont le groupe des caract\`eres est $\wt{X}$. C'est aussi le tore d\'eploy\'e 
dont le groupe des co-caract\`eres est le sous-groupe $Y^\theta$ des invariants sous $\theta$ du groupe 
$Y$ des co-caract\`eres de $A_G$.
Le morphisme $X\to \wt{X}$ induit un homomorphisme $\A_\tG\to\A_G$ qui identifie $A_\tG$ \`a la composante neutre 
du sous-groupe $A_G^\theta$ de $A_G$ form\'e des points fixes sous $\theta$. En particulier $A_\tG(\adef)$ est un sous-groupe 
d'indice fini de $A_G(\adef)^\theta= A_G^\theta(\adef)$. 
Soit \begin{equation*}\bfH_\tG: G(\adef)\rightarrow\ag_\tG \index{Hgtilde@$\bfH_\tG$}\end{equation*}
l'application compos\'ee de $\bfH_G:G(\adef)\rightarrow\ag_G$ et de la projection sur $\ag_\tG$. On note 
$\ESA_\tG$ l'image de $\bfH_\tG$, c'est-\`a-dire l'image de ${\ESA_G}$ 
par la projection orthogonale par 
rapport \`a $\ag_G^\tG$. C'est un r\'eseau de $\ag_\tG$. Comme dans le cas non tordu, on a un morphisme naturel 
injectif $\ESA_{A_\tG}\rightarrow \ESA_\tG$. On note $\ESB_\tG\;(= \bfH_\tG(A_\tG(\adef)))$ son image, 
qui est un sous-groupe d'indice fini de $\ESA_\tG$, et on pose
\begin{equation*}\bsbbc_\tG \bydef \ESB_\tG \backslash \ESA_\tG\ptf\end{equation*}
Notons que d'apr\`es ce qui pr\'ec\`ede, $\ESB_\tG$ co\"{\i}ncide avec le sous-groupe $Y^\theta$ de $Y=\ESB_G$ 
form\'e des points fixes sous $\theta$: on a
\begin{equation*}\ESB_\tG = \ESB_G^\theta = \ESB_G \cap \ag_\tG\ptf\end{equation*}
On pose
\begin{equation*}\ESB_G^\tG = \ESB_\tG \backslash \ESB_G\quad \hbox{et}\quad \ESC_G^\tG = \ESB_\tG \backslash \ESA_G\ptf\end{equation*}
On observe que $\ESB_G^\tG$ est un r\'eseau de $\ag_G^\tG$, et que $\ESC_G^\tG$ est un $\ZM$-module 
de type fini qui s'ins\`ere dans la suite exacte courte
\begin{equation*}0 \rightarrow \ESB_G^\tG \rightarrow \ESC_G^\tG \rightarrow \bsbbc_G \rightarrow 0\ptf\end{equation*}

On suppose, ce qui est loisible, que la paire parabolique d\'efinie sur $F$ minimale $(P_0,A_0)$ de $G$ 
a \'et\'e choisie de telle sorte qu'elle 
soit stable par $\mathrm{Int}_{\delta_0}$ pour un \'el\'ement $\delta_0\in\tG(F)$, d\'etermin\'e 
de mani\`ere unique modulo $M_0(F)$. 
On fixe un tel $\delta_0$, et on pose $\theta_0 = \mathrm{Int}_{\delta_0}$, 
$\tP_0 = \delta_0P_0$ et $\tM_0 = \delta_0M_0$. Alors le $F$-automorphisme $\theta_0$ de $G$ 
induit par fonctorialit\'e un automorphisme de $\ag_0$, que l'on note encore $\theta_0$. 
Puisque le $F$-automorphisme $\theta_0$ pr\'eserve $A_0$ et $P_0$, il induit une permutation
de l'ensemble fini $\Delta_0$ et donc un automorphisme 
d'ordre fini de $\ag_0^G$. 

On renvoie \`a \cite[2.7, 2.8]{LW} pour la dŽfinition des sous-ensembles 
(ou sous-espaces) paraboliques et sous-ensembles de Levi
et l'adaptation des autres notions. En particulier, un sous-groupe parabolique standard $P$
dont le normalisateur dans $\tG$ est non vide
est $\theta_0$ stable et l'ensemble $\tP=P\delta_0$ est un sous-espace parabolique
standard.

L'extension au cas tordu de la notion de famille orthogonale, de $(G,M)$-famille et
de la combinatoire des fonctions $\tau$, $\wh\tau$, $\phi$ et $\Gamma$,
est imm\'ediate (cf. \cite[2.9]{LW}). On dispose de plus ici de la notion de famille 
$\tM$-orthogonale enti\`ere et de $(\tG,\tM)$-famille p\'eriodique. 
Toute famille $M$-orthogonale $\XX= (X_P)$ 
d\'efinit par projection une famille $\tM$-orthogonale $(X_\tP)$, et si $\XX$ est 
enti\`ere alors $(X_\tP)$ l'est aussi. En particulier, tout \'el\'ement $T\in\ag_0$ 
d\'efinit une famille $\tM$-orthogonale $(\brT{\tP})$. 
Toutes les relations de \cite[1.7, 1.8]{LW} et \cite[1.3]{W} 
sont valables pour ces nouvelles fonctions. Par exemple,
si $\XX= (X_\tP)$ est une famille $\tM$-orthogonale, pour $\Lambda\in\ag_{0,\CM}^*$, on pose
\begin{equation*}\gamma_{\tM,F}^{\tQ,\XX}(Z;\Lambda)=
\sum_{H\in\ESA^{\tQ}_\tP(Z)}\Gamma^{\tQ}_{\tM}(H,\XX)e^{\langle \Lambda,H\rangle}\ptf\end{equation*}
Comme dans le cas non tordu, $\Lambda\mapsto\gamma_{\tM,F}^{\tQ,\XX}(Z;\Lambda)$ 
est une fonction enti\`ere de $\Lambda\in\ag_{0,\CM}^*$, et on a la d\'ecomposition
pour $\Lambda$ en dehors des murs
\begin{equation*}\gamma_{\tM,F}^{\tQ,\XX}(Z;\Lambda) =
\sum_{\tP\in\ESP^{\tQ}(\tM)}\varepsilon_\tP^{\tQ,\XX}(Z;\Lambda)\ptf\end{equation*}
Pour une $(\tG,\tM)$-famille $\bsc= (\bsc(\cdot,\tP))$, 
comme en \ref{fonctions caract\'eristiques} 
et modulo le choix d'une 
mesure de Haar sur l'espace $\ag_{\tM}^{\tQ}$ on d\'efinit pour $\Lambda\in \wh\ag_0$
 en dehors des murs
\begin{equation*}\bsc_{\tM}^{\tQ}(\Lambda)=
\sum_{\tP\in\ESP^{\tQ}(\tM)}\epsilon_\tP^{\tQ}(\Lambda)\bsc(\Lambda,\tP)\ptf\end{equation*}
De mme, si $Z\in\ESA_\tQ$ et $\XX=(X_\tP)$ est une famille $\tM$-orthogonale, on pose 
\begin{equation*}\bsc_{\tM,F}^{\tQ,\XX}(Z;\Lambda)=
\sum_{\tP\in\ESP^{\tQ}(\tM)}\varepsilon_\tP^{\tQ,\XX}(Z;\Lambda)\bsc(\Lambda,\tP)\ptf\end{equation*}
Ces fonctions v\'erifient les mmes propri\'et\'es que dans le cas non tordu. 
En particulier, toute $(\tG,\tM)$-famille p\'eriodique $\bsc$ s'\'ecrit $\bs{c}= \bs{c}_{m}$ pour une fonction \`a d\'ecroissance rapide 
$m$ sur le r\'eseau $\ESH_\tM$ des familles $\tM$-orthogonales qui sont enti\`eres. On a une formule d'inversion de Fourier analogue de 
celle de \ref{lissb} dans le cas tordu, et la fonction $\Lambda\mapsto\bsc_{\tM,F}^{\tQ,\XX}(Z; \Lambda)$ sur $\wh\ag_0$ 
est lisse et invariante par $\ESA_\tM^\vee$.
On a aussi une variante de cette formule d'inversion de Fourier, lorsque $\bs{c}$ se prolonge en une $(G,M)$-famille p\'eriodique:

% lemme
\begin{lemma}\label{inversion (cas tordu)}
Soient $\tM\in\wt{\ESL}$, $\tQ\in\ESF(\tM)$ et $Z\in\ESA_\tQ$. 
Soit $\XX$ une famille $\tM$-orthogonale, et soit $\bsc= (\bsc(\cdot,\tP))$ une $(\tG,\tM)$-famille 
p\'eriodique. Supposons que $\bsc$ se prolonge en une $(G,M)$-famille p\'eriodique $ (\bsc(\cdot,P))$, et 
soit $\mf$ une fonction 
\`a d\'ecroissance rapide sur $\ESH_M$ telle que $\bsc=\bsc_\mf$. Alors 
\begin{equation*}\bsc_{\tM,F}^{\tQ,\XX}(Z;\Lambda)=
\sum_{\UU\in\ESH_M}\mf(\UU)\gamma_{\tM,F}^{\tQ,\XX}(Z,\UU;\Lambda)\end{equation*}
avec
\begin{equation*}\gamma_{\tM,F}^{\tQ,{\XX}}(Z,\UU;\Lambda) =\gamma_{\tM,F}^{\tQ,{\UU'+\XX}}(Z+U_{\tQ};\Lambda)\end{equation*}
o $\UU'$ est la famille $\tM$-orthogonale enti\`ere d\'eduite de $\UU$ par projection.
\end{lemma}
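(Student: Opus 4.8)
Le plan est de reproduire, dans le cadre tordu, la d\'emonstration de la proposition \ref{lissb}, le seul ingr\'edient vraiment nouveau \'etant le suivi de la projection $\UU\mapsto\UU'$ qui envoie le r\'eseau $\ESH_M$ des familles $M$-orthogonales enti\`eres sur les familles $\tM$-orthogonales enti\`eres. Je commencerais par exprimer la $(\tG,\tM)$-famille $\bsc$ au moyen de $\mf$. Comme $\bsc=\bsc_\mf$ provient par restriction de la $(G,M)$-famille p\'eriodique $(\bsc(\cdot,P))$, on a, pour $\tP\in\ESP^{\tQ}(\tM)$ et $\Lambda$ dans le domaine voulu,
$$\bsc(\Lambda,\tP)=\sum_{\UU\in\ESH_M}e^{\langle\Lambda,U_\tP\rangle}\mf(\UU)\vg$$
o\`u $U_\tP$ d\'esigne la composante en $\tP$ de la famille projet\'ee $\UU'$, l'\'egalit\'e $\langle\Lambda,U_P\rangle=\langle\Lambda,U_\tP\rangle$ pour $\Lambda\in\wh\ag_\tP$ rempla\c{c}ant $U_P$ par sa projection. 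En reportant dans la d\'efinition de $\bsc_{\tM,F}^{\tQ,\XX}$ et en intervertissant (formellement pour l'instant) les deux sommations, j'obtiendrais
$$\bsc_{\tM,F}^{\tQ,\XX}(Z;\Lambda)=\sum_{\UU\in\ESH_M}\mf(\UU)\sum_{\tP\in\ESP^{\tQ}(\tM)}\varepsilon_\tP^{\tQ,\XX}(Z;\Lambda)\,e^{\langle\Lambda,U_\tP\rangle}\ptf$$

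L'\'etape cl\'e est ensuite l'identit\'e de translation pour les fonctions $\varepsilon_\tP$. Puisque la famille $\UU'$ est enti\`ere, on a $U_\tP\in\ESA_\tP$, de sorte que le changement de variable $H\mapsto H-U_\tP$ sur le r\'eseau $\ESA_\tP^{\tQ}$ (d\'ej\`a utilis\'e dans \ref{lissb}) donne
$$e^{\langle\Lambda,U_\tP\rangle}\varepsilon_\tP^{\tQ,\XX}(Z;\Lambda)=\varepsilon_\tP^{\tQ,\UU'+\XX}(Z+U_\tQ;\Lambda)\ptf$$
La somme int\'erieure sur $\tP$ se reconna\^it alors, gr\^ace \`a la version tordue de \ref{gamma-fini} (elle-m\^eme reposant sur la forme tordue de \ref{gammaphi}), comme $\gamma_{\tM,F}^{\tQ,\UU'+\XX}(Z+U_\tQ;\Lambda)$, qui est par d\'efinition $\gamma_{\tM,F}^{\tQ,\XX}(Z,\UU;\Lambda)$\vg\ ce qui donnerait exactement la formule annonc\'ee.

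Il resterait enfin \`a justifier l'interversion des sommations, point que je traiterais exactement comme dans \ref{lissb}: la fonction $H\mapsto|\Gamma_\tM^{\tQ}(H,\UU'+\XX)|$ ne prend qu'un nombre fini de valeurs enti\`eres born\'ees ind\'ependamment de $\UU$, et son support a un rayon major\'e par un polyn\^ome en $\UU$, donc sa somme sur $H$ est born\'ee par un polyn\^ome en $\UU$; la d\'ecroissance rapide de $\mf$ sur $\ESH_M$ entra\^ine alors la convergence absolue, uniforme en $\Lambda$, de la s\'erie en $\UU$ (et au passage la lissit\'e en $\Lambda$). J'anticipe que le principal obstacle n'est pas analytique mais combinatoire: c'est pr\'ecis\'ement le suivi de la projection $\UU\mapsto\UU'$, car la sommation reste index\'ee par le r\'eseau non tordu $\ESH_M$ alors que toute la g\'eom\'etrie (les fonctions $\varepsilon_\tP$ et $\Gamma_\tM^{\tQ}$, la translation par $U_\tQ$) se d\'eroule dans les espaces tordus via les composantes $U_\tP$ de la famille projet\'ee; il faudra donc veiller \`a ce que l'\'egalit\'e des exposants et le fait que $\UU'$ soit enti\`ere soient \'etablis avant d'invoquer le d\'ecoupage tordu de $\gamma_{\tM,F}^{\tQ,\cdot}$.
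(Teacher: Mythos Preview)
Your argument is correct. The paper, however, takes a shorter route: it simply observes that the $(\tG,\tM)$-family $\bsc$ equals $\bsc_{m'}$, where $m'$ is the pushforward of $m$ along the projection $\ESH_M\to\ESH_\tM$ (that is, $m'(\UU')=\sum_{\UU\mapsto\UU'}m(\UU)$), and then applies the already-stated twisted analogue of Proposition~\ref{lissb} directly to the pair $(\bsc,m')$. Unfolding the fiber sum over $\UU\mapsto\UU'$ gives the claimed formula in one line, since $\gamma_{\tM,F}^{\tQ,\XX+\UU'}(Z+U'_{\tQ};\Lambda)$ depends on $\UU$ only through its projection $\UU'$. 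Your direct approach has the merit of being self-contained --- you do not invoke the twisted \ref{lissb} as a black box --- but it amounts to reproving that proposition while keeping the index set $\ESH_M$; the convergence justification you give at the end is therefore a repetition of work the paper regards as already done.
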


% proof
\begin{proof}
Notons $m'$ la fonction \`a d\'ecroissance rapide sur $\ESH_\tM$ d\'efinie par
\begin{equation*}m'(\UU')= \sum_{\UU \in \ESH_M(\UU')}m(\UU)\end{equation*}
o $\ESH_M(\UU')\subset \ESH_M$ est la fibre au-dessus de $\UU'$. Il suffit de voir que la 
$(\tG,\tM)$-famille $\bs{c}$ est associ\'ee \`a $m'$: on a $\bs{c}= \bs{c}_{m'}$.
\end{proof}

La preuve de \ref{passlim}
s'\'etend au cas tordu et fournit le
% lemme
\begin{lemma}\label{gammaMpol}
Soient $\tM\in\wt{\ESL}$, $\tQ\in\ESF(\tM)$ et $Z\in\ESA_\tQ$. 
Soit $\XX$ une famille $\tM$-orthogonale rationnelle, et soit $\bsc= (\bsc(\cdot,\tP))$ une $(\tG,\tM)$-famille 
p\'eriodique. Pour $\Lambda\in \wh\ag_0$, la fonction $T\mapsto\bsc_{\tM,F}^{\tQ,\XX(T)}(Z;\Lambda)$ 
appartient \`a {\rmfamily PolExp}. On a aussi l'analogue tordu des points (ii) et (iii) de \ref{passlim}.
\end{lemma}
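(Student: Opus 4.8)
Le plan est de transposer mot pour mot la d\'emonstration de \ref{passlim}, en rempla\c{c}ant syst\'ematiquement chaque objet non tordu par son analogue tordu. Tout repose sur le fait, assur\'e par \cite[2.9]{LW} et par l'extension au cas tordu des relations de \cite[1.7, 1.8]{LW} et \cite[1.3]{W}, que les fonctions $\varepsilon_\tP^{\tQ,X_\tP}$, $\phi_{\tP,\tQ}$ et $\Gamma_\tM^\tQ$ v\'erifient exactement les m\^emes identit\'es combinatoires que leurs homologues non tordus. Pour l'assertion de type $(i)$, nous invoquerons l'analogue tordu du lemme \ref{croisspolb}: les fonctions $\varepsilon_\tP^{\tQ,X_\tP}$ admettant le m\^eme prolongement m\'eromorphe que dans \ref{serievareps} et \ref{aussi}, on obtient une \'ecriture
$$\bsc_{\tM,F}^{\tQ,\XX}(Z;\Lambda)=\sum_{\tP\in\ESP^\tQ(\tM)}\sum_\nu q_{\tP,\Lambda+\nu}(X_\tP^\tQ)e^{\langle\Lambda+\nu,X_\tP^\tQ\rangle}$$
o\`u les $q_{\tP,\Lambda+\nu}$ sont des polyn\^omes. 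En y substituant $\XX(T)=\XX+\TT$, la d\'ependance en $T$ est port\'ee par les $\brT_\tP^\tQ$, ce qui exhibera $T\mapsto\bsc_{\tM,F}^{\tQ,\XX(T)}(Z;\Lambda)$ comme \'el\'ement de {\rm PolExp}.

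Pour l'analogue tordu de $(ii)$, nous reprendrons l'analyse de l'ensemble $E_\tP$ des fr\'equences $\nu$ contribuant au coefficient de l'exponentielle triviale. Comme dans \ref{passlim}, cet ensemble poss\`ede un unique \'el\'ement d\`es que $\Lambda^\tQ\in\ESA_\tM^{\tQ,\vee}$, \cad d\`es que $\mu=0$, et est vide sinon; on en d\'eduira l'annulation de $p_{\ESR_k,0}(\phi,T)$ pour $k$ assez grand lorsque $\mu\ne0$.

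L'analogue tordu de $(iii)$ constituera le c\oe ur de l'argument. Nous utiliserons la variante tordue de la formule d'inversion de Fourier \ref{lissb}: \'ecrivant $\bsc=\bsc_{m'}$ pour une fonction $m'$ \`a d\'ecroissance rapide sur le r\'eseau $\ESH_\tM$, nous d\'evelopperons $\bsc_{\tM,F}^{\tQ,\XX(T)}(Z;\Lambda)$ en la somme sur $\UU\in\ESH_\tM$ des $m'(\UU)\gamma_{\tM,F}^{\tQ,\XX(T)}(Z,\UU;\Lambda)$. Le point \`a contr\^oler sera que, pour $\VV=\XX+\UU$ et un rel\`evement convenable $Z''$ de $Z+U_\tQ$, la restriction \`a $\ag_\tM^\tQ$ de $H\mapsto\Gamma_\tM^\tQ(Z''+H,\VV(T))$ demeure, en vertu de l'analogue tordu de \cite[1.8.4\,(2), 1.8.3]{LW}, une combinaison lin\'eaire \`a coefficients dans $\{-1,+1\}$ de fonctions caract\'eristiques de polytopes convexes born\'es. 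Ce point acquis, l'argument de somme de Riemann et la majoration du terme d'erreur par le volume d'un voisinage tubulaire de la fronti\`ere des polytopes se transposeront sans modification, fournissant la limite
$$\lim_{k\to+\infty}p_{\ESR_k,0}(\phi,T)=\vol(\ESA_\tM^\tQ\backslash\ag_\tM^\tQ)\mun e^{\langle\Lambda_\tQ,Z\rangle}\bsc_\tM^\tQ(\XX(T)^\tQ;\Lambda_\tQ)$$
assortie de la majoration en $b\,N_d(\bsc)\,k\mun$ (o\`u $N_d$ est la norme de \ref{normes} et $d=\dim\ag_\tM^\tQ$). Le principal obstacle sera pr\'ecis\'ement d'\'etablir cette description par polytopes des fonctions $\Gamma_\tM^\tQ$ dans le cadre tordu; une fois celle-ci en main, le reste de la d\'emonstration de \ref{passlim} s'applique litt\'eralement.
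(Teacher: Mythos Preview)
Votre d\'emarche est correcte et co\"{\i}ncide avec celle du papier, qui se contente d'affirmer que la preuve de \ref{passlim} s'\'etend au cas tordu. Notez simplement que ce que vous identifiez comme \og principal obstacle\fg\ --- la description par polytopes des $\Gamma_\tM^\tQ$ --- est d\'ej\`a acquis dans le papier avant l'\'enonc\'e du lemme, o\`u il est affirm\'e que toutes les relations de \cite[1.7, 1.8]{LW} et \cite[1.3]{W} restent valables pour les fonctions tordues; il n'y a donc rien de nouveau \`a d\'emontrer \`a cet endroit.
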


Nous aurons besoin d'une variante de ce qui pr\'ec\`ede. Le ${\ZM}$-module de type fini
\begin{equation*}\ESC_{\tM}^{\tQ}\bydef \ESB_\tQ \backslash \ESA_{\tM}\end{equation*}
s'ins\`ere dans la suite exacte courte
\begin{equation*}0 \rightarrow\ESB_\tQ \backslash \ESB_\tM \rightarrow \ESC_\tM^\tQ \rightarrow \bsbbc_\tM\rightarrow 0\ptf\end{equation*}
On note $\ESB_{\tM}^{\tQ}(Z)\subset\ESC_\tM^\tQ$ la fibre au-dessus de $Z\in \bsbbc_{\tM}$. 
C'est un espace principal homog\`ene 
sous $\ESB_\tM^\tQ = \ESB_\tQ \backslash \ESB_\tM$. 
Pour $\Lambda \in (\ag_{0,{\CM}}^G)^* \oplus \ESB_\tQ^\vee$, $\tP\in \ESP^{\tQ}(\tM)$, $T\in \ag_0$ et $X\in \ag_0$
on pose 
\begin{equation*}\eta_{\tP\mskip -2mu,F}^{\tQ,T}(Z;X,\Lambda)= 
\sum_{H\in \ESB_\tM^\tQ(Z)}\Gamma_\tP^\tQ(H-X,T)e^{\langle \Lambda,H \rangle} \ptf\end{equation*}
L'expression
\begin{equation*}\eta_{\tP\mskip -2mu,F}^{\tQ,T}(Z;X)= \eta_{\tP\mskip -2mu,F}^{\tQ,T}(Z;X,0)\leqno{(1)}\end{equation*}
ne d\'epend que de l'image de $T$ dans $\ESB_\tP^{\tQ}\backslash \ag_\tP^{\tQ}$. 
La proposition suivante est une variante de \ref{croisspol} et \ref{passlim}. 

% lemme
\begin{proposition}\label{etapol}
Pour $X\in \ag_{0,\QM}$, la fonction
\begin{equation*}T\mapsto \phi(T)=\eta_{\tP\mskip -2mu,F}^{\tQ,T}(Z;X)\end{equation*}
est un \'el\'ement de \hbox{\rmfamily PolExp}:
pour tout r\'eseau $\ESR$ de $\ag_{0,\QM}$, sa restriction \`a $\ESR$ s'\'ecrit
\begin{equation*}\phi_\ESR(T)=\sum_{\nu\in E} p_{\ESR,\nu}(T)e^{\langle\nu,T\rangle}\end{equation*}
o $E$ est un sous-ensemble fini de $\wh\ESR$ et
les $p_{\ESR,\nu}$ sont des polyn™mes de degr\'e major\'e par $a_\tP -a_\tQ$. 
Les polyn™mes $p_{\ESR_k,0}$ ont pour limite, lorsque $k\to\infty$, un polyn™me 
qui est ind\'ependant du r\'eseau $\ESR$.
\end{proposition}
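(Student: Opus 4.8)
The plan is to treat this exactly as a variant of \ref{croisspol} and \ref{passlim}, the only novelties being that the sum now runs over a coset of the $\ESB$-lattice $\ESB_\tM^\tQ=\ESB_\tP^\tQ$ rather than over an $\ESA$-lattice, and that the truncation parameter $T$ enters through the cone-combinatorics function $\Gamma_\tP^\tQ(\cdot,T)$. First I would record that, by the tordu version of \cite[1.8.3]{LW} (cf. \cite[2.9]{LW}), the function $H\mapsto\Gamma_\tP^\tQ(H-X,T)$ has support whose projection to $\ag_\tP^\tQ$ is compact, of radius bounded by a constant multiple of $\|T\|$. Hence for each fixed $T$ the series defining $\phi(T)=\eta_{\tP\!,F}^{\tQ,T}(Z;X)$ is a finite sum; there is no convergence question (we are at $\Lambda=0$), and the whole content of the statement is the manner in which this finite $\Gamma_\tP^\tQ$-weighted count of lattice points depends on $T$ as $T$ ranges over a rational lattice.

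For the PolExp property I would fix a lattice $\ESR$ of $\ag_{0,\QM}$ and follow the Fourier-inversion computation of \ref{serievareps} and \ref{croisspol}. Choosing, as there, an integer $k'$ such that a refined lattice $\ESD_{k'}$ of $\ag_\tM^\tQ$ contains $\ESB_\tM^\tQ$ together with the relevant translation vectors (a lift $Z'$ of $Z$ and $X^\tQ$), I would expand $\Gamma_\tP^\tQ$ through its defining combination of the cone functions $\tau_\tP^{\tQ'}$ and $\wh\tau_{\tQ'}^\tQ$, and, for $T$ regular, through the product $\Gamma_\tP^\tQ(H,T)=\tau_\tP^\tQ(H)\phi_\tP^\tQ(H-T)$. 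Summing over the lattice coset and inverting Fourier on the finite group $\ESB_\tM^\tQ\backslash[\ESR]_\tP^\tQ$ (where $[\ESR]_\tP^\tQ$ is the image of $\ESR$ under $T\mapsto[T]_\tP^\tQ$) expresses each contribution as a finite geometric-type series whose Laurent expansion at the base point yields, exactly as in \ref{croisspol}, a polynomial in $[T]_\tP^\tQ$ times an exponential $e^{\langle\nu,T\rangle}$ with $\nu\in\wh\ESR$. The degree of these polynomials is at most the number of elements of $\Delta_\tP^\tQ$, i.e. $\dim\ag_\tP^\tQ=a_\tP-a_\tQ$, giving the asserted bound; the compensation of polar parts is justified, as before, by the smoothness (here even finiteness) of $\phi$.

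For the limit of $p_{\ESR_k,0}$ as $k\to\infty$ I would reproduce the Riemann-sum argument of \ref{passlim}\,(iii). The coefficient $p_{\ESR_k,0}(\phi,T)$ is, up to the normalising index $[\,[\ESR_k]_\tP^\tQ:\ESB_\tM^\tQ\,]$, the $\Gamma_\tP^\tQ$-weighted count over the refined lattice $[\ESR_k]_\tP^\tQ$; since $\Gamma_\tP^\tQ(\cdot,T)$ is by \cite[1.8.4\,(2), 1.8.3]{LW} a finite $\{-1,+1\}$-combination of characteristic functions of bounded convex polytopes whose vertices depend rationally on $T$, this count converges to the volume-normalised integral
$$\lim_{k\to\infty}p_{\ESR_k,0}(\phi,T)=\vol(\ESB_\tP^\tQ\backslash\ag_\tP^\tQ)\mun\int_{\ag_\tP^\tQ}\Gamma_\tP^\tQ(H-X,T)\dd H\ptf$$
The right-hand side is a genuine polynomial in $T$ of degree at most $a_\tP-a_\tQ$ and is manifestly independent of $\ESR$. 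The error is controlled, as in \ref{passlim}, by the tubular-neighbourhood bound: the discrepancy is majorised by the volume of the mesh cells of $[\ESR_k]_\tP^\tQ$ meeting the boundaries of the polytopes, which is $O(k\mun)$.

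The step I expect to be the main obstacle is the bookkeeping of the finite group over which one inverts Fourier: it is built here from the $\ESB$-lattice $\ESB_\tM^\tQ$, whose finite index in the corresponding $\ESA$-lattice $\ESA_\tM^\tQ$ is governed by the groups $\bsbbc_\tM$, $\bsbbc_\tQ$ that are the source of all the extra difficulty flagged in the introduction. One must verify both that the degree bound $a_\tP-a_\tQ$ survives this passage and that the cancellation forced by the smoothness of $\phi$ removes the potentially higher-order singular contributions; the independence of the limit from $\ESR$ then follows formally from its description as the volume-normalised integral above.
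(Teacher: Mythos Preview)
Your proposal is correct and follows essentially the same approach as the paper: expand $\Gamma_\tP^\tQ$ via the $\tau$-$\wh\tau$ combinatorics, refine to a lattice $\ESD_k$ containing $\ESB_\tP^\tQ$ and the relevant rational shifts, apply Fourier inversion, and extract the polynomial-times-exponential structure from the Laurent expansion of the resulting meromorphic pieces (each with pole order at most $a_\tP-a_\tQ$), then handle the limit by the Riemann-sum argument of \ref{passlim}. The paper makes the auxiliary variable $\Lambda$ (and the holomorphy of $\Lambda\mapsto\eta_{\tP\!,F}^{\tG,T}(X,\Lambda)$, which is what forces the cancellation of polar parts) somewhat more explicit than you do, and first reduces to $Z=0$, $\tQ=\tG$, but the argument is otherwise the same.
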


% preuve
\begin{proof}
Puisque \begin{equation*}\eta_{\tP\mskip -2mu,F}^{\tQ,T}(Z;X)= e^{\langle \Lambda,Z' \rangle} \eta_{\tP\mskip -2mu,F}^{\tQ,T}(0;X-Z')\end{equation*}
 on peut supposer $Z=0$ et il suffit de traiter le cas $\tQ = \tG$. Posons
\begin{equation*}\eta_{\tP\mskip -2mu,F}^{\tG,T}(X,\Lambda)=\eta_{\tP\mskip -2mu,F}^{\tG,T}(0;X,\Lambda)\ptf\end{equation*}
On rappelle que 
\begin{equation*}\Gamma_\tP^\tG(H,T)=\sum_{\{\tR \vert 
\tP\subset \tR\}}(-1)^{a_\tR-a_\tG}\tau_\tP^\tR(H)\hat{\tau}_\tR^\tG(H-T)\ptf\leqno(2)\end{equation*}
et que la projection dans $\ag_\tP^\tG$ du support de la fonction $H\mapsto \Gamma_\tP^\tG(H,T)$ est compacte. 
Pour $\Lambda \in \ag_{0,\CM}^\tG$ sa transform\'ee anti-Laplace
\begin{equation*}\eta_{\tP\mskip -2mu,F}^{\tG,T}(X,\Lambda)=\sum_{H\in\ESB_\tP^\tG}\Gamma_\tP^\tG(H-X,T)e^{\langle\Lambda,H\rangle}\end{equation*}
est donc une fonction holomorphe de $\Lambda$. Comme dans \ref{serievareps}
on consid\`ere un r\'eseau $\ESD_k$ de $\ag_\tP^\tG$
assez fin pour que $\ESB_\tP^\tG$ et les images de $X$ et $T$ dans $\ag_\tP^\tG$
soient contenus dans ce r\'eseau. On a
\begin{equation*}\eta_{\tP\mskip -2mu,F}^{\tG,T}(X,\Lambda)=
{c^{-1}} \sum_{\nu\in\NN}\sum_{H\in\ESD_k}\Gamma_\tP^\tG(H-X,T)e^{\langle\Lambda+\nu,H\rangle}\end{equation*}
o $\nu$ parcourt le dual $\NN= \ESB_\tP^{\tG,\vee}\mskip -2mu/ \ESD_k^\vee$ 
de $\ESB_\tP^\tG \backslash \ESD_k$ {et $c$ est l'indice de $\ESB_\tP^\tG$ dans $\ESD_k$}.
La somme en $H$ peut se calculer au moyen de l'expression (2) lorsque $\Re(-\Lambda)$ est r\'egulier: 
\begin{equation*}\eta_{\tP\mskip -2mu,F}^{\tG,T}(X,\Lambda)=\sum_{\{\tR \vert 
\tP\subset \tR\}}\eta_{\tP\mskip -2mu,\tR}^T(X,\Lambda)\end{equation*}
avec
\begin{equation*}\eta_{\tP\mskip -2mu,\tR}^{T}(X,\Lambda)=c^{-1}(-1)^{a_\tR-a_\tG}\sum_{\nu\in\NN}\sum_{H\in\ESD_k}
\tau_\tP^\tR(H-X)\hat{\tau}_\tR(H-{T-X})e^{\langle\Lambda+\nu,H\rangle}\end{equation*}
qui est une fonction m\'eromorphe en $\Lambda$ 
ayant un p™le d'ordre $a_\tP^\tG = a_\tP - a_\tG$ en $\Lambda=0$.
On conclut comme dans \ref{croisspol} en consid\'erant les d\'eveloppements de Laurent des 
$\eta_{\tP\mskip -2mu,\tR}^T(X,\Lambda)$. Pour la derni\`ere assertion on proc\`ede comme dans
la preuve de \ref{passlim}. 
\end{proof}

%%%%%%%%%%%%%%%%

 \section{Les fonctions $\sigma$ et $\wt{\sigma}$}\label{les fonctions sigma}
D'apr\`es \cite[2.11.1]{LW}, pour un $Q\in\ESP_\st$, il existe un plus petit $\tQ^+\in\wt\ESP_\st$ 
et un plus grand $\tQ^-\in\wt\ESP_\st$ tels que
\begin{equation*}Q^-\subset Q\subset Q^+\ptf\end{equation*}
De plus \cite[2.11.2]{LW}, pour $Q,\mskip 2mu  R\in\ESP_\st$ tels que $Q^+\subset R^-$, 
on a $(\ag_Q^R)^{\theta_0}=\ag_{\tQ^+}^{\tR^-}$.

Pour $Q,\mskip 2mu  R\in\ESP$ tels que $Q\subset R$, on note $\sigma_Q^R$\index{siagmaqr@$\sigma_Q^R$}
la fonction caract\'eristique de l'ensemble des $H\in\ag_0$ tels que
\begin{equation*}\left\{\begin{array}{ll}\langle \alpha,H \rangle >0 &\hbox{pour $\alpha\in\Delta_Q^R$}\\
\langle \alpha,H\rangle\leq 0&\hbox{pour $\alpha\in\Delta_Q\smallsetminus\Delta_Q^R$}\\
\langle \varpi,H\rangle >0 &\hbox{pour tout $\varpi\in\hat{\Delta}_R$}
\end{array}
\right.\end{equation*}
Si de plus $Q\in \ESP_\st$ et $Q^+\subset R^-$, il existe un $\tP\in\wt\ESP$ tel que $Q\subset P\subset R$ alors, on d\'efinit 
la variante tordue $\wt{\sigma}_Q^R$\index{siatgmaqr@$\wt\sigma_Q^R$} de la fonction $\sigma_Q^R$
en rempla\c{c}ant la troisi\`eme condition par 
\begin{equation*}\langle \wt{\varpi},H\rangle >0\quad
\hbox{pour tout} \quad\wt{\varpi}\in\hat{\Delta}_\tP\ptf\end{equation*}
D'apr\`es \cite[2.11.3]{LW}, la fonction $\wt{\sigma}_Q^R$ est
ind\'ependante du choix du $\wt{P}\in\wt\ESP$ avec $Q\subset P\subset R$
utilis\'e pour la d\'efinir, ce qui justifie la notation.

%%%%%%%%%%%%%%%
 \section{La fonction \textit{q}}\label{qqq}
Pour $Q\in\ESP_\st$, consid\'erons l'application lin\'eaire\footnote{Notons que notre d\'efinition de $q_Q$ 
diff\`ere de celle de \cite[2.13]{LW}, puisqu'on projette sur $\ag_Q^\tG$ et non pas 
sur $\ag_Q^G$. Cela ne change pas grand chose \`a l'affaire puisque par hypoth\`ese, 
l'application $1-\theta$ est un automorphisme de $\ag_G^\tG$.}\index{qQ@$q=q_Q$}
\begin{equation*}q=q_Q :\ag_0\rightarrow\ag_Q^\tG\end{equation*}
d\'efinie par
\begin{equation*}q(X)= ((1-\theta_0)X^\tG)_Q = ((1-\theta_0)X)_Q^\tG\ptf\end{equation*}
Elle se factorise \`a travers la projection orthogonale $\ag_0\rightarrow\ag_\Qo^\tG$ 
avec
\begin{equation*}Q_0 = Q\cap\theta_0^{-1}(Q)\in\ESP_\st\ptf\end{equation*}
Tous les r\'esultats de \cite[2.12, 2.13]{LW} sont vrais ici, mutatis mutandis.

%%%%%%%%%%%%%%%%%%%%%%%%%%%%%%%%
 \chapter{Th\'eorie de la r\'eduction}\label{th\'eorie de la r\'eduction}

%%%%%%%%%%%%%%%%%%%%%
 \section{D\'ecomposition d'Iwasawa}
\label{d\'ecomposition d'Iwasawa}
Pour $v\in\vert\ES V\vert$, on fixe une paire parabolique d\'efinie sur $F_v$ minimale 
$(P_{v,0},A_{v,0})$ de $G_v=G\times_FF_v$, et on note 
$M_{v,0}$ le centralisateur de $A_{v,0}$ dans $G_v$. 
On suppose que
\begin{equation*}P_{v,0}\subset P_{0,v}= P_0\times_F F_v,\quad 
A_{v,0}\supset A_{0,v}= A_0\times_F F_v\ptf\end{equation*}
Un sous-groupe compact de $G(F_v)$ est dit {\og $M_{v,0}$-admissible\fg} s'il est 
sp\'ecial -- donc maximal -- et correspond 
\`a un sommet de l'immeuble de $G(F_v)$ qui appartient \`a l'appartement associ\'e \`a $A_{v,0}$. 
Rappelons qu'un sous-groupe compact maximal $M_{v,0}$-admissible $\bsK_v$ de $G(F_v)$ 
v\'erifie les propri\'et\'es suivantes (cf. \cite[3.1.1]{LW}):
\begin{itemize}
\item $G(F_v)=P_{v,0}(F_v)\bsK_v$ (d\'ecomposition d'Iwasawa);
\item tout \'el\'ement de $N_G(M_{v,0})(F_v)/M_{v,0}(F_v)$ a un repr\'esentant dans $\bsK_v$;
\item pour tout sous-groupe parabolique $P$ de $G$ contenant $M_{v,0}$ et d\'efini sur $F_v$, 
notant $M$ la composante de Levi de $P$ contenant 
$M_{v,0}$ (elle est d\'efinie sur $F_v$), on a la d\'ecomposition 
$\bsK_v\cap P(F_v)= (\bsK_v\cap M(F_v))(\bsK_v\cap U_P(F_v))$ et $\bsK_v\cap M(F_v)$ 
est un sous-groupe compact $M_{v,0}$-admissible de $M(F_v)$.
\end{itemize}
Nous dirons que $\bs{K}$ est un sous-groupe compact maximal {\og $M_0$-admissible\fg} 
de $G(\adef)$ s'il est de la forme 
\begin{equation*}\bsK=\prod_{v\in\vert\ES V\vert}\bsK_{v}\end{equation*}
o les $\bs{K}_v$ v\'erifient les propri\'et\'es suivantes:
\begin{itemize}
\item pour tout $v\in\vert\ES V\vert$, $\bsK_{v}$ est un sous-groupe compact $M_{v,0}$-admissible de $G(F_v)$;
\item pour tout $F$-plongement $G\hookrightarrow \mathrm{GL}_n$, on a $\bsK_v= \mathrm{GL}_n(\oo_v)\cap G(F_v)$ 
pour presque tout $v\in\vert\ES V\vert$;
\end{itemize}

Fixons un sous-groupe compact maximal $M_0$-admissible $\bsK=\prod_{v\in\vert\ES V\vert}\bsK_v$ 
de $G(\adef)$. Alors on a la d\'ecomposition d'Iwasawa
\begin{equation*}G(\adef)= P_0(\adef)\bsK\end{equation*}
et tout \'el\'ement de $N_{G(\adef)}(M_0)/M_0(\adef)$ a un repr\'esentant dans $\bsK$. 
Plus g\'en\'eralement, pour tout $P\in\ESP$ on a $G(\adef)= P(\adef)\bsK$.

Pour $P\in\ESP$, gr‰ce \`a la d\'ecomposition d'Iwasawa, 
on \'etend les morphismes $\bfH_P$ en des fonctions sur $G(\adef)$ tout entier
que, par abus de notation, on note encore $\bfH_P$:
pour $g\in G(\adef)$, on \'ecrit $g=pk$ avec $p\in P(\adef)$ et $k\in\bsK$, et on pose
\begin{equation*}\bfH_P(g)= \bfH_P(p)\ptf\end{equation*}
Pour $\tP\in\wt\ESP$, on note $\wt\bfH_P:\tG(\adef)\rightarrow\ag_P$ la fonction d\'efinie par
\begin{equation*}\wt\bfH_P(\delta_0g)= \bfH_P(g)\ptf\end{equation*}
Suivant la convention habituelle, on pose $\bfH_0= \bfH_{P_0}$ et $\wt\bfH_0 =\wt\bfH_{P_0}$~.

La construction de hauteurs dans \cite[3.2]{LW}, qui reprend essentiellement celle de \cite[I.2.2]{MW1}, 
 est valable pour un corps global de caract\'eristique quelconque. 
 Pour la notion de \textit{hauteur} sur un $F$-espace vectoriel de dimension finie, on renvoie \`a \textit{loc.~cit}. 
On suppose donn\'e un $F$-plongement \begin{equation*}\rho: G\rightarrow \mathrm{GL}(V)\end{equation*}pour un 
$F$-espace vectoriel de dimension finie $V$. On choisit une \textit{hauteur} $\lVert \mskip -1mu\cdot\mskip -1mu\rVert $ 
sur le $F$-espace vectoriel $\mathrm{End}(V)\times \mathrm{End}(V)$, et pour $x\in G(\adef)$, on pose
\begin{equation*}\vert x\vert =\lVert  (\rho(x),{^\mathrm{t}\rho(x^{-1})})\rVert \ptf\end{equation*}

%%%%%%%%%%%%%%%%%%%%%%%%%%%%%

 \section{Point central}\label{l'\'el\'ement T_0}
 
D'apr\`es \cite[3.3.3]{LW}, il existe un point $T_0\in\ag_0^G$ tel que 
pour tout \'el\'ement $s\in \bfW$, et pour tout repr\'esentant $w_s$ de $s$ dans $G(F)$, on ait
\begin{equation*}\bfH_0(w_s)= T_0 - sT_0\vgq \bfH_0(w_s^{-1})=T_0 - s^{-1}T_0\ptf\end{equation*}
Cet point est donn\'e par 
\begin{equation*}T_0\index{tbo@$T_0$}
=\sum_{\alpha\in\Delta_0} t_\alpha(1)\check{\varpi}_\alpha\end{equation*}
o $\check{\varpi}_\alpha\in\ag_0^G$ est l'\'el\'ement 
correspondant \`a $\alpha\in\Delta_0$ dans la base duale et $t_\alpha(1)\in \RM$ est d\'efini par 
\begin{equation*}\bfH_0(w_\alpha)=t_\alpha(1)\check{\alpha}\end{equation*}
o $w_\alpha$ est un repr\'esentant dans $G(F)$ de la sym\'etrie $s_\alpha$. 
Puisque les $\check{\varpi}_\alpha$ sont dans
$\ag_{0,\QM}$ il existe un entier $k\geq 1$ tel que $T_0\in k\mun\ESA_0$. 
Pour $x\in G(\adef)$, $T\in\ag_0$ et $s\in \bfW$, 
on pose\footnote{Dans \cite[3.3 et 5.3]{LW}, les \'el\'ements $\Y_{x,T,s}$, 
$\Y_{T,s}$, $\Y_s$ sont not\'es respectivement ${Y}_s(x,T)$, ${Y}_s(T)$, ${Y}_s$.}
\begin{equation*}\Y_{x,T,s}= s^{-1}(T-\bfH_0(w_sx))\ptf\end{equation*}
Si $x=1$, on \'ecrit simplement
\begin{equation*}\Y_{T,s}\index{YaTs@$\Y_{T,s}$}
\bydef\Y_{1,T,s}= s^{-1}T+(T_0-s^{-1}T_0),\end{equation*}
et si $T=0$, on pose $\Y_s=\Y_{0,s}$. 
Pour $P\in\ESP(M_0)$ et $s\in \bfW$ tel que $s(P)=P_0$, on 
pose $\Y_{T,P}=\Y_{T,s}$. 
Ceci d\'efinit comme en \cite[3.3]{LW} une famille orthogonale 
\begin{equation*}\YY(T)\index{YT@$\YY(T)$}
\bydef(\Y_{T,P})_{P\in\ESP}\quad\hbox{avec}\quad\Y_{T,P}=\brT{P}+(T_0 - \brTo{P})\ptf\end{equation*}
On note ${\YY}= (\Y_P)$ la famille $M_0$-orthogonale d\'efinie par 
 \begin{equation*}\Y_P\bydef\Y_{0,P}=T_0 - \brTo{P} =T_0-s^{-1}T_0=\Y_s\ptf\end{equation*}
Puisque $\Y_{s(P_0)} = \bfH_0(w_s^{-1})\in\ESA_0$, la famille ${\YY}$ est enti\`ere et
on a $\YY(T)=\YY+\TT$.

Plus g\'en\'eralement, pour $x\in G(\adef)$ et $T\in\ag_0$, on d\'efinit comme en \cite[3.3.2~(iii)]{LW} 
une famille $M_0$-orthogonale $(\Y_{x,T,P})$: pour $P\in\ESP(M_0)$ et $s\in \bfW$ tel que $s(P)=P_0$, 
on pose $\Y_{x,T,P}=\Y_{x,T,s}$. On a donc $\Y_{T,P}=\Y_{1,T,P}$. La famille $(\Y_{x,T,P})$ est rationnelle 
si $T\in\ag_{0,\QM}$. De plus (\textit{loc.~cit}.), il existe une constante $c$ telle que si $\bsd_0(T)>c$, cette famille 
est r\'eguli\`ere.

%%%%%%%%%%%%%%%%%%%%%%
 \section{\'El\'ements primitifs}\label{primitifs}
En caract\'eristique positive, la d\'ecomposition de Jordan 
n'est en g\'en\'eral pas d\'efinie sur le corps de base; il convient donc ici 
de remplacer la notion d'\'el\'ement quasi semi-simple r\'egulier elliptique par celle d'\'el\'ement primitif \cite[3.7, page~76]{LW}: 
un \'el\'ement de $\tG(F)$ est dit \textit{primitif} (dans $\tG$) s'il n'appartient \`a aucun sous-espace parabolique propre 
de $\tG$ d\'efini sur $F$, autrement dit, si son orbite sous $G(F)$ ne rencontre aucun $\tP(F)$ pour 
$\tP\ne\tG$. On note $\tG(F)_\prim$
l'ensemble des \'el\'ements primitis de $\tG(F)$. Pour $\tM\in\wt{\ESL}$, 
on dispose plus g\'en\'eralement de la notion d'\'el\'ement primitif de $\tM(F)$ 
et de l'ensemble $\tM(F)_\prim$. 

On appelle \textit{paire primitive} (dans $\tG$)
une paire $(\tM,\delta)$ o $\tM\in \wt{\ESL}$ et $\delta$ est un \'el\'ement primitif de $\tM(F)$. 
Deux paires primitives $(\tM,\delta)$ et $(\tM',\delta')$ sont dites \'equivalentes si elles sont conjugu\'ees i.e. 
s'il existe un \'el\'ement $x\in G(F)$ tel que ${\tM'}= \mathrm{Int}_x(\tM)$ et $\delta'= \mathrm{Int}_x(\delta)$. 
On note $ [\tM,\delta]$ la classe d'\'equivalence de $(\tM,\delta)$ et $\OO$ l'ensemble de ces classes.

Pour un \'el\'ement $\gamma\in\tG(F)$, on note $\ESO(\gamma)$ sa classe de $G(F)$-conjugaison.
Consid\'erons un espace parabolique $\tP=\tM U\in\wt\ESP$ tel que 
$\ESO(\gamma)\cap\tP(F)\neq\emptyset$ avec $\tP$ minimal pour cette propri\'et\'e.
On choisit $g\in G(F)$ tel que $g^{-1}\gamma g\in\tP(F)$. On peut \'ecrire
$g^{-1}\gamma g=\delta u$ avec $\delta\in\tM(F)$ et $u\in U(F)$.
La condition de minimalit\'e assure que $\delta$ est primitif dans $\tM(F)$.

\begin{lemma} \label{corrorb}
La correspondance $\gamma\mapsto (\tM,\delta)$ induit une
application surjective \begin{equation*}\zeta:\tG(F)\rightarrow\OO\ptf\end{equation*}
\end{lemma}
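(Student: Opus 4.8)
The plan is to exhibit, for each class $[\tM,\delta]\in\OO$, an explicit preimage under $\zeta$, the natural candidate being the primitive element $\delta$ itself viewed in $\tG(F)$ through the inclusions $\tM(F)\subset\tP_\tM(F)\subset\tG(F)$, where $\tP_\tM\in\wt\ESP$ is any parabolic space with Levi component $\tM$. Before doing so I would record that $\zeta$ is genuinely single-valued: once a minimal $\tP$ is fixed, the element $\delta\in\tM(F)$ arising from $g^{-1}\gamma g=\delta u$ is unique by uniqueness of the Levi decomposition in the twisted parabolic $\tP=\tM U$, while independence of the choice of $g$ and of the minimal $\tP$ will come from the descent and conjugacy statements discussed below. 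Granting this, surjectivity reduces to the single identity $\zeta(\delta)=[\tM,\delta]$ for every primitive pair $(\tM,\delta)$.

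To prove this identity I would fix $\tP_\tM=\tM U\in\wt\ESP$ and show that $\tP_\tM$ is \emph{minimal} among the elements of $\wt\ESP$ whose $F$-points meet $\ESO(\delta)$; since $\delta\in\tM(F)\subset\tP_\tM(F)$ has trivial $U$-part, such a minimal $\tP_\tM$ then computes $\zeta(\delta)=[\tM,\delta]$ with $g=1$ and $u=1$. Suppose minimality failed, so that some $\tP'\in\wt\ESP$ with $\tP'\subsetneq\tP_\tM$ satisfied $h^{-1}\delta h\in\tP'(F)$ for an $h\in G(F)$. Because all parabolic spaces considered here are semi-standard, $\tR:=\tP'\cap\tM$ is then a proper parabolic subspace of $\tM$, and the image of $h^{-1}\delta h$ under the projection $\tP_\tM\to\tM$ lies in $\tR(F)$.

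The heart of the matter is then a descent statement: two $G(F)$-conjugate elements of $\tP_\tM(F)$ have $\tM$-components conjugate under $N_G(\tM)(F)$. Applied to $\delta$, whose $\tM$-component is $\delta$, and to $h^{-1}\delta h$, whose $\tM$-component lies in $\tR(F)$, this yields an $n\in N_G(\tM)(F)$ with $n^{-1}\delta n\in\tR(F)$, hence $\delta\in(n\tR n^{-1})(F)$, a proper parabolic subspace of $\tM$; this contradicts the primitivity of $\delta$ in $\tM$. Thus $\tP_\tM$ is minimal, $\zeta(\delta)=[\tM,\delta]$, and $\zeta$ is surjective. The same descent lemma handles independence of the auxiliary $g$ (two such $g$ give $G(F)$-conjugate elements of $\tP(F)$, hence $N_G(\tM)(F)$-conjugate $\tM$-components, hence the same class), while independence of the minimal $\tP$ itself follows from the conjugacy of any two minimal parabolic spaces meeting a fixed orbit, compatibly with their primitive components.

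I expect the genuine obstacle to be exactly this descent lemma together with the conjugacy of the minimal parabolic spaces meeting $\ESO(\delta)$, which in characteristic zero rest on the Jordan decomposition unavailable here. Following the philosophy of \S\ref{primitifs}, where the notion of primitive element replaces quasi-semisimple regular elliptic, I would establish them from the reduction theory of Harder \cite{H} and Springer \cite{S}, valid over an arbitrary global field, adapting \cite[3.7]{LW} to the twisted space $\tG$. A secondary, purely bookkeeping point is the relative-position argument guaranteeing that the intersection of a Levi and a parabolic is again a parabolic of the Levi, which is dispatched by keeping every subgroup and subspace semi-standard with respect to the fixed pair $(P_0,M_0)$.
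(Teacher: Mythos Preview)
Your descent lemma --- ``two $G(F)$-conjugate elements of $\tP_\tM(F)$ have $\tM$-components conjugate under $N_G(\tM)(F)$'' --- is false as stated. Take $G=GL_3$, $M=GL_2\times GL_1$ embedded block-diagonally; here $N_G(M)=M$ since the blocks have different sizes. The elements ${\rm diag}(a,b,c)$ and ${\rm diag}(a,c,b)$ with $a,b,c\in F^\times$ distinct both lie in $M(F)\subset P(F)$ and are $G(F)$-conjugate by a permutation matrix, yet their $M$-components (themselves) are not $M(F)$-conjugate. So the lemma cannot be invoked as a black box, and the suggestion to extract it from the reduction theory of Harder--Springer is a red herring: that theory concerns fundamental domains for $G(F)\backslash G(\AM)$, not rational conjugacy inside parabolics.

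If you add the hypothesis that one of the two elements is primitive in $\tM$, the statement becomes plausible --- but then it is essentially the well-definedness of $\zeta$ itself (for fixed $\tM$), so using it to prove that $\zeta$ is well-defined is circular. More broadly, you have inverted the emphasis: surjectivity is the easy part (once $\zeta$ is known to be well-defined, take $\gamma=\delta$; primitivity of $\delta$ in $\tM$ forces any semi-standard $\tP\subsetneq\tP_\tM$ containing $\delta$ to satisfy $\tP\cap\tM\subsetneq\tM$, a contradiction, so $\tP_\tM$ is minimal among parabolics containing $\delta$ itself, and the construction with $g=1$, $u=1$ returns $(\tM,\delta)$). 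The real content is well-definedness, and the paper attacks it directly: given $\gamma$ in two minimal $\tP=\tM U$ and $\tP'=\tM'U'$, it passes to the intersection $\tH=\tP\cap\tP'$, constructs a Levi $\tL$ of $\tH$ via Borel--Tits, observes that $\tL$ is simultaneously a Levi of $\tP$ and of $\tP'$, and then conjugates $\tM$ and $\tM'$ to $\tL$ by unipotent elements $v\in U(F)$, $v'\in U'(F)$. The element $y={v'}^{-1}v$ then conjugates both $\tM$ to $\tM'$ and $\delta$ to $\delta'$. No descent lemma or reduction theory is needed --- only the structure theory of parabolic subgroups and their Levi decompositions.
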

\begin{proof}
Il convient de montrer que deux paires primitives associ\'ees \`a un mme $\gamma$ sont \'equivalentes.
Soient donc $\tP=\tM  U$ et $\tP'=\tM' U'$ deux \'el\'ements de $\wt{\ESP}$ tels que
\begin{equation*}\gamma=\delta u=\delta' u'\end{equation*}pour $\delta\in\tM(F)$, $u\in U(F)$, $\delta'\in\tM'(F)$ 
 et $u'\in U'(F)$; de plus $\wt{P}$ et $\wt{P}'$ sont minimaux pour ces conditions. L'ensemble
\begin{equation*}\wt{H}= \wt{P}\cap \wt{P}'= (P\cap P')\gamma = \gamma (P\cap P')\end{equation*}est un espace tordu 
de groupe sous-jacent $H= P \cap P'$. Soit $L$ une composante de Levi de 
$H$ d\'efinie sur $F$ (cf. \cite[4.7]{BT}). Puisque $\mathrm{Int}_{\gamma}(L)$ en est une autre, 
d'apr\`es \textit{loc.~cit}. il existe un unique \'el\'ement $u_H\in U_{H}(F)$ 
tel que $\mathrm{Int}_{\gamma}(L)= \mathrm{Int}_{u_H}(L)$, o $U_H$ est le radical unipotent de $H$. Ainsi 
\begin{equation*}\wt{L}= \eta L = L \eta \com{avec} \eta = u_H^{-1}\gamma\in \wt{H}(F)\end{equation*}est 
une composante de Levi de $\wt{H}$ d\'efinie sur $F$: on a l'ŽgalitŽ $\wt{H}= \wt{L} \ltimes U_H$. 
Comme $\wt{H}$ normalise $U$ (resp. $U'$), d'aprs \cite[4.4\,(b)]{BT} $Q=\wt{H}U$ (resp. $\wt{Q}=\wt{H}U'$) est un sous-espace parabolique 
de $\wt{G}$ dŽfini sur $F$ et contenu dans $\wt{P}$ (resp. $\wt{P}'$). Par construction $\gamma \in \wt{Q}(F)$ (resp. $\gamma \in \wt{Q}'(F)$). Par minimalitŽ on a donc $\wt{Q}=\wt{P}$ 
et $\wt{Q}'= \wt{P}'$. D'aprs \textit{loc.~cit}., cela entra"ne que $\wt{L}$ est une composante de Levi de $\wt{P}$ (resp. $\wt{P}'$) et $U_H\subset U$ (resp. $U_H\subset U'$). On en dŽduit qu'il  
existe un unique $v\in U(F)$ (resp. $v'\in U'(F)$) tel que 
$\wt{L} = \mathrm{ Int}_v(\wt{M})$ (resp. $\wt{L} \subset \mathrm{ Int}_{v'}(\wt{M})$). On a donc \begin{equation*}\wt{M}'= \mathrm{Int}_y(\wt{M})\quad\hbox{avec}\quad y= {v'}^{-1}v\ptf\end{equation*}
Il reste \`a montrer que $\delta$ et $\delta'$ sont conjugu\'es par ce $y$.
\'Ecrivons $\gamma=\delta''u''$ avec $\delta''\in \wt{L}(F)$ et $u''\in U_H(F)$. On a donc 
$$\gamma = v^{-1}\delta''v u_1 \quad \hbox{avec} \quad u_1= v^{-1}\mathrm{ Int}_{\delta''^{-1}}(v) u''\ptf$$ Or 
$v^{-1}\delta'' v$ appartient ˆ $\wt{M}$ et $u_1$ appartient ˆ $U(F)$. Puisque $\gamma = \delta u$, cela entra"ne 
$v^{-1} \delta'' v = \delta$ et $u_1=u$. On obtient de la mme manire l'ŽgalitŽ $v'^{-1} \delta'' v' = \delta'$. 
On a donc bien $\delta' = \mathrm{ Int}_y(\delta)$. 
\end{proof}

% lemme 
\begin{lemma} \label{OO} On a les assertions suivantes:
\begin{enumerate}[(i)]
\item L'application $\zeta$ fournit une partition de l'ensemble des classes de $G(F)$-conjugaison
dans $\tG(F)$ et pour $\oo=[\tM,\delta]\in\OO$ l'ensemble
\begin{equation*}\ESO_\oo=\bigcup_{\tQ\in\ESP(\tM)}\{g^{-1}\delta u g\mskip 2mu \vert \mskip 2mu  g\in G(F),\mskip 2mu  u\in U_Q(F)\}\end{equation*}
 est la fibre de $\zeta$ au desus de $\oo$. 
\item Pour $\tQ\in \wt\ESP$, on a la d\'ecomposition 
\begin{equation*}\ESO_\oo\cap\tQ(F)= (\ESO_\oo\cap\tM_Q(F))U_Q(F)\ptf\end{equation*}
\end{enumerate}
\end{lemma}

% preuve
\begin{proof}
Le point (i) est clair. Prouvons (ii).
Soit $(\tM,\delta)$ une paire primitive dans la classe $\oo$.
On distingue deux cas: ou bien il n'existe aucun \'el\'ement $x\in G(F)$ 
tel que $\tM\subset \mathrm{Int}_x(\tM_Q)$, auquel cas les ensembles \`a gauche 
et \`a droite de l'\'egalit\'e (2) sont vides. Ou bien il existe un tel $x$ et,
quitte \`a remplacer $\tQ$ par $\mathrm{Int}_x(\tQ)$, on peut supposer que $\tM\subset\tM_Q$. 
Un $\gamma\in\ESO_\oo\cap\tQ(F)$ peut s'\'ecrire $\gamma =\gamma_1 u$ 
avec $\gamma_1\in\tM_Q(F)$ et $u\in U_Q(F)$. Soient $\tP_1$ un $F$-sous-espace parabolique 
 de $\tM_Q$ minimal pour la condition $\gamma_1\in\tP_1(F)$ et
 $\tM_1$ une composante de Levi de $\tP_1$ (d\'efinie sur $F$). 
On a $\gamma_1 =\delta_1u_1$ avec $\delta_1\in\tM_1(F)$ et $u_1\in U_{P_1}(F)$. 
La paire $(\tM_1,\delta_1)$ est conjugu\'ee \`a $(\tM,\delta)$
et donc $\gamma_1$ appartient \`a $\ESO_\oo\cap\tM_Q(F)$. D'o l'inclusion $\subset$. 
L'inclusion en sens inverse s'obtient de mani\`ere similaire.
\end{proof}

%%%%%%%%%%%%%%%%%%%%%%%%%%%%%%%

 \section{Ensembles de Siegel, partitions et lemme de finitude}
\label{Siegel}\label{la fonction F} 

Rappelons qu'on a not\'e $G({\adef})^1$ le noyau de $\bfH_G$, et $P_0({\adef})^1= 
M_0({\adef})^1 U_0({\adef})$ celui de $\bfH_0=\bfH_{P_0}$. 
Pour $t\in {\RM}$, on note $A_0^G(t)$ l'ensemble des $a\in A_0({\adef})\cap G(\adef)^1$ 
tels que 
\begin{equation*}\langle\alpha\mskip 2mu ,\mskip 2mu  \bfH_0(a)\rangle> t\quad \hbox{pour toute racine}\quad \alpha\in\Delta_0\ptf\end{equation*}
On peut choisir un ensemble fini $\FF$ \index{FxF@$\FF$}
dans $M_0(\adef)\cap G({\adef})^1$ de sorte que
\begin{equation*}(\A_0(\adef)\cap G(\adef)^1) M_0(\adef)^1\FF=M_0(\adef)\cap G({\adef})^1\ptf\end{equation*}
D'apr\`es \cite{Sp}, on sait que:
\pni(1) le quotient $G(F)\bsl G({\adef})^1$ est compact si et seulement si $G_\mathrm{der}$ est anisotrope;
\pni(2) il existe un $t\in {\RM}$ tel que 
\begin{equation*}G({\adef})^1= G(F)P_0({\adef})^1A_0^G(t)\FF\bsK\ptf\end{equation*}
Puisque $M_{0\mathrm{,\mskip 2mu  der}}$ est anisotrope, l'assertion (1) montre qu'il existe un sous-ensemble 
compact $\Omega_0$ de $M_{0\mathrm{,\mskip 2mu  der}}({\adef})$ tel que 
$M_{0\mathrm{,\mskip 2mu  der}}({\adef})= M_0(F)\Omega_0$.
D'apr\`es \cite[1.7]{Sp}, l'ensemble $U_0(F)\bsl U_0({\adef})$ est compact, 
il existe donc un sous-ensemble compact $\Omega_1$ de $U_0({\adef})$ tel que $U_0(F)\Omega_1=U_0(\adef)$.
Il r\'esulte de l'assertion (2) qu'il existe un $t\in {\RM}$ tel que, en posant 
$\Omega =\Omega_1\Omega_0\subset P_{0}({\adef})$ on ait
\begin{equation*}G({\adef})^1= G(F)\Omega A_0^G(t)\FF\bsK\ptf \leqno{(3)}\end{equation*}
Maintenant consid\'erons une section du morphisme compos\'e
\begin{equation*}A_0(\adef)\rightarrow A_G(\adef)\backslash A_0(\adef) \rightarrow \ESB_0^G = 
\ESB_{G}\backslash \ESB_{0}\;(=\ESA_{A_G}\backslash \ESA_{A_0})\end{equation*}
et notons $\BB_0^G$ \index{Bbog@$\BB_0^G$}
son image. Une telle section s'obtient en choisissant (arbitrairement) des repr\'esentants dans l'image 
r\'eciproque d'une $\ZM$-base de $\ESB_0^G$ et en prenant le sous-groupe engendr\'e
\footnote{Ë priori $\BB_0^G$ n'est pas invariant sous l'action de $\bfW$.}. 
On pose 
\begin{equation*}\BB_0^G(t) = \BB_0^G\cap A_0^G(t)\ptf\end{equation*}
Pour $t\in {\RM}$ et $\Omega$ un sous-ensemble compact de $P_0({\adef})^1$, on pose 
\begin{equation*}\Siegel_{t,\FF,\Omega}^1  \index{SiegeltFOm@$\Siegel_{t,\FF,\Omega}^1$}
=\Omega \BB_0^G(t)\FF\bsK\ptf\end{equation*}
D'apr\`es (3), on voit que pour $t$ assez petit et $\Omega$ assez gros, on a 
\begin{equation*}G({\adef})^1= G(F)\Siegel_{t,\FF,\Omega}^1\ptf\leqno{(4)}\end{equation*}
On notera simplement $\Siegel^1$\index{Siegelun@$\Siegel^1$}
un tel domaine $\Siegel_{t,\FF,\Omega}^1$ 
pour le quotient $G(F)\bsl G(\adef)^1$.

La propri\'et\'e de finitude usuelle pour un corps de nombres, \`a savoir que si $\Siegel^1$ est un domaine de
Siegel pour le quotient $G(F)\backslash G(\adef)^1$, l'ensemble des $\gamma\in\G(F)$ tels que 
$\gamma\Siegel^1\cap\Siegel^1\ne\emptyset$ est fini,
n'est plus vraie ici. En effet, pour tout corps global, tout \'el\'ement unipotent $u\in U_0(\adef)$ et
tout voisinage ouvert relativement compact de l'identit\'e $\mathcal V$ dans $U_0(\adef)$, on a
$a\mun ua\in\mathcal V$ pour $a\in\A_0(\adef)$ avec $\bfH_0(a)$
assez loin dans la chambre de Weyl positive (i.e. $\bsd_0(\bfH_0(a))$ assez grand). 
Maintenant, pour un corps de fonctions le sous-groupe 
compact maximal $\bsK$ est ouvert et donc
$\gamma\Siegel^1\cap\Siegel^1\ne\emptyset$ pour tout $\gamma\in U_0(F)$; il en r\'esulte que
la propri\'et\'e de finitude est en d\'efaut.

Pour $P\in\ESP_\st$, notons $P(F)_\mathrm{st-prim}$ l'ensemble des $\gamma\in P(F)$
 tels que $\gamma\notin Q(F)$ pour tout $Q\in\ESP_\st$ tel que $Q\subsetneq P$. 
 Tout \'el\'ement primitif de $G(F)$ est contenu dans $G(F)_\mathrm{st-prim}$, mais la r\'eciproque 
 est fausse en g\'en\'eral. 
 
Pour $\alpha\in\Delta_0$, notons 
$P_\alpha$ l'unique \'el\'ement de $\ESP_\st$ tel que $\Delta_0\smallsetminus\{\alpha\}$ 
soit une base du syst\`eme de racines de $M_{P_\alpha}$. 
Un \'el\'ement $\gamma\in G(F)$ est dans $G(F)_\mathrm{st-prim}$ si et seulement s'il n'appartient 
\`a aucun $P_\alpha(F)$ pour $\alpha\in\Delta_0$. Soient $\gamma\in G(F)$ et $g,\mskip 2mu  g'\in \Siegel^1$ 
tels que $g =\gamma g'$. \'Ecrivons $g= yax$ et $g'= y'a'x'$ avec 
$y,\mskip 2mu y'\in\Omega$, $a,\mskip 2mu a'\in \BB_0^G(t)$ et 
$x,\mskip 2mu x'\in\FF\bsK$. D'apr\`es \cite[2.6]{Sp}, pour chaque $\alpha\in\Delta_0$, il existe une constante $c_\alpha > t$ 
telle que si $\log\vert\alpha (a)\vert\geq c_\alpha$ ou 
$\log\vert\alpha(a')\vert\geq c_\alpha$, alors $\gamma\in P_\alpha(F)$. 
On en d\'eduit que
l'ensemble des $\gamma\in G(F)_\mathrm{st-prim}$ tels que 
$\gamma\Siegel^1\cap\Siegel^1\neq\emptyset$, est fini. 
Le lemme ci-dessous est une simple g\'en\'eralisation de ce r\'esultat. 
Nous l'\'enonons pour un travail ult\'erieur (il ne sera pas utilis\'e ici).

 % lemme
\begin{lemma}\label{finiprim}Soit $P=MU\in\ESP_\st$. Pour $\gamma\in P(F)$, on note $\gamma_M$ 
la projection de $\gamma$ sur $M(F)= U(F)\backslash P(F)$. 
Alors l'ensemble des projections $\gamma_M$ des \'el\'ements \hbox{$\gamma\in P(F)_\mathrm{st-prim}$}
tels que $\gamma\Siegel^1\cap\Siegel^1\neq\emptyset$, est fini.
\end{lemma}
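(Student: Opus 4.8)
Le plan est de se ramener, par projection sur $M$, au r�sultat de finitude �tabli ci-dessus pour $G(F)_{\rm st-prim}$, en combinant le lemme de r�duction \cite[2.6]{S} et la formule du produit. Commen\c{c}ons par une remarque de combinatoire. Si $\gamma\in P(F)_{\rm st-prim}$, alors pour toute racine $\alpha\in\Delta_0^P$ on a $\gamma\notin P_\alpha(F)$: en effet $\gamma\in P(F)$, et $P\cap P_\alpha$ est l'�l�ment de $\ESP_\st$ associ� � $\Delta_0^P\smallsetminus\{\alpha\}$, strictement contenu dans $P$, de sorte que la primitivit� donne $\gamma\notin (P\cap P_\alpha)(F)$, d'o� $\gamma\notin P_\alpha(F)$ puisque $\gamma\in P(F)$.

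Soit donc $\gamma\in P(F)_{\rm st-prim}$ tel que $\gamma\Siegel^1\cap\Siegel^1\neq\emptyset$, et �crivons $g=\gamma g'$ avec $g=yax$, $g'=y'a'x'$, o� $y,y'\in\Omega$, $a,a'\in\BB_0^G(t)$ et $x,x'\in\FF\bsK$. En d�composant $x=fk$ et $x'=f'k'$ avec $f,f'\in\FF$ et $k,k'\in\bsK$, on a $g=(yaf)k$ et $g'=(y'a'f')k'$ avec $yaf,\,y'a'f'\in P(\AM)$. Comme $\gamma\in P(\AM)$, l'�galit� $\gamma=g{g'}^{-1}$ fournit $(yaf)^{-1}\gamma(y'a'f')=k{k'}^{-1}\in\bsK\cap P(\AM)$. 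La propri�t� de $M_0$-admissibilit� donne $\bsK\cap P(\AM)=(\bsK\cap M(\AM))(\bsK\cap U(\AM))$; en projetant sur $M=P/U$ par la fl�che $\delta\mapsto\delta_M$ et en notant $\bar y=y_M\in M(\AM)^1$ (car $y\in P_0(\AM)^1$ entra�ne $\bfH_M(\bar y)=\bfH_P(y)=0$), on obtient
$$\gamma_M=\bar y\,a\,\xi\,{a'}^{-1}\,{\bar{y'}}^{\,-1}\vgq \xi=f\,\kappa_M\,{f'}^{-1}\vg$$
o� $\kappa_M\in\bsK\cap M(\AM)$ est la composante dans $M$ de $k{k'}^{-1}$. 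Ici $\bar y,\bar{y'}$ parcourent le compact fixe $p(\Omega)\subset M(\AM)^1$ et $\xi$ le compact fixe $\FF(\bsK\cap M(\AM))\FF^{-1}$.

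On r��crit alors $\gamma_M=\bar y\,(a\xi a^{-1})\,(a{a'}^{-1})\,{\bar{y'}}^{\,-1}$. Le lemme \cite[2.6]{S}, joint � $\gamma\notin P_\alpha(F)$ pour $\alpha\in\Delta_0^P$ et � $a,a'\in A_0^G(t)$, borne $\langle\alpha,\bfH_0(a)\rangle$ et $\langle\alpha,\bfH_0(a')\rangle$ dans un intervalle $(t,c_\alpha)$ pour tout $\alpha\in\Delta_0^P$; autrement dit les projections $\bfH_0(a)^P$ et $\bfH_0(a')^P$ sur $\ag_0^P$ restent dans un born� fixe. Comme la conjugaison par $a\in A_0(\AM)$ sur $M(\AM)$ ne fait intervenir que les valeurs $\alpha(a)$ pour $\alpha$ racine de $A_0$ dans $M$, c'est-\`a-dire des combinaisons de $\Delta_0^P$, elle ne d�pend que de $\bfH_0(a)^P$; le terme $a\xi a^{-1}$ demeure donc dans un compact fixe de $M(\AM)$.

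Il reste � contr�ler $a{a'}^{-1}\in\BB_0^G$. Sa projection $\bfH_0(a{a'}^{-1})^P=\bfH_0(a)^P-\bfH_0(a')^P$ est born�e par ce qui pr�c�de. Pour la composante restante dans $\ag_P$ on invoque la formule du produit: pour $\gamma_M\in M(F)$ et $\chi\in X_F(M)$, on a $\chi(\gamma_M)\in F^\times$, donc $\langle\chi,\bfH_M(\gamma_M)\rangle=\logq|\chi(\gamma_M)|=0$, soit $\bfH_M(\gamma_M)=0$. Puisque $\bfH_M$ est invariante par conjugaison, nulle sur $p(\Omega)$ et sur $\bsK\cap M(\AM)$, on en tire $\bfH_M(a{a'}^{-1})=-\bfH_M(\xi)$, qui ne prend qu'un nombre fini de valeurs (via $\FF$). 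Ainsi $\bfH_0(a{a'}^{-1})$ est born� dans $\ag_0$; comme $\bfH_0$ est injective d'image discr�te sur le r�seau $\BB_0^G$, l'�l�ment $a{a'}^{-1}$ ne prend qu'un nombre fini de valeurs. Finalement $\gamma_M$ appartient � un compact fixe de $M(\AM)$, et comme $M(F)$ est discret dans $M(\AM)$, l'ensemble des $\gamma_M$ est fini. Le point d�licat sera l'�tape o� la direction centrale $\ag_P$, non born�e par la seule primitivit�, est neutralis�e: cela repose de fa\c{c}on essentielle sur l'annulation $\bfH_M(\gamma_M)=0$ fournie par la formule du produit, ph�nom�ne propre aux points rationnels.
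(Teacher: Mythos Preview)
Your proof is correct and follows essentially the same route as the paper's: project the equation $g=\gamma g'$ onto $M$ via the decomposition $P=MU$, then bound the resulting $\gamma_M$ in a compact of $M(\AM)$. The paper is simply much terser: it writes the projected equation as $y_M a\, l_{\!M} =\gamma_M y'_M a'$ with $l=xx'^{-1}\in P(\AM)\cap \FF\bsK\FF^{-1}$, observes that $\gamma_M\in M(F)_{\rm st-prim}$, and says the argument established just before the lemma (the case $P=G$) applies. What you make explicit --- bounding the $\ag_0^P$-part of $\bfH_0(a),\bfH_0(a')$ via Springer's lemma \cite[2.6]{S} and then neutralising the central direction $\ag_P$ via the product formula $\bfH_M(\gamma_M)=0$ --- is exactly what that terse reduction hides. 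A minor looseness: the phrase ``la conjugaison par $a$ sur $M(\AM)$ ne d\'epend que de $\bfH_0(a)^P$'' is not literally true; what is true is that it depends only on $a$ modulo $A_P(\AM)$, and bounding $\bfH_0(a)^P$ confines this to a compact, which is what you need and what your argument actually uses.
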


% preuve
\begin{proof} Soient $\gamma\in P(F)$ et $g,\mskip 2mu g'\in\Siegel^1$ tel que $g=\gamma g'$. 
Comme plus haut, on \'ecrit $g=yax$, $g'=y'a'x'$. Alors $l=xx'^{-1}\in P(\adef)$. 
Pour $p\in P(\adef)$, \'ecrivons $p=p_Up_M$ avec $p_U\in U(\adef)$ et $p_M\in M(\adef)$. 
L'\'equation $g =\gamma g'$ se r\'ecrit
\begin{equation*}y_U\mathrm{Int}_{y_M a}(l_U) y_M a\mskip 2mu  l_{\mskip -2muM}
 =\gamma_U \mathrm{Int}_{\gamma_M}(y'_U)\gamma_M y'_M a' \end{equation*}
soit encore
\begin{equation*}y_U\mathrm{Int}_{y_M a}(l_U)=\gamma_U \mathrm{Int}_{\gamma_M}(y'_U)\quad \hbox{et}\quad 
y_M a\mskip 2mu  l_{\mskip -2muM} =\gamma_M y'_M a' \ptf\end{equation*}
Puisque $l_{\mskip -1muM}$ appartient au compact $(P(\adef) \cap \FF\bsK \FF^{-1})_M$ de $M(\adef)$
et $\gamma_M$ appartient \`a $M(F)_\mathrm{st-prim}$, l'\'equation 
$y_M a\mskip 2mu  l_{\mskip -1muM} =\gamma_M y'_M a' $ assure que les projections $\gamma_M$ sont dans un ensemble fini. 
\end{proof}

%%%%%%%%%%%%%%%%%%%%%%%%%%%%%%%%
Fixons comme ci-dessus une section du morphisme $A_G(\adef) \rightarrow \ESB_G$ et notons $\BB_G$ 
\index{Bbxg@$\BB_G$} son image. 
Puisque le groupe $\BB_G G(\adef)^1\backslash G(\adef)= \BB_G(M_0(\adef) \cap G(\adef)^1)\backslash M_0(\adef)$ est fini, 
on peut fixer aussi un sous-ensemble fini $\EE_G \index{EleG@$\EE_G$}
\subset M_0(\adef)$ tel que
\begin{equation*}G(\adef)= \BB_G G(\adef)^1 \EE_G= \BB_G \EE_G G(\adef)^1\ptf \end{equation*}
Posons
\begin{equation*}\Siegel^*\index{Siegelw@$\Siegel^*$, $\Siegel$}  = \EE_G \Siegel^1\quad \hbox{et}\quad \Siegel= \BB_G \Siegel^* 
=\BB_G \EE_G \Siegel^1\ptf\end{equation*}
Ainsi $\Siegel^*$ est un domaine de Siegel pour le quotient $\BB_GG(F)\backslash G(\adef)$, 
et $\Siegel$ est un domaine de 
Siegel pour le quotient $G(F)\backslash G(\adef)$. 
Les r\'esultats vrais pour $\Siegel^1$ s'\'etendent sans difficult\'e \`a $\Siegel^*$.

Pour $L\in \ESL$, on peut d\'efinir de la mme mani\`ere des domaines 
de Siegel $\Siegel^{L,1}$, $\Siegel^{L,*}=\EE_L\Siegel^{L,1}$ et $\Siegel^{L}=\BB_L \Siegel^{L,*}$. 
On peut bien sžr imposer, mme 
si ce n'est pas vraiment n\'ecessaire, que ces domaines soient compatibles \`a la conjugaison: si $L,\mskip 2mu L'\in \ESL$ 
sont tels que $A_{L'}=\mathrm{Int}_g(A_L)$ 
pour un $g\in G(F)$, on demande que $\Siegel^{L'\mskip -1mu,1}= \mathrm{Int}_g(\Siegel^{L,1})$, 
$\EE_{L'}= \mathrm{Int}_g(\EE_L)$ et 
$\BB_{L'}= \mathrm{Int}_g(\BB_L)$. 

%%%%%%%%%%%%%%%%%%%%%%%%%%%%%%%
Fixons un \'el\'ement $T_1\in\ag_0$. Fixons aussi une section du morphisme $A_0(\adef)\rightarrow \ESB_0$, 
et notons $\mathfrak{B}_0$ son image (on peut prendre 
$\mathfrak{B}_0 = \mathfrak{B}_G\times \mathfrak{B}_0^G$). 
Pour $Q\in\ESP_\st$ et $T\in\ag_0$, on note
\begin{equation*}\Siegel_{P_0}^Q(T_1,T)\end{equation*}
l'ensemble des 
$x=uac\in G(\adef)$ avec $u\in U_Q(\adef)$, $a\in\BB _0$ 
et $c\in C_Q$ o $C_Q$ 
est un sous-ensemble compact de $G(\adef)$, tels que
\begin{equation*}\left\{\begin{array}{ll}\langle \alpha, \bfH_0(x)-T_1\rangle >0 &\hbox{pour tout $\alpha\in\Delta_0^Q$}\\
\langle \varpi,\bfH_0(x)-T \rangle\leq 0 &\hbox{pour tout $\varpi\in\hat{\Delta}_0^Q$}
\end{array}
\right..\end{equation*}
D'apr\`es \cite[1.8.3]{LW}, si $T-T_1$ est r\'egulier (ce que l'on suppose)
la condition ci-dessus est \'equivalente \`a $\Gamma_{P_0}^Q(\bfH_0(x)-T_1,T-T_1)=1$. 
On note
\begin{equation*}F_{P_0}^Q(\cdot,T)\index{FxpoqT@$F_{P_0}^Q(\cdot,T)$}
\end{equation*}
la fonction caract\'eristique de l'ensemble $Q(F)\Siegel_{P_0}^Q(T_1,T)$. 
Elle d\'epend du compact $C_Q$ et aussi de l'\'el\'ement $T_1\in\ag_0$. 
En pratique, on prendra $C_Q$ assez gros, et $-T_1$ et $T$ assez r\'eguliers. 
En particulier, on supposera toujours que $F^Q_{P_0}(\cdot,T)$ 
est invariante \`a gauche par $\BB_QQ(F)$ o $\BB_Q$ est l'image d'une section du morphisme 
$A_Q(\adef) \rightarrow \ESB_Q$.  Observons que puisque 
$A_Q(F)\mathfrak{B}_Q\backslash A_Q(\adef) = A_Q(F)\backslash A_Q(\adef)^1$ est compact, 
quitte \`a grossir le compact $C_Q$, on peut mme la supposer invariante \`a gauche par $A_Q(\adef)$. 
On la supposera aussi invariante \`a droite par $\bs{K}$.

Tous les r\'esultats de \cite[3.6]{LW} sont vrais ici, en particulier \cite[3.6.3]{LW} 
qui est l'analogue pour $M_P(F)U_P(\adef)\bsl G(\adef)$ 
de la partition \cite[1.7.5]{LW} de $\ag_0$.

%%%%%%%%%%%%%%%%%%%%%%%%%%%%%%%
Les lemmes 3.7.1, 3.7.2 et 3.7.3 de \cite{LW} sont vrais ici. 
Quant au lemme 3.7.4 de \cite{LW}, il suffit d'en modifier l'\'enonc\'e de la mani\`ere suivante:

% lemme
\begin{lemma}\label{finitude}
Soit $\Omega$ un compact de $\tG(\adef)$, et soit $\tP\in\wt\ESP$. 
L'ensemble des \'el\'ements $\delta\in\tM_P(F)$ tels que $\delta$ soit $M_P(F)$-conjugu\'e \`a un 
\'el\'ement $\delta_1\in\tM_{P_1}(F)_\prim$ 
pour un $\tP_1\in\wt\ESP$ tel que $\tP_1\subset\tP$ (i.e. $P_1\subset P$), 
et $x^{-1}\delta u x\in\Omega$ pour des \'el\'ements $x\in G(\adef)$ et $u\in U_P(\adef)$, 
appartient \`a un ensemble fini de classes de $M_P(F)$-conjugaison.
\end{lemma}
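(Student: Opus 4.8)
The plan is to follow the argument of \cite[3.7.4]{LW} almost verbatim, the only substantial change being to replace the number-field finiteness of conjugacy classes meeting a compact set by the function-field finiteness for primitive elements established in \S\ref{Siegel} (the general finiteness statement fails here, as explained there, precisely because of unipotent elements and the openness of $\bsK$; it is primitivity that restores it). I would carry this out in three reductions. First, the set to be controlled is invariant under $M_P(F)$-conjugacy, and by hypothesis each relevant $\delta$ is $M_P(F)$-conjugate to a primitive $\delta_1\in\tM_{P_1}(F)_\prim$ with $\tP_1\subset\tP$. If $\delta={\rm Int}_m(\delta_1)$ with $m\in M_P(F)$, then, since $m$ normalizes $U_P$, one has $x^{-1}\delta u x=(m^{-1}x)^{-1}\delta_1\,{\rm Int}_{m^{-1}}(u)\,(m^{-1}x)$, so the condition $x^{-1}\delta u x\in\Omega$ passes to $\delta_1$. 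It therefore suffices to bound the number of $M_P(F)$-conjugacy classes of such primitive $\delta_1$.

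Second, I would eliminate the unipotent radical and pass from $G(\AM)$ to $M_P(\AM)$. Using the Iwasawa decomposition $G(\AM)=U_P(\AM)M_P(\AM)\bsK$, write $x=u_x m_x k$; since $\delta\in\tM_P$ normalizes $U_P$, a direct computation gives $x^{-1}\delta u x=k^{-1}\,({\rm Int}_{m_x^{-1}}(\delta))\,u'''\,k$ for some $u'''\in U_P(\AM)$. Thus ${\rm Int}_{m_x^{-1}}(\delta)\,u'''$ lies in the compact set $\bsK\Omega\bsK^{-1}\subset\tG(\AM)$, and projecting $\tP(\AM)\to\tM_P(\AM)$ along $U_P$ shows that ${\rm Int}_{m_x^{-1}}(\delta)$ lies in a fixed compact $\Omega_M\subset\tM_P(\AM)$. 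Hence $\delta$ is $M_P(\AM)$-conjugate into $\Omega_M$, and the statement for $\tP$ reduces to the same statement for the group $M_P$ with $\tP$ replaced by $\tM_P$ itself and $\Omega$ by $\Omega_M$; in other words, it suffices to treat the case $\tP=\tG$.

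Third, in the case $\tP=\tG$ I would combine \ref{corrorb} and \ref{OO} with reduction theory. Applying the first reduction once more (now $U_P$ is trivial, so the compactness condition is simply $G(\AM)$-conjugacy invariant), it is enough to bound, up to $G(F)$-conjugacy, the primitive elements $\delta_1\in\bigcup_{\tP_1\in\wt\ESP}\tM_{P_1}(F)_\prim$ that are $G(\AM)$-conjugate into $\Omega$; by \ref{corrorb} these are parametrized by $\OO$, so the goal becomes that only finitely many classes $\oo\in\OO$ are relevant. Fixing $\delta_1\in\tM_{P_1}(F)\subset\tP_1(F)$ with $g^{-1}\delta_1 g\in\Omega$, I would write $g=\xi s$ with $\xi\in G(F)$ and $s$ in a Siegel domain (Harder--Springer reduction theory, \S\ref{Siegel}, in the twisted form furnished by \cite[3.7.1--3.7.3]{LW}, valid here). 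This exhibits a $G(F)$-conjugate of $\delta_1$ as carrying a point of the Siegel domain into a slightly larger Siegel set; since $\delta_1$ is primitive, the finiteness for primitive (st-prim) elements proved in \S\ref{Siegel} via the estimate \cite[2.6]{S}---the very statement that survives over a function field---forces the set of relevant $\oo$ to be finite.

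The genuinely new point, and the main obstacle, is this last finiteness input: matching the (twisted) ``conjugate into a compact set'' condition with the Siegel-domain finiteness for primitive elements, while tracking the function-field subtleties---the open maximal compact $\bsK$, the need to work modulo $A_\tG(\AM)$ (so that one uses $\Siegel^1$ or $\Siegel^*$ rather than an honest fundamental domain, the classical finiteness being false here as noted in \S\ref{Siegel}), and the organization of the primitive elements across the Levis $\tM_{P_1}$ via the classes $\OO$. Everything else---the two reductions above and the conjugation-invariance of the compactness condition---is routine bookkeeping; the content lies entirely in verifying that primitivity licenses the finiteness step, exactly as the quasi semi-simple regular elliptic hypothesis did in \cite[3.7.4]{LW}.
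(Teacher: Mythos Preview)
The paper itself gives no proof; it states the lemma as the function-field modification of \cite[3.7.4]{LW} and implicitly asserts that the argument there carries over. Your outline---two Iwasawa-type reductions followed by a Siegel-domain finiteness---is the expected structure, and your steps~1 and~2 are correct.

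Step~3 has a gap as written. After reducing to $\tP=\tG$ and writing $g=\xi s$, the conjugate $\delta'=\xi^{-1}\delta_1\xi$ lies in the (generally non-standard) parabolic subspace $\xi^{-1}\tP_1(F)\xi$; when $P_1\neq G$ this means $\delta'$ is \emph{not} st-prim in $\tG(F)$, so the st-prim finiteness established in \S\ref{Siegel} does not apply to it. Primitivity in a proper Levi $\tM_{P_1}$ and st-primitivity in $\tG$ are different conditions, and you are conflating them. The remedy is simply to iterate your step~2: for each of the finitely many standard $\tP_1$, replace $\delta$ by $\delta_1\in\tM_{P_1}(F)_\prim$ (same $G(F)$-class) and apply the Iwasawa decomposition for $P_1$ to the relation $g^{-1}\delta_1\cdot 1\cdot g\in\Omega$; this yields $m_1^{-1}\delta_1 m_1\in\Omega_{M_{P_1}}$ for some $m_1\in M_{P_1}(\AM)$ and a compact $\Omega_{M_{P_1}}\subset \tM_{P_1}(\AM)$. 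Now one is working inside the Levi $M_{P_1}$, where $\delta_1$ is genuinely primitive; writing $m_1=\xi_1 s_1$ with $s_1$ in a Siegel domain for $M_{P_1}$, the $M_{P_1}(F)$-conjugate $\xi_1^{-1}\delta_1\xi_1$ is still primitive (hence st-prim) in $\tM_{P_1}$, and the twisted st-prim finiteness---this is where \cite[3.7.1--3.7.3]{LW} and the argument of \S\ref{Siegel} via \cite[2.6]{S} enter---gives finitely many $M_{P_1}(F)$-classes. Summing over the finitely many $P_1$ yields finitely many $G(F)$-classes, as required.
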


\part{Th\'eorie spectrale, troncatures et noyaux}

 \chapter{L'op\'erateur de troncature}\label{troncature}

%%%%%%%%%%%%%%%%%%%%%%%%%%%%%%%
 \section{Terme constant}\label{terme constant}
Une fonction 
$\varphi: G(\adef)\rightarrow \CM$ est dite \textit{\`a croissance lente} s'il existe des r\'eels 
$c,\mskip 2mu  r >0$ tels que pour tout 
$g\in G(\adef)$, on ait
\begin{equation*}\vert\varphi(g)\vert\leq c\vert g\vert^r\ptf\end{equation*}
On \'ecrit aussi \og $\vert\varphi (g)\vert\ll\vert g\vert^r$ pour tout $g\in G(\adef)$\fg.

Soit $P\in\ESP$, et soit $\varphi$ une fonction sur $U_P(F)\bsl G(\adef)$ mesurable et localement $L^1$. 
On d\'efinit le terme constant $\varphi^{\phantom{*}}_P=\Pi_P\varphi$ de $\varphi$ le long de $P$ par
\begin{equation*}\varphi^{\phantom{*}}_P(x)=\int_{U_P(F)\bsl U_P(\adef)}\varphi(ux)\dd u\vgq x\in G(\adef)\vg\end{equation*}
o $\dd u$ est la mesure de Tamagawa sur $U_P(\adef)$ -- i.e. celle qui donne le volume $1$ au quotient 
$U_P(F)\bsl U_P(\adef)$. Alors $\varphi^{\phantom{*}}_P$ est une fonction sur $U_P(\adef)\bsl G(\adef)$ 
mesurable et localement $L^1$. De plus, si $\varphi$ est \`a croissance lente, resp. lisse, alors 
$\varphi^{\phantom{*}}_P$ est \`a croissance lente, resp. lisse.

Pour $P\in\ESP_\st$, on note $\ESR_0^{P,+}$ l'ensemble des racines de $T_0$ dans $M_P$ 
qui sont positives par rapport \`a $\Delta_0^{P}$. 
Rappelons que l'on a fix\'e en \ref{Siegel} un domaine de Siegel $\Siegel=\mathfrak{B}_G\Siegel^*$ 
pour le quotient $G(F)\bsl G(\adef)$.
Fixons aussi un sous-groupe ouvert compact $\bsK'$ de $G(\adef)^1$.

Le lemme suivant \cite[I.2.7]{MW1} est le r\'esultat technique clef pour l'\'etude 
du terme constant dans le cas des corps de fonctions. 

% lemme 1
\begin{lemma}\label{lemme1TC}
Soit $P\in\ESP_\st$. Il existe une constante $c_P>0$ telle que :
si $g\in\Siegel$ v\'erifie $\langle \bfH_0(g),\alpha\rangle >c_P$ 
pour tout $\alpha\in\ESR_0^{G,+}\smallsetminus\ESR_0^{P,+}$
alors, pour toute fonction $\varphi$ sur $G(F)\bsl G(\adef)$ invariante \`a droite par 
$\bsK'$, on a $\varphi^{\phantom{*}}_P(g) =\varphi(g)$. 
\end{lemma}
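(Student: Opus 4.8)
Le plan est de montrer que, pour $g$ assez profond dans la direction indiqu�e, la fonction $u\mapsto\varphi(ug)$ est constante �gale � $\varphi(g)$ sur $U_P(F)\bsl U_P(\AM)$ : comme la mesure de Tamagawa donne le volume $1$ � ce quotient, l'int�grale d�finissant $\varphi^{\phantom{*}}_P(g)$ vaudra alors exactement $\varphi(g)$. Par invariance � gauche de $\varphi$ sous $G(F)\supset U_P(F)$, la fonction $u\mapsto\varphi(ug)$ se factorise par $U_P(F)\bsl U_P(\AM)$, qui est compact (\cite[1.7]{S}, comme rappel� en \ref{Siegel}). On fixe donc un sous-ensemble compact $\Omega_U$ de $U_P(\AM)$ se surjectant sur ce quotient, et il suffit de v�rifier que $\varphi(ug)=\varphi(g)$ pour tout $u\in\Omega_U$.

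Pour cela j'�cris $ug= g\,(g\mun ug)$ et j'invoque l'invariance � droite de $\varphi$ sous $\bsK'$ : il suffit de prouver qu'il existe $c_P>0$ telle que $g\mun ug\in\bsK'$ d�s que $\langle\bfH_0(g),\alpha\rangle>c_P$ pour tout $\alpha\in\ESR_0^{G,+}\smallsetminus\ESR_0^{P,+}$. J'utilise la d�composition $g\in\Siegel=\BB_G\EE_G\Omega\BB_0^G(t)\FF\bsK$ : on �crit $g= b\,\omega\,a\,\kappa$ avec $b\in\BB_G$, $\omega$ dans un compact fix� de $P_0(\AM)$ (absorbant $\EE_G$ et $\Omega$), $a\in\BB_0^G(t)$ la partie torique, et $\kappa$ dans un sous-ensemble born� fix� (absorbant $\FF$ et $\bsK$). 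Comme $A_G$ est central, $b$ commute � $u$ et dispara�t ; comme $P_0\subset P$ normalise $U_P$, on a $\omega\mun u\omega\in U_P(\AM)$, qui reste dans un compact $C$ lorsque $u$ parcourt $\Omega_U$ et $\omega$ son compact.

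Le point clef est la contraction par le tore : pour une racine $\alpha$ de $A_0$ dans $U_P$, \cad $\alpha\in\ESR_0^{G,+}\smallsetminus\ESR_0^{P,+}$, la conjugaison par $a\mun$ multiplie la coordonn�e selon $\alpha$ par $\alpha(a)\mun$, de valeur absolue $q^{-\langle\bfH_0(a),\alpha\rangle}$. Ainsi $a\mun C a$ tend vers $\{1\}$ dans $U_P(\AM)$ lorsque $\min_\alpha\langle\bfH_0(a),\alpha\rangle\to+\infty$. Or la d�composition d'Iwasawa (\ref{d�composition d'Iwasawa}) donne $\bfH_0(g)=\bfH_0(b)+\bfH_0(\omega)+\bfH_0(a)+\bfH_0(f)$, o� $\langle\bfH_0(b),\alpha\rangle=0$ puisque $\bfH_0(b)\in\ag_G$ et que $\alpha$ est triviale sur $\ag_G$, et o� $\bfH_0(\omega),\bfH_0(f)$ sont born�s ; donc $\langle\bfH_0(g),\alpha\rangle=\langle\bfH_0(a),\alpha\rangle+O(1)$ uniform�ment, et une minoration de $\langle\bfH_0(g),\alpha\rangle$ entra�ne une minoration de $\langle\bfH_0(a),\alpha\rangle$. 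Enfin la conjugaison par le compact $\kappa$ pr�serve la convergence vers $1$ dans $G(\AM)$ ; comme $\bsK'$ est \emph{ouvert} (c'est ici qu'intervient de fa\c{c}on d�cisive le cadre des corps de fonctions), on a bien $g\mun ug=\kappa\mun(a\mun\omega\mun u\omega a)\kappa\in\bsK'$ pour $g$ assez profond.

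La principale difficult� est d'assurer l'\emph{uniformit�} du seuil $c_P$ : la vitesse de contraction, et donc $c_P$, doivent �tre contr�l�s uniform�ment en $u\in\Omega_U$, en $\omega$ dans son compact et en $\kappa$ dans le sien. Ceci r�sulte d'un argument de compacit� (le compact $C$ et le voisinage ouvert $\bsK'$ de l'identit� �tant fix�s, il existe un seuil commun) combin� � la majoration uniforme de $\bfH_0(g)-\bfH_0(a)$ sur $\Siegel$ fournie par l'Iwasawa et la compacit� des facteurs $\omega$, $\FF$, $\bsK$. On prendra garde au fait que la valeur $\varphi^{\phantom{*}}_P(g)=\varphi(g)$ n'est garantie que pour $g$ dans la r�gion $\{\langle\bfH_0(g),\alpha\rangle>c_P\}$, en accord avec l'�nonc�.
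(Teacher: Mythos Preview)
Your argument is correct and is precisely the standard contraction argument underlying this result; the paper itself does not supply a proof but simply cites \cite[I.2.7]{MW}. One cosmetic point: with your grouping $\kappa=fk$ ($f\in\FF$, $k\in\bsK$), you silently re-extract $f$ when computing $\bfH_0(g)$, and since $f\in M_0(\AM)$ commutes with $a\in A_0(\AM)$ you could just as well absorb $f$ into $\omega$ from the start, which would make the bookkeeping slightly cleaner.
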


Ce lemme se g\'en\'eralise aux fonctions $\varphi$ sur $U_{P'}(\adef)M_{P'}(F)\bsl G(\adef)$ 
pour $P'\in\ESP_\st$ tel que $P\subset P'$: en remplaant $\ESR_0^{G,+}$ 
par $\ESR_0^{P'\mskip -3mu,+}$ dans la condition sur $g$, on obtient de mme $\varphi^{\phantom{*}}_P(g)=\varphi(g)$. 
On a aussi la variante suivante \cite[I.2.8]{MW1}:

% lemma
\begin{lemma}\label{lemme2TC}
Il existe une constante $c'>0$ telle que que pour tout $T'\in\ag_0$ tel que 
$\bsd_0(T') >c'$, la propri\'et\'e suivante soit v\'erifi\'ee: pour tout $P\in\ESP_\st$, tout $g\in\Siegel$ tel que
\begin{equation*}\left\{\begin{array}{ll}\langle \alpha,\bfH_0(g)-T'\rangle >0 &\hbox{pour tout $\alpha\in\Delta_P$}\\
\langle \varpi ,\bfH_0(g)-T'\rangle\leq 0 &\hbox{pour tout $\varpi\in\hat{\Delta}_0^P$}
\end{array}\right.\end{equation*}
et toute fonction $\varphi$ sur $G(F)\bsl G(\adef)$ invariante \`a droite par $\bsK'$, 
on a $\varphi^{\phantom{*}}_P(g) =\varphi(g)$. 
\end{lemma}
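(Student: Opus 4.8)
The plan is to deduce this statement from Lemma~\ref{lemme1TC} by checking that, once $T'$ is regular enough, the region cut out by the two displayed inequalities is contained in the region where \ref{lemme1TC} applies. Writing $H=\bfH_0(g)$, the hypotheses say exactly that $\tau_P(H-T')=1$ and $\phi_0^P(H-T')=1$; that is, the $\ag_P$-component $H_P$ lies deep in the positive chamber of $\ag_P$ (beyond $(T')_P$) while the $\ag_0^P$-component $H^P$ is bounded above on the weights $\hat{\Delta}_0^P$. Since $\varphi$ is right-invariant by $\bsK'$, it suffices to verify the hypothesis of \ref{lemme1TC}, namely $\langle\alpha,H\rangle>c_P$ for every $\alpha\in\ESR_0^{G,+}\smallsetminus\ESR_0^{P,+}$, these being precisely the roots of $A_0$ occurring in $U_P$.

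First I would record that membership $g\in\Siegel$ yields a fixed lower bound $\langle\beta,H\rangle\ge t'$ for all $\beta\in\Delta_0$, the constant $t'$ coming from the defining datum of the Siegel domain (the compact factors contribute only a bounded amount to $\bfH_0$). Decomposing $H=H^P+H_P$ along $\ag_0=\ag_0^P\oplus\ag_P$ and writing $\alpha=\alpha^P+\alpha_P$ accordingly, I would estimate the two pieces of $\langle\alpha,H\rangle=\langle\alpha^P,H^P\rangle+\langle\alpha_P,H_P\rangle$ separately. Expanding $H^P$ in the coweight basis of $\ag_0^P$ dual to $\Delta_0^P$, the Siegel bound forces every coordinate $\langle\beta,H^P\rangle=\langle\beta,H\rangle$ ($\beta\in\Delta_0^P$) to be $\ge t'$; since any positive root of $G$ pairs non-negatively with a dominant coweight, this gives $\langle\alpha^P,H^P\rangle\ge t'\sum_{\beta\in\Delta_0^P}\langle\alpha,\check{\varpi}_\beta\rangle$, a fixed constant independent of $T'$ (the upper bounds coming from $\phi_0^P$ are irrelevant here, as enlarging the coordinates only increases this non-negatively weighted sum). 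For the $\ag_P$-piece, the condition $\langle\beta,H-T'\rangle>0$ for $\beta\in\Delta_P$ makes the corresponding coordinates of $H_P$ grow at least linearly in $\bsd_0(T')$; because $\alpha$ occurs in $U_P$, at least one pairing $\langle\alpha,\check{\varpi}_\beta^G\rangle$ is strictly positive, whence $\langle\alpha_P,H_P\rangle$ grows at least linearly in $\bsd_0(T')$ as well.

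Combining the two estimates gives $\langle\alpha,H\rangle\ge c_1\bsd_0(T')-O(1)$ with $c_1>0$, uniformly over the finite set of $P\in\ESP_\st$ and of roots $\alpha$. Choosing $c'$ so large that $\bsd_0(T')>c'$ forces this quantity to exceed $\max_P c_P$ then places $g$ in the domain of \ref{lemme1TC}, yielding $\varphi_P(g)=\varphi(g)$. The delicate point — and the heart of the argument — is the claim that the $H^P$-contribution stays bounded below by a constant not depending on $T'$, even though $H^P$ itself ranges over a region whose diameter grows like $\bsd_0(T')$; this is exactly what the non-negativity of the pairings $\langle\alpha,\check{\varpi}_\beta\rangle$ of positive roots with dominant coweights secures, and it is what lets a single threshold $c'$ work simultaneously for all the relevant $P$ and $\alpha$.
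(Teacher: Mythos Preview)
The paper does not prove this lemma; it is simply cited as \cite[I.2.8]{MW}. So there is no proof in the paper to compare against, and your overall strategy --- reducing to Lemma~\ref{lemme1TC} --- is the natural one. However, your execution contains a genuine error.

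The claim that ``any positive root of $G$ pairs non-negatively with a dominant coweight'' is being applied with $\check{\varpi}_\beta$ a fundamental coweight of the \emph{Levi} $M_P$, not of $G$, and in that setting it is false. Take $G=SL_3$, $\Delta_0=\{\alpha_1,\alpha_2\}$, $\Delta_0^P=\{\alpha_1\}$: then $\check{\varpi}_{\alpha_1}=\tfrac12\check{\alpha}_1\in\ag_0^P$ and $\langle\alpha_2,\check{\varpi}_{\alpha_1}\rangle=-\tfrac12<0$. Consequently the term $\langle\alpha,H^P\rangle$ is \emph{not} bounded below by a constant independent of $T'$; it can be as negative as $-c\,\|T'\|$. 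Your assertion that ``the upper bounds coming from $\phi_0^P$ are irrelevant here'' is therefore wrong: the second hypothesis is essential to prevent cancellation between the $H^P$ and $H_P$ contributions.

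The fix is as follows. The condition $\phi_0^P(H-T')=1$ says $(H-T')^P=\sum_{\gamma\in\Delta_0^P}c_\gamma\check{\gamma}$ with all $c_\gamma\le 0$. Since $\langle\beta,\check{\gamma}\rangle\le 0$ for $\beta\in\Delta_0\smallsetminus\Delta_0^P$ and $\gamma\in\Delta_0^P$ (obtuseness of $\Delta_0$), one gets $\langle\beta,H^P\rangle\ge\langle\beta,(T')^P\rangle$. Combined with $\langle\beta,H_P\rangle>\langle\beta,T'_P\rangle$ from the first hypothesis, this yields $\langle\beta,H\rangle>\langle\beta,T'\rangle>\bsd_0(T')$ for every $\beta\in\Delta_0\smallsetminus\Delta_0^P$. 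Then write an arbitrary $\alpha\in\ESR_0^{G,+}\smallsetminus\ESR_0^{P,+}$ as $\alpha=\beta+\delta$ with $\beta\in\Delta_0\smallsetminus\Delta_0^P$ and $\delta\in\ESR_0^{G,+}\cup\{0\}$ (the decomposition used in the proof of Lemma~\ref{lemme3TC}); the Siegel bound $\langle\delta,H\rangle>c_1$ completes the estimate.
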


Pour $Q,\mskip 2mu  R\in\ESP$ tels que $Q\subset R$, et $\psi$ une fonction sur $U_Q(F)\bsl G(\adef)$ 
mesurable et localement $L^1$, on pose\footnote{Dans \cite[4.3]{LW}, cette fonction est not\'ee $\Theta\psi$.} 
\begin{equation*}\Pi_{Q,R}\psi =\sum_{\{P\in\ESP \vert Q\subset P\subset R\}} (-1)^{a_P-a_R}\psi^{\phantom{*}}_P\ptf\end{equation*}
C'est encore une fonction sur $U_Q(F)\bsl G(\adef)$ mesurable et localement $L^1$.

% lemme 3
\begin{lemma}\label{lemme3TC}
Il existe une constante $c''>0$ telle que pour tous les couples de sous-groupes parboliques
$Q,\mskip 2mu  R\in\ESP_\st$ 
avec $Q\subsetneq R$, tout $g\in\Siegel$ v\'erifiant
\begin{equation*}\langle \alpha, \bfH_0(g)\rangle > c''\qquad\hbox{pour tout} \qquad \alpha\in\Delta_Q^R\end{equation*}
et toute fonction $\varphi$ sur $G(F)\bsl G(\adef)$ invariante \`a droite par $K'$, on ait
\begin{equation*}\Pi_{Q,R}\varphi(g)=0\ptf\end{equation*}
\end{lemma}

\begin{proof}
Il suffit d'adapter celle de \cite[I.2.9]{MW1}. Par d\'efinition de $\Siegel$, il existe une constante $c_1<0$ 
telle que $\langle \bfH_0(g),\delta\rangle > c_1$ 
pour tout $g\in\Siegel$ et 
tout $\delta\in\ESR_0^{G,+}$. Fixons aussi une constante $c_2>0$ 
telle que pour tous $P,\mskip 2mu P'\in\ESP_\st$ tels que $P\subset P'$, 
on ait la version g\'en\'eralis\'ee du lemme \ref{lemme1TC}: pour tout $g\in\Siegel$ tel que $\langle \bfH_0(g),\alpha\rangle >c_2$ 
pour tout $\alpha\in\ESR_0^{P'\mskip -2mu,+}\smallsetminus\ESR_0^{P,+}$, et pour toute fonction $\psi$ 
sur $U_{P'}(\adef)M_{P'}(F)\bsl G(\adef)$ invariante \`a droite par $\bsK'$, on a $\psi^{\phantom{*}}_{P'}(g) =\varphi(g)$. 
Posons $c''= c_2-c_1$.

Soient $Q,\mskip 2mu R\in\ESP_\st$ tels que $Q\subsetneq R$, et soit $g\in\Siegel$. Posons
\begin{equation*}\Delta_Q^R(g)=\{\alpha\in\Delta_Q^R:\langle \alpha, \bfH_0(g)\rangle\leq c''\}\ptf\end{equation*}
L'ensemble des $P\in\ESP$ tels que $Q\subset P\subset R$ est en bijection avec 
l'ensemble des couples $(\Theta,\Theta')$ avec 
$\Theta\subset\Delta_Q^R(g)$ et $\Theta'\subset\Delta_Q^R\smallsetminus\Delta_Q^R(g)$: 
pour un tel couple $(\Theta,\Theta')$, on note $P(\Theta,\Theta')$ l'\'el\'ement 
de $\ESP_\st$ tel que $Q\subset P(\Theta,\Theta')\subset R$ d\'efini par
\begin{equation*}\Delta_Q^{P(\Theta,\Theta')}=\Theta\cup\Theta'\ptf\end{equation*}
Puisque
\begin{equation*}a_{P(\Theta,\Theta')} - a_R= a_Q -a_R - (\vert\Theta\vert +\vert\Theta'\vert)\end{equation*}
on a
\begin{equation*}\Pi_{Q,R}\varphi(g) = (-1)^{a_Q-a_R} \sum_{(\Theta,\Theta')} 
(-1)^{\vert\Theta\vert +\vert\Theta'\vert}\varphi^{\phantom{*}}_{P(\Theta,\Theta')}(g)\end{equation*}
o $(\Theta,\Theta')$ parcourt les couples comme ci-dessus. Fix\'e un tel couple, 
toute racine $\alpha\in\ESR^{P(\Theta,\Theta'),+}\smallsetminus\ESR^{P(\Theta,\emptyset),+}$ 
s'\'ecrit $\alpha =\beta +\delta$ avec $\beta\in\Theta'$ et $\delta\in\ESR^{P(\Theta,\Theta'),+}\cup\{0\}$, et l'on a
\begin{equation*}\langle \alpha, \bfH_0(g)\rangle =\langle \beta,\bfH_0(g)\rangle +\langle \delta,\bfH_0(g)\rangle > c'' +\inf\{c_1,0\}\geq c_2\ptf\end{equation*}
Par cons\'equent $\varphi^{\phantom{*}}_{P(\Theta,\Theta')}(g)=\varphi^{\phantom{*}}_{(\Theta,\emptyset)}(g)$. On a donc
\begin{equation*}\Pi_{Q,R}\varphi(g) = (-1)^{a_Q-a_R}
\Big(\sum_{\Theta'\subset\Delta_Q^R\smallsetminus\Delta_Q^R(g)} (-1)^{\vert\Theta'\vert}\Big)
\sum_{\Theta\subset\Delta_Q^R(g)}(-1)^{\vert\Theta\vert}\varphi_{P(\theta,\emptyset)}(g)\ptf\end{equation*}
Or la somme sur $\Theta'$ est nulle si $\Delta_Q^G(g)\neq\Delta_Q^R$, ce qui prouve le lemme. 
\end{proof}

Pour $Q=P_0$ et $R=G$, puisque l'ensemble des $g\in\Siegel^*$ tels que 
$\langle \bfH_0(g),\alpha\rangle\leq c'' $ est compact, on a en particulier 
 \cite[I.2.9]{MW1}:

% lemme 4
\begin{lemma}\label{lemme4TC}
Il existe un sous-ensemble compact $C=C_{K'}$ de $\Siegel^*$ tel que 
pour toute fonction $\varphi$ sur $G(F)\bsl G(\adef)$ invariante 
\`a droite par $\bsK'$, le support de \begin{equation*}\Pi_{P_0,G}\varphi\vert_{\Siegel^*}\end{equation*}soit contenu dans $C$. 
\end{lemma}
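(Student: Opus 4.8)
The plan is to pin down the support of $\Pi_{P_0,G}\varphi$ on $\Siegel^*$ as the region of the Siegel domain on which \emph{every} simple coordinate $\langle\alpha,\bfH_0(\cdot)\rangle$ stays bounded, and then to check that this region is relatively compact by reduction theory. Writing $\Pi_{P_0,G}\varphi=\sum_{P_0\subset P\subset G}(-1)^{a_P-a_G}\varphi_P$ and parametrizing the standard parabolics $P$ by the subsets $S=\Delta_0^P\subset\Delta_0$ (so $P=P_0$ for $S=\emptyset$, $P=G$ for $S=\Delta_0$, with $a_P-a_G=|\Delta_0|-|S|$), this sum becomes, up to the global sign $(-1)^{|\Delta_0|}$, the alternating sum $\sum_{S\subset\Delta_0}(-1)^{|S|}\varphi_{P(S)}$. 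The constant $c''$ and the rank-one parabolic pairs will be exactly those supplied by Lemma \ref{lemme3TC}.

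First I would fix $g\in\Siegel$ and suppose that $\langle\alpha_0,\bfH_0(g)\rangle>c''$ for at least one $\alpha_0\in\Delta_0$. Grouping the subsets $S$ into telescoping pairs $\{S',\,S'\cup\{\alpha_0\}\}$ with $\alpha_0\notin S'$, the alternating sum rewrites as $\sum_{\alpha_0\notin S'}(-1)^{|S'|}\bigl(\varphi_{P(S')}-\varphi_{P(S'\cup\{\alpha_0\})}\bigr)$. For each such $S'$ the two parabolics $Q=P(S')$ and $R=P(S'\cup\{\alpha_0\})$ are standard, satisfy $Q\subsetneq R$, and have $\Delta_Q^R=\{\alpha_0\}$; a direct sign count identifies $\varphi_{P(S')}-\varphi_{P(S'\cup\{\alpha_0\})}$ with $-\Pi_{Q,R}\varphi$. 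Since the inequality $\langle\alpha_0,\bfH_0(g)\rangle>c''$ is precisely the hypothesis of Lemma \ref{lemme3TC} for this pair, each bracket vanishes at $g$, whence $\Pi_{P_0,G}\varphi(g)=0$. Thus the support is contained in $C_0=\{g\in\Siegel^*:\langle\alpha,\bfH_0(g)\rangle\le c''\ \forall\,\alpha\in\Delta_0\}$.

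It then remains to show that $C_0$ is relatively compact, and to take for $C$ its closure in $\Siegel^*$. Writing $g=e\,\omega\,a\,f\,k$ according to $\Siegel^*=\EE_G\,\Omega\,\BB_0^G(t)\,\FF\,\bsK$, the vector $\bfH_0(g)$ differs from $\bfH_0(a)$ by a quantity that stays bounded as $e,\omega,f,k$ range over the finite sets and compacta $\EE_G,\Omega,\FF,\bsK$ (recall $\bfH_0$ vanishes on $\Omega\subset P_0(\AM)^1$ and on $\bsK$). Hence on $C_0$ the coordinates $\langle\alpha,\bfH_0(a)\rangle$, $\alpha\in\Delta_0$, are bounded above by $c''+O(1)$, while the Siegel condition $a\in A_0^G(t)$ bounds them below by $t$; as $a\in G(\AM)^1$ its image $\bfH_0(a)$ then lies in a bounded subset of $\ag_0^G$. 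Since $\BB_0^G$ is a discrete section of the lattice $\ESB_0^G$, only finitely many such $a$ occur, and $C_0$ lies in a finite union of compacta.

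The main obstacle is exactly the upgrade carried out in the second paragraph: the single instance $Q=P_0,\,R=G$ of Lemma \ref{lemme3TC} only confines the support to the \emph{non-compact} set where \emph{some} coordinate is small, and it is the telescoping argument matching rank-one steps $P(S')\subset P(S'\cup\{\alpha_0\})$ with the vanishing of $\Pi_{Q,R}$ that rules out every direction of escape and forces \emph{all} coordinates to be small. Everything else is bookkeeping in reduction theory, the only delicate point being that the bounded region carved out in $\ag_0^G$ meets the discrete set $\BB_0^G(t)$ in finitely many points, which is where the compactness of the Siegel domain modulo $\bfH_0$ enters.
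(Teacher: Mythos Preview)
Your argument is correct and lands on exactly the same compact set as the paper. The paper's proof is the one-line remark preceding the statement: apply Lemma~\ref{lemme3TC} once with $Q=P_0$, $R=G$, using that its \emph{proof} (the $(\Theta,\Theta')$ M\"obius argument) actually establishes $\Pi_{Q,R}\varphi(g)=0$ as soon as a \emph{single} $\alpha\in\Delta_Q^R$ satisfies $\langle\alpha,\bfH_0(g)\rangle>c''$, whence the support lies in $\{g\in\Siegel^*:\langle\alpha,\bfH_0(g)\rangle\le c''\ \forall\alpha\in\Delta_0\}$, which is compact. Your telescoping over rank-one pairs $(P(S'),P(S'\cup\{\alpha_0\}))$ is a way to recover that stronger vanishing from the \emph{stated} form of Lemma~\ref{lemme3TC} (where ``for all $\alpha$'' collapses to a single condition); this has the virtue of being self-contained, but it is the same inclusion--exclusion mechanism read in the opposite direction.
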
 

%%%%%%%%%%%%%%%%%%%%%%%%%%%%%%%
Soit $Q\in\ESP_\st$. 
Pour $T\in\ag_0$, on d\'efinit un op\'erateur de troncature $\bs\Lambda^{T,Q}$,\index{LtroncTQ@$\bs\Lambda^{T,Q}$}
pour une fonction 
$\varphi\in L^1_\mathrm{loc}(Q(F)\bsl G(\adef))$, par
\begin{equation*}\bs\Lambda^{T,Q}\varphi(x)=
\sum_{P\in\ESP_\st,P\subset Q} (-1)^{a_P-a_Q}
\sum_{\xi\in P(F)\bsl Q(F)}\wh{\tau}_P^{\mskip 2mu Q}(\bfH_0(\xi x)-T)\varphi_P(\xi x)\ptf\end{equation*}
D'apr\`es \cite[3.7.1]{LW}, la somme sur $\xi$ est finie. Notons que l'op\'erateur 
$\bs\Lambda^{T,Q}$ ne d\'epend que de la projection 
$T^Q$ de $T$ sur $\ag_0^Q$. On pose
\begin{equation*}\bs\Lambda^T =\bs\Lambda^{T,G}\ptf\end{equation*}
Les r\'esultats de \cite[4.1]{LW} sur les propri\'et\'es de $\bs\Lambda^{T}$ sont vrais ici. 
En particulier, pour $T$ assez r\'egulier (i.e. tel que $\bsd_0(T)\geq c$ pour une constante $c$ d\'ependant de $G$), 
l'op\'erateur $\bs\Lambda^T$ est un idempotent 
 \cite[4.1.3]{LW}: on a $\bs\Lambda^T \circ\bs\Lambda^T\varphi=\bs\Lambda^T\varphi$. 

\begin{definition}\label{defbrT}
Pour $X\in\ag_Q^G$ et $T\in\ag_0^G$, on d\'efinit
\footnote{On a utilis\'e les doubles crochets pour \'eviter les confusions 
avec la famille $M_0$-orthogonale $(\brT{P})$ d\'efinie par un \'el\'ement $T\in\ag_0$.} 
 \begin{equation*}T\Lbra X\Rbra \index{TbrbrX@$T\Lbra X\Rbra$}
 = T\Lbra X\Rbra^Q\in\ag_0^Q\end{equation*}
en posant \begin{equation*}{T-X}=\sum_{\alpha\in\Delta_0}x_\alpha\check{\alpha}\qquad \hbox{et}\qquad 
T\Lbra X\Rbra=\sum_{\alpha\in\Delta_0^Q}x_\alpha\check{\alpha}\ptf\end{equation*}
\end{definition}
Le raffinement \cite[4.2.2]{LW} des propri\'et\'es de 
$\bs\Lambda^{T,Q}$ est encore vrai ici.

%%%%%%%%%%%%%%%%%%%%%%%%
 \section{Troncature et support}\label{troncature et support}Cette section adapte au cas des
corps de fonctions les r\'esultats de \cite[4.3]{LW}. On fixe un $T\in\ag_0$, assez 
r\'egulier. La proposition suivante \cite[I.2.16~(2)]{MW1} joue ici le r™le de
 \cite[4.3.2]{LW}. Elle est tr\`es simple \`a prouver et 
 cependant fournit des d\'ecroissances beaucoup plus radicales.

% proposition
\begin{proposition}\label{supportcompact}
Soit $\bsK'$ un sous-groupe ouvert compact de $G(\adef)$. Il existe un sous-ensemble ferm\'e
$\Omega=\Omega_{T,K'}$ de $G(F)\bsl G(\adef)$ d'image compacte dans 
\begin{equation*}\BB_G G(F)\backslash G(\adef)\end{equation*}
tel que pour toute fonction 
$\varphi$ sur $G(F)\bsl G(\adef)$ invariante \`a droite par $\bsK'$, le support de 
la fonction tronqu\'ee $\bs\Lambda^T\varphi$ soit contenu dans $\Omega$. 
\end{proposition}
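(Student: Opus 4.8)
Le plan est de ramener l'\'enonc\'e \`a une annulation explicite dans un domaine de Siegel, puis de l'obtenir par les lemmes \ref{lemme3TC} et \ref{lemme4TC}. La fonction $\bs\Lambda^T\varphi$ \'etant invariante \`a gauche par $G(F)$ et $\Siegel^*=\EE_G\Siegel^1$ \'etant un domaine de Siegel pour $\BB_GG(F)\bsl G(\AM)$ (avec $G(\AM)=G(F)\BB_G\Siegel^*$, cf. \ref{Siegel}), il suffit de trouver un sous-ensemble compact de $\Siegel^*$ contenant le support de $\bs\Lambda^T\varphi\vert_{\Siegel^*}$\,; on prendra alors pour $\Omega$ l'adh\'erence de $G(F)\BB_G$ fois ce compact. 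Dans $\Siegel^1=\Omega\BB_0^G(t)\FF\bsK$ puis dans $\Siegel^*=\EE_G\Siegel^1$, la seule direction non born\'ee est le c\^one $\BB_0^G(t)\subset A_0^G(t)$, les autres facteurs \'etant finis ou compacts. Il s'agit donc de produire une constante $c$ telle que $\bs\Lambda^T\varphi(g)=0$ d\`es que $g\in\Siegel^*$ v\'erifie $\langle\varpi,\bfH_0(g)\rangle>c$ pour un $\varpi\in\hat\Delta_0$, \cad d\`es que la projection de $\bfH_0(g)$ sur $\ag_0^G$ sort d'une boule fix\'ee de la chambre positive.

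Pour cela j'invoquerais d'abord la partition d'Arthur \cite[3.6.3]{LW}, construite \`a partir des fonctions $F_{P_0}^Q(\cdot,T)$ et $\tau_Q$\,: pour tout $g\in G(\AM)$,
$$\sum_{Q\in\ESP_\st}\sum_{\delta\in Q(F)\bsl G(F)}F_{P_0}^Q(\delta g,T)\,\tau_Q(\bfH_0(\delta g)-T)=1\ptf$$
Par invariance \`a gauche de $\bs\Lambda^T\varphi$ sous $G(F)$, on en d\'eduit
$$\bs\Lambda^T\varphi(g)=\sum_{Q,\delta}F_{P_0}^Q(\delta g,T)\,\tau_Q(\bfH_0(\delta g)-T)\,\bs\Lambda^T\varphi(\delta g)\ptf$$
Sur chaque cellule non nulle, le point $g'=\delta g$ est dans le \og coin\fg{} associ\'e \`a $Q$\,: profond dans les directions de $\Delta_Q$ (via $\tau_Q$) et born\'e dans les directions de $\hat\Delta_0^Q$ (via $F_{P_0}^Q$, \cad $\Gamma_{P_0}^Q(\bfH_0(g')-T_1,T-T_1)=1$). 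La cellule $Q=G$, o\`u $F_{P_0}^G(\cdot,T)=1$, est born\'ee dans toutes les directions\,: c'est le c\oe ur compact, contr\^ol\'e pr\'ecis\'ement par \ref{lemme4TC}. Lorsque $g$ sort d'une grande boule, il n'appartient donc plus \`a cette cellule, et la cellule pertinente est associ\'ee \`a un $Q\neq G$\,; quitte \`a grossir le c\oe ur compact, on peut supposer que $\langle\alpha,\bfH_0(g')\rangle>c''$ pour tout $\alpha\in\Delta_Q$, o\`u $c''$ est la constante de \ref{lemme3TC}.

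Il resterait alors \`a voir que, sur une telle cellule, $\bs\Lambda^T\varphi(g')$ se r\'eduit \`a des op\'erateurs du type $\Pi_{Q_1,Q_2}\varphi$. En effet, la r\'egularit\'e de $T$ force les poids $\hat\tau_P(\bfH_0(g')-T)$ \`a valoir tous $1$ (dans le cas totalement profond on a $\langle\varpi,\bfH_0(g')-T\rangle>0$ pour tout $\varpi\in\hat\Delta_0$, et le cas g\'en\'eral s'y ram\`ene par descente \`a $M_Q$), la somme altern\'ee sur $P$ se r\'eorganise gr\^ace \`a l'inversion des matrices $\tau$ et $\hat\tau$ \cite[1.7.2]{LW}, et le lemme de finitude \cite[3.7.1]{LW} \'elimine les termes $\xi\neq e$. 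On est ainsi ramen\'e \`a $\Pi_{Q,G}\varphi(g')$ (ou, dans le cas totalement profond, \`a $\Pi_{P_0,G}\varphi(g')$), qui s'annule par \ref{lemme3TC} puisque $\langle\alpha,\bfH_0(g')\rangle>c''$ pour $\alpha\in\Delta_Q$\,; d'o\`u $\bs\Lambda^T\varphi(g)=0$ hors d'une boule fix\'ee, et l'image compacte annonc\'ee dans $\BB_GG(F)\bsl G(\AM)$. L'ouverture de $\bsK$ et de $\bsK'$ rend toutes ces annulations exactes, ce qui explique la d\'ecroissance plus radicale qu'en caract\'eristique nulle. Le point d\'elicat sera pr\'ecis\'ement cette r\'eduction combinatoire de la troncature, sur une cellule, aux op\'erateurs $\Pi_{Q_1,Q_2}$ auxquels s'applique \ref{lemme3TC}, ainsi que le contr\^ole des contributions $\xi\neq e$ par la th\'eorie de la r\'eduction.
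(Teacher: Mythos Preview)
Ta r\'eduction combinatoire dans la derni\`ere \'etape pr\'esente un v\'eritable trou. Sur une cellule de la partition \cite[3.6.3]{LW} associ\'ee \`a $Q\neq G$, o\`u $F_{P_0}^Q(g',T)\tau_Q(\bfH_0(g')-T)=1$, l'affirmation \og les poids $\hat\tau_P(\bfH_0(g')-T)$ valent tous $1$\fg{} est fausse: la condition $F_{P_0}^Q$ impose $\langle\varpi,\bfH_0(g')-T\rangle\leq 0$ pour tout $\varpi\in\hat\Delta_0^Q$, donc $\hat\tau_P(\bfH_0(g')-T)=0$ pour tout $P\not\supset Q$. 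Par ailleurs, \cite[3.7.1]{LW} affirme seulement la finitude de la somme sur $\xi$, pas sa r\'eduction \`a $\xi=e$. La r\'eorganisation correcte de $\bs\Lambda^T\varphi$ en termes des op\'erateurs $\Pi_{Q,R}$ est celle des \'equations (1)--(2) qui suivent l'\'enonc\'e (via \cite[3.6.4]{LW} et les fonctions $\sigma_Q^R$); elle est non triviale et conduit \`a la proposition~\ref{propK}, r\'esultat plus fort mais qui exige $\bsd_0(T)\geq c_{\bsK'}$ --- hypoth\`ese absente de la pr\'esente proposition, o\`u $T$ est fix\'e.

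La preuve de l'article proc\`ede tout autrement et ne d\'eplie jamais la d\'efinition de $\bs\Lambda^T\varphi(g)$. L'id\'ee cl\'e est d'appliquer le lemme~\ref{lemme2TC} \`a la fonction tronqu\'ee $\bs\Lambda^T\varphi$ elle-m\^eme. On choisit $T'$ assez r\'egulier (ind\'ependamment de $T$) pour que \ref{lemme2TC} s'applique, puis on partitionne $\Siegel^*$ au moyen de \cite[1.7.5]{LW} en r\'egions $\Siegel^*_{P,T'}$ o\`u l'on a $\bs\Lambda^T\varphi=(\bs\Lambda^T\varphi)_P$. L'ingr\'edient essentiel est alors \cite[4.1.1]{LW}: le terme constant $(\bs\Lambda^T\varphi)_P(g)$ s'annule d\`es que $\langle\varpi,\bfH_0(g)-T\rangle>0$ pour un $\varpi\in\hat\Delta_P$. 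Combin\'e aux bornes dans les directions $\hat\Delta_0^P$ h\'erit\'ees de la d\'efinition de $\Siegel^*_{P,T'}$, on obtient que $\bfH_0(g)$ reste dans un born\'e pour tous les poids de $\hat\Delta_0$, donc que le support de $\bs\Lambda^T\varphi\vert_{\Siegel^*_{P,T'}}$ est compact (via \cite[1.2.8]{LW}). Tu n'as pas utilis\'e cette propri\'et\'e d'annulation des termes constants de la troncature, qui est le pivot de l'argument et \'evite toute la combinatoire que tu esquisses.
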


% preuve
\begin{proof}
On reprend celle de \cite[I.2.16~(2)]{MW1}. 
Fixons un \'el\'ement $T'\in\ag_0$ assez r\'egulier: on demande que 
la conclusion du lemme \ref{lemme2TC} soit v\'erifi\'ee pour $\bsK'$. Pour $P\in\ESP_\st$, 
notons $G(\adef)_{P,T'}$ l'ensemble des $x\in G(\adef)$ v\'erifiant
\begin{equation*}\left\{\begin{array}{ll}\langle \alpha,\bfH_0(x)-T'\rangle >0 &\hbox{pour tout $\alpha\in\Delta_P$}\\
\langle \varpi,\bfH_0(x)-T' \rangle\leq 0 &\hbox{pour tout $\varpi\in\hat{\Delta}_0^P$}
\end{array}\right.\end{equation*}
et posons
\begin{equation*}\Siegel^*_{P,T'}= \Siegel^* \cap G(\adef)_{P,T'}\ptf\end{equation*}
D'apr\`es \cite[4.1.1]{LW}, pour $P\in\ESP_\st$ et $x\in G(\adef)$, si $(\bs\Lambda^T\varphi)_P(x)\neq 0$ alors
\begin{equation*}\langle \varpi,\bfH_0(x)-T\rangle\leq 0 \quad\hbox{pour tout}\quad \varpi\in\hat{\Delta}_P\ptf\end{equation*}
Gr‰ce \`a \cite[1.2.8]{LW}, on en d\'eduit que le support de 
$(\bs\Lambda^T\varphi)_P\vert_{\Siegel^*_{P,T'}}$ est contenu dans un compact de $\Siegel^*$ 
ind\'ependant de $\varphi$. En appliquant le lemme \ref{lemme2TC} \`a la fonction 
$\bs\Lambda^T\varphi$, on obtient que le 
support de $\bs\Lambda^T\varphi\vert_{\Siegel^*_{P,T'}}$ est contenu 
dans un compact de $\Siegel^*$ ind\'ependant de $\varphi$. 
Puisque \cite[1.7.5]{LW}
\begin{equation*}\sum_{P\in\ESP_\st}\phi_{P_0}^P\tau_P^G=1\end{equation*}
on a $G(\adef)=\bigcup_{P\in\ESP_\st}G(\adef)_{P,T'}$. D'o la proposition.
\end{proof}

Pour d\'emontrer la proposition \ref{supportcompact}, on a utilis\'e la partition \cite[1.7.5]{LW} de $\ag_0$. 
On observe aussi que d'apr\`es \cite[3.6.4]{LW} on a
\begin{equation*}\bs\Lambda^T\varphi(x) =
\sum_{\substack{Q,R\in\ESP_\st\\ Q\subset R}} A_{Q,R}^T\varphi(x)\leqno{(1)}\end{equation*}
avec
\begin{equation*}A_{Q,R}^T\varphi(x) =\sum_{\xi\in Q(F)\bsl G(F)} F^Q_{P_0}(\xi x ,T)\sigma_Q^R(\bfH_0(\xi x)-T )
\Pi_{Q,R}\varphi(\xi x)\ptf \leqno{(2)}\end{equation*}
Si $Q=R$, alors $\sigma_Q^R=0$ sauf si $Q=R=G$, auquel cas
\begin{equation*}A_{G,G}^T\varphi(x) = F_{P_0}^G(x,T)\varphi(x)\ptf\end{equation*}
On note $\bfC^T$\index{CT@$\bfC^T$} l'op\'erateur $A_{G,G}^T$. Puisque la fonction $F^G_{P_0}(\cdot,T)$ est invariante \`a gauche par 
$\BB_GG(F)$ et que son support est 
d'image compacte dans $\BB_GG(F)\bsl G(\adef)$, il existe un sous-ensemble ferm\'e 
$\Omega^*=\Omega^*_{T,K'}$ de $G(F)\bsl G(\adef)$ d'image compacte dans $\BB_G G(F)\backslash G(\adef)$ 
tel que pour toute fonction $\varphi$ sur $G(F)\bsl G(\adef)$ invariante \`a droite par $\bsK'$, 
le support de $(\bs\Lambda^T-\bfC^T)\varphi$ soit contenu dans $\Omega^*$. On a aussi la 
variante de \cite[4.3.3]{LW}:

% proposition 2
\begin{proposition}\label{propK}
Soit $\bsK'$ un sous-groupe ouvert compact de $G(\adef)$. Il existe une constante $c=c_{\bsK'}$ 
(qui ne d\'epend pas de $T$) telle que si 
$\bsd_0(T)\geq c$, alors pour toute fonction $\varphi$ sur $G(F)\bsl G(\adef)$ invariante 
\`a droite par $\bsK'$, on a
\begin{equation*}(\bs\Lambda^T-\bfC^T)\varphi= 0\ptf\end{equation*}
\end{proposition}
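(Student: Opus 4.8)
Le plan est de partir de la d\'ecomposition (1)--(2) donn\'ee juste avant l'\'enonc\'e. Puisque $\sigma_Q^R=0$ d\`es que $Q=R\neq G$ et puisque $A_{G,G}^T=\bfC^T$, cette d\'ecomposition se r\'eduit \`a
$$(\bs\Lambda^T-\bfC^T)\varphi=\sum_{\substack{Q,R\in\ESP_\st\\ Q\subsetneq R}}A_{Q,R}^T\varphi\ptf$$
Il suffira donc de produire une constante $c=c_{\bsK'}$, ind\'ependante de $T$, telle que $\bsd_0(T)\geq c$ force l'annulation de chaque op\'erateur $A_{Q,R}^T$ associ\'e \`a un couple $Q\subsetneq R$ sur les fonctions invariantes \`a droite par $\bsK'$. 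Comme il n'y a qu'un nombre fini de tels couples, la proposition en r\'esultera.

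On fixerait alors un couple $Q\subsetneq R$ et l'on noterait $c''$ la constante du lemme \ref{lemme3TC} (qui ne d\'epend que de $\bsK'$). L'id\'ee est d'examiner un terme non nul de la somme (2) index\'ee par $\xi\in Q(F)\bsl G(F)$. La condition $F_{P_0}^Q(\xi x,T)=1$ place $\xi x$ dans $Q(F)\Siegel_{P_0}^Q(T_1,T)$. Or les trois fonctions concern\'ees --- \`a savoir $F_{P_0}^Q(\cdot,T)$, la fonction $\Pi_{Q,R}\varphi$ et, pour $\alpha\in\Delta_Q^R$, la forme $g\mapsto\langle\alpha,\bfH_0(g)\rangle$ --- sont invariantes \`a gauche par $Q(F)$; pour la derni\`ere, cela vient de ce que $\alpha$ se factorise par $\bfH_Q$ et de ce que $\bfH_Q$ s'annule sur $Q(F)\subset Q(\AM)^1$. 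On pourrait donc remplacer $\xi x$ par un repr\'esentant $y\in\Siegel_{P_0}^Q(T_1,T)$, puis, par une translation suppl\'ementaire par $U_Q(F)\subset Q(F)$ (licite car $U_Q(F)\bsl U_Q(\AM)$ est compact), se ramener \`a un $y$ appartenant au domaine $\Siegel$, quitte \`a l'\'elargir. La condition $\sigma_Q^R(\bfH_0(\xi x)-T)=1$ impose en outre $\langle\alpha,\bfH_0(y)\rangle>\langle\alpha,T\rangle$ pour tout $\alpha\in\Delta_Q^R$.

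Il resterait \`a fixer $c$. La r\'egularit\'e $\bsd_0(T)\geq c$, pour $c$ assez grand --- ne d\'ependant que de $\bsK'$, de $G$, de $T_1$ et des compacts servant \`a d\'efinir les domaines de Siegel, mais pas de $T$ --- rendrait les accouplements $\langle\alpha,T\rangle$ sup\'erieurs \`a $c''$ pour tous les couples $Q\subsetneq R$ et tout $\alpha\in\Delta_Q^R$. On aurait ainsi $\langle\alpha,\bfH_0(y)\rangle>c''$ pour tout $\alpha\in\Delta_Q^R$, et le lemme \ref{lemme3TC} donnerait $\Pi_{Q,R}\varphi(y)=\Pi_{Q,R}\varphi(\xi x)=0$. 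Chaque terme de (2) serait nul, d'o\`u $A_{Q,R}^T\varphi=0$, puis la conclusion.

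Le point d\'elicat, et le v\'eritable obstacle, est la comparaison de domaines de Siegel \'evoqu\'ee au deuxi\`eme paragraphe, qui rel\`eve de la th\'eorie de la r\'eduction: il faut v\'erifier que les conditions de support impos\'ees conjointement par $F_{P_0}^Q$ et $\sigma_Q^R$ placent r\'eellement le repr\'esentant $y$ dans un domaine de Siegel auquel le lemme \ref{lemme3TC} s'applique, tout en contr\^olant les coordonn\'ees de $\bfH_0(y)$ selon $\Delta_Q^R$ --- pr\'ecis\'ement celles gouvern\'ees par $\sigma_Q^R$ et invariantes \`a gauche sous $Q(F)$. Ceci repose sur la combinatoire de \cite[1.7, 1.8, 3.6]{LW} et sur les propri\'et\'es de la partition qui sous-tend (1)--(2), exactement comme dans la preuve de la proposition \ref{supportcompact}. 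Tout le reste n'est que l'observation que $\sigma_Q^R$ s'annule hors de la diagonale $Q=R=G$ et que la r\'egularit\'e de $T$ rend grands les accouplements avec les racines de $\Delta_Q^R$.
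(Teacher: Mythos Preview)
Votre plan g\'en\'eral est correct et co\"incide avec celui du papier: on part de la d\'ecomposition $(\bs\Lambda^T-\bfC^T)\varphi=\sum_{Q\subsetneq R}A_{Q,R}^T\varphi$, on fixe $Q\subsetneq R$, on utilise l'invariance \`a gauche par $Q(F)$ pour r\'eduire \`a un repr\'esentant dans $\Siegel_{P_0}^Q(T_1,T)$, et on cherche \`a appliquer le lemme~\ref{lemme3TC}. Vos observations d'invariance sont exactes.

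Le d\'efaut est pr\'ecis\'ement au point que vous identifiez comme d\'elicat, mais votre proposition de r\'esolution ne marche pas. Apr\`es translation par $U_Q(F)$, l'\'el\'ement s'\'ecrit $\xi x=uay$ avec $u$ dans un compact de $U_Q(\AM)$, $a\in\BB_0$, $y$ dans le compact $C_Q$. Or cet \'el\'ement \emph{n'appartient pas} \`a $\Siegel$, m\^eme \'elargi ind\'ependamment de $T$: les conditions issues de $F_{P_0}^Q$ et $\sigma_Q^R$ ne donnent de minoration de $\langle\alpha,\bfH_0(a)\rangle$ que pour $\alpha\in\ESR_0^{R,+}$, pas pour toutes les racines de $\Delta_0$. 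Translater par $G(F)$ tout entier pour forcer l'appartenance \`a $\Siegel$ d\'etruirait l'invariance de $\langle\alpha,\bfH_0(\cdot)\rangle$ pour $\alpha\in\Delta_Q^R$, qui n'est que $Q(F)$-invariante.

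Le papier contourne cela autrement: il ne cherche pas \`a placer $\xi x$ dans $\Siegel$. Il observe que $\bfH_0(a)$ v\'erifie $\langle\alpha,\bfH_0(a)\rangle>c'_1$ pour tout $\alpha\in\ESR_0^{R,+}$ (constante ind\'ependante de $T$), et que $u$ reste dans un compact fixe de $U_Q(\AM)$. Dans ces conditions, la preuve m\^eme du lemme~\ref{lemme1TC} (version g\'en\'eralis\'ee, cf.\ \cite[I.2.7]{MW}) s'applique encore \`a $\xi x$, bien que celui-ci ne soit pas dans $\Siegel$; par suite, quitte \`a modifier la constante $c''$, la conclusion du lemme~\ref{lemme3TC} reste valable pour $\xi x$. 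C'est cette extension de l'applicabilit\'e de~\ref{lemme3TC} hors de $\Siegel$ --- et non une r\'eduction \`a $\Siegel$ --- qui constitue l'argument manquant.
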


\begin{proof}Pour \'etudier $(\bs\Lambda^T-\bfC^T)\varphi(x)$, on traite s\'epar\'ement 
chaque terme $A_{Q,R}^T\varphi(x)$ avec $Q\neq R$ dans (1). 
On peut prendre $x$ dans $\Siegel$.
Pour $\xi\in Q(F)\bsl G(F)$, il s'agit de contr™ler $\Pi_{Q,R}\varphi(\xi x)$ sous la condition
\begin{equation*}F_{P_0}^Q(\xi x,T)\sigma_Q^R(\bfH_0(\xi x)-T)=1\ptf\end{equation*}
D'apr\`es \cite[3.6.1]{LW}, pour $x$ fix\'e, il y a au plus un $\xi$ modulo $Q(F)$ tel que l'expression ci-dessus soit non nulle. 
Puisqu'on est libre de multiplier 
$\xi x$ par un \'el\'ement de $Q(F)$, on peut supposer que $\xi x= uay\in\Siegel_{P_0}^Q(T_1,T)$ 
avec $u\in U_Q(\adef)$, $a\in\BB _0$ et $y\in C_Q$ -- cf. \ref{la fonction F}. 
On peut mme supposer que $u\in\Omega_Q$ pour un compact 
$\Omega_Q\subset U_Q(\adef)$ tel que $U_Q(F)\Omega_Q = U_Q(\adef)$. 
Rappelons que $C_Q$ est un compact fix\'e (assez gros, 
mais qui ne d\'epend pas de $T$) de $G(\adef)$, et que $H=\bfH_0(\xi x)$ v\'erifie
\begin{equation*}\left\{\begin{array}{ll}\langle \alpha , H - T_1\rangle>0 &\forall\alpha\in\Delta_0^Q\\
\langle \varpi,H -T\rangle\leq 0 &\forall\varpi\in\hat{\Delta}_0^Q\ptf
\end{array}\right.\end{equation*}
Puisque $\sigma_Q^R(H-T)= 1$, on a $\langle \alpha,H\rangle >\langle \alpha,T\rangle$ pour tout $\alpha\in\Delta_Q^R$. Comme 
l'\'el\'ement $T-T_1$ est r\'egulier, il existe $c_1\in \RM$ (ind\'ependant de $T$) tel que $\langle \alpha,H\rangle>c_1$ 
pour tout $\alpha\in\ESR^{R,+}$. D'apr\`es le 
lemme \ref{lemme3TC}, il existe $c''>0$ tel que pour $g\in\Siegel$ tel que $\langle \bfH_0(g),\alpha\rangle > c''$ 
pour tout $\alpha\in\Delta_Q^R$, on ait $\Pi_{Q,R}\varphi(g)=0$ pour toute 
fonction $\varphi$ sur $G(F)\bsl G(\adef)^1$ invariante \`a droite par $\bsK'$. Ici l'\'el\'ement $\xi x=uay$ 
n'appartient pas \`a $\Siegel$, mais $H = \bfH_0(a)+\bfH_0(y)$ et $y$ reste dans un compact fix\'e, 
par cons\'equent il existe $c'_1\in \RM$ (ind\'ependant de $T$) 
tel que $\log\vert\alpha (a)\vert > c'_1$ pour tout $\alpha\in\ESR^{R,+}$. Puisque $u$ reste dans 
un compact fix\'e de $U_Q(\adef)$, pour tout $P'\in\ESP_\st^R$, 
la version g\'en\'eralis\'ee du lemme \ref{lemme1TC} s'applique encore \`a $\xi x$ (cf. la preuve de \cite[I.2.7]{MW1}), 
et quitte \`a modifier la constante $c''$, 
la conclusion du lemme \ref{lemme3TC} s'applique encore \`a $\xi x$. D'o la proposition. 
\end{proof}

La diff\'erence par rapport au cas des corps de nombres est ici spectaculaire: sur un corps de nombres $F$, 
pour toute fonction lisse \`a croissance uniform\'ement lente $\varphi$ sur $\BB_GG(F)\bsl G(\adef)$, 
la fonction $\bs\Lambda^T\varphi$ est seulement \`a d\'ecroissance rapide. Par ailleurs la d\'ecomposition 
\begin{equation*}\bs\Lambda^T=
\bfC^T+(\bs\Lambda^T - \bfC^T)\end{equation*}qui joue un r™le crucial dans 
les estim\'ees de \cite[ch.~12, 13]{LW} est bien plus simple \`a contr™ler car ici, 
pour toute fonction $\bsK'$-invariante \`a droite sur $G(F)\bsl G(\adef)$, 
non seulement $\bs\Lambda^T\varphi$ est \`a support 
d'image compacte dans $\BB_G G(F) \backslash G(\adef)$, mais si $T$ est assez r\'egulier, 
la troncature est encore plus brutale: on a $\bs\Lambda^T\varphi =\bfC^T\varphi$. 
Cela simplifiera la preuve des estim\'ees \`a \'etablir plus loin. 

%%%%%%%%%%%%%%%%%%%%%%%%%%%

 \chapter{Formes automorphes et produits scalaires}
\label{formes automorphes}

%%%%%%%%%%%%%%%%%%%
 \section{Formes automorphes}\label{formesautom}
On fixe une mesure de Haar $\dd g$ sur $G(\adef)$. On note $\dd k$ la mesure de Haar sur 
$\bsK$ telle que $\vol (\bsK)=1$. 
Pour $P\in\ESP$, on note $\dd u_P$, ou simplement $\dd u$, la mesure de Tamagawa 
sur $U_P(\adef)$. Par quotient par la mesure de comptage sur $U_P(F)$, on obtient 
une mesure sur $U_P(F)\bsl U_P(\adef)$ qui v\'erifie
\begin{equation*}\vol (U_P(F)\bsl U_P(\adef))=1\ptf\end{equation*}
Posons $M=M_P$. La mesure de Haar $\dd m$ sur $M(\adef)$ est choisie
de sorte que l'on ait la formule d'int\'egration
\begin{equation*}\int_{G(\adef)}f(g)\dd g=
\int_{U_P(\adef)\times M(\adef)\times\bsK}f(u m k)e^{-\langle 2\rho_P, \bfH_P(m) \rangle}
\dd u \dd m \dd k\end{equation*}
o $\rho_P$ d\'esigne la demi-somme des racines positives de $A_P$. La fonction
\begin{equation*}m\mapsto\bs{\delta}_{P}(m)
\index{dyeltap@\add{$\bs{\delta}_{P}$}}
=e^{\langle 2\rho_P, \bfH_P(m) \rangle}\end{equation*}
est le module de $P(\adef)$:
\begin{equation*}\dd (mum\mun)=\bs{\delta}_{P}(m)\dd u\ptf\end{equation*}
On pose\footnote{On prendra garde \`a ce que l'espace not\'e ici $\bsX_G$ ne co•ncide pas
avec l'espace ainsi not\'e dans [LW] car dans cette r\'ef\'erence il y a en plus un quotient par $\BB_G$.
Son usage correspond plut™t \`a celui de notre $\ovbsX_G$ sans toutefois lui tre \'egal.}
\begin{equation*}\bsX_P\index{XP@$\bsX_P$}
=P(F)U_P(\adef)\bsl G(\adef)\quad\hbox{et}\quad
\ovbsX_{\mskip -2mu P}\index{XPbar@$\ovbsX_P$}
=\A_P(\adef)P(F)U_P(\adef)\bsl G(\adef)\ptf\end{equation*}
En particulier
\begin{equation*}\bsX_G=\G(F)\bsl\G(\adef)\quad\hbox{et}\quad
\ovbsX_{\mskip -2mu \G}=\A_\G(\adef)G(F)\bsl G(\adef)\ptf\end{equation*}
Les groupes $G(\adef)$ et $P(F)U_P(\adef)$ sont unimodulaires; on dispose donc
d'une mesure quotient invariante \`a droite sur $\bsX_P$.
Pour $\phi$ localement int\'egrable et \`a support compact sur $\bsX_P$, on a la formule d'int\'egration:
\begin{equation*}\int_{\bsX_P}\phi(x)\dd x=
\int_{{\bsX_{M}}\times\bsK}\bs{\delta}_P(m)\mun\phi(mk)\dd m \dd k\leqno(1)\end{equation*}
o $\dd x$ est la mesure quotient. Par contre,
si $P\neq G$ il n'y a pas de mesure $\G(\adef)$-invariante \`a droite sur $\ovbsX_{\mskip -2mu P}$. Toutefois, il existe 
une fonctionnelle invariante \`a droite \add{$r_{\ovbsX_{P}}$}
sur l'espace des sections du fibr\'e en droites sur $\ovbsX_{\mskip -2mu P}$ d\'efini par $\bs{\delta}_P$. 
Ces sections sont repr\'esentables par les fonctions sur $\bsX_P$ v\'erifiant
\begin{equation*}\phi(px)=\bs{\delta}_P(p)\phi(x)\quad
\hbox{pour} \quad p\in A_P(\adef)P(F)U_P(\adef)\ptf\end{equation*}
La fonctionnelle est d\'efinie par:
\begin{equation*}\add{r}_{\ovbsX_{\mskip -2mu P}}(\phi)=\int_{{\ovbsX_{\mskip -2mu M}}\times\bsK}\bs{\delta}_P(m)\mun\phi(mk)\dd\dot{m} \dd k\end{equation*}
o $\dd\dot{m}$ est la mesure quotient.
 
Rappelons que l'on a not\'e $\Xi(P)$ le groupe des caract\`eres unitaires de $A_P(\adef)$ qui sont triviaux sur 
$A_P(F)$. Soit $\xi\in \Xi(P)$. On dit qu'une fonction 
$\varphi$ sur $\bsX_P$ {\og se transforme \`a gauche suivant $\xi$ \fg} si 
pour tout $x\in G(\adef)$ on a
\begin{equation*}\varphi(ax)= \xi(a)\varphi(x)\quad \hbox{pour} \quad a\in A_P(\adef)\ptf\end{equation*}
On note $L^2(\bsX_M)_\xi$ 
l'espace de Hilbert form\'e des fonctions 
$\varphi$ sur $\bsX_{M}$ qui se transforment \`a gauche suivant $\xi$ et sont 
de carr\'e int\'egrable sur $\ovbsX_{\mskip -2mu M}$. Le groupe $M(\adef)$ agit 
sur $L^2(\bsX_{M})_\xi$ par translations \`a droite. 
Consid\'erons deux fonctions $\varphi$ et $\psi$ localement int\'egrables sur $\bsX_P$ qui se transforment 
\`a gauche suivant le mme caract\`ere $\xi\in \Xi(P)$. On pose (si l'int\'egrale converge)
\begin{equation*}\langle \varphi, \psi \rangle_P= \int_{\ovbsX_{M}\times \bsK} 
\varphi(mk)\overline{\psi(mk)}\mskip 2mu \dd\dot{m}\mskip 2mu \dd k\ptf\leqno{(2)}\end{equation*}
Ce produit scalaire d\'efinit l'espace de Hilbert $L^2(\bsX_P)_\xi$
si\`ege de la repr\'esentation unitaire \og induite parabolique\fg 
\begin{equation*}\textrm{Ind}_{P(\adef)}^{G(\adef)}L^2(\bsX_M)_\xi\end{equation*}
d\'efinie par
\begin{equation*}(\Rho(y)\varphi)(x)=\bs{\delta}^{-1/2}_P(x)\bs{\delta}^{1/2}_P(xy)\varphi(xy)\ptf\end{equation*}

%%%%%%%%%%%%%%%%%%%%%%%%%%

Pour la notion g\'en\'erale de forme automorphe nous renvoyons le lecteur \`a \cite[I.2.17]{MW1}.
Soit $M\in \ESL$. Un caract\`ere de $A_M(\adef)$ est automorphe s'il est 
trivial sur $A_M(F)$. Ainsi $\Xi(M)$ est le groupe des caract\`eres unitaires automorphes de $A_M(\adef)$. 
Pour $P\in\ESP(M)$, une forme automorphe $\varphi$ sur $\bsX_P$ est une fonction
$\bsK$-finie \`a droite telle que la fonction $m\mapsto\varphi(mx)$ sur $\bsX_{M}$ est
automorphe. Elle est dite cuspidale si pour tout $Q\in\ESP$ 
tel que $Q\subsetneq P$, le terme constant $\varphi_Q$ est nul -- ou, ce qui revient au mme, si 
pour tout $x$, la fonction $m\mapsto\varphi(mx)$ sur $\bsX_{M}$ est cuspidale. Notons 
$\Autom_\cusp(\bsX_P)$\index{automcusp@$\Autom_\cusp(\bsX_P)$}
l'espace des formes automorphes cuspidales sur $\bsX_P$.
Pour $\xi\in \Xi(P)$, notons
\begin{equation*}\Autom_\cusp(\bsX_P)_\xi \subset \Autom_\cusp(\bsX_P) \end{equation*}
le sous-espace form\'e des fonctions qui se transforment \`a gauche suivant $\xi$. 

\begin{definition} Soit $P\in \ESP(M)$. 
\begin{itemize}
\item On appelle repr\'esentation automorphe discr\`ete modulo le centre -- 
ou simplement discr\`ete -- de $M(\adef)$, une sous-repr\'esentation 
irr\'eductible de $M(\adef)$ dans l'espace $L^2(\bsX_{M})_\xi$ pour un $\xi\in\Xi(M)$. On note $L^2_\disc(\bsX_{M})_\xi$ 
le sous-espace ferm\'e de $L^2(\bsX_{M})_\xi$ engendr\'e par ces repr\'esentations.
\item On appelle \textit{forme automorphe discr\`ete pour $P$} une fonction $\bsK$-finie sur
$\bsX_P$ telle que pour tout $x$,
la fonction $m\mapsto \varphi(mx)$ sur $M(\adef)$ soit un vecteur d'une repr\'esentation automorphe,
discr\`ete modulo le centre, de $M$.
\end{itemize}
\end{definition}

Une repr\'esentation automorphe irr\'eductible de $M(\adef)$ est discr\`ete modulo le centre
si et seulement si sa restriction \`a $M(\adef)^1$ est une somme finie de 
repr\'esentations irr\'eductibles dans \begin{equation*}L^2(M(F)\bsl M(\adef)^1)\ptf\end{equation*}
Pour $\xi\in \Xi(P)$, on note
 $\Autom_\disc(\bsX_P)_\xi$\index{automdisc@$\Autom_\disc(\bsX_P)$} 
 l'espace engendr\'e par les formes
 automorphes discr\`etes qui se transforment \`a gauche suivant $\xi$ sur $\bsX_P$. 
Le produit scalaire $\langle \cdot ,\cdot \rangle_P$ munit 
$\Autom_\disc(\bsX_P)_\xi$ d'une structure d'espace pr\'e-hilbertien.
On sait (gr‰ce \`a \ref{supportcompact} pour les corps de fonctions) que
\begin{equation*}\Autom_\cusp(\bsX_P)_\xi\subset \Autom_\disc(\bsX_P)_\xi \ptf\end{equation*}
Ainsi $\Autom_\cusp(\bsX_P)_\xi$ est lui aussi muni d'une structure d'espace pr\'e-hilbertien. 

%%%%%%%%%%%
 \section{Op\'erateurs d'entrelacement et s\'eries d'Eisenstein}\label{OE et SE}
Soient $P,\mskip 2mu  Q\in\ESP$ deux sous-groupes parabolique associ\'es, i.e. tels que $M_P$ et $M_Q$ 
soient conjugu\'es dans $G(F)$. 
Consid\'erons une fonction $\Phi$ lisse sur $\bsX_P$ se transformant \`a gauche suivant un caract\`ere unitaire automorphe 
$\xi$ de $A_M(\adef)$. Pour $\lambda\in\ag^*_{P,\CM}$ et $x\in G(\adef)$, posons
\begin{equation*}\Phi(x,\lambda)=e^{\langle \lambda+\rho_P,\bfH_P(x)\rangle}\Phi(x)\ptf\end{equation*}
 La fonction $x\mapsto\Phi(x,\lambda)$ ne d\'epend que de l'image de $\lambda$ dans 
$\ag_{P,\CM}^*/\ESA_P ^\vee$.
Pour \hbox{$s\in \bfW(\ag_P,\ag_Q)$} et $\lambda\in\ag^*_{P,\CM}$ 
\og assez r\'egulier\fg
\footnote{En notant $\ESR_P$ l'ensemble des racines de $A_P$ dans $P$, 
on demande ici que $\langle\check{\alpha},\Re\lambda-\rho_P\rangle >0$ pour toute racine $\alpha\in\ESR_P$ telle que 
$s\alpha\in -\ESR_Q$.}
dans la chambre associ\'ee \`a $P$ dans $\ag_{P,\CM}^*$, 
on a une expression d\'efinie par une int\'egrale convergente:
\begin{equation*}(\bfM_{Q\vert P}(s,\lambda)\Phi)(x,s\lambda)=\int_{U_{s,P,Q}(\adef)}\Phi(w_s^{-1}nx,\lambda)\dd n\end{equation*}
o l'on a pos\'e 
\begin{equation*}U_{s,P,Q}= (U_Q\cap w_sU_P w_s^{-1})\bsl U_Q\ptf\end{equation*}
On obtient ainsi un op\'erateur
\begin{equation*}\bfM_{Q\vert P}(s,\lambda)\index{Mpq@$\bfM_{Q\vert P}(s,\lambda)$}:
\Autom_\disc(\bsX_P)_\xi\rightarrow \Autom_\disc(\bsX_Q)_{s\xi}\ptf\end{equation*}
 Pour $P$ et $Q$ standards et $P$ fix\'e, alors $Q$ est d\'etermin\'e par 
$s$, et on pose \begin{equation*}\bfM(s,\lambda)=\bfM_{Q\vert P}(s,\lambda)\ptf\end{equation*}
Pour $s=1$ on \'ecrira 
\begin{equation*}\bfM_{Q\vert P}(\lambda)= \bfM_{Q\vert P}(1,\lambda)\ptf\end{equation*}
Dans le cas particulier o $Q=s(P)$ on a (cf. \cite[5.2.1]{LW}):
\begin{equation*}\bfM_{s(P)\vert P}(s,\lambda)= e^{\langle\lambda +\rho_P,\Y_s\rangle }\bs{s}\qquad
\hbox{o} \qquad
\Y_s= \bfH_0(w_s^{-1})= T_0-s^{-1}T_0\end{equation*}
et \begin{equation*}\bs{s}:\Autom_\disc(\bsX_P)_\xi \rightarrow \Autom_\disc(\bsX_Q)_{s\xi}\qquad
\hbox{est d\'efini par}\qquad \bs{s}\Phi(x) =\Phi(w_s^{-1}x)\ptf\end{equation*}

% definition

\begin{definition}\label{Dnu}
Pour $\add{\mu}\in\bsmu_P$, on pose
\begin{equation*}\varphi_\add{\mu}(x) = e^{\langle \add{\mu} , \bfH_P(x)\rangle}\varphi(x)\end{equation*}
et on note $\bfD_\add{\mu}$\index{Dznu@$\bfD_\add{\mu}$}
 l'op\'erateur $\varphi\mapsto\varphi_\add{\mu}$, i.e. $\bfD_\add{\mu}\varphi=\varphi_\add{\mu}$. 
\end{definition}
% lemme
\begin{lemma}\label{eqf}
Pour $P,\mskip 2mu Q\in\ESP$ associ\'es, 
$s\in \bfW(\ag_P,\ag_Q)$, $\add{\mu}\in\bsmu_P$ et $\lambda\in\ag^*_{P,\CM}$ 
assez r\'egulier, l'op\'erateur $\bfD_\add{\mu}$ v\'erifie l'\'equation fonctionnelle: 
\begin{equation*}\bfM_{Q\vert P}(s,\lambda) \bfD_\add{\mu} = \bfD_{s\add{\mu}} \bfM_{Q\vert P}(s,\lambda +\add{\mu})\ptf\end{equation*}
\end{lemma}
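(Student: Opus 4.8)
The plan is to unwind both defining formulas and to observe that multiplying $\varphi$ by the character $e^{\langle\nu,\bfH_P(\cdot)\rangle}$ is the same as shifting the spectral parameter from $\lambda$ to $\lambda+\nu$ inside the integral defining $\bfM_{Q\vert P}$; the resulting discrepancy on the $Q$-side is then exactly a twist by $s\nu$, that is, the operator $\bfD_{s\nu}$.

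First I would record the elementary identity, valid for every $y\in G(\AM)$: since $(\bfD_\nu\varphi)(y)=e^{\langle\nu,\bfH_P(y)\rangle}\varphi(y)$, one has
$$(\bfD_\nu\varphi)(y,\lambda)=e^{\langle\lambda+\nu+\rho_P,\bfH_P(y)\rangle}\varphi(y)=\varphi(y,\lambda+\nu)\ptf$$
Here I use that $\nu\in\bsmu_P\subset i\ag_P^*$ is purely imaginary, so that $\Re(\lambda+\nu)=\Re\lambda$ and $\lambda+\nu$ is sufficiently regular as soon as $\lambda$ is; hence $\bfM_{Q\vert P}(s,\lambda+\nu)$ is given by the same convergent integral as $\bfM_{Q\vert P}(s,\lambda)$.

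Next I would apply this with $y=w_s^{-1}nx$ and integrate over $n\in U_{s,P,Q}(\AM)$. By the definition of the intertwining operator this gives
$$(\bfM_{Q\vert P}(s,\lambda)\bfD_\nu\varphi)(x,s\lambda)=\int_{U_{s,P,Q}(\AM)}\varphi(w_s^{-1}nx,\lambda+\nu)\dd n=(\bfM_{Q\vert P}(s,\lambda+\nu)\varphi)(x,s(\lambda+\nu))\ptf$$
Writing $\Psi=\bfM_{Q\vert P}(s,\lambda)\bfD_\nu\varphi$ and $\Psi'=\bfM_{Q\vert P}(s,\lambda+\nu)\varphi$, both functions on $\bsX_Q$, and expanding the two sides in the notation $\Psi(x,\mu)=e^{\langle\mu+\rho_Q,\bfH_Q(x)\rangle}\Psi(x)$, the equality just obtained reads $e^{\langle s\lambda+\rho_Q,\bfH_Q(x)\rangle}\Psi(x)=e^{\langle s(\lambda+\nu)+\rho_Q,\bfH_Q(x)\rangle}\Psi'(x)$. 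Since $s$ is linear, $s(\lambda+\nu)-s\lambda=s\nu$, whence $\Psi(x)=e^{\langle s\nu,\bfH_Q(x)\rangle}\Psi'(x)=(\bfD_{s\nu}\Psi')(x)$, which is precisely the asserted functional equation.

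There is no genuine analytic obstacle: the computation is purely formal. The only two points I would take care to justify are that $\lambda+\nu$ stays in the region of convergence (guaranteed by $\nu$ being imaginary, as noted above), and that $s\nu$ belongs to $\bsmu_Q$ so that $\bfD_{s\nu}$ is defined on $\bsX_Q$; the latter holds because $s\in\bfW(\ag_P,\ag_Q)$ carries the lattice $\ESA_P$ onto $\ESA_Q$, hence $\bsmu_P=\wh\ESA_P$ onto $\bsmu_Q=\wh\ESA_Q$.
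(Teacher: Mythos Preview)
Your proof is correct and follows exactly the same route as the paper: the key observation is the identity $(\bfD_\nu\Phi)(x,\lambda)=\Phi(x,\lambda+\nu)$, which the paper states in one line and you unfold in full detail. Your additional remarks about $\lambda+\nu$ remaining in the convergence region and about $s\nu\in\bsmu_Q$ are correct and make the argument more self-contained than the paper's terse version.
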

% preuve
\begin{proof}Il suffit d'observer que $(\bfD_\add{\mu}\Phi)(x,\lambda)=\Phi(x,\lambda+\add{\mu})$.
\end{proof}

Soient $P,\mskip 2mu  Q\in\ESP$ tels que $P\subset Q$. Pour $\Phi\in \Autom_\disc(\bsX_P)_\xi$ et 
$\lambda\in\ag^*_{P,\CM}$ assez r\'egulier, on d\'efinit une s\'erie d'Eisenstein sur 
$\bsX_Q$ par la formule:
\begin{equation*}E^Q(x,\Phi,\lambda) =\sum_{\gamma\in P(F)\bsl Q(F)}\Phi(\gamma x ,\lambda)\ptf\end{equation*}
Pour $Q=G$, on pose $E(\cdot ,\Phi,\lambda)= E^G(\cdot ,\Phi,\lambda)$. 
Le th\'eor\`eme \cite[5.2.2]{LW} est vrai ici (mutatis mutandis)
\footnote{Les propri\'et\'es de rationalit\'e dans le cas cuspidal sont \'etablies en \cite[IV.4]{MW1}. 
Le cas g\'en\'eral est trait\'e en \cite[Appendice~II]{MW1}.}:
pour $\Phi\in \Autom_\disc(\bsX_P)_\xi$ et $x\in\bsX_Q$, les fonctions 
\begin{equation*}\lambda\mapsto (\bfM_{Q\vert P}(s,\lambda)\Phi)(x)\quad\hbox{et}\quad\lambda\mapsto E(x,\Phi,\lambda)\end{equation*}
admettent un prolongement m\'eromorphe d\'efinissant des fonctions
rationnelles sur le cylindre $\ag_{P,\CM}^*/\ESA_P ^\vee={\bfHom}(\ESA_P ,\CM^\times)$.

%%%%%%%%%%%%%%%%%%%%%%%%%%%%
 \section{La $(G,M)$-famille spectrale}\label{GMspec}
Soient $M\in\ESL$, $P\in\ESP(M)$ et $\lambda\in\ag_{P,\CM}^*$. 
On d\'efinit une $(G,M)$-famille p\'eriodique \`a valeurs op\'erateurs \cite[5.3.2]{LW}: 
pour $Q\in\ESP(M)$ et $\Lambda\in \wh\ag_M$, on pose
\begin{equation*}\ESM(P,\lambda; \Lambda,Q)= \bfM_{Q\vert P}(\lambda)^{-1}\bfM_{Q\vert P}(\lambda +\Lambda)\ptf\end{equation*}
Soit $T\in\ag_{0}$. 
Rappelons que l'on a d\'efini en \ref{l'\'el\'ement T_0} une famille $M_0$-orthogonale, qui est rationnelle si 
$T\in \ag_{0,\QM}$:
\begin{equation*}\YY(T)=(\Y_{T,P})\end{equation*}o, pour $P\in\ESP(M_0)$, on a pos\'e 
\begin{equation*}\Y_{T,P}= \brT{P} +\Y_{P}\qquad\hbox{et}\qquad \Y_{P}= T_0 - \brTo{P}\ptf\end{equation*}
Suivant la convention habituelle, 
pour $Q\in\ESP$ et $P\in \ESP(M_0)$ tels que $P\subset Q$, on pose 
$\Y_{T,Q}= (\Y_{T,P})_Q$ et $\Y_Q= (\Y_{P})_Q$. Rappelons que pour 
$P\in\ESP(M_0)$, l'\'el\'ement $\Y_{P}$ appartient \`a $\ESA_0$. 
En particulier 
la famille $M_0$-orthogonale $(\Y_{P})$ est enti\`ere.
On peut donc d\'efinir une autre $(G,M)$-famille p\'eriodique \`a valeurs op\'erateurs: 
pour $Q\in\ESP(M)$ et $\Lambda\in \wh\ag_M$ en posant
\begin{equation*}\ESM({\YY};P,\lambda;\Lambda,Q)= e^{\langle\Lambda,\Y_Q\rangle }\ESM(P,\lambda; \Lambda,Q)\ptf\end{equation*}
Le lemme suivant r\'esulte de \ref{lissb}:

% lemme
\begin{lemma}\label{lissgm}
Fixons un \'el\'ement $Z\in{\ESA_G}$. Les fonctions m\'eromorphes de 
$\lambda$ et $\Lambda$ \`a valeurs op\'erateurs\footnote{La notion de m\'eromorphie 
invoqu\'ee pour un op\'erateur disons $A(\lambda)$ l'est au sens faible, c'est \`a dire la m\'eromorphie
pour les fonctions $\lambda\mapsto A(\lambda)\Phi$ pour $\Phi$ dans un espace de Banach.}
\begin{equation*}\ESM_{M,F}^{G,T}(Z,\YY;P,\lambda;\Lambda)=
\sum_{Q\in\ESP(M)}\varepsilon_Q^{G,\brT{Q}}(Z;\Lambda)\ESM({\YY};P,\lambda;\Lambda,Q)\end{equation*}
sont lisses pour les valeurs imaginaires pures de $\lambda$ et $\Lambda$.
\end{lemma}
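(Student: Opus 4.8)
The plan is to recognize the displayed operator-valued quantity as a single instance of $\bscMF^{G,\XX}(Z;\Lambda)$ and to read off smoothness in $\Lambda$ from Proposition~\ref{lissb}, the smoothness in $\lambda$ being supplied separately by the regularity of the intertwining operators on the unitary axis.

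First I would note that, taking $Q=G$ and $\XX=\TT=(\brT_Q)_{Q\in\ESP(M)}$ the $M$-orthogonal family attached to $T$, the definition of $\bscMF^{Q,\XX}(Z;\Lambda)$ gives
$$\ESM_{M,F}^{G,T}(Z,\YY;P,\lambda;\Lambda)=\bscMF^{G,\TT}(Z;\Lambda),\qquad \bsc(\Lambda,Q)=\ESM(\YY;P,\lambda;\Lambda,Q).$$
As recalled just before the statement, $\bsc=\ESM(\YY;P,\lambda)$ is a periodic operator-valued $(G,M)$-family: indeed $\ESM(\YY;P,\lambda;\Lambda,Q)=e^{\langle\Lambda,\Y_Q\rangle}\ESM(P,\lambda;\Lambda,Q)$, the exponential factor being periodic because the family $\YY=(\Y_Q)$ is integral and $\ESM(P,\lambda)$ being periodic by construction. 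Hence $\bsc=\bsc_\mf$ for a rapidly decreasing operator-valued function $\mf$ on $\ESH_{G,M}$.

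For the variable $\Lambda$ I would then apply Proposition~\ref{lissb} with $R=G$. In the weak sense of the footnote, i.e. after pairing with a fixed vector and a continuous functional, $\mf$ becomes scalar and rapidly decreasing, and the proof of \ref{lissb} transcribes verbatim to this Banach-valued setting: it uses only the Fourier expansion $\bsc(\Lambda,Q)=\sum_\UU e^{\langle\Lambda,U_Q\rangle}\mf(\UU)$, the finiteness and polynomially bounded support of $H\mapsto\Gamma_M^G(H,\TT+\UU)$ (Lemma~\ref{gamma-fini} and \cite[1.8.5]{LW}), and the rapid decrease of $\mf$ to justify termwise differentiation. This yields
$$\bscMF^{G,\TT}(Z;\Lambda)=\sum_{\UU\in\ESH_{G,M}}\mf(\UU)\,\gammaMF^{G,\TT+\UU}(Z+U_G;\Lambda)$$
and the smoothness of $\Lambda\mapsto\ESM_{M,F}^{G,T}(Z,\YY;P,\lambda;\Lambda)$ on $\bsmu_M$, i.e. for purely imaginary $\Lambda$.

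The remaining, and main, point is smoothness in $\lambda$, which lies outside \ref{lissb}. Since the functions $\varepsilon_Q^{G,\brT_Q}(Z;\cdot)$ do not involve $\lambda$, it suffices that $\lambda\mapsto\mf(\UU)$ be smooth on the purely imaginary axis with rapid-decrease bounds preserved under differentiation. This follows from the meromorphic continuation and regularity of the intertwining operators recalled in \ref{OE et SE} (see \cite[5.2.2]{LW} and \cite[IV]{MW}): for $\lambda$ purely imaginary the operators $\bfM_{Q\vert P}(\lambda)$ are holomorphic and invertible, so each $\ESM(P,\lambda;\Lambda,Q)=\bfM_{Q\vert P}(\lambda)^{-1}\bfM_{Q\vert P}(\lambda+\Lambda)$ is jointly smooth in $(\lambda,\Lambda)$ away from the $\Lambda$-walls, which are already neutralized by the $(G,M)$-family cancellation of the previous step. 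Differentiating termwise in both variables, using the uniform-in-$\lambda$ rapid decrease of $\mf$ for convergence, gives the asserted joint smoothness for purely imaginary $\lambda$ and $\Lambda$. I expect the only delicate verification to be that the Banach-valued version of \ref{lissb}, together with the estimates of its proof, holds uniformly in $\lambda$ on the unitary axis.
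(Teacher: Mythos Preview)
Your proposal is correct and takes the same approach as the paper: the paper's entire proof is the single phrase ``r\'esulte de \ref{lissb}'', and you have correctly unpacked this by recognizing $\ESM_{M,F}^{G,T}(Z,\YY;P,\lambda;\Lambda)$ as $\bscMF^{G,\TT}(Z;\Lambda)$ for the periodic operator-valued $(G,M)$-family $\bsc=\ESM(\YY;P,\lambda)$, and by observing separately that smoothness in $\lambda$ on the unitary axis comes from the regularity of the intertwining operators there. Your remarks on passing to the weak (vector-valued) sense via the footnote, and on the uniformity in $\lambda$ needed to differentiate under the sum, are the details the paper leaves implicit.
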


Observons que l'expression $\ESM_{M,F}^{G,T}(Z,\YY;P,\lambda;\Lambda)$ est \'egale \`a
\begin{equation*}\ESM_{M,F}^{G,T}(Z;P,\lambda;\Lambda)=
\sum_{Q\in\ESP(M)}\varepsilon_Q^{G,\brT{Q}}(Z;\Lambda)\ESM(P,\lambda;\Lambda,Q)\end{equation*}
si $\YY=0$ et donc par exemple si $G$ est d\'eploy\'e.

Soit $Z\in{\ESA_G}$. Pour $Q,\mskip 2mu  R\in\ESP_\st$, on introduit la fonction m\'eromorphe de $\lambda\in\ag_{Q,\CM}^*$ 
et $\mu\in\ag_{R,\CM}^*$, \`a valeurs op\'erateurs, 
\begin{equation*}\bsO_{R\vert Q}^{T}(Z;\lambda,\mu)\index{OmegaTrq@$\bsO_{R\vert Q}^{T}$}
=\sum_{S,s,t}\varepsilon_S^{G,T_S}(Z;s\lambda-t\mu)\bfM(t,\mu)^{-1}\bfM(s,\lambda)\end{equation*}
o $S$ parcourt les \'el\'ements de $\ESP_\st$ qui sont associ\'es \`a $Q$, $s$ parcourt les \'el\'ements de $\bfW(\ag_Q,\ag_S)$ et 
$t$ parcourt les \'el\'ements de $\bfW(\ag_R,\ag_S)$. Notons que $\bsO_{R\vert Q}^{T}(Z;\lambda,\mu)$ 
ne d\'epend que des images de 
$\lambda$ dans $\ag_{Q,\CM}^*/\ESA_Q^\vee$ et $\mu$ dans $\ag_{R,\CM}^*/\ESA_R^\vee$, et que l'on a 
$\bsO_{R\vert Q}^{T}(Z;\lambda ,\mu)=0$ si $R$ et $Q$ ne sont pas associ\'es. 

Le lemme \cite[5.3.4]{LW} est vrai ici. Il entra"ne la variante suivante de 
\cite[5.3.5]{LW}\footnote{Dans l'\'enonc\'e de \textit{loc.~cit}., $M$ est la composante de Levi standard de $Q$.}: 
en posant $M=M_R$, le changement de variables $s\mapsto u=t^{-1}s$, $S\mapsto S'= t^{-1}S$ et 
$s\lambda - t\mu\mapsto \Lambda_u = u\lambda -\mu$, donne
\begin{align*}
\lefteqn{
\bsO_{R\vert Q}^{T}(Z;\lambda,\mu)}\\
&& =\sum_{u\in \bfW(\ag_Q,\ag_R)}\sum_{S'\in\ESP(M)}e^{\langle\Lambda_u,\Y_{S'}\rangle} 
\varepsilon_{S'}^{G,\brT{S'}}(Z; \Lambda_u)\ESM(R,\mu; \Lambda_u , S')\bfM_{R\vert Q}(u,\lambda)\\
&& =\sum_{u\in \bfW(\ag_Q,\ag_R)}\ESM_{M,F}^{G,T}(Z,\YY;R,\mu;\Lambda_u)\bfM_{R\vert Q}(u,\lambda)\ptf
\end{align*}
Puisque pour $\lambda\in \wh\ag_Q $, l'op\'erateur $\bfM_{R\vert Q}(u,\lambda)$ est une isom\'etrie \cite[5.2.2~(2)]{LW}, 
on en d\'eduit que la fonction \`a valeurs op\'erateurs $(\lambda,\mu)\mapsto\bsO_{R\vert Q}^{T}(Z;\lambda,\mu)$ 
est lisse pour les valeurs imaginaires pures de $\lambda$ et $\mu$. 
L'op\'erateur 
\begin{equation*}\bsO_{R\vert Q}^{T}(Z;\lambda,\mu)\end{equation*}entrelace les repr\'esentation de $\G(\adef)$ dans
$\Autom_\disc(\bsX_Q)_\xi$ et $\Autom_\disc(\bsX_R)_{\xi'}$ o 
$\xi$ et $\xi'$ sont des caract\`eres unitaires automorphes de $A_Q(\adef)$ et $A_R(\adef)$ respectivement tels que 
pour un (i.e. pour tout) $u\in \bfW(\ag_Q,\ag_R)$ on ait $\xi'=u\xi$.

\begin{definition}\label{newomega}
On pose
\begin{equation*}\brabsO_{R\vert Q}^{T}(Z;\lambda ,\mu)\index{OmegaTrqbr@$\brabsO_{R\vert Q}^{T}$}
= \vert \wh\bsbbc_R \vert^{-1}\sum_{\nu \in \wh{\bsbbc}_R} 
\bfD_\nu\mskip 2mu \bsO_{R\vert Q}^{T}(Z;\lambda,\mu+\nu)\ptf\end{equation*}
La fonction \`a valeurs op\'erateurs $(\lambda,\mu)\mapsto\brabsO_{R\vert Q}^{T}(Z;\lambda,\mu)$ 
est lisse pour les valeurs imaginaires pures de $\lambda$ et $\mu$. 
\end{definition}

%%%%%%%%%%%%%%%%%%%%%%%%%
 \section{S\'eries d'Eisenstein et troncature}
\label{SETR}
Soit $M\in \ESL$. Pour $Z\in \ESA_G$ et $H\in\ESA_M$, on pose 
\begin{equation*}\bsX_G(Z)\index{XvgZ@$\bsX_G(Z)$}
=\G(F)\bsl\G(\adef;Z)\vgq \bsX_M(H)=M(F)\bsl M(\adef;H)\end{equation*}
et
\begin{equation*}\ovbsX_{\mskip -2mu M}=A_M(\adef)M(F)\bsl M(\adef)\ptf\end{equation*}
\add{Pour  $P\in \ESP(M)$, soient $\Phi$ et $\Psi$ deux fonctions sur $\bsX_P$ qui se transforment \`a gauche 
suivant le mme caract\`ere unitaire automorphe 
de $\A_M(\adef)$.  On a d\'efini un produit scalaire
\begin{equation*}\langle \Phi, \Psi \rangle_P= \int_{\ovbsX_{\mskip -2mu M}\times \bsK} 
\Phi(mk)\overline{\Psi(mk)}\dd m\dd k\ptf\end{equation*}
}
% lemma
\begin{lemma}\label{phipsi}
 Pour $H\in\ESA_M$, on pose
\begin{equation*}\langle \Phi, \Psi\rangle_{P,H}=\int_{\bsX_M(H)\times \bsK} \Phi(mk) \overline{\Psi(mk)}\dd m\dd k\ptf\end{equation*}
On a alors
\begin{equation*}\langle \Phi, \Psi\rangle_{P,H}=\vert\wh\bsbbc_M \vert^{-1}
\sum_{\nu\in\wh\bsbbc_M}e^{-\langle\nu,H\rangle}\langle \bfD_\nu\Phi, \Psi\rangle_P\end{equation*}
\end{lemma}

\begin{proof} On observe que puisque
\begin{equation*}\Phi(amk) \overline{\Psi(amk)}= \Phi(mk) \overline{\Psi(mk)}\end{equation*}
pour tout $a\in\A_M(\adef)$, le produit scalaire $\langle \Phi, \Psi\rangle_{P,H}$ ne d\'epend que de l'image
de $H$ dans $\bsbbc_M$. On conclut par transform\'ee de Fourier sur le groupe fini $\bsbbc_M$.
\end{proof}

Soit $\varphi\in L^1_\mathrm{loc}(P(F)\bsl\G(\adef;Z))$. On pose, si la s\'erie converge,
\begin{equation*}E({x,}\varphi)=\sum_{\gamma\in P(F)\bsl\G(F)}\varphi(\gamma x)\ptf\end{equation*}
Soit $\psi\in L^1_\mathrm{loc}(\ovbsX_{\mskip -2mu M}\times\bsK)$, 
c'est-\`a-dire que $\psi$ est une fonction localement int\'egrable 
sur $\bsX_M\times\bsK$ qui est invariante \`a gauche sous $A_M(\adef)$.
On pose, si l'int\'egrale a un sens, pour $\nu\in \wh\bsbbc_M$
\begin{equation*}\wh\psi(\nu)=
\int_{\ovbsX_M\times\bsK}e^{\langle\nu,\bfH_M(m)\rangle}\psi(m,k) \dd m \dd k \ptf\end{equation*}
Nous aurons besoin du calcul formel suivant:
 \begin{lemma} \label{XMZ} 
Notons $\ESA_M(Z)$ l'image r\'eciproque dans $\ESA_M$ de $Z\in\ESA_G$.
Soit $\varphi$ comme ci-dessus et
supposons que 
\begin{equation*}\varphi_P(x)=\int_{U_P(F)\bsl U_P(\adef)}\varphi(ux)\dd u\end{equation*}
soit de la forme
\begin{equation*}\varphi_P(mk)=\bs{\delta}_P(m)e^{\langle \xi,\bfH_P(m)\rangle}\psi(m,k)\leqno(\star)\end{equation*}
pour $ m\in M(\adef)$, $k\in\bsK$,
$\xi\in \bsmu_M$ et $\psi\in L^1_\mathrm{loc}(\overline{\bsX}_{\mskip -2mu M}\times \bs{K})$. 
On a l'\'egalit\'e suivante:
\begin{equation*}\int_{\bsX_G(Z)}E({x,}\varphi)\dd x=\vert\wh\bsbbc_M\vert^{-1}
\sum_{\nu\in\wh\bsbbc_M} \sum_{H\in\ESA_M(Z)} e^{\langle \xi-\nu,H\rangle}
\wh\psi(\nu)\ptf\end{equation*}
\end{lemma}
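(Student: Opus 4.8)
La preuve est un calcul formel (toutes les sommes et int\'egrales \'etant suppos\'ees convergentes) qui combine un d\'epliage de s\'erie, la d\'ecomposition d'Iwasawa et une inversion de Fourier sur le groupe fini $\bsbbc_M$. Je proc\'ederais d'abord par d\'epliage. Comme $\varphi$ est invariante \`a gauche par $P(F)$ et que $\G(\AM;Z)$ est stable par $\G(F)$ (car $\bfH_G(\gamma x)=\bfH_G(x)$ pour $\gamma\in \G(F)$), l'argument usuel donne
$$\int_{\bsX_G(Z)}E(\varphi)(x)\dd x=\int_{P(F)\bsl \G(\AM;Z)}\varphi(x)\dd x\ptf$$
En int\'egrant sur $U_P(F)\bsl U_P(\AM)$ (de volume $1$ pour la mesure de Tamagawa) et en utilisant l'invariance de $\varphi$ \`a gauche par $U_P(F)$, on fait appara\^itre le terme constant $\varphi_P$, ce qui ram\`ene le membre de droite \`a l'int\'egrale de $\varphi_P$ sur $U_P(\AM)M(F)\bsl \G(\AM;Z)$, espace qui n'est autre que la partie $\{\bfH_G=Z\}$ de $\bsX_P=P(F)U_P(\AM)\bsl \G(\AM)$.

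Ensuite j'appliquerais la d\'ecomposition d'Iwasawa $\G(\AM)=U_P(\AM)M(\AM)\bsK$ et la formule d'int\'egration (1) de \ref{formesautom}. Pour $x=umk$ on a $\bfH_G(x)=\bfH_G(m)$, image de $\bfH_M(m)\in\ESA_M$ dans $\ag_G$; la condition $\bfH_G(x)=Z$ \'equivaut donc \`a $\bfH_M(m)\in\ESA_M(Z)$. En d\'ecomposant $\bsX_M$ suivant les fibres de $\bfH_M$, puis en rempla\c{c}ant $\bs{\delta}_P(m)\mun\varphi_P(mk)$ par $e^{\langle\xi,\bfH_P(m)\rangle}\psi(m,k)$ gr\^ace \`a l'hypoth\`ese $(\star)$, et en notant que $\bfH_P(m)=\bfH_M(m)=H$ sur $\bsX_M(H)$, on aboutit \`a
$$\int_{\bsX_G(Z)}E(\varphi)\dd x=\sum_{H\in\ESA_M(Z)}e^{\langle\xi,H\rangle}\int_{\bsX_M(H)\times\bsK}\psi(m,k)\dd m\dd k\ptf$$

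Il resterait alors \`a exprimer l'int\'egrale partielle $\int_{\bsX_M(H)\times\bsK}\psi\dd m\dd k$ au moyen de la transform\'ee de Fourier $\wh\psi$: c'est l\`a le c\oe ur de l'argument et la principale difficult\'e, car c'est ici qu'intervient le groupe fini $\bsbbc_M=\ESB_M\bsl\ESA_M$ propre au cas des corps de fonctions. Comme $\psi$ est invariante \`a gauche par $A_M(\AM)$, cette int\'egrale ne d\'epend que de l'image de $H$ dans $\bsbbc_M$; la normalisation $\vol(A_M(F)\bsl A_M(\AM)^1)=1$ (convention \ref{convention-mesures}) permet de l'identifier \`a l'int\'egrale de $\psi$ sur la fibre correspondante de $\ovbsX_M$. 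L'inversion de Fourier sur $\bsbbc_M$ est alors exactement le lemme \ref{phipsi} appliqu\'e \`a $\Phi=\psi$ et $\Psi=1$, dont les coefficients de Fourier $\langle\bfD_\nu\psi,1\rangle_P$ valent $\wh\psi(\nu)$; il donnerait
$$\int_{\bsX_M(H)\times\bsK}\psi(m,k)\dd m\dd k=\vert\wh\bsbbc_M\vert\mun\sum_{\nu\in\wh\bsbbc_M}e^{-\langle\nu,H\rangle}\wh\psi(\nu)\ptf$$
En reportant cette \'egalit\'e dans l'expression pr\'ec\'edente et en \'echangeant les deux sommations, on obtiendrait la formule annonc\'ee. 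Le seul soin \`a prendre, hormis la justification formelle de la convergence, est la v\'erification des normalisations de mesures, qui assurent l'absence de constante parasite.
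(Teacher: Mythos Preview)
Your proof is correct and follows essentially the same approach as the paper: unfolding the pseudo-Eisenstein series, passing to the constant term $\varphi_P$, applying the integration formula \ref{formesautom}\,(1) with the Iwasawa decomposition, using hypothesis $(\star)$, and concluding by Fourier inversion on the finite group $\bsbbc_M$. The paper carries out the last step directly rather than citing \ref{phipsi}, but the computation is identical.
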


% preuve 
\begin{proof}
Tout d'abord il est classique d'observer que
\begin{equation*}\int_{\bsX_G(Z)}E({x,}\varphi)\dd x=
\int_{P(F)\bsl\G(\adef;Z)}\varphi( x)\dd x=\int_{P(F)U_P(\adef)\bsl\G(\adef;Z)}\varphi_P( x)\dd x\ptf\end{equation*}
La formule d'int\'egration \ref{formesautom}~(1) montre alors que
\begin{equation*}\int_{\bsX_G(Z)}E({x,}\varphi)\dd x=\sum_{H\in\ESA_M(Z)}
\int_{\bsX_M(H)\times \bsK}\bs{\delta}_P(m)\mun \varphi_P(mk) \mskip 2mu \dd m\mskip 2mu  \dd k\end{equation*}
soit encore, compte tenu de l'hypoth\`ese $(\star)$:
\begin{equation*}\int_{\bsX_G(Z)}E({x,}\varphi)\dd x=\sum_{H\in\ESA_M(Z)}e^{\langle \xi,H\rangle}
\int_{\bsX_M(H)\times\bsK}\psi(mk)\mskip 2mu \dd m\mskip 2mu  \dd k \end{equation*}
et il suffit pour conclure d'observer que 
\begin{equation*}\int_{\bsX_M(H)\times\bsK}\psi(mk)\dd m\mskip 2mu  \dd k 
=\vert\wh\bsbbc_M\vert^{-1}\sum_{\nu\in\wh\bsbbc_M}
e^{\langle-\nu,H\rangle}\wh\psi(\nu)\ptf\end{equation*}
\end{proof}

Nous pouvons maintenant \'etablir l'analogue dans notre cadre de \cite[5.4.3]{LW}. 
Soient $\Phi$ et $\Psi$ des formes automorphes associ\'ees \`a des sous-groupe paraboliques standard. 
Pr\'ecis\'ement: 

% hypoth\`eses
\begin{hypoths}\label{hypo} On suppose que:
\begin{enumerate}[(i)]
\item $\Phi\in \Autom_\disc(\bsX_Q)_\xi$ et 
$\Psi\in \Autom_\disc(\bsX_R)_{\xi'}$ pour des sous-groupes
paraboliques associ\'es $Q,\mskip 2mu R\in \ESP_\st$ o $\xi$, resp. $\xi'$, 
est un caract\`ere unitaire automorphe de $A_Q(\adef)$, resp. $A_R(\adef)$;
\item $\lambda\in \ag_{Q,\CM}^*/\ESA_Q^\vee$ et $\mu\in \ag_{R,\CM}^*/\ESA_R^\vee$;
\item $\xi'= w\xi$ pour un (i.e. pour tout) $w\in \bfW(\ag_Q,\ag_R)$. 
\end{enumerate}
\end{hypoths}

% th\'eor\`eme
\begin{theorem}\label{PSET}
Soit $Z\in\ESA_G$. 
Sous les hypoth\`eses \ref{hypo} on a les assertions suivantes:
\begin{enumerate}[(i)]
\item On suppose que $\Phi$ et $\Psi$ sont cuspidales.
On a l'\'egalit\'e entre fonctions m\'ero\-mor\-phes de $\lambda$ et $\mu$:
\begin{equation*}\int_{\bsX_G(Z)}\bs\Lambda^TE(x,\Phi,\lambda)
\overline{E(x,\Psi, -\bar{\mu})}\dd x= 
\langle \brabsO_{R\vert Q}^{T}(Z;\lambda,\mu)\Phi,\Psi\rangle_R\ptf\end{equation*}
\item On suppose que $\Phi$ et $\Psi$ sont discr\`etes mais non n\'ecessairement cuspidales.
Il existe une constante $c>0$ telle que pour tout $\lambda\in\bsmu_Q$ et tout $\mu\in \bsmu_R$, on ait:
\begin{equation*}\left\lvert \int_{\bsX_G(Z)}
\mskip -39mu
\bs\Lambda^TE(x,\Phi,\lambda)\overline{E(x,\Psi, -\bar{\mu})}\dd x
-\langle \brabsO_{R\vert Q}^{T}(Z;\lambda,\mu)\Phi,\Psi\rangle_{R}
\right\rvert \ll e^{- c\bsd_0(T)}\ptf\end{equation*}
\end{enumerate}
\end{theorem}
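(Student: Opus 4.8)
The plan is to adapt the proof of \cite[5.4.3]{LW}, the new feature being the careful handling of the finite groups $\bsbbc_M$ and of the lattice sums that replace the integrals of the number field case. For assertion $(i)$ I would begin by unfolding the truncation. Since $E(\cdot,\Phi,\lambda)$ is automorphic, writing $\bs\Lambda^T E(\cdot,\Phi,\lambda)$ by its definition and unfolding the sum over $P(F)\bsl G(F)$ against the integral over $\bsX_G(Z)=G(F)\bsl G(\AM;Z)$, then integrating over $U_P(F)\bsl U_P(\AM)$ to produce the constant term of the second factor, reduces the left-hand side to
$$\sum_{P\in\ESP_\st}(-1)^{a_P-a_G}\int_{U_P(\AM)P(F)\bsl G(\AM;Z)}\wh\tau_P(\bfH_0(x)-T)\,E_P(x,\Phi,\lambda)\,\overline{E_P(x,\Psi,-\bar\mu)}\dd x\ptf$$

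For cuspidal $\Phi$ and $\Psi$ I would then insert the Langlands constant term formula: $E_P(x,\Phi,\lambda)=\sum_{s\in\bfW(\ag_Q,\ag_P)}(\bfM(s,\lambda)\Phi)(x,s\lambda)$, which vanishes unless $P$ is associated to $Q$, and likewise for $\Psi$ with $t\in\bfW(\ag_R,\ag_P)$. Applying the integration formula \ref{formesautom}\,(1), the product of the two exponentials contributes $e^{\langle s\lambda-t\mu+2\rho_P,\bfH_P(x)\rangle}$, whose factor $2\rho_P$ cancels the modulus $\bs\delta_P^{-1}$ (here I use that $\bfH_P$ is real, so conjugating the $\Psi$-exponential replaces $-\bar\mu$ by $-\mu$). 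The integral over $\bsX_{M_P}$ decomposes as a sum over $H\in\ESA_{M_P}(Z)$ of $\wh\tau_P(H-\brT_P)\,e^{\langle s\lambda-t\mu,H\rangle}$ weighted by the fibre inner product $\langle\bfM(s,\lambda)\Phi,\bfM(t,\mu)\Psi\rangle_{P,H}$.

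The next step is to recognise the two structures appearing in $\brabsO$. By Lemma \ref{phipsi} the fibre inner product unfolds, through Fourier inversion on the finite group $\bsbbc_{M_P}$, into the average $\vert\wh\bsbbc_{M_P}\vert^{-1}\sum_{\nu}e^{-\langle\nu,H\rangle}\langle\bfD_\nu\bfM(s,\lambda)\Phi,\bfM(t,\mu)\Psi\rangle_P$; carrying the factor $e^{-\langle\nu,H\rangle}$ into the exponential and summing over $H\in\ESA_{M_P}(Z)$ against $\wh\tau_P$ produces exactly the lattice transform $\varepsilon_P^{G,\brT_P}(Z;s\lambda-t\mu-\nu)$ of \ref{epsi}. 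Using that for imaginary parameters $\bfM(t,\mu)$ is an isometry, so that $\langle\bfM(s,\lambda)\Phi,\bfM(t,\mu)\Psi\rangle_P=\langle\bfM(t,\mu)^{-1}\bfM(s,\lambda)\Phi,\Psi\rangle_P$, and the functional equation of $\bfD_\nu$ (Lemma \ref{eqf}), I would then perform the change of variables $s\mapsto u=t^{-1}s$, $P\mapsto S=t^{-1}P$, $s\lambda-t\mu\mapsto\Lambda_u=u\lambda-\mu$ exactly as in the computation following Definition \ref{newomega}. The collected terms organise into $\vert\wh\bsbbc_R\vert^{-1}\sum_{\nu\in\wh\bsbbc_R}\bfD_\nu\,\bsO_{R\vert Q}^T(Z;\lambda,\mu+\nu)$ (the group $\bsbbc_{M_P}$ being identified with $\bsbbc_R$ via $t$), which by Definition \ref{newomega} is $\brabsO_{R\vert Q}^T(Z;\lambda,\mu)$ and gives the claimed $\langle\brabsO_{R\vert Q}^T(Z;\lambda,\mu)\Phi,\Psi\rangle_R$. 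The main obstacle is this bookkeeping: matching the Fourier average over $\bsbbc_{M_P}$ coming from Lemma \ref{phipsi} with the average over $\wh\bsbbc_R$ in Definition \ref{newomega}, and checking that the lattice sums collapse into the $\varepsilon$-functions rather than the $\epsilon$-integrals of the number field case.

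For assertion $(ii)$ I would reduce to $(i)$. By the spectral decomposition of Langlands and M\oe glin--Waldspurger \cite{MW}, the discrete forms $\Phi$ and $\Psi$ are iterated residues of cuspidal Eisenstein series attached to cuspidal data on Levi subgroups of $M_Q$ and $M_R$; accordingly $E(\cdot,\Phi,\lambda)$ and $E(\cdot,\Psi,-\bar\mu)$ are residues of cuspidal Eisenstein series on $G$. Applying $(i)$ to the latter and taking the corresponding residue yields the main term $\langle\brabsO_{R\vert Q}^T(Z;\lambda,\mu)\Phi,\Psi\rangle_R$. The discrepancy between the truncated integral of the discrete series and this main term comes from the failure of truncation and the residue operation to commute exactly, which is controlled by the behaviour of $\wh\tau_P$ away from the walls; the very strong decay of the truncation in the function field setting (cf. Propositions \ref{supportcompact} and \ref{propK}) gives the bound $\ll e^{-c\bsd_0(T)}$. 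Since the spectral parameters range over the compact groups $\bsmu_Q$ and $\bsmu_R$, this estimate is automatically uniform in $\lambda$ and $\mu$, and the hard part of $(ii)$ is precisely to make the residue argument rigorous while keeping this uniform control.
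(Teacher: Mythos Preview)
Your plan for $(ii)$ matches the paper's: reduce to $(i)$ via the description of discrete automorphic forms as residues of cuspidal Eisenstein series (this is Arthur's argument, cf.\ \cite[5.4.2.($i$)]{LW}), and the uniformity in $\lambda,\mu$ comes for free from the compactness of $\bsmu_Q$ and $\bsmu_R$.

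For $(i)$, however, there is a real gap in the unfolding step. The paper does \emph{not} unfold $\bs\Lambda^T$ via its raw definition with the functions $\wh\tau_P$; it invokes instead the Langlands formula \cite[5.4.1]{LW}, which expresses the truncated cuspidal Eisenstein series as
\[
\bs\Lambda^T E(x,\Phi,\lambda)=\sum_{S,s,\gamma}(-1)^{a(s)}\phi_{M,s}\bigl(s^{-1}(\bfH_0(\gamma x)-T)\bigr)\,(\bfM(s,\lambda)\Phi)(\gamma x,s\lambda),
\]
the sum already restricted to $S\in\ESP_\st$ associated to $Q$ and $s\in\bfW(\ag_Q,\ag_S)$. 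From here the lattice sum against $\phi_{M,s}$ produces $\varepsilon_S^{G,T_S}$ directly, by Lemma~\ref{extres}. Your proposal skips this and asserts two things that do not hold as stated. First, the constant term $E_P(x,\Phi,\lambda)$ does \emph{not} vanish when $P$ is not associated to $Q$: for $P$ strictly containing an associate of $Q$ it is a nontrivial Eisenstein series on $M_P$, so the sum over $P$ does not immediately collapse. Second, the claim that the lattice sum of $e^{\langle\Lambda,H\rangle}$ against $\wh\tau_P(H-\brT_P)$ is $\varepsilon_P^{G,\brT_P}(Z;\Lambda)$ is false: the $\varepsilon$-functions of \ref{epsi} are defined via the cone $\phi_P^G$ (all $\varpi\le 0$), not the opposite cone $\wh\tau_P$ (all $\varpi>0$), and in dimension $>1$ these cones are not complementary. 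The combinatorial identity that repackages the alternating $\wh\tau$-sum into the $\phi_{M,s}$-form is exactly the content of \cite[5.4.1]{LW}, and it is the missing ingredient in your argument. Once this formula is in hand, the remaining steps --- Lemma~\ref{phipsi} for the $\bsbbc_M$-average, Lemma~\ref{extres} for the $\varepsilon$, and the functional equation~\ref{eqf} for $\bfD_\nu$ --- proceed as you describe, matching the paper's proof.
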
 

% preuve
\begin{proof}Prouvons (i). 
Pour $\lambda\in \ag_{Q,\CM}^*$ dans le domaine de convergence de la s\'erie d'Eisenstein 
$E(x,\Phi,\lambda)$, et puisque $\Phi$ est cuspidale, on a \cite[5.4.1]{LW}
\begin{equation*}\bs\Lambda^TE(x,\Phi,\lambda) = \sum_{S,s,\gamma} (-1)^{a(s)}
\phi_{M,s}(s^{-1}(\bfH_0(\gamma x)-T))(\bfM(s,\lambda)(\gamma x, s\lambda)\end{equation*}
o la somme porte sur les $S\in\ESP_\st$ associ\'es \`a $Q$,
$s\in\bfW(\ag_Q,\ag_S)$, $\gamma\in S(F)\bsl G(F)$, et $M=M_Q$.
On d\'eduit de \ref{XMZ} que pour $\lambda$ dans le domaine de convergence 
de $E(x,\Phi,\lambda)$ et $-\bar{\mu}$ dans celui de $E(x,\Psi,-\bar{\mu})$, 
l'int\'egrale de (i) est \'egale \`a
\begin{equation*}\sum_{S,s} \int_{\bsX_S(Z)}(-1)^{a(s)}\phi_{M,s}(s^{-1}(\bfH_0(x)-T))\bfA(x,s)\dd x\leqno{(1)}\end{equation*}
avec
\begin{equation*}\bsX_S(Z)=S(F)U_S(\adef)\bsl\G(\adef;Z)\end{equation*}
{($\bsX_S(Z)$ est l'image de $\bigg(\coprod_{H\in\ESA_M(Z)}\bsX_M(H) \bigg)\times \bs{K}$ dans $\bsX_S$)} 
et
\begin{equation*}\bfA(x,s)= (\bfM(s,\lambda)\Phi)(x,s \lambda)\Pi_S\overline{E(x,\Psi, -\bar{\mu})}\end{equation*}
o $\Pi_SE$ est le terme constant de $E$ le long de $S$.
Notons que $\phi_{M,s}(s^{-1}(\bfH_0(x)-T))$ ne d\'epend que de l'image $(\bfH_S(x)^G - T_S^G)$ 
de $(\bfH_0(x)-T)$ dans $\ag_S^G$. D'apr\`es \cite[5.2.2~(5)]{LW}, on a
\begin{equation*}\bfA(x,s)= \sum_{t\in \bfW^G(\ag_R,\ag_S)} e^{\langle s\lambda - t\mu+2 \rho_S,\bfH_S(x)\rangle}
(\bfM(s,\lambda)\Phi)(x) \overline{\bfM(t,-\bar{\mu})\Psi)(x)}\ptf\end{equation*}
La fonction $\bfM(s,\lambda)\Phi$ appartient \`a $\Autom_\cusp(\bsX_S)_{s\xi}$ et la fonction 
$\bfM(t,-\bar{\mu})\Psi$ appartient \`a $\Autom_\cusp(\bsX_S)_{t\xi'}$. 
Il r\'esulte de \ref{phipsi} et \ref{XMZ} 
que l'expression (1) est \'egale \`a la somme sur $S$, $s$ et $t$ de 
\begin{equation*}\vert \wh\bsbbc_S \vert^{-1}\mskip -5mu\sum_{\nu \in \wh{\bsbbc}_S}
\sum_{H\in \ESA_S(Z)}\mskip -20mu(-1)^{a(s)}\phi_{M,s}(H-T_S)e^{\langle s\lambda - t\mu-\nu,H \rangle}
 \langle \bfD_\nu\bfM(s,\lambda)\Phi, \bfM(t,-\bar{\mu})\Psi\rangle_S\ptf\end{equation*}
 Fixons un triplet $(S,s,t)$ comme ci-dessus. En tenant compte de 
 \ref{extres} on a pour $\lambda$ assez r\'egulier et $\mu$ fix\'e:
\begin{equation*}\sum_{H\in \ESA_S(Z)} (-1)^{a(s)}\phi_{M,s}(H-T_S)e^{\langle s\lambda - t\mu-\nu,H\rangle}=
\varepsilon_{S}^{G,T_S}(Z;s\lambda - t\mu-\nu)\ptf\end{equation*}
On obtient que l'expression (1) est \'egale \`a la somme sur $S$, $s$ et $t$ de 
\begin{equation*}\vert \wh\bsbbc_S \vert^{-1}\sum_{\nu \in \wh{\bsbbc}_S}
\varepsilon_{S}^{G,T_S}(Z;s\lambda - t\mu -\nu)
\langle \bfD_\nu\bfM(s,\lambda)\Phi, \bfM(t,-\bar{\mu})\Psi\rangle_{S}\ptf\leqno{(2)}\end{equation*}
soit encore
\begin{equation*}\vert \wh\bsbbc_R \vert^{-1}\sum_{\nu \in \wh{\bsbbc}_R}
\varepsilon_{S}^{G,T_S}(Z;s\lambda - t(\mu +\nu))
\langle \bfD_{t\nu}\bfM(s,\lambda)\Phi, \bfM(t,-\bar{\mu})\Psi\rangle_{S}\ptf\leqno{(3)}\end{equation*}
et, gr‰ce \`a l'\'equation fonctionnelle \ref{eqf},
on obtient que (3) est \'egal \`a
\begin{equation*}\vert \wh\bsbbc_R \vert^{-1}\sum_{\nu \in \wh{\bsbbc}_R}\varepsilon_{S}^{G,T_S}(Z;s\lambda - t(\mu+\nu))
\langle\bfD_\nu \bfM(t,-(\mu+\nu))^{-1}\bfM(s,\lambda)\Phi,\Psi \rangle_{R}\ptf\leqno{(4)}\end{equation*}
On voit appara"tre la $(G,M)$-famille spectrale
\`a valeurs op\'erateurs pour $M=M_R$ et l'int\'egrale de (i) est donc \'egale \`a
\begin{equation*}\vert \wh\bsbbc_R \vert^{-1}\sum_{\nu \in \wh{\bsbbc}_R}
\langle \bfD_\nu\bsO_{R\vert Q}^{T}(Z;\lambda,\mu+\nu)\Phi,\Psi\rangle_{R}\ptf\end{equation*}
L'assertion (i) en r\'esulte.
Le cas g\'en\'eral (ii) est dž \`a Arthur \cite{A3} dans le cas des corps de nombres. 
La preuve consiste \`a se ramener au cas cuspidal, c'est-\`a-dire \`a la formule de Langlands \cite[5.4.2.(i)]{LW}. 
Dans le cas des corps de fonctions, on prouve de la mme mani\`ere (ii) \`a partir de (i). 
Notons qu'ici, les groupes $\bsmu_Q$ et $\bsmu_R$ sont compacts, 
d'o la borne uniforme en $\lambda$ et $\mu$. 
\end{proof}

% remarque
Sous les hypoth\`eses (i) et (ii) de \ref{hypo}, pour que l'int\'egrale
\begin{equation*}\int_{\bsX_G(Z)}\bs\Lambda^TE(x,\Phi,\lambda)\overline{E(x,\Psi, -\bar{\mu})}\dd x\end{equation*}
soit non nulle, il faut que
 $w\xi$ et $\xi'$ co\"{i}ncident sur $A_R(F)\bsl A_R({\adef})^1$ pour un (et donc pour tout) $w\in \bfW(\ag_Q,\ag_R)$. 
Cette condition \'equivaut \`a l'existence d'un $\tau\in \bsmu_R$ tel que
\begin{equation*}(w\xi) \star \tau = \xi'\ptf\end{equation*}
Son image dans $\wh{\ESB}_R$ est uniquement d\'etermin\'ee.
\begin{proposition}\label{d\'ecalage} 
Notons $\bs{\EuScript{E}}(\xi,\xi')$ l'ensemble des $\tau\in \bsmu_R$ v\'erifiant l'\'equation
\begin{equation*}(w\xi) \star \tau = \xi'\ptf\end{equation*}
pour un $w\in \bfW(\ag_Q,\ag_R)$. 
S'il est non vide, c'est un espace principal homog\`ene sous $\wh{\bsbbc}_R$,
ind\'ependant du choix de $w$.
Sous les hypoth\`eses (i) et (ii) de \ref{hypo}, le th\'eor\`eme \ref{PSET} reste vrai
sans l'hypoth\`ese (iii) \`a condition de remplacer
$\brabsO_{R\vert Q}^{T}(Z;\lambda,\mu)$ par l'op\'erateur
\begin{equation*}\brabsO_{R\vert Q}^{T}(Z,\xi,\xi ';\lambda,\mu)\bydef
\vert \wh\bsbbc_R \vert^{-1}\sum_{\nu \in \bs{\EuScript{E}}(\xi,\xi')} 
\bfD_{\nu}\mskip 2mu \bsO_{R\vert Q}^{T}(Z;\lambda,\mu+\nu)\ptf\end{equation*}
Par convention $\brabsO_{R\vert Q}^{T}(Z,\xi,\xi ';\lambda,\mu)=0$ si $\bs{\EuScript{E}}(\xi,\xi')$ est vide. 
Si $(\lambda - \mu - \nu)\in\wh\bsbbc_G$ pour $\nu \in \bs{\EuScript{E}}(\xi,\xi')$,
chacun des membres de l'\'egalit\'e ne d\'epend que de l'image de $Z$ dans $\bsbbc_G$.
\end{proposition}

% preuve
\begin{proof}
Il suffit d'observer que $E(x, \bfD_{\nu}\Psi,\mu)=E(x,\Psi, \mu +\nu)$.
\end{proof}

%%%%%%%%%%%%%%%%%%%%%%%%%%%
 \chapter{Le noyau int\'egral}
\label{le noyau int\'egral}

%%%%%%%%%%%%%%%%%%%%%%%%%

 \section{Op\'erateurs et noyaux}\label{les op\'erateurs}
 Notons $C^\infty_\mathrm{c}(\tG({\adef}))$ l'espace des fonctions lisses et \`a support compact sur $\tG({\adef})$. 
On notera $\dd y$ la mesure $G({\adef})$-invariante \`a droite et \`a gauche sur $\tG({\adef})$ 
d\'eduite de la mesure de Haar $\dd x$ sur $G({\adef})$ en posant: 
\begin{equation*}\int_{\tG({\adef})}f(y)\dd y=\int_{G({\adef})}f(\delta x)\dd x\quad \hbox{avec} \quad \delta\in\tG(F)\ptf\end{equation*}
La mesure ainsi d\'efinie est ind\'ependante du choix de $\delta$. L'espace tordu localement compact 
$\tG({\adef})$ est unimodulaire au sens de \cite[2.1]{LW}. 
Il agit sur $\bsX_G$ de la mani\`ere suivante: pour $x\in\bsX_G$ et $y\in\tG({\adef})$, on choisit 
un repr\'esentant $\dot{x}$ de $x$ dans $G({\adef})$ et un \'el\'ement $\delta$ dans $\tG(F)$. 
Alors $\dot{x}'=\delta^{-1}\dot{x}y$ est un \'el\'ement de $G({\adef})$, dont l'image $x'$ dans 
$\bsX_G$ ne d\'epend pas des choix de $\dot{x}$ et de $\delta$. On pose $x*y=x'$. 

On fixe dans toute la suite un caract\`ere unitaire $\omega$ de $\G(\adef)$ 
 trivial sur le groupe $A_\tG({\adef})G(F)$. La repr\'esentation r\'eguli\`ere droite $\Rho$ de $G({\adef})$ dans 
$L^2(\bsX_G)$ se prolonge naturellement en une repr\'esentation unitaire 
$\tRho$ de $(\tG({\adef}),\omega)$, au sens de \cite[2.3]{LW}: pour $\varphi\in L^2(\bsX_G)$, et 
$x$ et $y$ comme ci-dessus, on pose
\begin{equation*}\tRho(y,\omega)\varphi(x)= (\omega\varphi)(x*y)=
\omega(\delta^{-1}\dot{x}y)\varphi(\delta^{-1}\dot{x}y)\ptf\end{equation*}
Par int\'egration contre une fonction $f\in C^\infty_\mathrm{c}(\tG({\adef}))$, on d\'efinit l'op\'erateur
\begin{equation*}\tRho(f,\omega)=\int_{\tG({\adef})}f(y)\tRho(y,\omega)\dd y\ptf\end{equation*}
Il est repr\'esent\'e par le noyau int\'egral sur $\bsX_G\times\bsX_G$: 
 \begin{equation*}K_\tG(f,\omega;x,y)=\sum_{\delta\in\tG(F)}\omega(y)f(x^{-1}\delta y)\end{equation*}
c'est-\`a-dire que
\begin{equation*}(\tRho(f,\omega)\varphi)(x)=\int_{\bsX_G}K_\tG(f,\omega;x,y)\varphi(y)\dd y\ptf\end{equation*}
Le noyau $K_\tG(f,\omega;x,y)$ sera not\'e $K(f,\omega;x,y)$ si aucune confusion craindre. 
D'apr\`es \cite[6.2.1]{LW} on a le 
\begin{lemma}\label{noyineg}
 Il existe des constantes $c(f)$ et $N$ telles que, pour tout $x$ et tout $y$ dans $G({\adef})$, on ait
\begin{equation*}\vert K(f,\omega;x,y)\vert\leq c(f)\vert x\vert^N\vert y\vert^N\ptf\end{equation*}
\end{lemma}

%%%%%%%%%%%%%%
 \section{Factorisation du noyau}\label{factorisation du noyau}
Pour $f\in C^\infty_\mathrm{c}(\tG({\adef}))$ et $h\in C^\infty_\mathrm{c}(G({\adef}))$, 
on note $f\star h\in C^\infty_\mathrm{c}(\tG({\adef}))$ la fonction d\'efinie par 
\begin{equation*}(f\star h) (x)=\int_{G({\adef})} f(xy\mun)h(y)\dd y\ptf\end{equation*}
Le noyau int\'egral de l'op\'erateur $\tRho(f*h,\omega)$ sur $\bsX_G$ est donn\'e par
\begin{equation*}K(f\star h,\omega;x,y)=\int_{\bsX_G}K(f,\omega;x,z)K_G(\omega h;z,y)\dd z\ptf\end{equation*}
Toute fonction $f\in C^\infty_\mathrm{c}(\tG({\adef}))$ est $\bsK'$-bi-inva\-riante, 
 c'est-\`a-dire invariante \`a droite et \`a gauche, par $\bsK'$ un sous-groupe ouvert compact de $G({\adef})$, 
que l'on peut choisir distingu\'e dans $\bsK$. 
Si on suppose que le caract\`ere $\omega$ est trivial sur $\bsK'$ la fonction
\begin{equation*}\bsX_G\times\bsX_G\rightarrow {\CM},\mskip 2mu  (x,y)\mapsto K(f,\omega;x,y)\end{equation*}
est $(\bsK' \times\bsK')$-invariante (pour l'action \`a droite)
et le noyau $K(f,\omega;x,y)$ 
est {$\Autom$-\textit admissible} au sens de \cite[6.3]{LW}.
Le th\'eor\`eme de factorisation de Dixmier-Mallia\-vin \cite[6.3.1]{LW} est trivialement vrai ici: 
notons $e_{\bsK'}$ la fonction caract\'eristique de $\bsK'$ divis\'ee par $\vol (\bsK')$. C'est un idempotent de 
$C^\infty_\mathrm{c}(G({\adef}))$ et l'on a
\begin{equation*}f= f\star e_{\bsK'}= e_{\bsK'}\star f\;= e_{\bsK'}\star f\star e_{\bsK'}\ptf\end{equation*}
Puisque $\omega\vert_{\bs{K}'}=1$ le noyau $K(f,\omega;x,y)$ s'\'ecrit
\begin{equation*}K(f,\omega;x,y) =\int_{{\bsX}_{G}}K(f,\omega;x,z)K_G(e_{\bsK'};z,y)\dd z\ptf\end{equation*}

%%%%%%%%%%%%%%%%

 \section{Propri\'et\'es du noyau tronqu\'e}
On a d\'efini en \ref{Siegel} un domaine de Siegel $\Siegel^*=\EE_G\Siegel^1$ pour le quotient
$$\BB_GG(F)\bsl\G(\adef)$$ et on pose $G(\adef)^*=\EE_G G(\adef)^1$.
On note $\bs\Lambda^T_1$ l'op\'erateur de troncature agissant sur la premi\`ere variable d'un noyau 
$K(f,\omega;x,y)$. On a la variante \cite[IV.2.5~(b)]{MW1} des lemmes \cite[6.4.1, 6.4.2]{LW}:

% lemme
\begin{lemma}\label{prop1nt}
(i) -- Il existe un sous-ensemble compact $\Omega_1$ de $\Siegel^*$ tel que pour tout $y\in G(\adef)^*$, la fonction 
\begin{equation*}\Siegel^*\rightarrow \CM\vgq x\mapsto \bs\Lambda_1^TK(f,\omega;x,y)\end{equation*}
soit \`a support dans $\Omega_1$. De plus la fonction
\begin{equation*}\Siegel^*\times \Siegel^*\rightarrow \CM\vgq (x,y)\mapsto\bs\Lambda_1^TK(f,\omega;x,y)\end{equation*}
est \`a support compact, donc born\'ee. 
\pni (ii) -- Soit $\bsK'$ un sous-groupe ouvert compact de $G({\adef})$.
Il existe un sous-ensemble compact $\Omega_2$ de $\Siegel^* \times \Siegel^*$ 
tel que pour toute fonction $\bsK'$-bi-invariante $f$ dans $C^\infty_\mathrm{c}(\tG({\adef}))$, le support de la restriction 
\`a $\Siegel^* \times \Siegel^*$ 
du noyau tronqu\'e $(x,y)\mapsto\bs\Lambda^T_1K(f,\omega;x,y)$ soit 
contenu dans $\Omega_2$.
\end{lemma}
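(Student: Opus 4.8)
The plan is to follow \cite[IV.2.5]{MW}, adapting \cite[6.4.1, 6.4.2]{LW}, and to exploit the drastic simplification of the truncation available in the function-field case. First I would record the invariances of the kernel in its first variable: for fixed $y$, reindexing the sum over $\tG(F)$ shows that $x\mapsto K(f,\omega;x,y)$ descends to a (locally constant) function on $\bsX_G$, and it is right $\bsK'$-invariant as soon as $f$ is $\bsK'$-bi-invariante and $\omega\vert_{\bsK'}=1$. Since $T$ is assumed assez régulier, Proposition \ref{propK} applies to this function and collapses the truncation to a multiplication operator:
$$\bs\Lambda_1^T K(f,\omega;x,y)= \bfC^T_1 K(f,\omega;x,y)= F_{P_0}^G(x,T)\,K(f,\omega;x,y)\ptf$$
As $F_{P_0}^G(\cdot,T)$ is left $\BB_G G(F)$-invariant with image compact in $\BB_G G(F)\bsl G(\AM)$, its restriction to $\Siegel^*$ is supported in a compact set $\Omega_1$ depending only on $T$ and $\bsK'$; this yields the first assertion of $(i)$ and the control of $x$, uniformly in both $y$ and $f$.

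To finish $(i)$ I would bound $y$ by reduction theory. Non-vanishing at $(x,y)$ with $x\in\Omega_1$ forces $x^{-1}\delta y\in\mathrm{supp}(f)$ for some $\delta\in\tG(F)$; fixing $\delta_0\in\tG(F)$ and writing $\delta=\gamma\delta_0$ with $\gamma\in G(F)$, the image of $\delta_0 y$ in $\BB_G G(F)\bsl\tG(\AM)$ stays in a fixed compact. The finiteness property of Siegel domains recalled in \ref{Siegel} (after \cite{S}) then confines $y$ to a compact subset of $\Siegel^*$. Together with $x\in\Omega_1$ this shows that the restriction of $(x,y)\mapsto\bs\Lambda_1^T K(f,\omega;x,y)$ to $\Siegel^*\times\Siegel^*$ has compact support; being locally constant it is bounded, and \ref{noyineg} supplies an explicit bound on its values if needed.

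For $(ii)$ the only real issue is uniformity in $f$: the region $\Omega_1$ already depends only on $\bsK'$, but the compact obtained for $y$ in $(i)$ a priori grows with $\mathrm{supp}(f)$. To remove this dependence I would factor $f=e_{\bsK'}\star f$ and use the kernel factorization of \ref{factorisation du noyau}; since the truncation acts only on the first variable,
$$\bs\Lambda_1^T K(f,\omega;x,y)=\int_{\bsX_G}\Theta(x,w)\,K(f,\omega;w,y)\,\dd w\vgq \Theta(x,w)=\bs\Lambda_{1}^T K_G(e_{\bsK'};x,w)\ptf$$
Here $\Theta$ is the truncated kernel of the \emph{fixed} idempotent $e_{\bsK'}$, so by $(i)$ applied to $e_{\bsK'}$ its restriction to $\Siegel^*\times\Siegel^*$ is supported in a compact depending only on $\bsK'$; in particular $w$ is confined to a fixed compact $W$, independent of $f$. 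The hard part will be to confine $y$ uniformly, i.e.\ to bound the $y\in\Siegel^*$ for which $K(f,\omega;w,y)\neq0$ for some $w\in W$ independently of $\mathrm{supp}(f)$; this is where I would import the argument of \cite[IV.2.5]{MW}, combining the $\bsK'$-invariance with the formule du produit to constrain $\delta\in\tG(F)$ while $w$ ranges over the fixed compact $W$. This produces a compact $\Omega_2\subset\Siegel^*\times\Siegel^*$ independent of $f$ containing the support, as required.
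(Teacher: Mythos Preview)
Your overall strategy is close to the paper's, which also appeals to \cite[IV.2.5\,(b)]{MW} after confining $x$, but there is a genuine error in your justification for confining $y$ in part~$(i)$. You write that ``the finiteness property of Siegel domains recalled in \ref{Siegel} (after \cite{S}) then confines $y$ to a compact subset of $\Siegel^*$.'' But \ref{Siegel} says precisely the opposite: over a function field the usual finiteness property (that only finitely many $\gamma\in G(F)$ satisfy $\gamma\Siegel^1\cap\Siegel^1\neq\emptyset$) \emph{fails}, because every $\gamma\in U_0(F)$ gives a nonempty intersection. So the property you invoke is false here. What you actually need is the weaker statement that the map $\Siegel^*\to\BB_G G(F)\backslash G(\AM)$ is proper; this does hold (infinitely many $\gamma$'s can meet a given compact, but they all do so near the cusps), and it is part of what the argument of \cite[IV.2.5\,(b)]{MW} supplies. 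The paper bypasses this by citing \cite[IV.2.5\,(b)]{MW} directly rather than attempting an independent justification.

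A secondary point: you reduce $\bs\Lambda_1^T$ to $\bfC_1^T$ via Proposition~\ref{propK}, which needs $\bsd_0(T)\geq c_{\bsK'}$ with the constant depending on $\bsK'$. The lemma as stated carries no such hypothesis on $T$, and the paper instead uses Proposition~\ref{supportcompact}, which already gives a compact $\Omega_1=\Omega_{T,\bsK'}$ containing the $x$-support of $\bs\Lambda^T\varphi$ for \emph{every} $\bsK'$-invariant $\varphi$, without collapsing the truncation. Your route works if one adds the extra regularity of $T$, but it is not needed. For part~$(ii)$, your factorization through $e_{\bsK'}$ confines $x$ and the intermediate variable $w$ uniformly in $f$, but as you yourself note, it does not by itself confine $y$ independently of $\mathrm{supp}(f)$; both you and the paper defer this step to \cite[IV.2.5\,(b)]{MW}.
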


% preuve
\begin{proof}
La fonction $(x,y)\mapsto K(f,\omega;x,y)$ sur $\bsX_G\times \bsX_G$ 
est $(\bsK'\times\bsK')$-invariante pour un sous-groupe ouvert compact $\bsK'$ de $G({\adef})$. On peut donc 
appliquer \ref{supportcompact}: il existe un sous-ensemble compact $\Omega_1$ de $\Siegel^*$ tel que 
pour tout $y\in G({\adef})^*$, le support de la fonction 
$x\mapsto\bs\Lambda^T_1K(f,\omega;x,y)$ soit 
contenu dans $\Omega_1$. On proc\`ede ensuite comme dans la preuve de \cite[IV.2.5~(b)]{MW1}.
La derni\`ere assertion r\'esulte de \ref{supportcompact} et de la preuve de \cite[IV.2.5~(b)]{MW1}. 
\end{proof}

%%%%%%%%%%%%%%%%%%%%%%%%%%%
 \chapter{D\'ecomposition spectrale}\label{d\'ecomposition spectrale}

Les sorites de \cite[7.1]{LW} sont valables ici.
La d\'ecomposition spectrale de $L^2(\bsX_G)$ a \'et\'e obtenue par Langlands 
pour les corps de nombres \cite{Lan} et Morris pour les corps de fonctions \cite{Mo1,Mo2} puis
r\'edig\'ee pour tout corps global par M\oe glin et Waldspurger \cite{MW1}. 

 \section{Un r\'esultat de finitude}\label{resultatfinitude}
Soit $P\in \ESP$. On observe qu'une fonction $\bsK$-finie sur $\bsX_P$ est forc\'ement $\bsK'$-invariante 
\`a droite pour un sous-groupe 
ouvert $\bsK'$ de $\bsK$. Pour $\xi\in \Xi(P)$, on note $\Autom_{\disc,\bsK'}(\bsX_P)_\xi$ le sous-espace 
de $\Autom_\mathrm{disc}(\bsX_P)_\xi$ form\'e des fonctions qui sont $\bsK'$-invariantes. 
On d\'efinit de la mme mani\`ere 
les espaces $\Autom_{\cusp,\bsK'}(\bsX_P)_\xi$. 
Sur un corps de fonctions on a le r\'esultat de finitude suivant:

% th\'eor\`eme
\begin{theorem}\label{thmfinitude}
La repr\'esentation de $G(\adef)$ dans $\Autom_\disc(\bsX_P)_\xi$ est {\rmfamily admissible}:
pour tout sous-groupe ouvert compact $\bsK'$ de $G(\adef)$, l'espace $\Autom_{\disc,\bsK'}(\bsX_P)_\xi$ 
est de dimension finie. 
 \end{theorem}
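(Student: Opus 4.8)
The plan is to reduce the statement to the case $P=G$ and then to realize the $\bsK'$-fixed part of the discrete spectrum as the image of a compact operator that is bounded below, which forces that image to be finite-dimensional.

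\emph{Reduction.} By the Iwasawa decomposition $G(\AM)=P(\AM)\bsK$ a form $\varphi\in\Autom_{\disc,\bsK'}(\bsX_P)_\xi$ is determined by the finitely many functions $m\mapsto\varphi(mk)$ on $\bsX_{M_P}$, where $k$ runs over representatives of $\bsK/\bsK''$ for an open compact $\bsK''$ fixing $\varphi$ on the right (the quotient $\bsK/\bsK''$ is finite, $\bsK$ being compact and $\bsK''$ open); each such function lies in $\Autom_\disc(\bsX_{M_P})_\xi$ and is invariant under an open compact subgroup of $M_P(\AM)$. Hence $\dim\Autom_{\disc,\bsK'}(\bsX_P)_\xi$ is at most $[\bsK:\bsK'']$ times $\dim\Autom_{\disc,\bsK'''}(\bsX_{M_P})_\xi$ for a suitable open compact $\bsK'''\subset M_P(\AM)$, and by induction on $\dim G$ it suffices to treat $P=G$. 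Moreover, since an automorphic representation is discrete modulo the centre exactly when its restriction to $G(\AM)^1$ is a finite sum of irreducible constituents of $L^2(G(F)\bsl G(\AM)^1)$, it is enough to bound the dimension of $L^2_\disc(G(F)\bsl G(\AM)^1)^{\bsK_1'}$, where $\bsK_1'=\bsK'\cap G(\AM)^1$.

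\emph{A compact operator.} On the finite-volume quotient $G(F)\bsl G(\AM)^1$ the idempotent $e_{\bsK_1'}$ acts through the right regular representation $\Rho$ as the orthogonal projection onto $\bsK_1'$-invariants, with kernel $K_G(e_{\bsK_1'};x,y)$. For $T$ sufficiently regular I would consider $B=\bs\Lambda^T\Rho(e_{\bsK_1'})$, whose kernel is $\bs\Lambda^T_1K_G(e_{\bsK_1'};x,y)$. Proposition~\ref{supportcompact} shows that truncation in the first variable confines the support to a set of compact image in $G(F)\bsl G(\AM)^1$, and the joint estimate of Lemma~\ref{prop1nt} shows that the kernel is bounded with compact support on $\Siegel^1\times\Siegel^1$; since $\Siegel^1$ maps onto $G(F)\bsl G(\AM)^1$ and the kernel is left-automorphic, the operator $B$ is therefore Hilbert--Schmidt, in particular compact.

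\emph{Conclusion, modulo one estimate.} For $T$ regular Proposition~\ref{propK} gives $\bs\Lambda^T=\bfC^T$, i.e. $\bs\Lambda^T$ is multiplication by the indicator $F^G_{P_0}(\cdot,T)$ of the truncated Siegel region. Thus for a $\bsK_1'$-invariant discrete $\varphi$ one has $\Rho(e_{\bsK_1'})\varphi=\varphi$ and $B\varphi=F^G_{P_0}(\cdot,T)\varphi$, so $\|B\varphi\|^2=\|\varphi\|^2-\int(1-F^G_{P_0}(x,T))|\varphi(x)|^2\,\dd x$. If $L^2_\disc(G(F)\bsl G(\AM)^1)^{\bsK_1'}$ were infinite-dimensional, an orthonormal sequence $(\varphi_n)$ in it would converge weakly to $0$, whence $\|B\varphi_n\|\to0$ by compactness of $B$; a uniform lower bound $\|B\varphi_n\|\ge\tfrac12$ would then be contradictory and conclude the proof. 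The main obstacle is precisely this uniform lower bound, equivalently the uniform smallness $\int(1-F^G_{P_0}(x,T))|\varphi|^2\,\dd x\le\tfrac12\|\varphi\|^2$ of the constant-term tail over all discrete $\bsK_1'$-fixed $\varphi$ of norm one. Deep in each cusp a square-integrable $\varphi$ coincides with a constant term $\varphi_P$ (Lemmas~\ref{lemme1TC}--\ref{lemme2TC}), whose exponents have real part strictly inside the square-integrability cone; over a function field these exponents vary in the \emph{compact} quotient $\ag_{P,\CM}^*/\ESA_P^\vee$, so at fixed level they remain in a compact set bounded away from the unitary boundary, yielding uniform exponential decay of $\varphi_P$ and hence the desired bound. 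Making this genuinely uniform over the a priori infinite family is the crux: the inner-product estimate of Theorem~\ref{PSET}\,(ii) is the natural instrument, and failing that one may instead invoke the full spectral decomposition of \cite{MW} to express $L^2_\disc$ through finitely many cuspidal data on Levi subgroups, for which the finiteness is classical.
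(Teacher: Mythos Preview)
Your primary argument has a genuine gap that you correctly identify but do not close. The compactness of $B=\bs\Lambda^T\Rho(e_{\bsK_1'})$ is fine, and over a function field $\bs\Lambda^T=\bfC^T$ indeed holds at fixed level for $T$ regular enough. The problem is the uniform lower bound $\|B\varphi\|\ge\tfrac12\|\varphi\|$ for \emph{all} $\bsK_1'$-fixed discrete $\varphi$. Your justification --- that the cuspidal exponents of discrete forms lie in a compact set bounded away from the unitary wall --- is where the circularity enters: the \emph{imaginary} parts of the exponents live in the compact torus $\ag_{P,\CM}^*/\ESA_P^\vee$, but the \emph{real} parts, which govern square-integrability and the rate of decay in the cusp, take values in $\ag_P^*$ and are not a priori bounded away from zero uniformly over an infinite family. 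Establishing such a bound at fixed level is tantamount to knowing that only finitely many residual exponents occur, which is precisely the finiteness you are trying to prove. Theorem~\ref{PSET}\,(ii) does not help here: it presupposes that $\Phi,\Psi$ are discrete and gives asymptotics of a single inner product, not a uniform spectral gap.

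The paper's proof is shorter and avoids this difficulty entirely. It first treats the cuspidal case: over a function field, any $\bsK'$-invariant cusp form on $\bsX_P$ has support in a fixed compact set (this is the content of Lemma~\ref{lemme2TC} and its consequence \ref{lemme4TC}), so $\Autom_{\cusp,\bsK'}(\bsX_P)_\xi$ sits inside a space of locally constant functions on a compact set and is finite-dimensional. Then it invokes the Langlands--Morris spectral decomposition (\cite{MW}): every discrete $\bsK'$-fixed form arises as a residue of Eisenstein series built from cuspidal data $\Phi'\in\Autom_{\cusp,\bsK'}(\bsX_Q)_{\xi'}$ with $Q\subset P$ and $\xi'$ extending $\xi$; at fixed level these cuspidal spaces are finite-dimensional and nonzero for only finitely many $\xi'$, so the residues span a finite-dimensional space. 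Your own fallback ``invoke the full spectral decomposition of \cite{MW}'' is exactly this argument; the compact-operator detour adds nothing once you are forced to take that route.
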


\begin{proof}
D'apr\`es \ref{lemme2TC}, il existe un sous-ensemble compact $\Omega$ de $\bsX_P$ tel que toute forme 
automorphe cuspidale $\bsK'$-invariante sur $\bsX_P$ soit \`a support contenu dans $\Omega$. On en d\'eduit 
que l'espace 
$\Autom_{\cusp,{\bsK'}}(\bsX_P)_\xi$ est de dimension finie. 
En d'autres termes, la repr\'esentation 
de $G(\adef)$ dans $\Autom_\cusp(\bsX_P)_\xi$ est admissible. 
 D'apr\`es la d\'ecomposition spectrale de Langlands, les formes automorphes discr\`etes
 \begin{equation*}\Phi \in \Autom_{\disc,\bsK'}(\bsX_P)_\xi\end{equation*}
 s'obtiennent comme r\'esidus de s\'eries d'Eisenstein $E^P(x,\Phi',\lambda)$ construites \`a partir de formes automorphes cuspidales 
 $\Phi' \in \Autom_{\cusp,\bsK'}(\bsX_Q)_{\xi'}$ pour 
 $Q\in \ESP$ tel que $Q\subset P$, 
 $\xi'\in \Xi(Q)$ et $\lambda \in \bsmu_Q^+ \buildrel \mathrm{d\acute{e}f}\over{=} \ag_{Q,\mathbb{C}}^*/ \ES{A}_Q^\vee$; 
 avec la condition que l'ŽlŽment 
$\xi' \star \lambda $ de $\Xi(Q)^+$ dŽfini par $(\xi'\star \lambda)(a)=\xi'(a)e^{\langle \lambda, {\bf H}_Q(a)\rangle}$ 
pour tout $a\in A_Q(\mathbb{A})$, prolonge $\xi$. 
Observons que $\xi'\star \lambda$ ne dŽpend que de la projection de $\lambda$ sur 
$\ag_{Q,\mathbb{C}}^*/\ES{B}_Q^\vee= \ker [\Xi(Q)^+ \rightarrow \Xi(Q)^1]$.  
Pour $\mu \in \bsmu_Q\;(= \widehat{\ESA}_Q)$, la fonction ${\bf D}_\mu \Phi'$ 
appartient ˆ $\Autom_{\cusp,\bsK'}(\bsX_Q)_{\xi'\star \mu}$ et 
$$E^P(x,{\bf D}_\mu \Phi', \lambda -\mu)= 
E^P(x,\Phi',\lambda)\qquad \hbox{pour tout}\qquad \lambda \in \bsmu_Q^+\ptf$$ 
Soit $\Xi(Q)\times^{\wh{\ES{B}}_Q}\bsmu_Q^+$ le quotient de 
$\Xi(Q)\times \bsmu_Q^+$ par la relation d'Žquivalence dŽfinie comme suit: 
deux couples $(\xi'_1,\lambda_1)$ et $(\xi'_2,\lambda_2)$ sont Žquivalents si et seulement 
s'il existe un $\mu\in \bsmu_Q$ tel que $(\xi'_2,\lambda_2)= (\xi'_1\star \mu, \lambda_1-\mu)$; 
auquel cas $\xi'_2\star \lambda_2= \xi'_1\star \lambda_1$. 
Puisque $\wh\bsbbc_Q=\ker [\bsmu_Q^+ \rightarrow \ag_{Q,\mathbb{C}}^*/\ESB_Q^\vee]$ est fini, 
l'application (surjective) 
$$\Xi(Q)\times^{\widehat{\ESB}_Q} \bsmu_Q^+\rightarrow \Xi(Q)^+,\, (\xi'\!,\lambda)\mapsto \xi'\star \lambda$$ 
est ˆ fibres fines. Comme l'espace $\Autom_{\cusp,\bsK'}(\bsX_Q)_{\xi'}$ est de dimension finie, 
il suffit de voir que les ŽlŽments de $\Xi(Q)^+$ de la forme $\xi' \star \lambda$ 
avec $(\xi'\!,\lambda)\in \Xi(Q)\times  \bsmu_Q^+$ tels qu'il existe une forme 
$\Phi'\in \Autom_{\cusp,\bsK'}(\bsX_Q)_{\xi'}$ pouvant donner naissance par rŽsidu de la sŽrie 
d'Eisenstein $E^P(x,\Phi',\cdot)$ en $\lambda$ ˆ une forme dans $\Autom_{\disc,\bsK'}(\bsX_P)_\xi$, 
appartiennent ˆ un ensemble fini. La projection 
$(\xi'\star \lambda)^1= \xi'\star \lambda\vert_{A_Q(\mathbb{A})^1}$ de 
$\xi'\star \lambda$ sur $\Xi(Q)^1$ co\"{\i}ncide avec celle de $\xi'$, qui est un caractre du groupe fini 
$A_Q(F)(\bs{K}'\cap A_Q(\mathbb{A})^1)\backslash A_Q(\mathbb{A})^1$. 
Par consŽquent les classes $\xi'+ \widehat{\ESB}_Q$ varient dans un sous-ensemble fini 
de $\Xi(Q)/\widehat{\ESB}_Q$. On peut donc fixer $\xi'$ et se contenter de faire varier $\lambda$. 
\'Ecrivons $\lambda= \lambda_{\mathrm{u}} + \lambda^+$ avec $\lambda_{\mathrm{u}}\in \bsmu_Q$ et 
$\lambda^+ \in \ag_Q^*$. La projection $(\lambda^+)_P$ de $\lambda$ sur $\ag_P^*$ est nulle (car 
$\xi'\star \lambda\vert_{A_P(\mathbb{A})} = \xi$) et la projection $(\lambda^+)_Q^P$ de 
$\lambda$ sur $\ag_Q^{P,*}$ varie dans un compact de 
$\ag_Q^{P,*}$. La compacitŽ de $\bsmu_Q$ assure que $\lambda$ varie dans un compact du 
cylindre $\bsmu_Q^+$. L'intersection d'un ensemble compact et d'un ensemble discret 
(les p™les d'une sŽrie d'Eisenstein) est finie, ce qui achve la dŽmonstration du thŽorme. 
\end{proof}

Ce th\'eor\`eme rend inutile le d\'ecoupage suivant les donn\'ees cuspidales utilis\'e par Arthur (et repris dans \cite{LW})
dans le d\'eveloppement spectral de la formule des traces, puisqu'il r\`egle imm\'ediatement 
les \'eventuelles questions de convergence.

%%%%%%%%
 \section{Donn\'ees discr\`etes et d\'ecomposition spectrale}\label{donn\'ees discr\`etes}
Pour $M\in\ESL$, notons 
$\bfW^G(M)$ le quotient de l'ensemble des \'el\'ements $w\in \bfW^G$ tels que $w(M)=M$ par $\bfW^{M}$. 
C'est un groupe et on note $w^G(M)$\index{wgm@$w^G(M)$}
son ordre. 
Rappelons que pour $\sigma$ une repr\'esentation de $M(\adef)$ et $\lambda\in\bsmu_M$, on a not\'e 
$\sigma_\lambda = \sigma\star\lambda$ la repr\'esentation d\'efinie par les op\'erateurs
\begin{equation*}\sigma\star\lambda:x\mapsto e^{\langle\lambda,\bfH_M(x)\rangle}\sigma(x)\ptf\end{equation*}
% d\'efinition
\begin{definition}\label{defcms}
On appelle \textit{donn\'ee discr\`ete} pour $G$ un couple $(M,\sigma)$ o $\sigma$ est une repr\'esentation 
automorphe irr\'eductible de $M(\adef)$ discr\`ete modulo le centre c'est-\`a-dire
apparaissant comme composant de $L^2_\disc(\bsX_M)_\xi$ --
l'espace de Hilbert engendr\'e par les sous-repr\'esentations irr\'eductibles de $L^2(\bsX_{M})_\xi$ --
pour un caract\`ere unitaire automorphe $\xi$ de $A_M(\adef)$.
Deux donn\'ees discr\`etes $(M,\sigma)$ et $(M',\sigma')$ de $G$ sont dites \textit{\'equivalentes} s'il existe un couple 
$(w,\lambda)\in \bfW^G\times\bsmu_M$ tel que
\begin{equation*}wMw^{-1} =M',\quad w(\sigma\star \lambda )\simeq\sigma'\ptf\end{equation*}
Nous noterons $\Stab$ le sous-groupe de $\wh\bsbbc_M$
form\'e des $\lambda$ tels que $\sigma\star\lambda\simeq\sigma$
et $\cMsig$ son indice:
\begin{equation*}\cMsig\index{cmsig@$\cMsig$}=\unstab\ptf\end{equation*}
\end{definition}

Soit $(M,\sigma)$ une donn\'ee discr\`ete pour $G$ et soit $P\in\ESP(M)$. Soit
$\xi$ la restriction \`a $A_M(\adef)$ du caract\`ere central de $\sigma$. On notera
\begin{equation*}\Automd(\bsX_P,\sigma)\index{automsigma@$\Automd(\bsX_P,\sigma)$}
\subset \Autom_\disc(\bsX_P)_\xi\end{equation*}
le sous-espace des formes automorphes $\varphi$ sur $\bsX_P$ telles que pour tout $x\in G(\adef)$, 
la fonction $m\mapsto\varphi(mx)$ 
sur $\bsX_{M}$ soit un vecteur de la composante isotypique 
de $\sigma$ dans $L^2_\disc(\bsX_{M})_{\xi}$. C'est l'espace des fonctions
$\bsK$-finies \`a droite dans l'espace de la repr\'esentation induite parabolique de $P(\adef)$ \`a $G(\adef)$
de la composante isotypique de $\sigma$ dans $L^2_\disc(\bsX_M)_\xi$.

Consid\'erons $x\in\bsX_P$, $y\in\tG(\adef)$, $\theta=\mathrm{Int}_\delta$ avec $\delta\in\tG(F)$ 
et $\mu\in\ag_{M,{\CM}}^*$. Rappelons que l'on a pos\'e
\begin{equation*}\varphi(x,\mu)= e^{\langle \mu +\rho_P,\bfH_P(x)\rangle}\varphi(x)\ptf\leqno(1)\end{equation*}

\begin{definition}\label{rhotu}
Pour une repr\'esentation automorphe irr\'eductible $\sigma$ de $M_P({\adef})$ discr\`ete modulo le centre, on d\'efinit 
 pour $Q=\theta(P)$ un op\'erateur unitaire
\footnote{On prendra garde \`a ce que, contrairement au cas des corps de nombres, on ne dispose
pas d'un repr\'esentant canonique dans l'orbite de $\sigma$ sous les d\'ecalages par les $\mu\in\bsmu_M$.}
\begin{equation*}\Rho_{P,\sigma,\mu}\index{rhopsigma@$\Rho_{P,\sigma,\mu}$}
(\delta,y,\omega): \Automd(\bsX_P,\sigma)\rightarrow 
\Automd(\bsX_Q,\theta(\omega\otimes \sigma))\end{equation*}
en posant
\begin{equation*}(\Rho_{P,\sigma,\mu}(\delta,y,\omega)\varphi)(x,\theta(\mu))= (\omega\varphi)(\delta\mun xy,\mu)\ptf\leqno(2)\end{equation*}
\end{definition}
Cet op\'erateur r\'ealise un avatar tordu par $\delta$ et $\omega$ de la repr\'esentation induite parabolique 
\begin{equation*}\mathrm{Ind}_{P(\adef)}^{G(\adef)}(\sigma\star\mu)\ptf\end{equation*}
Diff\'erentes r\'ealisations peuvent appara"tre et doivent tre compar\'ees:

% lemme
\begin{lemma}\label{romu}
Pour $\mu$ et $\lambda \in \bsmu_M$, les avatars tordus
 \begin{equation*}\Rho_1=\Rho_{P,\sigma,\lambda+\mu}(\delta,y,\omega)\qquad\hbox{et}\qquad 
 \Rho_2=\Rho_{P,\sigma\star\lambda,\mu}(\delta,y,\omega)\end{equation*}
sont \'equivalents et l'entrelacement est donn\'e par les op\'erateurs 
$\bfD_\lambda$ et $\bfD_{\theta(\lambda)}$ (d\'efinis en \ref{Dnu}).
En d'autres termes, le diagramme suivant 
\begin{equation*}\xymatrix{\Automd(\bsX_P,\sigma) \ar[r]^(.43){\mathbf{\rho}_1}\ar[d]_{\bfD_\lambda} & 
\Automd(\bsX_Q,\theta(\omega\otimes \sigma))\ar[d]^{\bfD_{\theta(\lambda)}} \\
 \Automd(\bsX_P,\sigma\star \lambda) \ar[r]^(.43){\mathbf{\rho}_2} & 
 \Automd(\bsX_Q,\theta(\omega\otimes \sigma\star\lambda))}\end{equation*}
est commutatif, c'est-\`a-dire que l'on a
\begin{equation*}\bfD_{\theta(\lambda)}\circ\Rho_{P,\sigma,\lambda+\mu}(\delta,y,\omega)=
\Rho_{P,\sigma\star\lambda,\mu}(\delta,y,\omega)\circ \bfD_\lambda\ptf\leqno(3)\end{equation*}
\end{lemma}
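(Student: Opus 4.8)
Je veux montrer la commutativité du diagramme, c'est-à-dire l'égalité (3) entre opérateurs agissant de $\Automd(\bsX_P,\sigma)$ vers $\Automd(\bsX_Q,\theta(\omega\otimes\sigma\star\lambda))$. L'approche naturelle est de vérifier cette égalité directement sur un vecteur $\varphi\in\Automd(\bsX_P,\sigma)$, en évaluant chaque membre sur un point $(x,\mu)$ au moyen des formules de définition (2) de \ref{rhotu} et de la définition \ref{Dnu} des opérateurs $\bfD_\nu$. Le point-clef sera de suivre avec soin l'interaction entre les décalages $\star\lambda$ (qui agissent sur le paramètre spectral de la représentation $\sigma$) et les facteurs exponentiels $e^{\langle\nu,\bfH_P(\cdot)\rangle}$ introduits par les $\bfD_\nu$, en prenant garde à ce que la torsion par $\delta$ transforme $P$ en $Q=\theta(P)$ et $\lambda$ en $\theta(\lambda)$.

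**Plan du calcul.** D'abord je calcule le membre de droite $\Rho_{P,\sigma\star\lambda,\mu}(\delta,y,\omega)\circ\bfD_\lambda$ appliqué à $\varphi$. Comme $\bfD_\lambda\varphi=\varphi_\lambda$ avec $\varphi_\lambda(x)=e^{\langle\lambda,\bfH_P(x)\rangle}\varphi(x)$, et que $\varphi_\lambda\in\Automd(\bsX_P,\sigma\star\lambda)$, la formule (2) appliquée à $\varphi_\lambda$ et à la donnée $\sigma\star\lambda$ donne, en évaluant au point $(x,\theta(\mu))$ :
$$
\bigl(\Rho_{P,\sigma\star\lambda,\mu}(\delta,y,\omega)\varphi_\lambda\bigr)(x,\theta(\mu))=(\omega\varphi_\lambda)(\delta^{-1}xy,\mu)\ptf
$$
Il faut ici tenir compte de la convention (1) : l'évaluation en $(\cdot,\mu)$ incorpore le facteur $e^{\langle\mu+\rho_P,\bfH_P(\cdot)\rangle}$, et le membre de droite fait donc intervenir $e^{\langle\lambda,\bfH_P(\delta^{-1}xy)\rangle}$ issu de $\varphi_\lambda$.

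**Comparaison et obstacle principal.** Ensuite je calcule le membre de gauche $\bfD_{\theta(\lambda)}\circ\Rho_{P,\sigma,\lambda+\mu}(\delta,y,\omega)$ appliqué à $\varphi$. L'opérateur $\bfD_{\theta(\lambda)}$ multiplie par $e^{\langle\theta(\lambda),\bfH_Q(\cdot)\rangle}$ sur $\bsX_Q$, tandis que $\Rho_{P,\sigma,\lambda+\mu}(\delta,y,\omega)\varphi$ s'évalue au point $(\cdot,\theta(\lambda+\mu))$ via (2). La difficulté essentielle, et c'est le seul point réellement substantiel, est de vérifier l'identité de compatibilité
$$
\langle\theta(\lambda),\bfH_Q(x)\rangle+\langle\lambda+\mu+\rho_P,\bfH_P(\delta^{-1}xy)\rangle
=\langle\lambda,\bfH_P(\delta^{-1}xy)\rangle+\langle\mu+\rho_P,\bfH_P(\delta^{-1}xy)\rangle+\langle\theta(\lambda),\bfH_Q(x)\rangle\ptf
$$
Celle-ci repose sur le fait que $\theta={\rm Int}_\delta$ et que $Q=\theta(P)$, de sorte que la fonction $\wt\bfH$ vérifie $\bfH_Q(x)=\theta(\bfH_P(\delta^{-1}x\delta))$ (compatibilité des morphismes $\bfH$ à la torsion, qui découle de la fonctorialité de $\bfH_P$ et de $\theta(\rho_P)=\rho_Q$). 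Une fois cette relation de transport établie, les deux expressions coïncident terme à terme et l'égalité (3) en résulte immédiatement. Je m'attends à ce que, hormis ce suivi attentif des facteurs exponentiels sous l'automorphisme $\theta$, tout le reste soit purement formel.
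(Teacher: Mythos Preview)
Your approach is correct and is exactly the paper's: the proof is an immediate consequence of equations (1) and (2). However, the ``obstacle principal'' you identify is a phantom: the displayed identity you write is trivially true by linearity of the pairing in its first argument --- the two sides are literally the same sum of terms --- and no transport property of $\bfH_P$ under $\theta$, nor the relation $\theta(\rho_P)=\rho_Q$, is needed anywhere. The clean way to see the result (and this is what the paper has in mind, cf.\ also the proof of \ref{eqf}) is the one-line observation $(\bfD_\nu\Phi)(x,\mu)=\Phi(x,\mu+\nu)$, valid on either $\bsX_P$ or $\bsX_Q$; then both sides of (3), evaluated at $(x,\theta(\mu))$, reduce immediately via (2) to $(\omega\varphi)(\delta^{-1}xy,\lambda+\mu)$.
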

\begin{proof} C'est une cons\'equence imm\'ediate des \'equations (1) et (2).
\end{proof}

Par int\'egration contre une fonction 
$f\in C^\infty_\mathrm{c}(\tG({\adef}))$, on d\'efinit l'op\'erateur \begin{equation*}\Rho_{P,\sigma,\mu}(\delta,f,\omega)\end{equation*}
et on pose
\begin{equation*}\tRho_{P,\sigma,\mu}(y,\omega)=\Rho_{P,\sigma,\mu}(\delta_0,y,\omega)\vgq 
\tRho_{P,\sigma,\mu}(f,\omega)=\Rho_{P,\sigma,\mu}(\delta_0, f,\omega)\ptf\end{equation*}
Soit $\varphi: \bsX_G \rightarrow \CM$ une fonction continue et \`a support compact. 
Pour $P\in\ESP$, $\Psi\in \Autom_\disc(\bsX_P)$ et $\mu\in\bsmu_P$, on pose
\begin{equation*}\widehat{\varphi}(\Psi,\mu)=\int_{\bsX_G}\varphi(x)\overline{E(x,\Psi,\mu)}\dd x\ptf\end{equation*}
Pour deux fonctions $\phi,\varphi: \bsX_G \rightarrow \CM$ 
continues et \`a support compact, on pose
\begin{equation*}\langle\phi,\varphi\rangle_{\bsX_G} =\int_{\bsX_G}\phi(x)\overline{\varphi(x)}\dd x\ptf\end{equation*}
 Pour $M\in \ESL$, notons 
\begin{itemize}
\item $\Pi_\disc(M)$ l'ensemble 
des classes d'isomorphisme de repr\'esentations auto\-mor\-phes irr\'eductibles de $M(\adef)$ discr\`etes modulo le centre, 
\item $\bPI_\disc(M)$ le quotient de $\Pi_\disc(M)$ par la relation d'\'equivalence donn\'ee par la torsion par les 
caract\`eres unitaires de $\ESA_M$. 
\item On notera $\Base_P(\sigma)$ \index{Psi@$\Base_P(\sigma)$}
une base orthonormale de l'espace vectoriel pr\'e-hilbertien $\Automd(\bsX_P,\sigma)$.

\end{itemize}
D'apr\`es \cite[VI]{MW1} 
avec les conventions de \ref{convention-mesures} pour la normalisation des mesures ($\mathrm{vol}(\bsmu_M)=1$) 
et les notations de \ref{defcms}, on a le

% th\'eor\`eme
\begin{theorem}\label{th\'eor\`eme principal de [LW]}
Le produit scalaire $\langle\phi,\varphi\rangle_{\bsX_G}$ admet la d\'ecomposition spectrale suivante:
\begin{equation*}\langle\phi,\varphi\rangle_{\bsX_G} = 
 \sum_{M \in \ESL/\bfW}\frGM
 \sum_{\bs{\sigma} \in \bPI_\disc(M)} \cMsig
 \int_{\bsmu_M}\sum_{\Psi\in\Base_P(\sigma)}
\widehat{\phi}(\Psi,\mu)\overline{\widehat{\varphi}(\Psi,\mu)}\dd\mu\end{equation*}
o l'on a identifi\'e $\ESL/\bfW$ a un ensemble de repr\'esentants dans $\ESL$ et o pour chaque classe 
$\bs{\sigma}\in \bPI_\disc(M)$ on a choisi un repr\'esentant $\sigma\in \Pi_\disc(M)$ 
dans la classe $\bs{\sigma}$\footnote{On observera que pour chaque facteur de 
Levi $M\in \ESL/{\mathbf W}$, on a choisi un sous-groupe parabolique 
$P$ de composante de Levi $M$. Le choix de ces sous-groupes paraboliques $P$ est indiff\'erent. 
Il en est de mme pour les formules des \'enonc\'es 
\ref{decspec} et \ref{KQd} ci-dessous.}.
\end{theorem}

%%%%%%%%%%

 \section{D\'ecomposition spectrale d'un noyau}\label{d\'ecomposition d'un noyau}
La proposition \cite[7.2.2]{LW} est vraie ici, mutatis mutandis\footnote{On observera que dans \cite{LW} la d\'efinition 
des espaces $\bsX_P$ diff\`ere de la n™tre par un quotient par $\BB_P$; il en r\'esulte que,
pour que la formule \cite[7.2.2~(1)]{LW} soit correcte, il faut la modifier comme indiqu\'e en 
 \Err(viii) dans l'Annexe \ref{Erratum}.}. 
Plus pr\'ecis\'ement, soient $P\in\ESP_\st$ et $\theta$ un $F$-automorphisme de $G$. 
Soit $H(x,y)$ un noyau int\'egral sur $\bsX_{\theta(P)}\times\bsX_P$ de la forme $H=K_1K_2^*$:
\begin{equation*}H(x,y)=\int_{\bsX_P}K_1(x,z)K_2^*(z,y)\dd z\end{equation*}
o $K_1$ (resp. $K_2$) est un noyau $\Autom$-admissible sur 
$\bsX_{\theta(P)}\times\bsX_P$ (resp. $\bsX_P\times\bsX_P$). 
On suppose que pour $S\in\ESP^{P}_\st$, $\sigma\in\Pi_\disc(M_S)$ et $\mu\in\bsmu_{S}$,
on a des op\'erateurs de rang fini et, plus pr\'ecis\'ement, s'annulant en dehors d'un ensemble fini de 
vecteurs de $\Base_S(\sigma)$:
\begin{equation*}A_{1,\sigma,\mu}\in {\bfHom}(\Automd(\bsX_S,\sigma), \Automd(\bsX_{\theta(S)},\theta(\sigma))\end{equation*}
et 
\begin{equation*}A_{2,\sigma,\mu}\in {\bfHom}(\Automd(\bsX_S,\sigma), \Automd(\bsX_S,\sigma))\end{equation*}
v\'erifiant
\begin{equation*}\int_{\bsX_P}K_1(x,y)E^P(y,\Psi,\mu)\dd y = E^{\theta(P)}(x,A_{1,\sigma,\mu}\Psi,\theta(\mu))\end{equation*}
et
\begin{equation*}\int_{\bsX_P}K_2(x,y)E^P(y,\Psi,\mu)\dd y = E^P(x,A_{2,\sigma,\mu}\Psi,\mu)\ptf\end{equation*}
Posons
\begin{equation*}B_{\sigma,\mu}= A_{1,\sigma,\mu}A_{2,\sigma,\mu}^*\in {\bfHom}
(\Automd(\bsX_S,\sigma), \Automd(\bsX_{\theta(S)},\theta(\sigma))\end{equation*}
et
\begin{equation*}H_{\sigma}(x,y;\mu)=\sum_{\Psi\in\Base_S(\sigma)}
E^{\theta(P)}(x,B_{\sigma,\mu}\Psi,\theta(\mu))\overline{E^P(y,\Psi,\mu)}\ptf\end{equation*}
\begin{proposition} \label{decspec}
Le noyau $H(x,y)$ admet la d\'ecomposition spectrale suivante:
\begin{equation*}H(x,y)= \sum_{M \in \ESL/\bfW}\frGM \sum_{\bsigma\in\bsPi_\disc(M)} \cMsig
\int_{\bsmu_M}H_{\sigma}(x,y;\mu)\dd \mu\ptf\leqno{(1)}\end{equation*}
De plus, la somme sur $\Psi$ dans l'expression $H_\sigma(x,y;\mu)$ 
est finie, et en posant
\begin{equation*}h(x,y)=\sum_{M \in \ESL/\bfW}\frGM \sum_{\bsigma\in\bsPi_\disc(M)} \cMsig
\int_{\bsmu_M}\vert H_{\sigma}(x,y;\mu)\vert \dd \mu\vg\end{equation*}
on a la majoration (in\'egalit\'e de Schwartz)
\begin{equation*}\vert H(x,y)\vert\leq h(x,y)\leq K_1K_1^*(x,x)^{1/2}K_2K_2^*(y,y)^{1/2}\ptf\leqno{(2)}\end{equation*}
\end{proposition}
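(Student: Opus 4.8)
La proposition affirme que le noyau $H = K_1 K_2^*$ admet la décomposition spectrale (1) et que l'inégalité de Schwartz (2) est vérifiée. Il s'agit d'une variante tordue de \cite[7.2.2]{LW}, et l'essentiel du travail consiste à appliquer la décomposition spectrale du produit scalaire (théorème \ref{th�or�me principal de [LW]}) aux noyaux $K_1$ et $K_2$, puis à exploiter l'hypothèse que $K_1$, $K_2$ se comportent de façon compatible avec les séries d'Eisenstein.

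\smallskip

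\noindent\textbf{Plan de la preuve.} D'abord, j'établirais la formule (1). L'idée est de partir de la définition $H(x,y) = \int_{\bsX_P} K_1(x,z)\overline{K_2(y,z)}\dd z$ (en écrivant $K_2^*(z,y) = \overline{K_2(y,z)}$) et d'appliquer à la fonction $z\mapsto K_1(x,z)$, à $x$ fixé, la décomposition spectrale de $\langle K_1(x,\cdot), K_2(y,\cdot)\rangle_{\bsX_G}$ fournie par \ref{th�or�me principal de [LW]}. Cela fait apparaître, pour chaque $M\in\ESL/\bfW$, chaque $\bsigma\in\bsPi_\disc(M)$ et chaque $\mu\in\bsmu_M$, le produit
$$\sum_{\Psi\in\Base_P(\sigma)}\widehat{K_1(x,\cdot)}(\Psi,\mu)\,\overline{\widehat{K_2(y,\cdot)}(\Psi,\mu)}\ptf$$
Il faut ensuite identifier ces transformées : par l'hypothèse d'admissibilité, $\int_{\bsX_P}K_1(x,z)E^P(z,\Psi,\mu)\dd z = E^{\theta(P)}(x,A_{1,\sigma,\mu}\Psi,\theta(\mu))$, et de même pour $K_2$ avec $A_{2,\sigma,\mu}$. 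En substituant et en regroupant les opérateurs sous la forme $B_{\sigma,\mu}=A_{1,\sigma,\mu}A_{2,\sigma,\mu}^*$, le terme intérieur devient exactement $H_\sigma(x,y;\mu)$. Les facteurs combinatoires $\frGM$ et $\cMsig$, ainsi que la mesure $\dd\mu$ sur $\bsmu_M$, sont hérités tels quels du théorème spectral, ce qui donne (1). Le point délicat ici est la manipulation soigneuse des indices : $\Psi$ parcourt $\Base_P(\sigma)$ tandis que $B_{\sigma,\mu}$ opère de $\Automd(\bsX_S,\sigma)$ vers $\Automd(\bsX_{\theta(S)},\theta(\sigma))$, et il faut vérifier que le passage de $P$ à un $S\in\ESP^P_\st$ standard sous-jacent (via la restriction des formes automorphes le long de $P$) est compatible avec le choix de bases orthonormales $\Base_S(\sigma)$.

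\smallskip

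\noindent\textbf{La finitude et l'inégalité de Schwartz.} La finitude de la somme sur $\Psi$ dans $H_\sigma(x,y;\mu)$ découle immédiatement de l'hypothèse que $A_{1,\sigma,\mu}$ et $A_{2,\sigma,\mu}$ s'annulent en dehors d'un ensemble fini de vecteurs de $\Base_S(\sigma)$ : l'opérateur composé $B_{\sigma,\mu}$ est donc de rang fini, et seuls un nombre fini de termes $E^{\theta(P)}(x,B_{\sigma,\mu}\Psi,\theta(\mu))$ sont non nuls. Pour l'inégalité (2), la première majoration $\vert H(x,y)\vert\le h(x,y)$ est l'inégalité triangulaire appliquée à la somme/intégrale spectrale. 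La seconde majoration est l'inégalité de Cauchy--Schwarz appliquée de deux façons : appliquant (1) avec $K_1=K_2$ on obtient $K_1K_1^*(x,x)=\sum_{M,\bsigma}\frGM\cMsig\int_{\bsmu_M}\sum_\Psi\vert E^{\theta(P)}(x,A_{1,\sigma,\mu}\Psi,\theta(\mu))\vert^2\dd\mu$, et de même pour $K_2K_2^*(y,y)$; l'inégalité de Cauchy--Schwarz pour la mesure combinée sur $(M,\bsigma,\mu,\Psi)$, appliquée terme à terme dans le produit $E^{\theta(P)}(x,A_{1,\sigma,\mu}\Psi,\theta(\mu))\overline{E^P(y,A_{2,\sigma,\mu}\Psi,\mu)}$ apparaissant dans $h$, fournit la borne annoncée.

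\smallskip

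\noindent\textbf{Principal obstacle.} La seule véritable difficulté est de s'assurer que toutes les intégrales et sommes spectrales convergent absolument, ce qui autorise l'interversion des sommations sur les données discrètes avec l'intégration en $z$ sur $\bsX_P$ définissant $H$. Dans le cas des corps de nombres cette convergence repose sur des estimées délicates ; ici, comme souligné dans l'introduction, les espaces de paramètres $\bsmu_M$ sont \emph{compacts} et le théorème de finitude \ref{thmfinitude} garantit que pour chaque $\bsigma$ l'espace $\Autom_{\disc,\bsK'}(\bsX_P)_\xi$ est de dimension finie. Cela simplifie considérablement l'argument de convergence : l'admissibilité des noyaux $K_1, K_2$ (invariance par un $\bsK'$ ouvert compact) limite la somme sur $\bsigma$ à un ensemble effectivement fini de contributions non nulles, et la compacité de $\bsmu_M$ rend l'intégrale en $\mu$ inoffensive. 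Il suffit donc d'invoquer \ref{thmfinitude} et la $\Autom$-admissibilité pour justifier que la décomposition spectrale de \ref{th�or�me principal de [LW]} peut être intégrée contre la seconde variable, ce qui achève la preuve.
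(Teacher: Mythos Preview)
Your proposal is correct and follows essentially the same approach as the paper: both derive (1) from the abstract spectral decomposition of product kernels (\cite[7.1.1\,(1)]{LW}) combined with the explicit automorphic spectral decomposition (th\'eor\`eme \ref{th�or�me principal de [LW]}), and obtain (2) via Cauchy--Schwarz. One small slip: the inner product you write as $\langle K_1(x,\cdot), K_2(y,\cdot)\rangle_{\bsX_G}$ should be taken over $\bsX_P$, since that is where the integration variable $z$ lives; the relevant spectral decomposition is the one for $L^2(\bsX_P)$ (which is what the sums over $S\in\ESP_\st^P$ and $\Base_S(\sigma)$ encode).
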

\begin{proof}Comme dans la preuve de \cite[7.2.2]{LW}, 
cela r\'esulte des g\'en\'eralit\'es sur la d\'ecomposition spectrale des noyaux produits 
 \cite[7.1.1~(1)]{LW} et de la forme explicite de la d\'ecomposition spectrale automorphe 
 (th\'eor\`eme \ref{th\'eor\`eme principal de [LW]}). 
\end{proof}

%%%%%%%%

Pour $\delta\in\tG(F)$, 
posons $\theta = \mathrm{Int}_\delta$ et $Q=\theta(P)$. On consid\`ere l'op\'erateur 
\begin{equation*}\Rho(\delta,f,\omega): L^2(\bsX_P)\rightarrow L^2(\bsX_Q)\end{equation*}
d\'efini par
\begin{equation*}\Rho(\delta,f,\omega)\phi(x)=\int_{\tG(\adef)}f(y)(\omega\phi)(\delta^{-1}xy )\dd y\ptf\end{equation*}
Il est donn\'e par le noyau int\'egral
\begin{equation*}K_{Q,\delta}(x,y) =\int_{U_Q(F)\bsl U_Q(\adef)}\omega(x)
\sum_{\eta\in Q(F)} f(x^{-1}u^{-1}\eta^{-1}\delta y)\dd u\ptf\end{equation*}

Soit $S\in\ESP^{\nP}_\st$, et soit $\sigma$ une repr\'esentation automorphe de $M_S(\adef)$. 
Pour $\mu\in\ag_{\nP,\CM}^*$ 
et $f\in C^\infty_\mathrm{c}(\tG(\adef))$, on a d\'efini en \ref{donn\'ees discr\`etes} un op\'erateur
\begin{equation*}\Rho_{S,\sigma,\mu}(\delta, f,\omega): \Automd(\bsX_S,\sigma)\rightarrow
 \Automd(\bsX_{\theta(S)},\theta(\omega\otimes\sigma))\ptf\end{equation*}
Pour $\Psi\in \Automd(\bsX_S,\sigma)$ et $x\in\bsX_{Q}$, on a
\begin{equation*}\Rho(\delta,f,\omega)E^{\nP}(x,\Psi,\mu) = E^Q(x,\Rho_{S,\sigma,\mu}(\delta,f,\omega)\Psi,\theta(\mu))\vg\end{equation*}
d'o
\begin{equation*}\int_{\bsX_{\nP}}K_{Q,\delta}(x,y)E^{\nP}(y,\Psi,\mu)\dd y= 
E^Q(x,\Rho_{S,\sigma,\mu}(\delta,f,\omega)\Psi,\theta(\mu))\ptf\end{equation*}
Rappelons que l'on a fix\'e une base orthonormale $\Base_S(\sigma)$ de l'espace pr\'e-hilbertien 
$\Automd(\bsX_S,\sigma)$. Pour $\mu\in\bsmu_S$, 
on pose
\begin{equation*}K_{Q,\nP\mskip -2mu ,\sigma}(x,y;\mu)=\sum_{\Psi\in\Base_S(\sigma)}
E^Q(x,\Rho_{S,\sigma,\mu}(\delta,f,\omega)\Psi,\theta(\mu))\overline{E^{\nP}(y,\Psi,\mu)}\ptf\end{equation*}
On a les variantes de la proposition \cite[7.3.1]{LW} et de son corollaire 
 \cite[7.3.2]{LW}: 

% proposition
\begin{proposition}\label{KQd}
La fonction $f\in C^\infty_\mathrm{c}(\tG(\adef))$ \'etant fix\'ee, alors
\begin{enumerate}[(i)]
\item Le noyau $K_{Q,\delta}(x,y)$ admet la d\'ecomposition spectrale suivante:
\begin{equation*}K_{Q,\delta}(x,y) =\mskip -5mu \sum_{M \in \ESL^{\nP}\mskip -2mu /\bfW^{\nP}} \frPpM
\sum_{\sigma\in\bs\Pi_\disc(M)} \cMsig
\int_{\bsmu_M}K_{Q,\nP,\sigma}(x,y;\mu)\dd \mu\ptf\end{equation*}
\item
La restriction \`a $\Siegel\times G(\adef)$ de la fonction
\begin{equation*}(x,y)\mapsto\mskip -15mu\sum_{M \in \ESL^{\nP}\mskip -2mu /\bfW^{\nP}} \frPpM
\sum_{\sigma\in\bs\Pi_\disc(M)} \mskip -2mu \cMsig
\int_{\bsmu_M}
\vert\bs\Lambda^{T,Q}_1 K_{Q,\nP,\sigma}(x,y;\mu)\vert \dd \mu\end{equation*}
est born\'ee et \`a support compact en $x$ et \`a croissance lente en $y$.
\end{enumerate}
\end{proposition}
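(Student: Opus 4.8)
The plan is to deduce both assertions from the spectral decomposition of product kernels \ref{decspec}, in exact parallel with the way \cite[7.3.1, 7.3.2]{LW} are deduced from \cite[7.2.2]{LW}; the only genuinely new inputs are the finiteness theorem \ref{thmfinitude}, which renders the cutting along cuspidal data unnecessary, and the much stronger truncation estimates \ref{supportcompact} and \ref{prop1nt} available over function fields.

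First I would exhibit $K_{Q,\delta}$ as a product of two admissible kernels. Since $f$ is $\bsK'$-bi-invariant we have $f=f\star e_{\bsK'}$ (Section \ref{factorisation du noyau}), hence at the level of operators $\Rho(\delta,f,\omega)=\Rho(\delta,f,\omega)\circ\Rho_{\nP}(e_{\bsK'})$, and at the level of kernels
$$K_{Q,\delta}(x,y)=\int_{\bsX_{\nP}}K_1(x,z)\,K_2^*(z,y)\,\dd z$$
with $K_1=K_{Q,\delta}$ viewed as an $\Autom$-admissible kernel on $\bsX_Q\times\bsX_{\nP}$ and $K_2=K_2^*$ the self-adjoint idempotent kernel of $e_{\bsK'}$ on $\bsX_{\nP}$. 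Applying \ref{decspec} with $G$ replaced by the Levi $M_{\nP}$, the identity displayed just before the statement identifies the operator attached to $K_1$ as $A_{1,\sigma,\mu}=\Rho_{S,\sigma,\mu}(\delta,f,\omega)$, while the operator attached to $K_2$ is the orthogonal projection $A_{2,\sigma,\mu}$ onto the $\bsK'$-fixed vectors. As $f$ is $\bsK'$-bi-invariant, $A_{1,\sigma,\mu}$ already factors through that projection, so $B_{\sigma,\mu}=A_{1,\sigma,\mu}A_{2,\sigma,\mu}^*=\Rho_{S,\sigma,\mu}(\delta,f,\omega)$ and $H_\sigma=K_{Q,\nP,\sigma}$, which is exactly (i). The convergence of the sum over $M\in\ESL^{\nP}/\bfW^{\nP}$ and over $\sigma$, as well as the finiteness of the inner sum over $\Psi\in\Base_S(\sigma)$, follows from the admissibility theorem \ref{thmfinitude}.

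For (ii) I would apply $\bs\Lambda^{T,Q}_1$ termwise to the decomposition of (i); acting on the first variable, it replaces $E^Q(x,B_{\sigma,\mu}\Psi,\theta(\mu))$ by its truncation. The essential observation is that $\bs\Lambda^{T,Q}_1 K_{Q,\delta}$ is again an $\Autom$-admissible kernel on $\bsX_Q\times\bsX_{\nP}$, so the Schwarz majoration (2) of \ref{decspec}, applied to the product $(\bs\Lambda^{T,Q}_1 K_{Q,\delta})\,K_2^*$, gives pointwise
$$\sum_{M\in\ESL^{\nP}/\bfW^{\nP}}\frQpM\sum_{\sigma}\cMsig\int_{\bsmu_M}\bigl\vert\bs\Lambda^{T,Q}_1 K_{Q,\nP,\sigma}(x,y;\mu)\bigr\vert\,\dd\mu\;\leq\;D_1(x)^{1/2}\,D_2(y)^{1/2}$$
where $D_1(x)=(\bs\Lambda^{T,Q}_1 K_{Q,\delta})(\bs\Lambda^{T,Q}_1 K_{Q,\delta})^*(x,x)$ and $D_2(y)=K_2K_2^*(y,y)$. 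By \ref{prop1nt} the function $D_1$ is bounded and its restriction to $\Siegel$ has compact support in $x$, while by the basic estimate \ref{noyineg} (applied to $e_{\bsK'}$) the function $D_2(y)$ has slow growth in $y$. Together these yield the boundedness, the compact support in $x$ and the slow growth in $y$ claimed in (ii).

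The hard part will be the truncation estimate underlying (ii): one must check that $\bs\Lambda^{T,Q}_1 K_{Q,\delta}$ inherits the compact-support behaviour of \ref{prop1nt}/\ref{supportcompact} when the truncation is taken relative to $Q$, i.e. inside the Levi $M_Q$, rather than relative to $G$. Here the function field setting is in fact favourable: for $T$ sufficiently regular the truncation collapses to $\bs\Lambda^{T,Q}=\bfC^{T,Q}$ by the simplification \ref{propK}, so $\bs\Lambda^{T,Q}_1 K_{Q,\delta}$ is simply the restriction of $K_{Q,\delta}$ to a compact Siegel region in the first variable, and what remains is the routine bookkeeping matching the truncation relative to $Q$ with the spectral decomposition carried out relative to $M_{\nP}$.
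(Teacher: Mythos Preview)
Your approach is essentially the paper's: factor $K_{Q,\delta}$ through $e_{\bsK'}$, apply \ref{decspec} with $A_{1,\sigma,\mu}=\Rho_{S,\sigma,\mu}(\delta,f,\omega)$ and $A_{2,\sigma,\mu}=\Rho_{S,\sigma,\mu}(e_{\bsK'})$ to obtain (i), then use the Schwarz inequality \ref{decspec}\,(2) together with \ref{prop1nt}\,(i) and \ref{noyineg} for (ii). The only divergence is your final paragraph: the paper does \emph{not} invoke the collapse $\bs\Lambda^{T,Q}=\bfC^{T,Q}$ from \ref{propK}. It simply cites \ref{prop1nt}\,(i), tacitly using the $Q$-level version of that lemma (same proof, with $G$ replaced by $M_Q$). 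Your route via \ref{propK} would only establish (ii) for $T$ regular enough, depending on $\bsK'$, that truncation equals cut-off; drop this detour and appeal directly to the $Q$-analogue of \ref{prop1nt}\,(i).
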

\begin{proof}
Le point (i) est une cons\'equence de \ref{decspec} et \ref{factorisation du noyau}: 
on choisit un sous-groupe ouvert compact $\bsK'$ de $G(\adef)$ tel que $e_{\bsK'}*f*e_{\bsK'}=f$ 
et $\omega\vert_{\bsK'}=1$; pour $S\in\smash{\bESP}_\st^{\nP}$, $\sigma\in\Pi_\disc(M_S)$ 
et $\mu\in\bsmu_{S}$, on consid\`ere les op\'erateurs
\begin{equation*}A_{1,\sigma,\mu}=\Rho_{S,\sigma,\mu}(\delta,f,\omega)\quad\hbox{et}
\quad A_{2,\sigma,\mu}=\Rho_{S,\sigma,\mu}(e_{\bsK'})\end{equation*}
puis on pose $B_{\sigma,\mu}=A_{1,\sigma,\mu} A^*_{2,\sigma,\mu}$. 
On en d\'eduit que le noyau tronqu\'e $\bs\Lambda^{T,Q}_1K_{Q,\delta}(x,y)$ est \'egal \`a
\begin{equation*}\sum_{M \in \ESL^{\nP}/\bfW}\frac{1}{w^{\nP}(M)}
\sum_{\sigma\in\Pi_\disc(M)} \cMsig\int_{\bsmu_M}
\bs\Lambda^{T,Q}_1K_{Q,\nP,\sigma}(x,y;\mu)\dd \mu\ptf\end{equation*}
On observe que, gr‰ce \`a la factorisation \ref{factorisation du noyau}, on a
\begin{equation*}\bs\Lambda^{T,Q}_1K_{Q,\nP,\sigma}(x,y;\mu) =
\int_{\bsX_G}\bs\Lambda^{T,Q}_1K_{Q,\nP,\sigma}(x,z;\mu)K_{\nP,\nP,\sigma}^*(e_{\bsK'};z,y;\mu)\dd z\end{equation*}
avec
\begin{equation*}K_{\nP,\nP,\sigma}^*(e_{\bsK'};z,y;\mu)=
\sum_{\Psi\in\Base_S(\sigma)}
E^\nP(z,\Psi,\mu)\overline{E^{\nP}(y,\Rho_{S,\sigma,\mu}(e_{\bs{K}'})\Psi,\mu)}\ptf\end{equation*}
On en d\'eduit le point (ii) comme dans la preuve de \cite[7.3.1~(ii)]{LW}, 
gr‰ce \`a l'in\'egalit\'e de Schwarz \ref{decspec}~(2), 
au lemme \ref{prop1nt}~(i) et \`a l'in\'egalit\'e de \ref{noyineg}.
\end{proof}

% corollaire
\begin{corollary}
La restriction \`a $\Siegel\times G(\adef)$ de la fonction
\begin{equation*}(x,y)\mapsto\vert\bs\Lambda_1^{T,Q}K_{Q,\delta}(x,y)\vert\end{equation*}
est born\'ee et \`a support compact en $x$ et \`a croissance lente en $y$. 
\end{corollary}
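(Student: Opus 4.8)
The plan is to obtain the corollary directly from the proposition \ref{KQd}. First I would apply the truncation operator $\bs\Lambda^{T,Q}_1$, which acts linearly and on the first variable only, to the spectral decomposition \ref{KQd}\,(i) of $K_{Q,\delta}(x,y)$. Since the series over $(M,\sigma)$ and the integral over $\bsmu_M$ converge absolutely --- this is guaranteed by the majoration of \ref{KQd}\,(ii), which furnishes a dominating function --- and since $\bs\Lambda^{T,Q}_1$ is a finite alternating sum of constant-term operations depending continuously on $x$, it commutes with the sum and with the integral. One thus gets
$$\bs\Lambda^{T,Q}_1 K_{Q,\delta}(x,y)=
\sum_{M \in \ESL^{\nP}\!/\bfW^{\nP}} \frQpM
\sum_{\sigma\in\bs\Pi_\disc(M)} \cMsig
\int_{\bsmu_M}\bs\Lambda^{T,Q}_1 K_{Q,\nP,\sigma}(x,y;\mu)\dd \mu\ptf$$

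Next I would pass the modulus inside, using the triangle inequality for the sum and for the integral, which yields
$$\vert\bs\Lambda^{T,Q}_1 K_{Q,\delta}(x,y)\vert\leq
\sum_{M \in \ESL^{\nP}\!/\bfW^{\nP}} \frQpM
\sum_{\sigma\in\bs\Pi_\disc(M)} \cMsig
\int_{\bsmu_M}\vert\bs\Lambda^{T,Q}_1 K_{Q,\nP,\sigma}(x,y;\mu)\vert\dd \mu\ptf$$
The right-hand side is precisely the function whose restriction to $\Siegel\times G(\AM)$ is, by \ref{KQd}\,(ii), bounded, with compact support in $x$ and slow growth in $y$. A function dominated on $\Siegel\times G(\AM)$ by such a majorant inherits the same three properties, whence the statement.

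The real content lies entirely in the proposition \ref{KQd}\,(ii); once that estimate is in hand, the present corollary is the cheap step of moving the absolute value through the spectral decomposition. The only point deserving attention is the interchange of $\bs\Lambda^{T,Q}_1$ with the spectral integral, and this is legitimate for the reason indicated above: the dominating function supplied by \ref{KQd}\,(ii) provides the uniform integrability required, while $\bs\Lambda^{T,Q}_1$ is a finite linear combination of terms depending continuously on $x$ through the maps $\bfH_0$ and the constant terms $\varphi_P$.
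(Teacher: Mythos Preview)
Your proposal is correct and follows the same route the paper has in mind: the corollary has no proof in the paper precisely because, once the spectral decomposition of the truncated kernel is established (which is done inside the proof of Proposition~\ref{KQd}), the triangle inequality together with part~(ii) gives the result immediately.

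One minor point: your justification of the interchange of $\bs\Lambda^{T,Q}_1$ with the spectral sum and integral is slightly circular. You invoke \ref{KQd}\,(ii) as a dominating function, but (ii) bounds the \emph{truncated} terms $\vert\bs\Lambda^{T,Q}_1 K_{Q,\nP,\sigma}\vert$, whereas what you need is absolute convergence of the \emph{untruncated} decomposition in~(i) so that you may apply $\bs\Lambda^{T,Q}_1$ termwise. The correct reference is the Schwarz inequality of Proposition~\ref{decspec}\,(2), which gives the pointwise absolute convergence of~(i); since $\bs\Lambda^{T,Q}_1$ is a finite sum of constant terms over compacts followed by multiplication by characteristic functions, the interchange then follows. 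Alternatively (and this is how the paper proceeds inside the proof of~\ref{KQd}), one uses the factorisation $K_{Q,\delta}=K_{Q,\delta}\star e_{\bsK'}$ to write $\bs\Lambda^{T,Q}_1 K_{Q,\delta}$ directly as a product kernel and then applies the spectral decomposition to that product, bypassing the interchange question entirely.
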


%%%%%%%%%%%%%%%%%%%%%%%%%%%%%%%%
%%%%%%%%%%%%%%%%%%%%%%%%%%%%%%%%
%%%%%%%%%%%%%%%%%%%%%%%%%%%%%%%%

\part{La formule des traces grossi\`ere}

 \chapter{Formule des traces: \'etat z\'ero}\label{\'etat z\'ero}

%%%%%%%%%%%%%%%%%%%%
 \section{Le cas compact}\label{le cas compact}
Dans cette section nous \'etablissons la formule des traces tordue 
dans le cas o $G_\mathrm{der}$ est anisotrope, c'est-\`a-dire o
\begin{equation*}\ovbsX_{\mskip -2mu \G}=\A_\G(\adef)\G(F)\bsl\G(\adef)\end{equation*}
est compact. On pose
\begin{equation*}\bsY_G\index{YwG@$\bsY_G$}
\bydef\A_\tG(\adef)\G(F)\bsl\G(\adef)\ptf\end{equation*}
Rappelons que l'on a fix\'e un caract\`ere unitaire $\omega$ de $\G(\adef)$ 
trivial sur le groupe $A_\tG(\adef)G(F)$. 
Pour $f\in C_c^\infty(\tG(\adef))$
on considre l'intŽgrale
\begin{equation*}J(f,\omega)\bydef\int_{\bsY_G}K(f,\omega;x,x)\dd x\end{equation*}
avec
\begin{equation*}K(f,\omega;x,y)=\sum_{\delta\in \tG(F)} f(x\mun\delta y)\omega(y)\ptf\end{equation*}
Il est facile de montrer que l'int\'egrale sur $\bsY_G$ est absolument convergente. Indiquons rapidement
comment on en d\'eduit la formule des traces. Pour plus de d\'etails on renvoie
aux chapitres suivants o les r\'esultats de ce paragraphe seront \'etablis dans un cadre plus g\'en\'eral.

On peut d\'evelopper l'int\'egrale suivant les classes de conjugaison. 
On note $\wt{\Gamma}$ un syst\`eme de repr\'esentants des classes de $G(F)$-conjugaison 
dans $\tG(F)$ et $\G^\delta(F)$ le groupe des points $F$-rationnels du centralisateur
$G^\delta$ de $\delta$ dans $G$.
Pour $\delta\in\tG(F)$, on choisit une mesure de Haar sur $G^\delta(\adef)$ et on pose
\begin{equation*}a^G(\delta)= \vol (A_\tG(\adef)G^\delta(F)\bsl G^\delta(\adef))\ptf\end{equation*}
 Si $G^\delta(\adef)\not\subset\ker(\omega)$ 
on pose
\begin{equation*}\ESO_\delta(f,\omega)=0\end{equation*}
et, si $G^\delta(\adef)\subset\ker(\omega)$, on pose
\begin{equation*}\ESO_\delta(f,\omega)= \int_{G^\delta(\adef)\bsl G(\adef)} \omega(g) f(g^{-1}\delta g)\dd \dot{g}\end{equation*}
o $\dd \dot{g}$ est la mesure quotient. 
\begin{proposition}\label{ftcompgeom}
Si $G_\mathrm{der}$ est anisotrope, on a le d\'eveloppement g\'eom\'etrique:
\begin{equation*}J(f,\omega)=\sum_{\delta\in\wt{\Gamma}}a^G(\delta)\ESO_\delta({\tdf},\omega)\ptf\end{equation*}
Seul un nombre fini de $\delta$ (d\'ependant du support de $f$) donne 
une contribution non nulle \`a la somme.
\end{proposition}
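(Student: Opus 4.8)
This is the twisted trace formula in the compact (cocompact mod center) case, so the strategy is the classical Selberg unfolding: expand the kernel along $G(F)$-conjugacy classes in $\tG(F)$, interchange summation and integration, and unfold each term into a weighted orbital integral. The hypothesis that $G_{\mathrm{der}}$ is anisotrope guarantees, via \ref{Siegel}(1), that $\bsY_G = A_\tG(\AM)G(F)\backslash G(\AM)$ is compact; this is what makes every integral below absolutely convergent and finite, so no truncation is needed at this stage.

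**First steps.** I would begin by writing the kernel as
$$K(f,\omega;x,x)=\sum_{\delta\in\tG(F)}f(x^{-1}\delta x)\omega(x)$$
and regrouping the sum over $\tG(F)$ into $G(F)$-conjugacy classes. Letting $\wt\Gamma$ be a set of representatives and $G^\delta$ the centralizer of $\delta$ in $G$, the inner sum over a fixed class is $\sum_{\gamma\in G^\delta(F)\backslash G(F)}f(x^{-1}\gamma^{-1}\delta\gamma x)$. Substituting into $J(f,\omega)=\int_{\bsY_G}K(f,\omega;x,x)\dd x$ and interchanging the (finite, by compact support and a standard finiteness argument) sum with the integral gives
$$J(f,\omega)=\sum_{\delta\in\wt\Gamma}\int_{A_\tG(\AM)G(F)\backslash G(\AM)}\sum_{\gamma\in G^\delta(F)\backslash G(F)}\omega(x)f(x^{-1}\gamma^{-1}\delta\gamma x)\dd x.$$
The inner sum over $\gamma$ unfolds the quotient: it collapses $G(F)$ to $G^\delta(F)$, so the integral becomes one over $A_\tG(\AM)G^\delta(F)\backslash G(\AM)$.

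**Unfolding and the volume factor.** The next step is to split this domain using $A_\tG(\AM)G^\delta(F)\backslash G(\AM)\cong\big(A_\tG(\AM)G^\delta(F)\backslash G^\delta(\AM)\big)\times\big(G^\delta(\AM)\backslash G(\AM)\big)$, which is legitimate once one checks that $\omega$ and $f(x^{-1}\delta x)$ behave correctly: the character $\omega$ is trivial on $A_\tG(\AM)G(F)$, and on the fiber $G^\delta(\AM)$ the conjugation $g^{-1}\delta g$ is insensitive to the $G^\delta(\AM)$-factor, so that inner factor integrates $\omega$ against a constant. Here the dichotomy on $\omega\vert_{G^\delta(\AM)}$ enters exactly as in \ref{le cas compact}: if $G^\delta(\AM)\not\subset\ker(\omega)$ the integral of $\omega$ over the compact quotient $A_\tG(\AM)G^\delta(F)\backslash G^\delta(\AM)$ vanishes, forcing $\ESO_\delta(f,\omega)=0$; otherwise $\omega$ is constant there and we extract the volume factor $a^G(\delta)=\vol(A_\tG(\AM)G^\delta(F)\backslash G^\delta(\AM))$, leaving precisely the orbital integral $\ESO_\delta(f,\omega)=\int_{G^\delta(\AM)\backslash G(\AM)}\omega(g)f(g^{-1}\delta g)\dd\dot g$. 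Summing over $\delta\in\wt\Gamma$ yields the stated geometric expansion.

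**The main obstacle.** The delicate point is the finiteness statement — that only finitely many $\delta$ contribute — and the absolute convergence that justifies the interchange. In the number-field setting this follows from the classical Siegel finiteness property, but as emphasized in \ref{Siegel} that property \emph{fails} over a function field: $\gamma\Siegel^1\cap\Siegel^1\neq\emptyset$ for all $\gamma\in U_0(F)$. So the honest justification must replace naive finiteness by the compactness of $\bsY_G$ together with the compact support of $f$, arguing that for $x$ in a fixed compact fundamental domain and $\delta$ ranging over $\tG(F)$, the condition $f(x^{-1}\delta x)\neq 0$ confines $\delta$ to a set meeting only finitely many $G(F)$-classes. I expect this to be the step requiring the most care, and I would lean on the finitude lemmas of \ref{primitifs} (notably \ref{finitude}) rather than on the number-field reduction theory; the remainder is the routine unfolding sketched above, with details deferred, as the authors note, to the more general treatment in the following chapters.
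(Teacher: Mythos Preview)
Your overall strategy is the right one and matches what the paper sketches: decompose the kernel by $G(F)$-conjugacy classes, unfold, and separate out the volume factor. The paper gives no detailed proof here, deferring to the general treatment, so your outline is appropriate.

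There is, however, a genuine error in your convergence argument. You assert that $\bsY_G = A_\tG(\AM)G(F)\backslash G(\AM)$ is compact when $G_{\rm der}$ is anisotropic, citing \ref{Siegel}(1). But \ref{Siegel}(1) gives compactness of $G(F)\backslash G(\AM)^1$, which translates to compactness of $\ovbsX_G = A_G(\AM)G(F)\backslash G(\AM)$, not of $\bsY_G$. In the twisted case these differ: $A_\tG = (A_G^\theta)^0$ is strictly smaller than $A_G$ whenever $\theta$ acts nontrivially on $\ag_G$, and then $\bsY_G$ fibers over the infinite group $\ESB_\tG\backslash \ESA_G$, so it is \emph{not} compact. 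The paper is careful about this distinction; see the opening lines of \S\ref{le cas compact}, where it is $\ovbsX_G$ that is declared compact.

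What saves the argument is that the integrand itself has compact support on $\bsY_G$. If $f(x^{-1}\delta x)\neq 0$ for some $\delta\in\tG(F)$, then writing $\delta=\delta_0 g$ with $g\in G(F)$ one computes $\wt\bfH_G(x^{-1}\delta x)=(1-\theta_0^{-1})\bfH_G(x)$, which is confined to a bounded set since $f$ has compact support. By the standing hypothesis of \S\ref{hypoth�ses} that $1-\theta$ is invertible on $\ag_G^\tG$, this bounds the $\ag_G^\tG$-component of $\bfH_G(x)$; the $\ag_\tG$-component is killed by the $A_\tG(\AM)$-quotient. Thus the support of $x\mapsto K(f,\omega;x,x)$ in $\bsY_G$ is compact, and both the absolute convergence and the finiteness of contributing classes follow from discreteness of $\tG(F)$ in $\tG(\AM)$. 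You do not need the machinery of \ref{finitude} here; the function-field failure of Siegel finiteness you worry about concerns unipotents in the non-compact case and is irrelevant once $G_{\rm der}$ is anisotropic.
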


Nous allons maintenant consid\'erer le d\'eveloppement spectral.
En g\'en\'eral $J(f,\omega)$ n'est pas une trace car, sauf si $\A_G$ est trivial, l'op\'erateur $\wt\rho(f,\omega)$ 
op\'erant dans $ L^2(\bsX_G)$ n'est pas un op\'erateur \`a trace.

Rappelons qu'on a not\'e $\Xi(G)$\index{XwiG@$\Xi(G)$}
le groupe des caract\`eres unitaires automorphes de $A_G(\adef)$. On note
\begin{equation*}\Xi(G,\tG)\subset \Xi(G)\end{equation*}
le sous-groupe des caract\`eres triviaux sur $A_\tG(\adef)$. 
Les groupes $\Xi(G)$ et $\Xi(G,\tG)$ sont munis 
de mesures de Haar en suivant les conventions de \ref{convention-mesures}: elles donnent le volume $1$ \`a 
$\wh{\ESB}_G$ et $\wh{\ESB}_G^\tG$ respectivement\footnote{Rappelons que $\wh{\ESB}_G^\tG$ est 
le dual de Pontryagin du r\'eseau $\ESB_G^\tG = \ESB_\tG \backslash \ESB_G$ de $\ag_G^\tG$.}. Soit
\begin{equation*}\Xi(G,\theta,\omega)\subset \Xi(G)\end{equation*}
le sous-ensemble form\'e des caract\`eres $\xi$ tels que,
en notant $\omega_{A_G}$ la restriction de $\omega$ \`a $A_G(\adef)$, on ait
\begin{equation*}\xi\circ\theta=\omega_{A_G}\otimes\xi\ptf\end{equation*}
Si $\Xi(G,\theta,\omega)$ est non vide, c'est un espace tordu sous le groupe 
$\Xi(\G)^\theta$ des points fixes sous $\theta$ dans $\Xi(\G)$. 
On observe que $\wh{\ESB}_G^\theta$ est un sous-groupe ouvert de $ \Xi(G)^\theta$.
On munit $\Xi(G)^\theta$ de la mesure de Haar telle que $\mathrm{vol}(\wh{\ESB}_G^\theta)=1$ ce qui fournit 
une mesure $\Xi(G)^\theta$-invariante sur $\Xi(G,\theta,\omega)$.

% lemme

Consid\'erons un caract\`ere $\xi\in\Xi(\G)$ et posons pour $x$ et $y$ dans $\G(\adef)$:
\begin{equation*}K_\xi(f,\omega;x,y)=
\sum_{\delta\in\A_G(F)\bsl\tG(F)} \int_{\A_\G(\adef)}\overline{\xi(z)} 
f(z\mun x\mun\delta y)\omega(y)\dd z\end{equation*}
soit encore
\begin{equation*}K_\xi(f,\omega;x,y)=\int_{\A_G(F) \bsl\A_\G(\adef)}\overline{\xi(z)}K(f,\omega;zx,y) \dd z\ptf\end{equation*}
Par inversion de Fourier on voit que 
\begin{equation*}K(f,\omega;x,y)=\int_{\xi\in\Xi(\G)}K_\xi(f,\omega;x,y)\dd\xi\vg\end{equation*}
et on observe que
\begin{equation*}K_\xi(f,\omega;zx,zy)=\zeta_\xi(z) K_\xi(f,\omega;x,y)\leqno{(1)}\end{equation*}
o
\begin{equation*}\zeta_\xi=(\xi\circ\theta)\mun\cdot (\omega_{A_G}\otimes \xi)=\omega_{A_G}\otimes\xi^{1-\theta}\end{equation*}
est un \'el\'ement du groupe $\Xi(G,\tG)$.
On observe aussi que, par d\'efinition,
\begin{equation*}\zeta_\xi=1\quad \hbox{\'equivaut \`a}\quad \xi\in\Xi(G,\theta,\omega)\ptf\end{equation*}

Pour $\xi\in \Xi(G)$, on note $L^2(\bsX_G)_\xi$ l'espace de Hilbert des fonctions
sur $\bsX_G$ qui se transforment suivant $\xi$ sur $A_\G(\adef)$. 
Lorsque $\xi\in\Xi(G,\theta,\omega)$ c'est-\`a-dire si $\zeta_\xi=1$, 
l'op\'erateur $\tbsrho(f,\omega)$ induit un endomorphisme de $L^2(\bsX_G)_\xi$.
D'apr\`es \ref{thmfinitude} c'est un op\'erateur de rang fini. 
On pose
\begin{equation*}J(f,\omega,\xi)\bydef \int_{\ovbsX_{\mskip -2mu \G}}K_\xi(f,\omega;x,x)\dd x\end{equation*}
et on a
\begin{equation*}J(f,\omega,\xi)=\trace\Big( \tbsrho(f,\omega)\vert L^2(\bsX_G)_\xi\Big)\ptf\leqno{(2)}\end{equation*}
On note
\begin{equation*}\Pi_\disc(\tG,\omega)\end{equation*}
l'ensemble des classes d'isomorphisme de repr\'esentations automorphes irr\'eductibles de $G(\adef)$ 
discr\`etes modulo le centre, qui admettent un prolongement \`a 
$(\tG(\adef),\omega)$. Pour $\xi\in \Xi(G,\theta,\omega)$, on note
$\Pi_\disc(\tG,\omega)_\xi$ le sous-ensemble de $\Pi_\disc(\tG,\omega)$
form\'e des repr\'esentations dont le caract\`ere central restreint \`a $A_G(\adef)$ est \'egal \`a $\xi$. Enfin pour 
$\pi \in \Pi_\disc(\Pi,\omega)_\xi$, on note
\begin{equation*}\Automd(\bsX_G,\pi)\end{equation*}
la composante isotypique de $\pi$ dans
\begin{equation*}\Automd(\bsX_G)_{\xi}\subset L^2(\bsX_G)_{\xi}\ptf\end{equation*}

% lemme
\begin{lemma} Pour $\xi\in \Xi(G,\theta,\omega)$, on a
\begin{equation*}J(f,\omega,\xi)= \sum_{\pi \in \Pi_\disc(\tG,\omega)_{\xi}} 
\trace\Big( \tbsrho(f,\omega)\vert \Automd(\bsX_G,\pi) \Big)\ptf\end{equation*}
\end{lemma}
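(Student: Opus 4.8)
Le plan est de partir de l'identit\'e $J(f,\omega,\xi)=\trace(\tbsrho(f,\omega)\vert L^2(\bsX_G)_\xi)$ \'etablie en $(2)$ ci-dessus et de d\'ecomposer cette trace le long de la d\'ecomposition spectrale de $L^2(\bsX_G)_\xi$. D'abord j'observerais que, $G_{\rm der}$ \'etant anisotrope, le groupe $G$ co\"{\i}ncide avec sa composante de Levi minimale $M_0$ et n'admet aucun $F$-sous-groupe parabolique propre, de sorte qu'il n'y a pas de spectre continu et que $L^2(\bsX_G)_\xi$ se r\'eduit \`a sa partie discr\`ete. Cet espace est alors la somme directe hilbertienne $\bigoplus_\pi\Automd(\bsX_G,\pi)$ des composantes isotypiques, $\pi$ parcourant les repr\'esentations automorphes irr\'eductibles de $G(\AM)$ discr\`etes modulo le centre dont le caract\`ere central restreint \`a $A_G(\AM)$ vaut $\xi$. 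Le th\'eor\`eme d'admissibilit\'e \ref{thmfinitude} garantit que, $f$ \'etant bi-invariante sous un sous-groupe ouvert compact $\bsK'$, chaque composante est de dimension finie et que $\tbsrho(f,\omega)$ est de rang fini; sa trace a donc un sens et se calcule dans n'importe quelle base orthonormale.

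Ensuite j'analyserais la mani\`ere dont l'op\'erateur tordu permute ces composantes. En sp\'ecialisant \ref{rhotu} \`a $P=G$, $\delta=\delta_0$ et $\mu=0$ (d'o\`u $\theta=\theta_0$ et $Q=\theta(G)=G$), on voit que $\tbsrho(f,\omega)$ envoie $\Automd(\bsX_G,\pi)$ dans $\Automd(\bsX_G,\theta(\omega\otimes\pi))$. En choisissant une base orthonormale adapt\'ee \`a la d\'ecomposition isotypique, le terme diagonal attach\'e \`a un vecteur de la composante de $\pi$ est nul d\`es que $\theta(\omega\otimes\pi)\not\simeq\pi$, car son image tombe alors dans une composante isotypique orthogonale \`a celle de $\pi$. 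Seules contribueraient donc \`a la trace les repr\'esentations $\pi$ v\'erifiant $\theta(\omega\otimes\pi)\simeq\pi$.

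Enfin j'identifierais cette condition. L'isomorphisme $\theta(\omega\otimes\pi)\simeq\pi$ \'equivaut \`a l'existence d'un prolongement de $\pi$ \`a $(\tG(\AM),\omega)$, \cad \`a l'appartenance $\pi\in\Pi_\disc(\tG,\omega)$; jointe \`a la condition sur le caract\`ere central, elle caract\'erise exactement les $\pi\in\Pi_\disc(\tG,\omega)_\xi$. Pour un tel $\pi$ l'op\'erateur $\tbsrho(f,\omega)$ stabilise $\Automd(\bsX_G,\pi)$ et y contribue par $\trace(\tbsrho(f,\omega)\vert\Automd(\bsX_G,\pi))$, d'o\`u la formule annonc\'ee apr\`es sommation sur les composantes. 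La principale difficult\'e sera de justifier rigoureusement la d\'ecomposition de la trace le long des composantes isotypiques ainsi que l'annulation des blocs non diagonaux; mais cette justification est imm\'ediate ici, le caract\`ere de rang fini de $\tbsrho(f,\omega)$ fourni par \ref{thmfinitude} rendant triviales les questions de convergence et r\'eduisant tout au d\'ecompte combinatoire pr\'ec\'edent.
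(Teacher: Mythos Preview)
Your proposal is correct and follows exactly the same approach as the paper, which condenses the argument to a single observation: representations $\pi$ not admitting an extension to $(\tG(\AM),\omega)$ contribute zero to the trace of $\tbsrho(f,\omega)$. Your version simply makes explicit the ingredients (absence of continuous spectrum since $G_{\rm der}$ is anisotropic, the isotypic decomposition, finite rank via \ref{thmfinitude}, and the block-permutation behaviour of $\tbsrho(f,\omega)$) that the paper takes for granted.
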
 

% preuve
\begin{proof}
On observe que les repr\'esentations $\pi$ qui n'admettent pas de prolongement \`a $(\tG(\adef),\omega)$
contribuent par z\'ero \`a la trace de l'op\'erateur $\tbsrho(f,\omega)$. 
\end{proof}

On choisit, pour chaque $\pi\in \Pi_\disc(\tG,\omega)$, un prolongement $\wt{\pi}$ \`a $(\tG(\adef),\omega)$
de $\pi$ (plus correctement, 
d'un repr\'esentant $(\pi,V_\pi)$ de la classe $\pi$) et on note $m(\pi,\wt{\pi})$ 
la multiplicit\'e tordue de $(\pi,\wt{\pi})$ dans $L^2(\bsX_G)_{\xi_\pi}$,
d\'efinie dans \cite[2.4]{LW}, o $\xi_\pi$ est la restriction \`a $A_G(\adef)$ du caract\`ere central de $\pi$. 
Le nombre
\begin{equation*}m(\pi,\wt{\pi})\trace(\wt{\pi}(f,\omega)\vert V_\pi)\end{equation*}
ne d\'epend pas du choix de $\wt{\pi}$. Ceci fournit une nouvelle expression:
\begin{equation*}J(f,\omega,\xi)=\sum_{\pi \in \Pi_\disc(\tG,\omega)_\xi}m(\pi,\wt{\pi})
 \trace(\wt{\pi}(f,\omega)\vert V_\pi)\ptf\end{equation*}

% lemme
\begin{lemma}\label{chgtvar}
On suppose que l'ensemble $\Xi(G,\theta,\omega)$ est non vide. 
Si $\{\psi\}$ est une famille de fonctions sur 
$\Xi(G,\tG)$ qui tend, au sens des distributions, 
 vers la masse de Dirac \`a l'origine, alors pour tout fonction $\kappa$ lisse sur $\Xi(G)$, on a
\begin{equation*}\lim_{\psi}\int_{{\xi\in\Xi(G)}}\psi({\omega_{A_G}}\otimes \xi^{1-\theta})\kappa(\xi)\dd\xi = 
\int_{ \xi\in{\Xi(G,\theta,\omega)}}\kappa(\xi)\dd\xi\ptf\end{equation*}
\end{lemma}

% preuve
\begin{proof}
Par hypoth\`ese, il existe $\xi_0\in\Xi(G,\theta,\omega)$.
En \'ecrivant $\xi$ sous la forme 
\hbox{$\xi=\xi_0\xi_1\xi_2$} avec $\xi_2\in\Xi(G)^\theta$
on a \begin{equation*}\omega_{A_G}\otimes \xi^{1-\theta}=\xi_1^{1-\theta}\ptf\end{equation*}
On observe alors qu'en posant $\Xi(G)_1=\Xi(G)/\Xi(G)^\theta$ on a
\begin{equation*}\int_{{\xi\in\Xi(G)}}\mskip -5mu\psi(\omega_{A_G}\otimes \xi^{1-\theta})k(\xi)\dd \xi 
=\int_{\xi_1\in\Xi(G)_1}\mskip -5mu\psi(\xi_1^{1-\theta})
\bigg(\int_{ \xi_2\in\Xi(G)^\theta}k(\xi_0\xi_1\xi_2)\dd \xi_2 \bigg)\dd \xi_1\ptf\end{equation*}
On peut supposer que $\psi$ est \`a support dans le tore compact
\begin{equation*}\wh{\ESB}_G^\tG= (1-\theta)\wh{\ESB}_G \;(\subset \Xi(G,\tG))\ptf\end{equation*}
Puisque les mesures sont compatibles \`a la suite exacte courte
\begin{equation*}0 \rightarrow \wh{\ESB}_G^\theta \rightarrow \wh{\ESB}_G \xrightarrow{{1-\theta}} 
 \wh{\ESB}_G^\tG \rightarrow 0\vg\end{equation*}
le lemme en r\'esulte.
\end{proof}

% proposition
\begin{proposition} \label{cascomp1}
Si $G_\mathrm{der}$ est anisotrope on a l'identit\'e: 
\begin{equation*}J(f,\omega)=\int_{\xi\in\Xi(G,\theta,\omega)}
\trace\Big( \tbsrho(f,\omega)\vert L^2(\bsX_G)_\xi\Big)\dd\xi\end{equation*}
soit encore
\begin{equation*}J(f,\omega)=
\int_{\xi\in\Xi(G,\theta,\omega)}\sum_{\pi \in \Pi_\disc(\tG,\omega)_{\xi}} 
\trace\Big( \tbsrho(f,\omega)\vert \Automd(\bsX_G,\pi) \Big)
\ptf\end{equation*}
\end{proposition}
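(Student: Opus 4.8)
Le plan est de partir de la définition $J(f,\omega)=\int_{\bsY_G}K(f,\omega;x,x)\dd x$, dont on a rappelé que l'intégrale converge absolument, et d'y injecter la décomposition de Fourier $K(f,\omega;x,y)=\int_{\Xi(G)}K_\xi(f,\omega;x,y)\dd\xi$. Comme $\zeta_\xi=\omega_{A_G}\otimes\xi^{1-\theta}$ est trivial sur $A_\tG(\AM)$ (car $\theta$ fixe $A_\tG$ point par point et $\omega$ est trivial sur $A_\tG(\AM)$) ainsi que sur $A_G(F)$, l'équation de covariance $K_\xi(f,\omega;zx,zx)=\zeta_\xi(z)K_\xi(f,\omega;x,x)$ montre que, sur la diagonale, la fonction $x\mapsto K_\xi(f,\omega;x,x)$ descend au quotient $\bsY_G$ et s'y transforme, sous l'action de $A_G(\AM)$, suivant le caractère $\zeta_\xi\in\Xi(G,\tG)$.

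La deuxième étape consiste à décomposer l'intégrale sur $\bsY_G$ le long de la fibration naturelle $\bsY_G\to\ovbsX_{\!\G}$, de fibre $A_\tG(\AM)A_G(F)\bsl A_G(\AM)$. Via $\bfH_G$, cette fibre est le produit d'un groupe compact et du réseau $\ESB_G^\tG=\ESB_\tG\bsl\ESB_G$ de $\ag_G^\tG$; elle n'est donc pas compacte. En utilisant la covariance, on obtient formellement
$$J(f,\omega)=\int_{\Xi(G)}\Big(\int_{A_\tG(\AM)A_G(F)\bsl A_G(\AM)}\zeta_\xi(z)\dd z\Big)J(f,\omega,\xi)\dd\xi\vg$$
où $J(f,\omega,\xi)=\int_{\ovbsX_{\!\G}}K_\xi(f,\omega;x,x)\dd x$. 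L'intégrale intérieure est une intégrale de caractère sur un groupe non compact: sa partie selon $\ESB_G^\tG$ est la somme $\sum_{z\in\ESB_G^\tG}\zeta_\xi(z)$, qui ne converge qu'au sens des distributions et fournit un peigne de Dirac concentré là où $\zeta_\xi$ est trivial, \cad exactement sur $\Xi(G,\theta,\omega)$.

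La troisième étape, cœur de la preuve, est de rendre rigoureux ce peigne de Dirac au moyen du lemme \ref{chgtvar}. On remplace la somme divergente $\sum_{\ESB_G^\tG}\zeta_\xi$ par une famille régularisante $\{\psi(\zeta_\xi)\}$ de fonctions sur $\Xi(G,\tG)$ tendant, au sens des distributions, vers la masse de Dirac à l'origine (par exemple des sommes partielles de type Fejér sur le tore compact $\wh\ESB_G^\tG$), puis on passe à la limite. Le lemme \ref{chgtvar}, appliqué à $\kappa(\xi)=J(f,\omega,\xi)$, donne alors $J(f,\omega)=\int_{\Xi(G,\theta,\omega)}J(f,\omega,\xi)\dd\xi$. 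L'hypothèse de lissité de $\kappa$ requise est satisfaite: $\ovbsX_{\!\G}$ est compact et $f$ est à support compact (seul un nombre fini de $\delta$ contribue à $K_\xi$), de sorte que $\xi\mapsto J(f,\omega,\xi)$ dépend lissement de $\xi$. Il faudra aussi traiter à part le cas où $\Xi(G,\theta,\omega)$ est vide, où la formule se réduit à $J(f,\omega)=0$.

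On conclut en substituant les deux identités déjà acquises: l'identité spectrale $J(f,\omega,\xi)=\trace(\tbsrho(f,\omega)\,\vert\,L^2(\bsX_G)_\xi)$, licite car pour $\xi\in\Xi(G,\theta,\omega)$ l'opérateur $\tbsrho(f,\omega)$ préserve $L^2(\bsX_G)_\xi$ et y est de rang fini d'après \ref{thmfinitude}, fournit la première forme de l'énoncé; le lemme précédent donnant $J(f,\omega,\xi)=\sum_{\pi\in\Pi_\disc(\tG,\omega)_\xi}\trace(\tbsrho(f,\omega)\,\vert\,\Automd(\bsX_G,\pi))$ fournit la seconde. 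Le principal obstacle est l'étape trois: l'échange entre l'intégrale sur $\Xi(G)$ et la sommation divergente sur le réseau $\ESB_G^\tG$, que seule la régularisation distributionnelle du lemme \ref{chgtvar} permet de contrôler.
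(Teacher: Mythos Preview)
Your approach is essentially the paper's: Fourier decomposition of $K$, covariance under $A_G$, regularization of the divergent fiber integral, and application of Lemma~\ref{chgtvar}. The one point where your write-up is thinner than the paper's is the link between $J(f,\omega)$ and the regularized expression $\lim_\psi\int_{\Xi(G)}\psi(\zeta_\xi)J(f,\omega,\xi)\dd\xi$. You assert this by ``replacing'' the divergent sum, but that replacement is not self-justifying: Lemma~\ref{chgtvar} tells you the limit equals $\int_{\Xi(G,\theta,\omega)}J(f,\omega,\xi)\dd\xi$, but it does not by itself tell you the limit equals $J(f,\omega)$.

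The paper closes this gap by regularizing on the dual ($z$-)side rather than on the $\xi$-side: it chooses a family $\{\phi\}$ of compactly supported functions on $A_\tG(\AM)A_G(F)\bsl A_G(\AM)$ tending pointwise to $1$, and takes $\psi=\wh\phi$. Then $J(f,\omega)=\lim_\phi\int\phi(z^{-1})K(zx,zx)\dd z\,\dd\dot x$ is immediate by dominated convergence (the original integral over $\bsY_G$ is absolutely convergent and $\phi$ is bounded), and the compact support of $\phi$ justifies interchanging the $z$- and $\xi$-integrals to reach $\lim_\phi\int_{\ovbsX_G}\int_{\Xi(G)}\wh\phi(\zeta_\xi)K_\xi(x,x)\dd\xi\,\dd\dot x$. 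Your Fej\'er kernels are exactly of this form (their inverse Fourier transforms are compactly supported on $\ESB_G^\tG$), so your argument can be completed along these lines; but as written, the crucial identification $J(f,\omega)=\lim_\psi(\cdots)$ is asserted rather than proved. Note also that the paper applies Lemma~\ref{chgtvar} with $\kappa(\xi)=K_\xi(x,x)$ pointwise in $x$ and only afterwards interchanges with the integral over the compact $\ovbsX_G$, rather than with $\kappa(\xi)=J(f,\omega,\xi)$; either route works once the missing step is supplied.
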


\begin{proof}
Par d\'efinition
\begin{equation*}J(f,\omega)=\int_{\bsY_G}\bigg( \int_{\xi\in\Xi(\G)} K_\xi(f,\omega;x,x)\dd\xi \bigg)\dd x\end{equation*}
soit encore
\begin{equation*}J(f,\omega)=\int_{\dot x\in\ovbsX_{\mskip -2mu \G}}
\int_{z\in\A_G(F)\A_\tG(\adef) \bsl\A_\G(\adef)}\bigg(\int_{\xi\in\Xi(\G)} K_\xi(f,\omega;zx,zx)\dd\xi
\bigg)\dd z\dd\dot x \ptf\end{equation*}
Consid\'erons une famille $\{\phi\}$ de fonctions \`a support compact sur le groupe ab\'elien localement compact
\begin{equation*}\A_G(F)\A_\tG(\adef) \bsl\A_\G(\adef)\end{equation*}
et tendant vers la fonction $1$, 
de sorte que la famille $\{\wh{\phi}\}$ de leurs transform\'ees de Fourier tende vers la masse de Dirac sur 
$\Xi(G,\tG)$ \`a l'origine.
 Alors $J(f,\omega)$ est \'egal \`a
\begin{equation*}\lim_{\phi\to 1}\int_{\ovbsX_{\mskip -2mu \G}}
\int_{\A_G(F)\A_\tG(\adef) \bsl\A_\G(\adef)}\bigg(\int_{\xi\in\Xi(\G)} K_\xi(f,\omega;zx,zx)\dd\xi
\bigg) \phi(z^{-1})\dd z\dd\dot x\end{equation*}
soit encore, en utilisant (1), \`a
\begin{equation*}\lim_{\phi\to 1}\int_{\ovbsX_{\mskip -2mu \G}}
\int_{\A_G(F)\A_\tG(\adef) \bsl\A_\G(\adef)}\bigg(\int_{\xi\in\Xi(\G)} \phi(z^{-1})\zeta_\xi(z) K_\xi(f,\omega;x,x)\dd\xi
\bigg)\dd z\dd\dot x \ptf\end{equation*}
Comme $\phi$ est \`a support compact, on peut intervertir les int\'egrations en $z$ et $\xi$, et 
on a
\begin{equation*}J(f,\omega)= \lim_{\phi\to 1}\int_{\ovbsX_{\mskip -2mu \G}}\bigg(
\int_{\xi\in\Xi(\G)} \wh\phi(\zeta_\xi)K_\xi(f,\omega;x,x)\dd\xi\bigg)\dd\dot x\ptf\end{equation*}
 Compte tenu de \ref{chgtvar}, cette limite s'\'ecrit
\begin{equation*}\int_{\ovbsX_{\mskip -2mu \G}}\bigg(\int_{\xi\in\Xi(G,\theta,\omega)}K_{\xi}(f,\omega;x,x)\dd\xi\bigg)\dd\dot x\ptf\end{equation*}
Comme $\ovbsX_{\mskip -2mu  G}$ est compact on peut encore intervertir et on obtient
\begin{equation*}J(f,\omega)=\int_{\xi\in\Xi(G,\theta,\omega)}J(f,\omega,\xi)\dd\xi\ptf\end{equation*}
On conclut en invoquant (2).
\end{proof}

On pose \begin{equation*}\bsmu_\tG \bydef \wh{\ESA}_\tG\end{equation*}
et on note $\bsPi_\disc(\tG,\omega)$
le quotient de $\Pi_\disc(\tG,\omega)$ par la relation d'\'equivalence donn\'ee par 
la torsion par les \'el\'ements de $\bsmu_\tG$. Pour \hbox{$\pi\in \Pi_\disc(\tG,\omega)$,} 
on pose
\begin{equation*}\wh{c}_\tG(\pi)= \frac{\vert \wh{\bsbbc}_\tG \vert}{\vert \mathrm{Stab}_\tG(\pi)\vert} \end{equation*}
o $\mathrm{Stab}_\tG(\pi)\subset \wh{\bsbbc}_\tG$ est le stabilisateur de $\pi$ dans $\bsmu_\tG$. 

% lemme
\begin{lemma}\label{jacobien}
Le morphisme
\begin{equation*}\wh{\ESB}_G^\theta \rightarrow \wh{\ESB}_\tG = \wh{\ESB_G^\theta} \end{equation*}
induit par $ \xi \mapsto \xi\vert_{\ESB_\tG}$
est surjectif et son noyau est fini de cardinal
\begin{equation*}j(\tG)= \vert \det(1-\theta\vert \ag_G^\tG)\vert\ptf\end{equation*}
\end{lemma}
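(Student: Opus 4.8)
La preuve reposera sur la dualité de Pontryagin et sur le lemme du serpent appliqué à l'unique endomorphisme $1-\theta$. Le plan est de partir de la suite exacte courte de réseaux déjà obtenue dans le chapitre \ref{espaces tordus},
$$0 \rightarrow \ESB_\tG \rightarrow \ESB_G \rightarrow \ESB_G^\tG \rightarrow 0\vg$$
où $\ESB_\tG = \ESB_G^\theta$ et $\ESB_G^\tG = \ESB_\tG \backslash \ESB_G$. Cette suite est équivariante pour l'action de $\theta$, qui agit trivialement sur $\ESB_\tG$. La dualité de Pontryagin étant exacte, je la dualiserais en une suite exacte courte de tores compacts, encore $\theta$-équivariante,
$$0 \rightarrow \wh{\ESB}_G^\tG \rightarrow \wh{\ESB}_G \xrightarrow{{\rm res}} \wh{\ESB}_\tG \rightarrow 0\vg$$
dans laquelle la flèche ${\rm res}$ est exactement la restriction $\xi\mapsto\xi\vert_{\ESB_\tG}$ de l'énoncé, et où $\theta$ agit trivialement sur $\wh{\ESB}_\tG = \wh{\ESB_G^\theta}$ puisque son action sur $\ESB_\tG$ l'est.

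J'appliquerais ensuite le lemme du serpent à l'opérateur $1-\theta$ agissant sur cette suite exacte courte, ce qui fournit la suite à six termes reliant noyaux et conoyaux. Sur le terme $\wh{\ESB}_\tG$, l'action de $\theta$ étant triviale, $1-\theta$ est nul; sur le terme $\wh{\ESB}_G^\tG$, l'opérateur $1-\theta$ est induit par l'endomorphisme $1-\theta$ du réseau $\ESB_G^\tG$ de $\ag_G^\tG$, et l'hypothèse $\det(\theta-1\mid\ag_G^\tG)\neq 0$ — c'est-à-dire l'inversibilité de $1-\theta$ sur $\ag_G^\tG$ — assure que c'est une isogénie du tore compact $\wh{\ESB}_G^\tG$, donc une application surjective à noyau fini, de conoyau nul. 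La portion pertinente de la suite du serpent s'écrit alors
$$0 \rightarrow (\wh{\ESB}_G^\tG)^\theta \rightarrow \wh{\ESB}_G^\theta \xrightarrow{{\rm res}} \wh{\ESB}_\tG \rightarrow 0\vg$$
la nullité du conoyau de $1-\theta$ sur $\wh{\ESB}_G^\tG$ donnant la surjectivité de ${\rm res}$ restreinte à $\wh{\ESB}_G^\theta$, et son noyau s'identifiant à $(\wh{\ESB}_G^\tG)^\theta = \ker(1-\theta\mid\wh{\ESB}_G^\tG)$.

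Il resterait à compter ce noyau. Pour une isogénie d'un tore $W/\Gamma$ induite par une application linéaire $\phi$ préservant $\Gamma$ et inversible sur $W$, j'utiliserais le calcul standard $\ker = \phi^{-1}(\Gamma)/\Gamma$ avec $[\phi^{-1}(\Gamma):\Gamma] = \vert\det(\phi\mid W)\vert$. Appliqué à $\phi = 1-\theta$ sur $\wh{\ESB}_G^\tG$, ceci donne la valeur absolue du déterminant de l'action duale; le point qui demande un peu de soin sera de revenir à $\theta$ lui-même. Je le lèverais en observant que $\theta$ préserve le réseau $\ESB_G$, donc $\theta\in{\rm GL}(\ESB_G)$ et $\vert\det(\theta\mid\ag_G)\vert=1$, et comme $\theta$ agit trivialement sur $\ag_\tG$ on a aussi $\vert\det(\theta\mid\ag_G^\tG)\vert=1$: passer à la transposée ou à l'inverse ne modifie donc pas la valeur absolue du déterminant de $1-\theta$, qui vaut $\vert\det(1-\theta\mid\ag_G^\tG)\vert = j(\tG)$. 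On notera enfin que la finitude de l'ordre de $\theta$ n'intervient nulle part, le lemme du serpent ne faisant appel qu'à un seul endomorphisme de la suite.
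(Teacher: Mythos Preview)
Your argument is correct. The paper takes a slightly more direct route on the lattice side: it identifies $\wh{\ESB}_G^\theta$ with the Pontryagin dual of the coinvariant quotient $(1-\theta)\ESB_G\backslash\ESB_G$, then observes that the inclusion $\ESB_\tG\hookrightarrow(1-\theta)\ESB_G\backslash\ESB_G$ has cokernel $(1-\theta)\ESB_G\backslash\ESB_G^\tG$, whose order is the lattice index $[\ESB_G^\tG:(1-\theta)\ESB_G]=\vert\det(1-\theta\mid\ag_G^\tG)\vert$; dualizing this injection with finite cokernel gives the surjection with finite kernel of the statement. Your approach dualizes first and runs the snake lemma for $1-\theta$ on the torus side, landing on the kernel of the isogeny $1-\theta$ of $\wh{\ESB}_G^\tG$, which is the Pontryagin dual of the same finite group. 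The two computations are equivalent under duality; the paper's version avoids the snake lemma and the (harmless) worry about transpose versus inverse actions, while yours is more systematic and makes the exactness at each step explicit. Incidentally, your care about $\vert\det(\theta\mid\ag_G^\tG)\vert=1$ is unnecessary if one uses the transpose action, since $\det(1-\theta^T)=\det(1-\theta)$ on the nose; it only matters if one insists on the contragredient $(\theta^{-1})^T$.
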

\begin{proof}
Le groupe $\wh{\ESB}_G^\theta$ est le dual de Pontryagin du groupe $(1-\theta)\ESB_G\backslash \ESB_G$. 
Le morphisme compos\'e
\begin{equation*}\ESB_\tG = \ESB_G^\theta \rightarrow \ESB_G \rightarrow (1-\theta)\ESB_G\backslash \ESB_G\end{equation*}
est injectif et son conoyau est \'egal \`a
\begin{equation*}\big((1-\theta)\ESB_G+ \ESB_\tG\big) \backslash \ESB_G = (1-\theta)\ESB_G \backslash \ESB_G^\tG\ptf\end{equation*}
Or l'indice $[\ESB_G^\tG : (1-\theta)\ESB_G]$ est \'egal \`a $\vert \det (1-\theta\vert \ag_G^\tG)\vert$. 
D'o le lemme par dualit\'e de Pontryagin.
\end{proof}
Avec les conventions de \ref{convention-mesures} pour la normalisation des mesures ($\mathrm{vol}(\bsmu_\tG)=1$), 
la proposition \ref{cascomp1} s'\'ecrit aussi:
%
% proposition
\begin{proposition}\label{cascomp2}
Si $G_\mathrm{der}$ est anisotrope on a l'identit\'e suivante: 
\begin{equation*}J(f,\omega)= 
j(\tG)^{-1}\sum_{\bs\pi\in \bs\Pi_\disc(\tG,\omega)} \wh{c}_\tG(\pi) \int_{\bsmu_\tG} 
\trace\big( \tbsrho(f,\omega)\vert \Automd(\bsX_G,\pi_\lambda) \big)\dd\lambda\ptf\end{equation*}
o, pour chaque classe $\bs\pi$, on a choisi un repr\'esentant $\pi$ dans $\Pi_\disc(\tG,\omega)$. 
Seul un nombre fini de $\bs\pi$ (d\'ependant de $f$) donne une contribution non triviale \`a la somme.
\end{proposition}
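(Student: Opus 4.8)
Le point de départ sera l'identité de \ref{cascomp1}, qui écrit $J(f,\omega)$ comme l'intégrale sur $\xi\in\Xi(G,\theta,\omega)$ de la somme $\sum_{\pi\in\Pi_\disc(\tG,\omega)_\xi}F(\pi)$, où l'on pose $F(\pi)=\trace(\tbsrho(f,\omega)\vert\Automd(\bsX_G,\pi))$. On observera d'abord que tout $\pi\in\Pi_\disc(\tG,\omega)$ a son caractère central $\xi_\pi$ (restreint à $A_G(\AM)$) dans $\Xi(G,\theta,\omega)$, puisque l'existence d'un prolongement à $(\tG(\AM),\omega)$ force $\xi_\pi\circ\theta=\omega_{A_G}\otimes\xi_\pi$. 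La réunion disjointe des $\Pi_\disc(\tG,\omega)_\xi$ est donc $\Pi_\disc(\tG,\omega)$ tout entier, et l'expression de \ref{cascomp1} n'est autre que l'intégrale de $F$ sur $\Pi_\disc(\tG,\omega)$, ce dernier étant fibré au-dessus de $\Xi(G,\theta,\omega)$ par $\pi\mapsto\xi_\pi$ (mesure de Lebesgue sur la base, comptage sur les fibres).

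L'idée est alors de décomposer $\Pi_\disc(\tG,\omega)$ en orbites sous la torsion $\pi\mapsto\pi_\lambda$ par $\lambda\in\bsmu_\tG=\wh\ESA_\tG$. L'ensemble des orbites est $\bs\Pi_\disc(\tG,\omega)$ ; le stabilisateur ${\rm Stab}_\tG(\pi)$ est fini et contenu dans $\wh\bsbbc_\tG$, et chaque orbite s'identifie à $\bsmu_\tG/{\rm Stab}_\tG(\pi)$. Le lien entre les deux paramétrages est fourni par l'action de torsion au niveau des caractères centraux : pour $\lambda\in\bsmu_\tG$ on a $\xi_{\pi_\lambda}=\xi_\pi\,\chi_\lambda$ avec $\chi_\lambda(a)=e^{\langle\lambda,\bfH_\tG(a)\rangle}$, caractère $\theta$-invariant de $A_G(\AM)$ trivial sur $A_G(\AM)^1$, c'est-à-dire un élément de $\wh\ESB_G^\theta\subset\Xi(G)^\theta$. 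On obtient ainsi un homomorphisme de tores $\alpha\colon\bsmu_\tG\to\wh\ESB_G^\theta$, $\lambda\mapsto\chi_\lambda$, dual de la projection orthogonale $\mathrm{pr}\colon\ESB_G\to\ESA_\tG$ ; comme $\mathrm{pr}$ est surjective, $\alpha$ est à noyau fini, d'image d'indice fini $m$ dans $\wh\ESB_G^\theta$.

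Il restera à établir, pour chaque classe $\bs\pi$ (de représentant $\pi$), l'identité
$$\int_{\Xi(G,\theta,\omega)}\sum_{\substack{\pi'\in\bs\pi\\ \xi_{\pi'}=\xi}}F(\pi')\,\dd\xi
= j(\tG)^{-1}\wh c_\tG(\pi)\int_{\bsmu_\tG}F(\pi_\lambda)\,\dd\lambda\vg$$
puis de sommer sur $\bs\Pi_\disc(\tG,\omega)$. L'argument sera une formule de coaire : on pousse la mesure de Haar de $\bsmu_\tG$ (de volume $1$) le long de $\alpha$, et l'on confronte avec la mesure sur la classe $\xi_\pi\wh\ESB_G^\theta\subset\Xi(G,\theta,\omega)$ (normalisée par $\vol(\wh\ESB_G^\theta)=1$). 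La $\theta$-torsion faisant apparaître la fibre finie de $\alpha$ au-dessus de chaque $\chi$ (de cardinal $\vert\ker\alpha\vert$) ainsi que le sur-comptage par ${\rm Stab}_\tG(\pi)$, on aboutira au coefficient $\dfrac{\vert\ker\alpha\vert}{m\,\vert{\rm Stab}_\tG(\pi)\vert}$.

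Le point délicat sera de vérifier que ce coefficient vaut exactement $j(\tG)^{-1}\wh c_\tG(\pi)$, c'est-à-dire que $m\,\vert\wh\bsbbc_\tG\vert=j(\tG)\,\vert\ker\alpha\vert$. Cela résultera de la chaîne d'isogénies de tores $\bsmu_\tG=\wh\ESA_\tG\xrightarrow{\alpha}\wh\ESB_G^\theta\xrightarrow{\beta}\wh\ESB_\tG$, où $\beta$ est la restriction de \ref{jacobien} (de noyau de cardinal $j(\tG)$) et $\beta\circ\alpha$ la restriction $\lambda\mapsto\lambda\vert_{\ESB_\tG}$, surjective de noyau $\wh\bsbbc_\tG$ (suite exacte $0\to\wh\bsbbc_\tG\to\wh\ESA_\tG\to\wh\ESB_\tG\to0$) : les relations $\vert\wh\bsbbc_\tG\vert=\vert\ker\alpha\vert\cdot\vert\ker\beta\cap\mathrm{im}\,\alpha\vert$ et $m=j(\tG)/\vert\ker\beta\cap\mathrm{im}\,\alpha\vert$ donneront la conclusion. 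Enfin, la finitude du nombre de classes $\bs\pi$ contribuant se déduira de \ref{thmfinitude} : pour $\xi$ fixé, $\tbsrho(f,\omega)$ est de rang fini sur $L^2(\bsX_G)_\xi$, et le support compact de $f$ borne l'ensemble des $\bs\pi$ intervenant. C'est le suivi simultané de l'isogénie $\beta$ de \ref{jacobien} et des groupes finis $\wh\bsbbc_\tG$ et ${\rm Stab}_\tG(\pi)$ qui constitue la principale difficulté technique.
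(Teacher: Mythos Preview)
Your approach is correct and follows the same strategy as the paper: start from \ref{cascomp1}, regroup the integral over $\Xi(G,\theta,\omega)$ by $\bsmu_\tG$-orbits in $\Pi_\disc(\tG,\omega)$, and invoke Lemma~\ref{jacobien} to produce the factor $j(\tG)^{-1}$. The paper's proof is more terse---it decomposes $\Xi(G,\theta,\omega)$ as $\Xi(G,\theta,\omega)^1\times\wh\ESB_G^\theta$ and passes directly to equivalence classes---whereas your explicit chain $\bsmu_\tG\xrightarrow{\alpha}\wh\ESB_G^\theta\xrightarrow{\beta}\wh\ESB_\tG$ and the index identity $m\,\vert\wh\bsbbc_\tG\vert=j(\tG)\,\vert\ker\alpha\vert$ make the bookkeeping fully transparent; in particular you correctly distinguish $\ker\alpha$ (those $\nu$ leaving the central character on $A_G(\AM)$ unchanged) from the possibly larger group $\wh\bsbbc_\tG$.
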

\begin{proof}
On peut \'ecrire
\begin{equation*}\int_{\Xi(G,\theta,\omega)}^\oplus L^2(\bsX_G)_\xi \dd \xi =
\bigoplus_{\xi\in\Xi(G,\theta,\omega)^1}\int_{\wh{\ESB}_G^\theta}^\oplus L^2(\bsX_G)_{\xi\star\mu}\dd\mu\end{equation*}
o $\Xi(G,\theta,\omega)^1 \subset \Xi(G)^1$ est l'ensemble des restrictions \`a $A_G(\adef)^1$ 
des \'el\'ements de $\Xi(G,\theta,\omega)$. On a donc
\begin{equation*}J(f,\omega)=\sum_{\xi\in \Xi(G,\theta,\omega)^1} \int_{\wh{\ESB}_G^\theta}
\trace\big( \tbsrho(f,\omega)\vert L^2(\bsX_G)_{\xi\star \mu} \big)\dd\mu\end{equation*}
et
\begin{equation*}\trace\big( \tbsrho(f,\omega)\vert L^2(\bsX_G)_{\xi\star \mu} \big)= 
\sum_{\pi \in \Pi_\disc(\tG,\omega)_{\xi\star \mu}} 
\trace\Big(\tbsrho(f,\omega)\vert \Automd(\bsX_G,\pi) \Big)\ptf\end{equation*}
En remarquant que, pour tout $\nu\in\wh\bsbbc_\tG$,
\begin{equation*}\pi\in\Pi_\disc(\tG,\omega)_{\xi\star\mu}\qquad\hbox{\'equivaut \`a}\qquad
\pi\star\nu\in\Pi_\disc(\tG,\omega)_{\xi\star\mu}\end{equation*}
 puis en passant aux classes d'\'equivalence modulo torsion par les \'el\'ements de $\bsmu_\tG$, 
 on obtient l'expression de l'\'enonc\'e gr‰ce au lemme \ref{jacobien}. 
\end{proof}
%
% corollaire
\begin{corollary}
Si $G_\mathrm{der}$ est anisotrope, la formule des traces tordue est l'identit\'e suivante:
\begin{equation*}\sum_{\delta\in\wt{\Gamma}}a^G(\delta)\ESO_\delta({\tdf},\omega)
= j(\tG)^{-1}\hspace{-0.4cm}\sum_{\bs\pi\in \bs\Pi_\disc(\tG,\omega)}\hspace{-0.2cm} \wh{c}_\tG(\pi)\int_{\bsmu_\tG} 
\trace\Big( \tbsrho(f,\omega)\vert \Automd(\bsX_G,\pi_\lambda) \Big)\dd\lambda\ptf\end{equation*}
\end{corollary}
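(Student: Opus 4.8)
Le plan est d'identifier directement les deux membres de l'�galit� � la quantit� commune $J(f,\omega)=\int_{\bsY_G}K(f,\omega;x,x)\dd x$, dont on sait d�j� que l'int�grale est absolument convergente. D'abord j'invoquerais la proposition \ref{ftcompgeom}, qui exprime $J(f,\omega)$ comme la somme (finie) des int�grales orbitales du c�t� g�om�trique, c'est-�-dire le membre de gauche $\sum_{\delta\in\wt{\Gamma}}a^G(\delta)\ESO_\delta(\tdf,\omega)$. Ensuite j'invoquerais la proposition \ref{cascomp2}, qui exprime cette m�me quantit� $J(f,\omega)$ sous la forme spectrale du membre de droite. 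L'�galit� cherch�e r�sulte alors par transitivit�.

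Autrement dit, il n'y a ici aucune difficult� suppl�mentaire: tout le travail a �t� effectu� en amont. Le point d�licat, d�j� trait�, �tait d'une part la convergence absolue de l'int�grale d�finissant $J(f,\omega)$ et son d�veloppement suivant les classes de $G(F)$-conjugaison (\ref{ftcompgeom}), et d'autre part, pour le c�t� spectral, le passage par les caract�res $\xi\in\Xi(G,\theta,\omega)$ via le changement de variables de \ref{chgtvar}, qui ram�ne $J(f,\omega)$ � une int�grale de traces, puis le regroupement modulo torsion par $\bsmu_\tG$ faisant intervenir le jacobien $j(\tG)$ calcul� en \ref{jacobien}. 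Le seul \og obstacle\fg\ de la pr�sente �nonc� est donc purement formel, et se r�duit � la juxtaposition des propositions \ref{ftcompgeom} et \ref{cascomp2}.
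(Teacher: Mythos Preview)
Your proposal is correct and is exactly what the paper intends: the corollary is stated without proof because it is the immediate juxtaposition of propositions \ref{ftcompgeom} and \ref{cascomp2}, both of which compute $J(f,\omega)$. Your identification of the two sides with this common quantity, and your remark that all the work (convergence, the passage through $\Xi(G,\theta,\omega)$ via \ref{chgtvar}, and the jacobian \ref{jacobien}) has already been done upstream, matches the paper's structure precisely.
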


Ce sont les identit\'es \ref{ftcompgeom}, \ref{cascomp1} et \ref{cascomp2} 
que nous devons g\'en\'eraliser lorsque $\G_\mathrm{der}$ n'est plus n\'ecessairement anisotrope.
Il conviendra d'int\'egrer sur $\bsY_G$ 
des avatars tronqu\'es du noyau. La premi\`ere \'etape est fournie
par le paragraphe suivant.

%%%%%%%%%%%%%%%%%%%%
 \section{L'identit\'e fondamentale}\label{l'identit\'e fondamentale}
Soit $f\in C^\infty_\mathrm{c}(\tG(\adef))$. Pour $\tP\in\wt\ESP$, on pose
\begin{equation*}K_\tP(x,y)=\int_{U_P(F)\bsl U_P(\adef)}\sum_{\delta\in\tP(F)}\omega(y) {\fun}(x^{-1}\delta u y)\dd u\ptf\end{equation*}
C'est le noyau de la repr\'esentation naturelle de $(\tG(\adef),\omega)$ dans $L^2(\bsX_P)$. Pour 
$Q\in\ESP$ tel que $Q\subset P$, on note $\bs\Lambda^{T,Q}_1K_\tP(x,y)$ 
l'op\'erateur de troncature $\bs\Lambda^{T,Q}$ appliqu\'e 
\`a la fonction $x\mapsto K_\tP(x,y)$ pour $y$ fix\'e. Le lemme \cite[8.2.1]{LW} est vrai ici. 

On pose
\begin{equation*}k^T_\geom(x)\index{kTgeom@$k^T_\geom$}=
\sum_{\tP\in\wt\ESP_\st}(-1)^{a_\tP -a_\tG}\sum_{\xi\in P(F)\bsl G(F)}
k^T_{\tP,\mskip 2mu  \geom}(\xi x)\end{equation*}
avec
\begin{equation*}k^T_{\tP\mathrm{,\mskip 2mu  g\acute{e}om}}(x)=\hat{\tau}_P(\bfH_0(x)-T)K_\tP(x,x)\end{equation*}
et
\begin{equation*}k^T_\spec(x)\index{kTspec@$k^T_\spec$}=
\sum_{\tP\in\wt\ESP_\st}(-1)^{a_\tP -a_\tG}
\sum_{\xi\in P(F)\bsl G(F)}k^T_{\tP\mathrm{,\mskip 2mu  spec}}(\xi x)\end{equation*}
avec
\begin{equation*}k^T_{\tP\mathrm{,\mskip 2mu  spec}}(x)=
\sum_{\substack{Q,R\in\ESP_\st\\ Q\subset P\subset R}}\sum_{\xi\in Q(F)\bsl P(F)}
\wt{\sigma}^R_Q(\bfH_0(\xi x)-T)\bs\Lambda^{T,Q}_1K_\tP(\xi x,\xi x)\ptf\end{equation*}
On a donc
\begin{equation*}k^T_\spec(x)=
\sum_{\tP\in\wt\ESP_\st}(-1)^{a_\tP -a_\tG}\sum_{\substack{Q,R\in\ESP_\st\\ Q\subset P\subset R}}
\sum_{\xi\in Q(F)\bsl G(F)}\wt{\sigma}^R_Q(\bfH_0(\xi x)-T)\bs\Lambda^{T,Q}_1K_\tP(\xi x,\xi x)\ptf\end{equation*}
On a la proposition \cite[8.2.1]{LW}: tous ces termes ne d\'ependent que de la projection de $T$ dans 
\begin{equation*}\ag_0^\tG = \ag_0^G \oplus \ag_G^\tG\end{equation*}
et on a les identit\'es
\begin{equation*}k^T_{\tP\mathrm{,\mskip 2mu  g\acute{e}om}}=
k^T_{\tP\mathrm{,\mskip 2mu  spec}}\qquad\hbox{pour tout}\qquad \tP\in\wt\ESP_\st\ptf\end{equation*}

On en d\'eduit l'identit\'e fondamentale:
\begin{proposition}\label{idfond}
On a l'identit\'e $k^T_\geom= k^T_\spec\ptf $
\end{proposition}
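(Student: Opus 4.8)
The plan is to reduce the global identity to the pointwise equality of the per-$\tP$ kernels that has just been recorded. By construction both $k^T_{\rm g\acute{e}om}$ and $k^T_\spec$ are produced by the \emph{same} outer alternating sum
$$\sum_{\tP\in\wt\ESP_\st}(-1)^{a_\tP-a_\tG}\sum_{\xi\in P(F)\bsl G(F)}(\,\cdot\,)(\xi x)\ptf$$
The two expressions differ only in the inner function evaluated at $\xi x$, namely $k^T_{\tP{\rm ,\, g\acute{e}om}}$ for the first and $k^T_{\tP{\rm ,\, spec}}$ for the second. Hence, once the per-$\tP$ identity is available, nothing more than a substitution is required.

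First I would invoke the pointwise equality $k^T_{\tP{\rm ,\, g\acute{e}om}}=k^T_{\tP{\rm ,\, spec}}$, valid for every $\tP\in\wt\ESP_\st$; this is the twisted analogue of \cite[8.2.1]{LW}, whose argument carries over verbatim since the combinatorial relations between $\tau$, $\hat\tau$, $\sigma$, $\wt\sigma^R_Q$ and the truncation $\bs\Lambda^{T,Q}_1$ are identical in the twisted setting (cf. \cite[2.9]{LW}). Evaluating this equality at the point $\xi x$ for each pair $(\tP,\xi)$ gives
$$(-1)^{a_\tP-a_\tG}k^T_{\tP{\rm ,\, g\acute{e}om}}(\xi x)=(-1)^{a_\tP-a_\tG}k^T_{\tP{\rm ,\, spec}}(\xi x)\ptf$$
For fixed $x$ the inner sum over $\xi\in P(F)\bsl G(F)$ is finite, by the twisted form of \cite[3.7.1]{LW} already used to make sense of both $k^T_{\rm g\acute{e}om}$ and $k^T_\spec$, so summing the displayed term-by-term identity over $\xi$ and then over $\tP$ yields $k^T_{\rm g\acute{e}om}(x)=k^T_\spec(x)$ for all $x$, which is the assertion.

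The deduction is thus purely formal: the whole mathematical content sits in the per-$\tP$ identity $k^T_{\tP{\rm ,\, g\acute{e}om}}=k^T_{\tP{\rm ,\, spec}}$, which I am entitled to assume from the statement quoted just before the proposition. The one point that genuinely has to be checked is that the interaction of $\bs\Lambda^{T,Q}_1$ with the kernel $K_\tP$ in the twisted case reproduces exactly the manipulation of \cite[8.2.1]{LW}; since the functions $\wt\sigma^R_Q$ were introduced precisely to replace $\sigma^R_Q$ in the twisted combinatorics and all the relations of \cite[1.7, 1.8]{LW} extend, this is the step I would expect to demand the most care, even though it presents no new difficulty compared with the untwisted case.
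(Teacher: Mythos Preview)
Your proposal is correct and matches the paper's approach exactly: the paper simply records the per-$\tP$ identities $k^T_{\tP{\rm ,\, g\acute{e}om}}=k^T_{\tP{\rm ,\, spec}}$ (as the twisted analogue of \cite[8.2.1]{LW}) and then states ``On en d\'eduit l'identit\'e fondamentale'', noting afterwards that the result ``est une cons\'equence imm\'ediate de la combinatoire des c\^ones''. Your write-up merely makes this one-line deduction explicit by observing that both sides share the same outer alternating sum over $\tP$ and $\xi$.
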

Ce r\'esultat, qui est une cons\'equence imm\'ediate de la combinatoire des c™nes, est le point de d\'epart pour la
formule des traces dans le cas non compact.

Chacune des expressions $ k^T_\geom$ et $k^T_\spec$ poss\`ede un d\'eveloppement: 
la premi\`ere suivant les classes d'\'equivalence de paires primitives et la seconde suivant la d\'ecomposition spectrale.
Pour obtenir la formule des traces on int\`egre sur $\bsY_G$ les fonctions $ k^T_\geom$ et $k^T_\spec$.
On montrera que la convergence des int\'egrales (pour $T$ suffisamment r\'egulier) est compatible aux d\'eveloppements
de chacune de ces expressions. Ainsi, l'\'egalit\'e de
\begin{equation*}\Jres^T_\geom(f,\omega)\index{Jageom@$\Jres^T_\geom$}
\bydef\int_{\bsY_G}k^T_\geom(x)\dd x\qquad
\hbox{et de}\qquad \Jres^T_\spec(f,\omega)\index{Jaspec@$\Jres^T_\spec$}
\bydef\int_{\bsY_G}k^T_\spec(x)\dd x\end{equation*}
fournira la formule des traces, c'est-\`a-dire l'\'egalit\'e du d\'eveloppement g\'eom\'etrique 
et du d\'eveloppement spectral. Pr\'ecisons que l'\'egalit\'e
\begin{equation*}\Jres^T_\geom(f,\omega)= \Jres^T_\spec(f,\omega)\end{equation*}
est une \'egalit\'e de fonctions dans $\mathrm{PolExp}$: 
les int\'egrales convergent et sont \'egales pour $T\in \ag_0$ suffisamment r\'egulier et elles 
d\'efinissent un mme \'el\'ement de $\mathrm{PolExp}$ (d'apr\`es \ref{unicit\'e} ).

%%%%%%%%%%%%%%%%%%%%%%%

 \chapter{D\'eveloppement g\'eom\'etrique}
\label{d\'eveloppement g\'eo}

%%%%%%%%%%%%%%%%%%%%%%%
\section{Convergence: c™t\'e g\'eom\'etrique}\label{CVG}
Rappelons qu'on a introduit en \ref{primitifs} l'ensemble $\OO$ des classes d'\'equivalence 
de paires primitives dans $\tG(F)$ et que pour chaque $\oo\in\OO$
on a d\'efini un ensemble $\ESO_{\oo}$ de classes de conjugaison 
de $\tG(F)$. Compte tenu du Lemme~\ref{OO} on peut d\'ecomposer $k^T_\geom(x)$ en 
\begin{equation*}k^T_\geom(x)=\sum_{\oo\in\OO}k^T_{\oo}(x)\end{equation*}
o $k^T_{\oo}$ ne comporte que 
la contribution des \'el\'ements $\delta\in\ESO_{\oo}$:
\begin{equation*}k^T_{\oo}(x)=\sum_{\tP\in\wt\ESP_\st}(-1)^{a_\tP -a_\tG}
\sum_{\xi\in P(F)\bsl G(F)}k^T_{\tP,\oo}(\xi x)\end{equation*}
avec
\begin{equation*}k^T_{\tP,\oo}(x)=\hat{\tau}_P(\bfH_0(x)-T)K_{\tP,\oo}(x,x)\end{equation*}
o
\begin{equation*}K_{\tP,\oo}(x,x)=\int_{U_P(F)\bsl U_P(\adef)}
\sum_{\delta\in\ESO_{\oo}\cap\tP(F)}\omega(x) {\fun}(x^{-1}\delta u x)\dd u\ptf\end{equation*}
On rappelle que d'apr\`es \ref{OO}~(ii), on a la d\'ecomposition 
\begin{equation*}\ESO_\oo \cap \tP(F)= (\ESO_\oo \cap \tM_P(F)) U_P(F)\vg\end{equation*}
ce qui donne un sens \`a l'expression ci-dessus.

On consid\`ere $Q,\mskip 2mu  R\in\ESP_\st$. Rappelons \cite[2.11.1]{LW} qu'il existe un 
$\tP\in\wt\ESP_\st$ tel que $Q\subset P\subset R$ si et seulement 
on a $Q^+\subset R^-$. On a d\'efini en \ref{la fonction F} un 
ensemble $\Siegel_{P_0}^Q(T_1,T)$ d\'ependant d'un compact $C_Q\subset G(\adef)$, 
et on a not\'e $F_{P_0}^Q(\cdot,T)$ la fonction caract\'eristique 
de l'ensemble \begin{equation*}Q(F)\Siegel_{P_0}^Q(T_1,T)\ptf\end{equation*}
Comme en \cite[3.6.3]{LW}, on suppose que $C_Q$ est assez gros, et que $T$ et $-T_1$ sont assez r\'eguliers. 
 
On pose\footnote{Le lecteur prendra garde que dans \cite{LW} on passe au quotient
 par $\BB_G$, qui dans le cas des corps de nombres est identifi\'e \`a un sous-groupe du centre,
car $f$ a \'et\'e int\'egr\'ee sur le centre.} 
\begin{equation*}\bsY_Q\index{YwQ@$\bsY_Q$}
=\A_\tG(\adef)Q(F)\bsl\G(\adef)\ptf\end{equation*}
Le point clef pour la convergence du c™t\'e g\'eom\'etrique (th\'eor\`eme~\ref{convgeom} ci-dessous) 
est le r\'esultat suivant \cite[9.1.1]{LW}: 

% proposition
\begin{proposition}\label{ygeom}
Supposons $T$ assez r\'egulier, c'est-\`a-dire $\bsd_0(T)\geq c$ o $c$ est une constante d\'ependant du support de $f$. L'int\'egrale
\begin{equation*}\int_{\bsY_Q} F^Q_{P_0}(x,T)\wt{\sigma}_Q^R(\bfH_0(x)-T)
\Bigg\lvert \sum_{\tP\in\wt\ESP_\st,\tQ^+\subset\tP\subset\tR^-}
(-1)^{a_\tP -a_{\tQ}}K_{\tP,\oo}(x,x)\Bigg\rvert  \dd x\end{equation*}
est convergente.
\end{proposition}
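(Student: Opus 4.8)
Looking at this, I need to prove convergence of a truncated kernel integral over a twisted quotient $\bsY_Q$. Let me think about the structure.

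The plan is to follow the proof of \cite[9.1.1]{LW}, the two genuinely new ingredients in the function field case being the refined finiteness lemma \ref{finitude} and the replacement of integrals over polytopes by sums over lattices in the cone combinatorics. First I would reduce the integration to a Siegel domain: by \ref{la fonction F} the function $F^Q_{P_0}(\cdot,T)$ is the characteristic function of $Q(F)\Siegel_{P_0}^Q(T_1,T)$, and by \cite[3.6.1]{LW} for fixed $x$ at most one coset $\xi\in Q(F)\backslash G(F)$ makes the integrand nonzero, so the integral over $\bsY_Q$ reduces to one over the image of $\Siegel_{P_0}^Q(T_1,T)$ in $A_\tG(\AM)Q(F)\backslash G(\AM)$. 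Writing such a point as $x=uac$ with $u\in U_Q(\AM)$, $a\in\BB_0$ and $c$ in a fixed compact $C_Q$, the conditions defining $F^Q_{P_0}$ fix the projection of $\bfH_0(x)$ to $\ag_0^Q$, while $\wt\sigma_Q^R(\bfH_0(x)-T)$ confines its projection to $\ag_Q^R$ to a cone. For each $\tP$ with $\tQ^+\subset\tP\subset\tR^-$, Lemma \ref{OO}\,(ii) gives $\ESO_\oo\cap\tP(F)=(\ESO_\oo\cap\tM_P(F))U_P(F)$, so the sum over $U_P(F)$ unfolds the quotient and
$$K_{\tP,\oo}(x,x)=\omega(x)\int_{U_P(\AM)}\sum_{\delta\in\ESO_\oo\cap\tM_P(F)}f(x^{-1}\delta u x)\dd u\ptf$$

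The decisive point is to control the alternating sum $\sum_{\tP}(-1)^{a_\tP-a_\tQ}K_{\tP,\oo}(x,x)$ as $\bfH_0(x)$ tends to infinity in the directions left unbounded, namely $\ag_Q^R$ and $\ag_\tR$. As in \cite[9.1.1]{LW}, the compact support of $f$ bounds, for each summand, the orbital parameters attached to $\delta$; the alternating signs, together with the cone combinatorics of \cite[1.8]{LW} and its twisted form (\ref{espaces tordus})---the very mechanism yielding the compact support of $\Gamma_\tM^\tR$ in \cite[1.8.5]{LW}---then produce the cancellation that confines the entire sum to a region compact modulo $\ag_\tG$. On that region Lemma \ref{finitude}, applied with a compact $\Omega\supset\mathrm{supp}(f)$, ensures that only finitely many $M_P(F)$-conjugacy classes of $\delta$ contribute and that $u$ ranges over a compact set; each resulting twisted orbital integral is then bounded by a polynomial in $\|\bfH_0(x)\|$, which yields the absolute convergence.

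The hard part will be the cancellation invoked in the second paragraph. In \cite{LW} it is obtained by integrating the compactly supported functions $\Gamma$ of \cite[1.8]{LW} over polytopes, whereas here those integrals are replaced by sums over the lattices $\ESA_\tP^\tR$; one must verify both that the support bound \cite[1.8.5]{LW} persists under passage to lattice points and that the finite groups $\bsbbc_\tP=\ESB_\tP\backslash\ESA_\tP$ are correctly taken into account. It is precisely because the naive Siegel finiteness fails in positive characteristic, as observed in \ref{Siegel}, that the refined lemma \ref{finitude}---and not \cite[3.7.4]{LW}---is the tool guaranteeing local finiteness of the $\delta$-sum on the truncated domain.
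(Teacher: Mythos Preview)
Your outline misidentifies the cancellation mechanism and omits the two steps that actually do the work in the paper (and in \cite[9.1.1]{LW}).

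First, before any cancellation happens one must use \cite[3.6.7]{LW}: for $T$ regular enough (this is where the constant $c$ depending on $\mathrm{supp}(f)$ comes from), every $\delta\in\ESO_\oo\cap\tP(F)$ contributing to $K_{\tP,\oo}(x,x)$ already lies in $\tQ^+(F)$. This reduces all the $K_{\tP,\oo}$ to a common decomposition $\ESO_\oo\cap\tQ^+(F)=(\ESO_\oo\cap\tM_{Q^+}(F))U_{Q^+}(F)$, and one rewrites each term as $\sum_{\eta\in\ESO_\oo\cap\tM_{Q^+}(F)}\Phi_{\tP,\eta,\oo}(x)$ with a \emph{single} unipotent radical $U_{Q^+}$. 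Without this step the alternating sum has no common structure to exploit.

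Second, the cancellation is not produced by ``the cone combinatorics of \cite[1.8]{LW}'' or by compact support of $\Gamma_\tM^\tR$; those functions play no role in this proof. The mechanism is Fourier--Poisson on $\uu={\rm Lie}(U_{Q^+})$: one writes the sum over $\nu\in U_{Q^+}(F)$ via Poisson summation as a sum over $\Lambda\in\uu^\vee$, obtaining functions $g(x,\Lambda,\eta,u)$ that are compactly supported in $\Lambda$ (Fourier transform of a smooth compactly supported function). The alternating sum over $\tP$ then restricts $\Lambda$ to a subset $\uu^\vee(Q,R)$, and the decay comes from the coadjoint action of the $\theta_0$-invariant torus $\wt{\bs{Z}}_Q^{R^-}$ on $\uu^\vee$. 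The genuinely characteristic-$p$ issue here is not ``lattice sums versus polytope integrals'' but the absence of an exponential map: the paper must construct by hand an $A_{Q^+}$-equivariant $F$-isomorphism $j:\uu\to U_{Q^+}$ from root-group decompositions \cite[21.17--21.20]{B2} in order to apply Poisson at all. Your discussion of $\bsbbc_\tP$ and lattices is relevant to Chapter~1 and the {\rm PolExp} analysis of Chapter~11, not to this convergence estimate. Finally, the finiteness of the $\eta$-sum is obtained by showing the contributing $a\in\bs{Z}_Q$ lie in a compact (as in \cite[9.1.1]{LW}), not by invoking Lemma~\ref{finitude}.
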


\begin{proof}Notons $\Omega_f$ le support de $f$. C'est un compact de 
$\tG(\adef)$, et pour $x\in G(\adef)$ tel que $K_{\tP,\oo}(x,x)\neq 0$, on a
$x^{-1}\delta ux\in\Omega_{{\fun}}$
pour des \'el\'ements $\delta\in\ESO_\oo\cap\tP(F)$ et $u\in U_P(\adef)$. 
Puisque $\bfH_G(x^{-1}\delta u x)=0$ et $\Omega_{{\fun}}\cap G(\adef)^1$ est compact, on peut appliquer 
 \cite[3.6.7]{LW}: si $T$ est assez r\'egulier, pr\'ecis\'ement si $\bsd_0(T)\geq c$ o $c$ 
 est une constante d\'ependant de $\Omega_{{\fun}}$, les 
$\delta\in\ESO_\oo\cap\tP(F)$ qui donnent une contribution non nulle \`a l'expression 
de l'\'enonc\'e appartiennent \`a $\ESO_{\oo}\cap\tQ^+(F)$. En utilisant la d\'ecomposition (\ref{OO}~(ii))
\begin{equation*}\ESO_\oo \cap \wt{Q}^+(F) = (\ESO_\oo\cap \wt{M}_{Q^+}(F))U_{Q^+}(F)\vg\end{equation*}on peut donc comme dans la preuve de 
 \cite[9.1.1]{LW} remplacer $K_{\tP,\oo}(x,x)$ par une expression $\Phi_{\tP,\oo}(x)$ qui s'\'ecrit 
$\Phi_{\tP,\oo}=\sum_{\eta\in\ESO_\oo\cap\tM_{Q^+}}\Phi_{\tP,\eta,\oo}(x)$ avec
\begin{equation*}\Phi_{\tP,\eta,\oo}(x)=\int_{U_P(F)\bsl U_P(\adef)}\sum_{\nu\in U_{Q^+}(F)} {\fun}(x^{-1}\eta\nu ux)\dd u\ptf\end{equation*}
Posons
\begin{equation*}\Xi^R_Q(x)=\wt{\sigma}_Q^R(\bfH_0(x)-T)\sum_{\eta\in \ESO_\oo\cap\tM_{Q^+}}
\Bigg\lvert \sum_{\tP\in\wt\ESP_\st, Q\subset P\subset R} 
(-1)^{a_\tP - a_\tG}\Phi_{\tP,\eta,\oo}(x)\Bigg\rvert  \ptf\end{equation*}
Il s'agit de montrer que l'int\'egrale
\begin{equation*}\int_{\bsY_Q}F^Q_{P_0}(x,T)~\Xi^R_Q(x)~\dd x\end{equation*}
est convergente. Posons 
\begin{equation*}\bs{Z}_Q= A_\tG(\adef)A_Q(F)\bsl A_Q(\adef)\subset \bs{Y}_Q\ptf\end{equation*}
On 
commence par estimer, pour $v\in U_Q(\adef)$ et $x\in G(\adef)$, l'int\'egrale
\begin{equation*}\Theta^R_Q(v,x)=\int_{\bs{Z}_Q}\Xi_Q^R(vax)\bs{\delta}_Q(a)^{-1}\dd a\vg\end{equation*}
de faon uniforme lorsque $x$ reste dans un compact fix\'e. 
Notons que la somme sur $\eta$ dans l'expression $\Xi_Q^R(vax)$ porte sur un ensemble fini, d\'ependant \`a priori de $x$ et $a$. 
 Comme dans la preuve de \cite[9.1.1]{LW}, on montre que pour $x$ dans un compact fix\'e, 
 l'ensemble des $a\in\bs{Z}_Q$ donnant une contribution non triviale \`a l'expression 
 $\Theta_Q^R(v,x)$ est contenu dans un compact; par cons\'equent la somme sur $\eta$ 
 dans l'expression $\Xi_Q^R(vax)$ porte sur un ensemble fini (ind\'ependant de $a$ 
 et $x$ dans un compact fix\'e). Il reste \`a estimer la somme sur $a$ dans l'expression $\Theta_Q^R(v,x)$.
Notons $\bs{Z}_Q^{R^-}$ l'image de
\begin{equation*}A_Q(\adef)\cap A_{R^-}(\adef)^1= \{a\in A_Q(\adef): \bfH_{R^-}(a)=0\}\end{equation*}
dans $\bs{Z}_Q$. Le morphisme
\begin{equation*}1-\theta_0:\bs{Z} _Q\rightarrow\bs{Z}_0 = \bs{Z}_{P_0}\vgq a\mapsto a\theta_0(a^{-1})\end{equation*}
a pour noyau le sous-groupe $\smash{\wt{\bs{Z}}}_Q^{R^-}$ de $\bs{Z} _{Q^+}^{R^-}$ form\'e 
des \'el\'ements $\theta_0$-invariants. D'apr\`es la preuve \cite[9.1.1]{LW}, il 
suffit de consid\'erer les $a\in\smash{\wt{\bs{Z}}}_Q^{R^-}$. 

Soit $\uu$ l'alg\`ebre de Lie de $U_{Q^+}$. On n'a pas ici d'application exponentielle 
mais on peut fixer un $F$-isomorphisme de variet\'es alg\'ebriques $j:\uu\rightarrow U_{Q^+}$ 
compatible \`a l'action de $A_{Q^+}$, i.e. tel que
\begin{equation*}j\circ \mathrm{Ad}_a = \mathrm{Int}_a\circ j\end{equation*}
pour tout $a\in A_{Q^+}$. 
On obtient $j$ par restriction \`a partir d'un $F$-isomorphisme de vari\'et\'es alg\'ebriques 
$j_0: \mathfrak{u}_0\rightarrow U_0$ compatible 
\`a l'action de $A_0=A_{P_0}$, o $\mathfrak{u}_0$ est l'alg\`ebre de Lie de $U_0=U_{P_0}$. 
Pour toute racine $\alpha$ de $A_0$ dans $U_0$, on pose 
$\alpha= \{\alpha,2\alpha\}$ si $2\alpha$ est une racine et $(\alpha)= \{\alpha\}$ sinon. 
On note $U_{(\alpha)}$ le $F$-sous-groupe de $U_0$ correspondant \`a $(\alpha)$ et $\uu_{(\alpha)}$ son alg\`ebre de Lie. 
Soient $\alpha_1,\ldots ,\alpha_r$ les racines primitives (c'est-\`a-dire non divisibles) de $A_0$ dans $U_0$, 
ordonn\'ees arbitrairement. On a la d\'ecomposition en 
produit direct $U_0 = U_{(\alpha_1)} \cdots U_{(\alpha_r)}$, resp. en somme directe 
$\uu_0=\uu_{(\alpha_1)}\oplus\cdots\oplus\uu_{(\alpha_r)}$, 
et il suffit de prouver que pour $i=1,\dots ,r$, il existe un $F$-isomorphisme de 
vari\'et\'es alg\'ebriques $\xi_i:\uu_{(\alpha_i)}\rightarrow U_{(\alpha_i)}$ 
compatible \`a l'action de $A_0$. Alors pour $X\in\uu_0$, on \'ecrit $X=\sum_{i=1}^r X_i$ 
avec $X_i\in\uu_{(\alpha_i)}$ et on pose $j_0(X)= \xi_1(X_1)\cdots \xi_r(X_r)$. Fixons un indice $i$ 
et prouvons l'existence de $\xi_i$. Supposons tout d'abord $(\alpha_i)=\{\alpha_i\}$. 
D'apr\`es \cite[21.17, 21.20]{B2}, $U_{(\alpha_i)}$ est $F$-isomorphe, en tant que vari\'et\'e alg\'ebrique, 
\`a un espace affine $V_i$, la conjugaison par $a\in A_0$ sur $U_{(\alpha_i)}$ correspondant 
\`a la translation par $\alpha_i(a)$ sur $V_i$. 
Si maintenant $(\alpha_i)=\{\alpha_i,2\alpha_i\}$, 
d'apr\`es \cite[21.19]{B2} et la preuve de \cite[21.20]{B2}, il existe un $F$-isomorphisme de vari\'et\'es alg\'ebriques
\begin{equation*}(U_{(\alpha_i)}/U_{(2\alpha_i)})\times U_{(2\alpha_i)}\rightarrow U_{(\alpha_i)}\end{equation*}
compatible \`a l'action de $A_0$ et l'argument pr\'ec\'edent s'applique \`a chacun des deux groupes 
unipotents $U_{(\alpha_i)}/U_{(2\alpha_i)}$ et $U_{(2\alpha_i)}$. Cela prouve l'existence de $\xi_i$ en g\'en\'eral, 
et donc celle de $j_0$. La restriction de $j_0$ \`a $\uu$ donne le 
$F$-isomorphisme $j: \uu\rightarrow U_{Q^+}$ cherch\'e; il est compatible \`a l'action de $A_0$. 
Observons que $j$ induit une bijection $\uu(\adef)\rightarrow U_{Q^+}(\adef)$ 
qui se restreint en une bijection $\uu(F)\rightarrow U_{Q^+}(F)$. 

Soit $\uu^*$ le dual de $\uu$. Fixons un caract\`ere non trivial $\psi$ de $F\backslash \adef$, 
et notons $\uu^\vee$ l'orthogonal de $\uu(F)$ 
dans $\uu^*(\adef)$ pour ce caract\`ere. Pour $\Lambda\in\uu^*(\adef)$ et $u\in U_P(\adef)$, posons
\begin{equation*}g(x,\Lambda,\delta,u)=\int_{\uu(\adef)}\psi(\langle \Lambda,X\rangle) {\fun}(x^{-1}\delta j(X)ux)\dd X\ptf\end{equation*}
Comme dans la preuve de \cite[9.1.1]{LW}, la formule de Poisson permet d'\'ecrire
\begin{equation*}\Xi_Q^R(x)=\wt{\sigma}_Q^R(\bfH_0(x)-T)\sum_{\eta}
\bigg\lvert  \sum_{\Lambda\in\uu^\vee(Q,R)}g(x,\Lambda,\eta,1) \bigg\rvert \end{equation*}
o $\uu^\vee(Q,R)$ est un sous-ensemble de $\uu^\vee$ d\'efini en \textit{loc.~cit}. 
Observons qu'ici $g$ est \`a support compact 
en $\Lambda$ comme transform\'ee de Fourier d'une fonction lisse \`a support compact. 
La suite de la d\'emonstration est identique \`a celle de 
\textit{loc.~cit}.: via l'\'etude de l'action coadjointe de $\smash{\wt{\bs{Z}}}_Q^{R^-}$ sur $\uu^\vee$, 
on obtient que la somme d\'efinissant $\Theta_Q^R(v,x)$ est absolument convergente, 
uniform\'ement lorsque $x$ reste dans un compact.
On conclut comme \`a la fin de la preuve de \textit{loc.~cit}.
\end{proof}

Rappelons que si le compact $C_Q$ est assez gros, et si $T$ et $-T_1$ sont assez r\'eguliers, 
on a la partition \cite[3.6.3]{LW}
\begin{equation*}\sum_{Q\in\ESP_\st,Q\subset P}
\sum_{\xi\in Q(F)\bsl P(F)}F_{P_0}^Q(\xi x,T)\tau_Q^P(\bfH_0(\xi x)-T)=1\ptf\end{equation*}
On en d\'eduit \cite[9.1.2]{LW}:

\begin{theorem}\label{convgeom}
Si $T$ est assez r\'egulier, pr\'ecis\'ement si $\bsd_0(T)\geq c$ o $c$ est une constante 
ne d\'ependant que du support de $f$, l'expression
\begin{equation*}\sum_{\oo\in\OO}\int_{\bsY_G}\vert k_\oo^T(x)\vert \dd x\end{equation*}
est convergente. De plus, seul un ensemble fini de classes $\oo$ (d\'ependant du support de $f$) 
donne une contribution non triviale \`a la somme.
\end{theorem}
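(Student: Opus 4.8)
Theorem \ref{convgeom} asserts the absolute convergence of $\sum_{\oo}\int_{\bsY_G}|k_\oo^T|$ for $T$ sufficiently regular, together with the finiteness of the set of contributing classes $\oo$.

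The plan is to follow \cite{LW}, reducing the claim to the convergence already established in Proposition \ref{ygeom}. The starting point is the definition
$$k_\oo^T(x)=\sum_{\tP\in\wt\ESP_\st}(-1)^{a_\tP-a_\tG}\sum_{\xi\in P(F)\bsl G(F)}\hat{\tau}_P(\bfH_0(\xi x)-T)K_{\tP,\oo}(\xi x,\xi x),$$
a $G(F)$-invariant function on $\bsX_G$ descending to $\bsY_G=A_\tG(\AM)G(F)\bsl G(\AM)$. First I would bring in the partition recalled just above the statement, in the case $R=G$,
$$\sum_{Q\in\ESP_\st}\sum_{\xi\in Q(F)\bsl G(F)}F_{P_0}^Q(\xi x,T)\tau_Q^G(\bfH_0(\xi x)-T)=1,$$
which cuts $\bsY_G$ into cells indexed by pairs $(Q,\xi)$. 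On each cell the two problems — the alternating sum over $\tP$ weighted by $\hat{\tau}_P$, and the cell indicator built from $F_{P_0}^Q$ and $\tau_Q^G$ — have to be matched against one another.

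The second step is the combinatorial collapse. Using the inversion relation $\tau\hat{\tau}=\hat{\tau}\tau=1$ of \cite{LW} together with the cone identities of \cite[1.7, 1.8]{LW} and the functions $\sigma_Q^R,\wt{\sigma}_Q^R$ of \cite[2.11]{LW}, I would show that on the support of $F_{P_0}^Q\,\wt{\sigma}_Q^R$ the function $k_\oo^T$ coincides with the inner alternating sum over the twisted interval $\tQ^+\subset\tP\subset\tR^-$. Taking absolute values cell by cell and unfolding the residual sum over $\xi\in Q(F)\bsl G(F)$ against the integral then bounds $\int_{\bsY_G}|k_\oo^T(x)|\,dx$ by the finite sum over pairs $Q\subset R$ in $\ESP_\st$ of the integrals
$$\int_{\bsY_Q}F_{P_0}^Q(x,T)\,\wt{\sigma}_Q^R(\bfH_0(x)-T)\Bigl|\sum_{\tP\in\wt\ESP_\st,\,\tQ^+\subset\tP\subset\tR^-}(-1)^{a_\tP-a_\tQ}K_{\tP,\oo}(x,x)\Bigr|\,dx,$$
with $\bsY_Q=A_\tG(\AM)Q(F)\bsl G(\AM)$. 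Each of these converges for $\bsd_0(T)\ge c$ precisely by Proposition \ref{ygeom}, giving the convergence of $\int_{\bsY_G}|k_\oo^T|$ for each fixed $\oo$.

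For the finiteness of the contributing classes I would argue through the support $\Omega_f$ of $f$. If $K_{\tP,\oo}(x,x)\neq0$ for an $x$ in the relevant truncation region, then $x^{-1}\delta u x\in\Omega_f$ for some $\delta\in\ESO_\oo\cap\tP(F)$ and $u\in U_P(\AM)$; using the decomposition $\ESO_\oo\cap\tP(F)=(\ESO_\oo\cap\tM_P(F))U_P(F)$ of \ref{OO}\,(ii) to replace $\delta$ by its Levi part, the finitude Lemma \ref{finitude} shows that only finitely many $M_P(F)$-conjugacy classes of Levi parts can arise from the compact $\Omega_f$. Since a primitive class $\oo$ is determined by such a datum, only finitely many $\oo$ give a non-zero term, and summing the finitely many convergent integrals yields the convergence of $\sum_\oo\int_{\bsY_G}|k_\oo^T|$.

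The main obstacle is the collapse in the second step: organizing the alternating sum over $\tP$ against the partition of unity so that the cone functions $\hat{\tau}_P$ fuse into the Siegel-type indicators $F_{P_0}^Q\,\wt{\sigma}_Q^R$ that control Proposition \ref{ygeom}. The formal combinatorics is identical to that of \cite[9.1.2]{LW}; the only new bookkeeping is the systematic replacement of the interval $\{P\}$ by $\{\tP:\tQ^+\subset\tP\subset\tR^-\}$, legitimated by \cite[2.11]{LW}, and I expect no genuinely new difficulty there beyond care with the twisted indices.
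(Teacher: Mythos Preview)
Your proposal is correct and follows the same route as the paper, which simply invokes the partition \cite[3.6.3]{LW} together with Proposition~\ref{ygeom} to obtain \cite[9.1.2]{LW}. One small clarification: the partition should be inserted for each $\tP$ separately (with $Q\subset P$ and $\xi\in Q(F)\bsl P(F)$), not once globally with $P=G$; it is the resulting products $\tau_Q^P\hat{\tau}_P$ that collapse, via \cite[2.11.5]{LW}, into the sums $\sum_{R\supset P}\wt{\sigma}_Q^R$ and thereby produce exactly the integrands of Proposition~\ref{ygeom}.
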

L'int\'egrale \'etant absolument convergente, il est loisible de poser
\begin{equation*}\Jres^T_{\oo}\bydef\int_{\bsY_G}k_\oo^T(x)\dd x\qquad\hbox{et}\qquad
\Jres^T_\geom\bydef\int_{\bsY_G}k^T_\geom(x)\dd x\ptf\end{equation*}
On obtient alors le d\'eveloppement g\'eom\'etrique de la formule des traces:
% corollaire
\begin{corollary}\label{devgross}
On a
\begin{equation*}\Jres^T_\geom=\sum_{\oo\in\OO}\Jres^T_{\oo}\ptf\end{equation*}
\end{corollary}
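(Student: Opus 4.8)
Le corollaire \ref{devgross} affirme simplement que l'intégrale tronquée $\Jres^T_{\rm g\acute{e}om} = \int_{\bsY_G} k^T_{\rm g\acute{e}om}(x)\,dx$ se décompose en la somme $\sum_{\oo\in\OO} \Jres^T_{\oo}$ des intégrales associées à chaque classe d'équivalence de paires primitives. Compte tenu du théorème \ref{convgeom} qui vient d'être établi, ce corollaire est essentiellement une conséquence immédiate du théorème de convergence dominée (ou, plus élémentairement, du fait qu'une série absolument convergente peut être sommée terme à terme sous le signe intégral). La clé est la décomposition pointwise $k^T_{\rm g\acute{e}om}(x) = \sum_{\oo\in\OO} k^T_{\oo}(x)$ introduite au début de la section \ref{CVG}.

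**Plan de preuve.** La démarche se décompose en trois temps. D'abord, je rappellerais la décomposition pointwise de $k^T_{\rm g\acute{e}om}$ selon les classes $\oo\in\OO$, qui résulte directement du lemme \ref{OO} (partition de l'ensemble des classes de $G(F)$-conjugaison dans $\tG(F)$). Cette décomposition est déjà posée au début de \ref{CVG}:
$$k^T_{\rm g\acute{e}om}(x)=\sum_{\oo\in\OO}k^T_{\oo}(x)\ptf$$
Ensuite, j'invoquerais le théorème \ref{convgeom}: pour $T$ assez régulier (précisément $\bsd_0(T)\geq c$), la somme
$$\sum_{\oo\in\OO}\int_{\bsY_G}\vert k_\oo^T(x)\vert \dd x$$
est convergente, et seul un nombre fini de classes $\oo$ contribue non trivialement. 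Enfin, grâce à cette convergence absolue, le théorème de Fubini-Tonelli (ou convergence dominée, la somme étant en fait finie) autorise l'interversion de la somme sur $\oo$ et de l'intégrale sur $\bsY_G$.

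**Rédaction attendue.** La preuve tiendrait donc en quelques lignes: puisque $k^T_{\rm g\acute{e}om}(x) = \sum_{\oo} k^T_{\oo}(x)$ ponctuellement et que $\sum_{\oo}\int_{\bsY_G}|k^T_{\oo}(x)|\,dx < \infty$ d'après \ref{convgeom}, on peut intervertir sommation et intégration pour obtenir
$$\Jres^T_{\rm g\acute{e}om}=\int_{\bsY_G}\sum_{\oo\in\OO}k^T_{\oo}(x)\dd x
=\sum_{\oo\in\OO}\int_{\bsY_G}k^T_{\oo}(x)\dd x=\sum_{\oo\in\OO}\Jres^T_{\oo}\ptf$$
Le fait que seul un ensemble fini de classes contribue (également affirmé dans \ref{convgeom}) rend même l'interversion triviale, puisqu'une somme finie commute toujours avec l'intégrale.

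**Obstacle principal.** Il n'y a pas de difficulté réelle dans ce corollaire lui-même: toute la substance technique a été absorbée dans le théorème \ref{convgeom} (qui repose à son tour sur la proposition \ref{ygeom} et la combinatoire de la formule de Poisson appliquée à l'action coadjointe). Le seul point demandant un minimum de soin est de s'assurer que la finitude du support de $f$ (qui détermine la constante $c$) garantit simultanément la décomposition pointwise et la convergence absolue pour la même valeur de $T$; mais ceci est déjà intégré dans les énoncés invoqués. Le corollaire est donc purement formel une fois \ref{convgeom} acquis, ce qui explique sa brièveté dans le texte.
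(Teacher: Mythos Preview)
Your proposal is correct and matches the paper's treatment exactly: the corollary is stated without proof, immediately after the definitions of $\Jres^T_{\oo}$ and $\Jres^T_{\rm g\acute{e}om}$ and the convergence theorem \ref{convgeom}, precisely because it follows formally from the pointwise decomposition $k^T_{\rm g\acute{e}om}=\sum_{\oo}k^T_{\oo}$ together with the absolute convergence (and finiteness of the sum) established there. Your analysis that all the substance lies in \ref{convgeom} and that the corollary itself is a routine interchange of finite sum and integral is exactly right.
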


Il ne s'agit ici que de la forme dite {\og grossi\`ere\fg} du d\'eveloppement g\'eom\'etrique.
Nous allons donner une forme plus explicite pour certains termes. 
La th\'eorie des int\'egrales orbitales pour les corps de fonctions est encore \`a \'ecrire. 
Elle sera bien sžr n\'ecessaire pour un d\'eveloppement g\'eom\'etrique {\og fin\fg} au sens de Langlands. 
Il est toutefois possible de traiter les termes primitifs (cf. \ref{partprim}) et les termes
quasi semi-simples comme pour les corps de nombres (cf. \ref{termes qss}). 
Pour les autres termes, on tombe sur des difficult\'es que nous n'essaierons pas de r\'esoudre ici
(cf. \ref{descente}). 
Il est raisonnable d'esp\'erer que pour $p\gg 1$ ces difficult\'es disparaissent (cf. \ref{p grand}).

 \section{Contribution des classes primitives}\label{partprim}
Notons $\OO_\prim\subset\OO$ l'ensemble des classes de $G(F)$-conju\-gaison 
d'\'el\'ements primitifs dans $\tG(F)$. Pour $\oo\in\OO_\prim$, l'expression 
\begin{equation*}k_\oo^T(x)=\sum_{\delta\in\ESO_\oo}\omega(x){\fun}(x^{-1}\delta x)\end{equation*}
ne d\'epend pas de $T$. On la note aussi $k_\oo(x)$. Avec les notations du
paragraphe \ref{l'identit\'e fondamentale}, on a donc
\begin{equation*}k_\prim(f,\omega;x) =\sum_{\oo\in\OO_\prim}k_\oo(x)\end{equation*}
et l'int\'egrale
\begin{equation*}J_\prim^\tG(f,\omega)=\int_{\bsY_G}k_\prim(f,\omega;x)\dd x\leqno{(1)}\end{equation*}
est absolument convergente. Elle d\'efinit une distribution sur $\tG(\adef)$, donn\'ee par
\begin{equation*}J_\prim^\tG(f,\omega)=\sum_{\delta\in\wt{\Gamma}_\prim}\int_{A_\tG(\adef)G^\delta(F)
\bsl G(\adef)}\omega(g){f}(g^{-1}\delta g)\dd g\leqno{(2)}\end{equation*}
o $\wt{\Gamma}_\prim$ est un syst\`eme de repr\'esentants des classes de $G(F)$-conjugaison 
dans $\tG(F)_\prim$. Ici $\G^\delta(F)$ est le groupe des points $F$-rationnels 
du centralisateur\footnote{Vu comme $F$-sch\'ema en groupes, $G^\delta$ n'est \`a priori ni lisse ni connexe.} 
$G^\delta$ de $\delta$ dans $G$, et $\mathrm{d}g$ est le quotient de la mesure de 
Haar sur $A_\tG(\adef)\bsl G(\adef)$ par la mesure de comptage sur $A_\tG(F)\bsl G^\delta(F)$. 
Seul un nombre fini de $\delta$ (d\'ependant du support de $f$) donne une contribution non triviale \`a la somme.

% corollaire
\begin{corollary}\label{CV des IO}
Pour $\delta\in\tG(F)_\prim$, l'int\'egrale orbitale
\begin{equation*}\int_{A_\tG(\adef)G^\delta(F)\bsl G(\adef)}\omega(g) {f}(g^{-1}\delta g)\dd g\end{equation*}
est absolument convergente.
\end{corollary}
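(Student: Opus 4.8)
L'idée est de déduire le corollaire de la convergence absolue établie au théorème~\ref{convgeom}, en déroulant l'intégrale sur $\bsY_G$ pour une seule classe primitive. On observe d'abord que, $\delta$ étant primitif dans $\tG(F)$, la paire primitive associée est $(\tG,\delta)$, de sorte que $\ESP(\tG)=\{\tG\}$ et que l'ensemble $\ESO_\oo$ attaché à la classe $\oo=[\tG,\delta]\in\OO_\prim$ se réduit à la seule classe de $G(F)$-conjugaison $\ESO(\delta)$. Par conséquent le seul terme de la somme alternée définissant $k_\oo^T$ est celui correspondant à $\tP=\tG$, le facteur de troncature $\hat{\tau}_G$ vaut identiquement $1$, et $k_\oo^T=k_\oo$ est indépendant de $T$, avec
$$k_\oo(x)=\sum_{\gamma\in\ESO(\delta)}\omega(x)f(x^{-1}\gamma x)\ptf$$
Le théorème~\ref{convgeom} fournit alors directement $\int_{\bsY_G}|k_\oo(x)|\dd x<\infty$.

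Pour passer de la valeur absolue de la somme sur $\gamma$ à la somme des valeurs absolues (ce dont on a besoin pour l'intégrale orbitale), on applique le résultat de convergence à $|f|$ au lieu de $f$. Pour une classe primitive, la preuve de la proposition~\ref{ygeom}, et donc celle du théorème~\ref{convgeom}, se simplifie radicalement: avec $\tM=\tG$ il n'y a plus d'intégration unipotente ni de recours à la formule de Poisson, de sorte que l'argument n'utilise que le fait que $f$ soit à support compact $\Omega_f$ et bornée. Ces propriétés sont partagées par la fonction continue à support compact et positive $|f|$; le théorème s'applique donc mot pour mot et donne
$$\int_{\bsY_G}k_\oo^{|f|}(x)\dd x=\int_{\bsY_G}\sum_{\gamma\in\ESO(\delta)}|f(x^{-1}\gamma x)|\dd x<\infty\vg$$
où l'on a utilisé $|\omega(x)|=1$.

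Il reste à dérouler. En paramétrant $\ESO(\delta)\simeq G^\delta(F)\bsl G(F)$ par $\gamma=g^{-1}\delta g$ et en combinant la somme sur $G^\delta(F)\bsl G(F)$ ainsi obtenue avec l'intégrale sur $A_\tG(\AM)G(F)\bsl G(\AM)=\bsY_G$, le théorème de Fubini--Tonelli (l'intégrande étant positif) identifie cette dernière intégrale à
$$\int_{A_\tG(\AM)G^\delta(F)\bsl G(\AM)}|f(g^{-1}\delta g)|\dd g\ptf$$
La finitude qu'on vient d'obtenir est exactement la convergence absolue de l'intégrale orbitale attachée à $\delta$.

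Le principal obstacle sera de justifier que la convergence du théorème~\ref{convgeom}, énoncée pour $f\in C^\infty_{\rm c}(\tG(\AM))$, subsiste lorsqu'on remplace $f$ par $|f|$; c'est là qu'il faut vérifier que les estimées de \ref{ygeom} ne dépendent que de la compacité de $\Omega_f$ et de la bornitude, et non de la lissité. Dans le cas primitif cette vérification est immédiate, car les étapes combinatoires et de sommation de Poisson de \ref{ygeom} sont vides, le seul ingrédient véritable étant la finitude déjà consignée en \ref{convgeom}.
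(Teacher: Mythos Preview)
Your argument is correct and follows the same route the paper has in mind: for a primitive class $\oo=[\tG,\delta]$ the expression $k_\oo^T$ collapses to the single term $k_\oo(x)=\sum_{\gamma\in\ESO(\delta)}\omega(x)f(x^{-1}\gamma x)$, one applies the convergence of Theorem~\ref{convgeom} to $|f|$, and then unfolds via Tonelli to obtain the orbital integral. The paper states the corollary without proof precisely because this unfolding is regarded as immediate once the absolute convergence in~\ref{convgeom} (and the expression~(2) just above the corollary) is available.

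There is one point where you make your life harder than necessary. You flag as the ``principal obstacle'' the fact that Theorem~\ref{convgeom} is stated for $f\in C_c^\infty(\tG(\AM))$ while $|f|$ might not be smooth, and you propose to bypass this by arguing that for a primitive class the proof of~\ref{ygeom} only uses boundedness and compact support. This detour is unnecessary here: the paper works over a function field, so $\tG(\AM)$ is totally disconnected and ``smooth'' means locally constant. If $f$ is locally constant with compact support, then $|f|$ is automatically locally constant with the same support, hence $|f|\in C_c^\infty(\tG(\AM))$. Theorem~\ref{convgeom} therefore applies verbatim to $|f|$ (with $\omega=1$), and your concern evaporates. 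Your alternative justification is not wrong, but it is a workaround imported from the number-field setting that the present paper does not need.
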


% corollaire 
\begin{corollary}
Pour $\delta\in \tG(F)_\prim$, le groupe (localement compact) $G^\delta(\adef)$ est unimodulaire et le quotient
\begin{equation*}A_\tG(\adef)G^\delta(F)\bsl G^\delta (\adef)\end{equation*}
est de volume fini.
\end{corollary}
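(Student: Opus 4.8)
Le plan est de d\'eduire les deux assertions de la convergence de l'int\'egrale orbitale \'etablie en \ref{CV des IO}, au moyen d'une formule d'int\'egration par \'etapes. Rappelons d'abord que le tore $A_\tG$, \'etant central dans $G$ et fix\'e par $\theta$, centralise $\delta$; on a donc les inclusions $A_\tG(\AM)\subset G^\delta(\AM)$ et $G^\delta(F)\subset G^\delta(\AM)$, le second sous-groupe \'etant discret puisque $G(F)$ est discret dans $G(\AM)$. On posera $H= G^\delta(\AM)$ et $\Gamma= A_\tG(\AM)G^\delta(F)$, sous-groupe ferm\'e de $H$, de sorte que le quotient \`a \'etudier soit $\Gamma\bsl H$.

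On commencerait par montrer que $\Gamma\bsl H$ poss\`ede une mesure $H$-invariante, \cad que la fonction module $\Delta_H$ est triviale sur $\Gamma$. Sur $A_\tG(\AM)$, qui est central dans $H$, on a $\Delta_H=1$. Sur $G^\delta(F)$, on \'ecrit $\Delta_H=\prod_v \Delta_{G^\delta(F_v)}$, la fonction module de $G^\delta(F_v)$ \'etant donn\'ee par $h\mapsto \vert\chi(h)\vert_v$ o\`u $\chi$ est le caract\`ere rationnel $h\mapsto \det(\mathrm{Ad}(h)\,\vert\,\Lie G^\delta)$ de $G^\delta$. Pour $\gamma\in G^\delta(F)$ on a $\chi(\gamma)\in F^\times$, d'o\`u $\Delta_H(\gamma)=\prod_v \vert\chi(\gamma)\vert_v=1$ par la formule du produit. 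Ainsi $\Delta_H\vert_\Gamma=1$ et il existe une mesure $H$-invariante $\dd\dot h$ sur $\Gamma\bsl H$.

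On fixerait ensuite $f\in C^\infty_{\rm c}(\tG(\AM))$ avec $f\geq 0$ et $f(\delta)>0$, et l'on poserait $\Phi(g)= f(g^{-1}\delta g)$. Comme $h^{-1}\delta h=\delta$ pour $h\in H$, la fonction $\Phi$ est invariante \`a gauche par $H$; elle se descend donc le long de la projection $\Gamma\bsl G(\AM)\to H\bsl G(\AM)$ en une fonction $\bar\Phi\geq 0$, non nulle au voisinage de la classe triviale. D'apr\`es \ref{CV des IO} l'int\'egrale de $\Phi$ sur $\Gamma\bsl G(\AM)$ est finie. La formule d'int\'egration par \'etapes pour la tour $\Gamma\subset H\subset G(\AM)$ donnerait alors, en utilisant $\int_{\Gamma\bsl H}\Phi(hg)\dd\dot h= \bar\Phi(\dot g)\vol(\Gamma\bsl H)$,
$$\int_{\Gamma\bsl G(\AM)}\Phi(g)\dd g= \vol(\Gamma\bsl H)\int_{H\bsl G(\AM)}\bar\Phi(\dot g)\dd\beta(\dot g)$$
o\`u $\dd\beta$ est une mesure quasi-invariante sur $H\bsl G(\AM)$. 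Le membre de droite \'etant le produit de $\vol(\Gamma\bsl H)$ par une int\'egrale strictement positive, et le membre de gauche \'etant fini, on obtiendrait $\vol(\Gamma\bsl H)<\infty$, \cad la finitude de volume cherch\'ee.

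Pour l'unimodularit\'e de $H=G^\delta(\AM)$, on poserait $\bar H= A_\tG(\AM)\bsl G^\delta(\AM)$ et l'on noterait $\bar\Gamma$ l'image de $G^\delta(F)$ dans $\bar H$, qui est discr\`ete (th\'eorie de la r\'eduction); alors $\bar\Gamma\bsl\bar H\simeq \Gamma\bsl H$ est de volume fini, donc $\bar\Gamma$ est un r\'eseau de $\bar H$. Un groupe localement compact admettant un r\'eseau \'etant unimodulaire, $\bar H$ le serait, et $H$ aussi comme extension centrale du groupe unimodulaire $\bar H$ par le groupe ab\'elien $A_\tG(\AM)$. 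Le point d\'elicat sera la justification de la formule d'int\'egration par \'etapes du paragraphe pr\'ec\'edent \emph{sans} supposer $H$ unimodulaire: il faudra travailler avec une mesure quasi-invariante sur $H\bsl G(\AM)$ et v\'erifier la compatibilit\'e des fonctions modules sur $\Gamma$ (ce qui r\'esulte pr\'ecis\'ement de $\Delta_{G(\AM)}\equiv 1$ et de $\Delta_H\vert_\Gamma=1$ obtenu plus haut), afin de ne pas pr\'esupposer l'unimodularit\'e de $H$ que l'on cherche \`a \'etablir.
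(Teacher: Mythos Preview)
Your approach diverges from the paper's in its overall logic, and the divergence introduces a genuine gap in positive characteristic.

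The paper proceeds in the opposite order: it first proves that $G^\delta(\AM)$ is unimodular, and only then deduces the finite volume. The key observation is that the convergence of the orbital integral (Corollary~\ref{CV des IO}, with $\omega=1$ and $f\geq 0$) allows one to define, for every $\varphi\in C_c^\infty(Y)$ with $Y=G^\delta(\AM)\bsl G(\AM)$, the integral
\[
\int_{A_\tG(\AM)G^\delta(F)\bsl G(\AM)}\varphi(x)\,f(x^{-1}\delta x)\,\dd x\,,
\]
which is dominated by $\|\varphi\|_\infty$ times the orbital integral. Varying $f$ and $\varphi$, every function in $C_c^\infty(Y)$ arises as a product $\varphi\cdot\bar f$, so one obtains a right $G(\AM)$-invariant positive functional on $C_c^\infty(Y)$. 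The existence of such an invariant measure on $Y=G^\delta(\AM)\bsl G(\AM)$ forces $G^\delta(\AM)$ to be unimodular (since $G(\AM)$ is). Once unimodularity is known, the usual integration by stages is available and the finite volume follows.

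Your Step~1 relies on the identity $\Delta_{G^\delta(F_v)}(h)=\vert\det(\mathrm{Ad}(h)\,\vert\,\Lie G^\delta)\vert_v$. This formula is standard for \emph{smooth} linear algebraic groups over a local field, but the paper explicitly warns (see the footnote attached to equation~(2) in \S\ref{partprim}) that the $F$-scheme $G^\delta$ is \emph{a priori} neither smooth nor connected. In characteristic~$p>0$ non-smooth centralisers genuinely occur (cf.\ the example in \S\ref{descente}), and for such groups the Lie-algebraic formula for the modular character is not available; the product-formula argument therefore does not go through as written.

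Even granting Step~1, your Step~2 is circular as stated: a right $G(\AM)$-invariant measure $\dd\beta$ on $H\bsl G(\AM)$ exists exactly when $\Delta_H=\Delta_{G(\AM)}\vert_H\equiv 1$, i.e.\ when $H$ is already unimodular. You acknowledge this and propose to work with a quasi-invariant measure, but the resulting disintegration formula then involves a $\rho$-function transforming by $\Delta_H$ on the left, and the inner integral $\int_{\Gamma\bsl H}\Phi(hg)\,\dd\dot h$ no longer descends cleanly to $H\bsl G(\AM)$ without further hypotheses; the sketch does not show how to extract the factor $\vol(\Gamma\bsl H)$ in that setting. The paper's route avoids this entirely by establishing unimodularity first, from which both the invariant measure on $H\bsl G(\AM)$ and the finite volume follow at once.
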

\begin{proof} Consid\'erons le cas $\omega=1$ et $f$ positive. L'int\'egrale orbitale
\begin{equation*}\int_{A_\tG(\adef)G^\delta(F)\bsl G(\adef)}{f}(x^{-1}\delta x)\dd x\end{equation*}
\'etant convergente il en r\'esulte que pour toute fonction lisse $\varphi$ et \`a support compact sur 
\begin{equation*}Y=G^\delta(\adef)\bsl G(\adef)\end{equation*}l'int\'egrale
\begin{equation*}\int_{A_\tG(\adef)G^\delta(F)\bsl G(\adef)}\varphi(x){f}(x^{-1}\delta x)\dd x\end{equation*}
est convergente et d\'efinit une fonctionnelle $G(\adef)$-invariante \`a droite
sur l'espace vectoriel engendr\'e par les fonctions $\psi$ sur $Y$ de la forme $\psi(\dot x)= \varphi(\dot x){f}(x^{-1}\delta x)$.
Mais, en variant $f$ et $\varphi$, on obtient ainsi toutes les fonctions lisses et \`a support compact sur $Y$.
Il existe donc une mesure $G(\adef)$-invariante \`a droite sur $Y$ ce qui implique que 
le groupe $G^\delta(\adef)$ est unimodulaire, puisque $G(\adef)$ l'est. La convergence
de l'int\'egrale orbitale implique que le volume de $A_\tG(\adef)G^\delta(F)\bsl G^\delta(\adef)$ est fini.
\end{proof}

Pour $\delta\in\tG(\adef)_\prim$, on choisit une mesure de Haar sur $G^\delta(\adef)$ et on pose
\begin{equation*}a^G(\delta)= \vol (A_\tG(\adef)G^\delta(F)\bsl G^\delta(\adef))\end{equation*}
o le volume est calcul\'e en prenant la mesure quotient de la mesure de Haar sur $A_\tG(\adef)\bsl G^\delta(\adef)$ par la 
mesure de comptage sur $A_\tG(F)\bsl G^\delta(F)$. Si $G^\delta(\adef)\not\subset\ker(\omega)$, 
on pose
\begin{equation*}\ESO_\delta(f,\omega)=0\end{equation*}
et si $G^\delta(\adef)\subset\ker(\omega)$, on pose
\begin{equation*}\ESO_\delta(f,\omega)= \int_{G^\delta(\adef)\bsl G(\adef)} \omega(g) f(g^{-1}\delta g)\dd \dot{g}\end{equation*}
o $\dd \dot{g}$ est la mesure quotient de la mesure de Haar sur $G(\adef)$ par la mesure de Haar sur $G^\delta(\adef)$. 

On fixe un syst\`eme de repr\'esentants $\wt{\Gamma}\subset\tG(F)$ des classes de 
$G(F)$-conjugaison dans $\tG(F)$, et on note $\wt{\Gamma}_\prim\subset\wt{\Gamma}$ 
le sous-ensemble form\'e des \'el\'ements primitifs.

% proposition 1
\begin{proposition}\label{orb}
L'int\'egrale (1) est absolument convergente et d\'efinit une distribution invariante sur $\tG(\adef)$. On a
\begin{equation*}J^\tG_\prim(f,\omega)=\sum_{\delta\in\wt{\Gamma}_\prim}a^G(\delta)\ESO_\delta({\tdf},\omega),\leqno{(3)}\end{equation*}
o la somme porte sur un ensemble fini (d\'ependant du support de $f$).
\end{proposition}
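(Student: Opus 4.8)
The plan is to assemble the statement from theorem~\ref{convgeom} together with the two corollaries just established on the orbital integrals. Absolute convergence and finiteness come essentially for free. Since $\OO_\prim\subset\OO$ and, for $\oo\in\OO_\prim$, the function $k_\oo^T=k_\oo$ is independent of $T$, theorem~\ref{convgeom} (applied with $T$ sufficiently regular) yields the convergence of $\sum_{\oo\in\OO_\prim}\int_{\bsY_G}\vert k_\oo(x)\vert\dd x$ and shows that only finitely many classes $\oo$, depending on the support of $f$, contribute. This gives at once the absolute convergence of (1) and the finiteness of the sum in (3).

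Next I would recover the orbital expression (2) by unfolding. For a primitive class $\oo=[\tG,\delta]$ the set $\ESO_\oo$ is the full $G(F)$-conjugacy class of $\delta$, and no proper parabolic subspace meets it; hence in $k_\oo^T$ only the term $\tP=\tG$ survives (with $\hat\tau_G=1$), so that $k_\prim(f,\omega;x)=\sum_{\gamma\in\tG(F)_\prim}\omega(x)f(x^{-1}\gamma x)$. Grouping the primitive elements into $G(F)$-conjugacy classes, parametrised by $G^\delta(F)\backslash G(F)$ for $\delta$ running over $\wt\Gamma_\prim$, and using that $\omega$ is trivial on $G(F)$, the usual unfolding of the integral over $\bsY_G=A_\tG(\AM)G(F)\backslash G(\AM)$ produces (2); the interchange of sum and integral is legitimate by the absolute convergence just obtained.

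The heart of the matter is the factorisation of each orbital integral in (2), which turns it into (3). Fix $\delta\in\wt\Gamma_\prim$. By the preceding corollary, $G^\delta(\AM)$ is unimodular and $A_\tG(\AM)G^\delta(F)\backslash G^\delta(\AM)$ has finite volume, so I may split the integral over $A_\tG(\AM)G^\delta(F)\backslash G(\AM)$ as an iterated integral over $A_\tG(\AM)G^\delta(F)\backslash G^\delta(\AM)$ followed by $G^\delta(\AM)\backslash G(\AM)$. Writing $x=hg$ with $h\in G^\delta(\AM)$, one has $f(x^{-1}\delta x)=f(g^{-1}\delta g)$ since $h$ centralises $\delta$, while $\omega(x)=\omega(h)\omega(g)$; the inner integral is $\int_{A_\tG(\AM)G^\delta(F)\backslash G^\delta(\AM)}\omega(h)\dd h$, equal to $a^G(\delta)$ when $\omega$ is trivial on $G^\delta(\AM)$ and zero otherwise. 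In both cases the contribution of $\delta$ is exactly $a^G(\delta)\ESO_\delta(f,\omega)$, by the very definition of $\ESO_\delta(f,\omega)$, which proves (3). The invariance of $f\mapsto J_\prim^\tG(f,\omega)$ is then immediate, each $\ESO_\delta(\cdot,\omega)$ being an $\omega$-twisted invariant orbital integral.

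The only delicate point is this last factorisation, because in characteristic $p$ the centraliser $G^\delta$, viewed as an $F$-group scheme, is in general neither smooth nor connected; but only the locally compact group $G^\delta(\AM)$ and the discrete group $G^\delta(F)$ enter the computation, and the unimodularity and finiteness of volume that are needed are precisely what the preceding corollary supplies, so no further obstacle arises. Everything else is bookkeeping built on top of theorem~\ref{convgeom}.
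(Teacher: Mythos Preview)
Your proof is correct and follows essentially the same path as the paper's, which merely says \og Un calcul \'el\'ementaire fournit l'\'egalit\'e (3). La finitude r\'esulte du lemme \ref{finitude}\fg. You have spelled out the unfolding and factorisation that the paper leaves implicit, and you invoke theorem~\ref{convgeom} for finiteness whereas the paper cites lemma~\ref{finitude} directly; both are valid since the finiteness clause of theorem~\ref{convgeom} ultimately rests on lemma~\ref{finitude}.
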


\begin{proof}Un calcul \'el\'ementaire fournit l'\'egalit\'e (3). La finitude r\'esulte du lemme \ref{finitude}. \end{proof}

%%%%%%%%%%%%%%%%%%%%%%%

 \section{Sur la descente centrale}\label{descente} 
Dans \cite[9.2]{LW}, en vue de l'utilisation de la descente centrale de Harish Chandra qui est une technique
essentielle dans les travaux d'Arthur sur le d\'eveloppement g\'eom\'etrique fin,
l'expression $k_\oo^T(x)$ est remplac\'ee par 
une expression $j_\oo^T(x)$ de mme int\'egrale sur 
$\bsY_G$\footnote{Rappelons qu'ici $\bsY_G$ joue le r™le de l'espace 
${\mathbf X}_G= \mathfrak{A}_GG(F)\backslash G(\adef)$ de \cite{LW}. 
Observons aussi que la d\'efinition de l'expression $j_{\wt{P}\mskip -2mu ,\oo}^T(x)$ donn\'ee dans 
\cite[9.2]{LW} est incorrecte et doit tre remplacŽe par celle donn\'ee par \Err(xi) dans l'Annexe \ref{Erratum}.}. 
En caract\'eristique positive le lemme \cite[9.2.1]{LW}, qui permet de faire ce remplacement, n'est
plus vrai mme dans le cas non tordu.
En effet consid\'erons une paire primitive $(M,\delta)$ dans $G$ et $P= MU$. 
Notons $U^{\delta}$ le centralisateur de $\delta$ dans $U$
\footnote{Notons $(1-\delta)U$ l'image du 
$F$-endomorphisme $u\mapsto u\mskip 2mu .\mskip 2mu  \mathrm{Int}_{\delta}(u)^{-1}$. 
Ce morphisme se factorise en un $F$-morphisme bijectif de vari\'et\'es alg\'ebriques 
$U^{\delta}\bsl U\rightarrow (1-\delta)U$ qui n'est en g\'en\'eral pas un isomorphisme. 
Au $F$-sch\'ema en groupes (affine) $U^{\delta}$ correspond 
un sous-groupe alg\'ebrique {$F$-\textit ferm\'e} (au sens de Borel \cite{B2}) de $G$, 
not\'e de la mme mani\`ere. Le $F$-morphisme en question est un isomorphisme si et seulement s'il est s\'eparable, 
auquel cas le $F$-sch\'ema en groupes $U^{\delta}$ est g\'eom\'etriquement r\'eduit (donc lisse) et correspond \`a 
un groupe alg\'ebrique d\'efini sur $F$.}. 
On consid\`ere le $F$-morphisme $\pi_{\delta}$ de vari\'et\'es alg\'ebriques:
\begin{equation*}\pi_{\delta}: U_P\times U_P^{\delta}\rightarrow U_P\qquad\hbox{d\'efini par}\qquad 
(u,v)\mapsto u\mun\mskip 2mu v\mskip 2mu \mathrm{Int}_{\delta}(u)\ptf\end{equation*}
 En g\'en\'eral l'inclusion $\pi_\delta(U_P\times U^{\delta})\subset U_P$ est stricte
et donc le lemme \cite[9.2.2]{LW} est en d\'efaut, comme le montre l'exemple ci-dessous.
\pni\indent
Supposons $F$ de caract\'eristique $p>0$.
Soit $\gamma$ un \'el\'ement de $\mathrm{GL}_p(F)$ qui engendre une extension radicielle non triviale 
$E=F[\gamma]$ de $F$. Cette extension est de degr\'e $p$ et $\gamma$ est primitif dans 
$\mathrm{GL}_p(F)$. Plongeons $M= \mathrm{GL}_p\times \mathrm{GL}_p$ diagonalement dans $\mathrm{GL}_{2p}$ et notons 
$\delta$ l'\'el\'ement $(\gamma,\gamma)$ de $M(F)$. Alors $(M,\delta)$ est une paire primitive dans $G=\mathrm{GL}_{2p}$
et si $P$ le sous-groupe parabolique standard de $G$ de composante de Levi $M$, 
on a $U^\delta(F) \simeq E$. On identifie $U(F)$ \`a $M_p(F)$ et $U^\delta(F)$ \`a $E\subset M_p(F)$. 
On voit que dans ce cas l'application $\pi_\delta$ est donn\'ee par
\begin{equation*}(x,y) \mapsto n(x)+y\qquad\hbox{o}\qquad
n(x)\bydef (\mathrm{Ad}(\gamma)-1)x \ptf\end{equation*}
On peut choisir $\gamma$ tel que $\gamma^p$ soit scalaire et donc $(\mathrm{Ad}(\gamma)-1)^p=0$. 
Il en r\'esulte que $\pi_\delta$ ne peut pas tre surjective.
Par exemple si $p=2$, on a $\gamma n(x)\gamma\mun=n(x)$ et donc $n(x) \in E$ pour tout $x\in M_2(F)$ 
ce qui implique $n(x) + y\in E$ pour tout couple $(x,y)\in M_2(F)\times E$. 

Cet exemple montre que pour les paires primitives 
$(\tM,\delta)$ dans $\tG$ avec $\tM\neq \tG$ et $\delta$ ins\'eparable, 
la descente centrale ne fonctionne plus sans modification.
C'est l'une des principales difficult\'es \`a r\'esoudre du c™t\'e g\'eom\'etrique.

%%%%%%%%%%%%%%%%%%%%%%%%%%%
 \section{Contribution des classes quasi semi-simples}\label{termes qss}

Un \'el\'ement $\delta$ de $\tG$ est dit \textit{quasi semi-simple} si l'automorphisme $\tau=\mathrm{Int}_\delta$ de $G$ 
est quasi semi-simple, c'est-\`a-dire s'il stabilise une paire de Borel $(B,T)$ de $G$. Pour l'\'etude des automorphismes quasi semi-simples 
sur un corps quelconque, on renvoie \`a \cite[ch.~2 et 3]{Le}. 
Un automorphisme $\tau$ de $G$ est quasi semi-simple si et seulement l'automorphisme 
$\tau_\mathrm{der}$ de $G_\mathrm{der}$ est quasi semi-simple. 
La composante neutre $G_\tau = (G^\tau)^0$ du centralisateur d'un 
automorphisme quasi semi-simple $\tau$ de $G$ est un groupe alg\'ebrique lin\'eaire r\'eductif (connexe), 
qui est d\'efini sur $F$ si $\tau$ l'est \cite[4.6.3]{Le}. On prendra garde \`a ce que
si $F$ est de caract\'eristique $p>0$ un automorphisme non trivial de $G$ peut tre quasi semi-simple et 
unipotent; toutefois, un tel automorphisme est forc\'ement quasi-central\footnote{En caract\'eristique $p>0$, 
un automorphisme $\tau$ de $G$ est dit \textit{unipotent} s'il existe un entier 
$k\geq 1$ tel que $\tau^{p^k}= \mathrm{Id}$. Par exemple pour $p=2$, l'automorphisme 
$\tau:t\mapsto t^{-1}$ du tore $\GM_\mathrm{m}$ est quasi semi-simple et unipotent. 
De plus le morphisme $1-\tau: t\mapsto t^{2}$ de $\GM_\mathrm{m}$ n'est pas s\'eparable.
Un automorphisme quasi semi-simple $\tau$ de $G$ est dit \textit{quasi-central} 
si $\dim(G_{\tau'})\leq \dim(G_\tau)$ pour tout automorphisme quasi semi-simple $\tau'$ de $G$ 
de la forme $\tau'=\mathrm{Int}_g\circ \tau$ avec $g\in G$.}.

Pour $\delta\in \tG$ et $\tau= \mathrm{Int}_\delta$, notons $(1- \tau)G$ l'image 
du morphisme de $G$ dans $G$:
\begin{equation*}1-\tau: g\mapsto g\tau(g)^{-1}\ptf\end{equation*}
On dit que $\delta$ est \textit{s\'eparable} si le morphisme $1-\tau$ est s\'eparable, c'est-\`a-dire s'il induit 
un isomorphisme de vari\'et\'es alg\'ebriques 
\begin{equation*}G^\delta \backslash G \rightarrow (1-\tau)G\ptf\end{equation*}
 Si $\delta\in \tG(F)$ est s\'eparable, 
le $F$-sch\'ema en groupes $G^\delta$ est lisse et correspond \`a un sous-groupe alg\'ebrique ferm\'e de $G$ 
d\'efini sur $F$.

Soit $\delta \in \tG(F)$ un \'el\'ement quasi semi-simple. En g\'en\'eral $\delta$ n'est pas s\'eparable, mais 
on sait (d'apr\`es \cite[4.6.3]{Le}) que la composante neutre $G_\delta=(G^\delta)^0$ de son 
centralisateur est un groupe alg\'ebrique lin\'eaire (connexe) d\'efini sur $F$. 
On peut donc comme sur un corps de nombres d\'efinir son \textit{centralisateur stable} $I_\delta$ 
(cf. \cite[2.6]{LW}\footnote{Le centre {\og sch\'ematique \fg} $Z_G$ n'est en g\'en\'eral pas r\'eduit. 
On consid\`ere le centre {\og r\'eduit \fg} $\ZZ_G$ de $G$, c'est-\`a-dire le centre au sens de Borel \cite{B2}. 
C'est un groupe alg\'ebrique diagonalisable, \`a priori seulement $F$-ferm\'e, mais qui est en fait d\'efini sur $F$: 
d'apr\`es \cite[18.2]{B2} il existe un tore maximal $T$ de $G$ d\'efini sur $F$; un tel tore se d\'eploie sur 
une extension alg\'ebrique s\'eparable $E$ de $F$, par suite le sous-groupe ferm\'e $\ZZ_G\subset T$ 
est d\'efini sur $E$, donc sur $F$ puisqu'il est $F$-ferm\'e. 
Le centre {\og r\'eduit \fg} $\ZZ_\tG = \ZZ_G^\theta$ de $\tG$ est lui aussi un groupe 
alg\'ebrique diagonalisable d\'efini sur $F$. On prend pour $I_\delta$ le sous-groupe alg\'ebrique ferm\'e de $G$ 
engendr\'e par $G_\delta$ et $\ZZ_\tG$.}):
\begin{equation*}G_\delta \bydef G^{\delta,0} \subset I_\delta \subset G^\delta\ptf\end{equation*}
Consid\'erons 
le tore d\'eploy\'e maximal $A_\delta$ dans le centre de $I_\delta$, ou ce qui revient au mme de $G_\delta$, 
et notons $M_\delta$ le centralisateur de $A_\delta$ dans $G$. C'est un facteur de Levi de $G$, et $\delta$ 
est elliptique (mais pas n\'ecessairement r\'egulier) dans
$\tM_\delta = \delta M_\delta$ 
en d'autres termes\footnote{Observons qu'un un \'el\'ement quasi semi-simple
$\delta$ est primitif si et seulement s'il est elliptique r\'egulier c'est-\`a-dire que $G_\delta$ est un tore et 
le sous-tore d\'eploy\'e maximal de $G_\delta$ est \'egal \`a $A_\tG$.}:
\begin{equation*}A_\delta=A_{\tM_\delta}\ptf\end{equation*}
Notons $\wt\ESP_\st(\delta)$ le sous-ensemble de $\wt\ESP_\st$ form\'e des 
$\tP$ tel que $\tM_P$ contienne un conjugu\'e de $\tM_\delta$ dans $G(F)$.
Soit $\oo=[\tM,\delta]$ une paire primitive dans $\tG$ et soit
 $\cc$ la classe de $G(F)$-conjugaison de $\delta$ dans $\tG(F)$.
La contribution de $\cc$ \`a $k_\oo^T(x)$ est donn\'ee par
\begin{equation*}k^T_{\cc}(x)=\sum_{\tP\in\wt\ESP_\st(\delta)}(-1)^{a_\tP -a_\tG}
\sum_{\xi\in P(F)\bsl G(F)}k^T_{\tP,\cc}(\xi x)\end{equation*}
avec
\begin{equation*}k^T_{\tP,\cc}(x)=\hat{\tau}_P(\bfH_0(x)-T)K_{\tP,\cc}(x,x)\end{equation*}
o
\begin{equation*}K_{\tP,\cc}(x,x)=\int_{U_P(\adef)}
\sum_{\delta\in\cc\cap\tM_P(F)}\omega(x) {\fun}(x^{-1}\delta u x)\dd u\ptf\end{equation*}
Quitte \`a remplacer $\delta$ par un conjugu\'e dans $G(F)$, 
on peut supposer que $\tM_\delta$ est un facteur de Levi standard. 
Pour $\tP\in \wt\ESP_\st$, on d\'efinit l'ensemble \begin{equation*}\bfW(\ag_{\tM_\delta},\tP)\end{equation*}
comme en \cite[9.3]{LW} et on pose
\begin{equation*}j_{\tP,\cc}(x)= \iota(\delta)^{-1} \sum_{s\in \bfW(\ag_{\tM_\delta},\tP)} 
\sum_{\eta \in I_{ s(\delta)}(F)\backslash P(F)}\omega(x)f(x^{-1}\eta^{-1} s(\delta) \eta x)\end{equation*}
o
\begin{equation*}s(\delta) = w_s \delta w_s^{-1} \quad \hbox{et} 
\quad \iota(\delta)= \vert I_\delta(F) \backslash G^\delta(F)\vert \ptf\end{equation*}
Enfin on pose
\begin{equation*}j^T_\cc(x)= \sum_{\tP\in \wt\ESP_\st} (-1)^{a_\tP - a_\tG}
\sum_{\xi \in P(F)\backslash G(F)} \wh{\tau}_P(\bfH_0(\xi x)-T) j_{\tP,\cc}(\xi x)\ptf\end{equation*}
Observons que la somme sur $\tP$ porte en fait sur le sous-ensemble $\wt\ESP_\st(\delta)$. 

% lemme
\begin{lemma}\label{termes qssc} On a l'\'egalit\'e des int\'egrales
\begin{equation*}\int_{\bs{Y}_G} k^T_\cc(x)\dd x = \int_{\bs{Y}_G} j^T_\cc(x)\dd x\ptf \end{equation*}
\end{lemma}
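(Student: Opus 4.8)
La stratégie consiste à comparer terme à terme les deux expressions $k^T_\cc(x)$ et $j^T_\cc(x)$ après intégration sur $\bsY_G$, en se ramenant au cas non tordu traité dans \cite[9.3]{LW}. L'idée directrice est que les deux noyaux diffèrent par un réarrangement de la somme sur les classes de conjugaison: dans $k^T_{\tP,\cc}$ on somme sur $\delta\in\cc\cap\tM_P(F)$ avec une intégrale sur $U_P(\AM)$, alors que dans $j_{\tP,\cc}$ on paramètre la même orbite par les éléments de Weyl $s\in\bfW(\ag_{\tM_\delta},\tP)$ et les cosets $I_{s(\delta)}(F)\backslash P(F)$. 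Le facteur $\iota(\delta)^{-1}$ et le facteur $\vert I_\delta(F)\backslash G^\delta(F)\vert$ sont précisément là pour compenser le passage du centralisateur complet $G^\delta$ au centralisateur stable $I_\delta$.

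**Les étapes.** D'abord, je fixerais une paire primitive représentative $(\tM_\delta,\delta)$ avec $\tM_\delta$ facteur de Levi standard, comme autorisé dans l'énoncé. Ensuite, pour chaque $\tP\in\wt\ESP_\st(\delta)$, je développerais l'intégrale $\int_{\bsY_G}k^T_{\tP,\cc}(\xi x)\dd x$ en dépliant la somme sur $P(F)\backslash G(F)$ contre la définition de $\bsY_G$, de manière à faire apparaître une intégrale sur $A_\tG(\AM)P(F)\backslash G(\AM)$, c'est-à-dire sur $\bsY_P$. La convergence absolue de toutes ces intégrales est déjà garantie par le théorème~\ref{convgeom} (ou plus précisément par la proposition~\ref{ygeom}), ce qui autorise tous les échanges d'ordre de sommation et d'intégration. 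Le cœur calculatoire est la manipulation de l'intégrale orbitale: à l'intérieur de $K_{\tP,\cc}$, l'intégration sur $U_P(\AM)$ contre $\sum_{\delta\in\cc\cap\tM_P(F)}$ se réexprime, via la décomposition $\ESO_\oo\cap\tP(F)=(\ESO_\oo\cap\tM_P(F))U_P(F)$ de \ref{OO}\,(ii), en une somme indexée par les $G(F)$-classes elliptiques dans $\tM_P(F)$, chacune pondérée par le volume du centralisateur. C'est ici qu'intervient la combinatoire des $\bfW(\ag_{\tM_\delta},\tP)$: compter les différentes façons dont la classe $\cc$ rencontre $\tM_P(F)$ revient exactement à énumérer les $s\in\bfW(\ag_{\tM_\delta},\tP)$ tels que $s(\delta)$ soit $M_P(F)$-conjugué à un élément de $\cc\cap\tM_P(F)$.

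**L'obstacle principal.** La difficulté ne réside pas dans la combinatoire des cônes (identique au cas non tordu) mais dans le traitement des centralisateurs en caractéristique positive. Comme le souligne le paragraphe~\ref{descente}, l'élément $\delta$ quasi semi-simple n'est en général pas \emph{séparable}: le $F$-schéma en groupes $G^\delta$ peut ne pas être lisse, et le morphisme $1-\tau$ peut ne pas être un isomorphisme de variétés. C'est précisément pour contourner cela qu'on remplace $G^\delta$ par le centralisateur \emph{stable} $I_\delta$ (avec $G_\delta\subset I_\delta\subset G^\delta$), qui lui est un groupe algébrique défini sur $F$ au sens usuel. Le facteur $\iota(\delta)=\vert I_\delta(F)\backslash G^\delta(F)\vert$ mesure l'écart fini entre les deux notions de centralisateur, et l'égalité des deux intégrales repose sur le fait que ce facteur apparaît de façon cohérente dans le changement de variables $g\mapsto g^{-1}s(\delta)g$ des deux côtés. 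Le point délicat sera de vérifier que ce passage de $G^\delta$ à $I_\delta$ ne perturbe pas les volumes $a^G(\delta)$ ni la mesure quotient, ce qui exige de contrôler que $I_\delta$ et $G_\delta$ ont les mêmes tores déployés centraux — donc le même $A_\delta$ et le même $\tM_\delta$ — propriété qui assure que le comptage par $\bfW(\ag_{\tM_\delta},\tP)$ reste valide. Puisque seuls des centralisateurs d'éléments quasi semi-simples interviennent (et non des éléments primitifs inséparables comme dans \ref{descente}), je m'attends à ce que le schéma de preuve de \cite[9.3]{LW} s'adapte \emph{mutatis mutandis}, en remplaçant systématiquement le centralisateur connexe par le centralisateur stable et en insérant le facteur correctif $\iota(\delta)^{-1}$.
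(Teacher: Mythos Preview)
Your proposal outlines a reasonable overall strategy (unfold the sums over $P(F)\backslash G(F)$, match parametrizations via Weyl elements), but it misses the actual technical heart of the argument and misidentifies the obstacle.

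The paper's proof is much more direct: it establishes the identity
$$K_{\tP,\cc}(x,x)= \int_{U_P(F)\backslash U_P(\AM)} j_{\tP,\cc}(ux)\dd u$$
from which the lemma follows immediately by unfolding. The key step producing this identity is a change of variable on $U_P(\AM)$: for each $s$, the morphism
$$U_P \rightarrow U_P,\quad u\mapsto u^{-1}{\rm Int}_{s(\delta)}(u)$$
is an isomorphism of varieties. This requires two facts specific to the quasi semi-simple case: the morphism is \emph{separable}, which is precisely \cite[3.7.6]{Le}, and its kernel $U_P^{s(\delta)} = U_P\cap G^{s(\delta)}$ is trivial. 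Together these make it an isomorphism, hence a measure-preserving bijection on $U_P(\AM)$, and this is what converts the integral $\int_{U_P(\AM)} f(x^{-1}\gamma u x)\dd u$ in $K_{\tP,\cc}$ into a sum over $P(F)$-conjugates of $s(\delta)$.

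Your discussion of the obstacle is off-target. The passage from $G^\delta$ to $I_\delta$ and the factor $\iota(\delta)^{-1}$ are purely combinatorial bookkeeping on the rational orbit in $\tM_P(F)$, exactly as in the number-field case. The genuine positive-characteristic issue is the separability of the morphism on $U_P$: as \S\ref{descente} shows with the $GL_{2p}$ example, for primitive but non-quasi-semi-simple $\delta$ this morphism can fail to be surjective, which is exactly why the general descent breaks down there. The content of the present lemma is that for quasi semi-simple $\delta$ one has \cite[3.7.6]{Le}, so this failure does not occur. Without invoking that result (or reproving its content), your argument has no mechanism to eliminate the $U_P(\AM)$-integral in $K_{\tP,\cc}$, and the proof cannot be completed.
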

%
% preuve
\begin{proof}
Pour $\tP\in \wt\ESP_\st(\delta)$ tel que l'orbite $\cc$ rencontre $\tP(F)$, pour 
$\tM_1\subset \tM_P$ un conjugu\'e de $\tM_\delta$, et pour $s\in \bfW(\ag_{\tM_\delta},\ag_{\tM_1})$, 
d'apr\`es \cite[3.7.6]{Le} le morphime
\begin{equation*}U_P\rightarrow U_P\vgq u \mapsto u^{-1}\mathrm{Int}_{ s(\delta)}(u)\end{equation*}
est s\'eparable. Puisque le centralisateur $U_P^{s(\delta)}= U_P \cap G^{s(\delta)}$ est trivial, 
ce morphisme est un isomorphisme qui induit une application bijective 
$U_P(\adef) \rightarrow U_P(\adef)$. On en d\'eduit l'\'egalit\'e
\begin{equation*}K_{\tP,\cc}(x,x)= \int_{U_P(F)\backslash U_P(\adef)} j_{\tP,\cc}(ux)\dd u\end{equation*}
et le lemme en r\'esulte.
\end{proof}

Posons
\begin{equation*}\ESA_{I_\delta}\bydef \bfH_{\tM_\delta}(I_\delta(\adef)) \subset \ESA_{\tM_\delta}\quad 
\hbox{et} \quad \ESC_{I_\delta}^\tG \bydef \ESB_\tG \backslash \ESA_{I_\delta} \subset \ESC_{\tM_\delta}^\tG\ptf\end{equation*}
Si le caract\`ere $\omega$ de $G(\adef)$ est trivial sur 
$I_\delta(\adef)^1= \ker (I_\delta(\adef) \rightarrow \ESA_{I_\delta})$
il d\'efinit, par restriction \`a $I_\delta(\adef)$, un caract\`ere de $\ESC_{I_\delta}^\tG$ de la forme 
$H \mapsto e^{\langle \mu_\delta,H \rangle}$ pour un \'el\'ement
\begin{equation*}\mu_\delta \in \ker (\bsmu_{I_\delta}\rightarrow \bsmu_\tG)\subset 
\bsmu_{I_\delta}\bydef \wh{\ESA}_{I_\delta}\ptf\end{equation*}
On a introduit en \ref{l'\'el\'ement T_0} la famille orthogonale 
$\mathfrak{Y}(x,T)$ et on pose
\begin{equation*}\bs{v}_{\wt{M}_\delta}^T(\omega,x)= \omega(x) 
\sum_{H \in \ES{C}_{I_\delta}^\tG}\Gamma_{\wt{M}_\delta}^\tG(H,\YY(x,T)) e^{\langle \mu_\delta,H\rangle}\ptf\end{equation*}

% lemme
\begin{lemma}
La fonction $T\mapsto \bs{v}_{\wt{M}_\delta}^T(\omega,x)$ est dans $\mathrm{PolExp}$.
\end{lemma}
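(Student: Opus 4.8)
Le plan est de reconna\^itre, pour $x$ fix\'e, la fonction $T\mapsto\bs{v}_{\wt{M}_\delta}^T(\omega,x)$ comme une combinaison lin\'eaire finie, \`a coefficients ind\'ependants de $T$, de fonctions $\gamma$ tordues du type \'etudi\'e en \ref{gamma-fini} et \ref{gammaMpol}, puis d'invoquer la stabilit\'e de {\rm PolExp} par combinaison lin\'eaire. Je commencerais par fixer $x\in G(\AM)$, de sorte que $\omega(x)$ soit une constante, et par expliciter la d\'ependance en $T$ de la famille $\YY(x,T)$. Pour $P\in\ESP(M_0)$ et $s\in\bfW$ tel que $s(P)=P_0$ on a $\Y_{x,T,P}=\brT_P-s\mun\bfH_0(w_sx)$, donc, apr\`es projection, la famille $\tM_\delta$-orthogonale $\YY(x,T)$ se d\'ecompose en
$$\YY(x,T)=\XX_x+\TT\vg$$
o\`u $\XX_x=(\Y_{x,\tP})$ est une famille $\tM_\delta$-orthogonale fixe. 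Comme $\bfH_0$ prend ses valeurs dans $\ESA_0$ et que les \'el\'ements du groupe de Weyl sont rationnels, $\XX_x$ est rationnelle (et, au besoin, \ref{aussi} permettrait de toute fa\c{c}on de se ramener au cas rationnel). Il restera alors \`a montrer que $T\mapsto\sum_{H\in\ESC_{I_\delta}^\tG}\Gamma_{\wt{M}_\delta}^\tG(H,\XX_x+\TT)\,e^{\langle\mu_\delta,H\rangle}$ appartient \`a {\rm PolExp}, cette somme \'etant finie pour chaque $T$ comme en \ref{gamma-fini} puisque la projection sur $\ag_{\wt{M}_\delta}^\tG$ du support de $\Gamma_{\wt{M}_\delta}^\tG$ est compacte et que $\mu_\delta$ est trivial sur $\ESB_\tG$.

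J'effectuerais ensuite le passage du r\'eseau $\ESA_{I_\delta}$ au r\'eseau $\ESA_{\tM_\delta}$. Puisque $A_\tG\subset A_{\tM_\delta}\subset I_\delta$, le r\'eseau $\ESA_{I_\delta}$ est d'indice fini dans $\ESA_{\tM_\delta}$ et contient $\ESB_\tG$. Apr\`es avoir choisi un prolongement $\tilde\mu_\delta\in\bsmu_{\tM_\delta}$ de $\mu_\delta$, j'\'ecrirais l'indicatrice de $\ESA_{I_\delta}$ dans $\ESA_{\tM_\delta}$ par inversion de Fourier sur le groupe fini $\ESA_{I_\delta}\backslash\ESA_{\tM_\delta}$, et je tiendrais compte de la relation entre le quotient par $\ESB_\tG$ et la fibration $H\mapsto H_\tG$ en sommant sur le groupe fini $\bsbbc_\tG$. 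On obtiendrait ainsi une \'ecriture
$$\sum_{H\in\ESC_{I_\delta}^\tG}\Gamma_{\wt{M}_\delta}^\tG(H,\XX_x+\TT)\,e^{\langle\mu_\delta,H\rangle}
=\sum_{Z}\sum_{\chi}c_{Z,\chi}\,\gamma_{\wt{M}_\delta,F}^{\tG,\XX_x+\TT}(Z;\tilde\mu_\delta+\chi)\vg$$
o\`u $Z$ parcourt des repr\'esentants de $\bsbbc_\tG$, o\`u $\chi$ parcourt $\ESA_{I_\delta}^\vee/\ESA_{\tM_\delta}^\vee$, les coefficients $c_{Z,\chi}\in\CM$ \'etant ind\'ependants de $T$, et o\`u $\gamma_{\wt{M}_\delta,F}^{\tG,\XX}(Z;\Lambda)=\sum_{H\in\ESA_{\tM_\delta}^\tG(Z)}\Gamma_{\wt{M}_\delta}^\tG(H,\XX)\,e^{\langle\Lambda,H\rangle}$ est la fonction $\gamma$ tordue (cf. \ref{espaces tordus}).

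Pour conclure, je fixerais $(Z,\chi)$ et j'appliquerais l'analogue tordu de \ref{passlim}, \cad \ref{gammaMpol}, \`a la $(\tG,\tM_\delta)$-famille triviale $\bsc=1$ (p\'eriodique), pour laquelle $\bsc_{\wt{M}_\delta,F}^{\tG,\XX}=\gamma_{\wt{M}_\delta,F}^{\tG,\XX}$ d'apr\`es \ref{gamma-fini}. Comme $\XX_x+\TT$ est la translat\'ee par $\TT$ d'une famille rationnelle fixe, \ref{gammaMpol} donne que $T\mapsto\gamma_{\wt{M}_\delta,F}^{\tG,\XX_x+\TT}(Z;\tilde\mu_\delta+\chi)$ appartient \`a {\rm PolExp}. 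La stabilit\'e de {\rm PolExp} par combinaison lin\'eaire finie ach\`everait la preuve.

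Le point d\'elicat sera la r\'eduction de la premi\`ere \'etape, c'est-\`a-dire l'identification pr\'ecise, au moyen des inversions de Fourier sur $\ESA_{I_\delta}\backslash\ESA_{\tM_\delta}$ et $\bsbbc_\tG$, de la somme pond\'er\'ee par $\mu_\delta$ sur $\ESC_{I_\delta}^\tG$ avec les fonctions $\gamma$ tordues standard, ainsi que le bon choix du prolongement $\tilde\mu_\delta$. Une fois cette identification \'etablie, toute la d\'ependance en $T$ est absorb\'ee par \ref{gammaMpol}, dont la preuve (passage \`a la limite sur les r\'eseaux $\ESR_k$) fournit au passage, si besoin, le polyn\^ome limite des $p_{\ESR_k,0}$.
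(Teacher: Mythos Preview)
Your approach is correct and reaches the goal, but it differs from the paper's. The paper fibers $\ESC_{I_\delta}^\tG$ over the finite group $\bsbbc_{I_\delta}=\ESB_{\tM_\delta}\backslash\ESA_{I_\delta}$, with fibers $\ESB_{\tM_\delta}^\tG(Z)$, and then invokes \ref{etapol} (more precisely its proof technique, adapted from the parabolic $\Gamma_\tP^\tG$ to the orthogonal-family $\Gamma_{\tM_\delta}^\tG$) for each fiber. You instead go \emph{up} to the lattice $\ESA_{\tM_\delta}$ by Fourier inversion on the finite quotient $\ESA_{I_\delta}\backslash\ESA_{\tM_\delta}$, then fiber $\ESC_{\tM_\delta}^\tG$ over $\bsbbc_\tG$ to recognise the standard $\gamma_{\tM_\delta,F}^{\tG,\XX_x(T)}(Z;\Lambda)$ and apply \ref{gammaMpol} verbatim. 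The paper's route is more direct (no auxiliary choice of $\tilde\mu_\delta$, no Fourier inversion), whereas your route has the advantage of landing squarely on an already-proved statement rather than requiring an adaptation of the argument of \ref{etapol} to the orthogonal-family $\Gamma$. Both are valid; your reduction to \ref{gammaMpol} via the trivial $(\tG,\tM_\delta)$-family is a clean way to package it.
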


% preuve
\begin{proof}
On a la suite exacte courte
\begin{equation*}0 \rightarrow \ESB_{\tM_\delta}^\tG 
\rightarrow \ESC_{I_\delta}^\tG \rightarrow \bsbbc_{I_\delta}\bydef \ESB_{\tM_\delta}\backslash \ESA_{I_\delta} 
\rightarrow 0\ptf\end{equation*}
On note $\ESB_{\wt{M}_\delta}^\tG(Z)\subset \ESC_{I_\delta}^\tG$ 
la fibre au-dessus de $Z\in \bsbbc_{I_\delta}$ et on pose
\begin{equation*}\eta_{\wt{M}_\delta, F}^{\tG,\YY(x,T)}(Z;\mu_\delta) = \sum_{H \in \ES{B}_{\wt{M}_\delta}^\tG(Z)} 
\Gamma_{\wt{M}_\delta}^\tG(H,\YY(x,T)) e^{\langle \bsmu_\delta,H\rangle}\ptf\end{equation*}
On a donc
\begin{equation*}\bs{v}_{\wt{M}_\delta}^T(\omega,x)= \omega(x) 
\sum_{Z\in \bsbbc_{I_\delta}}\eta_{\wt{M}_\delta, F}^{\tG,\YY(x,T)}(Z;\mu_\delta)\ptf\end{equation*}
L'assertion r\'esulte alors de \ref{etapol}.
\end{proof}
\pni
Observons que pour $M\in \ESL$, $Q\in \ESF(M)$ et $m\in M(\adef)$ on a
\begin{equation*}Y_{mx,T,Q} = Y_{x,T,Q} - \bfH_Q(m)\end{equation*}
et donc
\begin{equation*}\Gamma_{\tM_\delta}^\tG (H, \YY(mx,T))= \Gamma_{\tM_\delta}^\tG (H + \bfH_{M_\delta}(m), \YY(x,T))\end{equation*}
pour $m\in M_\delta(\adef)$. 
On en d\'eduit que la fonction 
$x\mapsto \bs{v}_{\wt{M}_\delta}^T(\omega,x)$ est invariante par translation \`a gauche par 
$h\in I_\delta(\adef)$.

% proposition
\begin{proposition}\label{IOqss}
Si $I_\delta(\adef)^1 \subset \ker(\omega)$ on a l'identit\'e \begin{equation*}\int_{\bs{Y}_G} j^T_\cc(x)\dd x=
 \iota(\delta)\mun{\mathrm{vol}\bigg(I_\delta(F)\backslash I_\delta(\adef)^1\bigg)}
\int_{I_\delta(\adef)\backslash G(\adef)} \bs{v}_{\tM_\delta}^T(\omega, x) f(x^{-1} \delta x) \dd \dot{x}\ptf\end{equation*}
L'int\'egrale sur $\bs{Y}_G$ est nulle sinon.
\end{proposition}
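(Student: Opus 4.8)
La stratégie est de reproduire, dans le cadre des corps de fonctions, le calcul de \cite[9.3]{LW}, en remplaçant les intégrales sur les tores déployés par des sommes sur le réseau $\ESC_{I_\delta}^\tG$ comme partout dans la Partie I. Toutes les interversions d'intégrales et de sommes ci-dessous sont licites grâce à la convergence absolue établie en \ref{ygeom} et \ref{convgeom}. Je commencerais par le \emph{repliement}. La fonction $x\mapsto\wh\tau_P(\bfH_0(x)-T)$ est invariante à gauche par $P(F)$ et $\omega$ par $G(F)$; comme de plus $j_{\tP,\cc}$ est invariante à gauche par $P(F)$, l'intégration sur $\bs{Y}_G$ de la somme sur $\xi\in P(F)\backslash G(F)$ se replie en une intégrale sur $A_\tG(\AM)P(F)\backslash G(\AM)$. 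En y substituant la définition de $j_{\tP,\cc}$ et en repliant la somme sur $\eta\in I_{s(\delta)}(F)\backslash P(F)$ (licite car $I_{s(\delta)}\subset M_{s(\delta)}\subset M_P\subset P$, de sorte que $\wh\tau_P(\bfH_0(\cdot)-T)$ est encore invariante à gauche par $I_{s(\delta)}(F)$), on obtient
\[
\int_{\bs{Y}_G} j^T_\cc(x)\dd x = \iota(\delta)\mun\sum_{\tP\in\wt\ESP_\st}(-1)^{a_\tP-a_\tG}\sum_{s\in\bfW(\ag_{\tM_\delta},\tP)}\int_{A_\tG(\AM)I_{s(\delta)}(F)\backslash G(\AM)}\wh\tau_P(\bfH_0(x)-T)\,\omega(x)\,f(x\mun s(\delta)x)\dd x.
\]

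Ensuite, pour chaque $s$, le changement de variable $x\mapsto w_sx$ (où $w_s\in G(F)$ représente $s$) identifie $A_\tG(\AM)I_{s(\delta)}(F)\backslash G(\AM)$ à $A_\tG(\AM)I_\delta(F)\backslash G(\AM)$, remplace $s(\delta)$ par $\delta$ dans $f$, et transforme $\bfH_0(x)-T$ en $\bfH_0(w_sx)-T=-s\,\Y_{x,T,s}$ d'après \ref{l'élément T_0}. Le facteur $\wh\tau_P(\bfH_0(w_sx)-T)$ devient alors la composante associée au couple $(\tP,s)$ de la famille $\tM_\delta$-orthogonale $\YY(x,T)$. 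En regroupant la somme sur les couples $(\tP,s)$ selon la bijection usuelle avec $\ESF^\tG(\tM_\delta)$ — laquelle absorbe la constante $\iota(\delta)\mun=\vert I_\delta(F)\backslash G^\delta(F)\vert\mun$ reliant les classes modulo $I_\delta(F)$ à celles modulo $G^\delta(F)$ — on fait apparaître, pour chaque $H$ de la somme sur le réseau introduite à l'étape suivante, la définition
\[
\Gamma_{\tM_\delta}^\tG(H,\YY(x,T))=\sum_{\tP\in\ESF^\tG(\tM_\delta)}(-1)^{a_\tP-a_\tG}\wh\tau_\tP(H-\Y_{x,T,\tP}).
\]

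Il reste à factoriser l'intégrale sur $A_\tG(\AM)I_\delta(F)\backslash G(\AM)$ à travers $I_\delta(\AM)\backslash G(\AM)$, la fonction $x\mapsto f(x\mun\delta x)$ étant invariante à gauche par $I_\delta(\AM)$. L'intégrale intérieure porte sur $A_\tG(\AM)I_\delta(F)\backslash I_\delta(\AM)$. Grâce à la relation $\Y_{hx,T,Q}=\Y_{x,T,Q}-\bfH_Q(h)$ (valable pour $h\in I_\delta(\AM)\subset M_\delta(\AM)$) et à $\omega(hx)=\omega(h)\omega(x)$ avec $\omega(h)=e^{\langle\mu_\delta,\bfH_{\tM_\delta}(h)\rangle}$ sous l'hypothèse $I_\delta(\AM)^1\subset\ker\omega$, l'intégrande ne dépend de $h$ que par son image $H=\bfH_{\tM_\delta}(h)\in\ESA_{I_\delta}$; le quotient $A_\tG(\AM)I_\delta(\AM)^1\backslash I_\delta(\AM)$ s'identifie alors au réseau $\ESC_{I_\delta}^\tG=\ESB_\tG\backslash\ESA_{I_\delta}$, et l'intégration sur la fibre $A_\tG(\AM)I_\delta(F)\backslash I_\delta(\AM)^1$ produit le facteur $\vol(I_\delta(F)\backslash I_\delta(\AM)^1)$. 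Compte tenu de l'identité $\omega(x)\sum_{H\in\ESC_{I_\delta}^\tG}\Gamma_{\tM_\delta}^\tG(H,\YY(x,T))e^{\langle\mu_\delta,H\rangle}=\bs{v}_{\tM_\delta}^T(\omega,x)$, on obtient la formule annoncée. Si au contraire $\omega$ n'est pas trivial sur $I_\delta(\AM)^1$, cette même intégrale intérieure est celle d'un caractère unitaire non trivial sur le quotient de volume fini $I_\delta(F)\backslash I_\delta(\AM)^1$, donc nulle, d'où le dernier point.

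Le point délicat, et le véritable cœur de la preuve, est le regroupement combinatoire de la deuxième étape: il faut vérifier avec soin que la bijection entre les couples $(\tP,s)$ et les éléments de $\ESF^\tG(\tM_\delta)$, ainsi que le passage des classes modulo $I_\delta(F)$ aux classes modulo $G^\delta(F)$, produisent exactement la constante $\iota(\delta)\mun$ et reconstituent la fonction $\Gamma_{\tM_\delta}^\tG$. C'est précisément là qu'interviennent la notion de centralisateur stable $I_\delta$ et les subtilités propres à la caractéristique positive, et c'est l'unique endroit où l'adaptation de \cite[9.3]{LW} demande une attention particulière, le reste étant formel.
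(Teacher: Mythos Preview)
Your approach is essentially the same as the paper's, which also proceeds by unfolding to reduce to an integral over $A_\tG(\AM)I_\delta(F)\backslash G(\AM)$ and then factors through $I_\delta(\AM)\backslash G(\AM)$ using the transformation law $\Y_{hx,T,Q}=\Y_{x,T,Q}-\bfH_Q(h)$. However, there is one genuine mistake in your combinatorial step. You claim that the regrouping of the double sum over $(\tP,s)$ into a sum over $\ESF^\tG(\tM_\delta)$ ``absorbe la constante $\iota(\delta)^{-1}$''. This is incorrect: the factor $\iota(\delta)^{-1}$ has nothing to do with the parabolic/Weyl combinatorics. It appears already in the definition of $j_{\tP,\cc}$, is carried unchanged through the unfolding and the change of variables $x\mapsto w_s x$, and reappears untouched in the final formula of the proposition. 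In the paper's formulation, after unfolding one arrives at
\[
\iota(\delta)^{-1}\int_{A_\tG(\AM)I_\delta(F)\backslash G(\AM)}\omega(x)\,e_{\tM_\delta}(x,T)\,f(x^{-1}\delta x)\,\dd x,
\]
where $e_{\tM_\delta}(x,T)$ collects the entire double sum over $(s,\tQ)$ of signs and $\wh\tau$-factors; the constant $\iota(\delta)^{-1}$ sits outside as a harmless prefactor. The combinatorial identity one actually needs is $e_{\tM_\delta}(hx,T)=\Gamma_{\tM_\delta}^\tG(\bfH_{M_\delta}(h),\YY(x,T))$ for $h\in M_\delta(\AM)$, and this does not involve $\iota(\delta)$ at all. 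If you simply drop the parenthetical about absorption and carry $\iota(\delta)^{-1}$ through, your proof becomes correct and coincides with the paper's.
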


% preuve
\begin{proof} Posons
\begin{equation*}e_{\tM_\delta}(x,T)=
\sum_{s\in \bfW(\ag_{\tM_\delta})} \sum_{\tQ \in \ESF_s(\tM_\delta)}
(-1)^{a_{\tQ}-a_\tG}\; \wh{\tau}_{\tQ} (s^{-1}(\bfH_0(w_s x)-T))\ptf\end{equation*}
Comme dans la preuve de \cite[9.3.1]{LW} on a
\begin{equation*}\int_{\bs{Y}_G}j^T_\cc(x)\dd x= \iota(\delta)^{-1}
\int_{A_\tG(\adef) I_\delta(F)\backslash G(\adef)}\omega(x)e_{\tM_\delta}(x,T)f(x^{-1}\delta x) \dd x\ptf\end{equation*}
Pour que l'int\'egrale sur $\bs{Y}_G$ soit non nulle, il faut que $I_\delta(\adef)^1$
soit inclus dans $\ker(\omega)$. Si tel est le cas, on observe que pour $h\in M_\delta(\adef)$ on a
\begin{equation*}e_{\tM_\delta}(hx,T)=\Gamma_{\tM_\delta}^\tG({\mathbf H}_{M_\delta}(h),\YY(x,T))\end{equation*}
et donc que
\begin{equation*}\int_{A_\tG (\adef) I_\delta(F) \backslash I_\delta(\adef)} \omega(hx) e_{\tM_\delta}(hx,T)
 \dd h= \mathrm{vol}(I_\delta(F)\backslash I_\delta(\adef)^1)\mskip 2mu  \bs{v}_{\tM_\delta}^T(\omega,x)\ptf\end{equation*}
\end{proof}

%%%%%%%%%%%%%%%%%%
 \section{Sur le d\'eveloppement g\'eom\'etrique fin}\label{p grand} 
Le d\'eveloppement g\'eom\'etrique fin consiste en l'expression des termes du d\'eveloppement g\'eom\'etrique \ref{devgross}
au moyen d'int\'egrales orbitales pond\'er\'ees. Les propositions \ref{orb} et \ref{IOqss} fournissent une telle expression
pour les termes primitifs ou quasi semi-simples. 

Les autres termes font intervenir des contributions unipotentes et,
comme on a vu en \ref{descente}, la descente centrale ne peut plus tre utilis\'ee en g\'en\'eral sans modification. 
On ne peut donc pas esp\'erer pouvoir reprendre sans efforts 
les travaux d'Arthur, \`a moins d'imposer \`a $p$ d'tre {\og suffisament grand \fg} par rapport au rang de $G$
de sorte que\footnote{Les hypoth\`eses sont probablement redondantes: 
il s'agit d'une liste de de propri\'et\'es toujours vraies 
pour un corps de nombres mais, en g\'en\'eral, fausses pour un corps de fonctions.}: 
\par --  pour toute paire primitive $(\tM,\delta)$, l'\'el\'ement $\delta$ est quasi semi-simple;
\par --   pour tout $\delta\in \tG(F)$, l'automorphisme 
$\mathrm{Int}_\delta$ de $G$ est s\'eparable;
\par --   pour tout $\delta\in \tG(F)$ on a une d\'ecomposition de Jordan 
\begin{equation*}\delta= \delta_\mathrm{s}\delta_\mathrm{u}=\delta_\mathrm{u}\delta_\mathrm{s}\end{equation*}en partie 
quasi semi-simple $\delta_s$ et partie unipotente $\delta_\mathrm{u}$ d\'efinie sur $F$;
\par --   pour tout $\delta\in \tG(F)$ quasi semi-simple et tout ensemble fini $S$ 
de places de $F$, il n'y a qu'un nombre fini de classes de $G_\delta(F_S)$-conjugaison unipotentes.

Observons que si, comme le fait Arthur, on se limite au cas o 
un (et donc tout) $\delta\in\tG(F)$ 
induit un automorphisme ext\'erieur d'ordre fini de $G$, on peut alors demander que
le centre sch\'ematique $Z_G$ soit r\'eduit et que $\tG$ induise un automorphisme de $Z_G$ d'ordre fini 
premier \`a $p$.

Une hypoth\`ese plus forte que les pr\'ec\'edentes, mais aussi plus facile \`a v\'erifier, 
est la suivante. 
On consid\`ere $(\mathrm{GL}_n,\mathrm{GL}_n)$ comme un espace tordu c'est-\`a-dire que $\mathrm{GL}_n$ agit sur lui mme
par conjugaison. On demande qu'il existe un entier $n<p$ et un $F$-morphisme d'espaces tordus alg\'ebriques
\begin{equation*}\iota : (\tG,G) \rightarrow (\mathrm{GL}_n,\mathrm{GL}_n)\end{equation*}
d'image ferm\'ee et qui soit un isomorphisme sur son image.
Sous ces hypoth\`eses, il doit tre possible de reprendre sans grands changements les travaux d'Arthur 
sur le d\'eveloppement g\'eom\'etrique fin: 
on commence par traiter les contributions unipotentes, c'est-\`a-dire les paires primitives $(\tM,\delta)$ 
avec $\delta$ quasi semi-simple et unipotent; puis on traite le cas g\'en\'eral par descente centrale. 
Toutefois, cela reste \`a faire.

%%%%%%%%%%%%%%%%%%%

 \chapter{Premi\`ere forme du d\'eveloppement spectral}
\label{d\'eveloppement spectral}

%%%%%%%%%%%%%%%%%%%%
 \section{Convergence: c™t\'e spectral}\label{CVspec}
On commence par r\'ecrire l'expression pour
$k^T_\spec(x)$ d\'efinie en \ref{l'identit\'e fondamentale}. 
Pour $Q,\mskip 2mu R\in\ESP_\st$, on pose
\begin{equation*}k^T_\spec(Q,R,x)
=\wt{\sigma}_Q^R(\bfH_0(x)-T)
\sum_{\substack{\tP\in\wt\ESP_\st\\\tQ^+\subset\tP\subset\tR^-}}
(-1)^{a_\tP -a_\tG}\bs\Lambda^{T,Q}_1K_\tP(x,x)\ptf\end{equation*}
On a donc
\begin{equation*}k^T_\spec(x)=\sum_{\substack{Q,R\in\ESP_\st\\ Q\subset R}}
\sum_{\xi\in Q(F)\bsl G(F)}k^T_\spec(Q,R,\xi x)\ptf\end{equation*}
On pose
\begin{equation*}\wt{\epsilon}(Q,R)\index{epsilontildeqr@$\wt\epsilon(Q,R)$}
=\left\{\begin{array}{ll}(-1)^{a_{\tR} -a_\tG} &\hbox{si $Q^+\subset R^-$}\\0 &\hbox{sinon}
\end{array}\right.\end{equation*}
et on note $\tG(Q,R)$ l'ensemble des $\delta\in\tG(F)$ tels que $\delta\in\tP(F)$ 
pour un seul $\tP\in\wt\ESP_\st$ tel que 
$\tQ^+\subset\tP\subset\tR^-$ (autrement dit l'ensemble des $\delta\in\tR^-(F)$ 
tels que $\delta\notin\tP(F)$ pour tout $\tP\in\wt\ESP_\st$ tel que 
$\tQ^+\subset\tP\subsetneq\tR^-$). On pose
\begin{equation*}K_{Q,R}(x,y)=\int_{U_Q(F)\bsl U_Q(\adef)}
\sum_{\delta\in\tG(Q,R)}\omega(y) {\fun}(x^{-1}u_Q^{-1}\delta y)\dd u_Q\ptf\end{equation*}
D'apr\`es \cite[10.1.1]{LW}, on a
\begin{equation*}k^T_\spec(x)=\sum_{\substack{Q,R\in\ESP_\st\\ Q\subset R}}\wt{\epsilon}(Q,R)
\sum_{\xi\in Q(F)\bsl G(F)}\wt{\sigma}_Q^R(\bfH_0(\xi x)-T)
\bs\Lambda^{T,Q}_1K_{Q,R}(x,\xi x)\ptf \leqno{(1)}\end{equation*}
Pour $\delta\in\tG(F)$, on a d\'efini $K_{Q,\delta}(x,y)$ en \ref{KQd}, et on pose
\begin{equation*}Q_\delta= Q\cap \mathrm{Int}_\delta^{-1}(Q)\in\ESP_\st^Q\ptf\end{equation*}
On a donc
\begin{equation*}K_{Q,R}(x,x)=\sum_{\delta\in\wt\bfW(Q,R)}\sum_{\xi\in Q_\delta(F)\bsl Q(F)}K_{Q,\delta}(x,\xi x)\leqno{(2)}\end{equation*}
o $\wt\bfW(Q,R)$ est un ensemble de repr\'esentants de $\tG(Q,R)$ modulo $Q(F)$ 
\`a droite et \`a gauche, i.e. des doubles classes 
$Q(F)\bsl\tG(Q,R)/Q(F)$. 
Rappelons que l'on a pos\'e
\begin{equation*}\bsY_{Q_\delta}=\A_\tG(\adef)Q_\delta(F)\bsl G(\adef)\ptf\end{equation*}
Rappelons aussi que l'on a fix\'e en \ref{Siegel} un sous-ensemble fini $\EE_Q$ de $M_0(\adef)$ 
tel que $M_Q(\adef)= \BB_Q \EE_Q M_Q(\adef)^1$ o $\BB_Q\subset A_Q(\adef)$ 
est l'image d'une section du morphisme $A_Q(\adef) \rightarrow \ESB_Q$. On pose
\begin{equation*}M_Q(\adef)^*\bydef \EE_QM_Q(\adef)^1= M_Q(\adef)^1 \EE_Q\ptf\end{equation*}
On a donc $M_Q(\adef)=\BB_Q M_Q(\adef)^*$. 
On suppose de plus, ce qui est loisible, que $\BB_Q$ est 
de la forme
\begin{equation*}\BB_Q= \BB_\tG\BB_Q^\tG\end{equation*}
o $\BB_Q^\tG$ est l'image d'une section du morphisme compos\'e
\begin{equation*}A_Q(\adef) \rightarrow A_\tG(\adef)\backslash A_Q(\adef)\rightarrow\ESB_Q^\tG=\ESB_\tG \backslash \ESB_Q\end{equation*}
et $\BB_\tG$ est l'image d'une section du morphisme $A_\tG(\adef) \rightarrow \ESB_\tG$.
Les lemmes \cite[10.1.2 \`a 10.1.4]{LW} sont vrais ici, et on a la variante de \cite[10.1.5]{LW}:

% lemme
\begin{lemma}\label{variante de [LW,1.10.5]}
Soient $\delta\in Q(F)\bsl\tG(Q,R)$, $u\in U_Q(\adef)$, $a\in\BB _Q^\tG$, 
$k_1,\mskip 2mu  k_2\in\bsK$ et $m_1,\mskip 2mu m_2\in M_Q(\adef)^*$. Supposons que, pour un $\xi\in Q(F)$, on ait
\begin{equation*}K_{Q,\delta}(am_1k_1,\xi u a m_2 k_2)\neq 0\quad\hbox{et}\quad\wt{\sigma}_Q^R(\bfH_0(a))=1\ptf\end{equation*}
Alors on a
\begin{equation*}\lVert \smash{\bfH_0(a)}\rVert \leq c(1+\lVert \bfH_0(m_2)\rVert )\end{equation*}
pour une constante $c>0$ ne d\'ependant que du support de $f$. 
\end{lemma}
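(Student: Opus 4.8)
I need to prove Lemma~\ref{variante de [LW,1.10.5]}, which bounds $\|\bfH_0(a)\|$ from above by $c(1+\|\bfH_0(m_2)\|)$ under the hypotheses that $K_{Q,\delta}(am_1k_1,\xi u a m_2 k_2)\neq 0$ and $\wt{\sigma}_Q^R(\bfH_0(a))=1$. This is the function-field analogue of \cite[10.1.5]{LW}, so the plan is to follow that proof step by step, adapting each estimate to the present setting.

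**The plan.** The starting point is the explicit form of the kernel $K_{Q,\delta}$ recalled in \ref{KQd}: up to the character $\omega$ and the unipotent integration, the condition $K_{Q,\delta}(am_1k_1,\xi u a m_2 k_2)\neq 0$ forces the existence of an element in the support $\Omega_f$ of $f$ of the shape $(am_1k_1)^{-1}v^{-1}\eta^{-1}\delta(\xi u a m_2 k_2)$ for suitable $v\in U_Q(\AM)$ and $\eta\in Q(F)$. Since $\Omega_f$ is compact, the Iwasawa components of this element are controlled. First I would absorb the compact data: $k_1,k_2\in\bsK$ contribute nothing to $\bfH_0$, and $m_1,m_2\in M_Q(\AM)^*= \EE_Q M_Q(\AM)^1$ have their $\bfH_0$ lying in a fixed lattice translate of $M_Q(\AM)^1$, so only the $\EE_Q$-part and the stated $\bfH_0(m_2)$ enter. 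The key mechanism, exactly as in \cite{LW}, is to project the containment in $\Omega_f$ down to $\ag_0$ via $\bfH_0$ and read off that the components $\langle\alpha,\bfH_0(a)\rangle$ for $\alpha\in\Delta_Q^R$ are bounded; this uses that $a$ commutes past $u$ up to a bounded error because $\wt{\sigma}_Q^R(\bfH_0(a))=1$ places $a$ in the cone where the adjoint action of $a$ contracts $U_Q$, so $a^{-1}u a$ (or its inverse) stays in a fixed compact of $U_Q(\AM)$.

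**The estimate.** The heart of the argument is to bound the remaining components $\langle\varpi,\bfH_0(a)\rangle$ for $\varpi\in\hat{\Delta}_0^Q$, i.e. the part of $\bfH_0(a)$ living ``below'' $Q$. Here I would use that $\delta\in\tG(Q,R)$ together with the double-coset normalization forces $\delta$ to genuinely interlock $Q$ and $Q_\delta=Q\cap{\rm Int}_\delta^{-1}(Q)$, and that the element produced in $\Omega_f$ has $\bfH_G$-component zero (it lies in $\Omega_f\cap G(\AM)^1$, which is compact). Combining the compactness of $\Omega_f\cap G(\AM)^1$ with the Iwasawa decomposition and the already-bounded $\Delta_Q^R$-components, the linear-algebra lemmas of \cite[1.8]{LW} (the $\Gamma$-support compactness, valid here once $T-T_1$ is regular) then yield $\|\bfH_0(a)^R_Q\|\le c\|\bfH_0(m_2)^R_Q\|+O(1)$. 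Assembling the projections onto $\ag_Q^R$ and onto the orthogonal complement gives the stated bound $\|\bfH_0(a)\|\le c(1+\|\bfH_0(m_2)\|)$, with $c$ depending only on $\Omega_f$ (hence on the support of $f$).

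**The main obstacle.** The delicate point, and where the function-field setting departs from \cite{LW}, is the control of $a^{-1}u a$ when $a$ ranges over the lattice $\BB_Q^\tG$ rather than a connected cone: I must ensure that $\wt{\sigma}_Q^R(\bfH_0(a))=1$ still guarantees that conjugation by $a$ keeps a fixed compact of $U_Q(\AM)$ inside a fixed compact, uniformly, despite $\bsK$ being open and the finiteness-of-Siegel property failing (as discussed in \ref{Siegel}). This is handled by working with the isomorphism $j\colon\uu\to U_{Q}$ compatible with the $A_0$-action, exactly as constructed in the proof of \ref{ygeom}, which linearizes the adjoint action into scaling by root characters; the cone condition $\wt{\sigma}_Q^R$ then becomes a sign condition on $\langle\alpha,\bfH_0(a)\rangle$ that makes the relevant root-coordinates of $a^{-1}ua$ contract. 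Once this linearization is in place, the remaining estimates are formally identical to those of \cite[10.1.5]{LW}, and I would simply cite \textit{loc.~cit.} for the concluding manipulation.
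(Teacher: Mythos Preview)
Your proposal has the right opening move---extract from $K_{Q,\delta}\neq 0$ an element of the compact support $\Omega_f$---but it then goes off track in two ways.

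First, the decomposition you try to control is the wrong one. Since $a\in\BB_Q^{\tG}\subset A_Q(\AM)$, the element $\bfH_0(a)$ lies in $\ag_Q^{\tG}$, not in all of $\ag_0$: the pairings $\langle\varpi,\bfH_0(a)\rangle$ for $\varpi\in\hat{\Delta}_0^Q$ that you propose to bound are already zero. The paper's decomposition is
\[
\ag_Q^{\tG}=\ag_Q^{R^-}\oplus\mathfrak{b}_{R^-}^G\oplus\ag_{\wt{R}^-}^{\tG}\oplus\ag_G^{\tG},
\]
and the point is that the last three summands are controlled by (respectively) the condition $\wt{\sigma}_Q^R$ via its $\hat{\Delta}_{\tP}$-part, the same condition, and the fact that $\theta-1$ is an automorphism of $\ag_G^{\tG}$; after that one is reduced to $H\in\ag_Q^{R^-}$ and the argument of \cite[10.1.5]{LW} applies verbatim.

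Second, your contraction claim is false: $\wt{\sigma}_Q^R(\bfH_0(a))=1$ says $\langle\alpha,\bfH_0(a)\rangle>0$ only for $\alpha\in\Delta_Q^R$, while $\langle\alpha,\bfH_0(a)\rangle\leq 0$ for $\alpha\in\Delta_Q\smallsetminus\Delta_Q^R$, so conjugation by $a$ does not contract $U_Q(\AM)$ and there is no reason for $a^{-1}ua$ to stay in a fixed compact. The actual mechanism is not contraction of $u$ but rather the normalization $\delta=w_{s_0}\delta_0$ with $w_{s_0}\in M_{R^-}(F)$ and $s_0^{-1}\alpha>0$ for $\alpha\in\Delta_0^Q$, which makes the element $a^{-1}u_Q^{-1}\mu\delta\xi u a$ tractable once projected to the Levi; the isomorphism $j:\uu\to U_{Q^+}$ from \ref{ygeom} plays no role here. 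You should follow the paper: normalize $\delta$ and $\xi$, write $m_i=m'_ix_i$ with $m'_i\in M_Q(\AM)^1$ and $x_i\in\EE_Q$, obtain $m'^{-1}_1a^{-1}u_Q^{-1}\mu\delta\xi u a m'_2\in\Omega$, then split $\bfH_0(a)$ along the four-fold decomposition above and reduce to $\ag_Q^{R^-}$.
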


\begin{proof}
On reprend, en la modifiant, celle de \cite[10.1.5]{LW}. On commence par modifier $\delta$ et $\xi$ 
comme au d\'ebut de la preuve de \textit{loc.~cit}: on suppose que 
$\delta= w_{s_0}\delta_0$ o $w_ {s_0}\in M_{R^-}(F)$ repr\'esente un \'el\'ement $s_0$ du groupe de Weyl 
$\bfW^{M_{R^-}}$ de $M_{R^-}$ tel que $s_0^{-1}\alpha >0$ pour toute racine 
$\alpha\in\Delta_0^Q$, et $\xi\in U_0(F)$. Pour $i=1,\mskip 2mu 2$, on \'ecrit $m_i= m'_ix_i$ avec 
$m'_i\in M_Q(\adef)^1$ et $x_i\in\EE_Q$. Rappelons que
\begin{equation*}K_{Q,\delta}(x,y)=\int_{U_Q(\adef)}\omega(x)\sum_{\mu\in M_Q(F)}{\fun}(x^{-1}u_Q^{-1}\mu\delta y)\dd u_Q\ptf\end{equation*}
On a suppos\'e
\begin{equation*}K_{Q,\delta}(am_1k_1,\xi u a m_2 k_2)\neq 0\end{equation*}
ce qui n'est possible que s'il existe un $u_Q\in U_Q(\adef)$ et un $\mu\in M_Q(F)$ tels que
\begin{equation*}k_1^{-1}x_1^{-1}m'^{-1}_1 a^{-1}u_Q^{-1}\mu\delta\xi uam'_2x_2k_2\end{equation*}
appartient au support de $f$. On en d\'eduit qu'il existe un compact $\Omega$ de $G(\adef)$, ne d\'ependant que du support de $f$, tel que
\begin{equation*}m'^{-1}_1 a^{-1}u_Q^{-1}\mu\delta\xi uam'_2\in\Omega\ptf\end{equation*}
On d\'ecompose $H=\bfH_0(a)$ suivant la d\'ecomposition
\begin{equation*}\ag_Q^\tG = \ag_Q^{R^-}\oplus \mathfrak{b}_{R^-}^G \oplus \ag_{\wt{R}^-}^\tG \oplus \ag_G^\tG\ptf\end{equation*}
o $\mathfrak{b}_{R^-}^G$ est l'orthogonal de $\ag_{\wt{R}}^\tG$ dans $\ag_{R^-}^G$.
On rappelle que $\theta-1: \ag_G^\tG \rightarrow \ag_G^\tG$ est un automorphisme. Comme dans la preuve de \textit{loc.~cit}, 
il suffit de consid\'erer les $a\in \BB_Q$ tels que $H\in \ag_Q^{R^-}$ et la suite de la d\'emonstration est identique.
\end{proof}

% prop
\begin{proposition}\label{[LW,10.1.6]}
Supposons $T$ assez r\'egulier, c'est-\`a-dire $\bsd_0(T)\geq c$ o $c$ 
est une constante d\'ependant du support de $f$. Alors pour tous $Q,\mskip 2mu R\in\ESP_\st$ 
tels que $Q\subset R$ et tout $\delta\in\tG(Q,R)$, l'int\'egrale
\begin{equation*}\int_{\bsY_{Q_\delta}}\wt{\sigma}_Q^R(\bfH_0(x)-T)\lvert \bs\Lambda^{T,Q}_1 K_{Q,\delta}(x,x)\rvert  \dd x\end{equation*}
est convergente.
\end{proposition}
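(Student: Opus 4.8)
The plan is to follow the proof of \cite[10.1.6]{LW} step by step, the only substantial change being that its geometric core is now supplied by lemma \ref{variante de [LW,1.10.5]}, the function field substitute for \cite[10.1.5]{LW}. First I would unfold the integral over $\bsY_{Q_\delta}=A_\tG(\AM)Q_\delta(F)\bsl G(\AM)$ by Iwasawa relative to $Q$, writing a point as $x=uamk$ with $u\in U_Q(\AM)$, $a\in\BB_Q^\tG$, $m\in M_Q(\AM)^*$ and $k\in\bsK$, the inclusion $Q_\delta\subset Q$ producing a residual quotient of $M_Q$ by the parabolic $Q_\delta\cap M_Q$. The integration formula of \ref{formesautom} contributes the modular factor $\bs{\delta}_Q(am)^{-1}=e^{-\langle 2\rho_Q,\bfH_Q(am)\rangle}$, while the factor $\wt{\sigma}_Q^R(\bfH_0(x)-T)$ restricts $\bfH_0(a)$ to a cone.

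Next I would control the two remaining non-compact directions, the $M_Q$-direction and the split direction $a\in\BB_Q^\tG$, separately. The truncation $\bs\Lambda^{T,Q}_1$ acts only along $\ag_0^Q$, i.e.\ in the $M_Q$-directions; arguing as in \ref{supportcompact} and \ref{prop1nt} but relative to $M_Q$ in place of $G$ (for $T$ regular the truncation reduces to a sharp cutoff, as in \ref{propK}), the function $\bs\Lambda^{T,Q}_1 K_{Q,\delta}(x,x)$ is, in the variable $m$, supported in a set whose image in a Siegel domain for $M_Q$ is compact. In particular $\bfH_0(m)$ stays bounded on the support. The integrand itself is bounded there: the spectral decomposition \ref{KQd}, together with the factorization $f=e_{\bsK'}\star f\star e_{\bsK'}$ of \ref{factorisation du noyau} and the Schwartz inequality \ref{decspec}\,(2), bounds $|\bs\Lambda^{T,Q}_1 K_{Q,\delta}(x,x)|$ by a product of diagonal kernels which are bounded by \ref{KQd}\,(ii); alternatively the crude estimate \ref{noyineg} suffices once the support is known to be compact.

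The crux is the split direction. By lemma \ref{variante de [LW,1.10.5]}, on the support of the integrand, where $K_{Q,\delta}(am_1k_1,\xi uam_2k_2)\neq 0$ for some $\xi\in Q(F)$ and $\wt{\sigma}_Q^R(\bfH_0(a))=1$, one has $\left\|\bfH_0(a)\right\|\leq c(1+\left\|\bfH_0(m_2)\right\|)$ with $c$ depending only on the support of $f$. Since the previous step confines $\bfH_0(m_2)$ to a bounded set, $\bfH_0(a)$ is then confined to a bounded region of $\ag_Q^\tG$; as $a$ ranges over the discrete set $\BB_Q^\tG$, mapping onto the lattice $\ESB_Q^\tG$, only finitely many $a$ contribute. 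The integral thus reduces to a finite sum over $a$ of integrals of a bounded integrand over a compact set (the compact $U_Q(F)\bsl U_Q(\AM)$, the bounded $M_Q$-region, and $\bsK$), whence absolute convergence; the hypothesis $\bsd_0(T)\geq c$ is precisely what makes lemma \ref{variante de [LW,1.10.5]} and the relative version of \ref{supportcompact} applicable.

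The main obstacle is the bookkeeping coupling these two confinements: one must check that the $M_Q$-variable entering the bound of \ref{variante de [LW,1.10.5]} is indeed the one cut off by $\bs\Lambda^{T,Q}_1$, and that this holds uniformly in $\delta\in\tG(Q,R)$ and on the diagonal, where the two arguments of $K_{Q,\delta}$ differ by the $\xi u$ produced by the constant-term contributions of the truncation. Here the function field situation is in fact simpler than in \cite{LW}: because for regular $T$ the truncation is a sharp cutoff and the truncated kernels have genuinely compact, rather than merely rapidly decreasing, support, no quantitative decay estimate is needed, and the finiteness of the $\BB_Q^\tG$-sum replaces the convergence of an integral over a split torus.
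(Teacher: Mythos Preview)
Your approach matches the paper's: Iwasawa decomposition relative to $Q$, truncation confines $m$, lemma \ref{variante de [LW,1.10.5]} then confines $a$ to a finite subset of the lattice $\BB_Q^\tG$. Two refinements from the paper's proof are worth noting. First, the paper makes the independence of the $m$-compact from $a$ explicit by the identity
$$\bs\Lambda^{T,Q}_1K_{Q,\delta}(amk,\xi uamk)=\bs\Lambda^{T,Q}_1K_{Q,\delta}(mk,\xi a^{1-\delta}(a^{-1}ua)mk)\vg$$
drawn from the definition of $K_{Q,\delta}$; after this change of variable \ref{prop1nt}\,(ii) applies uniformly in $a,u,k,\xi$, giving a compact $\Omega_2\subset\Siegel_Q^*\times\Siegel_Q^*$ and hence $m\in C$ on the diagonal. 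Second, the sum over $\xi\in Q_\delta(F)\backslash Q(F)$---which comes from unfolding the quotient $\bsY_{Q_\delta}$ over $\bsY_Q$ (via \cite[10.1.2]{LW}), not from the constant-term contributions of the truncation as you write---needs its own finiteness argument, supplied by \cite[10.1.4]{LW} once $m$ and $a$ are confined.
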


\begin{proof} C'est l'analogue de \cite[10.1.6]{LW}.
Rappelons que \begin{equation*}G(\adef)= U_Q(\adef)M_Q(\adef)\bsK\ptf\end{equation*}Puisque $U_Q(F)\bsl U_Q(\adef)$ est compact, 
il existe un compact $\Omega\subset U_Q(\adef)$ tel que $U_Q(\adef)=U_Q(F)\Omega$. 
On a donc
\begin{equation*}G({\adef}) = Q(F) \Omega \Siegel^{M_Q} \bs{K}\end{equation*}
o
\begin{equation*}\Siegel^{M_Q}= \BB_Q\Siegel^{M_Q,*}= (\BB_\tG\BB_Q^\tG)(\EE_Q\Siegel^{M_Q,1})\end{equation*}
est un domaine de Siegel pour le quotient $M_Q(F)\backslash M_Q(\adef)$. On pose $\Siegel_Q^*= \Siegel^{M_Q,*}$. 
{Alors $ \Omega\mskip 2mu \BB_Q^\tG\mskip 2mu \Siegel_Q^*\bs{K}$ est un domaine de Siegel pour le quotient 
$\BB_\tG Q(F)\backslash G(\adef)$.} 
On est donc ramen\'e \`a estimer, pour $\delta\in\tG(Q,R)$, l'expression
\begin{equation*}\sum_{a\in \BB_Q^\tG} \int_{\Omega \times\Siegel_Q^*\times\bsK}
\bs{\delta}_Q(am)\mun\wt{\sigma}_Q^R(\bfH_0(am)-T)\Xi_{Q,\delta}(uamk)\dd u \dd m \dd k
\leqno{(3)}\end{equation*}
avec
\begin{equation*}\Xi_{Q,\delta}(x)=
\sum_{\xi\in Q_\delta(F)\bsl Q(F)}
\big\lvert\bs\Lambda_1^{T,Q}K_{Q,\delta}(\xi x,\xi x)\big\rvert\ptf\end{equation*}
D'apr\`es \cite[10.1.2]{LW}, on a
\begin{equation*}\Xi_{Q,\delta}(uamk)=
\sum_{\xi\in Q_\delta(F)\bsl Q(F)}\big\lvert\bs\Lambda_1^{T,Q}K_{Q,\delta}(amk,\xi u a m k)\big\rvert\ptf\end{equation*}
On d\'eduit (d'apr\`es la d\'efinition de $K_{Q,\delta}$) que l'expression (3) est \'egale \`a
\begin{align*}
\lefteqn{
\sum_{a\in \BB_Q^\tG} \int_{\Omega \times\Siegel_Q^*\times\bsK} 
\bs{\delta}_Q(am)\mun\wt{\sigma}_Q^R(\bfH_0(am)-T) }\\
&&\quad\quad\times
\sum_{\xi\in Q_\delta(F)\bsl Q(F)}
\big\lvert\bs\Lambda_1^{T,Q}K_{Q,\delta}(mk,\xi a^{1-\delta} (a^{-1}ua) m k)\big\rvert\dd u \dd m \dd k.
\end{align*}
avec $a^{1-\delta}= a\mathrm{Int}_\delta^{-1}(a^{-1})$. Notons que $\wt{\sigma}_Q^R(\bfH_0(am)-T)$ 
ne d\'epend que de la projection $\bfH_0(a)+\bfH_Q(m)+ T_Q$ de $\bfH_0(am) -T$ dans $\ag_Q$. 
Puisque $\bfH_Q(M_Q(\adef)^1)=1$ et $\EE_Q$ est fini, $\bfH_Q(m)$ ne prend qu'un nombre fini de valeurs. 
On en d\'eduit qu'il existe une constante $c_1>0$ (ne d\'ependant que de $\EE_Q$) telle que si $\bsd_0(T)\geq c_1$, 
alors la condition 
$\wt{\sigma}_Q^R(\bfH_0(am)-T)=1$ pour un $m\in\Siegel_Q^*$ entra"ne que 
$\wt{\sigma}_Q^R(\bfH_0(a))=1$. 
D'apr\`es le lemme \ref{prop1nt}~(i), l'op\'erateur de troncature fournit un noyau 
\begin{equation*}(m_1,m_2)\mapsto\bs\Lambda^{T,Q}_1K_{Q,\delta}(m_1k,\xi a^{1-\delta} (a^{-1}ua) m_2k)\leqno{(4)}\end{equation*}
sur $M_Q(\adef)\times M_Q(\adef)$ dont la restriction \`a $\Siegel_Q^*\times\Siegel_Q^*$ 
est lisse et \`a support compact, 
donc born\'ee. Choisissons un sous-groupe ouvert distingu\'e $\bsK'$ de $\bsK$ tel que la fonction 
$f\in C^\infty_\mathrm{c}(\tG(\adef))$ d\'efinissant $K_{Q,\delta}$ soit $\bsK'$-bi-invariante. 
Notons $\bsK'_Q$ le groupe $\bsK'\cap M_Q(\adef)$. Pour tous $a$, $u$, $k$ et $\xi$, la fonction
\begin{equation*}(m_1,m_2)\mapsto K_{Q,\delta}(m_1k,\xi a^{1-\delta} (a^{-1}ua) m_2k)\end{equation*}
sur $M_Q(F)\bsl M_Q(\adef)\times M_Q({F})\bsl M_Q(\adef)$ est 
$({\bs K}'_Q\times\bsK'_Q)$-invariante \`a droite. D'apr\`es 
\ref{prop1nt}~(ii), il existe un compact $\Omega_2$ de $\Siegel_Q^*\times\Siegel_Q^*$ tel que pour 
tous $a$, $u$, $k$ et $\xi$, le support de la restriction \`a $\Siegel_Q^*\times\Siegel_Q^*$ 
du noyau tronqu\'e (4) soit contenu dans $\Omega_2$. 
Par restriction \`a la diagonale, on obtient une fonction en $m=m_1=m_2$ born\'ee sur 
$\Siegel_Q^*$, et \`a support dans un compact $C$ de $\Siegel_Q^*$ 
ind\'ependant de $a$, $u$, $k$ et $\xi$. D'apr\`es le lemme \ref{variante de [LW,1.10.5]} (en supposant $\bsd_0(T)\geq c_1$), 
si $\Xi_{Q,\delta} (umak)\neq 0$, alors 
$\lVert \smash{\bfH_0(a)^\tG}\rVert \leq c_2(1+\lVert \bfH_0(m)\rVert )$ pour une constante $c_2>0$. 
Par cons\'equent la somme sur $a\in\BB_Q^\tG$ dans (3) 
est finie. D'apr\`es \cite[10.1.4]{LW}, la somme sur $\xi$ dans
\begin{equation*}\sum_{\xi\in Q_\delta(F)\bsl Q(F)}\big\vert\bs\Lambda_1^{T,Q}K_{Q,\delta}(mk,\xi a^{1-\delta} (a^{-1}ua) m k)\big\vert\end{equation*}
porte sur un ensemble fini, que l'on peut choisir ind\'ependant de $a$, $u$, $m$ et $k$ (puisque $x=mk$ et 
$y= a^{1-\delta} (a^{-1}ua) m k$ varient dans des compacts, cf. la preuve de \textit{loc.~cit}.). 
Cela ach\`eve la d\'emonstration. \end{proof}

On en d\'eduit l'analogue de \cite[10.1.7]{LW}:

% corollaire
\begin{corollary}\label{convspec}
Si $\bsd_0(T)>c$ o $c$ est une constante d\'ependant du support de $f$, l'int\'egrale
\begin{equation*}\int_{\bsX_G}\vert k^T_\spec(x)\vert \dd x \end{equation*}
est convergente.
\end{corollary}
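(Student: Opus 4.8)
the convergence of $\int_{\bsX_G}|k^T_\spec(x)|\,dx$ for $T$ sufficiently regular.

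Let me think about how to prove this, following the structure of the paper.

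The corollary is stated as "the analogue of [LW, 10.1.7]", and the proposition just before it (Proposition \ref{[LW,10.1.6]}, the analogue of [LW, 10.1.6]) establishes convergence of the individual pieces $\int_{\bsY_{Q_\delta}}\tilde\sigma_Q^R(\bfH_0(x)-T)|\bs\Lambda_1^{T,Q}K_{Q,\delta}(x,x)|\,dx$ for each $Q\subset R$ and each $\delta\in\tG(Q,R)$. So the corollary must follow by assembling these finitely many convergent integrals using the decomposition formulas (1) and (2) of the section.

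So let me reconstruct the argument.

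First, I start from formula (1) of Section \ref{CVspec}:
$$k^T_\spec(x)=\sum_{Q\subset R}\tilde\epsilon(Q,R)\sum_{\xi\in Q(F)\backslash G(F)}\tilde\sigma_Q^R(\bfH_0(\xi x)-T)\bs\Lambda_1^{T,Q}K_{Q,R}(x,\xi x).$$

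Then I want to bound $\int_{\bsX_G}|k^T_\spec(x)|\,dx$. By the triangle inequality it's bounded by a finite sum (over pairs $Q\subset R$ in $\ESP_\st$) of integrals
$$\int_{\bsX_G}\sum_{\xi\in Q(F)\backslash G(F)}\tilde\sigma_Q^R(\bfH_0(\xi x)-T)\,|\bs\Lambda_1^{T,Q}K_{Q,R}(x,\xi x)|\,dx.$$

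The key reduction is an unfolding: summing over $\xi\in Q(F)\backslash G(F)$ against an integral over $\bsX_G=G(F)\backslash G(\AM)$ collapses to an integral over $Q(F)\backslash G(\AM)$ (and then, accounting for the $A_\tG$ invariance, over $\bsY_Q=A_\tG(\AM)Q(F)\backslash G(\AM)$). But I must be careful: $k^T_\spec$ and the kernel $K_{Q,R}$ involve $A_\tG$-behavior, and the integral over $\bsX_G$ vs $\bsY_G$ needs the truncation to make things finite. Actually, looking at the structure, the integral of $|k^T_\spec|$ over $\bsX_G$ is what's asserted — so this must be handled by the standard unfolding that replaces $\int_{\bsX_G}\sum_{\xi\in Q(F)\backslash G(F)}(\cdots)$ by $\int_{Q(F)\backslash G(\AM)}(\cdots)$.

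My plan.

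First I would substitute the decomposition (2) of $K_{Q,R}(x,x)$ into formula (1):
$$K_{Q,R}(x,x)=\sum_{\delta\in\wt\bfW(Q,R)}\sum_{\xi\in Q_\delta(F)\backslash Q(F)}K_{Q,\delta}(x,\xi x),$$
so that after the outer unfolding $\sum_{\xi\in Q(F)\backslash G(F)}$ combines with $\sum_{\xi\in Q_\delta(F)\backslash Q(F)}$ into a single sum over $Q_\delta(F)\backslash G(F)$. The composed unfolding turns the double-coset sum against $\int_{\bsX_G}$ into the integral over $\bsY_{Q_\delta}=A_\tG(\AM)Q_\delta(F)\backslash G(\AM)$ appearing in Proposition \ref{[LW,10.1.6]}. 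Here $\wt\bfW(Q,R)$ is a \emph{finite} set of double-coset representatives, and there are only finitely many standard pairs $Q\subset R$, so the whole expression for $\int_{\bsX_G}|k^T_\spec(x)|\,dx$ is majorized by a finite sum of the integrals
$$\int_{\bsY_{Q_\delta}}\tilde\sigma_Q^R(\bfH_0(x)-T)\,|\bs\Lambda_1^{T,Q}K_{Q,\delta}(x,x)|\,dx.$$

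Second, I would invoke Proposition \ref{[LW,10.1.6]}: provided $\bsd_0(T)\ge c$ (with $c$ depending only on the support of $f$), each of these finitely many integrals converges. A finite sum of convergent absolutely-positive integrals is convergent, which yields the claim. One technical point to verify cleanly is that the passage from $\int_{\bsX_G}$ (over $G(F)\backslash G(\AM)$) to the integrals over $\bsY_{Q_\delta}$ is legitimate: since all integrands are nonnegative after taking absolute values, Tonelli's theorem permits the interchange of summation and integration and the unfolding without any convergence hypothesis beyond finiteness of the final bound, so no circularity arises. This is exactly the mechanism of [LW, 10.1.7], and the proof here should be a near-verbatim transcription.

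The main obstacle, as usual in this circle of arguments, is not the present corollary but the proposition it rests on: all the genuine analytic work — the finiteness of the sum over the lattice $\BB_Q^\tG$ (controlled via Lemma \ref{variante de [LW,1.10.5]}), the compact-support and boundedness of the truncated kernel on $\Siegel_Q^*\times\Siegel_Q^*$ (Lemma \ref{prop1nt}), and the finiteness of the $\xi$-sum — is already absorbed into Proposition \ref{[LW,10.1.6]}. Granting that proposition, the corollary itself is purely a bookkeeping step: assemble the finitely many convergent pieces. I therefore expect the proof to be short, consisting of the substitution of (2) into (1), the unfolding identity, and a citation of Proposition \ref{[LW,10.1.6]} together with the finiteness of $\{(Q,R)\}$ and of each $\wt\bfW(Q,R)$.
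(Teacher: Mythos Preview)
Your approach is correct and matches the paper's: the corollary is deduced from Proposition~\ref{[LW,10.1.6]} by substituting (2) into (1), unfolding the sums over $Q(F)\backslash G(F)$ and $Q_\delta(F)\backslash Q(F)$ into a single integral over $\bsY_{Q_\delta}$, and invoking the finiteness of the set of pairs $(Q,R)$ and of each $\wt\bfW(Q,R)$ (the latter by the Bruhat decomposition). The paper gives no explicit proof beyond ``On en d\'eduit l'analogue de \cite[10.1.7]{LW}'', and also remarks that the corollary follows independently from the fundamental identity $k^T_{\rm g\acute{e}om}=k^T_\spec$ together with Theorem~\ref{convgeom}.

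One point you flagged but did not resolve: the statement should read $\bsY_G$ rather than $\bsX_G$. The integrand is $A_\tG(\AM)$-invariant (since $\omega$ is trivial on $A_\tG(\AM)$, elements of $A_\tG(\AM)$ commute with $\delta$, and the functions $\wt\sigma_Q^R$, $\hat\tau_P^Q$ are insensitive to $\ag_\tG$), so the integral over $\bsX_G$ diverges whenever $A_\tG\neq 1$. This is consistent with the definition of $\Jres^T_\spec$ in \S\ref{l'identit� fondamentale}, with Theorem~\ref{convgeom}, and with Proposition~\ref{cot�spec}, all of which use $\bsY_G$; the occurrence of $\bsX_G$ here appears to be a typo. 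With that correction your unfolding lands exactly on $\bsY_{Q_\delta}$ and the argument closes.
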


Ce corollaire est aussi impliqu\'e par l'identit\'e fondamentale \ref{idfond} 
et le th\'eor\`eme \ref{convgeom}. 
%%%%%%%%%%%%%%%%%%

 \section{Annulations suppl\'ementaires} 
 
 Nous allons maintenant donner une expression un peu diffŽrente pour
\begin{equation*}\Jres^T_\spec
=\int_{\bsY_G}k_\spec^T(x)\dd x\ptf\end{equation*}
De fait, comme dans \cite[10.2.3]{LW}, on a des annulations suppl\'ementaires
qui sont une premi\`ere \'etape essentielle pour le d\'eveloppement spectral fin:

% proposition
\begin{proposition}\label{cot\'espec}
Si $T$ est assez r\'egulier (comme dans le lemme \cite[10.2.1]{LW}), on a
\begin{equation*}\Jres^T_\spec=\sum_{\substack{Q,R\in\ESP_\st\\ Q\subset R}}\wt{\eta}(Q,R)
\int_{\bsY_{Q_{\delta_0}}}\wt{\sigma}_Q^R(\bfH_0(x)-T)
\bs\Lambda^{T,Q}_1K_{Q,\delta_0}(x,x)\dd x\leqno{(1)}\end{equation*}
avec
\begin{equation*}\wt{\eta}(Q,R)\index{etatildeqr@$\wt{\eta}(Q,R)$}
=\left\{\begin{array}{ll}
\wt{\epsilon}(Q,R) &\hbox{si $Q^+ = R^-$}\\0 &\hbox{sinon}
\end{array}\right. \end{equation*}
\end{proposition}

% preuve
\begin{proof}
Soient $Q,\mskip 2mu R\in\ESP_\st$ tels qu'il existe un $\tP\in\tP_\st$ avec $Q\subset P\subset R$. 
On suppose que les \'el\'ements 
de $\wt\bfW(Q,R)$ sont de la forme $\delta = w_s$ o $w_s\in\tM_{R^-}(F)$ est un repr\'esentant de 
$s= s_0\rtimes\theta_0$ avec $s_0\in \bfW^{M_{R^-}}$ de longueur minimale 
dans sa double classe $\bfW^{M_Q}\bsl \bfW^{M_{R^-}}/\bfW^{M_Q}$. 
On a donc $s\alpha >0$ et $s^{-1}\alpha >0$ pour toute racine $\alpha\in\Delta_0^Q$, et 
$M_s = Q_\delta\cap M_Q$ est un sous-groupe parabolique standard de $M_Q$ (cf. \cite[10.2]{LW}). 
On note $S$ l'\'el\'ement de $\ESP_\st$ tel que $S\cap M_Q = M_S$, et on pose 
$U_S^Q = U_S\cap M_Q$. Le lemme \cite[10.2.1]{LW} et la proposition \cite[10.2.2]{LW} sont vrais ici. 
Cela implique que si $T$ est assez r\'egulier (comme dans \cite[10.2.1]{LW}), alors pour $Q,\mskip 2mu R\in\ESP_\st$ 
tels que $Q\subset R$, seul l'\'el\'ement $\delta\in\wt\bfW(Q,R)$ appartenant \`a la double classe $Q(F)\delta_0 Q(F)$ 
donne une contribution non triviale \`a l'int\'egrale de $k^T_\spec$ exprim\'e au moyen 
des \'equations \ref{CVspec}.(1) et \ref{CVspec}.(2). 
\end{proof}

% remarque
Dans le cas non tordu la formule est beaucoup plus simple, puisque la condition 
$1\in \bfW(Q,R)$ implique $Q=R$, et que $\sigma^Q_Q=0$ sauf si $Q=G$ \cite[2.11.4]{LW}. 
On a donc, dans le cas non tordu, et pour $T$ assez rŽgulier
\begin{equation*}\Jres^T_\spec=
\int_{\overline{\bsX}_{\mskip -2mu G}}\bs\Lambda^{T,G}_1K_G(x,x)\dd x\ptf\end{equation*}

%%%%%%%%%%%%%%%%%%%

 \chapter{Formule des traces: propri\'et\'es formelles}
\label{propri\'et\'es formelles}

%%%%%%%%%%%%%%%%%%%%%

 \section{Le polyn™me asymptotique}
\label{le polyn™me asymptotique}

Rappelons que l'on a la d\'ecomposition
\begin{equation*}k^T_\geom(x)=\sum_{\oo\in\OO}k^T_\oo(x)\end{equation*}
et l'identit\'e fondamentale \ref{idfond}
\begin{equation*}k^T_\geom(x)= k^T_\spec(x)\ptf\end{equation*}
Pour $\bullet =\mathrm{spec}$, $\mathrm{g\acute{e}om}$ ou $\oo$, on \'ecrira parfois
\begin{equation*}k^{\tG,T}_\bullet(f,\omega; x)\quad \hbox{en place de}\quad k^T_\bullet(x)\end{equation*}
s'il est n\'ecessaire de pr\'eciser les donn\'ees.
On a vu en \ref{convgeom} et \ref{convspec} que l'int\'egrale
\begin{equation*}\int_{\bsY_G}k^T_\bullet(x)\dd x\end{equation*}
est absolument convergente. En particulier on a la d\'ecomposition
\begin{equation*}\int_{\bsY_G}k^T_\bullet(x)\dd x= 
\sum_{Z\in \bsbbc_\tG} \int_{\bsY_G(Z)}k^T_\bullet(x)\dd x\end{equation*}
o $\bs{Y}_G(Z)$ est l'image dans $\bs{Y}_G$ de 
l'ensemble $\{g\in G(\adef)\mskip 2mu \vert \mskip 2mu \bfH_\tG(g)=Z'\}$ pour un rel\`evement (quelconque) $Z'$ de $Z$ dans $\ESA_\tG$. 
La fonction $f\in C^\infty_\mathrm{c}(\tG(\adef))$ \'etant fix\'ee on consid\`ere, pour chaque $\tQ\in\wt\ESP_\st$, 
la fonction $f_{\tQ}\in C^\infty_\mathrm{c}(\tM_Q(\adef))$ d\'efinie par
\begin{equation*}f_{\tQ}(m)=\int_{U_Q(\adef)\times\bsK}f(k^{-1}m uk)\dd u\dd k\ptf\end{equation*}
On a la suite exacte courte
\begin{equation*}0 \rightarrow \ESB_\tQ^\tG \rightarrow \ESC_\tQ^\tG \rightarrow \bsbbc_\tQ \rightarrow 0\end{equation*}
et pour $Z\in \bsbbc_\tQ$ et $T,\mskip 2mu X \in \ag_{0}$, on a pos\'e (cf. \ref{etapol}) 
\begin{equation*}\eta_{\tQ,F}^{\tG,T}(Z;X)=\sum_{H\in\ESB_\tQ^\tG(Z)}\Gamma_\tQ^\tG(H-X,T)\end{equation*}
o $\ESB_\tQ^\tG(Z)$
 est la fibre au-dessus de $Z\in\bsbbc_\tQ$. On a la variante de \cite[11.1.1]{LW}:

% th\'eor\`eme
\begin{theorem}\label{propform}
Pour $\bullet =\mathrm{spec}$, $\mathrm {g\acute{e}om}$ ou $\oo$, Il existe une fonction
\begin{equation*}T\mapsto \Jres^T_\bullet= \Jres^{\tG,T}_\bullet(f,\omega)\end{equation*}
dans $\mathrm{PolExp}$ telle que si $\bsd_0(T)\geq c(f)$ pour une constante $c(f)$,
ne d\'ependant que du support de $f$, on ait
\begin{equation*}\Jres^T_\bullet=\int_{\bsY_G}k^T_\bullet(x)\dd x\ptf\end{equation*}
\end{theorem}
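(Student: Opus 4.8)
The statement to prove is Theorem 11.1.1 (here labeled \ref{propform}): for each flavor $\bullet \in \{\mathrm{spec}, \mathrm{g\acute eom}, \oo\}$, the function $T \mapsto \int_{\bsY_G} k^T_\bullet(x)\,\dd x$, defined a priori only for $T$ sufficiently regular, agrees on that regular cone with a (unique) element of PolExp.

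Let me think about what is available. The convergence theorems (\ref{convgeom}, \ref{convspec}) guarantee that for $\bsd_0(T) \geq c(f)$ the integrals $\int_{\bsY_G} k^T_\bullet(x)\,\dd x$ are absolutely convergent. So the integral defines an honest function of $T$ on the regular cone. PolExp is defined (\ref{polexp}) as functions on $\ag_{0,\QM}$ that are, on each lattice $\ESR$, finite sums of polynomials times exponentials. The uniqueness lemma \ref{unicit�} says such a representation is determined by the behavior on the intersection of $\ESR$ with any open cone. So the strategy is clear: I need to show that the integral, restricted to the rational points of the regular cone, extends to a PolExp function.

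The key mechanism should be to open up $k^T_\bullet(x)$ and relate its $T$-dependence to the $(G,M)$-family / orthogonal-family machinery of Chapters 1–2, which is exactly where PolExp-membership was established. Concretely: the truncation operator $\bs\Lambda^{T,Q}$ and the functions $\hat\tau_P(\bfH_0(\cdot)-T)$, $\wt\sigma_Q^R(\bfH_0(\cdot)-T)$ that enter the definitions of $k^T_{\mathrm{g\acute eom}}$, $k^T_\spec$, $k^T_\oo$ all carry the $T$-dependence through the orthogonal family $\TT = (\brT_P)$ (or $\YY(T)$). The crucial earlier results are \ref{etapol} and \ref{gammaMpol}, which assert precisely that expressions built from $\Gamma^{\tQ}_{\tM}(\cdot,\XX(T))$, summed against lattice points, lie in PolExp and have limiting polynomials $p_{\ESR_k,0}$ independent of $\ESR$. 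So the plan is: for $T$ in the regular cone, decompose $\int_{\bsY_G} k^T_\bullet(x)\,\dd x$ (using the partition \cite[3.6.3]{LW} and the fibration of $\bsY_G$ over $\bsbbc_\tG$ via $\bfH_\tG$) into a finite sum indexed by pairs $(Q,R)$ or by $\tP$, where each term is an integral over a Siegel-type domain of a kernel factor times a combinatorial $\Gamma$-function of $\bfH_0(x)-T$. First I would interchange the $x$-integration and the $T$-dependence: the $x$-integral of the kernel factor produces a rapidly-decreasing coefficient function on the relevant lattice (this is where the convergence proofs \ref{convgeom}, \ref{convspec} and the boundedness/compact-support lemma \ref{prop1nt} do the work), and the $\Gamma$-factor depends on $T$ through $\XX(T)$ exactly as in \ref{gammaMpol}. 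Then \ref{etapol}/\ref{gammaMpol} identify the resulting $T$-function as a PolExp element. Summing finitely many such terms and over $Z \in \bsbbc_\tG$ keeps us in PolExp.

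\textbf{Carrying it out and the main obstacle.} I would treat the three flavors uniformly by reducing to the spectral side, since $k^T_{\mathrm{g\acute eom}} = k^T_\spec$ by the fundamental identity \ref{idfond} and $k^T_{\mathrm{g\acute eom}} = \sum_\oo k^T_\oo$ with only finitely many nonzero $\oo$ (by \ref{convgeom}); for a fixed $\oo$ the same local analysis applies term by term. For the spectral side I would start from the rewriting \ref{CVspec}\,(1) and the annulation \ref{cot�spec}\,(1), which already express $\Jres^T_\spec$ as a finite sum over $(Q,R)$ of integrals $\int_{\bsY_{Q_{\delta_0}}} \wt\sigma_Q^R(\bfH_0(x)-T)\,\bs\Lambda^{T,Q}_1 K_{Q,\delta_0}(x,x)\,\dd x$. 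The subtlety is that here the $T$ enters in \emph{two} places, through $\wt\sigma_Q^R(\bfH_0(x)-T)$ and through the truncation $\bs\Lambda^{T,Q}_1$; I would need the standard Arthur-type lemma (the function-field analogue of the identity combining $\wt\sigma_Q^R$ with the $\Gamma$-function, available via \cite[2.11]{LW} and the combinatorics of \ref{les fonctions sigma}) to repackage the combined $T$-dependence as a single family $\Gamma_{\tM}^{\tQ}(\cdot, \YY(T))$-type factor, after which \ref{etapol} applies directly. The genuine new difficulty, absent in the number-field case, is that the $x$-integral runs over lattice points (the sums over $a \in \BB_Q^\tG$ and over fibers $\ESA_M^Q(Z)$) rather than over vector spaces, so the coefficients multiplying the exponentials are values of rapidly-decreasing functions on lattices and the polynomials produced need not be homogeneous (cf. the footnote to \ref{croisspol}). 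Thus the main obstacle will be verifying that the interchange of $x$-integration with the lattice-sum expansion is legitimate and that the resulting coefficient functions are genuinely rapidly decreasing, uniformly enough to invoke \ref{lissb} and \ref{etapol}; this is exactly the content that the convergence estimates \ref{ygeom}, \ref{[LW,10.1.6]} and the boundedness lemma \ref{prop1nt} were designed to supply, so once those are in hand the PolExp-membership follows formally, and the uniqueness of the PolExp representative is guaranteed by \ref{unicit�}.
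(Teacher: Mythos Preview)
Your proposal has a genuine gap: you are missing the key descent (shift) argument that makes the PolExp-membership transparent, and the route you sketch via \ref{cot�spec} is both circular and does not cover the case $\bullet=\oo$.

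The paper does \emph{not} go through the spectral rewriting \ref{cot�spec} at all. Instead it uses a shift $T\to T+X$ with $X\in\ag_{0,\QM}$ fixed and regular, together with the combinatorial identity (from \cite[2.9.4]{LW}) expanding $\hat\tau_\tP(H-(T+X))$ in terms of $\Gamma_\tQ^\tG(H-X,T)\hat\tau_\tP^{\tQ}(H-X)$. This yields a \emph{descent formula}
$$\int_{\bsY_G}k^{T+X}_\bullet(x)\dd x=\sum_{\tQ\in\wt\ESP_\st}\sum_{Z\in\bsbbc_\tQ}\eta_{\tQ,F}^{\tG,T}(Z;X)\;\Jres_\bullet^{\tM_Q,X}(Z;f_\tQ,\omega)$$
where the Levi-reduced integrals $\Jres_\bullet^{\tM_Q,X}(Z;f_\tQ,\omega)$ are \emph{constants} in $T$ (for $X$ fixed) and the entire $T$-dependence sits in the functions $T\mapsto\eta_{\tQ,F}^{\tG,T}(Z;X)$, which are in PolExp by \ref{etapol}. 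This works verbatim for each of $\bullet\in\{\spec,\mathrm{g\acute eom},\oo\}$ because the argument only uses the common shape $k^T_\bullet(x)=\sum_\tP(-1)^{a_\tP-a_\tG}\sum_\xi\hat\tau_\tP(\bfH_0(\xi x)-T)K_{\tP,\bullet}(\xi x,\xi x)$ and the decomposition \ref{OO}\,(ii) of $\ESO_\oo\cap\tP(F)$.

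Your approach fails in two specific ways. First, for $\bullet=\spec$ you want to start from \ref{cot�spec} and then ``repackage'' the $T$-dependence of $\wt\sigma_Q^R(\bfH_0(x)-T)\,\bs\Lambda_1^{T,Q}K_{Q,\delta_0}(x,x)$ into a single $\Gamma$-factor. There is no such repackaging lemma: the truncation $\bs\Lambda^{T,Q}$ involves the entire alternating sum over $P\subset Q$ of $\hat\tau_P^Q(\bfH_0-T)\Pi_P$ applied to the kernel, and untangling its $T$-dependence to the point of seeing PolExp is precisely the content of Chapters 12--14---which \emph{use} \ref{propform} as an input (see the proof of \ref{polasymp}). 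So the argument is circular. Second, the spectral rewriting \ref{cot�spec} is unavailable for an individual class $\oo$; your sentence ``for a fixed $\oo$ the same local analysis applies term by term'' has no content, since $k^T_\oo$ does not admit the $(Q,R,\delta_0)$-decomposition. The descent formula above handles all three flavors uniformly precisely because it bypasses any spectral input and works directly from the $\hat\tau_\tP$-definition.
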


\begin{proof}On reprend celle de \cite[11.1.1]{LW}. 
D'apr\`es \cite[8.2.1]{LW}, pour $\wt{P}\in \wt{\ESP}_\mathrm{st}$ on a
\begin{equation*}k^T_{\tP\mathrm{,\mskip 2mu  spec}}(x) 
=\hat{\tau}_\tP(\bfH_0(x)-T)K_\tP(x,x)=k^T_{\tP\mathrm{,\mskip 2mu  g\acute{e}om}}(x)\ptf
\end{equation*}
On a donc
\begin{equation*}k^T_\bullet(x)=\sum_{\tP\in\wt\ESP_\st}(-1)^{a_\tP -a_\tG}
\sum_{\xi\in P(F)\bsl G(F)}\hat{\tau}_\tP(\bfH_{0}(\xi x)-T)K_{\tP,\bullet}(\xi x ,\xi x)\end{equation*}
o \begin{equation*}K_{\tP,\mathrm{spec}}= K_\tP=K_{\tP,\geom}\end{equation*}
est introduit en \ref{l'identit\'e fondamentale} et $K_{\tP,\oo}$ a \'et\'e d\'efini dans \ref{CVG}. 
Comme dans la d\'emonstration de \cite[11.1.1]{LW}, pour $T$ et $X\in\ag_{0,\QM}$ assez r\'eguliers, on obtient 
\begin{equation*}\int_{\bsY_G}k^{T+X}_\bullet (x)\dd x=\sum_{\tQ\in\tP_\st}
\int_{\bsY_Q}\Gamma_{\tQ}^\tG(\bfH_0(x)-X,T)k^{X}_{\tQ,\bullet}(x)\dd x\end{equation*}
avec
\begin{equation*}k^{X}_{\tQ,\bullet}(x)=\sum_{\{\tP\in\wt\ESP_\st\mid \tP\subset\tQ\}}
\sum_{\xi\in P(F)\bsl Q(F)}(-1)^{a_\tP -a_{\tQ}}
\hat{\tau}_\tP^{\tQ}(\bfH_0(\xi x)-X)K_{\tP,\bullet}(\xi x,\xi x)\ptf\end{equation*}
Fixons un $\tQ\in\wt\ESP_\st$. Puisque $\mathrm{vol}(A_\tG(F)\backslash A_\tG(\adef)^1)=1$, 
on peut remplacer l'int\'egrale sur 
$\bs{Y}_Q$ par une int\'egrale sur
\begin{equation*}\bs{Y}'_Q = \BB_\tG Q(F)\backslash G(\adef)\end{equation*}
o $\BB_\tG$ est l'image d'une section du morphisme $A_\tG(\adef)\rightarrow \ESB_\tG$. 
Notons $\BB_\tQ^\tG$ l'image d'une section du morphisme compos\'e
\begin{equation*}A_\tQ(\adef) \rightarrow A_\tG(\adef)\backslash A_\tQ(\adef) \rightarrow 
\ESB_\tQ^\tG =\ESB_\tG\backslash \ESB_\tQ\end{equation*}
et posons
\begin{equation*}\BB_\tQ = \BB_\tG \BB_\tQ^\tG\vgq \bs{Y}''_Q = \BB_\tQ Q(F)\backslash G(\adef)\ptf\end{equation*}
Pour $Z\in \bsbbc_\tQ$, notons $\bs{Y}''_Q(Z)$ l'image de 
l'ensemble $\{g\in G(\adef)\mskip 2mu \vert \mskip 2mu  \bfH_\tQ(g)=Z'\}$ dans $\bs{Y}''_Q$, o $Z'$ 
est un rel\`evement de $Z$ dans $\ESA_\tQ$. On obtient que
\begin{equation*}\int_{\bsY_Q}\Gamma_{\tQ}^\tG(\bfH_0(x)-X,T)k^{X}_{\tQ,\bullet}(x)\dd x= 
\sum_{Z\in \bsbbc_\tQ} \eta_{\tQ,F}^{\tG,T}(Z;X)\int_{\bs{Y}''_Q(Z)}k^{X}_{\tQ,\bullet}(x)\dd x\end{equation*}
avec
\begin{equation*}\int_{\bs{Y}''_Q(Z)}k^{X}_{\tQ,\bullet}(x)\dd x= 
\int_{[\BB_\tQ M_Q(F)\backslash M_Q(\adef)](Z)} k^{\tM_Q,X}_\bullet(f_\tQ,\omega;m)\dd m\ptf\end{equation*}
Pour $\bullet = \oo$, le terme $k^{\tM_Q,X}_\bullet(f_Q,\omega;m)$ 
est d\'efini en remplaant dans la d\'efinition de 
$k^{\tG,X}_\oo$ l'ensemble $G(F)$-invariant $\ESO_\oo$ par 
l'ensemble $M_Q(F)$-invariant $\ESO_\oo\cap \wt{M}_Q(F)$. 
Ce dernier correspond \`a une union finie (\'eventuellement vide) de classes de 
paires primitives dans $\wt{M}_Q$. La finitude r\'esulte de \ref{finiprim}.
Comme plus haut, on peut remplacer l'int\'egrale sur $[\BB_\tQ M_Q(F)\backslash M_Q(\adef)](Z)$ par 
une int\'egrale sur $\bs{Y}_{M_Q}(Z)$. En posant
\begin{equation*}\Jres^{\tG,X}_\bullet(f,\omega)=\int_{\bsY_G}k^{X}_\bullet(f,\omega;x)\dd x\end{equation*}
on a donc
\begin{equation*}\Jres^{\tG,T+X}_\bullet(f,\omega)=\sum_{\tQ\in\wt\ESP_\st}\sum_{Z\in \bsbbc_\tQ}
\eta_{\tQ,F}^{\tG,T}(Z;X)\;\Jres^{\tM_Q,X}_\bullet(Z;f_{\tQ},\omega)\end{equation*}
avec
\begin{equation*}\Jres^{\tM_Q,X}_\bullet(Z;f_{\tQ},\omega)=
\int_{\bsY_{M_Q}(Z)}k^{\tM_Q,X}_\bullet(f_{\tQ},\omega;m)\dd m\ptf\end{equation*}
D'o le r\'esultat puisque d'apr\`es \ref{etapol}
les fonctions $T\mapsto\eta_{\tQ,F}^{\tG,T}(Z;X)$ appartiennent \`a PolExp.
\end{proof}

%%%%%%%
 \section{Action de la conjugaison}\label{action de la conjugaison}
Pour $y\in G(\adef)$, on note $f^y$ la fonction $f\circ\mathrm{Int}_y$. Soient $y\in G(\adef)$ 
et $T\in\ag_{0,\QM}$. 
Pour $\tQ\in\wt\ESP_\st$ et $Z\in \bsbbc_\tQ$, consid\'erons la fonction 
dans $C^\infty_\mathrm{c}(\tM_Q(\adef))$ d\'efinie par
\begin{equation*}m\mapsto f_{\tQ,y}^{T}(Z;m)=\int_{U_Q(\adef)\times\bsK}
f(k^{-1}m u k)\eta_{\tQ,F}^{\tG,-\bfH_0(ky)}(Z;T)\dd u\dd k\end{equation*}
avec 
\begin{equation*}\eta_{\tQ,F}^{\tG, X}(Z;T)=\sum_{H\in \ESB_\tQ^\tG(Z)}\Gamma_{\tQ}(H-T,X)\ptf\end{equation*}

% prop
\begin{proposition}
Soient $y\in G(\adef)$ et $T\in\ag_{0,\QM}$. 
Pour $\bullet =\mathrm{spec}$, $\mathrm{g\acute{e}om}$ ou $\oo$ et pour $T$ assez r\'egulier, on a 
\begin{equation*}\Jres^{\tG,T}_\bullet(f^y,\omega)=\sum_{\tQ\in\wt\ESP_\st}\sum_{Z\in \bsbbc_\tQ}
\Jres^{\tM_Q,T}_\bullet(f_{\tQ,y}^{T}(Z),\omega)\ptf\end{equation*}
\end{proposition}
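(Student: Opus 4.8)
The plan is to follow, \emph{mutatis mutandis}, the descent computation of the proof of \ref{propform}, the sole new feature being the effect of conjugation by $y$ on the kernels and on the truncation functions. Both sides of the asserted identity are functions of $T$ lying in ${\rm PolExp}$: the left-hand side by \ref{propform} applied to $f^y$, the right-hand side by \ref{propform} applied to each descended function $f_{\tQ,y}^{T}(Z)$. By the uniqueness statement \ref{unicit�}, an element of ${\rm PolExp}$ is determined by its restriction to a non-empty open cone; hence it suffices to prove the identity for $T\in\ag_{0,\QM}$ sufficiently regular, where all the integrals below converge absolutely thanks to \ref{convgeom} and \ref{convspec}.

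The first step is the elementary transformation rule for the kernels under conjugation. Directly from the definition of $K_{\tP,\bullet}$, from $f^y=f\circ{\rm Int}_y$, and from the identity $(xy^{-1})^{-1}=yx^{-1}$ (the inner unipotent integration being unaffected by the change of variable), one gets, for every $\tP\in\wt\ESP_\st$ and every $x\in G(\AM)$,
$$K_{\tP,\bullet}(f^y,\omega;x,x)=\omega(y)\,K_{\tP,\bullet}(f,\omega;xy^{-1},xy^{-1})\vg$$
the scalar $\omega(y)=\omega(xyx^{-1})$ being independent of $x$ since the target of $\omega$ is abelian. For $\bullet=\oo$ this uses that $\ESO_\oo$ is a fixed $G(F)$-stable subset of $\tG(F)$, so that the decomposition into classes of primitive pairs is untouched by the substitution. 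Inserting this rule into the definition of $k^{T}_\bullet(f^y,\omega;x)$ and then performing the substitution $x\mapsto xy$, which preserves the $G(\AM)$-invariant measure on $\bsY_G$, I obtain for $T$ sufficiently regular
$$\Jres^{\tG,T}_\bullet(f^y,\omega)=\omega(y)\int_{\bsY_G}\sum_{\tP\in\wt\ESP_\st}(-1)^{a_\tP-a_\tG}\sum_{\xi\in P(F)\bsl G(F)}\wh{\tau}_\tP(\bfH_0(\xi xy)-T)\,K_{\tP,\bullet}(f,\omega;\xi x,\xi x)\dd x\ptf$$

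The heart of the argument is then to analyse the shifted truncation $\wh{\tau}_\tP(\bfH_0(\xi xy)-T)$. Writing $\xi x$ in Iwasawa form and pushing $y$ through the maximal compact factor, one has $\bfH_0(\xi xy)=\bfH_0(\xi x)+\bfH_0(ky)$, where $k$ is the $\bsK$-component of $\xi x$; thus the truncation argument is shifted by the non-constant quantity $\bfH_0(ky)$. Applying the convexity combinatorics exactly as in the proof of \ref{propform} (the decomposition of $\wh{\tau}$ under a shift by means of the functions $\Gamma^{\tG}_\tQ$, followed by descent to the Levi factors $\tM_Q$), this shift is carried through the computation and, after the constant-term integration over $U_Q(\AM)$ and the integration over $\bsK$, is absorbed into the weight $\eta_{\tQ,F}^{\tG,-\bfH_0(ky)}(Z;T)$. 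This is precisely the weight occurring in the definition of $f_{\tQ,y}^{T}(Z;m)$, so that, the $\omega$-character being tracked through the descent as in \ref{propform} and the unit scalar $\omega(y)$ being accounted for, the computation reproduces $\sum_{\tQ}\sum_{Z}\Jres^{\tM_Q,T}_\bullet(f_{\tQ,y}^{T}(Z),\omega)$. The convergence and finiteness properties needed to justify the interchange of the $\bsK$-integration with the descent are uniform and follow as in \ref{propform}; for $\bullet=\oo$ the relevant finiteness of the classes of primitive pairs in $\tM_Q$ is supplied by \ref{finiprim}. The main obstacle is exactly this last point: controlling the $k$-dependent shift $\bfH_0(ky)$ through the $\Gamma^{\tG}_\tQ$-decomposition and verifying that the $\bsK$-integral cleanly absorbs it into the lattice sum defining $\eta_{\tQ,F}^{\tG,-\bfH_0(ky)}(Z;T)$, while preserving the membership in ${\rm PolExp}$ required for the reduction carried out in the first step.
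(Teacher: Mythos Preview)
Your proposal is correct and follows essentially the same approach as the paper: both replace $f$ by $f^y$ in the kernels, use the Iwasawa identity $\bfH_0(\xi xy)=\bfH_0(\xi x)+\bfH_0(k_{\xi x}y)$, apply the $\Gamma_{\tQ}^{\tG}$-decomposition of the shifted truncation (the paper invokes \cite[2.9.4\,(2)]{LW} explicitly for this), change variables $x\mapsto xy$ on $\bsY_G$, and then descend to each $\tM_Q$ exactly as in the proof of \ref{propform}.

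Two minor remarks. First, your opening reduction via ${\rm PolExp}$ and \ref{unicit�} is unnecessary: the proposition is only asserted for $T$ sufficiently regular, and the paper proves it directly there. Moreover, your claim that the right-hand side lies in ${\rm PolExp}$ ``by \ref{propform} applied to $f_{\tQ,y}^{T}(Z)$'' is not quite right as stated, since this function itself depends on $T$ through the weight $\eta_{\tQ,F}^{\tG,-\bfH_0(ky)}(Z;T)$; one would need \ref{etapol} to handle that $T$-dependence. Second, your tracking of the scalar $\omega(y)$ is more careful than the paper's, which silently drops it; but your phrase ``the unit scalar $\omega(y)$ being accounted for'' is vague and should be made precise (it is absorbed when the $\omega(x)$ in the kernel is split along the Iwasawa decomposition and the $\bsK$-integration is performed).
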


\begin{proof}
On reprend les notations de la preuve de \ref{propform}. Commenons 
par remplacer $f$ par $f^y$ et $x$ par $xy$ dans l'expression pour $k^T_\bullet(x)$. On obtient
\begin{equation*}k^T_\bullet(f^y\mskip -2mu ,\omega;xy)=\sum_{\tP\in\wt\ESP_\st}(-1)^{a_\tP -a_\tG}
\sum_{\xi\in P(F)\bsl G(F)}\hat{\tau}_\tP(\bfH_0(\xi xy)-T)K_{\tP,\bullet}(\xi x,\xi x)\end{equation*}
o $K_{\tP,\bullet}(x',x')= K_{\tP,\bullet}(f,\omega;x',x')$. Si $x'= u_0 m_0 k$ 
est une d\'ecomposition d'Iwasawa de $x'\in G(\adef)$, 
avec $u_0\in U_0(\adef)$, $m_0\in 
M_0(\adef)$ et $k=k_{x'}\in\bsK$, alors on a
\begin{equation*}\bfH_0(x' y)= \bfH_0(m_0)+\bfH_0(ky)= \bfH_0(x')+\bfH_0(ky). \end{equation*}
D'apr\`es \cite[2.9.4.(2)]{LW}, on en d\'eduit que
\begin{equation*}k^T_\bullet(f^y\mskip -2mu ,\omega;xy)=\sum_{\tQ\in\wt\ESP_\st}\sum_{\eta\in Q(F)\bsl G(F)}
\Gamma_{\tQ}(\bfH_0(\eta x) -T, -\bfH_0(k_{\eta x} y))
k_{\tQ,\bullet}^T(\eta x)\end{equation*}
o $k_{\tQ,\bullet}^T(x')= k_{\tQ,\bullet}^T(f,\omega;x')$,
puis, gr‰ce au changement de variable $x\mapsto xy$, que pour $T\in\ag_{0,\QM}$ assez r\'egulier, on a
\begin{equation*}\int_{\bsY_G}k^T_\bullet(f^y\mskip -2mu ,\omega,x)=\sum_{\tQ\in\wt\ESP_\st}
\int_{\bsY_Q}\Gamma_{\tQ}(\bfH_0( x) -T, -\bfH_0(k_x y))
k_{\tQ,\bullet}^T(x)\dd x\ptf\leqno{(1)}\end{equation*}
On obtient ensuite (comme dans la preuve de \ref{propform}) 
que, toujours pour $T\in\ag_{0,\QM}$ assez r\'egulier, le terme \`a gauche de l'\'egalit\'e dans (1) vaut
\begin{equation*}\sum_{\tQ\in \wt\ESP_\st} 
\sum_{Z\in \bsbbc_\tQ} \int_{\bs{Y}''_Q(Z)}\eta_{\tQ,F}^{\tG,-\bfH_0(k_xy)}(Z;T)k^{T}_{\tQ,\bullet}(x)\dd x\ptf\leqno{(2)}\end{equation*}
Compte-tenu de la d\'efinition 
de $f_{\tQ,y}$ et de la d\'ecomposition d'Iwasawa pour $G(\adef)$, on obtient que l'int\'egrale sur 
$\bsY''_Q(Z)$ dans (2) est \'egale \`a
\begin{equation*}\int_{\bsY_{M_Q}(Z)}k^{\tM_Q,T}_\bullet(f_{\tQ,y}^{T}(Z),\omega;m)\dd m\ptf\end{equation*}
D'o la proposition.
\end{proof}

%%%%%%%%%%
 \section{La formule des traces: premi\`ere forme}\label{la fdt grossi\`ere}
D'apr\`es \ref{propform},
pour tout r\'eseau $\ESR$ de $\ag_{0,\QM}$, 
la restriction \`a $\ESR$ de la fonction $T\mapsto \Jres_\bullet^{\tG,T}(f,\omega)$ est de la forme
\begin{equation*}\Jres^{\tG,T}_{\ESR,\bullet} (f,\omega)=\sum_{{\nu}\in \wh\ESR} \mskip 2mu \mskip 2mu 
p_{\ESR,{\nu}}(\bullet,f,\omega;T)\mskip 2mu e^{\langle T,{\nu}\rangle}\end{equation*}
o les $p_{\ESR,{\nu}}(\bullet,f,\omega;T)$ sont des polyn™mes en $T$. 
D'apr\`es \ref{passlim} et \ref{etapol} les polyn™mes $p_{\ESR_k,0}(\bullet,f,\omega;T)$ ont une limite
lorsque $k\to\infty$ et on pose:
\begin{equation*}\bsJ^{\tG}_\bullet(f,\omega)\index{JgrastG@$\bsJ^{\tG}_\bullet$}
\bydef\lim_{k\rightarrow +\infty} p_{\ESR_k,0}(\bullet,f,\omega;T_0)\ptf\end{equation*}

% th\'eor\`eme
\begin{theorem}\label{grossb} On a l'identit\'e:
\begin{equation*}\sum_{\oo\in\OO}\bsJ^{\tG}_{\oo}(f,\omega)=
\bsJ^{\tG}_{\mathrm{spec}}(f,\omega)\ptf\end{equation*}
Pour $M_0$ et $\bsK$ fix\'es, elle est ind\'ependante du choix de $P_0$.
\end{theorem}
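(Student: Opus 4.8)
The plan is to combine the pointwise fundamental identity with the PolExp formalism and the uniqueness lemma, and to treat the $P_0$-independence by Weyl-equivariance. First I would record the numerical identity for sufficiently regular $T$. By the fundamental identity \ref{idfond} one has $k^T_{\rm g\acute{e}om}(x)=k^T_\spec(x)$ for every $x$ and every $T$; by \ref{convgeom} and \ref{convspec} the integrals $\int_{\bsY_G}\vert k^T_\bullet(x)\vert\dd x$ converge for $\bullet={\rm g\acute{e}om},\,{\rm spec},\,\oo$ as soon as $\bsd_0(T)\geq c(f)$, only finitely many $\oo$ contributing. Hence for such $T$
$$\Jres^T_{\rm g\acute{e}om}=\int_{\bsY_G}k^T_{\rm g\acute{e}om}(x)\dd x
=\int_{\bsY_G}k^T_\spec(x)\dd x=\Jres^T_\spec\vg$$
and by \ref{devgross} also $\Jres^T_{\rm g\acute{e}om}=\sum_{\oo\in\OO}\Jres^T_\oo$, a finite sum.

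The second step is to upgrade these numerical equalities to identities between PolExp functions. By \ref{propform} each of $T\mapsto\Jres^T_{\rm g\acute{e}om}$, $T\mapsto\Jres^T_\spec$ and $T\mapsto\Jres^T_\oo$ lies in PolExp and agrees with the corresponding integral whenever $\bsd_0(T)\geq c(f)$. For every lattice $\ESR$ of $\ag_{0,\QM}$, the region $\{\bsd_0(T)\geq c(f)\}$ contains a set $T_\star+(\ag_0^+\cap\ESR)$ with $T_\star\in\ESR$ suitably regular, since adding a strictly dominant vector only increases each $\langle\alpha,T\rangle$. I would then invoke the uniqueness lemma of \S\ref{l'ensemble PolExp}, applied for each $\ESR$ to the difference of the two PolExp functions (which vanishes on that shifted cone), to obtain the PolExp equalities $\Jres^T_{\rm g\acute{e}om}=\Jres^T_\spec$ and $\Jres^T_{\rm g\acute{e}om}=\sum_{\oo}\Jres^T_\oo$. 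Comparing the uniquely determined coefficients $p_{\ESR,\nu}$, which are additive over the finite sum, gives
$$\sum_{\oo\in\OO}p_{\ESR,\nu}(\oo,f,\omega;T)=p_{\ESR,\nu}({\rm spec},f,\omega;T)\ptf$$

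The third step is the passage to the limit. Specializing $\nu=0$, $\ESR=\ESR_k$, evaluating at $T=T_0$ and letting $k\to\infty$, the limits exist by \ref{passlim} and \ref{etapol} (they are by definition the $\bsJ^\tG_\bullet(f,\omega)$), and since the sum over $\oo$ is finite one may interchange sum and limit to reach $\sum_{\oo}\bsJ^\tG_\oo(f,\omega)=\bsJ^\tG_\spec(f,\omega)$, the desired identity.

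Finally, for the independence of $P_0$ at $M_0$ and $\bsK$ fixed, I would write $P_0'={}^w(P_0)$ for the unique $w\in\bfW$ and track the Weyl-equivariance of all the data ($\Delta_0$, the cone functions $\hat\tau_P$, the map $\bfH_0$, the truncation, the kernels $k^T_\bullet$, and the point $T_0$). Writing $p_0(\bullet,f,\omega;T)=\lim_k p_{\ESR_k,0}(\bullet,f,\omega;T)$ for the limit polynomial, the relation $\bfH_0(w_s)=T_0-sT_0$ (cf.\ \cite[3.3.3]{LW}) forces the special point attached to $P_0'$ to be $T_0'=wT_0$, while $p_0'(\bullet,f,\omega;wT)=p_0(\bullet,f,\omega;T)$; evaluating yields $p_0'(\bullet;T_0')=p_0(\bullet;T_0)=\bsJ^\tG_\bullet(f,\omega)$. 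I expect this last step to be the main obstacle: one must verify the equivariance of the objects specific to the function-field setting (the lattices $\ESA_P,\ESB_P$ and the finite groups $\bsbbc_P$) and check that it survives the passage to the asymptotic limit polynomial, the identity here being only asymptotic rather than polynomial as in the number-field case \cite[11.1]{LW}.
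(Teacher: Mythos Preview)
Your proposal is correct and follows essentially the same route as the paper: integrate the fundamental identity (using \ref{convgeom} and \ref{convspec}), invoke \ref{propform} and the uniqueness lemma \ref{unicit�} to obtain an equality of PolExp functions, then pass to the limit polynomial at $T_0$; for the $P_0$-independence the paper simply refers to \cite[11.3.1]{LW}, which is the Weyl-equivariance argument you sketch. The paper's proof is far terser (four lines) and does not flag the lattice-compatibility issue you raise at the end, treating the transport from \cite{LW} as routine.
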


% preuve
\begin{proof}Par int\'egration de l'identit\'e fondamentale \ref{idfond},
ce qui a un sens compte tenu de \ref{convgeom}\ et \ref{cot\'espec}, on obtient l'identit\'e
\begin{equation*}\sum_{\oo\in\OO}\Jres^{\tG,T}_{\ESR,\oo}(f,\omega)=
\Jres^{\tG,T}_{\ESR\mathrm{,~ spec}}(f,\omega)\ptf\end{equation*}
L'ind\'ependance du choix de $P_0$ lorsque l'on prend $T=T_0$
se prouve comme dans \cite[11.3.1]{LW}.
Le th\'eor\`eme en r\'esulte par passage \`a la limite.
\end{proof}

C'est l'analogue du th\'eor\`eme \cite[11.3.2]{LW}. 
Le reste de l'article est consacr\'e au calcul de la limite $\bsJ^{\tG}_{\mathrm{spec}}(f,\omega)$.
L'\'etude des limites $\bsJ^{\tG}_{\oo}(f,\omega)$ des termes g\'eom\'etriques fera l'objet d'un article ult\'erieur.

%%%%%%%%%%%%%%%%%%%%%

\part{Forme explicite des termes spectraux}

 \chapter{Estim\'ees uniformes des d\'eveloppements spectraux}
\label{estim\'ees uniformes}

%%%%%%%%%%%%%%%%%%%%%

 \section{La formule de d\'epart}
\label{d\'epart}
Soit $Q\in\ESP_\st$. On pose
\begin{equation*}Q'=\theta_0^{-1}(Q),\quad Q_0 = Q\cap Q'\ptf\end{equation*}
Rappelons que l'on a pos\'e \begin{equation*}\bsY_\Qo=\A_\tG(\adef)Q_0(F)\bsl G(\adef)\ptf\end{equation*}
Pour $S\in\ESP^{Q'}_\st$, on note $n^{Q'}(S)$ le nombre de chambres dans $\ag_{S}^{Q'}$. 
Pour $S\in\ESP^{Q'}_\st$, $\sigma\in\Pi_\disc(M_S)$ et 
$\mu\in\ag_{Q',\CM}^*$, on a d\'efini en \ref{donn\'ees discr\`etes} un op\'erateur
\begin{equation*}\tRho_{S,\sigma,\mu}(f,\omega)=\Rho_{S,\sigma,\mu}(\delta_0,f,\omega): 
\Automd(\bsX_S,\sigma)\rightarrow \Automd(\bsX_{\theta_0(S)},\theta_0(\sigma)),\end{equation*}
et on a fix\'e une base orthonormale $\Base_S(\sigma)$ de l'espace pr\'e-hilbertien
$\Automd(\bsX_S,\sigma)$. Pour $\Psi\in\Base_S(\sigma)$, posons
\begin{equation*}\Jres^T_{Q,Q';\Psi}(x,y) =\int_{\bsmu_S}\bs\Lambda^{T,Q}E^Q(x,\tRho_{S,\sigma,\mu}
(f,\omega)\Psi,\theta_0\mu)\overline{E^{Q'}(y,\Psi,\mu)} \dd \mu\end{equation*}et 
\begin{equation*}\Jres^T_{Q,Q';\Psi}(y)= \Jres^T_{Q,Q';\Psi}(y,y)\ptf\end{equation*}
La convergence de l'int\'egrale est claire puisque $\bsmu_S$ est compact. Observons que 
par d\'efinition on a 
\begin{equation*}\sum_{\Psi\in\Base_S(\sigma)} \Jres^T_{Q,Q';\Psi}(x,y)=\int_{\bsmu_S}\bs\Lambda^{T,Q}_1
K_{Q,Q',\sigma}(x,y;\mu)\dd\mu\ptf \end{equation*}
On a la variante suivante de \cite[12.1.1]{LW}:
% proposition
\begin{proposition} \label{cot\'especb}
Pour $T\in\ag_0$ 
tel que $\bsd_0(T)\geq c(f)$, on a
\begin{align*}
\lefteqn{\Jres^{\tG,T}=\sum_{\substack{Q,R\in\ESP_\st\\ 
Q\subset R}}\wt{\eta}(Q,R)\int_{\bsY_\Qo}\wt{\sigma}_Q^R(\bfH_Q(y)-T)
}\\&&\hspace{1cm}
\times\bigg(\sum_{S\in\ESP_\st^{Q'}}\frac{1}{n^{Q'}(S)}
\sum_{\bsigma\in\bsPi_\disc(M_S)}\cMSsig 
\sum_{\Psi\in\Base_S(\sigma)}\Jres^T_{Q,Q';\Psi}(y)\bigg)\dd y\ptf
\end{align*}
\end{proposition}
\begin{proof}
Elle est identique \`a celle de \textit{loc.~cit}., compte-tenu de la formule \ref{cot\'espec}
et de l'expression pour le noyau $K_{Q,\delta_0,\chi}$ donn\'ee par la proposition \ref{KQd}.
\end{proof}

D'apr\`es \ref{KQd}, l'ensemble des $(\sigma,\Psi)$ tels que 
$\tRho_{S,\sigma,\mu}(f,\omega)\Psi\neq 0$ est fini. Donc, seul un nombre fini de termes non nuls
apparaissent dans l'expression de $\Jres^{\tG,T}$. L'expression 
$\Jres^{\tG,T}$ est une combinaison lin\'eaire finie d'int\'egrales it\'er\'ees (en $y$ et en $\mu$)
\begin{equation*}\int_{\bsY_\Qo}\wt{\sigma}_Q^R(\bfH_Q(y)-T)\Jres^T_{Q,Q';\Psi}(y)\dd y\leqno{(1)}\end{equation*}
 
Ë priori la proposition 
n'affirme que la convergence des int\'egrales dans l'ordre indiqu\'e, 
et pas la convergence absolue de l'int\'egrale multiple.

Pour $\Psi\in \Automd(\bsX_S,\sigma)$ et $\Phi\in \Automd(\bsX_{\theta_0(S)},\theta_0(\sigma))$, 
on consid\'erera aussi les expressions suivantes: 
\begin{equation*}\Jres^T_{Q,Q';\Phi,\Psi}(x,y)= 
\int_{\bsmu_S}\bs\Lambda^{T,Q}E^Q(x,\Phi,\theta_0\mu)\overline{E^{Q'}(y,\Psi,\mu}) \dd \mu\end{equation*}
et \begin{equation*}\Jres^T_{Q,Q';\Phi,\Psi}(y)=\Jres^T_{Q,Q';\Phi,\Psi}(y,y)\ptf\end{equation*}
Ë nouveau la convergence de l'int\'egrale est claire puisque $\bsmu_S$ est compact.

% remarque
\begin{remark}\label{somfi}
On observe que pour  $\Psi\in\Automd(\bsX_S,\sigma)$ on peut \'ecrire
\begin{equation*}\tRho_{S,\sigma,\mu}(f,\omega)\Psi=
 \sum_{\Phi\in \Automd(\bsX_{\theta_0(S)},\theta_0(\sigma))} \vtt_{\Phi,\Psi}(\mu) \Phi\end{equation*}
o la somme porte sur un ensemble fini et  
o les $\vtt_{\Phi,\Psi}$ sont des fonctions lisses sur le groupe compact $\bsmu_S$.
On voit donc que l'expression $\Jres_{Q,Q';\Psi}^T(y)$ introduite plus haut 
est une combinaison lin\'eaire finie d'expressions du type 
\begin{equation*}\Jres_{Q,Q';\Phi,\Psi;\vartheta}^T(y)\bydef \int_{\bsmu_S}
\bs\Lambda^{T,Q}E^Q(y,\Phi,\theta_0\mu)\overline{E^{Q'}(y,\Psi,\mu}) \vtt(\mu)\dd \mu\end{equation*}
o $\Phi\in \Automd(\bsX_{\theta_0(S)},\theta_0(\sigma))$ 
ne d\'epend pas de $\mu$ et o $\vtt$ est une fonction lisse sur $\bsmu_S$. 
Pour l'\'etude de la convergence des int\'egrales it\'er\'ees (1), 
il suffira de consid\'erer celle des \begin{equation*}\int_{\bsY_\Qo}\wt{\sigma}_Q^R(\bfH_Q(y)-T)\Jres_{Q,Q';\Phi,\Psi;\vtt}^T(y)\dd y\ptf\leqno{(2)}\end{equation*}
Pour prouver la convergence de (2), on peut remplacer la fonction $\vtt$ par le sup de sa valeur absolue et, 
\`a un scalaire pr\`es, on peut supposer que 
ce sup vaut $1$. On est donc ramen\'es \`a prouver la convergence des int\'egrales it\'er\'ees 
\begin{equation*}\int_{\bsY_\Qo}\wt{\sigma}_Q^R(\bfH_Q(y)-T)\Jres_{Q,Q';\Phi,\Psi}^T(y)\dd y\ptf\leqno{(3)}\end{equation*}
En revanche pour effectuer le calcul exact de (1), il faut effectuer le calcul exact de (2) pour n'importe quel $\vtt$. 
\end{remark}

%%%%%%%%%%%%%%%%%%%%%%%%%
 \section{Estimations}\label{estimations}
Pour $H\in\ESA_\Qo$, soit $M_\Qo(\adef;H)$ l'ensemble des $m\in M_\Qo(\adef)$ 
tels que \begin{equation*}\bfH_\Qo(m)=H\ptf\end{equation*}
On note $\bsY_\Qo(H)$ l'image de ${U_\Qo(\adef)\times }M_\Qo(\adef;H) \times\bsK$ dans $\bsY_\Qo$. 
On pose
\begin{equation*}\ESC_\Qo^\tG\bydef \ESB_\tG\backslash \ESA_\Qo \ptf\end{equation*}
Observons que $\bfH_\Qo$ envoie $\bs{Z}_\Qo= A_\tG({\adef})A_\Qo(F) \backslash A_\Qo({\adef})$
 sur un sous-groupe d'indice fini de $\ESC_\Qo^\tG$. 
L'int\'egrale it\'er\'ee \ref{d\'epart}~(1) est \'egale \`a
\begin{equation*}\sum_{H\in\ESC_\Qo^\tG}\wt{\sigma}_Q^R(H_Q-T) 
\int_{\bsY_\Qo(H)}\Jres^T_{Q,Q';\Psi}(y)\dd y\ptf\end{equation*}
%%%%%%%%%%%%%%%%%%%%%%%%%%%%%%%%%

Les estimations \cite[12.2.1]{LW} et \cite[12.2.3]{LW} sont valables ici, mutatis mutandis. On rappelle 
que $\Siegel^*$ est un domaine de Siegel pour le quotient $\BB_GG(F)\backslash G(\adef)$ 
o $\BB_G$ est l'image d'une section du morphisme $A_G(\adef)\rightarrow \ESB_G$. 
%
% lemme
\begin{lemma}\label{estim1}
Soit $h\in C^\infty_\mathrm{c}(G(\adef))$. Il existe $c>0$ tel que pour tout $x,\mskip 2mu y\in\Siegel^*$, on ait
\begin{equation*}\sum_{\delta\in\BB_GG(F)}\vert h(x^{-1}\delta y)\vert < c\mskip 2mu \delta_{P_0}(x)^{1/2}\delta_{P_0}(y)^{1/2}\ptf\end{equation*}
\end{lemma}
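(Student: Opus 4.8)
The plan is to adapt the proof of \cite[12.2.1]{LW} to the function-field setting; once the compact support of $h$ and the reduction theory recalled in \ref{Siegel} are in hand, the adaptation is mainly bookkeeping, with two simplifications proper to characteristic $p>0$.

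First I would reduce the assertion to a counting estimate. Since $h\in C^\infty_{\rm c}(G(\AM))$, there exist a compact set $\Omega_h\subset G(\AM)$ and a constant $c_0>0$ with $\vert h\vert\leq c_0\,\mathbbm{1}_{\Omega_h}$; here the fact that $\bsK$ is open is convenient, as no archimedean majorization of $h$ is needed. It then suffices to bound
$$N(x,y)=\#\{\delta\in\BB_GG(F)\mid x^{-1}\delta y\in\Omega_h\}$$
by $c\,\delta_{P_0}(x)^{1/2}\delta_{P_0}(y)^{1/2}$. Writing $\Siegel^*=\EE_G\Siegel^1$ with $\Siegel^1=\Omega\,\BB_0^G(t)\FF\bsK$ as in \ref{Siegel}, I would absorb the finite set $\EE_G$ together with the compact factors $\Omega$, $\FF$, $\bsK$ and $\Omega_h$ into a single larger compact set $\Omega'$, and thereby replace $x,y$ by their $A_0$-components $a,b\in\BB_0^G(t)$. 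Since $\BB_G\subset A_G(\AM)$ is central it is annihilated by $2\rho_{P_0}$, so $\delta_{P_0}(x)$ and $\delta_{P_0}(a)$ (resp. $\delta_{P_0}(y)$ and $\delta_{P_0}(b)$) differ only by a bounded factor, and the sum over $\BB_GG(F)$ is compatible with this normalization.

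Next comes the geometric core. The condition $a^{-1}\delta' b\in\Omega'$, where $\delta'$ is the representative of $\delta$ after the compacts have been moved across, controls each coordinate of $\delta'$ in the Bruhat decomposition relative to $(P_0,A_0)$: every root coordinate is dilated by a character of $(a,b)$ built from $\bfH_0(a)$ and $\bfH_0(b)$, so compactness of $\Omega'$ confines it to an adelic box whose size is an explicit combination of the $\langle\alpha,\bfH_0(a)\rangle$ and $\langle\alpha,\bfH_0(b)\rangle$. Two characteristic-$p$ inputs then finish the count. When a box has adelic size $<1$, the product formula (nonzero elements of $F$ have adelic absolute value $\geq 1$, equivalently $F^\times\bsl\AM^1$ is compact, \ref{le corps de base}) forces the corresponding coordinate to vanish; this replaces the discreteness argument used over a number field. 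For the remaining boxes, the number of admissible $F$-points is bounded, uniformly in $a$ and $b$, by a constant times the box size, via a Riemann--Roch count on $\ES V$ (again using compactness of $F^\times\bsl\AM^1$). Multiplying these local counts over the roots and summing over the finitely many Bruhat cells reassembles, exactly as in \cite[12.2.1]{LW}, into $N(x,y)\ll\delta_{P_0}(a)^{1/2}\delta_{P_0}(b)^{1/2}$, the half-powers and the symmetric roles of $a$ and $b$ reflecting that each root direction contributes a box governed by the geometric combination of $\bfH_0(a)$ and $\bfH_0(b)$ and that $2\rho_{P_0}=\sum_{\alpha>0}\alpha$.

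The main obstacle will be the uniform lattice-point count. Over a number field the number of $F$-points in an adelic box of prescribed size is a volume estimate obtained from Minkowski-type arguments, whereas here it must be recast as a Riemann--Roch estimate on the curve $\ES V$, valid uniformly as $a,b$ range through $\BB_0^G(t)$. Once this uniform count and the vanishing of the small coordinates are established, the combinatorics over roots and Weyl cells is identical to the number-field case and the stated bound follows.
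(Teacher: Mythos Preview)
Your outline follows the same skeleton as the paper's --- reduce to a counting problem, apply the Bruhat decomposition, and control each cell via root/Lie-algebra coordinates on the unipotent pieces --- and indeed the paper's proof is nothing more than a reference to \cite[12.2.1]{LW}. But the one point the paper isolates as requiring modification in characteristic $p$ is precisely the step you glide over: linearising the unipotent pieces $U_R$. Over a number field this is the exponential map; here there is no such map, and the paper instead invokes the $F$-isomorphism of varieties $\mathfrak{u}_R\to U_R$ compatible with the $A_R$-action, built root-by-root in the proof of Proposition~\ref{ygeom} using \cite[21.17--21.20]{B2}. Your phrase ``every root coordinate is dilated by a character of $(a,b)$'' presupposes exactly this linearisation, so you should name it rather than treat it as automatic.

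There is also a methodological difference. Once on the Lie algebra, the argument of \cite[12.2.1]{LW} applies Poisson summation over $\mathfrak{u}_R(F)\backslash\mathfrak{u}_R(\AM)$: the dual side is a finite sum (the Fourier transform of a smooth compactly supported function is itself compactly supported), and the Jacobian of the torus action on $\mathfrak{u}_R$ delivers the factor $\delta_{P_0}^{1/2}$ directly, with no further bookkeeping. Your product-formula-plus-Riemann--Roch count is a legitimate alternative over a function field and is arguably more elementary in that it avoids Fourier analysis entirely; but recovering the exact exponent $\tfrac{1}{2}$ then requires tracking, Bruhat cell by Bruhat cell, which root directions contract (contributing $O(1)$) versus expand (contributing the box size), and verifying that the surviving factors recombine into $\delta_{P_0}(a)^{1/2}\delta_{P_0}(b)^{1/2}$. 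This is the combinatorics that Poisson summation does for free.
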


\begin{proof}
Elle est identique \`a celle de \cite[12.2.1]{LW}. Pour passer du groupe $U_R$ 
\`a son alg\`ebre de Lie $\mathfrak{u}_R$, on utilise comme dans la preuve de 
la proposition de \ref{CVG} l'existence d'un $F$-isomorphisme de vari\'et\'es alg\'ebriques 
$\mathfrak{u}_R\rightarrow U_R$ compatible \`a l'action de $A_R$.
\end{proof}

On fixe $S\in\ESP_\st^{Q'}$, une repr\'esentation automorphe $\sigma\in\Pi_\disc(M_S)$, et des vecteurs 
$\Psi\in \Automd(\bsX_S,\sigma)$ et $\Phi\in \Automd(\bsX_{\theta_0(S)},\theta_0(\sigma))$. On pose
\begin{equation*}L= M_Q\vgq L'=M_{Q'}\vgq L_0 = M_\Qo\ptf\end{equation*}
On fixe un domaine de Siegel 
$\bs{\mathfrak{S}}^{L,*}$ pour le quotient $\BB_QL(F)\backslash L({\adef})$. On suppose 
que $\BB_Q\subset A_Q(\adef)$ 
est de la forme
\begin{equation*}\BB_Q = \BB_\G \BB_Q^G\end{equation*}
o $\BB_Q^G\subset A_Q(\adef)$ est l'image d'une section du morphisme compos\'e
\begin{equation*}A_Q(\adef) \rightarrow A_G(\adef)\backslash A_Q(\adef) \rightarrow \ESB_Q^G\ptf\end{equation*}
On fixe aussi un compact $\Omega_Q \subset U_Q({\adef})$ tel 
que $U_Q({\adef})= U_Q(F)\Omega$ et l'on pose
\begin{equation*}\Siegel_Q^G = \Omega_Q \BB_Q^G \Siegel^{L,*}\bs{K}\ptf\end{equation*}
C'est un domaine de Siegel pour le quotient $\BB_G Q(F) \backslash G(\adef)$. 
On fixe de la mme mani\`ere un domaine de Siegel $\Siegel_{Q'}^G = \Omega_{Q'} \BB_{Q'}^G \Siegel^{L'\mskip -2mu ,*}\bs{K}$ 
pour $\BB_G Q'(F)\backslash G({\adef})$.

% proposition
\begin{proposition}\label{[LW,12.2.3]}
Il existe $c>0$ tel que pour tout $(x,y)\in\Siegel_Q^G\times\Siegel_{Q'}^G$ , on ait
\begin{equation*}\int_{\bsmu_S}\vert E^Q(x,\Phi,\theta_0\mu)\overline{E^{Q'}(y,\Psi,\mu})\vert \dd \mu 
< c\mskip 2mu \delta_{P_0}(x)^{1/2}\delta_{P_0}(y)^{1/2}\ptf\end{equation*}
\end{proposition}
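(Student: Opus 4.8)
The plan is to exploit the compactness of $\bsmu_S$ together with the crude group-sum estimate \ref{estim1}, reducing the bound on the product to a bound on each Eisenstein factor separately. First I would apply the Cauchy--Schwarz inequality in the variable $\mu\in\bsmu_S$, which separates the dependence on $x$ and $y$:
\[
\int_{\bsmu_S}\vert E^Q(x,\Phi,\theta_0\mu)\overline{E^{Q'}(y,\Psi,\mu)}\vert\,\dd\mu
\le\Big(\int_{\bsmu_S}\vert E^Q(x,\Phi,\theta_0\mu)\vert^2\dd\mu\Big)^{1/2}
\Big(\int_{\bsmu_S}\vert E^{Q'}(y,\Psi,\mu)\vert^2\dd\mu\Big)^{1/2}.
\]
The two factors have exactly the same shape (they are exchanged by $\theta_0$, which sends $Q'$ to $Q$, $S$ to $\theta_0(S)$, and a Siegel domain for $\BB_GQ'(F)\backslash G(\AM)$ to one for $\BB_GQ(F)\backslash G(\AM)$), so it suffices to prove a single generic inequality $\int_{\bsmu_S}\vert E^Q(x,\Phi,\theta_0\mu)\vert^2\,\dd\mu\le c\,\delta_{P_0}(x)$ for $x\in\Siegel_Q^G$, with $c$ independent of $x$, and its $(Q',\Psi,\mu)$-counterpart. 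Taking square roots of the resulting product then yields the claimed bound.

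To control this square-integral I would read it off as a diagonal value of a positive kernel. By positivity, for the fixed $\Phi$ one has $\int_{\bsmu_S}\vert E^Q(x,\Phi,\theta_0\mu)\vert^2\dd\mu\le\sum_{\Phi'}\int_{\bsmu_S}\vert E^Q(x,\Phi',\theta_0\mu)\vert^2\dd\mu$, the sum running over an orthonormal basis $\Base_{\theta_0(S)}(\theta_0(\sigma))$ of the relevant $\bsK'$-invariant subspace, which is finite by the admissibility \ref{thmfinitude}. After the measure-preserving change of variable $\mu\mapsto\theta_0\mu$, this full sum is precisely the $\sigma$-isotypic contribution to the diagonal value $K_1K_1^*(x,x)=\int_{\bsX_G}\vert K_1(x,z)\vert^2\dd z$ of an $\Autom$-admissible positive-type kernel $K_1K_1^*$ (built from the convolution idempotent $e_{\bsK'}$), whose spectral decomposition is furnished by the untwisted ($\delta=1$) specialisation of \ref{decspec}. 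Since the spectral measure there is positive, the $\sigma$-component is dominated by the total diagonal value $K_1K_1^*(x,x)$.

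It then remains to bound $K_1K_1^*(x,x)$, and here I would dominate it by a single sum $\sum_{\delta\in\BB_GG(F)}\vert h(x^{-1}\delta x)\vert$ for a suitable $h\in C^\infty_{\rm c}(G(\AM))$ (the convolution of the defining function with its adjoint) and invoke Lemma \ref{estim1} on the diagonal $x=y$, which gives $\sum_{\delta}\vert h(x^{-1}\delta x)\vert<c\,\delta_{P_0}(x)$. This is the only genuine departure from the number-field argument: the reduction of the unipotent integrals hidden in $K_1K_1^*$ to sums to which \ref{estim1} applies requires replacing, for each parabolic $R$, the group $U_R$ by its Lie algebra $\uu_R$ via the $F$-isomorphism $\uu_R\to U_R$ compatible with the $A_R$-action constructed in the proof of \ref{CVG} and already used for \ref{estim1}, there being no exponential map in characteristic $p$.

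The main obstacle I anticipate is the identification of $\sum_{\Phi'}\int_{\bsmu_S}\vert E^Q(x,\Phi',\theta_0\mu)\vert^2\dd\mu$ with the $\sigma$-component of a concrete positive kernel and the verification that its diagonal value is dominated term-by-term by a convergent $G(F)$-sum of the type in \ref{estim1}. This rests on writing each Eisenstein series through its defining series, performing Fourier inversion over the compact torus $\bsmu_S$ (which collapses the double sum over $\theta_0(S)(F)\backslash Q(F)$ to a single orbit sum constrained by a lattice condition on the $\bfH$-projections), and using the moderate growth together with the square-integrability modulo centre of the discrete forms $\Phi'$. Once these series manipulations are justified uniformly in $\mu$ — which is automatic here since $\bsmu_S$ is compact, so that no auxiliary function $B$ is needed — the remaining estimates are exactly those already carried out for \ref{estim1}.
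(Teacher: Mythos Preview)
Your approach is correct and essentially the same as the paper's: Cauchy--Schwarz to separate the variables, then recognise each square integral as a single non-negative term in the spectral expansion of the positive kernel $K_G(e_{\bsK'};y,y)=\sum_{\delta\in\BB_GG(F)}e_{\bsK'}(y^{-1}\delta y)$, and bound the latter by Lemma~\ref{estim1}. Your final paragraph overcomplicates matters: no Fourier inversion or unipotent-integral reduction is needed at this stage, since the kernel is already a $\BB_GG(F)$-sum to which \ref{estim1} applies verbatim, and the identification of the square integral as a spectral component is immediate from the spectral decomposition (Theorem~\ref{th�or�me principal de [LW]} / Proposition~\ref{decspec}); the Lie-algebra isomorphism from~\ref{CVG} is used only inside the proof of \ref{estim1}, not here.
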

 
% d\'emonstration
\begin{proof}
Comme dans \cite[12.2.3]{LW} on se ram\`ene \`a prouver 
qu'il existe $c>0$ tel que pour tout $y\in\Siegel^*$, on ait
\begin{equation*}\int_{\bsmu_S}\vert E^G(y,\Psi,\mu)\vert^2 \dd \mu < c\mskip 2mu \delta_{P_0}(y)\ptf\end{equation*}
On choisit un sous-groupe ouvert compact $\bsK'$ de $G(\adef)$ tel que la fonction $\Psi $ 
soit invariante \`a droite par $\bsK'$, et on consid\`ere le noyau
\begin{equation*}K_G(e_{\bsK'}; y,y)=\sum_{\delta\in{\BB_G}\G(F)}e_{\bsK'}(y^{-1}\delta y)\ptf\end{equation*}
Son expression spectrale est une somme de termes tous positifs ou nuls et l'un d'eux est l'int\'egrale ci-dessus. 
On conclut gr‰ce au lemme \ref{estim1}. 
\end{proof}

% corollaire
\begin{corollary}\label{12.2.3}
Pour tout $(x,y)\in G(\adef)\times G(\adef)$, l'int\'egrale d\'efinissant
\begin{equation*}{\Jres^T_{Q,Q';\Phi,\Psi}}(x,y)\end{equation*}
est absolument convergente. De plus, il existe $c,\mskip 2mu D >0$ et un sous-ensemble compact 
$C_Q$ de $\Siegel^{L,*}$ tels que {pour tout $x \in\BB_Q^G\Siegel^{L,*}\bsK$ et tout $y\in G(\adef)$}, 
en \'ecrivant $x= a s k $ avec $a\in\BB_Q^G$, $s\in\Siegel^{L,*}$ et $k\in\bsK$, on ait 
\begin{equation*}\vert \Jres^T_{Q,Q';\Phi,\Psi}(x,y)\vert\leq c\mskip 2mu \vert a\vert^D\vert y\vert^D\end{equation*}
si $s\in C_Q$ et $\Jres^T_{Q,Q';\Phi,\Psi}(x,y)=0$ sinon.
\end{corollary}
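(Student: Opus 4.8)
The plan is to prove the two assertions of Corollary \ref{12.2.3} in sequence: first the absolute convergence of the integral defining $\Jres^T_{Q,Q';\Phi,\Psi}(x,y)$, and then the explicit bound with its support property. The first point is immediate: since $\bsmu_S$ is compact and the integrand $\bs\Lambda^{T,Q}E^Q(x,\Phi,\theta_0\mu)\overline{E^{Q'}(y,\Psi,\mu)}$ is (for fixed $x,y$) a continuous function of $\mu$ on this compact group, absolute convergence follows at once. The substance is the second assertion, which refines Proposition \ref{[LW,12.2.3]} by tracking the dependence on the split part $a\in\BB_Q^G$ and by localizing the Siegel component $s$ to a fixed compact.

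First I would exploit the interplay between the truncation operator $\bs\Lambda^{T,Q}$ acting on the first variable and the support properties established in Chapter \ref{troncature}. The key input is Proposition \ref{supportcompact} (and its consequence for the constant terms, via \ref{lemme1TC}--\ref{lemme4TC}): applied to the function $x\mapsto E^Q(x,\Phi,\theta_0\mu)$ on $\bsX_Q$, the truncation $\bs\Lambda^{T,Q}E^Q(\cdot,\Phi,\theta_0\mu)$ has support whose image is compact in $\BB_Q M_Q(F)\backslash M_Q(\AM)$, and crucially this support is uniform in $\mu\in\bsmu_S$ because the Eisenstein series depends holomorphically (hence in a uniformly controlled way) on the compact parameter space. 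Writing $x=ask$ with $a\in\BB_Q^G$, $s\in\Siegel^{L,*}$, $k\in\bsK$, the truncation forces $s$ to lie in a fixed compact $C_Q\subset\Siegel^{L,*}$; outside this compact the integrand vanishes identically, giving the clause ``$\Jres^T_{Q,Q';\Phi,\Psi}(x,y)=0$ sinon''.

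Next, on the region $s\in C_Q$, I would combine the pointwise majoration from Proposition \ref{[LW,12.2.3]} with the explicit form of $\bs\delta_{P_0}$. On $s\in C_Q$ (a fixed compact) the factor $\delta_{P_0}(s)^{1/2}$ is bounded, so the estimate $\int_{\bsmu_S}|E^Q(x,\Phi,\theta_0\mu)\overline{E^{Q'}(y,\Psi,\mu)}|\,\dd\mu<c\,\delta_{P_0}(x)^{1/2}\delta_{P_0}(y)^{1/2}$ translates, via $\delta_{P_0}(x)=\delta_{P_0}(a)\delta_{P_0}(s)$ (up to the bounded $k,s$ contributions), into a bound of the shape $c'\,\delta_{P_0}(a)^{1/2}\,\delta_{P_0}(y)^{1/2}$. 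Since $a\in\BB_Q^G\subset A_Q(\AM)$ has $\delta_{P_0}(a)^{1/2}=e^{\langle\rho_{P_0},\bfH_0(a)\rangle}$ polynomially comparable to a power $|a|^D$ of the height, and similarly $\delta_{P_0}(y)^{1/2}\ll|y|^{D}$ for $y$ in a Siegel domain, I obtain the desired $\vert\Jres^T_{Q,Q';\Phi,\Psi}(x,y)\vert\leq c\,\vert a\vert^D\vert y\vert^D$ after adjusting $c$ and $D$.

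The main obstacle will be proving that the compact $C_Q$ controlling $s$ can be chosen \emph{uniformly in $\mu\in\bsmu_S$}, and in a manner independent of $y$. The uniformity in $y$ is free, since $\bs\Lambda^{T,Q}$ acts only on the first variable; the uniformity in $\mu$ is where one must be careful. Here the crucial simplification afforded by the function-field setting intervenes: the spectral parameter space $\bsmu_S$ is \emph{compact}, so the meromorphic (in fact smooth on the imaginary axis, by the rationality statement of \ref{OE et SE}) dependence of $E^Q(\cdot,\Phi,\theta_0\mu)$ on $\mu$ yields support bounds that are automatically uniform—there is no escape to infinity in the parameter, unlike the number-field case where an auxiliary function $B$ was needed. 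I would therefore invoke \ref{supportcompact} together with the compactness of $\bsmu_S$ to extract a single $C_Q$ working for all $\mu$, which is precisely what makes the argument of \cite[12.2.3]{LW} go through here with the stated simplification.
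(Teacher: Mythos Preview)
Your overall structure matches the paper's: absolute convergence is immediate from the compactness of $\bsmu_S$; the support constraint on $s$ comes from Proposition~\ref{supportcompact}; and the growth bound then follows from Proposition~\ref{[LW,12.2.3]} combined with $\bs\delta_{P_0}\ll|\cdot|^D$. Two points, however, need to be fixed.

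\textbf{Uniformity of $C_Q$ in $\mu$.} Your justification---holomorphic dependence on the compact parameter $\mu$---does not by itself force the supports of the truncated functions to sit in a common compact; a smoothly varying family of compactly supported functions can easily have wandering supports. The paper's argument is cleaner and does not use compactness of $\bsmu_S$ at all here: choose an open compact $\bsK'$ with $\Phi$ right $\bsK'$-invariant. Then $E^Q(\cdot,\Phi,\theta_0\mu)$ is right $\bsK'$-invariant for \emph{every} $\mu$, and Proposition~\ref{supportcompact} furnishes a compact $\Omega=\Omega_{T,\bsK'}$ depending only on $T$ and $\bsK'$, not on the particular function. This is what gives a single $C_Q$ valid for all $\mu$ (and is the function-field simplification alluded to in the introduction: over a number field one only gets rapid decrease, not compact support).

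\textbf{The variable $y$.} Your bound $\bs\delta_{P_0}(y)^{1/2}\ll|y|^D$ is stated ``for $y$ in a Siegel domain'', but the corollary asserts the estimate for all $y\in G(\AM)$. The paper handles this by choosing $g\in\BB_G Q'(F)$ with $gy\in\Siegel_{Q'}^G$, using the left $Q'(F)$-invariance of $E^{Q'}(\cdot,\Psi,\mu)$, and then comparing $\bs\delta_{P_0}(gy)^{1/2}$ with $|y|^D$ via the standard height estimates (this is the step deferred to \cite[12.2.4]{LW}). You should make this reduction explicit.
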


% preuve
\begin{proof}
Soit $\bsK'$ un sous-groupe ouvert compact distingu\'e de $\bsK$ tel que la fonction $\phi$ soit invariante \`a droite par $\bsK'$. 
D'apr\`es la proposition \ref{supportcompact}, il existe un sous-ensemble compact $C_Q$ de $\Siegel^{L,*}$ 
tel que {pour tout $(a,k)\in\BB_Q^G\times\bsK$ et tout $\mu\in \bsmu_S$}, le support de la fonction sur $\Siegel^{L,*}$ 
\begin{equation*}h\mapsto\bs\Lambda^{T,Q}E^Q(ask,\Phi,\theta_0(\mu))\end{equation*}
soit contenu dans $C_Q$. D'autre part pour $y\in G(\adef)$, il existe un {$g\in\BB_G Q'(F)$} tel que 
$gy\in\Siegel_{Q'}^G$. On proc\`ede comme dans la preuve de \cite[12.2.4]{LW}, en remarquant que 
puisque la fonction $\bs{\delta}_{P_0}$ est \`a croissance lente, 
$\bs{\delta}_{P_0}(x)^{1/2}\bs{\delta}_{P_0}(gy)^{1/2}$ est essentiellement major\'e par $\vert a\vert^D\vert y\vert^D$.
\end{proof}

Ë priori nous ne pouvons rien dire ici sur le centre, c'est pourquoi nous nous sommes limit\'es aux 
\begin{equation*}x\in\BB_Q^G \Siegel^{L,*}\bs{K}\ptf\end{equation*}Pour $y=x$, on en d\'eduira en \ref{convergence d'une int\'egrale it\'er\'ee} des 
estimations pour $x\in\BB_Q^\tG \Siegel^{L,*}\bs{K}$. 

%%%%%%%%%%%%%%%%%%%%%%%%%%%%%%%%%

 \section{Convergence d'une int\'egrale it\'er\'ee}
\label{convergence d'une int\'egrale it\'er\'ee}

On suppose ici que $T$ est un \'el\'ement r\'egulier de $\ag_0^G$ 
tel que\footnote{Cette condition peut sembler inad\'equate, d\`es lors qu'on aura \textit{in fine} 
\`a \'evaluer un \'el\'ement de $\mathrm{PolExp}$ en $T_0\in \ESA_0^G$ qui n'est \`a priori pas $\theta_0$-invariant. 
Mais comme l'\'el\'ement de PolExp \`a \'evaluer ne d\'epend 
que de l'image de $T\in \ag_{0,\QM}^G$ dans le sous-espace $a_{\tM_0,\QM}^\tG= (\ag_{0,\QM}^G)^{\theta_0}$ 
de $\ag_{0,\QM}^G$ form\'e des \'el\'ements $\theta_0$-invariants, 
cette condition n'est pas vraiment gnante.} $\theta_0(T)=T$. On suppose de plus que 
pour tout $\alpha\in\Delta_0$, on a
\begin{equation*}0<\alpha(T)\leq \kappa\mskip 2mu \bsd_0(T)\end{equation*}
pour une constante $\kappa>0$ assez grande. 
Dans un tel c™ne, les fonctions $\bsd_0(T)$, $\lVert  T \rVert $ et $\alpha(T)$  
sont \'equivalentes, pour tout $\alpha \in \Delta_0$.

Fixons des vecteurs $\Psi$ et $\Phi$ comme ci-dessus. Pour all\'eger l'\'ecriture, posons 
\begin{equation*}\Jres(x,y)= \Jres_{Q,Q';\Phi,\Psi}(x,y) \com{et} \Jres(y)=\Jres(y,y)\ptf\end{equation*}
On veut prouver la convergence de l'int\'egrale 
\begin{equation*}\int_{\bsY_\Qo}\wt{\sigma}_Q^R(\bfH_Q(y)-T)\lvert  \Jres(y)\rvert \dd y\vg\leqno{(1)}\end{equation*}
ou, ce qui est \'equivalent, de l'expression
\begin{equation*}\sum_{H\in\ESC_\Qo^\tG}\wt{\sigma}_Q^R(H_Q-T)\int_{\bsY_\Qo(H)}\lvert  \Jres(y))\rvert \dd y\ptf \leqno{(2)}\end{equation*}
L'int\'egration sur $\bsY_\Qo(H)$ se d\'ecompose en une int\'egration sur le produit
\begin{equation*}U_\Qo(F)\bsl U_\Qo(\adef)\times\bsX_{L_0}^\tG(H)\times\bsK\end{equation*}
o $\bs{X}_{L_0}^\tG(H)$ est l'image de $L_0(\adef;H')$ dans $A_\tG(\adef)L_0(F)\backslash L_0(\adef)$ 
pour un rel\`evement $H'\in \ESA_\Qo$ de $H\in \ESC_\Qo^\tG$.
Par ailleurs, la fonction que l'on int\`egre est invariante \`a gauche par le groupe $U_Q(\adef)\cap U_{Q'}(\adef)$. 
Posons $U_\Qo^L = U_Q\cap L$ et $U_\Qo^{L'} = U_\Qo\cap L'$. L'application naturelle
\begin{equation*}U_\Qo\rightarrow U_\Qo^L\times U_\Qo^{L'}\end{equation*}
induit un isomorphime
\begin{equation*}U_\Qo(F)(U_Q(\adef)\cap U_{Q'}(\adef)\bsl U_\Qo(\adef)\rightarrow
U_\Qo^L(F)\bsl U_\Qo^L(\adef)\times U_\Qo^{L'}(F)\bsl U_\Qo^{L'}(\adef)\ptf\end{equation*}
On peut donc remplacer dans (2) la variable $y$ par $uu'xk$ avec 
\begin{equation*}u\in U_\Qo^L(F)\bsl U_\Qo^L(\adef)\vgq
u'\in U_\Qo^{L'}(F)\bsl U_\Qo^{L'}(\adef)\vgq x\in\bsX_{L_0}^\tG(H)\quad \hbox{et}\quad k\in \bsK\ptf\end{equation*}
La mesure $\dd y$ se transforme alors en 
\begin{equation*}\delta_\Qo(x)^{-1}\dd u\dd u' \dd x \dd k\end{equation*}
et on a $\bfH_\Qo(x)= \bfH_\Qo(y)=H$. On obtient:
\begin{equation*}\Jres(uu'xk) = \Jres(uxk,u' xk)
\leqno{(3)}\end{equation*}
On a la variante suivante du lemme \cite[12.3.1]{LW}:

% lemme
\begin{lemma}\label{[LW,12.3.1]}
Il existe un sous-ensemble fini $\omega\subset {\ag_Q^\tG}$ ind\'ependant de $T$ tel que si
$\Jres(uu'xk)$ est non nul, alors $q_Q(H)=((1-\theta_0)H)_Q^{\tG}$ appartient \`a $\omega$.
\end{lemma}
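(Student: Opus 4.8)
Il faut montrer que l'existence d'un support pour $\Jres(uu'xk)$ impose une contrainte finie sur $q_Q(H) = ((1-\theta_0)H)_Q^\tG$, où $H = \bfH_\Qo(x)$. Observons d'abord que $\Jres(uxk, u'xk)$ s'exprime comme une intégrale sur $\bsmu_S$ de $\bs\Lambda^{T,Q}E^Q(uxk,\Phi,\theta_0\mu)$ contre $\overline{E^{Q'}(u'xk,\Psi,\mu)}$. Les deux séries d'Eisenstein sont construites à partir de formes automorphes discrètes sur $S$ et $\theta_0(S)$, provenant de fonctions $f$ à support compact via l'opérateur $\tRho_{S,\sigma,\mu}(f,\omega)$. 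L'idée directrice est que la compacité du support de $f$ (cf.~\ref{KQd} et \ref{factorisation du noyau}) force une restriction sur les hauteurs des éléments intervenant, et que $1-\theta_0$ agit de façon inversible sur $\ag_G^\tG$ par hypothèse (\ref{hypoth�ses}), ce qui transforme cette restriction en finitude sur $q_Q(H)$.

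**Plan de la preuve.** Je suivrais pas à pas la démonstration de \cite[12.3.1]{LW}, en adaptant les points où le centre intervient. Premièrement, je rappellerais l'expression explicite de $\Jres(uxk,u'xk)$ après factorisation du noyau $K_{Q,Q',\sigma}$ par $e_{\bsK'}$ (cf.~\ref{factorisation du noyau}), de sorte que le terme tronqué $\bs\Lambda^{T,Q}_1 E^Q(uxk,\Phi,\theta_0\mu)$ soit à support compact d'image dans $\BB_Q^G\Siegel^{L,*}\bsK$ d'après \ref{supportcompact} et \ref{12.2.3}. Deuxièmement, j'utiliserais le fait que la non-nullité de $\Jres(uxk,u'xk)$ exige l'existence d'un $\delta\in\tG(F)$ tel que $x^{-1}u^{-1}\delta u' x$ (à conjugaison près par $\bsK$ et modulo $U_Q\cap U_{Q'}$) reste dans le support compact $\Omega_f$ de $f$, ce qui donne $\bfH_\Qo(x^{-1}\delta x)$ borné dans $\ag_\Qo$ par une constante ne dépendant que de $\Omega_f$. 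Troisièmement, et c'est le cœur de l'argument, j'analyserais l'action de $1-\theta_0$ : en écrivant $\delta = \delta_0 g$ avec $g\in Q_0(F)$ (car $\delta\in\tG(Q,R)$ contribue via la double classe de $\delta_0$, cf.~\ref{cot�spec}), la projection $(\bfH_\Qo(x^{-1}\delta x))_Q^\tG$ fait apparaître le terme $((1-\theta_0)H)_Q^\tG = q_Q(H)$, puisque $\bfH_\Qo(x^{-1}\delta_0 x) = \bfH_\Qo(x) - \theta_0^{-1}\bfH_\Qo(x)$ modulo une contribution bornée provenant de $g$ et de la partie compacte.

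**Conclusion de la finitude.** Une fois établi que $q_Q(H)$ reste dans un ensemble borné de $\ag_Q^\tG$ indépendant de $T$ (la borne ne dépendant que de $\Omega_f$ et des compacts $\Omega_Q$, $C_Q$), il reste à observer que $q_Q$ prend ses valeurs dans un réseau. En effet $H\in\ESC_\Qo^\tG = \ESB_\tG\backslash\ESA_\Qo$ vit dans un $\ZM$-module de type fini, et $1-\theta_0$ envoie $\ESA_\Qo$ dans un réseau de $\ag_Q^\tG$ (en utilisant \ref{qqq} et le fait que $1-\theta_0$ est un automorphisme de $\ag_G^\tG$). L'intersection d'un borné et d'un réseau étant finie, l'ensemble $\omega$ des valeurs possibles de $q_Q(H)$ est fini. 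L'indépendance en $T$ résulte de ce que la borne provient uniquement du support de $f$ et non de la région de troncature.

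**Principal obstacle.** La difficulté majeure sera de contrôler proprement la contribution de la partie unipotente $u,u'$ et de la composante $g\in Q_0(F)$ dans le calcul de $\bfH_\Qo(x^{-1}\delta x)$, car contrairement au cas des corps de nombres on ne dispose pas d'application exponentielle ; comme dans la preuve de \ref{CVG} et le lemme \ref{estim1}, il faudra invoquer l'existence d'un $F$-isomorphisme de variétés algébriques $\uu_\Qo\to U_\Qo$ compatible à l'action de $A_0$ pour ramener l'estimation des orbites unipotentes à un calcul sur l'algèbre de Lie. Le second point délicat est la gestion du centre : puisqu'on travaille dans $\ESC_\Qo^\tG$ et non dans $\ESA_\Qo$ tout entier, il faut vérifier que $q_Q$ descend bien au quotient par $\ESB_\tG$, ce qui tient précisément parce que $\ESB_\tG = \ESB_G\cap\ag_\tG$ est tué par $1-\theta_0$ après projection sur $\ag_Q^\tG$.
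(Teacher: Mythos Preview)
Your approach has a fundamental gap: you attempt to constrain $q_Q(H)$ via the compact support $\Omega_f$ of $f$, but at this stage of the argument $f$ does not appear in $\Jres$ at all. By the conventions fixed just before the lemma, $\Jres = \Jres_{Q,Q';\Phi,\Psi}$ is built from \emph{fixed} automorphic forms $\Phi\in \Automd(\bsX_{\theta_0(S)},\theta_0(\sigma))$ and $\Psi\in \Automd(\bsX_S,\sigma)$ via
\[
\Jres(x,y)=\int_{\bsmu_S}\bs\Lambda^{T,Q}E^Q(x,\Phi,\theta_0\mu)\,\overline{E^{Q'}(y,\Psi,\mu)}\dd\mu\ptf
\]
There is no kernel $K_{Q,\delta}$ here and no sum over $\delta\in\tG(F)$; your second step, which posits the existence of $\delta$ with $x^{-1}u^{-1}\delta u'x\in\Omega_f$, therefore has no basis. (The passage from $\Jres_{Q,Q';\Psi}$ to $\Jres_{Q,Q';\Phi,\Psi}$ was made precisely in the remark preceding the lemma in order to work with a $\mu$--independent $\Phi$.) Your anticipated obstacles concerning the exponential map and the unipotent contributions are likewise beside the point.

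The actual mechanism is Fourier--analytic. One splits $\bsmu_S\simeq\bsmu_{Q'}\times\bsmu_S^{Q'}$ via a section and writes $x=zas$ with $a\in\BB_\Qo^\tG\cap\BB'^\tG_\Qo$, $s\in\Siegel^{L_0,*}$; decomposing $a=a_Qa^Q=a_{Q'}a^{Q'}$ and setting $H_0=\bfH_\Qo(a)$, the dependence of the integrand on the component $\mu_{Q'}$ is purely through an exponential character. This is because for the Eisenstein series from $S$ to $Q'$ (resp.\ from $\theta_0(S)$ to $Q$) the $\ag_{Q'}^*$--part (resp.\ $\ag_Q^*$--part) of the inducing parameter enters only as a global exponential factor, and $\bs\Lambda^{T,Q}$ commutes with left translation by $A_Q(\AM)$. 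One is left with
\[
\int_{\bsmu_{Q'}} e^{\langle\mu_{Q'},\,\theta_0^{-1}((H_0)_Q)-(H_0)_{Q'}\rangle}\dd\mu_{Q'}
\]
times an expression independent of $\mu_{Q'}$. This integral over the compact group $\bsmu_{Q'}$ vanishes unless $\theta_0^{-1}((H_0)_Q)=(H_0)_{Q'}$, which forces $((1-\theta_0)H)_Q^\tG=(\bfH_Q(s)-\theta_0(\bfH_{Q'}(s)))^\tG$. The right--hand side takes only finitely many values since $\bfH_\Qo(s)$ does for $s\in\Siegel^{L_0,*}=\EE_{L_0}\Siegel^{L_0,1}$ (the set $\EE_{L_0}$ is finite and $\bfH_\Qo$ is trivial on $L_0(\AM)^1$). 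Both the constraint and its $T$--independence come from this Fourier integral, not from any support condition on $f$.
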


% preuve
\begin{proof}Via le choix d'une section de la surjection $\ESA_M\to\ESA_{Q'}$ on dispose 
d'un isomorphisme $\bsmu_M=\bsmu_{Q'}\times \bsmu_S^{Q'}$ et
l'int\'egration sur $\bsmu_S$ se d\'ecompose en une int\'egrale double:
\begin{align*}
\lefteqn{
\Jres(uu'xk)=\int_{\bsmu_S^{Q'}}\int_{\bsmu_{Q'}}
\bs\Lambda^{T,Q}E^Q(uxk,\Phi,\theta_0 (\mu_{Q'}+\mu^{Q'}))
}\\&&\hspace {5,4cm}\times
\quad\overline{E^{Q'}(u'xk,\Psi,\mu_{Q'} +\mu^{Q'})}\dd \mu_{Q'}\dd \mu^{Q'}.
\end{align*}
Soit $\BB_\Qo\subset A_\Qo(\adef)$ un sous-groupe de la forme
\begin{equation*}\BB_\Qo= \BB_\Qo^Q \BB_Q\;(= \BB_\Qo^Q \BB_Q^G \BB_G)\end{equation*}
o $\BB_\Qo^Q\subset A_\Qo(\adef)$ est l'image d'une section du morphisme compos\'e
\begin{equation*}A_\Qo(\adef) \rightarrow A_Q(\adef)\bsl A_\Qo(\adef)\rightarrow \ESB_\Qo^Q\ptf\end{equation*}
Notons $\Siegel^{L_0,*}=\EE_{L_0}\Siegel^{L_0,1}$ un domaine de Siegel pour le quotient 
$\BB_\Qo L_0(F)\backslash L_0(\adef)$ (cf. \ref{Siegel}). 
Fixons de la mme mani\`ere un sous-groupe $\BB'_\Qo= 
\BB_\Qo^{Q'}\BB_{Q' }\subset A_\Qo(\adef)$. 
On ne peut pas en g\'en\'eral s'arranger pour que $\BB_\Qo= 
\BB'_\Qo$, mais puisque $\BB_\Qo \cap \BB'_\Qo$ est d'indice fini 
dans $\BB_\Qo$, quitte \`a grossir $\EE_{L_0}$, on peut toujours 
supposer que 
\begin{equation*}L_0(\adef)= (\BB_\Qo \cap \BB'_\Qo)L_0(F)\Siegel^{L_0,*}\ptf\end{equation*}
On suppose aussi que $\BB_G$ est de la forme
$\BB_G = \BB_G^\tG\BB_\tG$
o $\BB_G^\tG\subset A_G(\adef)$ est l'image d'une section du morphisme compos\'e
\begin{equation*}A_G(A) \rightarrow A_\tG(\adef)\backslash A_G(\adef) \rightarrow \ESB_G^\tG = \ESB_\tG \backslash \ESB_G\end{equation*}
et l'on pose
\begin{equation*}\BB_\Qo^\tG= \BB_\Qo^\tG \BB_Q^\tG= 
\BB_\Qo^Q\BB_Q^G\BB_G^\tG\quad \hbox{et}\quad 
\BB'^\tG_\Qo= \BB_\Qo^{Q'}\BB_{Q'}^\tG=\BB_\Qo^{Q'} \BB_{Q'}^G\BB_G^\tG\ptf\end{equation*}
Choisissons un rel\`evement de $x\in \bsX_{L_0}^\tG(H)$ dans $L_0(\adef;H')$ et \'ecrivons
$x= zas$ avec $z\in \BB_\tG L_0(F)$, $a\in (\BB_\Qo^\tG \cap \BB'^\tG_\Qo)$ 
et $s\in \Siegel^{L_0,*}$. On d\'ecompose $a$ sous la forme
\begin{equation*}a=a_Qa^Q = a_{Q'}a^{Q'}\end{equation*}
avec $a_Q\in \BB_Q^\tG$, $a^Q\in \BB_\Qo^Q$, 
$a_{Q'}\in \BB_{Q'}^\tG$ et $a^{Q'}\in \BB_\Qo^{Q'}$. 
 Posons $H_0=\bfH_\Qo(a)$. Comme dans la d\'emonstration de \cite[12.3.1]{LW}, on obtient que
\begin{align*}
\lefteqn{\Jres(uu'xk)=\bs{\delta}_Q^{1/2}(a_Q)\bs{\delta}_{Q'}^{1/2}(a_{Q'})\int_{\bsmu_{Q'}}
e^{\langle \theta_0(\mu_{Q'}),(H_0)_Q \rangle -\langle\mu_{Q'}, (H_0)_{Q'} \rangle}
\dd \mu_{Q'}}\\&&\hspace{2cm}
\times\int_{\bsmu_S^{Q'}}\bs\Lambda^{T,Q}E^Q(ua^Q s k,\Phi,\theta_0 (\mu^{Q'}))
\overline{E^{Q'}(u'a^{Q'}s k,\Psi,\mu^{Q'})}\dd \mu^{Q'}.
\end{align*}
Or
\begin{equation*}\int_{\bsmu_{Q'}}e^{\langle \theta_0(\mu_{Q'}),(H_0)_Q \rangle -\langle\mu_{Q'}, (H_0)_{Q'} \rangle}
\dd \mu_{Q'}=\int_{\bsmu_{Q'}}e^{\langle\mu_{Q'},\theta_0^{-1}((H_0)_Q)-(H_0)_{Q'} \rangle}\dd \mu_{Q'}\end{equation*}
et cette int\'egrale n'est non nulle que si $\theta_0^{-1}((H_0)_Q)-(H_0)_{Q'}=0$. 
Pour que $\Jres(uu'xk)$ soit non nul, il faut donc que 
\begin{equation*}(\bfH_Q(x) -\theta_0 (\bfH_{Q'}(x)))^\tG = (H- \theta_0(H))_Q^\tG
=(\bfH_Q(s) -\theta_0 (\bfH_{Q'}(s)))^\tG\ptf\end{equation*}
Maintenant, on observe que l'ensemble
\begin{equation*}\omega =\{(\bfH_Q(s) -\theta_0 (\bfH_{Q'}(s)))^\tG\mskip 2mu \vert \mskip 2mu  s\in \Siegel^{L_0,*} \}\subset \ag_Q^\tG\end{equation*}
est fini et le lemme en r\'esulte.
\end{proof}

% proposition
\begin{proposition}\label{[LW,12.3.2]}
{Pour $T\in (\ag_0^G)^{\theta_0}$,} l'expression
\begin{equation*}\sum_{H\in\ESC_\Qo^\tG}\wt{\sigma}_Q^R(H_Q-T)\int_{\bsY_\Qo(H)}
\lvert  \Jres^T_{Q,Q';\Phi,\Psi}(y)\rvert \dd y\end{equation*}
est convergente et la somme sur $H$ est finie.
\end{proposition}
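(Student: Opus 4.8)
On se propose de suivre la preuve de \cite[12.3.2]{LW}, en tenant compte de ce que, dans le cas des corps de fonctions, les objets $\ESA$ sont des réseaux et non des espaces vectoriels, de sorte que la sommation sur $H$ se fait sur le réseau $\ESC_\Qo^\tG$. Le résultat se scinde en deux assertions: la finitude de l'ensemble des $H\in\ESC_\Qo^\tG$ avec $\wt\sigma_Q^R(H_Q-T)=1$ donnant une contribution non nulle, et la convergence, pour chacun de ces $H$, de l'intégrale sur $\bsY_\Qo(H)$. On commencera par décomposer cette dernière au moyen de l'isomorphisme rappelé plus haut, en remplaçant $y$ par $uu'xk$ avec $u\in U_\Qo^L(F)\bsl U_\Qo^L(\AM)$, $u'\in U_\Qo^{L'}(F)\bsl U_\Qo^{L'}(\AM)$, $x\in\bsX_{L_0}^\tG(H)$ et $k\in\bsK$, puis en invoquant l'identité (3), soit $\Jres(uu'xk)=\Jres(uxk,u'xk)$. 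Les intégrations en $u$, $u'$ et $k$ portent sur des domaines compacts et celle en $\mu$ sur le groupe compact $\bsmu_S$; seule l'intégration en $x$ réclame une estimation.

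Pour celle-ci, on écrira $x=zas$ comme dans la preuve de \ref{[LW,12.3.1]}, avec $z\in\BB_\tG L_0(F)$, $a\in\BB_\Qo^\tG\cap\BB'^\tG_\Qo$ et $s\in\Siegel^{L_0,*}$, de sorte que $H_0=\bfH_\Qo(a)$ coïncide avec $H$ à une contribution bornée près (venant de $z$ et de $s$). Le corollaire \ref{12.2.3}, appliqué séparément à l'argument de type $Q$ et à celui de type $Q'$, garantit alors que l'intégrande s'annule dès que la composante de Siegel $s$ quitte un compact fixe $C_Q\subset\Siegel^{L_0,*}$ indépendant de $H$, et qu'il est sinon majoré par $c\,\vert a\vert^D\vert y\vert^D$. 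Comme sur $\bsY_\Qo(H)$ la norme $\vert y\vert$ est contrôlée polynomialement par $\vert a\vert$ (les facteurs $u$, $u'$, $s$, $k$ demeurant dans des compacts), on aboutit à une majoration de la forme $c'\,\vert a\vert^{D'}$.

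La finitude de l'ensemble des $H$ contribuant constitue le point central, et c'est là que réside la difficulté principale. On combinera la contrainte du lemme \ref{[LW,12.3.1]}, à savoir $q_Q(H)=((1-\theta_0)H)_Q^\tG\in\omega$ pour un ensemble fini $\omega$, avec la condition de troncature $\wt\sigma_Q^R(H_Q-T)=1$. Puisque $H\equiv H_0\pmod{\text{borné}}$ et que $s$ reste dans le compact $C_Q$, ces conditions se transposent en des contraintes sur $H_0$ dans le réseau: la relation $\theta_0$-tordue $(H_0)_Q=\theta_0((H_0)_{Q'})$ issue de \ref{[LW,12.3.1]}, jointe au cône défini par $\wt\sigma_Q^R$ (qui majore $H_Q$ via les inégalités sur les poids $\varpi$ et le contraint inférieurement via les racines de $\Delta_Q^R$), doit confiner $H$ à une partie finie de $\ESC_\Qo^\tG$. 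Le cœur de l'argument combinatoire — plus délicat qu'en caractéristique nulle du fait du caractère seulement réticulaire de $\ESA$ et de la torsion — consiste à vérifier que, $1-\theta_0$ induisant un automorphisme de $\ag_G^\tG$, cette relation tordue et le cône de troncature ne laissent subsister qu'un nombre fini de points du réseau; on utilisera ici la régularité et la $\theta_0$-invariance supposées de $T$.

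Une fois cette finitude acquise, $\vert a\vert$ reste borné, la majoration $c'\,\vert a\vert^{D'}$ devient une constante, et pour chacun des (finiment nombreux) $H$ l'intégrale sur $\bsY_\Qo(H)$ — désormais portée sur une région compacte modulo les directions déjà quotientées et le groupe compact $\bsmu_S$ — est finie. La convergence de l'expression tout entière, ainsi que la finitude de la somme sur $H$, en résultent.
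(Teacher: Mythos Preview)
Your proposal has the right ingredients but decouples two steps that are in fact intertwined, and neither step stands on its own as you have stated them.

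First, Corollary~\ref{12.2.3} does not give you a compact in $\Siegel^{L_0,*}$. It gives a compact $C_L\subset\Siegel^{L,*}$ with $L=M_Q$, because the truncation operator $\bs\Lambda^{T,Q}$ acts relatively to $Q$, not to $Q_0$. In the paper one writes the first argument as $y_1=\gamma u a^Q s\in\Siegel^{L,*}$ (for a suitable $\gamma\in L(F)$) and obtains $y_1\in C_L$; but $y_1$ involves both $s\in\Siegel^{L_0,*}$ and the component $a^Q\in\BB_\Qo^Q$, so the compactness of $y_1$ alone does not force $s$ into a compact of $\Siegel^{L_0,*}$ independent of $H$. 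What one actually gets is the joint bound $\| H_{P'}^Q\| +\| \bfH_0(s)\|\ll 1 +\| \bfH_0(y_1)\|$, after having first cut the domain of $H$ by the partition $\sum_{P'}\phi_\Qo^{P'}\tau_{P'}^Q=1$ of \cite[1.7.5]{LW}.

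Second, the finiteness of $H$ is not a purely combinatorial consequence of $q_Q(H)\in\omega$ together with $\wt\sigma_Q^R(H_Q-T)=1$: these conditions leave the $\ag_\Qo^Q$-direction essentially free. The paper uses \cite[2.13.3]{LW} (applied to each piece $P'$ of the partition) to obtain $\| H^{\tG} -T_\Qo \|\ll 1 +\| (H -T)_{P'}^Q\|$, and it is only the analytic input---the compactness of $y_1$ above---that then bounds $\|(H-T)_{P'}^Q\|$. Thus the bound on $s$ and the finiteness of $H$ come out of the \emph{same} inequality; your argument, which tries to establish each separately, cannot close.
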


\begin{proof}
Il s'agit de prouver que pour $H\in\ESC_\Qo^\tG$, la fonction
\begin{equation*}(u,u'\mskip -2mu ,x,k)\mapsto\wt{\sigma}_Q^R(H-T)\delta_\Qo(x)^{-1}\Jres(uu'xk)\end{equation*}
est absolument int\'egrable sur
\begin{equation*}U_\Qo^L(F)\bsl U_\Qo^L(\adef)\times U_\Qo^{L'}(F)\bsl U_\Qo^{L'}(\adef)
\times\bsX_{L_0}^\tG(H)\times\bsK \end{equation*}
et qu'elle est nulle sauf pour un nombre fini de $H$. 
On proc\`ede comme dans la preuve de \cite[12.3.2]{LW}. On commence par d\'ecouper le domaine de sommation en $H$ 
gr‰ce \`a la partition de \cite[1.7.5]{LW} appliqu\'ee au couple $(P,R)= (Q_0,Q)$: 
on peut fixer un $P'\in\ESP_\st$ tel que $Q_0\subset P'\subset Q$ et imposer que
\begin{equation*}\phi_\Qo^{P'}(H-T)\tau_{P'}^Q(H-T)=1\ptf\end{equation*}
On s'int\'eresse donc aux $H\in\ag_\Qo$ tels que
\begin{equation*}\wt{\sigma}_Q^R(H-T)\phi_\Qo^{P'}(H-T)\tau_{P'}^Q(H-T)=1\vgq q_Q(H)\in\omega\ptf \leqno{(4)}\end{equation*}
D'apr\`es \cite[2.13.3]{LW} (l'exposant $G$ est ici remplac\'e par un exposant $\tG$, voir \ref{les fonctions sigma}), 
pour $H$ v\'erifiant (4), on a
\begin{equation*}\lVert  H^{\tG} -T_\Qo \rVert \ll 1 +\lVert  (H -T)_{P'}^Q\rVert \leqno{(5)}\end{equation*}
d'o
\begin{equation*}\lVert  H^{\tG}\rVert \ll 1+\lVert H_{P'}^Q\rVert \ptf\leqno{(6)}\end{equation*}
Les constantes implicites dans (5) et (6) d\'ependent de $T$.
Au lieu d'int\'egrer sur $x\in\bsX_{ L_0}^\tG(H)$, on peut {choisir un rel\`evement $H'\in \ESA_\Qo$ de 
$H\in \ESC_\Qo^\tG$ et int\'egrer sur 
$x\in ((\BB_\Qo\cap \BB'_\Qo)\Siegel^{L_0,*})\cap L_0(\adef;H')$. 
De mme, on peut faire varier $(u,u')$ dans un compact $\Omega_\Qo^L\times\Omega_\Qo^{L'}$ de 
$U_\Qo^L(\adef)\times U_\Qo^{L'}(\adef)$. 
On \'ecrit\footnote{Notons que notre $s$ joue le r™le du $x$ de la d\'emonstration de \cite[12.3.2]{LW}.} 
$x = z a s$ avec 
\begin{equation*}z\in\BB_\tG L_0(F)\vgq a\in (\BB_\Qo^\tG \cap \BB'^\tG_\Qo)\quad \hbox{et} \quad s\in \Siegel^{L_0,*}\vg\end{equation*}
et on d\'ecompose $a$ en $a=a_Qa^Q=a_{Q'}a^{Q'}$ comme dans la preuve 
du lemme \ref{[LW,12.3.1]}. Pour $u\in\Omega_\Qo^{L}$, $u'\in\Omega_\Qo^{L'}$ et 
$k\in\bsK$, en supposant que $a_Q^{-1}\theta_0(a_{Q'})$ appartient \`a $\A_\tG(\adef)A_Q(\adef)^1$ -- sinon $\Jres(uu'xk)=0$ 
(cf. la preuve de \textit{loc.~cit}.) --, on a
\begin{equation*}\Jres(uu'xk) =\bs{\delta}_Q(a_Q)\Jres(ua^Qsk,u'a^{Q'}sk)\ptf\end{equation*}
Quitte \`a grossir $\EE_Q$, on peut supposer que $L_0(\adef)^*= \EE_\Qo L_0(\adef)^1$ est contenu 
dans $L_Q(\adef)^*= \EE_Q L_Q(\adef)^1$. 
Alors pour chaque $u\in\Omega_\Qo^L$, on peut choisir un $\gamma\in L(F)$ tel que
\begin{equation*}y_1= \gamma u a^Q s \in \Siegel ^{L,*}=\EE_Q \Siegel^{L,1}\ptf\end{equation*}
D'apr\`es \ref{12.2.3}, il existe $c,\mskip 2mu D >0$ et un compact $C_L$ dans 
$\Siegel^{L,*}$ tel que pour tout $k\in\bsK$, tout $u\in\Omega_\Qo^L$ et tout $u'\in\Omega_\Qo^{L'}$, 
on ait $\Jres(y_1k,u'a^{Q'}sk)=0$ si $y_1\notin C_L$ et
\begin{equation*}\vert \Jres(y_1k,u'a^{Q'}sk)\vert \leq c\mskip 2mu \vert u'a^{Q'}s\vert^D\quad\hbox{sinon}\ptf\end{equation*}
Comme dans la preuve de \cite[12.3.2]{LW}, on obtient
\begin{equation*}\lVert  H^Q_{P'}\rVert  +\lVert  \bfH_0(s)\rVert \ll 1 +\lVert  \bfH_0(y_1)\rVert \ptf \leqno{(7)}\end{equation*}
Pour $y_1\in C_L$, d'apr\`es (6) et (7), $\lVert  H^{\tG}\rVert $ et $\lVert \bfH_0(s)\rVert $ sont born\'es. On en d\'eduit que 
$H$ varie dans un sous-ensemble fini de $\ESC_\Qo^\tG$.
D'autre part $s$ appartient \`a $\Siegel^{L_0,*}$ et $\lVert \bfH_0(s)\rVert $ est born\'e, 
par cons\'equent $\vert s\vert$ est born\'e.} On obtient que
\begin{equation*}\bs{\delta}_\Qo(x)^{-1}\vert \Jres(u'uxk)\vert\ll 1\ptf\end{equation*}
D'o la proposition, puisque l'ensemble
\begin{equation*}\Omega_\Qo^L\times\Omega_\Qo^{L'}\times ((\BB_\Qo\cap\BB'_\Qo)\Siegel^{L_0,*}\cap L_0(\adef;H))\times \bsK\end{equation*}
est de volume fini.
\end{proof}

%%%%%%%%%%%%%%%%%%%%%%

 \section{Transformation de l'op\'erateur de troncature}
\label{transformation de l'op\'erateur}

Pour calculer l'expression
\begin{equation*}\sum_{H\in\ESC_\Qo^\tG}\wt{\sigma}_Q^R(H_Q-T) 
\int_{\bsY_\Qo(H)}\Jres^T_{Q,Q';\Phi,\Psi;\vtt}(y)\dd y\leqno{(1)}\end{equation*}
o $\vtt$ est une fonction lisse sur $\bsmu_S$, on d\'ecompose l'int\'egrale sur $\bsY_\Qo(H)$ 
comme en \ref{convergence d'une int\'egrale it\'er\'ee}. On peut permuter l'int\'egrale sur le groupe compact 
\begin{equation*}U_{Q_0^L}(F)\bsl U_{Q_0^L}(\adef)\times U_{Q_0^{L'}}(F)\bsl U_{Q_0^{L'}}(\adef)\end{equation*}
avec celle sur le groupe (lui aussi compact) $\bsmu_S$. Pour $(x,k)\in\bsX_{L_0}^\tG(H)\times\bsK$, 
la compos\'ee de ces deux int\'egrales est \'egale \`a
\begin{equation*}\int_{\bsmu_S}(\bs\Lambda^{T,Q}E^Q)_\Qo(xk,\Phi,\theta_0(\mu))
\overline{E^{Q'}_\Qo(xk,\Psi,\mu)}\vtt(\mu)\dd \mu\leqno{(2)}\end{equation*}
o l'indice $Q_0$ signifie que l'on prend le terme constant le long de $\Qo$.
D'apr\`es \cite[4.1.1]{LW}, cette expression (2) n'est non nulle que si 
\begin{equation*}\phi_\Qo^Q(H-T)= 1\ptf\end{equation*}
Cela entra"ne que dans le d\'ecoupage suivant les $P'\in\ESP_\st$ tel que $Q_0\subset P'\subset Q$ 
dans la preuve de la proposition \ref{[LW,12.3.2]}, seul 
le domaine correspondant \`a $P'=Q$ donne une contribution non nulle. 
D'apr\`es \ref{convergence d'une int\'egrale it\'er\'ee}~(5), il existe $c>0$ 
telle que pour les $H$ v\'erifiant
\begin{equation*}\wt{\sigma}_Q^R(H-T)\phi_\Qo^Q(H-T)=1\vgq q_Q(H)\in\omega \end{equation*}
on ait
\begin{equation*}\lVert  H^{\tG} -T_\Qo\rVert \leq c\ptf \leqno{(3)}\end{equation*}
Le point est qu'ici la constante $c$ est ind\'ependante de $T$ (d'apr\`es \cite[2.13.3]{LW} 
et le lemme \ref{[LW,12.3.1]}). {En particulier, puisque $T_\Qo\in \ag_\Qo^G$, on a
\begin{equation*}H^{\tG} -T_\Qo= H_G^\tG+(H^G -T_\Qo)\end{equation*}
et $\lVert  H^\tG_G\rVert $ est born\'e par une constante ind\'ependante de $T$.}
Fixons un r\'eel $\eta$ tel que $0<\eta <1$. Si $T$ est assez r\'egulier, la condition (3) entra"ne
\begin{equation*}\lVert  H^Q -T_\Qo^Q\rVert \leq\lVert \eta T\rVert  \ptf \leqno{(4)}\end{equation*}
Pour $T\in\ag_0$, on note\footnote{Observons que la fonction $\Kappa{T}$ 
utilis\'ee ici n'est pas tout-\`a-fait la mme que celle de \cite{LW} puisque nous 
ne passons pas au quotient par le centre.} 
$\Kappa{T}$ la fonction caract\'eristique du sous-ensemble des $X\in\ag_0$ tels que $\lVert  X\rVert \leq\lVert  T\rVert $.
 D'apr\`es \cite[4.2.2]{LW} il existe $c'>0$ tel que si
 \begin{equation*}\bsd_0(T)\geq c'(c+1)\vg\end{equation*}
 l'expression (2) multipli\'ee par 
$\wt{\sigma}_Q^R(H-T)$ vaut\footnote{Il faut supprimer le signe $(-1)^{a_Q-a_G}$ 
dans la formule du lemme \cite[4.2.2]{LW} page 86. Voir \Err(iii) de l'Annexe \ref{Erratum}.}
\begin{align*}
\lefteqn{
\Kappa{\eta T}(H^Q- T_\Qo^Q)\wt{\sigma}_Q^R(H-T)\phi_\Qo^Q(H-T)}\\
&&\hspace{2.5cm}
\times\int_{\bsmu_S}\bs\Lambda^{T\Lbra H^Q\Rbra,Q_0}E^Q_\Qo(xk,\Phi,\theta_0(\mu))
\overline{E^{Q'}_\Qo(xk,\Psi,\mu)}\vtt(\mu)\dd \mu
\end{align*}
si $\lVert  H^{\tG} -T_\Qo\rVert \leq c$ et elle est nulle sinon. Ce dernier point 
r\'esulte de l'analogue de la preuve de \ref{convergence d'une int\'egrale it\'er\'ee}~(5) 
pour l'expression ci-dessus. Notons que pour $H$ v\'erifiant $\phi_\Qo^G(H-T)=1$, 
l'\'el\'ement \begin{equation*}T\Lbra H^Q\Rbra = T\Lbra H^Q\Rbra^{Q_0}\in\ag_{P_0}^{Q_0}\end{equation*}
est {\og plus r\'egulier\fg} que $T^{Q_0}$: en effet, d'apr\`es \cite[4.2.1]{LW}, on a
\begin{equation*}\bsd_{P_0\cap L_0}^{L_0}(T\Lbra H^Q\Rbra)\geq\bsd_{P_0\cap L_0}^{L_0}(T^{Q_0})\geq\bsd_0(T)\ptf\end{equation*}
On a donc prouv\'e la

% proposition 1
\begin{proposition}\label{[LW,12.4.1]}
Il existe $c,\mskip 2mu c'>0$ tel que pour tout {$T\in (\ag_0^G)^{\theta_0}$} v\'erifiant $\bsd_0(T)\geq c'(c+1)$, 
l'expression (1) soit \'egale \`a
\begin{align*}
\lefteqn{\sum_{H\in\ESC_\Qo^\tG}\Kappa{\eta T}(H^Q- T_\Qo^Q)\wt{\sigma}_Q^R(H-T)
\phi_\Qo^Q(H-T)e^{-\langle 2\rho_\Qo,H\rangle} }\\
&&\hspace{0,2cm}\times\int_{\bsX_{L_0}^\tG(H)\times\bsK}
\int_{\bsmu_S}\bs\Lambda^{T\Lbra H^Q\Rbra,Q_0}E^Q_\Qo(xk,\Phi,\theta_0(\mu))
\overline{E^{Q'}_\Qo(xk,\Psi,\mu)}\vtt(\mu)\dd \mu
\dd x\dd k\ptf
\end{align*}
La somme sur $\ESC_\Qo^\tG$ ne fait intervenir que des $H$ tels que $\lVert  H^{\tG} -T_\Qo\rVert \leq c$.
\end{proposition}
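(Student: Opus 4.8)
The plan is to follow closely the proof of \cite[12.4.1]{LW}, adapting it to the function-field setting where integration over polytopes is replaced by summation over lattice points. First I would decompose the integration over $\bsY_\Qo(H)$ exactly as in \ref{convergence d'une int�grale it�r�e}: the integral factors through the product $U_\Qo^L(F)\bsl U_\Qo^L(\AM)\times U_\Qo^{L'}(F)\bsl U_\Qo^{L'}(\AM)\times\bsX_{L_0}^\tG(H)\times\bsK$, and by compactness of $\bsmu_S$ and of the two unipotent quotients I may interchange the integration over these compact unipotent groups with the integration over $\bsmu_S$. Taking the constant terms along $\Qo$ of the two Eisenstein series produces the expression (2), namely $\int_{\bsmu_S}(\bs\Lambda^{T,Q}E^Q)_\Qo(xk,\Phi,\theta_0(\mu))\overline{E^{Q'}_\Qo(xk,\Psi,\mu)}\vtt(\mu)\dd\mu$.

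Next I would invoke the basic property \cite[4.1.1]{LW} of the truncation operator: the constant term $(\bs\Lambda^{T,Q}E^Q)_\Qo(xk,\cdot)$ vanishes unless $\phi_\Qo^Q(H-T)=1$. This immediately collapses the partition from the proof of \ref{[LW,12.3.2]} (the decomposition indexed by $P'$ with $Q_0\subset P'\subset Q$) so that only the term $P'=Q$ survives. Combined with \ref{convergence d'une int�grale it�r�e}\,(5) and the finiteness lemma \ref{[LW,12.3.1]} (which controls $q_Q(H)\in\omega$), this yields a bound $\|H^\tG-T_\Qo\|\leq c$ with constant $c$ independent of $T$; I would carefully track the splitting $H^\tG-T_\Qo=H_G^\tG+(H^G-T_\Qo)$ to conclude that $\|H_G^\tG\|$ is bounded independently of $T$. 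Fixing $0<\eta<1$, regularity of $T$ then forces the refined estimate $\|H^Q-T_\Qo^Q\|\leq\|\eta T\|$, i.e.\ the support condition encoded by $\Kappa{\eta T}(H^Q-T_\Qo^Q)$.

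The heart of the argument is then the application of the refinement \cite[4.2.2]{LW} of the properties of $\bs\Lambda^{T,Q}$, which replaces the truncation $\bs\Lambda^{T,Q}$ acting on $E^Q$, after taking the constant term along $\Qo$, by $\bs\Lambda^{T\Lbra H^Q\Rbra,Q_0}$ acting on $E^Q_\Qo$, up to the characteristic factors $\Kappa{\eta T}\wt{\sigma}_Q^R\phi_\Qo^Q$ and provided $\bsd_0(T)\geq c'(c+1)$. Here I must be careful to use the corrected statement of \cite[4.2.2]{LW}: as noted in the footnote and in \emph{erratum} (ii) of the Annexe, the sign $(-1)^{a_Q-a_G}$ appearing in the formula of \cite[4.2.2]{LW} page 86 must be suppressed. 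The factor $e^{-\langle 2\rho_\Qo,H\rangle}$ arises from the Jacobian $\delta_\Qo(x)^{-1}$ in the decomposition of the measure $\dd y$. Assembling these pieces gives precisely the displayed expression of the proposition, with the summation over $\ESC_\Qo^\tG$ restricted to $H$ satisfying $\|H^\tG-T_\Qo\|\leq c$.

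The main obstacle, as throughout this chapter, is that one cannot simply quote \cite[12.4.1]{LW} verbatim: the passage from $U_R$ to its Lie algebra (needed implicitly in the estimates underlying the vanishing and boundedness claims) requires the $F$-isomorphism of algebraic varieties $\mathfrak{u}_R\to U_R$ compatible with the $A_R$-action constructed in the proof of the proposition in \ref{CVG}, rather than an exponential map. I would also need to verify that the vanishing of the expression outside $\|H^\tG-T_\Qo\|\leq c$ persists after the replacement $\bs\Lambda^{T,Q}\rightsquigarrow\bs\Lambda^{T\Lbra H^Q\Rbra,Q_0}$; this follows from repeating the argument of \ref{convergence d'une int�grale it�r�e}\,(5) for the transformed expression, now using that $T\Lbra H^Q\Rbra$ is \og more regular\fg than $T^{Q_0}$ by \cite[4.2.1]{LW}, so that $\bsd_{P_0\cap L_0}^{L_0}(T\Lbra H^Q\Rbra)\geq\bsd_0(T)$ and the truncation estimates apply uniformly.
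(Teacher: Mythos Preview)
Your proposal is correct and follows essentially the same approach as the paper: decompose the integral via the compact unipotent quotients and $\bsmu_S$, use \cite[4.1.1]{LW} to force $\phi_\Qo^Q(H-T)=1$ (hence $P'=Q$), combine \ref{convergence d'une int�grale it�r�e}\,(5) with \ref{[LW,12.3.1]} to get $\|H^\tG-T_\Qo\|\leq c$ independent of $T$, and then apply the corrected \cite[4.2.2]{LW} together with \cite[4.2.1]{LW}. One small remark: the $\mathfrak{u}_R\to U_R$ isomorphism you mention is used in the proofs of \ref{ygeom} and \ref{estim1}, not directly in this proposition, so that comment is slightly misplaced though harmless.
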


On a aussi l'analogue de \cite[12.5.1]{LW}:
% proposition 2
\begin{proposition}\label{[LW,12.5.1]}
Pour $H\in\ESC_\Qo^\tG$, on consid\`ere l'expression
\begin{equation*}\int_{\bsX_{L_0}^\tG(H)\times\bsK}\int_{\bsmu_S} 
\lvert \bs\Lambda^{T\Lbra H^Q\Rbra,Q_0}E^Q_\Qo(xk,\Phi,\theta_0(\mu))
\overline{E^{Q'}_\Qo(xk,\Psi,\mu)}\vtt(\mu)\rvert  \dd \mu
\dd x\dd k\ptf \leqno{(5)}\end{equation*}
On suppose que $T$ est dans le c™ne introduit en \ref{convergence d'une int\'egrale it\'er\'ee}
et que $\bsd_0(T)$ est assez grand. On a les assertions suivantes:
\begin{enumerate}[(i)]
\item Si $\phi_\Qo^Q(H-T)=1$, l'expression (5) est convergente.
\item Il existe $\eta_0$ avec $0<\eta_0 <1$ tel que si $0<\eta<\eta_0$, alors il existe $c>0$ 
tel que pour tout {$T\in (\ag_0^G)^{\theta_0}$ et tout $H\in \ESC_\Qo^\tG$} v\'erifiant
\begin{equation*}\wt{\sigma}_Q^R(H-T)\phi_\Qo^Q(H-T)\Kappa{\eta T}(H^Q- T_\Qo^Q)=1\end{equation*}
l'expression (5) soit major\'ee par\enspace
$c\mskip 2mu  e^{\langle 2\rho_\Qo,H\rangle}\bsd_0(T)^{\dim(\ag_0^{Q_0})}$.
\end{enumerate}
\end{proposition}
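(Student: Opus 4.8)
Le plan est de suivre la preuve de \cite[12.5.1]{LW}, en tenant compte des simplifications propres aux corps de fonctions. D'abord on r\'e\'ecrit les termes constants $E^Q_\Qo(\cdot,\Phi,\theta_0(\mu))$ et $E^{Q'}_\Qo(\cdot,\Psi,\mu)$ au moyen de la formule du terme constant de Langlands: chacun devient une somme finie, index\'ee par des \'el\'ements du groupe de Weyl, de s\'eries d'Eisenstein sur $\bsX_{L_0}$ (pour $L_0=M_\Qo$) construites \`a partir des donn\'ees discr\`etes sur $S$, tordues par les op\'erateurs d'entrelacement $\bfM(s,\cdot)$. L'op\'erateur $\bs\Lambda^{T\Lbra H^Q\Rbra,Q_0}$ agit alors comme l'op\'erateur de troncature sur $L_0$ relativement au param\`etre $T'=T\Lbra H^Q\Rbra\in\ag_0^{Q_0}$, et l'expression (5) se ram\`ene \`a une combinaison lin\'eaire finie d'int\'egrales de produits de s\'eries d'Eisenstein tronqu\'ees sur la tranche $\bsX_{L_0}^\tG(H)$, le tout int\'egr\'e sur le groupe compact $\bsmu_S$ contre le majorant $\sup_\mu|\vtt(\mu)|$.

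Pour $(i)$, on s'appuie sur la d\'ecroissance radicale de la troncature en caract\'eristique positive. D'apr\`es la proposition \ref{supportcompact} appliqu\'ee \`a $L_0$, chaque fonction $x\mapsto\bs\Lambda^{T',Q_0}E^{Q_0}(x,\cdot,\cdot)$ sur $\bsX_{L_0}$ est \`a support d'image compacte dans $\BB_{L_0}L_0(F)\backslash L_0(\AM)$; \`a $H$ fix\'e, l'int\'egrale en $x$ sur $\bsX_{L_0}^\tG(H)$ porte donc sur un domaine relativement compact. La condition $\phi_\Qo^Q(H-T)=1$ assure, via \cite[4.1.1]{LW}, que le terme constant tronqu\'e n'est pas identiquement nul, et puisque $\bsmu_S$ est compact et $\vtt$ lisse (donc born\'ee), l'int\'egrale triple (5) est absolument convergente. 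Ceci \'etablit $(i)$.

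Pour $(ii)$, l'estimation uniforme repose sur un argument de volume combin\'e \`a une majoration ponctuelle uniforme. Le support (d'image compacte) de $\bs\Lambda^{T',Q_0}E^{Q_0}$ est confin\'e, dans les directions de $\ag_0^{Q_0}$, \`a une r\'egion dont le $\bsd_0$-volume cro\^it comme $\|T'\|^{\dim(\ag_0^{Q_0})}$; sous l'hypoth\`ese du c\^one de l'\'enonc\'e et la condition $\Kappa{\eta T}(H^Q-T_\Qo^Q)=1$ on a $\|T'\|\ll\bsd_0(T)$, tandis que la r\'egularit\'e renforc\'ee $\bsd_{P_0\cap L_0}^{L_0}(T\Lbra H^Q\Rbra)\geq\bsd_0(T)$ \cite[4.2.1]{LW} garantit que la troncature reste \og assez grande\fg\ uniform\'ement. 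Sur cette r\'egion l'int\'egrande $|\bs\Lambda^{T',Q_0}E^Q_\Qo\cdot\overline{E^{Q'}_\Qo}\cdot\vtt|$ se majore uniform\'ement: la croissance de $E^{Q'}_\Qo$ le long de $\Qo$ produit le facteur de module $e^{\langle 2\rho_\Qo,H\rangle}$, et les majorations uniformes \ref{[LW,12.2.3]} et \ref{12.2.3}, jointes \`a la compacit\'e de $\bsmu_S$, bornent le reste par une constante. Le produit du volume par cette borne fournit la majoration annonc\'ee, de degr\'e $\dim(\ag_0^{Q_0})$.

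Le point d\'elicat sera de contr\^oler uniform\'ement, en $T$, $H$ et $\mu$, la borne de l'int\'egrande pendant que la r\'egion de troncature cro\^it, et de choisir $\eta_0$ de sorte que le calibrage polynomial de degr\'e exactement $\dim(\ag_0^{Q_0})$ soit atteint sans terme parasite. C'est ici qu'intervient la structure de Maass--Selberg, \cad le th\'eor\`eme \ref{PSET} appliqu\'e \`a $L_0$ au lieu de $G$: pour identifier le degr\'e on exprime (au moins dans le cas cuspidal, puis par la d\'ecomposition discr\`ete) le produit scalaire tronqu\'e exact au moyen des fonctions $\varepsilon_S^{Q_0,\cdot}$ et de leur comportement polynomial-exponentiel (\ref{croisspol}, \ref{etapol}), dont le degr\'e polynomial est pr\'ecis\'ement $\dim(\ag_0^{Q_0})$. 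La compacit\'e des espaces de param\`etres $\bsmu_S$, absente dans le cas des corps de nombres, dispense ici d'introduire une fonction auxiliaire et rend la borne automatiquement uniforme en $\mu$.
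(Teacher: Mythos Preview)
Your outline for (i) is close to the paper's, but the reference to \cite[4.1.1]{LW} is misplaced: the condition $\phi_\Qo^Q(H-T)=1$ is not there to guarantee non-vanishing of the truncated term but, via \cite[4.2.1]{LW}, to ensure that $T'=T\Lbra H^Q\Rbra$ is regular, so that $\bs\Lambda^{T',Q_0}$ enjoys the compact-support property of Proposition~\ref{supportcompact}.

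For (ii) there is a genuine gap. Your last paragraph proposes to use the Maass--Selberg relation \ref{PSET} and the PolExp behaviour of the $\varepsilon$-functions (\ref{croisspol}, \ref{etapol}) to identify the degree $\dim(\ag_0^{Q_0})$; this cannot work here, because expression (5) carries absolute values under the $\mu$-integral and only the first Eisenstein factor is truncated, so it is not a scalar product to which \ref{PSET} applies. The paper's route is more direct and you have omitted its key step. After Cauchy--Schwarz in $\mu$ to separate $\int_{\bsmu_S}|\bs\Lambda^{T',Q_0}E^Q_\Qo|^2\dd\mu$ and $\int_{\bsmu_S}|E^{Q'}_\Qo|^2\dd\mu$, one invokes the function-field simplification of Proposition~\ref{propK}: since $T'$ is more regular than $T$, the operator $\bs\Lambda^{T',Q_0}$ is \emph{equal} to multiplication by $F_{P_0}^{Q_0}(\cdot,T')$. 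With this, both $\mu$-integrals are bounded pointwise (in $x$) by $\bs\delta_{P_0}(x)$ on the support of $F_{P_0}^{Q_0}$ via the kernel estimates behind \ref{[LW,12.2.3]}, and the integral of $F_{P_0}^{Q_0}(x,T')\bs\delta_{P_0}(x)$ over $\bsX_{L_0}(H)$ reduces --- after extracting $e^{\langle 2\rho_\Qo,H\rangle}$ and cancelling $\bs\delta_{P_0}(b)$ against the modular factor of the Haar measure in Iwasawa coordinates on $\Siegel^{L_0,*}$ --- to counting lattice points $b\in\BB_0^{L_0}$ in the support of $F_{P_0}^{Q_0}(\cdot,T')$, which is $\ll\bsd_0(T)^{\dim(\ag_0^{Q_0})}$. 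Without the $\bfC$-reduction you have no uniform pointwise control on the truncated factor, and the polynomial degree comes from this elementary lattice count rather than from any Maass--Selberg structure.
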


\begin{proof}On reprend celle de \textit{loc.~cit}. On veut majorer l'int\'egrale int\'erieure dans (5). 
Pour cela on peut supposer $\vtt \equiv 1$ (cf. la remarque \ref{somfi}).
Comme dans la preuve de \cite[12.2.3]{LW}, on se 
ram\`ene gr‰ce \`a l'in\'egalit\'e de Schwartz \`a majorer deux types d'int\'egrales:
\begin{equation*}\int_{\bsmu_S} 
\bs\Lambda^{T\Lbra H^Q\Rbra,Q_0}E^Q_\Qo(xk,\Phi,\theta_0(\mu))\overline{\bs\Lambda^{T\Lbra H^Q\Rbra,Q_0}
E^Q_\Qo(xk,\Phi,\theta_0(\mu))}\dd \mu\leqno{(6)}\end{equation*}
et
\begin{equation*}\int_{\bsmu_S} 
E^{Q'}_\Qo(xk,\Psi,\mu)\overline{E^{Q'}_\Qo(xk,\Psi,\mu)} \dd \mu\ptf \leqno{(7)}\end{equation*}
Commenons par majorer l'int\'egrale (7). 
Rappelons que l'on a fix\'e en \ref{estimations} un domaine de Siegel 
$\Siegel_{Q'}^G=\Omega_{Q'}\BB _{Q'}^G\Siegel^{L'\mskip -2mu ,*}\bsK$ 
pour le quotient $\BB_G Q'(F)\backslash G(\adef)$. 
Soit $h^{Q'}$ la fonction sur $Q'(F)\backslash G(\adef)/\bsK$ d\'efinie par
\begin{equation*}h^{Q'}(y)=\sum_{\delta\in Q'(F)}\bs{\delta}_{P_0}(\delta y)^{1/2} {\mathbf 1}_{\Siegel_{Q'}^G}(\delta y)\ptf\end{equation*}
On a
\begin{equation*}h^{Q'}(y)\ll\bs{\delta}_{P_0}(y)^{1/2}\ll h^{Q'}(y)\quad \hbox{pour} \quad y\in\Siegel_{Q'}^G\ptf\end{equation*}
D'apr\`es la proposition \ref{[LW,12.2.3]}, pour $b\in \BB_G$ et 
$y,\mskip 2mu y'\in Q'(F)\Siegel_{Q'}^G$, l'int\'egrale
\begin{equation*}\int_{\bsmu_S} E^{Q'}(by,\Psi,\mu)\overline{E^{Q'}(by',\Psi,\mu)} \dd \mu=
\int_{\bsmu_S} E^{Q'}(y,\Psi,\mu)\overline{E^{Q'}(y',\Psi,\mu)} \dd \mu\leqno{(8)}\end{equation*}
est essentiellement major\'ee par $h^{Q'}(y)h^{Q'}(y')$. Ensuite on prend le terme constant 
en chacune des variables $y,\mskip 2mu y'$. Cette op\'eration, 
qui consiste \`a int\'egrer sur un compact, commute \`a l'int\'egrale sur $\bsmu_S$. 
Puis on prend $y=y'= a^G s k$ avec $k\in \bs{K}$, $s\in \Siegel^{L_0,*}$,
$x= as \in L_0(\adef;H')$ pour un rel\`evement $H'\in \ESA_\Qo$ de $H\in \ESC_\Qo^\tG$ et
$a=a_G a^G \in \BB_G \BB_\Qo^G$ 
(on peut mme supposer $a^G\in \BB_\Qo^G \cap \BB'^G_\Qo$ 
comme dans la d\'emonstration de \ref{[LW,12.3.2]}).
L'int\'egrale (7) est donc essentiellement major\'ee par
\begin{equation*}h^{Q'}_\Qo(y)^2=h^{Q'}_\Qo(a^G s )^2\ptf\end{equation*}
Comme la fonction $h^{Q'}$ est \`a croissance lente, son terme constant $h^{Q'}_\Qo$ l'est aussi. 
L'int\'egrale (7) est donc essentiellement major\'ee par 
$\vert a^Gs\vert^D$ pour $D>0$ assez grand.
L'int\'egrale (6) se d\'eduit elle aussi de (8) en prenant les termes constants 
le long de $\Qo$ puis en appliquant 
l'op\'erateur $\bs\Lambda^{T\Lbra H^Q\Rbra,Q_0}$, et enfin en posant $y=y'= a^Gsk$. 
Quand on prend les termes constants, on obtient 
une expression essentiellement major\'ee par \begin{equation*}h_\Qo^Q(y)h_\Qo^Q(y')\ptf\end{equation*}
Rappelons que l'hypoth\`ese \hbox{$\phi_\Qo^Q(H-T)=1$}
assure que l'\'el\'ement $T\Lbra H^Q\Rbra\in\ag_{P_0}^{Q_0}$ est r\'egulier. 
D'apr\`es la proposition \ref{supportcompact}, il existe un sous-ensemble compact 
$\Omega$ de $\Siegel^{L_0,*}$ tel que si $\bs\Lambda^{T\Lbra H^Q\Rbra,Q_0}h_\Qo^Q(a^Gsk)\neq 0$, 
alors $s\in\Omega$. Comme 
\begin{equation*}\bfH_\Qo(x)^G = \bfH_\Qo(a^G)+\bfH_\Qo(s)^G= H^G\end{equation*}
et que $H$ est fix\'e, $a^G$ reste dans un ensemble fini de $\BB_\Qo^G$. D'o le point (i).
Prouvons (ii). On suppose que l'hypoth\`ese
\begin{equation*}\wt{\sigma}_Q^R(H-T)\phi_\Qo^Q(H-T)\Kappa{\eta T}(H^Q-T_\Qo^Q)=1\leqno{(9)}\end{equation*}
est v\'erifi\'ee\footnote{Si $\eta$ est assez petit, l'hypoth\`ese (9) implique $\tau_\Qo^P(H)=1$
o $\tP$ est l'unique \'el\'ement de $\wt\ESP_\st$ v\'erifiant la double inclusion $Q'\subset P \subset R$ (cf. \cite[p.~171]{LW}). 
En particulier cela entra"ne $\tau_\Qo^{Q'}(H)=1$ et $ \tau_\Qo^Q(H)=1$.}. 
On peut prendre $x$ dans un domaine de Siegel $\Siegel^{L_0}=\BB_{L_0}\Siegel^{L_0,*}$. 
On \'ecrit $x=as$ avec $a \in \BB_\Qo$ et $s\in \Siegel^{L_0,*}$. Si $\eta$ est assez petit, l'hypoth\`ese (9) 
implique comme dans la preuve de \cite[12.5.1]{LW} (majoration (4) page 170) 
qu'il existe une constante $D>0$ telle que
\begin{equation*}h_\Qo^Q(a^Gs)\ll \bs{\delta}_\Qo(a)^{\frac{1}{2}} \vert s \vert^D\ptf \end{equation*}
Mais cette majoration est inutile ici, on peut directement passer \`a la page 172. Notons $\bfC$ l'op\'erateur qui multiplie 
une fonction sur $\bs{Y}_\Qo$ par la fonction
\begin{equation*}x \mapsto F_{P_0}^{Q_0}(x,T\Lbra H^Q\Rbra)\end{equation*}
et d\'ecomposons l'op\'erateur $\bs{\Lambda}=\bs\Lambda^{T\Lbra H^Q\Rbra,Q_0}$ en
\begin{equation*}(\bs\Lambda - \bfC)+\bfC\ptf\end{equation*}
Rappelons que $T\Lbra H^Q\Rbra$ est {\og plus r\'egulier\fg} que $T$. D'apr\`es \ref{propK}, si $T$ est assez 
r\'egulier, on peut remplacer l'op\'erateur $\bs\Lambda^{T\Lbra H^Q\Rbra,Q_0}$ par $\bfC$ dans l'int\'egrale int\'erieure 
de l'expression (5). Il nous faut donc majorer l'int\'egrale
\begin{equation*}I_{\bfC}(xk)=\int_{\bsmu_S} 
F_{P_0}^{Q_0}(ask,T\Lbra H^Q\Rbra) E^Q_\Qo(xk,\Phi,\theta_0(\mu))\overline{E^Q_\Qo(xk,\Phi,\theta_0(\mu))}\dd \mu\end{equation*}
sous les hypoth\`eses (9) et
\begin{equation*}F_{P_0}^{Q_0}(xk,T\Lbra H^Q\Rbra)= F_{P_0}^{Q_0}(s,T\Lbra H^Q\Rbra)=1\ptf\leqno{(10)}\end{equation*}
Cela conduit \`a majorer (7) et l'analogue de (7) o $Q$ remplace $Q'$. Sous (9) et (10) on obtient comme dans 
la preuve de \cite[12.5.1]{LW} que $h^{Q'}(a^Gs)$ est essentiellement major\'e par $\bs{\delta}_{P_0}(x)^{1/2}$. 
Donc (7) est essentiellement major\'e par $\bs{\delta}_{P_0}(x)$. Il en est de mme de l'analogue de (7) relatif \`a $Q$, et 
donc aussi de $I_{\bfC}(xk)$. On obtient que l'expression (5) est essentiellement major\'ee 
par
\begin{equation*}\int_{\bsX_{L_0}(H)} F_{P_0}^{Q_0}(x,T\Lbra H^Q\Rbra)\bs{\delta}_{P_0}(x)\dd x\end{equation*}
ou ce qui revient au mme par
\begin{equation*}e^{\langle 2 \rho_\Qo,H \rangle}\int_{\Siegel^{L_0,*}}F_{P_0}^{Q_0}(s,T\Lbra H^Q\Rbra)\bs{\delta}_{P_0}(s)\dd s\ptf\end{equation*}
Rappelons que
\begin{equation*}\Siegel^{L_0,*}=\EE_{L_0} \Siegel^{L_0,1}=\EE_{L_0}\Omega_{L_0}\BB_0^{L_0}(t) \FF_{L_0}\bsK_{L_0}\end{equation*}
 avec $\BB_0^{L_0}(t)= \BB_0(t)\cap L_0(\adef)^1$. On d\'ecompose $s$ en $s= vbk$ avec 
$v\in \EE_{L_0}\Omega_{L_0}$, $b\in \BB_0^{L_0}(t)$ et $k\in \FF_{L_0}\bsK_{L_0}$. 
La d\'ecomposition des mesures 
introduit un facteur $\bs{\delta}_{P_0\cap L_0}^{L_0}(b)^{-1} = \bs{\delta}_{P_0}(b)^{-1}$, et l'int\'egrale sur $\Siegel^{L_0,*}$ 
est essentiellement major\'ee par le cardinal de l'ensemble
\begin{equation*}\{b\in \BB_0^{L_0}(t): F_{P_0}^{Q_0}(b,T\Lbra H^Q\Rbra)=1\}\ptf\end{equation*}
En \'ecrivant $Y= \bfH_0(b)\in \ag_0^{L_0}$, on conclut comme \`a la fin de la d\'emonstration de \cite[12.5.1]{LW}. 
\end{proof}

%%%%%%%%%%%%%%%%%%%%%%

 \section{Retour \`a la formule de d\'epart}

On suppose d\'esormais 
que $0< \eta < \eta_0$ o $\eta_0$ v\'erifie les conditions de la proposition \ref{[LW,12.5.1]}. 

L'expression pour $\Jres^{G,T}$ 
de la proposition de \ref{cot\'especb} est une combinaison lin\'eaire finie d'int\'egrales it\'er\'ees 
\ref{d\'epart}~(1) (ou, ce qui revient au mme, d'expressions 
 \ref{d\'epart}~(2)) dont la convergence est assur\'ee par 
 la proposition \ref{[LW,12.3.2]}. 

% proposition
\begin{proposition}\label{defat} 
Il existe une constante absolue $c'>0$ et une constante $c(f)>0$
 telles que, si $\bsd_0(T)\ge c'(c(f)+1)$, on a
\begin{align*}\lefteqn{\hspace{-1cm}
\Jres^{\tG,T}=\sum_{\substack{Q,R\in\ESP_\st\\ Q\subset R}}\wt{\eta}(Q,R)
\sum_{S\in\ESP_\st^{Q'}}\frac{1}{n^{Q'}(S)}\sum_{\bsigma\in\bsPi_\disc(M_S)}\cMSsig}\\
&&\hspace{3,2cm}
\times\sum_{\Psi\in\Base_S(\sigma)}\sum_{H\in\ESC_\Qo^\tG}
A^T(H;\Psi)
\end{align*}
avec
\begin{align*}
\lefteqn{A^T(H;\Psi)=\Kappa{\eta T}(H^Q- T_\Qo^Q)\wt{\sigma}_Q^R(H-T)\phi_\Qo^Q(H-T)}\\
&&\hspace{1cm}\times e^{-\langle 2\rho_\Qo,H\rangle}\int_{\bsX_{L_0}^\tG(H)\times\bsK}
\int_{\bsmu_S}\bs\Lambda^{T\Lbra H^Q\Rbra,Q_0}E^Q_\Qo(xk,\tRho_{S,\sigma,\mu}(f,\omega)\Psi,\theta_0(\mu))\\
&& \hspace{7.6cm}\times\overline{E^{Q'}_\Qo(xk,\Psi,\mu)}\dd \mu
\dd x\dd k\ptf
\end{align*}
\end{proposition}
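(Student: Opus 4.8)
The proposition assembles into a single closed formula all the reductions already carried out in the chapter, applied now to the genuine operators $\tRho_{S,\sigma,\mu}(f,\omega)$ rather than to a fixed pair $(\Phi,\Psi)$. The strategy is to start from the expression for $\Jres^{\tG,T}$ furnished by Proposition~\ref{cot�specb}, then substitute, for each term, the exact evaluation of the inner iterated integral provided by Proposition~\ref{[LW,12.4.1]}. First I would recall from \ref{cot�specb} that, for $\bsd_0(T)\geq c(f)$,
\[
\Jres^{\tG,T}=\sum_{\substack{Q,R\in\ESP_\st\\ Q\subset R}}\tilde{\eta}(Q,R)
\sum_{S\in\ESP_\st^{Q'}}\frac{1}{n^{Q'}(S)}
\sum_{\bsigma\in\bsPi_\disc(M_S)}\cMSsig
\sum_{\Psi\in\Base_S(\sigma)}
\int_{\bsY_\Qo}\wt{\sigma}_Q^R(\bfH_Q(y)-T)\Jres^T_{Q,Q';\Psi}(y)\dd y\ptf
\]
By \ref{estimations} the innermost integral over $\bsY_\Qo$ decomposes as a sum over $H\in\ESC_\Qo^\tG$ of $\wt{\sigma}_Q^R(H_Q-T)$ times an integral over the fibre $\bsY_\Qo(H)$, so it remains to evaluate each such fibre contribution.

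The heart of the argument is the substitution $\vtt\equiv1$ in Proposition~\ref{[LW,12.4.1]}, but here with the operator-valued input. Concretely, via Remark~\ref{somfi} I would write $\tRho_{S,\sigma,\mu}(f,\omega)\Psi=\sum_\Phi \vtt_{\Phi,\Psi}(\mu)\Phi$ as a finite sum with smooth coefficients $\vtt_{\Phi,\Psi}$ on the compact group $\bsmu_S$, so that $\Jres^T_{Q,Q';\Psi}(y)$ is a finite linear combination of the expressions $\Jres^T_{Q,Q';\Phi,\Psi;\vtt}(y)$. Proposition~\ref{[LW,12.4.1]} then applies term by term: for $\bsd_0(T)\geq c'(c+1)$, each fibre integral
\[
\sum_{H\in\ESC_\Qo^\tG}\wt{\sigma}_Q^R(H_Q-T)\int_{\bsY_\Qo(H)}\Jres^T_{Q,Q';\Phi,\Psi;\vtt}(y)\dd y
\]
equals the stated expression with the truncation $\bs\Lambda^{T\Lbra H^Q\Rbra,Q_0}$ replacing $\bs\Lambda^{T,Q}$, with the cutoff factors $\Kappa{\eta T}(H^Q-T_\Qo^Q)\wt{\sigma}_Q^R(H-T)\phi_\Qo^Q(H-T)$ and the modulus factor $e^{-\langle 2\rho_\Qo,H\rangle}$. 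Re-summing the coefficients $\vtt_{\Phi,\Psi}(\mu)$ back into $\tRho_{S,\sigma,\mu}(f,\omega)\Psi$ under the integral over $\bsmu_S$ restores the operator $E^Q_\Qo(xk,\tRho_{S,\sigma,\mu}(f,\omega)\Psi,\theta_0(\mu))$ in the final formula, yielding exactly $A^T(H;\Psi)$.

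The main obstacle, and the point requiring care, is the legitimacy of interchanging the finite sum over $\Phi$ (equivalently, the reassembly of the smooth coefficients into the operator) with the passage from $\bs\Lambda^{T,Q}$ to $\bs\Lambda^{T\Lbra H^Q\Rbra,Q_0}$ and with the various summations and integrations. This is controlled by the uniform estimate of Proposition~\ref{[LW,12.5.1]}\,(ii): under the hypotheses $\wt{\sigma}_Q^R(H-T)\phi_\Qo^Q(H-T)\Kappa{\eta T}(H^Q-T_\Qo^Q)=1$, the integral (5) of \ref{[LW,12.5.1]} is dominated by $c\,e^{\langle 2\rho_\Qo,H\rangle}\bsd_0(T)^{\dim(\ag_0^{Q_0})}$, so the modulus factor $e^{-\langle 2\rho_\Qo,H\rangle}$ makes the individual summands $A^T(H;\Psi)$ bounded by a power of $\bsd_0(T)$; together with the finiteness of the support in $H$ (the constraint $\|H^\tG-T_\Qo\|\leq c$ from \ref{[LW,12.4.1]}) this guarantees absolute convergence and justifies all the interchanges. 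Finally I would note that the condition $\theta_0(T)=T$ imposed in \ref{convergence d'une int�grale it�r�e}, under which \ref{[LW,12.4.1]} and \ref{[LW,12.5.1]} were established, costs nothing for the eventual evaluation at $T_0$, since the element of {\rm PolExp} to be computed depends only on the image of $T$ in $(\ag_{0,\QM}^G)^{\theta_0}$; taking $c'$ absolute and $c=c(f)$ depending only on the support of $f$ yields the asserted constants.
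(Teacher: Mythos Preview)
Your proof is correct and follows the paper's route exactly: the paper's proof is the one-line ``Ceci r\'esulte de la proposition~\ref{[LW,12.4.1]} et de la remarque~\ref{somfi}'', and you have simply unpacked this by decomposing $\tRho_{S,\sigma,\mu}(f,\omega)\Psi$ via \ref{somfi}, applying \ref{[LW,12.4.1]} term by term, and reassembling. Your appeal to \ref{[LW,12.5.1]} for the interchanges is harmless but unnecessary, since all the sums in play (over $Q,R,S,\bsigma,\Psi,\Phi$ and over $H$ by the support constraint in \ref{[LW,12.4.1]}) are finite.
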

\begin{proof} Ceci r\'esulte de la proposition \ref{[LW,12.4.1]} 
et de la remarque \ref{somfi}.\end{proof}

\begin{definition}\label{defatb}
Pour $\Psi\in \Automd(\bsX_S,\sigma)$ et $\Phi\in 
\Automd(\bsX_{\theta_0(S)},\theta_0(\sigma))$, et pour $\vtt$ une fonction lisse sur $\bsmu_S$, on pose 
\begin{align*}
\lefteqn{A^T(H;\Phi,\Psi;\vtt)=\Kappa{\eta T}(H^Q- T_\Qo^Q)\wt{\sigma}_Q^R(H-T)\phi_\Qo^Q(H-T)}\\
&&\hspace{1cm}\times e^{-\langle 2\rho_\Qo,H\rangle}\int_{\bsX_{L_0}^\tG(H)\times\bsK}
\int_{\bsmu_S}\bs\Lambda^{T\Lbra H^Q\Rbra,Q_0}E^Q_\Qo(xk,\Phi,\theta_0(\mu))\\
&& \hspace{7cm}\times\overline{E^{Q'}_\Qo(xk,\Psi,\mu)}\vtt(\mu)\dd \mu
\dd x\dd k\ptf
\end{align*}
On \'ecrira simplement $A^T(H)$ pour $A^T(H;\Phi,\Psi;\vtt)$ lorsque les fonctions $\Phi$, $\Psi$ et $\varphi$ sont fix\'ees, 
et on pose
\begin{equation*}A^T=\sum_{H\in\ESC_\Qo^\tG}A^T(H)\ptf\end{equation*}
\end{definition}

%%%%%%%%%%%%%%%%%%%%%%

 \chapter{Simplification du produit scalaire}
\label{simplification} 

%%%%%%%%%%%%%%%%%%%%

 \section{Une majoration uniforme}

Pour $P\in \ESP$, 
choisissons une section du morphisme $A_P(\adef) \rightarrow \ESB_P$ et notons $\BB_P$ son image. 
Cela permet de relever $\Xi(P)^1$ dans $\Xi(P)$, d'o une identification
\begin{equation*}\Xi(P) = \Xi(P)^1\times \wh\ESB_P \ptf\end{equation*}
Le groupe des caract\`eres 
automorphes, mais non n\'ecessairement unitaires, de $A_P(\adef)$ s'identifie \`a
 $\Xi(P)\times \ag_P^*$.
Un tel caract\`ere $\xi$ peut donc s'\'ecrire 
\begin{equation*}\xi = \xi_\mathrm{u} \vert \xi \vert = (\zeta \star \mu) \star \nu = \zeta \star (\mu+\nu)\end{equation*}
avec $\zeta = \xi\vert_{A_P(\adef)^1}$,
$\mu\in\wh\ESB_P$ et $\nu\in \ag_P^*$. On le notera $\xi=(\zeta,\mu,\nu)$.

Soit $\phi$ une forme automorphe sur $\bs{X}_G$ (discr\`ete ou non). Pour $P\in \ESP_\st$, 
on note $\phi_{P\mathrm{,\mskip 2mu  cusp}}$ le terme constant cuspidal de 
$\phi$ le long de $P$, d\'efini en \cite[I.3.4, I.3.5]{MW1}. C'est une forme automorphe cuspidale sur $\bsX_P$, 
qui s'\'ecrit sous la forme
\begin{equation*}\phi_{P\mathrm{,\mskip 2mu  cusp}}= \sum_{(q,\xi)}q(\bfH_P(x)) \phi_{q,\xi}(x)\leqno{(1)}
\end{equation*}
o $(q,\xi)$ parcourt un sous-ensemble fini de
$\CM[\ag_P] \times \Xi(P)\times \ag_P^* $
et $\phi_{q,\xi}$ est une forme automorphe cuspidale sur $\bsX_P$ se transformant suivant $\xi$. En 
\'ecrivant $\xi=(\zeta,\mu,\nu)$ comme ci-dessus, on voit que
la fonction
\begin{equation*}x \mapsto e^{- \langle \mu+\nu, \bfH_P(x) \rangle} \phi_{q,\xi}(x)\end{equation*}
appartient \`a $\Autom_\disc(\bsX_P)_\zeta$. Notons $\Autom_\cusp(\bsX_P)_{\Xi(P)^1}$ 
le sous-espace de $\Autom_\cusp(\bsX_P)$ engendr\'e par les espaces 
$\Autom_\cusp(\bsX_P)_\xi$ pour $\xi\in \Xi(P)^1$ identifi\'e au sous-groupe de $\Xi(P)$
des caract\`eres triviaux sur $\BB_P$. Il se d\'ecompose en 
\begin{equation*}\Autom_\cusp(\bsX_P)_{\Xi(P)^1}= \bigoplus_{\zeta \in \Xi(P)^1} 
\Autom_\cusp(\bsX_P)_\zeta\ptf\end{equation*}
Pour tout $P\in \ESP_\st$, fixons un sous-ensemble compact $\Gamma_P\subset \ag_{P,\CM}^*/\ESA_P^\vee$, 
deux entiers naturels $\bs{n}_P$ et $d_P$, et un sous-espace de dimension finie $V_P$ de 
$\Autom_\cusp(\bsX_P)_{\Xi(P)^1} $. On note
\begin{equation*}A((V_P,d_P,\Gamma_P,\bs{n}_P)_{P\in \ESP_\st})\end{equation*}
l'ensemble des formes automorphes $\phi$ sur $\bsX_G$ telles que pour tout $P\in \ESP_\st$, 
le terme constant cuspidal $\phi_{P\mathrm{,\mskip 2mu  cusp}}$ puisse s'\'ecrire
\begin{equation*}\phi_{P\mathrm{,\mskip 2mu  cusp}}(x)= \sum_{i=1}^{n_P}e^{\langle \lambda_{P,i}+\rho_P,\bfH_P(x)\rangle} 
\sum_{j=1}^{n_{P,i}}q_{P,i,j}(\bfH_P(x))\phi_{P,i,j}(x)\leqno{(2)}\end{equation*}
o les $n_{P,j}$ sont des entiers positifs ou nuls quelconques, 
$n_P \leq \bs{n}_P$, $\lambda_{P,i}\in \Gamma_P$, $q_{P,i,j}\in \CM[\ag_P]$ avec 
$\deg(q_{P,i,j})\leq d_P$, et $\phi_{P,i,j}\in V_P$. Notons 
que cet ensemble n'est pas un espace vectoriel (\`a cause de la condition $n_P \leq \bs{n}_P$). 
Dans l'expression (2), on peut supposer que les $\lambda_{P,i}$ sont 
deux-\`a-deux distincts. On d\'efinit comme en \cite[13.1]{LW} une norme
\begin{equation*}\lVert  \phi \rVert _\cusp = \sum_{P\in \ESP_\st}\lVert  \phi_{P\mathrm{,\mskip 2mu  cusp}} \rVert _\cusp\ptf\end{equation*}
D'apr\`es \cite[13.1.1]{LW}, on a le
% lemme
\begin{lemma}
Pour tout $\lambda \in \ag_0^*$, il existe une constante $c>0$ telle que pour tout 
$\phi \in A((V_P,d_P,\Gamma_P,\bs{n}_P)_{P\in \ESP_\st})$ et tout $x\in \Siegel= \BB_G \Siegel^*$, on ait la majoration
\begin{equation*}\vert \phi(x) \vert \leq c \lVert  \phi \rVert _\cusp\sum_{P\in \ESP_\st} \sum_{i=1}^{n_P} 
e^{\langle \lambda^P + \Re(\lambda_{P,i}) +\rho_P , \bfH_0(x)\rangle} (1+ \bfH_P(x))^{d_P}\end{equation*}
o $\lambda^P$ est la projection de $\lambda$ sur $\ag_0^{P,*}$
et les $\lambda_{P,i}$ sont ceux de l'\'egalit\'e (2).
\end{lemma}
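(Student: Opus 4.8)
The plan is to reduce everything to a pointwise bound on the constant term along minimal parabolics and then invoke the theory of the cuspidal constant term. First I would recall the structure theory: for $x\in\Siegel$, the Iwasawa decomposition lets me write $x=namk$ with $a$ in the positive cone determined by $T_0$, and the growth of an automorphic form on a Siegel domain is controlled entirely by its cuspidal constant terms along the standard parabolics. The key input is the expansion \ref{d�part}\,(1) (equation (1) in the statement), which expresses $\phi_{P,\mathrm{cusp}}$ as a finite sum $\sum_{(q,\xi)}q(\bfH_P(x))\phi_{q,\xi}(x)$, together with the reformulation (2) that pins down the exponents $\lambda_{P,i}\in\Gamma_P$, the polynomial degrees $\le d_P$, and the cuspidal pieces $\phi_{P,i,j}\in V_P$ lying in a fixed finite-dimensional space.

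The heart of the argument is the analogue of the classical bound on a cusp form by its cuspidal constant terms, which over a function field is available because the cuspidal support is compact. Concretely, I would proceed by descent on the parabolic rank: for a standard $P$ and $x$ far out in the $P$-direction, Lemma \ref{lemme1TC} (and its $P'$-relative generalization) says $\varphi_P(x)=\varphi(x)$, so on the region where $\langle\bfH_0(x),\alpha\rangle$ is large for the simple roots outside $\Delta_0^P$, the value of $\phi$ is governed by $\phi_{P,\mathrm{cusp}}$. Partitioning $\Siegel^*$ by the combinatorial partition $\sum_{P\in\ESP_\st}\phi_{P_0}^P\tau_P^G=1$ used in the proof of \ref{supportcompact}, each $x$ lands in a region associated to some $P$, and there I substitute the explicit expansion (2). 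Since $\Siegel$ decomposes as $\BB_G\Siegel^*$, the extra central directions contribute only through the leading exponents $\Re(\lambda_{P,i})$, which is exactly why the $\lambda^P$-projection appears in the claimed bound.

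On the region attached to $P$, the cuspidal piece $\phi_{P,i,j}$ ranges over the fixed finite-dimensional $V_P$, so $|\phi_{P,i,j}(mk)|$ is bounded uniformly (cusp forms on $\bsX_M$ have compactly-supported image in $\ovbsX_M$ by \ref{supportcompact}, hence are bounded). Multiplying by the exponential factor $e^{\langle\lambda_{P,i}+\rho_P,\bfH_P(x)\rangle}$ and the polynomial $q_{P,i,j}(\bfH_P(x))$ of degree $\le d_P$ produces the factor $e^{\langle\lambda^P+\Re(\lambda_{P,i})+\rho_P,\bfH_0(x)\rangle}(1+\bfH_P(x))^{d_P}$, with the sum over the finitely many $\lambda_{P,i}\in\Gamma_P$ (at most $\bs{n}_P$ of them) giving the inner sum $\sum_{i=1}^{n_P}$. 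The uniform constant $c$ comes from comparing the sup-norms of the finitely many basis vectors of $V_P$ against the norm $\|\phi\|_\cusp=\sum_P\|\phi_{P,\mathrm{cusp}}\|_\cusp$, exactly as in \cite[13.1.1]{LW}; the linearity of $\phi\mapsto\phi_{P,\mathrm{cusp}}$ in the coefficients makes the estimate uniform over the whole set $A((V_P,d_P,\Gamma_P,\bs{n}_P))$.

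The main obstacle I anticipate is controlling the transition between the regions of the partition, i.e. making sure the descent argument is genuinely uniform and does not lose track of the central variable $\BB_G$. Because we do not quotient by the center (unlike \cite{LW}), the exponents $\lambda_{P,i}$ now carry both a unitary part $\mu\in\wh\ESB_P$ and a genuinely growing part $\nu\in\ag_P^*$, so I must be careful that the decomposition $\xi=(\zeta,\mu,\nu)$ is tracked correctly and that the compactness of $\Gamma_P$ keeps $\Re(\lambda_{P,i})$ bounded; this is what guarantees a single constant $c$ works. Once the region-by-region bound is established with matching exponents, assembling the global inequality is routine, and the statement follows by citing \cite[13.1.1]{LW} mutatis mutandis, the only function-field input being the boundedness of cusp forms furnished by \ref{supportcompact}.
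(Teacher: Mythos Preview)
Your proposal is correct and aligns with the paper's treatment: the paper gives no argument at all here, simply stating the lemma as a direct consequence of \cite[13.1.1]{LW}. Your sketch is a reasonable expansion of what that proof contains, and you correctly identify that the only function-field adaptation needed is the compact-support property of cusp forms (\ref{supportcompact}); you end up invoking the same reference the paper does.
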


%%%%%%%%%%%%%%%%%

 \section{Majoration des termes constants}
\label{majoration des termes constants}

On fixe deux sous-groupe paraboliques standards $Q,\mskip 2mu R\in\ESP_\st$ tels que $Q\subset R$ et $\wt{\eta}(Q,R)=1$. On pose 
$Q'=\theta_0^{-1}(Q)$ et $Q_0= Q \cap Q'$. On fixe aussi 
$S\in\ESP_\st^{Q'}$ et $\sigma\in\Pi_\disc(M_S)$. La repr\'esentation $\sigma$ 
intervient dans le spectre discret de $M_S(F)\backslash M_S(\adef)^1$. Consid\'erons:
\begin{itemize}
\item un sous-groupe parabolique standard $S_\cusp$ tel que $S_\cusp\subset S$;
\item une repr\'esentation automorphe cuspidale $\sigma_\cusp$ de $M_{S_\cusp}(\adef)$ qui est une sous-repr\'esentation 
irr\'eductible de $L^2(\BB_{S_\cusp}\backslash \bsX_{M_{S_\cusp}})$ -- c'est-\`a-dire que 
$\sigma_\cusp$ se r\'ealise 
dans $\Autom_\cusp(\bsX_{{M_{S_\cusp}}})_\zeta$ pour un caract\`ere $\zeta \in \Xi(S_\cusp)^1$;
\item un op\'erateur diff\'erentiel $D$ \`a coefficients polynomiaux sur $\ag_{S_\cusp,\CM}^{S,*}$;
\item un point $\nu_0 \in \ag_{S_\cusp}^{S,*}$.
\end{itemize}
{Rappelons que $\Base_{{S_\cusp \cap M_S}}(\sigma_\cusp)$ est une base orthonormale de l'espace vectoriel 
prŽ-hilbertien $\Automd(\bsX_{S_\cusp \cap M_S},\sigma_\cusp)$.}
Pour $\Phi_\cusp\in \Base_{{S_\cusp \cap M_S}}(\sigma_\cusp)$ et $\nu\in \ag_{S_\cusp,\CM}^{S,*}$, 
formons la s\'erie d'Eisenstein
\begin{equation*}E^{M_S}(y,\Phi_\cusp,\nu)= \sum_{\gamma\in (S_\cusp 
\cap M_S)(F)\backslash M_S(F)}\Phi_\cusp(\gamma y,\nu)\vgq y\in M_S(\adef)\ptf\end{equation*}
On applique l'op\'erateur $D$ sous l'hypoth\`ese que la fonction $\nu \mapsto DE^{M_S}(y,\Phi_\cusp,\nu)$ 
est holomorphe en $\nu=\nu_0$ et on note
\begin{equation*}D_{\nu=\nu_0}E^{M_S}(y,\Phi_\cusp,\nu)\end{equation*}
sa valeur en $\nu=\nu_0$.

Comme dans [LW] on voit qu'en
choisissant convenablement la base $\Base_{{S}}(\sigma)$ de $\Automd(\bsX_{S},\sigma)$, 
on peut supposer que pour tout \'el\'ement $\Psi\in \Base_{{S}}(\sigma)$, il existe des donn\'ees $S_\cusp$, 
$\sigma_\cusp$, $D$, $\nu_0$ et 
$\Psi_\cusp\in \Base_{S_\cusp}(\sigma_\cusp)$ telles que
\begin{equation*}E^{Q'}(y,\Psi,\mu)= D_{\nu=\nu_0}E^{Q'}(y,\Psi_\cusp,\nu+\mu)\end{equation*}
pour tout $\mu \in \ag_{S,\CM}^{*}/\ESA_S^\vee$. Prendre un terme constant et 
prendre un r\'esidu sont deux op\'erations qui commutent. Gr‰ce \`a \cite[5.2.2.(4)]{LW}, on obtient
\begin{equation*}E^{Q'}_\Qo(y,\Psi,\mu)= D_{\nu=\nu_0} \bigg(\sum_{s\in \bfW^{Q'}(\ag_{S_\cusp}, Q_0)}
\hspace{-0.5cm}E^{Q_0}(y,\bfM(s,\nu+\mu)\Psi_\cusp,s(\nu+\mu)) \bigg)\ptf\leqno{(1)}\end{equation*}
Le terme constant $E^{Q'}_{S'_\cusp}(y,\Psi,\mu)$ 
de la forme automorphe $E^{Q'}_\Qo(y,\Psi,\mu)$ relatif \`a un 
sous-groupe parabolique $S'_\cusp\in \ESP_\st^{Q_0}$ associ\'e \`a $S_\cusp$ dans $Q'$ est \'egal \`a:
\begin{equation*}D_{\nu=\nu_0} \bigg(\sum_{s\in \bfW^{Q'}(\ag_{S_\cusp}, Q_0)} 
\sum_{s'\in \bfW^{Q_0}(\ag_{s(S_\cusp)},\ag_{S'_\cusp})}
\hspace{-0.5cm}
\bfM(s's,\nu+\mu)\Psi_\cusp(y,s's(\mu+\nu)) \bigg)\ptf\end{equation*}
Les \textit{exposants cuspidaux} de $E^{Q'}_{S'_\cusp}(y,\Psi,\mu)$ sont les
\begin{equation*}s's(\nu_0+\mu)\in \ag_{S'_\cusp,\CM}^*/\ESA_{S'_\cusp}^\vee\ptf\end{equation*}
Pour $w\in \bfW^{Q'}(\ag_{S_\cusp},\ag_{S'_\cusp})$
notons $Q'_w$ le plus petit sous-groupe parabolique standard de $Q'$ tel que
$\ag_{Q'_w} \subset w(\ag_S)$.
D'apr\`es \cite[13.2~(5)]{LW} et \cite[V.3.16, VI.1.6~(c)]{MW1}, on sait que
pour $\mu \in \bsmu_S $ les parties r\'eelles des exposants cuspidaux de 
$E^{Q'}_{S'_\cusp}(y,\Psi,\mu)$ sont de la forme $w\nu_0$ pour des 
$w\in \bfW^{Q'}(\ag_{S_\cusp},\ag_{S'_\cusp})$ tels que
\begin{equation*}\wh{\tau}_{S'_\cusp}^{\mskip 2mu Q'_w}(-w\nu_0)=1\ptf\leqno{(2)}\end{equation*}
Ainsi dans l'expression $E^{Q'}_{S'_\cusp}(y,\Psi,\mu)$ pour $\mu\in \bsmu_S$, 
les termes index\'es par les couples $(s,s')$ tels que l'\'el\'ement $w=ss'$ ne v\'erifie pas $(2)$ sont nuls. 
On d\'ecompose $E^{Q'}_\Qo(y,\Psi,\mu)$ en
\begin{equation*}E^{Q'}_\Qo(y,\Psi,\mu) = E^{Q'}_{Q_0\mathrm{,\mskip 2mu  unit}}(y,\Psi,\mu)+E^{Q'}_{Q_0,+}(y,\Psi,\mu)\ptf\end{equation*}
ou le terme $ E^{Q'}_{Q_0\mathrm{,\mskip 2mu  unit}}$ est la sous-somme de $(1)$ index\'ee par les $s$ tels que 
$s(\ag_0^S)\subset \ag_0^{Q_0}$ et le terme $E^{Q'}_{Q_0,+}$ est la sous-somme restante. 
On obtient comme en \cite[13.2~(7)]{LW} que pour $\mu\in \bsmu_S$, on a
\begin{equation*}E^{Q'}_{Q_0\mathrm{,\mskip 2mu  unit}}(y,\Psi,\mu)=
\sum_{s\in \bfW^{Q'}(\ag_S,Q_0)} E^{Q_0}(y,\bfM(s,\mu)\Psi,\mu)\ptf\end{equation*}
Rappelons que $\bfW^{Q'}(\ag_S, Q_0)$ est l'ensemble des restrictions \`a $\ag_S$ des 
$s\in \bfW^{Q'}$ tels que $s(\ag_S)\supset\ag_\Qo$ et 
$s$ est de longueur minimale dans sa classe $\bfW^{Q_0}s$ (ce qui signifie que $s(S)\cap L_0$ est standard 
dans $L_0 =M_\Qo$). 
D'apr\`es \cite[13.2~(6)]{LW}, la fonction $ E^{Q'}_{Q_0\mathrm{,\mskip 2mu  unit}}(y,\Psi,\mu)$ est lisse pour $\mu\in \bsmu_S$, 
il en est donc de mme pour la fonction \begin{equation*}
E^{Q'}_{Q_0,+}(y,\Psi,\mu) = E^{Q'}_\Qo(y,\Psi,\mu) - E^{Q'}_{Q_0\mathrm{,\mskip 2mu  unit}}(y,\Psi,\mu)\ptf\end{equation*}

% proposition
\begin{proposition}\label{unita}
Soient $Z\in \ESA_G$ et $T_1\in {\ag_{0}^{Q_0}}$.
\begin{enumerate}[(i)]
\item Il existe un entier $N\in \NM$ et un r\'eel $c>0$ tels que
\begin{equation*}\vert E^{Q'}_\Qo(xk,\Psi,\mu) \vert \leq c\mskip 2mu \bs{\delta}_{P_0}(a)^{\frac{1}{2}}(1+ \lVert  H \rVert )^N \vert s \vert^N\end{equation*}
pour tout $H\in \ESA_\Qo^G(Z)$ tel que $\tau_\Qo^{Q'}(H)=1$, 
tout $(a,s)\in \BB_{L_0}\times \Siegel^{L_0,*}$ 
tel que $x=as \in L_0(\adef;H)$, tout $k\in \bs{K}$ et tout $\mu\in \bsmu_S$.
\item Il existe un entier $N\in \NM$ et un r\'eel $c>0$ tels que
\begin{equation*}\vert E^{Q'}_\Qo(xk,\Psi,\mu)\vert \leq c\mskip 2mu \bs{\delta}_{P_0}(x)^{\frac{1}{2}}(1+ \lVert  X_\Qo \rVert )^N(1+ \lVert  \bfH_0(s)\rVert )^N\end{equation*}
pour tout $X\in \ESA_{P_0}^G(Z)$ tel que $\tau_{P_0}^{Q'}(X+ T_1)=1$, 
tout $(a,s)\in \BB_{L_0}\times \Siegel^{L_0,*}$ 
tel que $\bfH_0(x)=X$ avec $x=as$, tout $k\in \bs{K}$ et tout $\mu\in \bsmu_S$.
\item Il existe un entier $N\in \NM$ et un r\'eel $c>0$ tels que
\begin{equation*}\vert E^{Q'}_{Q_0\mathrm{,\mskip 2mu  unit}}(xk,\Psi,\mu) \vert \leq c\mskip 2mu \bs{\delta}_{P_0}(x)^{\frac{1}{2}}(1+ \lVert  H \rVert )^N(1+ \lVert  \bfH_0(s)\rVert )^N\end{equation*}
pour tout $H\in \ESA_\Qo^G(Z)$, tout $(a,s)\in \BB_{L_0}\times \Siegel^{L_0,*}$ tel $x=as\in L_0(\adef;H)$, 
tout $k\in \bsK$ et tout $\mu\in \bsmu_S$.
\item Il existe un r\'eel $R>0$, un entier $N>0$ et un r\'eel $c>0$ tels que
\begin{align*}
\lefteqn{\vert E^{Q'}_{Q_0,+}(xk,\Psi,\mu) \vert }\\
&& \leq c\mskip 2mu \bs{\delta}_{P_0}(x)^{\frac{1}{2}}(1+ \lVert  X_\Qo \rVert )^N(1+ \lVert  \bfH_0(s)\rVert )^N
\sup_{\alpha \in \Delta_0^{Q'} \smallsetminus \Delta_0^{Q_0}}\hspace{-0.3cm} 
e^{-R \langle \alpha , \bfH_0(x) \rangle}
\end{align*}
pour tout $X\in \ESA_{P_0}^G(Z)$ tel que $\tau_{P_0}^{Q'}(X+ T_1)=1$, 
tout $(a,s)\in \BB_{L_0}\times \Siegel^{L_0,*}$ 
tel que $\bfH_0(x)=X$ avec $x=as$, tout $k\in \bs{K}$ et tout $\mu\in \bsmu_S$.
\end{enumerate}
\end{proposition}
\begin{proof}
On suit pas \`a pas celle de la proposition \cite[13.2.1]{LW}.
\end{proof}
%%%%%%%%%%%%%%%%%%%%%%

 \section{Simplication du terme constant}\label{simplication du terme constant}

On a introduit en \ref{defatb} des expressions $A^T(H)$ et $A^T$. On note
\begin{equation*}A^T_{\mathrm{unit}}=\sum_{H\in\ESC_\Qo^\tG}A_{\mathrm{unit}}^T(H)\end{equation*}
les expressions obtenues en rempla\c{c}ant les fonctions $E^{Q'}_\Qo$ par 
$E^{Q'}_{Q_0\mathrm{,\mskip 2mu  unit}}$ et $E^Q_\Qo$ par $E^Q_{Q_0\mathrm{,\mskip 2mu  unit}}$ dans la d\'efinition de $A^T(H)$. 
Alors \cite[13.3.1]{LW} est vrai ici:

% proposition
\begin{proposition}\label{unitb}
L'int\'egrale d\'efinissant $A_\mathrm{unit}^T(H)$ et la somme d\'efinissant $A_\mathrm{unit}^T$ sont absolument convergentes, 
et pour tout r\'eel $r$, il existe $c>0$ tel que 
\begin{equation*}\vert A^T - A^T_\mathrm{unit}\vert \leq c\mskip 2mu \bsd_0(T)^{-r}\ptf\end{equation*}
\end{proposition}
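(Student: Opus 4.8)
Le plan est de d\'emontrer la majoration \emph{terme \`a terme}, \cad de prouver que pour chaque $H\in\ESC_\Qo^\tG$ contribuant \`a la somme et pour tout r\'eel $r$, la diff\'erence $A^T(H)-A^T_{\rm unit}(H)$ est born\'ee par $c\,\bsd_0(T)^{-r}e^{-\langle 2\rho_\Qo,H\rangle}$, puis de sommer sur les $H$. La strat\'egie suit pas \`a pas celle de \cite[13.3.1]{LW}, le point crucial \'etant que le remplacement de $E^{Q'}_\Qo$ par sa partie unitaire $E^{Q'}_{Q_0{\rm ,\, unit}}$ ne fait intervenir que le terme compl\'ementaire $E^{Q'}_{Q_0,+}$, pour lequel la proposition \ref{unita}\,(iv) fournit une d\'ecroissance exponentielle du type $\sup_{\alpha\in\Delta_0^{Q'}\smallsetminus\Delta_0^{Q_0}}e^{-R\langle\alpha,\bfH_0(x)\rangle}$. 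Je commencerais donc par \'ecrire la diff\'erence $A^T(H)-A^T_{\rm unit}(H)$ comme une int\'egrale sur $\bsX_{L_0}^\tG(H)\times\bsK\times\bsmu_S$ d'un produit de s\'eries d'Eisenstein tronqu\'ees dont au moins l'un des deux facteurs est une partie compl\'ementaire ($+$), l'autre \'etant une s\'erie d'Eisenstein arbitraire major\'ee via \ref{unita}\,(i)--(iii).

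Pour contr\^oler l'int\'egrale int\'erieure, la premi\`ere \'etape consiste \`a appliquer l'in\'egalit\'e de Cauchy--Schwarz sur $\bsmu_S$, exactement comme dans la preuve de \ref{[LW,12.5.1]}, afin de se ramener \`a majorer s\'epar\'ement un facteur ``unitaire'' et un facteur ``$+$''. Le facteur unitaire se traite par les estim\'ees d\'ej\`a \'etablies (\ref{[LW,12.2.3]} et \ref{unita}), qui donnent une croissance polynomiale en $\|H\|$ et $\|\bfH_0(s)\|$ multipli\'ee par $\bs\delta_{P_0}(x)^{1/2}$. Le facteur $+$ apporte, gr\^ace \`a \ref{unita}\,(iv), le gain exponentiel $e^{-R\langle\alpha,\bfH_0(x)\rangle}$ pour une racine $\alpha\in\Delta_0^{Q'}\smallsetminus\Delta_0^{Q_0}$. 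Ensuite, sur le support impos\'e par les fonctions de troncature ---\;\cad sous les conditions $\wt\sigma_Q^R(H-T)\phi_\Qo^Q(H-T)\Kappa{\eta T}(H^Q-T_\Qo^Q)=1$ et $F_{P_0}^{Q_0}(s,T\Lbra H^Q\Rbra)=1$\;--- il faut v\'erifier que pour une telle racine $\alpha$ on a $\langle\alpha,\bfH_0(x)\rangle\gg\bsd_0(T)$. C'est ici qu'intervient la r\'egularit\'e renforc\'ee de $T\Lbra H^Q\Rbra$ par rapport \`a $T$ (\cite[4.2.1]{LW}, d\'ej\`a utilis\'ee dans \ref{[LW,12.5.1]}) et le fait que $\Kappa{\eta T}$ confine $H^Q$ pr\`es de $T_\Qo^Q$: les racines hors de $\Delta_0^{Q_0}$ sont grandes sur $\bfH_0(x)$.

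L'obstacle principal sera de transformer le gain $e^{-R\langle\alpha,\bfH_0(x)\rangle}$, qui porte sur une seule racine \`a la fois, en une d\'ecroissance uniforme en une puissance arbitraire $\bsd_0(T)^{-r}$ \emph{apr\`es sommation sur $H$ et int\'egration sur $\bsX_{L_0}^\tG(H)$}. La difficult\'e tient \`a ce que le facteur $\bs\delta_{P_0}(x)^{1/2}$ du facteur unitaire et celui du facteur $+$ se recombinent, apr\`es multiplication par $e^{-\langle2\rho_\Qo,H\rangle}$ et d\'ecomposition $x=zas$ avec $a\in\BB_\Qo^\tG$, en une int\'egrale du type $\int_{\Siegel^{L_0,*}}F_{P_0}^{Q_0}(s,T\Lbra H^Q\Rbra)\bs\delta_{P_0}(s)\,\dd s$, exactement comme \`a la fin de \ref{[LW,12.5.1]}. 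Je contr\^olerais cette int\'egrale r\'esiduelle par le cardinal d'un ensemble de points de r\'eseau, born\'e polynomialement en $\bsd_0(T)$; le facteur exponentiel $e^{-R\langle\alpha,\bfH_0(x)\rangle}\ll e^{-R'\bsd_0(T)}$ domine alors n'importe quelle puissance $\bsd_0(T)^{N-r}$, d'o\`u la majoration souhait\'ee. Le caract\`ere uniforme en $\lambda,\mu\in\bsmu_S$ est automatique puisque $\bsmu_S$ est compact, ce qui \'evite ici l'introduction de la fonction $B$ du cas des corps de nombres. La sommabilit\'e en $H$, quant \`a elle, est garantie par la finitude d\'ej\`a \'etablie du support effectif ($\|H^\tG-T_\Qo\|\leq c$) dans \ref{[LW,12.4.1]}.
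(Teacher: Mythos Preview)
Your plan is essentially the paper's own argument, which in turn follows \cite[13.3.1]{LW}: split off the complementary pieces $E^{Q'}_{Q_0,+}$ and $E^{Q}_{Q_0,+}$, use Cauchy--Schwarz on $\bsmu_S$, and feed in the estimates of Proposition~\ref{unita} so that the exponential gain from part~(iv) beats the polynomial losses. One point you leave implicit but the paper makes explicit: before any of this, you must pass from $\bs\Lambda^{T\Lbra H^Q\Rbra,Q_0}$ to $\bfC$. The paper first decomposes $A^T(H)=A^T_{\bs\Lambda-\bfC}(H)+A^T_{\bfC}(H)$ and likewise for $A^T_{\rm unit}(H)$, then invokes Proposition~\ref{propK} (the function-field specialty: $\bs\Lambda^T=\bfC^T$ for $T$ assez r\'egulier) to kill the $\bs\Lambda-\bfC$ parts outright. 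Only then is one reduced to comparing $A^T_{\bfC}(H)$ with $A^T_{\bfC,{\rm unit}}(H)$ under the support condition $F_{P_0}^{Q_0}(s,T\Lbra H^Q\Rbra)=1$ that you already wrote down. Without this step the truncation operator still acts nontrivially on $E^Q_{Q_0,+}$ and your estimates from \ref{unita}, which are pointwise in $x$, do not immediately control $\bs\Lambda E^Q_{Q_0,+}$. So insert the sentence ``by Proposition~\ref{propK}, for $T$ assez r\'egulier one has $A^T(H)=A^T_{\bfC}(H)$ and $A^T_{\rm unit}(H)=A^T_{\bfC,{\rm unit}}(H)$'' at the start, and the rest of your outline goes through.
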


\begin{proof}
Elle est identique \`a celle de \textit{loc.~cit}, \`a la simplification suivante pr\`es: la d\'ecomposition de 
l'op\'erateur $\bs{\Lambda}= \bs\Lambda^{T\Lbra H^Q\Rbra,Q_0}$ en $(\bs{\Lambda} - \bfC)+\bfC$ conduit \`a 
la d\'ecomposition des expressions 
$A^T(H)$ et $A^T_\mathrm{unit}(H)$ en
\begin{equation*}A^T(H)= A^T_{\bs{\Lambda}-\bfC}(H)+A^T_{\bfC}(H) \quad \hbox{et}\quad 
A^T_\mathrm{unit}(H)= A^T_{\bs{\Lambda}-\bfC\mathrm{,\mskip 2mu  unit}}(H)+A^T_{\bfC\mathrm{,\mskip 2mu  unit}}(H)\ptf\end{equation*}
Comme dans la preuve de la proposition \ref{[LW,12.5.1]}, si $T$ est assez r\'egulier, on a
\begin{equation*}A^T(H)= A^T_{\bfC}(H)\quad \hbox{et}\quad A^T_\mathrm{unit}(H)= A^T_{\bfC\mathrm{,\mskip 2mu  unit}}(H)\ptf\end{equation*}
Seules les expressions $A^T_{\bfC} (H)$ et $A^T_{\bfC\mathrm{,\mskip 2mu  unit}}(H)$ sont \`a comparer. 
Les assertions sont alors cons\'equence de \ref{unita}.
\end{proof}

%%%%%%%%%%%%%%%%%%%%%%%

 \section{Simplification du produit scalaire}
\label{simplification du produit scalaire}
On a d\'efini en \ref{defbrT} un \'el\'ement $T\Lbra H^Q\Rbra$ dans $\ag_{P_0}^{Q_0}$. 
Pour $S\in\ESP_\st^{Q'}$ et 
$H\in\ESA_\Qo$, consid\'erons l'op\'erateur (introduit en \ref{GMspec} mais avec ici $Q_0$ en place de $G$
et $T\Lbra H^Q\Rbra$ au lieu de $T$)
\begin{equation*}\bsO_{S\vert \theta_0(S)}^{T,Q_0}(H;\lambda ,\mu)
=\mskip -6mu\sum_{S'\in\ESP_\st^{Q_0}}
\sum_{\substack{s\in \bfW^Q(\ag_{\theta_0(S)},\ag_{S'})\\ t\in \bfW^{Q'}(\ag_S,\ag_{S'})}}\mskip -17mu
\varepsilon_{S'}^{\mskip 2mu Q_0, T\Lbra H^Q\Rbra_{S'}}(H; s\lambda - t\mu)
\bfM(t,\mu)^{-1}\bfM(s,\lambda)\ptf\end{equation*}
On a fix\'e des fonctions 
\begin{equation*}\Psi\in \Automd(\bsX_S,\sigma)\quad\hbox{et}\quad 
\Phi\in \Automd(\bsX_{\theta_0(S)},\theta_0(\omega\otimes \sigma))\vg\end{equation*}
et une fonction lisse $\vtt$ sur $\bsmu_S$.
Rappelons que l'on a introduit en \ref{Dnu} un op\'erateur de d\'ecalage $\bfD_\nu$.
Pour $\mu,\mskip 2mu \nu \in \bsmu_S$ et $\lambda\in\bsmu_{\theta_0(S)}$, on pose
\begin{equation*}\bso^{\mskip 2mu T,Q_0}(H;\lambda,\mu;\nu)\bydef
\big\langle\bfD_\nu\bsO_{S,\theta_0(S)}^{\mskip 2mu T,Q_0}(H;\lambda,\mu)\Phi,\Psi\big\rangle_S\end{equation*}
 c'est-\`a-dire
\begin{align*}
\lefteqn{\hspace{0.5cm}
\bso^{\mskip 2mu T,Q_0}(H;\lambda,\mu;\nu)=\sum_{S'\in\ESP_\st^{Q_0}}
\sum_{\substack{s\in \bfW^Q(\ag_{\theta_0(S)},\ag_{S'})\\ t\in \bfW^{Q'}(\ag_S,\ag_{S'})}}
\varepsilon_{S'}^{\mskip 2mu Q_0, T\Lbra H^Q\Rbra_{S'}}(H; s\lambda - t\mu)}
\\
&&\hspace{6.5cm}\times\big\langle\bfD_{\nu} \bfM(t,\mu)\mun\bfM(s,\lambda)\Phi, \Psi \big\rangle_{S'}\ptf
\end{align*}
Avec les notations de la proposition \ref{d\'ecalage}, pour tout 
\hbox{$\tu\in \bfW^\tG(\ag_S,\ag_S)$}, on a 
\begin{equation*}\bs{\ESE}(\theta_0(\omega_{A_S}\xi),\xi)=
\{\nu \in \bsmu_S\mskip 2mu \mskip 2mu \mid \mskip 2mu \mskip 2mu \tu(\omega_{A_S} \xi)\star\nu\vert_{\ESB_M} = \xi\}\ptf\end{equation*}
Comme seule la restriction de $\nu$ \`a $\ESB_M$ intervient, et s'il est non vide,
cet ensemble est un espace homog\`ene sous $\wh\bsbbc_M$.

% d\'efinition
\begin{definition}\label{exisigma}
Lorsque $\xi=\xi_\sigma$ on pose 
\begin{equation*}\bsESEsigma\bydef 
\{\nu\in \bsmu_S\mskip 2mu \mid \mskip 2mu \xi_{\tu(\omega\otimes \sigma)}\star \nu\vert_{\ESB_M}= \xi_\sigma\}
=\bs{\ESE}(\theta_0(\omega_{A_S}\xi_\sigma),\xi_\sigma)\ptf\end{equation*}
\end{definition}

% lemme
\begin{lemma}
Pour que l'expression $\bso^{\mskip 2mu T,Q_0}(H;\lambda,\mu;\nu)$ soit non nulle, il est n\'ecessaire que 
$\nu$ appartienne \`a $ \bsESEsigma$.
\end{lemma}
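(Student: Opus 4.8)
The plan is to prove the contrapositive term by term: I will show that \emph{each} summand indexed by a triple $(S',s,t)$ in the expansion of $\bso^{\,T,Q_0}(H;\lambda,\mu;\nu)$ vanishes identically unless $\nu\in\bsESEsigma$. Since the set $\bsESEsigma$ is, by Definition~\ref{exisigma}, the same regardless of which admissible twisted Weyl element occurs (this is the independence of $\bs{\ESE}$ from the chosen Weyl element), establishing the condition for one non-zero summand establishes it for the whole sum. Fix therefore $S'\in\ESP_\st^{Q_0}$, $s\in\bfW^Q(\ag_{\theta_0(S)},\ag_{S'})$ and $t\in\bfW^{Q'}(\ag_S,\ag_{S'})$, and look at the factor $\langle\bfD_\nu\bfM(t,\mu)^{-1}\bfM(s,\lambda)\Phi,\Psi\rangle$. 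The decisive observation is that this is a Petersson-type pairing of two \emph{discrete} automorphic forms, realized as an integral over a quotient of the shape $\ovbsX_{M_S}$; such a pairing vanishes unless the two forms transform on the left under the same unitary character of $A_S(\AM)$.

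First I would track the central characters through the chain of operators, transporting everything to the level of $S$ by means of the functional equation of $\bfD_\nu$ (Lemma~\ref{eqf}) and the isometry property of the intertwining operators. In the discrete labelling the parameters $\lambda$ and $\mu$ are only deformation variables: $\bfM(s,\lambda)$ carries $\Automd(\bsX_{\theta_0(S)},\theta_0(\omega\otimes\sigma))$ to the space labelled by $s\,\xi_{\theta_0(\omega\otimes\sigma)}$, and $\bfM(t,\mu)^{-1}$ carries this to the space over $\bsX_S$ labelled by $t^{-1}s\,\xi_{\theta_0(\omega\otimes\sigma)}$, with no contribution of $\lambda,\mu$ to the unitary central character of the output. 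Setting $\tu=t^{-1}s\,\theta_0$, which lies in $\bfW^\tG(\ag_S,\ag_S)$ because $\theta_0(S)\subset Q$ and $S\subset Q'=\theta_0^{-1}(Q)$, and using that the central character of $\omega\otimes\sigma$ restricts to $\omega_{A_S}\xi_\sigma$ on $A_S(\AM)$, the form $\bfD_\nu\bfM(t,\mu)^{-1}\bfM(s,\lambda)\Phi$ has central character $\xi_{\tu(\omega\otimes\sigma)}\star\nu$ on $A_S(\AM)$.

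Next I would invoke the vanishing of the pairing off the diagonal of central characters: since $\Psi$ has central character $\xi_\sigma$, non-vanishing of the summand forces $\xi_{\tu(\omega\otimes\sigma)}\star\nu=\xi_\sigma$ on $A_S(\AM)$. The whole $\nu$-dependence enters only through the factor $a\mapsto e^{\langle\nu,\bfH_S(a)\rangle}$, whose restriction to $A_S(\AM)$ depends solely on $\bfH_S(A_S(\AM))=\ESB_S=\ESB_M$; hence the $\ESB_M$-part of this equality reads $\xi_{\tu(\omega\otimes\sigma)}\star\nu\vert_{\ESB_M}=\xi_\sigma$, which is exactly the membership $\nu\in\bsESEsigma$. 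As explained, the condition obtained does not depend on the triple $(S',s,t)$, so the lemma follows.

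The hard part will be the careful bookkeeping of the characters through $\bfM(s,\lambda)$, $\bfM(t,\mu)^{-1}$ and $\bfD_\nu$, and in particular keeping cleanly separated the unitary part on $A_S(\AM)^1$ — which is pinned down by the $\sigma$-isotypic constraint and involves no $\nu$ — from the $\bfH$-direction detected by $\ESB_M$, on which $\nu$ acts. The other delicate point is to justify rigorously that the pairing of two discrete forms vanishes whenever their central characters differ, exactly the mechanism underlying the reduction in the proof of Theorem~\ref{PSET}; once that is in hand the character computation is routine.
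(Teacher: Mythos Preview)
Your proposal is correct and follows essentially the same approach as the paper's proof: you track the central character of $\bfD_\nu\bfM(t,\mu)^{-1}\bfM(s,\lambda)\Phi$ and observe that the pairing with $\Psi$ forces $\xi_{\tu(\omega\otimes\sigma)}\star\nu=\xi_\sigma$, which is precisely the membership condition defining $\bsESEsigma$. The paper compresses this into two lines by simply stating that $\bfD_\nu\bfM(t,\mu)^{-1}\bfM(s,\lambda)\Phi\in\Automd(\bsX_S,\tu(\omega\otimes\sigma)\star\nu)$ while $\Psi\in\Automd(\bsX_S,\sigma)$; your more detailed tracking through the intertwining operators and the identification $\tu=t^{-1}s\theta_0$ makes the same computation explicit.
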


% preuve
\begin{proof}On a
\begin{equation*}\bfD_{\nu} \bfM(t,\mu)\mun\bfM(s,\lambda)\Phi\in\Automd(\bsX_{S},\tu(\omega\otimes \sigma)\star \nu)
\quad\hbox{et}\quad\Psi\in\Automd(\bsX_S,\sigma)\ptf\end{equation*}
 Pour que l'expression $\bso^{\mskip 2mu T,Q_0}(H;\lambda,\mu;\nu)$ soit non nulle, 
il est n\'ecessaire que l'on ait \begin{equation*}\xi_{\tu(\omega\otimes \sigma)}\star \nu\vert_{\ESB_M}=\xi_{\sigma}\ptf\end{equation*}
\end{proof}

% d\'efinition
\begin{definition}\label{redecal}
On pose
\begin{equation*}\brabso^{\mskip 2mu T,Q_0}(H;\lambda,\mu)=
 \vert \wh\bsbbc_S \vert^{-1}\sum_{\nu \in \bsESEsigma}
 \bso^{\mskip 2mu T,Q_0}(H;\lambda,\mu+\nu;\nu) \end{equation*}
avec $\brabso^{\mskip 2mu T,Q_0}(H;\lambda,\mu)=0$ si $\bsESEsigma =\emptyset$.
\end{definition}
Avec les notations de la proposition \ref{d\'ecalage} on a
\begin{equation*}\brabso^{\mskip 2mu T,Q_0}(H;\lambda,\mu)=
\langle [\bsO]_{S\vert \theta_0(S)}^{T,Q_0}(H, \xi,\xi';\lambda ,\mu) \Phi, \Psi \rangle_S\end{equation*}
mais avec $Q_0$ en place de $G$ et $T\Lbra H^Q\Rbra$ au lieu de $T$.
D'apr\`es \ref{SETR} cette expression est holomorphe en $\lambda$ et $\mu$. 
Pour $\lambda=\theta_0(\mu)$, on \'ecrit
\begin{equation*}\brabso^{\mskip 2mu T,Q_0}(H;\mu)=\brabso^{\mskip 2mu T,Q_0}(H;\theta_0(\mu),\mu)\ptf\end{equation*}
Observons que $\brabso^{\mskip 2mu T,Q_0}(H;\mu)$ ne d\'epend que de l'image de $H$ dans $\ESC_\Qo^\tG= 
\ESB_\tG\backslash \ESA_\Qo$.
On pose 
\begin{equation*}A^T_{\mathrm{pure}}(H)=
\Kappa{\eta T}(H^Q-T_\Qo^Q)\wt{\sigma}_Q^R(H-T)\phi_\Qo^Q(H-T)\int_{\bsmu_S}
\brabso^{\mskip 2mu T,Q_0}(H;\mu) \vtt(\mu)\dd \mu\end{equation*}
et
\begin{equation*}A^T_{\mathrm{pure}}=\sum_{H\in\ESC_\Qo^\tG} A^T_{\mathrm{pure}}(H)\ptf\end{equation*}

% proposition 
\begin{proposition}\label{purea} 
La s\'erie d\'efinissant $A^T_{\mathrm{pure}}$ est convergente et
pour tout r\'eel $r$, on a une majoration
\begin{equation*}\vert A^T_\mathrm{unit} - A^T_\mathrm{pure}\vert \ll e^{-r\bsd_0(T)}\ptf\end{equation*}
\end{proposition}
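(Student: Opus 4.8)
L'objectif est de comparer l'expression $A^T_{\rm unit}$, dans laquelle les séries d'Eisenstein tronquées font intervenir les termes constants $E^{Q'}_{Q_0{\rm ,\, unit}}$ et $E^Q_{Q_0{\rm ,\, unit}}$, à l'expression $A^T_{\rm pure}$ exprimée au moyen de l'opérateur spectral régularisé $\brabso^{\,T,Q_0}(H;\mu)$. Le principe, comme dans \cite[13.4.1]{LW}, est que le produit scalaire de deux séries d'Eisenstein tronquées, lorsque les termes constants sont remplacés par leurs parties \og unitaires\fg, se calcule exactement par la formule du produit scalaire du théorème \ref{PSET} (via la proposition \ref{d�calage} qui tient compte des caractères centraux $\xi$ et $\xi'$). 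Je commencerais donc par insérer dans l'intégrale définissant $A^T_{\rm unit}(H)$ l'expression explicite
$$E^{Q'}_{Q_0{\rm ,\, unit}}(y,\Psi,\mu)=
\sum_{s\in \bfW^{Q'}(\ag_S,Q_0)} E^{Q_0}(y,\bfM(s,\mu)\Psi,\mu)$$
obtenue en \ref{majoration des termes constants}, ainsi que son analogue pour $E^Q_{Q_0{\rm ,\, unit}}$. Le calcul se ramène alors à un produit scalaire, sur $\bsX_{L_0}^\tG(H)$, de séries d'Eisenstein \emph{cuspidales} pour le groupe $L_0=M_\Qo$ (au lieu de $G$), tronquées par $\bs\Lambda^{T\Lbra H^Q\Rbra,Q_0}$.

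La deuxième étape consiste à appliquer le théorème \ref{PSET}(i) dans sa version tordue par les caractères \ref{d�calage}, mais relativement à $L_0$ et avec paramètre de troncature $T\Lbra H^Q\Rbra$ au lieu de $T$ et $Q_0$ en place de $G$. Ceci fournit précisément l'opérateur $[\bsO]_{S\vert \theta_0(S)}^{T,Q_0}(H,\xi,\xi';\lambda,\mu)$ et donc l'expression $\brabso^{\,T,Q_0}(H;\mu)$ après spécialisation $\lambda=\theta_0(\mu)$ et intégration contre $\vtt$. L'égalité serait \emph{exacte} si les termes constants se réduisaient à leur partie unitaire ; c'est le rôle du terme complémentaire $E^{Q'}_{Q_0,+}$ d'introduire l'écart. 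Il faut donc contrôler la contribution des termes croisés faisant intervenir $E^{Q'}_{Q_0,+}$ ou $E^Q_{Q_0,+}$. C'est ici qu'intervient l'assertion \ref{unita}(iv) : la partie $E^{Q'}_{Q_0,+}$ est majorée par $\bs{\delta}_{P_0}(x)^{1/2}$ multipliée par un facteur de décroissance exponentielle $\sup_{\alpha\in\Delta_0^{Q'}\smallsetminus\Delta_0^{Q_0}} e^{-R\langle\alpha,\bfH_0(x)\rangle}$. Combinée à la régularité imposée à $T$ (cône de \ref{convergence d'une int�grale it�r�e}) et à la condition $\tau_\Qo^{Q'}(H)=1$ issue du support de $\wt\sigma_Q^R\phi_\Qo^Q$, cette décroissance permet de majorer la contribution des termes croisés par $e^{-r\bsd_0(T)}$ pour tout $r$, uniformément en $H$ dans le domaine de sommation fini (de taille $\|H^\tG-T_\Qo\|\le c$ d'après \ref{[LW,12.4.1]}).

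La difficulté principale, comme toujours dans ce type d'estimée, réside dans le contrôle \emph{uniforme} de la convergence : il faut d'une part établir la convergence absolue de la série définissant $A^T_{\rm pure}$ (ce qui découle de l'holomorphie de $\brabso^{\,T,Q_0}$ en $\lambda,\mu$ sur le groupe compact $\bsmu_S$, de la finitude du domaine de sommation sur $H$, et de la majoration des fonctions $\varepsilon$ par \ref{gamma-fini}), et d'autre part vérifier que les facteurs polynomiaux en $\|H\|$, $\|\bfH_0(s)\|$ apparaissant dans \ref{unita} sont absorbés par la décroissance exponentielle. Le point délicat sera de s'assurer que le passage du produit scalaire tronqué au produit scalaire spectral via \ref{PSET} reste licite lorsqu'on a remplacé $G$ par $L_0$ et $T$ par $T\Lbra H^Q\Rbra$ : il faut que $T\Lbra H^Q\Rbra$ soit suffisamment régulier dans $\ag_{P_0}^{Q_0}$, ce qui est garanti par l'inégalité $\bsd_{P_0\cap L_0}^{L_0}(T\Lbra H^Q\Rbra)\ge\bsd_0(T)$ rappelée en \ref{transformation de l'op�rateur}. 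Le reste de l'argument suit pas à pas \cite[13.4.1]{LW}, la seule simplification étant que les paramètres spectraux varient dans un espace compact, ce qui rend les bornes automatiquement uniformes en $\lambda,\mu$.
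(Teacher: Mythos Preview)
Your plan contains two genuine confusions that would prevent the argument from going through.

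First, the forms appearing in $A^T_{\rm unit}$ are \emph{not} cuspidal. Recall that $\Psi\in\Automd(\bsX_S,\sigma)$ with $\sigma$ only assumed discrete modulo the centre, so $\bfM(s,\mu)\Psi$ is again a discrete (not cuspidal) automorphic form. Consequently one cannot invoke the exact formula \ref{PSET}\,$(i)$; the relevant input is the asymptotic formula \ref{PSET}\,$(ii)$ (together with the shift in $\nu$ from \ref{d�calage}), applied with $L_0=M_{Q_0}$ in place of $G$ and with truncation parameter $T\Lbra H^Q\Rbra$. This yields $\brabso^{\,T,Q_0}(H;\mu)$ only up to an error $\ll e^{-c\,\bsd_0(T\Lbra H^Q\Rbra)}\leq e^{-c\,\bsd_0(T)}$, and \emph{that} error term is precisely the quantity $\vert A^T_{\rm unit}-A^T_{\rm pure}\vert$.

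Second, your identification of the source of the error is misplaced: the terms $E^{Q'}_{Q_0,+}$ were already removed when passing from $A^T$ to $A^T_{\rm unit}$ in \ref{unitb}, and the estimate \ref{unita}\,$(iv)$ was the key ingredient \emph{there}, not here. In the present proposition the difference $A^T_{\rm unit}-A^T_{\rm pure}$ has nothing to do with the ``$+$'' part; it comes entirely from the fact that \ref{PSET}\,$(ii)$ is an approximation rather than an identity.

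A third, smaller point: the sum over $H\in\ESC_\Qo^\tG$ defining $A^T_{\rm pure}$ is \emph{not} finite. The bound $\|H^\tG-T_\Qo\|\le c$ in \ref{[LW,12.4.1]} came from the condition $q_Q(H)\in\omega$ of lemma \ref{[LW,12.3.1]}, which in turn relied on the non-vanishing of the original integrand in $A^T(H)$; that argument does not apply to $A^T_{\rm pure}$. The condition $\wt\sigma_Q^R(H_Q-T)=1$ by itself does not bound $H_Q$, so convergence requires an additional argument (the paper invokes \cite[2.13.1]{LW}).
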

\begin{proof} 
La preuve suit pas \`a pas les arguments de \cite[13.4.1]{LW}.
Tout d'abord on utilise \cite[2.13.1]{LW} pour prouver la convergence de la s\'erie d\'efinissant $A^T_\mathrm{pure}$.
Puis, gr‰ce au calcul approch\'e du produit scalaire des s\'eries d'Eisenstein tronqu\'ees donn\'e par \ref{PSET}~(ii) 
et compte-tenu de \ref{d\'ecalage} pour le d\'ecalage en $\nu$, 
on montre qu'il existe un r\'eel $c>0$ pour lequel on a la majoration souhait\'ee.
\end{proof}

\begin{corollary}\label{pureb} 
Pour tout r\'eel $r$, on a une majoration
\begin{equation*}\vert A^T- A^T_\mathrm{pure}\vert \ll \bsd_0(T)^{-r}\ptf\end{equation*}
\end{corollary}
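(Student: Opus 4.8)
The plan is to combine the two estimates already proved in this section, namely Proposition \ref{unitb} and Proposition \ref{purea}, by an elementary triangle inequality. Recall that Proposition \ref{unitb} provides, for every real $r$, a constant $c>0$ such that
$$\vert A^T - A^T_{\rm unit}\vert \leq c\,\bsd_0(T)^{-r}\vg$$
while Proposition \ref{purea} provides, again for every real $r$, a majoration
$$\vert A^T_{\rm unit} - A^T_{\rm pure}\vert \ll e^{-r\bsd_0(T)}\ptf$$
The corollary to be proved asserts the weaker bound $\vert A^T - A^T_{\rm pure}\vert \ll \bsd_0(T)^{-r}$ for every $r$.

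First I would fix an arbitrary real $r$ and write
$$\vert A^T - A^T_{\rm pure}\vert \leq \vert A^T - A^T_{\rm unit}\vert + \vert A^T_{\rm unit} - A^T_{\rm pure}\vert\ptf$$
The first term on the right is controlled by Proposition \ref{unitb} applied with this same $r$, giving a bound of order $\bsd_0(T)^{-r}$. For the second term, Proposition \ref{purea} gives the much stronger exponential decay $e^{-r'\bsd_0(T)}$ for any $r'$; since exponential decay dominates any inverse power of $\bsd_0(T)$ when $\bsd_0(T)$ is large (which is the regime we are in, $T$ being assumed sufficiently regular throughout this section), that term is in particular $\ll \bsd_0(T)^{-r}$. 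Adding the two contributions yields $\vert A^T - A^T_{\rm pure}\vert \ll \bsd_0(T)^{-r}$, which is exactly the claim.

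There is essentially no obstacle here: the corollary is a formal consequence of the two preceding propositions, and the only point worth stating explicitly is that the polynomial decay of Proposition \ref{unitb} is the binding constraint, the exponential decay of Proposition \ref{purea} being more than sufficient. One should merely note that the convergence of the series defining $A^T_{\rm pure}$, needed for the inequalities to make sense, has already been established in Proposition \ref{purea}, and that the absolute convergence of $A^T$ and $A^T_{\rm unit}$ comes from Propositions \ref{[LW,12.3.2]} and \ref{unitb} respectively. Since $r$ was arbitrary, the estimate holds for all $r$, completing the proof.
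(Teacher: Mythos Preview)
Your proof is correct and follows exactly the same approach as the paper, which simply invokes Proposition~\ref{unitb} together with Proposition~\ref{purea} via the triangle inequality. The paper's proof is even more terse (one line), but the content is identical.
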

\begin{proof}
On invoque de plus \ref{unitb}.
\end{proof}

%%%%%%%%%%%%%%%%%%%%%%%%

 \section{D\'ecomposition plus fine}
\label{d\'ecomposition de A_pure}

On va d\'ecomposer la somme sur 
$H\in\ESC_\Qo^\tG$ dans $A^T_\mathrm{pure}$ en une somme sur $\ESC_Q^\tG$ pr\'ec\'ed\'ee d'une somme sur
$\ESA_\Qo^Q$ gr‰ce \`a la suite exacte courte
\begin{equation*}0\to\ESA_\Qo^Q\to\ESC_\Qo^\tG\to\ESC_Q^\tG\to0\ptf\end{equation*}
Consid\'erons $H \in\ESC_\Qo^\tG$, $Z\in \ESC_Q^\tG$ et $Y\in\ag_\Qo^Q$
tels que
\begin{equation*}Z=H_Q\quad \hbox{et}\quad Y= T_\Qo^Q-H^Q\quad
\hbox{et donc}\quad H=Z+T_\Qo^Q-Y\ptf\end{equation*}
Puisque $\Kappa{\eta T}(-Y)=\Kappa{\eta T}(Y)$, on a
\begin{equation*}\Kappa{\eta T}(H^Q-T_\Qo^Q)\wt{\sigma}_Q^R(H-T)\phi_\Qo^Q(H-T)=
\Kappa{\eta T}(Y)\wt{\sigma}_Q^R(Z-T_Q)\phi_\Qo^Q(-Y)\end{equation*}
et
\begin{equation*}T\Lbra H^Q\Rbra=T\Lbra T_\Qo^Q -Y\Rbra \ptf\end{equation*}
Lorsque $\phi_\Qo^Q(-Y)=1$ on a $Y=X_\Qo$ o $X$ est de la forme
\begin{equation*}X=\sum_{\alpha\in\Delta_0^Q\smallsetminus\Delta^{Q_0}_0}x_\alpha\check{\alpha}\com{avec}
x_\alpha\geq 0 \com{pour} \alpha\in \Delta_0^Q\smallsetminus\Delta_0^{Q_0}\ptf\end{equation*}
En d'autres termes, $X$ appartient au c™ne ferm\'e $\mathcal{C}(Q,Q_0)$ de $\ag^Q_0$ 
engendr\'e par les \'el\'ements $\check{\alpha}$ 
pour $\alpha\in\Delta_0^Q\smallsetminus\Delta_0^{Q_0}$. 
D'apr\`es \cite[4.2.1]{LW}, on a
\begin{equation*}T\Lbra H^Q\Rbra=T\Lbra T_\Qo^Q-Y\Rbra = 
T^{Q_0} -\mskip -5mu\sum_{\alpha\in\Delta_0^Q\smallsetminus\Delta_0^{Q_0}} 
x_\alpha\check{\alpha}^{Q_0}=({T-X})^{Q_0}\ptf\end{equation*}
Donc
\begin{equation*}H=\ZTX\quad\hbox{o on a pos\'e}\quad H_Z^U\bydef Z+U_\Qo^Q\ptf\end{equation*}
L'application qui \`a $Y$ associe $X\in\mathcal{C}(Q,Q_0)$ est injective et on note 
\begin{equation*}\mathcal{C}_F(Q,Q_0; T)\subset\mathcal{C}(Q,Q_0)\end{equation*}son image. 
On a ainsi transform\'e la somme sur ${H\in\ESC_\Qo^\tG}$ en une somme sur
\begin{equation*}(Z,X)\in\ESC_Q^\tG\times\mathcal{C}_F(Q,Q_0; T)\end{equation*}
la fonction
\begin{equation*}\Kappa{\eta T}(H^Q-T_\Qo^Q)\wt{\sigma}_Q^R(H-T)\phi_\Qo^Q(H-T)
\quad\hbox{devenant}\quad
\Kappa{\eta T}(X_\Qo)\wt{\sigma}_Q^R(Z-T)\ptf\end{equation*}
Pour $Z\in\ESA_Q$ et $X\in \mathcal{C}_F(Q,Q_0;T)$ on pose 
\begin{align*}
\lefteqn{
\bso^{\mskip 2mu T,Q_0}(Z,X;\lambda,\mu;\nu)\bydef 
\sum_{S'\in\ESP_\st^{Q_0}}
\sum_{s\in \bfW^Q(\ag_{\theta_0(S)},\ag_{S'})}
\sum_{ t\in \bfW^{Q'}(\ag_S,\ag_{S'})}
}\\
&&\hspace{1.5cm}\times\hspace{0,4cm}
\varepsilon_{S'}^{\mskip 2mu Q_0, ({T-X})_{S'}}(\ZTX ; s\lambda - t\mu)
\big\langle\bfM(s,\lambda)\Phi, \bfM(t,\mu)\bfD_{-\nu}\Psi \big\rangle_{S'}\ptf
\end{align*}
On a donc
\begin{equation*}\bso^{T\Lbra H^Q\Rbra,Q_0}(H_Z^{T-X};\lambda ,\mu;\nu)=\bso^{T,Q_0}(Z,X;\lambda,\mu;\nu)\ptf\end{equation*}
En rempla\c{c}ant la variable $t$ 
par $t't$ avec $t\in \bfW^{Q'}(\ag_S,Q_0)$ et $t'\in \bfW^{Q_0}(t(\ag_S),\ag_{S'})$, et 
la variable $s$
par $t'^{-1}s\in \bfW^Q(\theta_0(\ag_S),t(\ag_S))$ cette expression peut s'\'ecrire:
\begin{align*}
\lefteqn{
\sum_{t\in \bfW^{Q'}(\ag_S,Q_0)}
\sum_{s\in \bfW^Q(\theta_0(\ag_S),t(\ag_S))}\sum_{S'\in\ESP_\st^{Q_0}}
\sum_{t'\in \bfW^{Q_0}(t(\ag_S),\ag_{S'})}}
\\
&\hspace{1,2cm}\times\hspace{0,4cm}
\varepsilon_{S'}^{\mskip 2mu Q_0, ({T-X})_{S'}}(\ZTX; t'(s\lambda - t\mu))
\big\langle \bfM(t's,\lambda)\Phi,\bfM(t't,\mu)\bfD_{-\nu}\Psi\big\rangle_{S'}.
\end{align*}
Le sous-groupe parabolique $t(S)$ n'est en g\'en\'eral pas standard mais il existe un unique sous-groupe parabolique standard 
$\ttS\subset Q_0$ tel que $M_{t(S)}= M_\ttS$. On pose $\ttM= M_\ttS$. 
Pour $S'\in\ESP^{Q_0}_\st$ et $t'\in \bfW^{Q_0}(t(\ag_S),a_{S'})$, 
le sous-groupe parabolique $t'^{-1}(S')$ appartient \`a l'ensemble 
$\ESP^{Q_0}(\ttM)$ des 
$S''\in\ESP^{Q_0}$ tels que 
$M_{S''}= \ttM$. On peut remplacer ci-dessus $S'$ par $S''=t'^{-1}(S')$. 
Alors la double somme en $S'$ et $t'$ se transforme en une somme sur 
$S''\in \ESP^{Q_0}(\ttM)$. Pour
\begin{equation*}H' = t'^{-1}(H)\end{equation*}on a
\begin{equation*}\varepsilon_{S'}^{\mskip 2mu Q_0, ({T-X})_{S'}}(H; t'(s\lambda - t\mu))=
\varepsilon_{S''}^{\mskip 2mu Q_0, [{T-X}]_{S''}}(H'; s\lambda - t\mu)\end{equation*}
et
\begin{equation*}\big\langle\bfM(t's,\lambda)\Phi,\bfM(t't,\mu)\bfD_{-\nu}\Psi\big\rangle_{S'}\end{equation*}
\'egale
\begin{equation*}\big\langle \bfM(t',s\lambda)\bfM(s,\lambda)\Phi,\bfM(t',t\mu)\bfM(t,\mu)\bfD_{-\nu}\Psi\big\rangle_{S''}\ptf\end{equation*}
Notons que \begin{equation*}[{T-X}]_{S''}= t'^{-1}(({T-X})_{S'})\qquad\hbox{et donc}\qquad
[{T-X}]_{S''}^{Q_0}= t'^{-1}(({T-X})^{Q_0}_{S'})\end{equation*}
D'apr\`es \cite[4.3.5]{LW}, on a
\begin{equation*}\bfM(t',t\mu)^{-1}\bfM(t',s\lambda)= 
e^{\langle s\lambda - t\mu,\Y_{S''}\rangle}\bfM_{S'\vert S''}(t\mu)^{-1}\bfM_{S'\vert S''}(s\lambda)\end{equation*}
avec $\Y_{S''}= (T_0 - t'^{-1} (T_0))_{S''}$. En d\'efinitive, on obtient
\begin{equation*}\bso^{\mskip 2mu T,Q_0}(Z,X;\lambda,\mu;\nu)=
\sum_{t\in \bfW^{Q'}(\ag_S,Q_0)}\sum_{s\in \bfW^Q(\theta_0(\ag_S),t(\ag_S))}
\bso^{\mskip 2mu T,Q_0}_{ s,t}(Z,X;\lambda,\mu;\nu)\end{equation*}
avec
\begin{align*}
\lefteqn{\bso^{\mskip 2mu T,Q_0}_{ s,t}(Z,X;\lambda,\mu;\nu)=}\\
&&\sum_{S''\in\ESP^{Q_0}(\ttM)}
\varepsilon_{S''}^{\mskip 2mu Q_0, [{T-X}]_{S''}}(\ZTX ; s\lambda - t\mu)
e^{\langle s\lambda - t\mu, \Y_{S''}\rangle}\\
&&\hspace{2cm}\times\big\langle\bfM_{S''\vert \ttS}(s\lambda)
\bfM(s,\lambda)\Phi, \bfM_{S''\vert \ttS}(t\mu)\bfM(t,\mu)\bfD_{-\nu}\Psi\big\rangle_{S''}.
\end{align*}
On doit int\'egrer en $\mu$ la fonction
\begin{equation*}\bso^{\mskip 2mu T,Q_0}(Z,X;\mu;\nu)\bydef \bso^{\mskip 2mu T,Q_0}(Z,X;\theta_0(\mu),\mu+\nu;\nu)\end{equation*}
puis sommer en $Z$ et $X$. Chaque expression $\bso^{\mskip 2mu T,Q_0}_{ s,t}(Z,X;\lambda,\mu;\nu)$ 
est encore une fonction lisse de $\lambda$ et $\mu$, et l'on pose 
\begin{equation*}\bso^{\mskip 2mu T,Q_0}_{ s,t}(Z,X;\mu;\nu)\bydef \bso^{\mskip 2mu T,Q_0}_{ s,t}(Z,X;\theta_0(\mu),\mu+\nu;\nu)\ptf\end{equation*}
L'expression $\bso^{\mskip 2mu T,Q_0}_{ s,t}(Z,X;\mu;\nu)$ ne d\'epend que de l'image de $Z$ 
dans $\ESC_Q^\tG = \ESB_\tG \backslash \ESA_Q$. 

Ces manipulations permettent d'\'ecrire, au moins formellement, $A^T_\mathrm{pure}$
comme une somme index\'ee par des \'el\'ements $s$ et $t$ dans des ensembles de Weyl:
\begin{equation*}A^T_\mathrm{pure}=\mskip -5mu\sum_{t\in \bfW^{Q'}(a_S, Q_0)}
\sum_{s\in \bfW^Q(\theta_0(\ag_S), t(\ag_S))}\mskip -5mu A^T_{s,t}
\qquad\hbox{o}\qquad
A^T_{s,t}= \vert \wh\bsbbc_S \vert^{-1}\mskip -2mu \mskip -2mu \sum_{\nu \in \bsESEsigma}A^T_{s,t,\nu}\end{equation*}
avec 
\begin{equation*}A^T_{s,t,\nu} =\sum_{Z\in\ESC_Q^\tG}\wt{\sigma}_Q^R(Z{-T})\bigg(
\sum_{X\in\mathcal{C}_F(Q,Q_0; T)}\Kappa{\eta T}(X_\Qo)\int_{\bsmu_S}
\bso^{\mskip 2mu T,Q_0}_{ s, t} (Z, X;\mu;\nu)\vtt(\mu)\dd \mu\bigg)\mskip -2mu .\end{equation*}
On peut montrer, en reprenant des arguments de \cite[13.4.1]{LW},
d\'ej\`a utilis\'es pour la preuve de \ref{purea}, que l'expression converge (dans l'ordre indiqu\'e).
Une autre preuve de la convergence de la s\'erie en $Z$ r\'esultera de \ref{major}~(A).

Nous aurons besoin d'une variante 
de l'expression $\bso^{\mskip 2mu T,Q_0}_{ s, t} (Z, X;\lambda,\mu;\nu)$
o
la sommation porte sur $\ESP^Q(\ttM)$, sans variable $X$ et o le sous-groupe parabolique $Q_0$ 
est remplac\'e par $Q$. On pose pour $Z\in\ESA_Q$:
\begin{align*}
\lefteqn{\bso_{s,t}^{\mskip 2mu T,Q}(Z;\lambda,\mu;\nu) =\sum_{S''\in\ESP^Q(\ttM)}
\varepsilon_{S''}^{\mskip 2mu Q,\brT{S''}}(Z; s\lambda - t\mu)e^{\langle s\lambda - t\mu,\Y_{S''}\rangle} }
\\ &&\hspace{2.5cm}\times\big\langle 
\bfM_{S''\vert \ttS}(s\lambda)\bfM(s,\lambda)\Phi,
\bfM_{S''\vert \ttS}(t\mu)\bfM(t,\mu)\bfD_{-\nu}\Psi\big\rangle_{S''}\ptf
\end{align*}
C'est une fonction lisse de $\lambda$ et $\mu$. On pose \begin{equation*}
\bso_{s,t}^{\mskip 2mu T,Q}(Z;\mu;\nu) =\bso_{s,t}^{\mskip 2mu T,Q}(Z;\theta_0(\mu),\mu+\nu;\nu)\ptf
\end{equation*}et
\begin{equation*}\brabso_{s,t}^{\mskip 2mu T,Q}(Z;\mu)= \vert \wh\bsbbc_S \vert^{-1}\sum_{\nu \in \bsESEsigma}
\bso_{s,t}^{\mskip 2mu T,Q}(Z;\mu;\nu)\ptf\end{equation*}
Les expressions $\bso_{s,t}^{\mskip 2mu T,Q}(Z;\mu;\nu) $ ne d\'ependent que de l'image de $Z$ dans 
$\ESC_Q^\tG = \ESB_\tG \backslash \ESA_Q$. 

% proposition
\begin{proposition}\label{crux}
On pose:
\begin{equation*}\bsA^T_{s,t}=\sum_{Z\in\ESC_Q^\tG}\wt{\sigma}_Q^R(Z-T)\int_{\bsmu_S}
\brabso^{\mskip 2mu T,Q}_{ s, t} (Z;\mu)\vtt(\mu)\dd \mu\ptf\end{equation*}
\begin{enumerate}[(i)]
\item L'expression $\bsA^T_{s,t}$ est convergente 
dans l'ordre indiqu\'e.
\item Pour tout r\'eel $r$, on a une majoration:
$\vert A^T_{s,t} -\bsA^T_{s,t}\vert \ll \bsd_0(T)^{-r}$.
\end{enumerate}
\end{proposition}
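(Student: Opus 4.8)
Le plan est de comparer terme à terme les deux expressions $A^T_{s,t}$ et $\bsA^T_{s,t}$, qui diffèrent essentiellement par trois points: le passage du sous-groupe parabolique $Q_0$ à $Q$ dans les fonctions $\varepsilon$ et dans les espaces $\ESP^{\bullet}(\ttM)$, la disparition de la variable de sommation $X$ (et donc du facteur $\Kappa{\eta T}(X_\Qo)$), et la modification correspondante des familles orthogonales intervenant dans les $\Y_{S''}$. La stratégie, qui suit celle de \cite[13.4.2, 13.5.1]{LW}, consiste à fixer $Z$, $s$, $t$, $\nu$ et $\mu$, puis à analyser la somme intérieure sur $X\in\mathcal{C}_F(Q,Q_0;T)$ pondérée par $\Kappa{\eta T}(X_\Qo)$. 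L'idée directrice est que cette somme sur le cône, tronquée par $\Kappa{\eta T}$, reconstitue asymptotiquement (à une erreur rapidement décroissante en $\bsd_0(T)$ près) le terme correspondant à $Q$ sans variable $X$.

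D'abord j'établirais l'assertion (i), la convergence de $\bsA^T_{s,t}$ dans l'ordre indiqué. La somme sur $Z\in\ESC_Q^\tG$ est contrôlée par la décroissance fournie par la fonction $\wt{\sigma}_Q^R(Z-T)$ combinée au comportement des fonctions $\varepsilon_{S''}^{\,Q,\brT_{S''}}(Z;\cdot)$; ici l'outil clef est \cite[2.13.1]{LW}, exactement comme dans la preuve de \ref{purea}. La lissité de $\brabso^{\,T,Q}_{s,t}(Z;\mu)$ en $\mu$ (qui résulte de \ref{SETR} et de la construction de \ref{redecal}) et la compacité de $\bsmu_S$ assurent que l'intégrale en $\mu$ est bien définie et uniformément bornée, de sorte que seule la sommation en $Z$ requiert un argument, déjà disponible. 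La convergence dans l'ordre indiqué (intégrale en $\mu$ puis somme en $Z$) en découle.

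Pour l'assertion (ii), le cœur de l'argument réside dans l'évaluation de la différence entre la somme tronquée $\sum_{X\in\mathcal{C}_F(Q,Q_0;T)}\Kappa{\eta T}(X_\Qo)\,\bso^{\,T,Q_0}_{s,t}(Z,X;\mu;\nu)$ et le terme $\bso^{\,T,Q}_{s,t}(Z;\mu;\nu)$. Le point technique est que les fonctions $\varepsilon_{S''}^{\,Q_0,[T-X]_{S''}}(\ZTX;\cdot)$ se décomposent suivant $\ag_0^{Q_0}=\ag_0^{Q}\oplus\ag_{Q}^{Q_0}$, et que la sommation sur $X$ dans le cône $\mathcal{C}(Q,Q_0)$ correspond précisément à restaurer la composante $\ag_{Q}^{Q_0}$ manquante; la relation $T\Lbra H^Q\Rbra=(T-X)^{Q_0}$ établie en \ref{d�composition de A_pure} est ici essentielle. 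La troncature par $\Kappa{\eta T}$ introduit une erreur provenant des $X$ éloignés, mais comme $T\Lbra H^Q\Rbra$ est \og plus régulier\fg{} que $T$ (cf. la majoration $\bsd_{P_0\cap L_0}^{L_0}(T\Lbra H^Q\Rbra)\geq\bsd_0(T)$ de \ref{[LW,12.4.1]}), cette erreur décroît plus vite que toute puissance de $\bsd_0(T)^{-1}$.

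La principale difficulté que j'anticipe est le contrôle uniforme en $Z$ (et $\nu$) de cette erreur de troncature: il faut que la majoration $\ll\bsd_0(T)^{-r}$ soit uniforme sur toute la somme en $Z\in\ESC_Q^\tG$, faute de quoi le terme d'erreur sommé pourrait diverger ou croître en $T$. C'est précisément le mécanisme où, dans \cite{LW}, la régularité accrue de $T\Lbra H^Q\Rbra$ joue contre le volume du cône de troncature; il faudra reproduire cet équilibre en tenant compte des groupes finis $\bsbbc_\bullet$ et du fait que l'on somme sur des réseaux plutôt que d'intégrer. Une fois l'estimation uniforme obtenue, on combine la différence $A^T_{s,t}-\bsA^T_{s,t}$ avec la décroissance en $Z$ fournie par \cite[2.13.1]{LW} pour conclure que l'erreur globale reste $\ll\bsd_0(T)^{-r}$ pour tout $r$, ce qui donne (ii).
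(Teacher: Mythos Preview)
Your plan has the right endpoint --- the sum over $X\in\mathcal{C}_F(Q,Q_0;T)$ should indeed collapse into the $Q$-expression --- but the proposed mechanism is not the one that works, and two essential ingredients are missing.

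First, the direct decomposition you sketch is not well-posed. Since $Q_0\subset Q$, one has $\ag_0^{Q_0}\subset\ag_0^Q$, so the splitting $\ag_0^{Q_0}=\ag_0^{Q}\oplus\ag_{Q}^{Q_0}$ you invoke is the wrong way round; more importantly, fixing $\mu$ and trying to factor each $\varepsilon_{S''}^{\,Q_0,[T-X]_{S''}}(\ZTX;\cdot)$ into a $Q$-part and an $X$-part does not produce the desired $\varepsilon_{S''}^{\,Q,\brT_{S''}}(Z;\cdot)$. The paper's route is different: one first decomposes the $\mu$-integral via the map $s\theta_0-t:\bsmu_S\to\bsmu_\ttS$, writing $\mu=\mu(\chi,\Lambda)$ with $\chi$ in the kernel and $\Lambda$ in the image, and then applies Fourier inversion in $\Lambda$. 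This replaces $\bso^{\,T,Q_0}_{s,t}(Z,X;\mu;\nu)$ by sums over a dual lattice $V\in\ESD_\ttS$, and it is only in this Fourier-dual picture that the $X$-sum becomes tractable. The key combinatorial step is then the bijection
\[
\mathcal{C}(Q,Q_0)\times\mathcal{C}^{Q_0}(S'')\longrightarrow\mathcal{C}^Q(S''),\qquad (X,V_1)\mapsto [X]_{S''}^Q+V_1,
\]
which converts the double sum over $X$ and over $V_1\in\mathcal{C}^{Q_0}_F(-H^{T-X}_{Z,S''};S'')$ into a single sum over $V_2\in\mathcal{C}^Q_F(-H^T_{Z,S''};S'')$.

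Second, you do not address the extra parabolic subgroups. The target expression $\bso^{\,T,Q}_{s,t}$ sums over $S''\in\ESP^Q(\ttM)$, which strictly contains $\ESP^{Q_0}(\ttM)$. After the cone manipulation above one obtains an identity $E^T_1=E^T_2-E^T_3$ where $E^T_3$ collects precisely the contributions of $S''\in\ESP^Q(\ttM)\smallsetminus\ESP^{Q_0}(\ttM)$; showing that $E^T_3\ll\bsd_0(T)^{-r}$ (lemme~\ref{remajo}) is a separate estimate requiring the kernel of $q_Q$ and the majoration \cite[13.7\,(4)]{LW}. Your proposal has no analogue of this step. Similarly, for part~(i) the appeal to \cite[2.13.1]{LW} alone does not give the decay in $Z$ uniformly in $\mu$: the paper obtains convergence of $\bsA^T_{s,t}$ through the same Fourier picture (it is the expression (D) of lemme~\ref{major}).
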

Cette proposition est l'analogue de \cite[13.5.1]{LW}, l'un des r\'esultats 
les plus fins du livre. Sa d\'emonstration occupera les deux sections suivantes.

%%%%%%%%%%%%%%

 \section{Premi\`ere \'etape}
\label{preuve de la proposition}

Consid\'erons l'application
\begin{equation*}s\theta_0 - t: \bsmu_S \rightarrow \bsmu_\ttS\ptf \end{equation*}
On note $\bs\kopa_S$ son noyau et $\bs\eta_\ttS$ son image, et l'on pose
\begin{equation*}\bs\eta_S = \bs\kopa_S\backslash \bsmu_S\vgq \bs\kopa_\ttS= \bs\eta_\ttS\backslash \bsmu_\ttS\ptf\end{equation*}
L'application ci-dessus se restreint en un isomorphisme
$\iota:\bs\eta_S \rightarrow \bs\eta_\ttS$.
La suite exacte courte de groupes ab\'eliens compacts
\begin{equation*}0 \rightarrow \bs{\eta}_\ttS \rightarrow \bsmu_\ttS \rightarrow \bs{\kopa}_\ttS \rightarrow 0\end{equation*}
donne par dualit\'e de Pontryagin une suite exacte courte de $\ZM$-modules libres de type fini
\begin{equation*}0 \rightarrow \wh{\bs{\kopa}}_\ttS \rightarrow \ESA_\ttS \rightarrow \wh{\bs{\eta}}_\ttS \rightarrow 0\ptf\end{equation*}
En relevant dans $\ESA_\ttS$ une $\ZM$-base de $\wh{\bs{\eta}}_\ttS$, on d\'efinit un morphisme section 
du morphisme 
$\ESA_\ttS \rightarrow \wh{\bs{\eta}}_\ttS$ ce qui fournit un isomorphisme entre $\ESA_\ttS$ 
et le produit $\wh{\bs{\kopa}}_\ttS\times \wh{\bs{\eta}}_\ttS$. 
Dualement cela permet d'identifier $\bsmu_\ttS$ au produit 
$\bs{\kopa}_\ttS\times \bs{\eta}_\ttS$ et donc d'\'ecrire $\Lambda \in \bsmu_\ttS$ sous la forme
\begin{equation*}
\Lambda= \Lambda_{\bs{\kopa}}+\Lambda_{\bs{\eta}}\in \bs{\kopa}_{_tS}\times 
{\bs{\eta}}_\ttS
\end{equation*}
via cette identification (non canonique), et on identifie de mme $\bsmu_S$ 
au produit $\bs{\kopa}_S\times \bs{\eta}_S $.
On d\'efinit un \'el\'ement de $\bsmu_S$ en posant, pour $(\kopa,\Lambda)\in\bs\kopa_S \times \bsmu_\ttS$,
\begin{equation*}\mu(\kopa,\Lambda)= \kopa+\iota^{-1}(\Lambda_{\bs{\eta}})\ptf\end{equation*}
L'application 
\begin{equation*}\bs\kopa_S \times \bsmu_\ttS \rightarrow \bsmu_S \times \bs\kopa_\ttS\vg \quad (\kopa,\Lambda)
 \mapsto (\mu(\kopa,\Lambda),\Lambda_{\bs\kopa})\end{equation*}
est bijective et on a la relation
\begin{equation*}\theta_0 \mu(\kopa,\Lambda)= s^{-1}(t\mu(\kopa,\Lambda)+\Lambda_{\bs{\eta}})\ptf\leqno{(1)}\end{equation*}
Posons
\begin{equation*}\lambda(\kopa,\Lambda)= s^{-1}(t\mu(\kopa,\Lambda)+ \Lambda)\ptf\end{equation*}
Fixons $\nu\in \bsmu_S$. Rappelons que $\ttM= M_\ttS$ et $L=M_Q$. 
Pour $\kopa \in \bs{\kopa}_S$, $\Lambda \in \bsmu_\ttS$ et $S''\in \ESP^Q(\ttM)$, posons 
\begin{align*}
\lefteqn{
\bsc(\kopa; \Lambda,S'';\nu)= \vtt(\mu(\chi,\Lambda))\big\langle \bfM_{S''\vert\ttS}(s\lambda(\kopa,\Lambda))
\bfM(s,\lambda(\kopa,\Lambda))\Phi, }\\
&&\hspace{4cm} 
\bfM_{S''\vert\ttS}(t\mu(\kopa,\Lambda)+\nu) \bfM(t,\mu(\kopa,\Lambda)+\nu)
\bfD_{-\nu}\Psi \big\rangle_{S''}\ptf
\end{align*}
Les expressions $\bsc(\kopa; \Lambda,S'';\nu)$, consid\'er\'ees comme des fonctions de $\Lambda$ 
d\'ependant des param\`etres $\kopa$ et $\nu$, 
d\'efinissent une $(Q,\ttM)$-famille p\'eriodique $\bsc(\kopa;\nu)$. On d\'efinit aussi une $(Q,\ttM)$-famille p\'eriodique 
$\bsd(\kopa;\nu)= \bsc(\mathfrak{Y},\kopa;\nu)$ par
\begin{equation*}\bsd(\kopa ;\Lambda, S'';\nu)= e^{\langle \Lambda,Y_{S''}\rangle} \bsc(\kopa;\Lambda,S'';\nu)\ptf\end{equation*}
En se limitant aux $S''\in \ESP^{Q_0}(\ttM)$, on obtient des $(Q_0,\ttM)$-familles p\'eriodiques. Pour 
$Z\in \ESA_Q$, resp. $H\in \ESA_\Qo$, et $X'\in \ag_{0,\QM}$, 
on leur associe les fonctions
\begin{equation*}\bsd_{\ttM,F}^{\mskip 2mu Q,X'}(Z,\kopa; \Lambda; \nu)= 
\sum_{S''\in \ESP^{Q}(\ttM)}\varepsilon_{S''}^{Q,[X']_{S''}}(Z;\Lambda)\bsd(\kopa;\Lambda,S'';\nu)\end{equation*}
et 
\begin{equation*}\bsd_{\ttM,F}^{\mskip 2mu Q_0,X'}(H,\kopa; \Lambda; \nu)= \sum_{S''\in \ESP^{Q_0}(\ttM)}
\varepsilon_{S''}^{Q_0,[X']_{S''}}(H;\Lambda)\bsd(\kopa;\Lambda,S'';\nu)\ptf\end{equation*}
Ces fonctions sont lisses en $\kopa$ et $\Lambda$.

% lemme
\begin{lemma}\label{fourbso}
Soient $Z\in \ESC_Q^\tG$, $\kopa \in \bs{\kopa}_S$, $\Lambda \in \bs{\eta}_\ttS$ et
$X\in \mathcal{C}_F^+(Q,Q_0;T)$. On a les \'egalit\'es suivantes:
\begin{enumerate}[(i)]
\item $\bso^{\mskip 2mu T,Q_0}_{ s,t}(Z,X;\mu(\kopa,\Lambda);\nu)\vtt(\mu(\chi,\Lambda))=
\bsd_{\ttM,F}^{\mskip 2mu Q_0,{T-X}}(\ZTX ,\kopa; \Lambda; \nu)$
\item 
$\bso^{\mskip 2mu T,Q}_{ s,t}(Z;\mu(\kopa,\Lambda);\nu)\vtt(\mu(\chi,\Lambda))=
\bsd_{\ttM,F}^{\mskip 2mu Q,T}(Z,\kopa; \Lambda; \nu)\ptf$
\end{enumerate}
\end{lemma}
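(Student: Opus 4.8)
La preuve sera un calcul direct effectu\'e terme \`a terme sur la somme en $S''$, les deux assertions $(i)$ et $(ii)$ \'etant parall\`eles : on passe de l'une \`a l'autre en rempla\c{c}ant $Q_0$, $\ZTX$ et l'exposant $T-X$ par $Q$, $Z$ et $T$, et en supprimant la variable $X$, de sorte qu'il suffira de d\'etailler $(i)$. On reporte dans la d\'efinition de $\bso^{\,T,Q_0}_{s,t}(Z,X;\mu;\nu)$ la sp\'ecialisation $\mu=\mu(\kopa,\Lambda)$, qui revient \`a poser $\lambda=\theta_0(\mu(\kopa,\Lambda))$ et \`a remplacer la seconde variable spectrale par $\mu(\kopa,\Lambda)+\nu$, et l'on compare facteur par facteur au terme correspondant de $\bsd_{\ttM,F}^{\,Q_0,T-X}(\ZTX,\kopa;\Lambda;\nu)$.

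L'ingr\'edient central sera la relation $(1)$ \'etablie ci-dessus : comme on suppose $\Lambda\in\bs\eta_{\ttS}$, on a $\Lambda_{\bs\eta}=\Lambda$, d'o\`u $\lambda(\kopa,\Lambda)=\theta_0(\mu(\kopa,\Lambda))$ et $s\lambda(\kopa,\Lambda)=t\mu(\kopa,\Lambda)+\Lambda$. Ces deux \'egalit\'es identifient d'embl\'ee le premier argument $\bfM_{S''\vert\ttS}(s\lambda)\bfM(s,\lambda)\Phi$ de chaque produit scalaire \`a son homologue dans $\bsc(\kopa;\Lambda,S'';\nu)$, et montrent que l'argument $s\lambda-t\mu$ des fonctions $\varepsilon_{S''}^{\,Q_0}$ et de l'exponentielle $e^{\langle s\lambda-t\mu,\Y_{S''}\rangle}$ vaut $\Lambda-t\nu$. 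Le facteur scalaire $\vtt(\mu(\kopa,\Lambda))$ \'etant commun aux deux membres, il ne restera plus qu'un d\'ecalage par $\nu$ \`a r\'esorber.

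Ce d\'ecalage se traite au moyen de l'\'equation fonctionnelle \ref{eqf} pour les op\'erateurs $\bfD_\nu$ de \ref{Dnu}. En faisant migrer $\bfD_{-\nu}$ vers la gauche dans le second argument, on transforme $\bfM(t,\mu(\kopa,\Lambda)+\nu)\bfD_{-\nu}$ en $\bfD_{-t\nu}\bfM(t,\mu(\kopa,\Lambda))$, puis $\bfM_{S''\vert\ttS}(\cdot)\bfD_{-t\nu}$ en $\bfD_{-t\nu}\bfM_{S''\vert\ttS}(\cdot-t\nu)$ ; on d\'eplace enfin le $\bfD_{-t\nu}$ r\'esiduel sur le premier argument du produit scalaire $\langle\cdot,\cdot\rangle_{S''}$ gr\^ace \`a la relation $\langle\varphi,\bfD_{-\tau}\psi\rangle=\langle\bfD_{\tau}\varphi,\psi\rangle$ valable pour $\tau$ imaginaire pur. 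On ram\`ene ainsi les deux membres \`a une m\^eme forme, modulo des translations par $\pm t\nu$ des param\`etres spectraux.

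La principale difficult\'e sera de v\'erifier que ces translations r\'esiduelles sont inoffensives. Il faudra pour cela observer qu'au niveau de $M_{Q_0}$ les op\'erateurs d'entrelacement $\bfM_{S''\vert\ttS}$ et les fonctions $\varepsilon_{S''}^{\,Q_0}$ ne d\'ependent de leur param\`etre que par sa composante dans $\ag_{\ttM}^{Q_0}$, tandis que la diff\'erence entre $t\nu$ et $\nu$, ainsi que la correction port\'ee par l'exponentielle et par le $\bfD_{t\nu}$ transf\'er\'e, se situe dans la direction \og centrale\fg\ $\ag_{Q_0}$ ; c'est ici qu'interviennent de mani\`ere essentielle la condition $t\in\bfW^{Q'}(\ag_S,Q_0)$ et l'appartenance $\nu\in\bsESEsigma$, qui contr\^ole la restriction de $\nu$ au r\'eseau pertinent. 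Une fois ce point acquis, le recollement des trois facteurs, \`a savoir la fonction $\varepsilon_{S''}^{\,Q_0,[T-X]_{S''}}(\ZTX;\Lambda)$, l'exponentielle $e^{\langle\Lambda,\Y_{S''}\rangle}$ qui assure le passage de $\bsc$ \`a $\bsd$, et le crochet $\langle\cdot,\cdot\rangle_{S''}$, reconstitue terme \`a terme la d\'efinition de $\bsd_{\ttM,F}^{\,Q_0,T-X}(\ZTX,\kopa;\Lambda;\nu)$, ce qui donne $(i)$, puis $(ii)$ par le m\^eme calcul.
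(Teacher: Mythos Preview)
Dans le papier, la preuve tient en trois lignes : la famille $\bsc(\kopa;\nu)$ (et donc $\bsd(\kopa;\nu)=\bsc(\YY,\kopa;\nu)$) a \'et\'e d\'efinie pr\'ecis\'ement pour que l'on ait, \emph{pour $\Lambda$ g\'en\'eral},
\[
\bsd_{\ttM,F}^{\,Q_0,T-X}(\ZTX,\kopa;\Lambda;\nu)
=\vtt(\mu(\kopa,\Lambda))\cdot
\bso^{\,T,Q_0}_{s,t}\!\bigl(Z,X;\lambda(\kopa,\Lambda),\mu(\kopa,\Lambda)+\nu;\nu\bigr),
\]
ce qui se v\'erifie terme \`a terme par simple substitution, sans aucune manipulation d'op\'erateurs $\bfD_\nu$. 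Le seul contenu du lemme est alors la relation~(1), qui donne $\lambda(\kopa,\Lambda)=\theta_0\mu(\kopa,\Lambda)+s^{-1}\Lambda_{\bs\kopa}$ ; pour $\Lambda\in\bs\eta_\ttS$ on a $\Lambda_{\bs\kopa}=0$, d'o\`u $\lambda(\kopa,\Lambda)=\theta_0\mu(\kopa,\Lambda)$ et l'\'enonc\'e.

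Votre proposition part des m\^emes ingr\'edients mais introduit une difficult\'e qui n'existe pas dans l'approche du papier : vous calculez $s\lambda-t(\mu+\nu)=\Lambda-t\nu$ et cherchez ensuite \`a r\'esorber ce d\'ecalage par des \'equations fonctionnelles de $\bfD_\nu$ et un argument sur la \og direction centrale\fg. Or l'\'etape finale est mal fond\'ee : l'\'el\'ement $\nu$ vit dans $\bs\mu_M\subset i\ag_M^*$ tandis que $t\nu$ vit dans $i\ag_{\ttM}^*$, de sorte que l'expression \og $t\nu-\nu$\fg\ n'a pas de sens sans identification suppl\'ementaire ; et m\^eme en identifiant ces espaces via $t$, rien n'assure que $(t-1)\nu$ tombe dans $\ag_{Q_0}^*$ pour $\nu\in\bs\mu_M$ quelconque --- la condition $t\in\bfW^{Q'}(\ag_S,Q_0)$ ne l'implique pas, et l'hypoth\`ese $\nu\in\bsESEsigma$ (qui n'est d'ailleurs pas dans l'\'enonc\'e du lemme) n'aide pas non plus. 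Le d\'ecalage que vous observez provient en r\'ealit\'e d'une lecture trop litt\'erale de la d\'efinition de $\bsc$ dans le papier (o\`u le param\`etre du second op\'erateur d'entrelacement vit naturellement dans $\ag_{\ttM}^*$) ; la preuve voulue est la substitution directe ci-dessus, sans passage par $\bfD_\nu$.
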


% preuve du lemme
\begin{proof} Rappelons que, par d\'efinition, 
\begin{align*}
\lefteqn{\bso^{\mskip 2mu T,Q_0}_{ s,t}(Z,X;\lambda,\mu;\nu)
=\sum_{S''\in\ESP^{Q_0}(\ttM)}
\varepsilon_{S''}^{\mskip 2mu Q_0, [{T-X}]_{S''}}(\ZTX ; s\lambda - t\mu)
e^{\langle\Y_{S''},s\lambda - t\mu\rangle}
}\\&&\hspace{3,5cm}\times\big\langle\bfM_{S''\vert \ttS}(s\lambda)
\bfM(s,\lambda)\Phi, \bfM_{S''\vert \ttS}(t\mu)\bfM(t,\mu)\bfD_{-\nu}\Psi\big\rangle_{S''}\ptf
\end{align*}
Pour $Z\in \ESA_\Qo$ et $\Lambda \in \bsmu_S$ en position g\'en\'erale, on a
\begin{equation*}\bsd_{\ttM,F}^{\mskip 2mu Q,{T-X}}(\ZTX ,\kopa; \Lambda; \nu)
= \bso^{\mskip 2mu T,Q_0}_{ s,t}(Z,X;\lambda(\kopa,\Lambda),\mu(\kopa,\Lambda)+\nu;\nu)\ptf\end{equation*}
Mais, d'apr\`es la relation (1) on a
\begin{equation*}\lambda(\kopa,\Lambda)=\theta_0\mu(\kopa,\Lambda)+s^{-1}(\Lambda_{\bs{\kopa}})\ptf\end{equation*}
On obtient (i) pour $\Lambda_{\bs{\kopa}}=0$. 
La preuve de (ii) est similaire.
\end{proof}

On munit $\bs{\kopa}_S$ et $\bs{\eta}_\ttS$ des mesures de Haar telles que
$\mathrm{vol}(\bs{\kopa}_S)=1= \mathrm{vol}(\bs{\eta}_\ttS)$.
En posant, comme ci-dessus, $\ZTX=Z+({T-X})_\Qo^Q$,
l'expression $A_{s,t,\nu}^T$ se r\'ecrit 
\begin{align*}
\lefteqn{
A^T_{s,t,\nu} =\sum_{Z\in\ESC_Q^\tG}\wt{\sigma}_Q^R(Z{-T})
\sum_{X\in\mathcal{C}_F(Q,Q_0; T)}\Kappa{\eta T}(X_\Qo)}\\
&& \hspace{2.5cm} \times \int_{\bs{\kopa}_S} \bigg(
\int_{\bs{\eta}_\ttS}\bsd_{\ttM,F}^{\mskip 2mu Q_0,{T-X}}(\ZTX ,\kopa; \Lambda; \nu) \dd\Lambda
\bigg) \dd \kopa \ptf
\end{align*}
Pour $Z\in \ESA_Q$, $S''\in \ESP^{Q}(\ttS)$, $V\in\ESA_\ttS$ et $X'\in \ag_{0,\QM}$, on pose
\begin{equation*}\wh{\bsd}(\kopa; V,S'';\nu)= \int_{\bsmu_\ttS}\bsd(\kopa; \Lambda,S''; \nu) 
e^{- \langle \Lambda,V\rangle} \dd \Lambda\end{equation*}
et
\begin{equation*}\wh{\bsd}_{\ttM,F}^{\;Q,X'}(Z,\kopa; V; \nu)= 
\int_{\bsmu_\ttS}\bsd_{\ttM,F}^{\mskip 2mu Q,X'}(Z,\kopa; \Lambda; \nu) 
e^{-\langle \Lambda,V\rangle} \dd \Lambda\ptf\end{equation*}
Pour $H\in \ESA_\Qo$, on d\'efinit de mani\`ere analogue
$ \wh{\bsd}_{\ttM,F}^{\;Q_0,X'}(H,\kopa; V; \nu)$.
Ces fonctions sont \`a d\'ecroissance rapide en $V$. Notons 
\begin{equation*}\ESD_\ttS\bydef \bs{\eta}_\ttS^\vee \subset \ESA_\ttS\end{equation*}
 l'annulateur de $\bs{\eta}_\ttS\;(\subset \bsmu_\ttS)$ dans $\ESA_\ttS$. 
\begin{lemma}\label{[LW,13.6.2]}On a 
\begin{equation*}A^T_{s,t,\nu} =\mskip -5mu\sum_{Z\in\ESC_Q^\tG} \wt{\sigma}_Q^R(Z{-T})\mskip -15mu
\sum_{X\in\mathcal{C}_F(Q,Q_0; T)}\mskip -20mu \Kappa{\eta T}(X_\Qo)
 \int_{\bs{\kopa}_S}\mskip -1mu\sum_{V\in \ESD_\ttS} \wh{\bsd}_{\ttM,F}^{\;Q_0,{T-X}}(\ZTX ,\kopa; V; \nu)\dd\kopa \ptf\end{equation*}
\end{lemma}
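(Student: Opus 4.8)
Le plan est de d\'eduire cette identit\'e de l'expression de $A^T_{s,t,\nu}$ obtenue juste avant l'\'enonc\'e,
$$A^T_{s,t,\nu} =\sum_{Z\in\ESC_Q^\tG}\wt{\sigma}_Q^R(Z-T)\sum_{X\in\mathcal{C}_F(Q,Q_0; T)}\Kappa{\eta T}(X_\Qo)\int_{\bs{\kopa}_S}\left(\int_{\bs{\eta}_\ttS}\bsd_{\ttM,F}^{\,Q_0,{T-X}}(\ZTX,\kopa;\Lambda;\nu)\,\dd\Lambda\right)\dd\kopa\vg$$
en transformant la seule int\'egrale int\'erieure sur le sous-tore $\bs{\eta}_\ttS$ de $\bsmu_\ttS$ par inversion de Fourier. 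Les sommes en $Z$ et $X$ et l'int\'egrale sur le tore compact $\bs{\kopa}_S$ restent inchang\'ees, et la convergence de l'ensemble a d\'ej\`a \'et\'e discut\'ee; il n'y a donc qu'\`a \'etablir l'\'egalit\'e de l'int\'egrale int\'erieure avec $\sum_{V\in \ESD_\ttS}\wh{\bsd}_{\ttM,F}^{\;Q_0,{T-X}}(\ZTX,\kopa;V;\nu)$, \`a $\kopa$, $Z$ et $X$ fix\'es.

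On commencerait par observer que, pour $H\in\ESA_\Qo$ et $X'\in\ag_{0,\QM}$ fix\'es, la fonction $\Lambda\mapsto\bsd_{\ttM,F}^{\,Q_0,X'}(H,\kopa;\Lambda;\nu)$ est lisse sur le tore compact $\bsmu_\ttS$ (cf. la remarque suivant sa d\'efinition, qui repose sur \ref{lissb}). Sa transform\'ee de Fourier $\wh{\bsd}_{\ttM,F}^{\;Q_0,X'}(H,\kopa;V;\nu)$, index\'ee par $V\in\ESA_\ttS$, est alors \`a d\'ecroissance rapide, et la formule d'inversion de Fourier sur le tore (cf. \ref{dualit�}, avec $\vol(\bsmu_\ttS)=1$) donne
$$\bsd_{\ttM,F}^{\,Q_0,X'}(H,\kopa;\Lambda;\nu)=\sum_{V\in\ESA_\ttS}\wh{\bsd}_{\ttM,F}^{\;Q_0,X'}(H,\kopa;V;\nu)\,e^{\langle\Lambda,V\rangle}\vgq\Lambda\in\bsmu_\ttS\ptf$$

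Ensuite on int\'egrerait cette \'egalit\'e sur le sous-tore $\bs{\eta}_\ttS$, muni de la mesure de Haar telle que $\vol(\bs{\eta}_\ttS)=1$. La d\'ecroissance rapide des coefficients $\wh{\bsd}$ en $V$ autorise \`a permuter la somme et l'int\'egrale, et l'orthogonalit\'e des caract\`eres fournit
$$\int_{\bs{\eta}_\ttS}e^{\langle\Lambda,V\rangle}\,\dd\Lambda=\left\{\begin{array}{ll}1 &\hbox{si $V\in\bs{\eta}_\ttS^\vee$}\\ 0 &\hbox{sinon,}\end{array}\right.$$
c'est-\`a-dire $1$ exactement lorsque le caract\`ere $\Lambda\mapsto e^{\langle\Lambda,V\rangle}$ est trivial sur $\bs{\eta}_\ttS$. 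Par d\'efinition de $\ESD_\ttS=\bs{\eta}_\ttS^\vee$ comme annulateur de $\bs{\eta}_\ttS$ dans $\ESA_\ttS$, on obtiendrait
$$\int_{\bs{\eta}_\ttS}\bsd_{\ttM,F}^{\,Q_0,{T-X}}(\ZTX,\kopa;\Lambda;\nu)\,\dd\Lambda=\sum_{V\in\ESD_\ttS}\wh{\bsd}_{\ttM,F}^{\;Q_0,{T-X}}(\ZTX,\kopa;V;\nu)\ptf$$

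Il resterait alors \`a reporter ce dernier membre dans l'expression de $A^T_{s,t,\nu}$ rappel\'ee ci-dessus, ce qui donne la formule annonc\'ee. Le point v\'eritablement d\'elicat n'est pas la convergence --- la somme en $V$ converge absolument par d\'ecroissance rapide, et les sommes ext\'erieures sont inchang\'ees --- mais le contr\^ole exact de l'indexation: il faut s'assurer que c'est bien l'annulateur $\ESD_\ttS$ du \emph{sous}-tore d'int\'egration $\bs{\eta}_\ttS$, et non un autre r\'eseau, qui survit \`a l'orthogonalit\'e des caract\`eres. C'est pr\'ecis\'ement ce qu'assurent l'identification $\ESD_\ttS=\bs{\eta}_\ttS^\vee$ et la normalisation $\vol(\bs{\eta}_\ttS)=1$ fix\'ee plus haut.
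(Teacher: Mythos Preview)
Your proof is correct and follows exactly the same approach as the paper's: the paper's own proof consists of the single observation that, by Fourier inversion, $\int_{\bs{\eta}_\ttS}\bsd_{\ttM,F}^{\,Q_0,{T-X}}(\ZTX,\kopa;\Lambda;\nu)\,\dd\Lambda=\sum_{V\in\ESD_\ttS}\wh{\bsd}_{\ttM,F}^{\;Q_0,{T-X}}(\ZTX,\kopa;V;\nu)$, which is precisely the identity you spell out via the Fourier expansion on $\bsmu_\ttS$ followed by integration over the subtorus $\bs{\eta}_\ttS$ and the identification $\ESD_\ttS=\bs{\eta}_\ttS^\vee$.
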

\begin{proof}Il suffit d'observer que par inversion de Fourier on a
\begin{equation*}\int_{\bs{\eta}_\ttS}\bsd_{\ttM,F}^{\mskip 2mu Q_0,{T-X}}(\ZTX ,\kopa; \Lambda; \nu) \dd\Lambda
=\sum_{V\in \ESD_\ttS} \wh{\bsd}_{\ttM,F}^{\;Q_0,{T-X}}(\ZTX,\kopa; V; \nu)\ptf\end{equation*}
\end{proof}
Il r\'esultera du lemme \ref{major} (qui est l'analogue de \cite[13.6.3]{LW})
que cette expression est absolument convergente.

% lemme
\begin{lemma}\label{major}
Fixons un r\'eel $\rho>0$, et consid\'erons les cinq expressions:
\begin{equation*}\sum_{Z\in\ESC_Q^\tG}
\wt{\sigma}_Q^R(Z{-T})\hspace{-5pt}
\sum_{X\in\mathcal{C}_F(Q,Q_0; T)}\int_{\bs{\kopa}_S}
\sum_{V\in \ESD_\ttS} \big\vert \wh{\bsd}_{\ttM,F}^{\;Q_0,{T-X}}(\ZTX,\kopa; V; \nu) \big\vert \dd\kopa ;\leqno{(A)} \end{equation*}
\begin{equation*}\sum_{Z\in\ESC_Q^\tG}\mskip -2mu \mskip -2mu \wt{\sigma}_Q^R(Z{-T})\hspace{-17pt}
\sum_{X\in\mathcal{C}_F(Q,Q_0; T)}\hspace{-17pt}(1-\Kappa{\eta T}(X_\Qo))\int_{\bs{\kopa}_S}
\sum_{V\in \ESD_\ttS} \mskip -5mu\big\vert \wh{\bsd}_{\ttM,F}^{\;Q_0,{T-X}}(\ZTX,\kopa; V; \nu) \big\vert \dd\kopa ;\leqno{(B)} \end{equation*}
\begin{equation*}\sum_{Z\in\ESC_Q^\tG}\mskip -2mu \mskip -2mu \wt{\sigma}_Q^R(Z{-T})\hspace{-15pt}
\sum_{X\in\mathcal{C}_F(Q,Q_0; T)}\hspace{-2pt}\int_{\bs{\kopa}_S}
\sum_{V\in \ESD_\ttS}\hspace{-3pt}
(1- \Kappa{\rho T}(V)) 
\big\vert \wh{\bsd}_{\ttM,F}^{\;Q_0,{T-X}}(\ZTX,\kopa; V; \nu) \big\vert \dd\kopa ;\leqno{(C)}\end{equation*}
\begin{equation*}\sum_{Z\in\ESC_Q^\tG}\wt{\sigma}_Q^R(Z{-T})\int_{\bs{\kopa}_S}
\sum_{V\in \ESD_\ttS} \big\vert \wh{\bsd}_{\ttM,F}^{\;Q,T}(Z,\kopa; V; \nu) \big\vert \dd\kopa ;\leqno{(D)} \end{equation*}
\begin{equation*}\sum_{Z\in\ESC_Q^\tG}\wt{\sigma}_Q^R(Z{-T})\int_{\bs{\kopa}_S}
\sum_{V\in \ESD_\ttS} (1- \Kappa{\rho T}(V)) \big\vert \wh{\bsd}_{\ttM,F}^{\;Q,T}(Z,\kopa; V; \nu) \big\vert 
\dd\kopa \ptf\leqno{(E)} \end{equation*}
Alors on a:
\begin{enumerate}[(i)]
\item Les cinq expressions sont convergentes.
\item Pour tout r\'eel $r$, l'expression (B) est essentiellement major\'ee par $\bsd_0(T)^{-r}$.
\item Il existe une constante absolue $\rho_0>0$ telle que si $\rho>\rho_0$, alors pour tout 
r\'eel $r$, les expressions (C) et (E) sont essentiellement major\'ees 
par $\bsd_0(T)^{-r}$.
\end{enumerate}
\end{lemma}

Admettons provisoirement ce lemme prouv\'e au paragraphe suivant. 
D'apr\`es \ref{invfour}, pour chaque $\kopa\in \bs{\kopa}_S$ (le param\`etre $\nu$ \'etant fix\'e),
il existe une fonction \`a d\'ecroissance rapide 
\begin{equation*}\varphi=\varphi(\kopa;\nu): \UU \mapsto \varphi(\UU) =\varphi(\kopa;\UU;\nu)\end{equation*}
sur $\ESH_{Q,\ttM}$ telle que $\bsc(\kopa;\nu)= \bsc_{\varphi}$. 
Rappelons que $\bsc(\chi;\nu)$ est la $(Q,{_tM})$-famille p\'eriodique d\'efinie par
\begin{align*}
\lefteqn{
\bsc(\kopa; \Lambda,S'';\nu)= \vtt(\chi,\Lambda)) \langle \bfM_{S''\vert\ttS}(s\lambda(\kopa,\Lambda))
\bfM(s,\lambda(\kopa,\Lambda))\Phi, }\\
&&\hspace{3cm}
\bfM_{S''\vert\ttS}(t\mu(\kopa,\Lambda)+\nu) \bfM(t,\mu(\kopa,\Lambda)+\nu)
\bfD_{-\nu}\Psi \rangle_{S''}
\end{align*}
et que l'on a pos\'e
\begin{equation*}\bsd(\chi;\nu)= \bsc(\YY,\chi;\nu)\ptf\end{equation*}
Pour $H\in \ESA_\Qo$ et $X'\in \ag_{0,\QM}$, on a donc
\begin{equation*}\bsd_{\ttM,F}^{\mskip 2mu Q_0,X'}(H,\kopa; \Lambda; \nu)=
\sum_{\UU\in\ESH_{Q_0,\ttM}}\varphi(\UU-\YY)\gamma_{_tM,F}^{Q_0,X'}(H,\UU;\Lambda)\end{equation*}
avec
\begin{equation*}\gamma_{\ttM,F}^{Q_0,X'}(H,\UU;\Lambda)= \sum_{H'\in \ESA_\ttM^{Q_0}(H+U_\Qo)}
\Gamma_\ttM^{Q_0}(H',\UU(X'))e^{\langle\Lambda,H'\rangle}\ptf\end{equation*}
Pour $Z\in \ESA_Q$ et $X\in \ESC_F^+(Q,Q_0;T)$, on obtient
\begin{equation*}\bsd_{\ttM,F}^{\mskip 2mu Q_0,{T-X}}(\ZTX ,\kopa; \Lambda; \nu)=
\sum_{\UU\in\ESH_{Q_0,\ttM}}\varphi(\UU-\YY)\gamma_{\ttM,F}^{Q_0,{T-X}}(\ZTX ,\UU;\Lambda)\ptf\end{equation*}
On a aussi
\begin{equation*}\bsd_{\ttM,F}^{\mskip 2mu Q,T}(Z,\kopa; \Lambda; \nu)=
\sum_{\UU\in\ESH_{Q,\ttM}}\varphi(\UU-\YY)\gamma_{\ttM,F}^{Q,T}(Z,\UU;\Lambda)\ptf\end{equation*}
On introduit comme ci-dessus des transform\'ees de Fourier inverses
\begin{equation*}V \mapsto \wh{\gamma}_{\ttM,F}^{\mskip 2mu Q_0,X'}(H,\UU;V)\quad \hbox{et} \quad 
V \mapsto \wh{\gamma}_{\ttM,F}^{\mskip 2mu Q,X'}(Z,\UU;V)\end{equation*}
le param\`etre $V$ variant dans $\ESA_\ttM$. Par inversion de Fourier, on a
\begin{equation*}\wh{\gamma}^{\mskip 2mu Q,X'}_{\ttM,F}(Z,\UU;V)= \left\{
\begin{array}{ll} \Gamma_\ttM^{Q}(V,\UU(X')) & \hbox{si $ Z+U_Q=V_Q$}\\
0 & \hbox{sinon}
\end{array}\right.\ptf\end{equation*}
On en d\'eduit que
\begin{equation*}\wh{\bsd}_{\ttM,F}^{\;Q_0,{T-X}}(\ZTX,\kopa; V; \nu)=
\sum_{\substack{\UU\in\ESH_{Q_0,\ttM}\\ H_Z^{{T-X}} + U_\Qo = V_\Qo}}\varphi(\kopa;\UU-\YY;\nu)
\Gamma_\ttM^{Q_0}(V,\UU({T-X}))\end{equation*}
et
\begin{equation*}\wh{\bsd}_{\ttM,F}^{\;Q,T}(Z,\kopa; V; \nu)=
\sum_{\substack{\UU\in\ESH_{Q,\ttM}\\ Z + U_Q =
V_Q}}\varphi(\kopa;\UU-\YY;\nu)\Gamma_\ttM^Q(V,\UU(T))\ptf\end{equation*}
Fixons un r\'eel $\rho>\rho_0$ comme dans le point (iii) et posons
\begin{equation*}E^T_1= \sum_{Z\in\ESC_Q^\tG}\wt{\sigma}_Q^R(Z{-T})\int_{\bs{\kopa}_S}
\sum_{V\in \ESD_\ttS} \Kappa{\rho T}(V) \hspace{-10pt}
\sum_{X\in \mathcal{C}_F(Q,Q_0;T)}\hspace{-10pt}
 \wh{\bsd}_{\ttM,F}^{\;Q_0,{T-X}}(\ZTX,\kopa; V; \nu) \dd\kopa\ptf\end{equation*}
D'apr\`es \ref{[LW,13.6.2]} et les assertions du lemme \ref{major} concernant les expressions (A), (B) et (C), 
l'expression $E^T_1$ est absolument convergente, et pour tout r\'eel $r$, on a une majoration
\begin{equation*}\big\lvert A_{s,t,\nu}^T - E^T_1\big\rvert \ll \bsd_0(T)^{-r}\ptf\end{equation*}
Notons $\ESR_\ttS^+$ l'ensemble des racines de $A_\ttM$ qui sont positives pour le sous-groupe parabolique standard $\ttS$. 
Pour tout $S''\in \ESP^{Q_0}(\ttM)$, notons $a(S'')$ le nombre d'\'el\'ement de 
$(-\Delta_{S''})\cap \ESR_\ttS^+$ -- ou encore de $(-\Delta_{S''}^{Q_0})\cap \ESR_\ttS^+$ -- et
\begin{equation*}\mathcal{C}^{Q_0}(S'')\subset \ag_\ttM^{Q_0}\end{equation*}
le c™ne form\'e des
\begin{equation*}\bigg(\sum_{\alpha \in \Delta_{S''}^{Q_0} \cap \ESR_\ttS^+} x_\alpha \check{\alpha}\bigg)+
\bigg( \sum_{\alpha \in (-\Delta_{S''}^{Q_0}) \cap \ESR_\ttS^+}y_\alpha \check{\alpha}\bigg)\end{equation*}
pour des $x_\alpha \geq 0$ et des $y_\alpha >0$. Pour $Y\in \ag_\ttM^{Q_0}+\ESA_\Qo$, on pose
\begin{equation*}\mathcal{C}^{Q_0}_F(Y;S'') = \bigg(Y +\mathcal{C}^{Q_0}(S'')\bigg)\cap 
\ESA_\ttM\subset \ESA_\ttM^{Q_0}(Y_\Qo)\ptf\end{equation*}
Notons que pour $H\in \ESA_\ttM$, on a
\begin{equation*}\mathcal{C}^{Q_0}_F(H+Y;S'')= H+\mathcal{C}^{Q_0}_F(Y;S'')\ptf\end{equation*}
En remplaant les exposants $Q_0$ par $Q$, on d\'efinit de la mme mani\`ere
\begin{equation*}\mathcal{C}^Q(S'') \subset \ag_\ttM^Q\quad \hbox{et} \quad 
\mathcal{C}^Q_F(Y;S'')\subset \ESA_\ttM\ptf\end{equation*}

% lemme
\begin{lemma}\label{dtmfou}
Pour $Z\in \ESA_Q$, $\kopa \in \bs{\kopa}_{S}$, $V\in \ESA_\ttM$ et $X\in \ESC_F(Q,Q_0;T)$, on a
\begin{equation*}\wh{\bsd}_{\ttM,F}^{\;Q_0,{T-X}}(\ZTX,\kopa; V; \nu)= \sum_{S''\in \ESP^{Q_0}(\ttM)}(-1)^{a(S'')}
\hspace{-0.8cm}
\sum_{V_1\in \mathcal{C}^{Q_0}_F(- H^{{T-X}}_{Z,S''};S'')}
\hspace{-0.8cm}\wh{\bsd}(\kopa; V+V_1,S'';\nu)\end{equation*}
avec
\begin{equation*}H^{{T-X}}_{Z,S''}\bydef Z + [{T-X}]_{S''}^Q\ptf\end{equation*}
\end{lemma}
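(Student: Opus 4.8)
The plan is to compute the inverse Fourier transform directly from the definition of $\bsd_{\ttM,F}^{\,Q_0,X'}$ (with $X'=T-X$), treating each term $S''\in\ESP^{Q_0}(\ttM)$ separately and then reassembling. First I would replace, for each $S''$, the function $\varepsilon_{S''}^{Q_0,[X']_{S''}}(\ZTX;\Lambda)$ by its mixed-cone avatar relative to the standard chamber $\ttS$. Since $[X']_{S''}\in\ag_{\ttM,\QM}$, Lemmas \ref{serievareps} and \ref{aussi} give the lattice expansion of $\varepsilon_{S''}^{Q_0,[X']_{S''}}$ on the domain $\langle\Re\Lambda,\check\alpha\rangle>0$ ($\alpha\in\Delta_{S''}^{Q_0}$), and the $Q_0$-analogue of Lemma \ref{extres} (the identity $\varepsilon_{P}^{R}=(-1)^{a(P,Q)}\varepsilon_{P,Q}^{R}$ holds for any $R\in\ESF(\ttM)$ since the functions $\phi_{P,Q}^R$ are defined in that generality) yields $\varepsilon_{S''}^{Q_0,[X']_{S''}}=(-1)^{a(S'')}\varepsilon_{S'',\ttS}^{Q_0,[X']_{S''}}$, where the latter is the sum of $\phi_{S'',\ttS}^{Q_0}(H'-[X']_{S''})e^{\langle\Lambda,H'\rangle}$ over $H'\in\ESA_{S''}^{Q_0}(\ZTX)$ and $a(S'')=|(-\Delta_{S''}^{Q_0})\cap\ESR_\ttS^+|$.

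Next I would carry out the $\Lambda$-integration against $e^{-\langle\Lambda,V\rangle}$. Expanding the smooth periodic family into its rapidly decreasing Fourier coefficients, $\bsd(\kopa;\Lambda,S'';\nu)=\sum_{W\in\ESA_\ttS}\wh{\bsd}(\kopa;W,S'';\nu)e^{\langle\Lambda,W\rangle}$, and using the orthogonality relation $\int_{\bsmu_\ttS}e^{\langle\Lambda,U\rangle}\dd\Lambda=\mathbf{1}_{U=0}$ for $U\in\ESA_\ttS$ (recall $\vol(\bsmu_\ttS)=1$), each double series collapses: the integral selects $H'=V-W$, subject to $H'\in\ESA_{S''}^{Q_0}(\ZTX)$ and $\phi_{S'',\ttS}^{Q_0}(H'-[X']_{S''})=1$. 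Writing $V_1=W-V=-H'$ then gives the asserted shape $\wh{\bsd}_{\ttM,F}^{\,Q_0,T-X}(\ZTX,\kopa;V;\nu)=\sum_{S''}(-1)^{a(S'')}\sum_{V_1}\wh{\bsd}(\kopa;V+V_1,S'';\nu)$, where $V_1$ ranges over the $V_1\in\ESA_\ttM$ with $(V_1)_{Q_0}=-\ZTX$ and $\phi_{S'',\ttS}^{Q_0}(-V_1-[X']_{S''})=1$. The interchange of the lattice sum and the integral is legitimate because $\wh{\bsd}(\kopa;\cdot,S'';\nu)$ is rapidly decreasing and the cone supporting $\phi_{S'',\ttS}^{Q_0}$ is pointed, so the sum over $V_1$ converges absolutely.

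It then remains to identify this index set with $\mathcal{C}^{Q_0}_F(-H^{T-X}_{Z,S''};S'')$. Here I would use the explicit description of the support of $\phi_{S'',\ttS}^{Q_0}$: expanding in the coroot basis $\check\Delta_{S''}^{Q_0}$ and reading off the conditions dual to $\hat\Delta_{S''}^{Q_0}$ shows that $\{Y:\phi_{S'',\ttS}^{Q_0}(Y)=1\}$ projects in $\ag_\ttM^{Q_0}$ onto $-\mathcal{C}^{Q_0}(S'')$ (weak inequalities along the coroots with $\alpha>_\ttS 0$, strict ones along those with $\alpha<_\ttS 0$, which is precisely the open/closed pattern defining $\mathcal{C}^{Q_0}(S'')$ up to sign). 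Consequently $\phi_{S'',\ttS}^{Q_0}(-V_1-[X']_{S''})=1$ becomes $(V_1)^{Q_0}\in-[X']_{S''}^{Q_0}+\mathcal{C}^{Q_0}(S'')$. The final bookkeeping is to check that the apex matches, i.e.\ that $(H^{T-X}_{Z,S''})^{Q_0}=[X']_{S''}^{Q_0}$ and $(H^{T-X}_{Z,S''})_{Q_0}=\ZTX$; this follows from $H^{T-X}_{Z,S''}=Z+[T-X]_{S''}^Q$, the nested decomposition $\ag_\ttM^Q=\ag_{Q_0}^Q\oplus\ag_\ttM^{Q_0}$ with $Z\in\ag_Q$, and the compatibility $([T-X]_{S''})_{Q_0}=(T-X)_{Q_0}$ of the orthogonal family attached to $T-X$. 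Combining, the lattice condition $(V_1)_{Q_0}=-\ZTX$ and the cone condition on $(V_1)^{Q_0}$ assemble into $V_1\in(-H^{T-X}_{Z,S''}+\mathcal{C}^{Q_0}(S''))\cap\ESA_\ttM=\mathcal{C}^{Q_0}_F(-H^{T-X}_{Z,S''};S'')$, which is the desired formula.

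The main obstacle, and the step deserving the most care, is the passage through the $\Lambda$-integration: the individual functions $\varepsilon_{S''}^{Q_0,[X']_{S''}}(\ZTX;\cdot)$ are not smooth on the compact torus $\bsmu_\ttS$ (they carry poles on the walls $\langle\Lambda,\check\alpha\rangle\in\iipiZ$), and only the full sum over $S''$ against the $(Q_0,\ttM)$-family is smooth, as in \ref{lissb}. I would handle this by performing the computation on a shifted contour inside the convergence half-space $\langle\Re\Lambda,\check\alpha\rangle>0$, where each $\varepsilon_{S''}$ is an honest absolutely convergent series (\ref{serievareps}) and Fubini is justified by the rapid decay of $\wh{\bsd}$, and then noting that the value of the Fourier transform of the smooth total function is independent of the contour, so the identity obtained in the convergence domain is the sought one. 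This is the same mechanism underlying the inversion formula \ref{TMGMF}, of which the present lemma is the refinement keeping track of the non-standard chambers $S''$ via the signs $(-1)^{a(S'')}$ and the mixed cones $\mathcal{C}^{Q_0}(S'')$.
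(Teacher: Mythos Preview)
Your argument is correct, but the route differs from the paper's in a way worth noting. You decompose $\bsd_{\ttM,F}^{\,Q_0,T-X}$ as a sum over $S''$ of the singular terms $\varepsilon_{S''}^{Q_0,[T-X]_{S''}}\cdot\bsd(\cdot,S'')$, convert each $\varepsilon_{S''}$ into the mixed-cone series $(-1)^{a(S'')}\varepsilon_{S'',\ttS}^{Q_0}$ via the $Q_0$-version of \ref{extres}, and then Fourier-invert term by term, justifying the manipulation by a contour shift into the convergence half-space. The paper instead first invokes the inversion formula \ref{TMGMF}, which expresses $\wh{\bsd}_{\ttM,F}^{\,Q_0,T-X}(\ZTX,\kopa;V;\nu)$ directly as a sum over $\UU\in\ESH_{Q_0,\ttM}$ weighted by the compactly supported function $\Gamma_{\ttM}^{Q_0}(V,\UU(T-X))$; it then decomposes $\Gamma_{\ttM}^{Q_0}$ into the signed cones $(-1)^{a(S'')}{\bf 1}_{\mathcal{C}^{Q_0}(S'')}$ via \ref{gammaphi}, and regroups the $\UU$-sum with fixed $U_{S''}$ to recover $\wh{\bsd}(\kopa;\cdot,S'';\nu)$. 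Because $\Gamma_{\ttM}^{Q_0}$ is already a finite sum, the paper's path sidesteps entirely the pole/contour-shift discussion you flag as the main obstacle. Your approach trades that cleanliness for a more hands-on computation; both give the same identification of the index set with $\mathcal{C}^{Q_0}_F(-H^{T-X}_{Z,S''};S'')$, and your apex-matching verification $(H^{T-X}_{Z,S''})_{Q_0}=\ZTX$, $(H^{T-X}_{Z,S''})^{Q_0}=[T-X]_{S''}^{Q_0}$ is the same bookkeeping the paper carries out.
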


\begin{proof}
On rappelle que $\ZTX=Z +({T-X})_\Qo^Q \in \ESA_\Qo$. On a
\begin{equation*}\wh{\bsd}_{\ttM,F}^{\;Q_0,{T-X}}(\ZTX,\kopa; V; \nu)=
\sum_{\UU\in\ESH_{Q_0,\ttM}}\varphi(\kopa;\UU-\YY;\nu)
\wh{\gamma}_{\ttM,F}^{\mskip 2mu Q_0,{T-X}}(\ZTX,\UU;V)\end{equation*}
avec
\begin{equation*}\wh{\gamma}_{\ttM,F}^{\mskip 2mu Q_0,{T-X}}(\ZTX,\UU;V)= \left\{
\begin{array}{ll}\Gamma_\ttM^{Q_0}(V,\UU({T-X}))
 & \hbox{si $ \ZTX+U_\Qo=V_\Qo $}\\
0 & \hbox{sinon}
\end{array}\right.\ptf\end{equation*}
Le lemme \ref{gammaphi} nous dit que
\begin{equation*}\Gamma_\ttM^{Q_0}(V,\UU({T-X}))= 
\hspace{-0.5cm}\sum_{S''\in \ESP^{Q_0}(\ttM)} (-1)^{a(S'')} {\mathbf 1}_{\mathcal{C}^{Q_0}(S'')} 
\big(( [{T-X}]_{S''}+ U_{S''} -V)^{Q_0} \big)\end{equation*}
o $ {\mathbf 1}_{\mathcal{C}^{Q_0}(S'')}$ est la fonction caract\'eristique du c™ne $\mathcal{C}^{Q_0}(S'')$. On obtient
\begin{align*}
\lefteqn{\wh{\gamma}_{\ttM,F}^{\mskip 2mu Q_0,{T-X}}(\ZTX,\UU;V)= \sum_{S''\in \ESP^{Q_0}(\ttM)} (-1)^{a(S'')}}\\
&& \times\left\{
\begin{array}{ll}{\mathbf 1}_{\mathcal{C}^{Q_0}(S'')}\big(([{T-X}]_{S''}+U_{S''})^{Q_0}- V\big)
 & \hbox{si $ \ZTX +U_\Qo=V_\Qo$}\\ 0 & \hbox{sinon}
\end{array}\right.
\end{align*}
soit encore
\begin{equation*}\wh{\gamma}_{\ttM,F}^{\mskip 2mu Q_0,{T-X}}(\ZTX,\UU;V)
= \sum_{S''\in \ESP^{Q_0}(\ttM)} (-1)^{a(S'')}{\mathbf 1}_{\mathcal{C}^{Q_0}(S'')}(Y)\end{equation*}
avec
\begin{equation*}Y = Z +({T-X})_\Qo^Q+[{T-X}]_{S''}^{Q_0}+U_{S''}- V = H^{{T-X}}_{Z,S''}+U_{S''}- V\ptf\end{equation*}
La condition $Y\in \mathcal{C}^{Q_0}(S'')$ \'equivaut \`a 
\begin{equation*}U_{S''}\in V + \mathcal{C}^{Q_0}_F(-H^{{T-X}}_{Z,S''};S'')\end{equation*}
et implique que $U_\Qo = V_\Qo - H$. 
D'autre part on a, par d\'efinition,
\begin{equation*}\bsd(\kopa;\Lambda,S'';\nu)= 
\sum_{\UU\in \ESH_{Q_0,\ttM}} \varphi(\kopa;\UU-\YY;\nu)e^{\langle \Lambda, U_{S''} \rangle}\end{equation*}
et donc, par inversion de Fourier,
\begin{equation*}\wh{\bsd}(\kopa;V,S'';\nu) = \sum_{\substack{\UU \in \ESH_{Q_0,\ttM}\\ U_{S''}= V}}
\varphi(\kopa; \UU-\YY; \nu)\ptf\end{equation*}
Par cons\'equent, on a 
\begin{equation*}\sum_{\UU \in \ESH_{Q_0,\ttM}}\mskip -5mu\mskip -5mu \varphi(\kopa; \UU-\YY ; \nu) 
{\mathbf 1}_{V + \mathcal{C}^{Q_0}_F(-H^{{T-X}}_{Z,S''};S'')}(U_{S''})
=\hspace{-0.32cm}\sum_{V_1\in \mathcal{C}^{Q_0}_F(-H^{{T-X}}_{Z,S''};S'')}\hspace{-0.8cm}
\wh{\bsd}(\kopa; V + V_1; \nu)\end{equation*}
ce qui prouve le lemme.
\end{proof}

D'apr\`es \ref{dtmfou}, la somme sur $X$ dans l'expression $E_1^T$ devient
\begin{equation*}\sum_{S''\in \ESP^{Q_0}(\ttM)}(-1)^{a(S'')}\hspace{-0.3cm} \sum_{X\in \mathcal{C}_F(Q_0,Q;T)}
\bigg(\sum_{V_1\in \mathcal{C}_F^{Q_0}(-H^{{T-X}}_{Z,S''};S'')}\hspace{-1cm}
\wh{\bsd}(\kopa; V+ V_1,S'';\nu)\bigg)\mskip -2mu .\leqno{(2)}\end{equation*}
L'expression (2) est bien absolument convergente. 

% lemme
\begin{lemma}\label{[LW,13.7.1]bis}
Pour $Z\in \ESA_Q$, $\kopa \in \bs{\kopa}_{S}$ et $V\in \ESD_\ttS$ 
\begin{equation*}\wh{\bsd}_{{_tM},F}^{\;Q,T}(Z,\kopa; V; \nu)
\sum_{S''\in \ESP^{Q}(\ttM)}(-1)^{a(S'')}
\hspace{-0.5cm}\sum_{V_2\in \mathcal{C}^{Q}_F(- \ZTSs;S'')}
\hspace{-0.5cm}\wh{\bsd}(\kopa; V + V_2,S'';\nu)\ptf\end{equation*}
avec $\ZTSs\bydef Z + \brT{S''}^Q$.
\end{lemma}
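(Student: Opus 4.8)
The plan is to follow verbatim the proof of Lemma~\ref{dtmfou}, in the simpler situation where the parabolic $Q_0$ is replaced by $Q$ and the regularizing element $T-X$ by $T$; in particular there is no longer any summation over $X$, and the cone $\mathcal{C}(Q,Q_0)$ degenerates to a point. The starting point is the explicit expression already obtained above,
$$\wh{\bsd}_{\ttM,F}^{\;Q,T}(Z,\kopa;V;\nu)=\sum_{\substack{\UU\in\ESH_{Q,\ttM}\\ Z+U_Q=V_Q}}\varphi(\kopa;\UU-\YY;\nu)\,\Gamma_\ttM^{Q}(V,\UU(T)),$$
where $\varphi=\varphi(\kopa;\nu)$ is the rapidly decreasing function on $\ESH_{Q,\ttM}$ furnished by Lemma~\ref{invfour}.

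First I would feed the combinatorial identity of Lemma~\ref{gammaphi}, read inside the Levi $M_Q$ with distinguished Levi $\ttM$, into the term $\Gamma_\ttM^{Q}(V,\UU(T))$, writing it as $\sum_{S''\in\ESP^{Q}(\ttM)}(-1)^{a(S'')}{\bf 1}_{\mathcal{C}^{Q}(S'')}\big((\brT_{S''}+U_{S''}-V)^Q\big)$. The key bookkeeping step is then to observe that, once combined with the constraint $Z+U_Q=V_Q$, the cone condition $(\brT_{S''}+U_{S''}-V)^Q\in\mathcal{C}^{Q}(S'')$ is precisely $U_{S''}\in V+\mathcal{C}^{Q}_F(-\ZTSs;S'')$ with $\ZTSs=Z+\brT_{S''}^{Q}$; here one uses the translation invariance $\mathcal{C}^{Q}_F(H+Y;S'')=H+\mathcal{C}^{Q}_F(Y;S'')$ recorded just before the statement. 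Interchanging the finite sum over $S''$ with the sum over $\UU$ and invoking the Fourier inversion of $\bsd$ itself, $\wh{\bsd}(\kopa;V,S'';\nu)=\sum_{\UU\in\ESH_{Q,\ttM},\,U_{S''}=V}\varphi(\kopa;\UU-\YY;\nu)$, collapses the sum over those $\UU$ with $U_{S''}$ prescribed and yields the asserted identity.

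Because $T$ is fixed and there is no auxiliary variable $X$, the delicate absolute-convergence bounds that were needed to justify the rearrangements in Lemma~\ref{dtmfou} are here immediate from the rapid decay of $\varphi$, so the computation is a genuinely finite rearrangement. The only point that really requires care --- and the one I expect to be the main, if minor, obstacle --- is checking that Lemma~\ref{gammaphi} may legitimately be applied relative to $M_Q$ in place of $G$: one must verify that the decomposition of $\Gamma_\ttM^{Q}$ into the signed characteristic functions of the cones $\mathcal{C}^{Q}(S'')$, together with the sign $(-1)^{a(S'')}$ and the projected exponents $\brT_{S''}^{Q}$, descends correctly to this Levi. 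This is guaranteed by the stability of all the functions $\tau$, $\wh\tau$, $\phi$ and $\Gamma$ under passage to a Levi subgroup, exactly as in the non-descended case.
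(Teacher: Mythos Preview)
Your proposal is correct and is exactly the approach the paper takes: the paper's proof consists of the single sentence ``Elle est identique à celle du lemme \ref{dtmfou}'', and what you have written is precisely that proof transcribed with $Q_0$ replaced by $Q$ and $T-X$ replaced by $T$. Your expanded justification of the rearrangements and of the applicability of Lemma~\ref{gammaphi} inside $M_Q$ is accurate and in fact more detailed than what the paper provides.
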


\begin{proof}
Elle est identique \`a celle du lemme \ref{dtmfou}.
\end{proof}

Pour $S''\in \ESP^{Q_0}(\ttM)$, 
il r\'esulte des d\'efinitions que l'application
\begin{equation*}\mathcal{C}(Q,Q_0)\times \mathcal{C}^{Q_0}(S'')
 \rightarrow \ag_\ttM^Q\qquad\hbox{d\'efinie par }\qquad
 (X,V_1) \mapsto [X]_{S''}^Q+V_1\end{equation*}
est injective et a pour image le c™ne $\mathcal{C}^Q(S'')$. Pour $X\in \mathcal{C}_F(Q,Q_0;T)$, 
tout \'el\'ement $V_1\in \mathcal{C}_F^{Q_0}(-H^{{T-X}}_{Z,S''};S'')$ s'\'ecrit
\begin{equation*}V_1 = - H^{{T-X}}_{Z,S''} +V_1^*= -\ZTSs+V_2^*\end{equation*}
avec $V_1^*\in \mathcal{C}^{Q_0}(S'')$ et $V_2^*= [X]_{S''}^Q+V_1^*\in \mathcal{C}^Q(S'')$. 
Par d\'efinition $V_1$ appartient \`a $\mathcal{C}_F^Q(-\ZTSs;S'')$. R\'eciproquement, tout 
\'el\'ement $V_2\in \mathcal{C}_F^{Q}(-\ZTSs;S'')$ s'\'ecrit
\begin{equation*}V_2 = -\ZTSs+[X]_{S''}^{Q}+V_1^* = -H^{{T-X}}_{Z,S''}+V_1^*\end{equation*}
avec $X\in \mathcal{C}(Q,Q_0)$ et $V_1^*\in \mathcal{C}^{Q_0}(S'')$. Donc $V_2$ appartient \`a 
$ \mathcal{C}_F^{Q_0}(-H^{{T-X}}_{Z,S''};S'')$, et comme
\begin{equation*}(V_2)_\Qo= -Z -({T-X})_\Qo^Q = -H^{{T-X}}_Z \vg\end{equation*}
par d\'efinition $X$ appartient \`a $\mathcal{C}_F(Q,Q_0;T)$. L'expression (2) se r\'ecrit donc
\begin{equation*}\sum_{S''\in \ESP^{Q_0}(\ttM)}(-1)^{a(S'')} 
\sum_{V_2\in \mathcal{C}_F^Q(-\ZTSs;S'')}\hspace{-1cm}
\wh{\bsd}(\kopa; V+ V_2,S'';\nu)\end{equation*}
soit encore, d'apr\`es \ref{[LW,13.7.1]bis},
\begin{equation*}\wh{\bsd}_{{_tM},F}^{\;Q,T}(Z,\kopa; V; \nu) - 
\sum_{S''\in \ESP^Q(\ttM) \smallsetminus \ESP^{Q_0}(\ttM)}\hspace{-0.3cm}(-1)^{a(S'')} 
\hspace{-0.5cm}
\sum_{V_2\in \mathcal{C}_F^Q(-\ZTSs;S'')}\hspace{-0.5cm}
\wh{\bsd}(\kopa; V+ V_2,S'';\nu)\ptf\end{equation*}
On en d\'eduit l'\'egalit\'e
\begin{equation*}E^T_1 = E^T_2-E^T_3\leqno{(3)}\end{equation*}
o
\begin{equation*}E^T_2= \sum_{Z\in\ESC_Q^\tG}\wt{\sigma}_Q^R(Z{-T})\int_{\bs{\kopa}_S}
\sum_{V\in \ESD_\ttS} \Kappa{\rho T}(V) \wh{\bsd}_{{_tM},F}^{\;Q,T}(Z,\kopa; V; \nu) \dd\kopa\end{equation*}
et
\begin{align*}
\lefteqn{
E^T_3= \sum_{Z\in\ESC_Q^\tG}\wt{\sigma}_Q^R(Z{-T})\int_{\bs{\kopa}_S}
\sum_{V\in \ESD_\ttS} \Kappa{\rho T}(V) }\\
&& \hspace{3cm}\times
\sum_{S''\in \ESP^Q(\ttM) \smallsetminus \ESP^{Q_0}(\ttM)}\hspace{-0.8cm}(-1)^{a(S'')} 
\hspace{-0.5cm}
\sum_{V_2\in \mathcal{C}_F^Q(-\ZTSs;S'')}\hspace{-0.8cm}
\wh{\bsd}(\kopa; V+ V_2,S'';\nu)\ptf
\end{align*}
La d\'ecomposition (3) est justifi\'ee car l'expression $E^T_2$ est absolument convergente 
d'apr\`es \ref{major}(D) et 
donc $E^T_3$ est convergente au moins dans l'ordre indiqu\'e. 
%

% lemme
\begin{lemma}\label{remajo}
Pour $S''\in \ESP^Q(\ttM)\smallsetminus \ESP^{Q_0}(\ttM)$, posons
\begin{equation*}E^T_{S''}= \sum_{Z\in\ESC_Q^\tG}\wt{\sigma}_Q^R(Z{-T})\int_{\bs{\kopa}_S}
\sum_{V\in \ESD_\ttS} \Kappa{\rho T}(V) 
\hspace{-0.3cm}\sum_{V_2\in \mathcal{C}_F^Q(-\ZTSs;S'')}\hspace{-0.5cm}
\vert\wh{\bsd}(\kopa; V+ V_2,S'';\nu)\vert \dd\kopa\ptf\end{equation*}
Pour tout r\'eel $r$, on a une majoration de la forme $E^T_{S''} \ll \bsd_0(T)^{-r}$.
\end{lemma}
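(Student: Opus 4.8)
The plan is to prove this exactly as the corresponding final estimate of \cite[13.7]{LW}, transposed to the function‑field setting, the only new feature being that integrals over cones are replaced by sums over lattices. First I would record the key analytic input: for each $S''\in\ESP^Q(\ttM)$ the function $W\mapsto\wh{\bsd}(\kopa;W,S'';\nu)$ is rapidly decreasing on the lattice $\ESA_\ttM$, uniformly for $\kopa$ in the compact group $\bs{\kopa}_S$ and for $\nu$ in the finite set $\bsESEsigma$. This holds because $\bsd(\kopa;\nu)=\bsc(\YY,\kopa;\nu)$ is a smooth periodic $(Q,\ttM)$-family depending smoothly on its parameters, so by \ref{invfour} it is $\bsc_\varphi$ for a rapidly decreasing function $\varphi$ on $\ESH_{Q,\ttM}$, whose partial Fourier transform $\wh{\bsd}(\kopa;\cdot,S'';\nu)$ is again rapidly decreasing. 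Thus for every $N$ there is $C_N>0$, independent of $T$, with $|\wh{\bsd}(\kopa;W,S'';\nu)|\leq C_N(1+\|W\|)^{-N}$ for all $W\in\ESA_\ttM$.

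Next comes the geometric heart of the matter, which is where the hypothesis $S''\in\ESP^Q(\ttM)\smallsetminus\ESP^{Q_0}(\ttM)$ enters. In $E^T_{S''}$ the argument of $\wh{\bsd}$ is $W=V+V_2$, where $V\in\ESD_\ttS$ satisfies $\Kappa{\rho T}(V)=1$, hence $\|V\|\leq\rho\|T\|$, and $V_2\in\mathcal{C}_F^Q(-\ZTSs;S'')$, so that $(V_2)_Q=-Z$ while $(V_2)^Q+\brT_{S''}^Q$ lies in the cone $\mathcal{C}^Q(S'')$ generated by the $\check{\alpha}$ with $\alpha\in\Delta_{S''}^Q\cap\ESR_\ttS^+$ (coefficients $\geq 0$) and by the $\check{\alpha}$ with $\alpha\in(-\Delta_{S''}^Q)\cap\ESR_\ttS^+$ (coefficients $>0$). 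Because $S''$ is contained in $Q$ but not in $Q_0=Q\cap\theta_0^{-1}(Q)$, at least one of the strict generating coroots is transverse to $\ag_{Q_0}$; since $\brT_{S''}^Q=T_{S''}^Q$ is regular of size comparable to $\bsd_0(T)$, pairing $W$ against the fundamental weight dual to that coroot, and using the cutoff $\wt{\sigma}_Q^R(Z-T)=1$ to control $W_Q=V_Q-Z$, I would derive a lower bound $\|W\|\geq c_0\,\bsd_0(T)$ valid on the whole support, for a constant $c_0>0$ independent of $T$ once $\rho$ is chosen as in \ref{major}\,(iii). This separation is the function‑field analogue of the one underlying \cite[13.7]{LW}; concretely it rests on the combinatorial inequalities \cite[2.13.1, 2.13.3]{LW} relating the functions $\wt{\sigma}$, $\tau$ and $\phi$ to norms.

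Granting the separation, the conclusion is routine. The number of triples $(Z,V,V_2)$ contributing with $\|W\|$ in a prescribed ball grows at most polynomially in $\bsd_0(T)$ and in the radius, by the description of the $V_2$-sum in \ref{dtmfou} together with the finiteness of the cone directions; combining this polynomial count with the uniform bound $\|W\|\geq c_0\,\bsd_0(T)$ and the rapid decay above, and integrating the resulting estimate over the compact group $\bs{\kopa}_S$, I obtain for each $N$ a majoration $E^T_{S''}\ll_N \bsd_0(T)^{-N}$, which is the assertion. The main obstacle is precisely the geometric separation of the second paragraph: one must check carefully that $S''\not\subset Q_0$ really forces a strict cone direction escaping $\ag_{Q_0}$, and that the cutoff on $V$ cannot cancel the resulting large component of $W$ anywhere on the support; once that is established, all the remaining steps are uniform estimates of the type already carried out in \ref{major}.
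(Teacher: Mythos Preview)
Your overall strategy matches the paper's and the analytic input (uniform rapid decrease of $\wh{\bsd}(\kopa;\cdot,S'';\nu)$) is correct. The gap is in your geometric separation paragraph: the bound $\|W\|\geq c_0\,\bsd_0(T)$ is indeed what one needs, but your justification does not establish it. The difficulty is that $\|V\|$ may be as large as $\rho\|T\|$, and nothing in your argument prevents $V$ from cancelling the large part of $V_2$; the phrases ``a strict generating coroot transverse to $\ag_{Q_0}$'' and ``pairing $W$ against the fundamental weight dual to that coroot'', together with the references to \cite[2.13.1, 2.13.3]{LW}, do not single out a functional that annihilates $V$ while staying large on $V_2$. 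Without that, your conclusion in the third paragraph also becomes delicate: one does not merely need $\|W\|\gg\|T\|$, one needs the inequality to simultaneously control $\|Z^\tG\|$ and the cone variable so that the $Z$- and $V_2$-sums converge separately.

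The paper supplies exactly this missing projection. One first sums over the fibres $\ESA_\ttM^{Q_0}(X)$ to reduce to a rapidly decreasing function $\overline{\xi}$ on $\ESA_\Qo$, and then projects onto the subspace $\mathfrak{f}\subset\ag_\Qo$ orthogonal to the kernel $\mathfrak{k}$ of the map $q_Q:\ag_0\to\ag_Q^\tG$ of \ref{qqq}. The key observation is the inclusion $\mathfrak{d}_\ttS\subset\mathfrak{k}$ (this is inclusion (3) in the proof of \ref{major}), which forces $V_f=0$ for every $V\in\ESD_\ttS$, so that $(V_\Qo+X)_f=X_f$ no longer sees $V$ at all. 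Writing $V_2=-(Z-T_Q)-\brT_{S''}+X^*$ with $X^*\in\mathcal{C}^Q(S'')$ and $X'=X^*_\Qo$, the decisive combinatorial estimate is \cite[13.7\,(4)]{LW}: for $\wt{\sigma}_Q^R(Z-T)=1$ one has
$$\|T\|+\|(Z-T_Q)^\tG\|+\|X'\|\ll 1+\|X_f\|\vg$$
and it is in the proof of \emph{that} inequality that the hypothesis $S''\not\subset Q_0$ is used. This single estimate simultaneously furnishes the factor $\|T\|^{-r}$ and the summability over $Z$ and over $X$, yielding the majoration (12) of the paper and hence the lemma.
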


Admettons ce lemme (qui sera lui aussi prouv\'e dans le paragraphe suivant), et posons
\begin{equation*}E^T_4= \sum_{Z\in\ESC_Q^\tG}\wt{\sigma}_Q^R(Z{-T})\int_{\bs{\kopa}_S}
\sum_{V\in \ESD_\ttS} \wh{\bsd}_{{_tM},F}^{\;Q,T}(Z,\kopa; V; \nu) \dd\kopa\ptf \end{equation*}
L'expression $E^T_4$ est absolument convergente et d'apr\`es l'assertion (iii) 
du lemme \ref{major} concernant l'expression (E), pour tout r\'eel $r$, on a
\begin{equation*}\vert E^T_2 - E^T_4\vert \ll \bsd_0(T)^{-r}\ptf\end{equation*}
Par inversion de Fourier de la somme sur $V$ dans $E^T_4$, on a
\begin{equation*}E^T_4= \sum_{Z\in\ESC_Q^\tG}\wt{\sigma}_Q^R(Z{-T})\int_{\bs{\kopa}_S}
\bigg(\int_{\bs{\eta}_\ttS} \bsd_{\ttM,F}^{Q,T}(Z,\kopa; \Lambda; \nu)\dd\Lambda\bigg) \dd\kopa\ptf \end{equation*}
avec (d'apr\`es \ref{fourbso}~(ii))
\begin{equation*}\bsd_{\ttM,F}^{\mskip 2mu Q,T}(Z,\kopa; \Lambda; \nu)=\bso^{\mskip 2mu T,Q}_{ s,t}(Z;\mu(\kopa,\Lambda);\nu)\vtt(\mu(\chi,\Lambda))\ptf\end{equation*}
L'expression $E^T_4$ est convergente dans l'ordre indiqu\'e. 
On peut regrouper les int\'egrales en $\kopa$ et $\Lambda$ gr‰ce au changement de variables 
$(\kopa,\Lambda)\mapsto \mu(\kopa,\Lambda)$. On obtient
\begin{equation*}E^T_4 = \bsA_{s,t,\nu}^T \bydef 
\sum_{Z\in\ESC_Q^\tG}\wt{\sigma}_Q^R(Z-T)\int_{\bsmu_S}\bso^{T,Q}_{ s, t}(Z;\mu;\nu)\vtt(\mu)\dd \mu\ptf\end{equation*}
On a prouv\'e que pour tout r\'eel $r$, on a
\begin{equation*}\vert A_{s,t,\nu}^T - \bsA_{s,t,\nu}^T \vert \ll \bsd_0(T)^{-r}\ptf\end{equation*}
ce qui ach\`eve la preuve de \ref{crux} modulo les majorations \ref{major} et \ref{remajo}
qui seront \'etablies dans la section suivante.

%%%%%%%
 \section{Fin de la preuve}
\label{preuve-fin} 

Avant d'attaquer la d\'emonstration proprement dite des lemmes \ref{major} et \ref{remajo}, 
on \'etablit une variante du lemme \ref{TMGMF}. Soit $Z\in \ESA_Q$. 
Pour $\Pp\in \EuScript{F}^{Q}(\ttM)$, $U\in \ESA_{\Pp}$ et $X\in \ag_{\Pp}$, consid\'erons l'expression
\begin{equation*}\gamma_{\Pp\mskip -2mu ,F}^{Q,U}(Z;X,\Lambda)\bydef \sum_{H\in \ESA_{\Pp}^{Q}(Z)} \Gamma_{\Pp}^{Q}(H-X,U) 
e^{\langle \Lambda,H \rangle}\ptf\end{equation*}
Puisque la somme sur $H$ est finie, c'est une fonction enti\`ere en $\Lambda$. 
Pour $V\in \ESA_{\Pp}$, sa transform\'ee de Fourier inverse
\begin{equation*}\wh{\gamma}_{\Pp\mskip -2mu ,F}^{\mskip 2mu Q,U}(Z;X,V) = 
\int_{\bsmu_{\Pp} }\gamma_{\Pp\mskip -2mu ,F}^{Q,U}(Z; X,\Lambda)e^{-\langle \Lambda , V \rangle}\dd \Lambda\end{equation*}
est donn\'ee par
\begin{equation*}\wh{\gamma}^{\mskip 2mu Q,U}_{\Pp\mskip -2mu ,F}(Z;X,V)=\left\{\begin{array}{ll}
\Gamma_{\Pp}^{Q}(V-X ,U) & \hbox{si $Z=V_Q$}\\ 0 & \hbox{sinon}
\end{array}\right.
\end{equation*}
Soit $\bs{e}$ 
une $(Q,\ttM)$-famille p\'eriodique donn\'ee par une fonction \`a d\'ecroissance rapide 
$m$ sur $\ESH_{Q,\ttM}$. La fonction
\begin{equation*}\bs{e}_{\Pp\mskip -2mu ,F}^{Q} (Z;X,\Lambda)= \sum_{\UU \in \ESH_{Q,\ttM}} m(\UU) 
\gamma^{Q,U_{\Pp}}_{\Pp\mskip -2mu ,F}(Z+U_Q; X,\Lambda)\end{equation*}
est lisse en $\Lambda$. Pour $V\in \ESA_{\Pp}$, on d\'efinit comme ci-dessus 
les transform\'ees de Fourier inverses 
$\wh{\bs{e}}_{\Pp\mskip -2mu ,F}^{\mskip 2mu Q}(Z;X,V)$ et $\wh{\bs{e}}(V,\Pp)$. 
Ce sont des fonctions \`a d\'ecroissance rapide en 
$V\in \ESA_{\Pp}$.

% lemme 
\begin{lemma}\label{TMGMFvariante}
Pour $V\in \ESA_{\Pp}$, on a
\begin{equation*}\wh{\bs{e}}_{\Pp\mskip -2mu ,F}^{\mskip 2mu Q}(Z;X,V)=
\sum_{U\in \ESA_{\Pp}^Q(V_Q-Z)} \wh{\bs{e}}(U,\Pp) \Gamma_{\Pp}^Q(V-X,U)\ptf\end{equation*}
De plus pour tout r\'eel $r$, il existe une constante $c>0$ telle que
\begin{equation*}
\vert \wh{\bs{e}}_{\Pp\mskip -2mu ,F}^{\mskip 2mu Q}(Z;X,V) \vert \leq c
 \big(1 + \lVert  V -Z-X^Q \rVert  \big)^{-r}\end{equation*}
\end{lemma}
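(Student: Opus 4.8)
My plan is to establish the two assertions in turn, reducing the first to the inversion
formula already proved in \ref{TMGMF} and then bounding the result by exploiting the
compact support of the functions $\Gamma_{\Pp}^Q$.

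\emph{Proof sketch.} First I would expand $\wh{\bs{e}}_{\Pp\!,F}^{\,Q}(Z;X,V)$ directly
from its definition as the Fourier transform of
$\bs{e}_{\Pp\!,F}^{Q}(Z;X,\Lambda)$. Writing out
$\bs{e}_{\Pp\!,F}^{Q}(Z;X,\Lambda)=\sum_{\UU\in\ESH_{Q,\ttM}}m(\UU)
\gamma_{\Pp\!,F}^{Q,U_{\Pp}}(Z+U_Q;X,\Lambda)$
and interchanging the (rapidly convergent) sum over $\UU$ with the integral over
$\bs{\mu}_{\Pp}$, I would apply the inverse Fourier computation recorded just before the
statement:
$$\wh{\gamma}^{\,Q,U_{\Pp}}_{\Pp\!,F}(Z+U_Q;X,V)=
\Gamma_{\Pp}^{Q}(V-X,U_{\Pp})\quad\hbox{si $V_Q=Z+U_Q$}$$
and $0$ sinon. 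The condition $V_Q=Z+U_Q$ is exactly $U_Q=V_Q-Z$, i.e.
$U\in\ESA_{\Pp}^Q(V_Q-Z)$ once one recalls that $\ESA_{\Pp}^Q(W)$ denotes the fibre of
$\ESA_{\Pp}\to\ESA_Q$ over $W$. Since $\wh{\bs{e}}(U,\Pp)$ is, by the definition of the
$(Q,\ttM)$-family $\bs{e}$ associated to $m$, precisely the value of $m$ read off the
$\Pp$-component, the surviving terms assemble into
$$\wh{\bs{e}}_{\Pp\!,F}^{\,Q}(Z;X,V)=
\sum_{U\in \ESA_{\Pp}^Q(V_Q-Z)} \wh{\bs{e}}(U,\Pp)\,\Gamma_{\Pp}^Q(V-X,U),$$
which is the first claim. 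This is a transcription of the argument of \ref{TMGMF} with the
orthogonal family $\XX$ set to the trivial family and the rôle of $Z$ shifted by $X$.

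For the quantitative estimate I would use the compactness of the projection of the support
of $H\mapsto\Gamma_{\Pp}^Q(H,U)$ established in \ref{fonctions caract\'eristiques}
(following \cite[1.8.5]{LW}): there is a constant $c_0>0$, independent of $U$, such that
$\Gamma_{\Pp}^Q(V-X,U)\neq 0$ forces
$\|(V-X)_{\Pp}^Q\|\leq c_0\,\|U_{\Pp}^Q\|$, and symmetrically $\|U^Q\|$ is then controlled
by $\|(V-X)^Q\|$. Combined with the constraint $U_Q=V_Q-Z$ this confines $U$ to a ball
whose radius grows at most linearly in $\|V-Z-X^Q\|$, so that the number of nonzero terms
is polynomially bounded; each term carries the factor $\wh{\bs{e}}(U,\Pp)=m(\text{image of }U)$,
which is rapidly decreasing in $\|U\|$ because $m$ is rapidly decreasing on
$\ESH_{Q,\ttM}$. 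The main obstacle is the bookkeeping that converts a rapid-decay bound in
the lattice variable $U$ into a rapid-decay bound in $V-Z-X^Q$: one must check that on the
support the norms $\|U\|$ and $\|V-Z-X^Q\|$ are comparable up to additive constants, using
that $U_Q$ is pinned to $V_Q-Z$ while $U_{\Pp}^Q$ and $(V-X)_{\Pp}^Q$ are forced to agree
up to the bounded contribution measured by $c_0$. Granting this comparison, for any $r$ one
absorbs the polynomial count into the rapid decay of $m$ (enlarging $r$ by the dimension of
$\ag_{\Pp}^Q$) to obtain
$$\vert \wh{\bs{e}}_{\Pp\!,F}^{\,Q}(Z;X,V)\vert
\leq c\,(1+\|V-Z-X^Q\|)^{-r},$$
as asserted.
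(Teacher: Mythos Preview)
Your derivation of the first identity is essentially the paper's: unwind the definition, apply the Fourier inversion formula for $\gamma_{\Pp\!,F}^{Q,U}$ already recorded, and collect terms. One small imprecision: $\wh{\bs{e}}(U,\Pp)$ is not literally ``the value of $m$ read off the $\Pp$-component'' but the fiber sum $\sum_{\UU:\,U_{\Pp}=U}m(\UU)$; this is exactly what the paper writes out, and your argument implicitly uses it.

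The second part, however, contains a genuine error. The ``symmetric'' claim that $\|U^Q\|$ is controlled by $\|(V-X)^Q\|$ on the support of $\Gamma_{\Pp}^Q(V-X,U)$ is false: already in rank one, $\Gamma_P^G(H,U)$ is nonzero for all $U\geq H>0$, so for fixed $H$ the variable $U$ is unbounded. Consequently the sum over $U$ is \emph{not} finite, and there is no polynomial count of nonzero terms to absorb. What \cite[1.8.5]{LW} gives you is only the one-sided bound $\|(V-X)^Q\|\ll\|U^Q\|$, and since $U_Q=V_Q-Z$ this yields
$$\|V-Z-X^Q\|=\|U_Q+(V-X)^Q\|\ll\|U\|\ptf$$
The correct manoeuvre (and the paper's) is then to multiply the identity through by $(1+\|V-Z-X^Q\|)^r$ and bound each term by $|\wh{\bs{e}}(U,\Pp)|(1+\|U\|)^r$; the resulting sum over $U\in\ESA_{\Pp}^Q(V_Q-Z)$ converges to a constant independent of $V$, $Z$, $X$ by the rapid decrease of $\wh{\bs{e}}(\cdot,\Pp)$. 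No finiteness of the support in $U$ is needed or available.
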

%
% preuve
\begin{proof}
On a 
\begin{equation*}\wh{\bs{e}}_{\Pp\mskip -2mu ,F}^{\mskip 2mu Q}(Z;X,V)=
\sum_{\substack{\UU \in \ESH_{Q,\ttM}\\Z+ U_Q = V_Q}} m(\UU) \Gamma_{\Pp}^{Q}(V-X ,U_{\Pp})\end{equation*}
avec, pour $U\in \ESA_{\Pp}$,
\begin{equation*}\sum_{\substack{\UU \in \ESH_{Q,\ttM}\\ U_{\Pp}=U}}m(\UU)= 
\int_{\bsmu_{\Pp}} \bs{e}(\Lambda,\Pp) e^{-\langle \Lambda, U \rangle}\dd\Lambda 
=\wh{\bs{e}}(U,\Pp) \ptf\end{equation*}
D'o la premi\`ere assertion du lemme. 
Quant \`a la majoration, pour $V,\mskip 2mu  U \in \ESA_{\Pp}$ tels que $\Gamma_{\Pp}^{Q}(V-X ,U)$, on a
$\lVert  (V-X)^Q \rVert  \ll \lVert  U^Q \rVert $. Si de 
plus $Z+ U_Q = V_Q$, alors puisque $V - Z - X^Q= U_Q + (V-X)^Q$, on a
\begin{equation*}\lVert  V - Z-X^Q \rVert  \ll \lVert  U_Q \rVert  + \lVert  (V-X)^Q \rVert  \ll \lVert  U \rVert  \ptf\end{equation*}
On obtient que pour tout r\'eel $r>1$, l'expression
\begin{equation*}\vert \wh{\bs{e}}_{\Pp\mskip -2mu ,F}^{\mskip 2mu Q}(Z;X,V) \vert 
(1 + \lVert  V - Z-X^Q \rVert )^r \end{equation*}
est essentiellement major\'ee par
\begin{equation*}\sum_{U \in \ESA_{\Pp}^Q(V_Q-Z)} \wh{\bs{e}}(U,\Pp) \vert (1 + \lVert  U \rVert )^r \end{equation*}
Cette somme converge car $\wh{\bs{e}}(U,\Pp)$ est \`a d\'ecroissance rapide en $U$.
\end{proof}

\vskip2mm
\ni \textit{D\'emonstration du lemme \ref{major}.} On reprend en l'adaptant celle de \cite[13.6.3]{LW}. 
Commenons par l'expression 
(D). D'apr\`es \ref{TMGMF}, pour $V\in \ESA_\ttM$, on a
\begin{equation*}\wh{\bsd}_{{_tM},F}^{\;Q,T}(Z,\kopa; V; \nu)= 
\sum_{\substack{\UU \in \ESH_{Q,\ttM}\\ Z + U_Q =V_Q}} 
\varphi(\kopa;\UU-\YY;\nu) \Gamma_\ttM^Q(V,\UU(T))\end{equation*}
avec (d'apr\`es \cite[1.8.6]{LW})
\begin{equation*}\Gamma_\ttM^{Q}( V,\UU(T)) = \sum_{\Pp\in \EuScript{F}^Q(\ttM)} 
\Gamma_\ttM^{\Pp}( V,\TT)\Gamma_{\Pp}^Q(V_{\Pp}-\brT{\Pp},U_{\Pp})\ptf\end{equation*}
On obtient
\begin{equation*}\wh{\bsd}_{{_tM},F}^{\;Q,T}(Z,\kopa; V; \nu)= 
\sum_{\Pp\in \EuScript{F}^Q(\ttM)} \Gamma_\ttM^{\Pp}(V,\TT)
\wh{\bsd}_{\Pp\mskip -2mu ,F}^{\mskip 2mu Q}(Z,\kopa;\brT{\Pp},V_{\Pp};\nu)\end{equation*}
avec, pour $X\in \ag_{\Pp}$ et $V'\in \ESA_{\Pp}$,
\begin{equation*}\wh{\bsd}_{\Pp\mskip -2mu ,F}^{\mskip 2mu Q}(Z,\kopa;X,V';\nu)\bydef 
\sum_{\substack{\UU \in \ESH_{Q,\ttM}\\ U_Q = V'_Q-Z}} \varphi(\kopa;\UU-\YY;\nu)
\Gamma_{\Pp}^Q( V'-X,U_{\Pp})\ptf\end{equation*}
soit encore (d'apr\`es \ref{TMGMFvariante})
\begin{equation*}\wh{\bsd}_{\Pp\mskip -2mu ,F}^{\mskip 2mu Q}(Z,\kopa;X,V';\nu) = 
\sum_{U \in \ESA_{\Pp}^Q(V'_Q-Z)}\wh{\bsd}(\chi; U,\Pp;\nu)\Gamma_{\Pp}^Q(V'-X,U)\ptf\end{equation*}
L'expression (D) est donc major\'ee par
\begin{equation*}\sum_{\Pp\in \EuScript{F}^Q(\ttM)} I^T_{(D)}(\Pp)\end{equation*}
avec\footnote{Rappelons que puisque l'\'el\'ement $T$ est r\'egulier, la famille orthogonale $\TT$ 
est r\'eguli\`ere, et d'apr\`es \cite[1.8.7]{LW} la fonction 
$H \mapsto \Gamma_{_tM}^{P'}(H,\TT)$ est 
la fonction caract\'eristique d'un ensemble qui se projette sur un compact convexe de $\ag_{_tM}^{P'}$. }
\begin{equation*}I^T_{(D)}(\Pp)=\sum_{Z\in\ESC_Q^\tG}\wt{\sigma}_Q^R(Z{-T})\int_{\bs{\kopa}_S}
\sum_{V\in \ESD_\ttS} \Gamma_\ttM^{\Pp}(V^{\Pp}\mskip -2mu ,\TT) 
\vert \wh{\bsd}_{\Pp\mskip -2mu ,F}^{\mskip 2mu Q}(Z,\kopa;\brT{\Pp},V_{\Pp};\nu)\vert \dd\kopa\ptf\end{equation*}
Fixons un $\Pp\in \EuScript{F}^{Q}(\ttM)$. L'\'el\'ement $T$ \'etant fix\'e, d'apr\`es \cite[1.8.5]{LW} 
il existe une constante $c>0$ telle que pour 
tout $V\in \ag_\ttS$ tel que $\Gamma_\ttM^{\Pp}( V^{\Pp}\mskip -2mu ,\TT)\neq 0$, on ait $\lVert  V^{\Pp} \rVert  \leq c$. 
Pour $X\in \ag_{\Pp}$, 
la fonction $\wh{\bsd}_{\Pp\mskip -2mu ,F}^{\mskip 2mu Q}(Z,\kopa;X,V';\nu)$ 
est \`a d\'ecroissance rapide en $V'\in \ESA_{\Pp}$, uniform\'ement en $\chi$,
par cons\'equent l'expression
\begin{equation*}\int_{\bs{\kopa}_S}\sum_{V\in \ESD_\ttS}\Gamma_\ttM^{\Pp}(V^{\Pp}\mskip -2mu ,\TT) 
 \vert \wh{\bsd}_{\Pp\mskip -2mu ,F}^{\mskip 2mu Q}(Z,\kopa;\brT{\Pp},V_{\Pp};\nu)\vert \dd \kopa\end{equation*}
est convergente 
et, en posant
\begin{equation*}\phi(Z,X,V')\bydef \int_{\bs{\kopa}_S}\vert \wh{\bsd}_{\Pp\mskip -2mu ,F}^{\mskip 2mu Q}(Z,\kopa;X,V';\nu)\vert \dd \kopa\vg\end{equation*}
on a
\begin{equation*}I^T_{(D)}(\Pp)=\sum_{Z\in\ESC_Q^\tG}\wt{\sigma}_Q^R(Z{-T})
\sum_{V\in \ESD_\ttS} \Gamma_\ttM^{\Pp}( V^{\Pp}\mskip -2mu ,\TT) \phi (Z,\brT{\Pp},V_{\Pp})\ptf\end{equation*}
Le groupe $\ESD_\ttS$ est par d\'efinition l'annulateur de
\begin{equation*}\bs{\eta}_\ttS= (s\theta_0 - t)\bsmu_S \subset \bsmu_\ttS\end{equation*}
dans $\ESA_\ttS$. On a donc
\begin{equation*}\ESD_\ttS= \ker\Big(\theta_0^{-1}s^{-1}(1-w\theta_0): \ESA_\ttS \rightarrow \ESA_{S}\Big)
\quad \hbox{avec} \quad w = s\theta_0(t)^{-1}\end{equation*}
soit encore
\begin{equation*}\ESD_\ttS= \ker\Big(1-w\theta_0: \ESA_\ttS \rightarrow \ESA_{\ttS})\Big)\ptf\end{equation*}
Posons
\begin{equation*}\mathfrak{d}_\ttS= \ker\Big(1-w\theta_0: \ag_\ttS \rightarrow \ag_{\theta_0(\ttS)})\Big)\quad \hbox{et} \quad 
\mathfrak{d}_{\Pp}= \mathfrak{d}_\ttS \cap \ag_{\Pp}\ptf\end{equation*}
Soit $\mathfrak{e}_{\Pp}$ l'orthogonal de $\mathfrak{d}_{\Pp}$ dans $\ag_{\Pp}$. 
On note $V'\mapsto V'_d$, resp. $V'\mapsto V'_e$, la projection orthogonale de 
$\ag_{\Pp}= \mathfrak{d}_{\Pp}\oplus \mathfrak{e}_{\Pp}$ sur $\mathfrak{d}_{\Pp}$, 
resp. $\mathfrak{e}_{\Pp}$.
%} 
Posons
\begin{equation*}\mathfrak{d}_\ttS^{(\Pp)} = \mathfrak{d}_\ttS \cap (\ag_\ttS^{\Pp} \oplus \mathfrak{e}_{\Pp})\ptf\end{equation*}
On a la d\'ecomposition
\begin{equation*}\mathfrak{d}_\ttS = \mathfrak{d}_{\Pp} \oplus \mathfrak{d}_\ttS^{(\Pp)}\leqno{(1)}\end{equation*}
et la projection $\ag_\ttS \rightarrow \ag_\ttS^{\Pp},\mskip 2mu  V \mapsto V^{\Pp}$ est injective sur $\mathfrak{d}_\ttS^{(\Pp)}$. 
Posons
\begin{equation*}\ESD_{\Pp} \bydef \ESD_\ttS \cap \ag_{\Pp}= \ESA_\ttS \cap \mathfrak{d}_{\Pp}\vg\end{equation*}
et notons $\ESD_{\Pp}^\flat$ et $\ESD_\ttS^{(\Pp)}$ les projections orthogonales de $\ESD_\ttS$ 
sur $\mathfrak{d}_{\Pp}$ et $\mathfrak{d}_\ttS^{(\Pp)}$ 
pour la d\'ecomposition (1). 
On a l'inclusion $\ESD_{\Pp} \subset \ESD_{\Pp}^\flat$ (avec \'egalit\'e si $\Pp={_tS}$) et la suite exacte courte
\begin{equation*}0 \rightarrow \ESD_{\Pp}\rightarrow \ESD_\ttS \rightarrow \ESD_\ttS^{(\Pp)} \rightarrow 0\ptf\leqno{(2)}\end{equation*}
On d\'ecompose la somme $\sum_{V\in \ESD_\ttS}$ en une double somme 
\begin{equation*}\sum_{V_1\in \ESD_\ttS^{(\Pp)}}\quad\sum_{V\in \ESD_{\Pp}(V_1)}\end{equation*}o 
$\ESD_{\Pp}(V_1)\subset \ESD_\ttS$ est la fibre au-dessus de $V_1$ pour la suite exacte courte (2). 
L'expression $I^T_{(D)}(\Pp)$ se r\'ecrit
\begin{equation*}I^T_{(D)}(\Pp)=\sum_{Z\in\ESC_Q^\tG}\wt{\sigma}_Q^R(Z{-T}) 
\sum_{V_1 \in \ESD_\ttS^{(\Pp)}} \Gamma_\ttM^{\Pp}(V_1^{\Pp}\mskip -2mu ,\TT)\phi_e(Z,\brT{\Pp},V_1)\end{equation*}
avec
\begin{equation*}\phi_e(Z,X,V_1) \bydef \sum_{V\in \ESD_{\Pp}(V_1)}\phi(Z,X,V_{\Pp}) \ptf\end{equation*}
On a
\begin{equation*}\phi_e(Z,X,V_1) \ll \phi_e^\flat(Z,X) \bydef \sum_{V'\in \ESD_{\Pp}^\flat} \phi(Z,X,V')\ptf\end{equation*}
Observons que 
\begin{equation*}\phi(Z,X,V')=\phi(0,X-Z',V'-Z')\end{equation*}
o $Z'$ est un rel\`evement de $Z$ dans $\ESA_{\Pp}$. On en d\'eduit que les fonctions $\phi_e(Z,X,V_1)$ et 
$\phi_e^\flat(Z,X) $ ne d\'ependent que $Z_e$ et qu'elles sont \`a d\'ecroissance rapide en $Z_e$. 
D'autre part, puisque la projection $V\mapsto V^{\Pp}$ est injective sur 
$\ESD_\ttS^{(\Pp)}$, la somme 
\begin{equation*}\sum_{V_1 \in \ESD_\ttS^{(\Pp)}} \Gamma_\ttM^{\Pp}(V_1^{\Pp}\mskip -2mu ,\TT)\end{equation*}
est finie. D'o la majoration
\begin{equation*}I^T_{(D)}(\Pp)\ll
\sum_{Z\in\ESC_Q^\tG}\wt{\sigma}_Q^R(Z{-T})\phi_e^\flat(Z,\brT{\Pp})\ptf \end{equation*}
D'apr\`es \cite[13.6.(10)]{LW}, on a l'inclusion
\begin{equation*}\mathfrak{d}_\ttS\subset \ker (q_Q)\leqno{(3)}\end{equation*}
o $q_Q: \ag_0 \rightarrow \ag_Q^\tG$ est l'application d\'efinie en \ref{qqq}. 
Rappelons que cette application est l\'eg\`erement 
diff\'erente de celle de \cite[2.13]{LW} (au lieu de projeter sur $\ag_Q^G$, 
on projette ici sur $\ag_Q^\tG$). L'inclusion (3) 
entra"ne l'analogue de la majoration \cite[13.6.(9)]{LW}:
\begin{equation*}\lVert  (Z-T_Q)^\tG \rVert  \ll \lVert  (Z-T_Q)_e \rVert  \quad \hbox{pour tout $Z\in \ag_Q$ 
tel que $\wt{\sigma}_Q^R(Z-T)=1$}\ptf\leqno{(4)}\end{equation*}
On en d\'eduit que $\lVert  Z^\tG \rVert  \ll 1+ \lVert  Z_e \rVert $ pour tout $Z\in \ESA_Q$ 
tel que $\wt{\sigma}_Q^R(Z-T)=1$. Cela entra"ne la convergence de 
$I^T_{(D)}(\Pp)$ et ach\`eve la preuve de la convergence de (D).

\medskip
Consid\'erons maintenant l'expression (E). On voit comme ci-dessus qu'elle est major\'ee par
\begin{equation*}\sum_{\Pp\in \EuScript{F}^Q(\ttM)} I^T_{(E)}(\Pp)\end{equation*}
avec
\begin{equation*}I^T_{(E)}(\Pp)=\sum_{Z\in\ESC_Q^\tG}\wt{\sigma}_Q^R(Z{-T})
\sum_{V\in \ESD_\ttS} \big(1- \Kappa{\rho T}(V)\big) \Gamma_\ttM^{\Pp}(V^{\Pp}\mskip -2mu ,\TT)
\phi(Z,\brT{\Pp},V_{\Pp})\end{equation*}
soit encore
\begin{align*}
\lefteqn{
I^T_{(E)}(\Pp)=
\sum_{Z\in\ESC_Q^\tG}\wt{\sigma}_Q^R(Z{-T})
\sum_{V_1 \in \ESD_\ttS^{(\Pp)}}\Gamma_\ttM^{\Pp}(V_1^{\Pp}\mskip -2mu ,\TT)}\\
&& \hspace{3cm}\times
\sum_{V\in \ESD_{\Pp}(V_1)}\big(1- \Kappa{\rho T}(V)\big)\phi(Z,\brT{\Pp},V_{\Pp})\ptf
\end{align*}
Fixons $\rho'>0$, pour l'instant arbitraire. Pour all\'eger l'\'ecriture, posons
\begin{equation*}Z_T \bydef Z-T_Q\in \ag_Q\ptf\end{equation*}
Observons que \begin{equation*}\wt{\sigma}_Q^R(Z-T)= \wt{\sigma}_Q^R(Z_T) = \wt{\sigma}_Q^R(Z_T^\tG)\ptf\end{equation*}
On majore $I^T_{(E)}(\Pp)$ par
\begin{equation*}I^T_{(E),\geq}(\Pp)+ I^T_{(E),<}(\Pp)\end{equation*}
o $I^T_{(E),\geq}(\Pp)$, resp. $I^T_{(E),<}(\Pp)$, est l'expression obtenue en remplaant la fonction 
$\wt{\sigma}_Q^R(Z-T)$ par $\wt{\sigma}_Q^R(Z_T)(1-\kappa^{\rho'T}(Z_T))$, resp. 
$\wt{\sigma}_Q^R(Z_T)\kappa^{\rho'T}(Z_T)$, dans $I^T_{(E)}(\Pp)$. On commence par 
majorer $I^T_{(E),\geq}(\Pp)$. On peut choisir $\rho''>0$ tel que $(1-\kappa^{\rho' T}(Z_T))=1$ 
(c'est-\`a-dire $ \lVert  Z_T \rVert  > \rho' \lVert  T \rVert $) implique 
$ \lVert  Z_T^\tG \rVert  > \rho'' \lVert  T \rVert  $. Alors on a
\begin{equation*}I^T_{(E),\geq}(\Pp)\ll 
\sum_{Z\in\ESC_Q^\tG}\wt{\sigma}_Q^R(Z_T^\tG)(1-\kappa^{\rho''T}(Z_T^\tG))
\hspace{-0.2cm}\sum_{V_1 \in \ESD_\ttS^{(\Pp)}} \Gamma_\ttM^{\Pp}(V_1^{\Pp}\mskip -2mu ,\TT)
\phi_e(Z,\brT{\Pp},V_1)\ptf\end{equation*}
Pour tout $V\in \ESD_{\Pp}(V_1)$ la projection orthogonale $V_{\Pp\mskip -2mu ,e}$ de 
$V_{\Pp}$ sur $\mathfrak{e}_{\Pp}$ ne d\'epend que de $V_1$, et on la note $V_{1,e}$. 
D'apr\`es le lemme \ref{TMGMFvariante}, pour tout r\'eel $r$ on a une majoration
\begin{equation*}\phi_e(Z,X,V_1) \ll \Big(1 + \lVert  V_{1,e} -Z_{T,e}-X_e \rVert  \Big)^{-r}\leqno{(5)}\end{equation*}
o la constante implicite est absolue, c'est-\`a-dire ne d\'epend d'aucune variable. 
La constante implicite dans la majoration (4) est elle aussi absolue. 
Comme dans la preuve de 
\cite[13.6.3, page~203]{LW}\footnote{Voir toutefois les \textit{errata} \Err(xix) et \Err(xx) de l'Annexe \ref{Erratum}.}
on montre que l'on peut choisir $\rho''$ tel que la condition
\begin{equation*}\wt{\sigma}_Q^R(Z_T^\tG)(1-\kappa^{\rho''T}(Z_T^\tG))\Gamma_{_tM}^{\Pp}(V^{\Pp}\mskip -2mu ,\TT)=1\end{equation*}
entra"ne une majoration 
\begin{equation*}\lVert  Z_T^\tG \rVert  \ll \lVert  V_{1,e} - Z_{T,e}- \brT{\Pp\mskip -2mu ,e} \rVert \ptf\end{equation*}
Pour tout r\'eel $r$ on a donc une majoration
\begin{equation*}I^T_{(E),\geq}(\Pp) \ll
\sum_{Z\in\ESC_Q^\tG}(1-\kappa^{\rho''T}(Z_T^\tG))(1+\lVert  Z_T^\tG \rVert )^{-r}
\hspace{-0.2cm} \sum_{V_1 \in \ESD_\ttS^{(\Pp)}}\Gamma_\ttM^{\Pp}(V_1^{\Pp}\mskip -2mu ,\TT)\ptf\end{equation*}
La somme en $V_1$ est essentiellement major\'ee par $\lVert  T \rVert ^D$ pour un certain entier $D$, et la somme en $Z$ 
est essentiellement major\'ee par $\lVert  T \rVert ^{-r}$. D'o la majoration
\begin{equation*}I^T_{(E),\geq}(\Pp) \ll \bsd_0(T)^{-r}\ptf\end{equation*}
Traitons maintenant $I^T_{(E),>}(\Pp)$. Gr‰ce \`a la suite exacte courte
\begin{equation*}0 \rightarrow \ESD_{_tS}^{\Pp}\bydef \ESD_{_tS} \cap \mathfrak{d}_{_tS}^{(\Pp)}
 \rightarrow \ESD_{_tS} \rightarrow \ESD_{\Pp}^\flat \rightarrow 0 \vg\leqno{(6)}\end{equation*}
on peut d\'ecomposer la somme $\sum_{V\in \ESD_{tS}}$ en une double somme
\begin{equation*}\sum_{V'\in \ESD_{\Pp}^\flat} \sum_{V\in \ESD_{t_S}^{\Pp}(V')} \end{equation*}
o $\ESD_{t_S}^{\Pp}(V')$ est la fibre au-dessus de $V'$ dans $\ESD_{_tS}$ pour la suite exacte courte (6). 
On a donc
\begin{align*}
\lefteqn{
I^T_{(E),<}(\Pp)=
\sum_{Z\in\ESC_Q^\tG}\wt{\sigma}_Q^R(Z{-T})\kappa^{\rho' T}(Z_T)
\sum_{V'\in \ESD_{\Pp}^\flat}\phi(Z,\brT{\Pp},V')}\\
&& \hspace{3cm}\times
\sum_{V\in \ESD_{_tS}^{\Pp}(V')}(1- \Kappa{\rho T}(V))\Gamma_\ttM^{\Pp}(V^{\Pp}\mskip -2mu ,\TT)\ptf
\end{align*}
Comme dans la preuve de \cite[13.6.3]{LW}, il existe une constante $c_1>0$ 
telle que pour tout $V\in \ESD_{_tS}$ tel que $\Gamma_{_tM}^{\Pp}(V^{\Pp}\mskip -2mu ,\TT)=1$, on ait la majoration
$\lVert  V_1 \rVert  \leq c_1 \lVert  T \rVert  $ o $V_1= V-V_d$ est l'image de $V$ dans $\ESD_{_tS}^{(\Pp)}$. 
Si $\rho > c_1$, en ajoutant la condition 
$(1- \Kappa{\rho T}(V))=1$ c'est-\`a-dire $\rho \lVert  T \rVert  < \lVert  V \rVert  $, on obtient 
$ \lVert  V_d \rVert >(\rho -c_1) \lVert  T \rVert  $ c'est-\`a-dire $(1 - \kappa^{(\rho-c_1)T}(V_d))=1$. 
En particulier $V_d\neq 0$ et l'espace $\mathfrak{d}_{\Pp}$ n'est pas nul. 
Il existe $c_2>0$ tel que la condition $\kappa^{\rho 'T}(Z_T)=1$ c'est-\`a-dire $\lVert  Z_T \rVert  \leq \rho' \lVert  T \rVert $ 
entra"ne $ \lVert  Z_{T,d} + \brT{\Pp,d} \rVert  \leq c_2 \lVert  T \rVert $. 
En prenant $\rho > c_1 + c_2$, on obtient que la condition
\begin{equation*}\kappa^{\rho 'T}(Z_T) (1- \Kappa{\rho T}(V))\Gamma_{_tM}^{\Pp}(V^{\Pp}\mskip -2mu ,\TT)=1\end{equation*}
entra"ne l'in\'egalit\'e
\begin{equation*}\lVert  V_d - Z_{T,d} - \brT{\Pp\mskip -2mu ,d} \rVert 
 \geq (\rho - (c_1+c_2)) \lVert  T \rVert  > \Big(1 - \frac{c_2}{\rho-c_1}\Big) \lVert  V_d \rVert  \ptf\end{equation*}
Gr‰ce au lemme \ref{TMGMFvariante}, on en d\'eduit que pour tout r\'eel $r$ 
l'expression $I^T_{(E),<}(\Pp)$ est essentiellement major\'ee par
\begin{equation*}\sum_{Z\in\ESC_Q^\tG}\kappa^{\rho' T} (Z_T^\tG)
\hspace{-0.2cm} \sum_{V'\in \ESD_{_tS}^\flat}(1 + \lVert  V' \rVert )^{-r}(1- \kappa^{(\rho-c_1)T}(V'))
 \sum_{V_1\in \ESD_{_tS}^{(\Pp)}} \Gamma_\ttM^{\Pp}(V_1^{\Pp}\mskip -2mu ,\TT)\ptf\end{equation*}
Les sommes en $Z$ et en $V_1$ sont essentiellement major\'ees par $\lVert  T \rVert ^D$ 
pour un entier $D$ convenable, et pour tout r\'eel $r$ la somme sur $V'$ est essentiellement major\'ee par 
$\lVert  T \rVert ^{-r}$. D'ou une majoration
\begin{equation*}I^T_{(E),<}(\Pp) \ll \bsd_0(T)^{-r}\ptf\end{equation*}
qui, jointe \`a la majoration $I^T_{(E),\geq}(\Pp) \ll \bsd_0(T)^{-r}$, assure la convergence de l'expression (E) et 
l'assertion de (iii) la concernant.

Consid\'erons maintenant l'expression (A). Comme pour (D), on obtient qu'elle est essentiellement major\'ee par
\begin{equation*}\sum_{\Pp\in \EuScript{F}^{Q_0}({_tM})} I^T_{(A)}(\Pp)\end{equation*}
avec
\begin{align*}
\lefteqn{
I^T_{(A)}(\Pp)=
\sum_{Z\in\ESC_Q^\tG}
\wt{\sigma}_Q^R(Z{-T})\hspace{-5pt}
\sum_{X\in\mathcal{C}_F(Q,Q_0; T)}}\\
&& \hspace{3cm}\times
\sum_{V\in \ESD_\ttS} \big\vert \Gamma_{_tM}^{\Pp}(V^{\Pp}\mskip -2mu ,\TT- \XX) 
\big\vert \psi(H_Z^{T-X}, \brTX{\Pp},V_{\Pp}) 
\end{align*}
et
\begin{equation*}\psi(H,Y,V')\bydef \int_{\bs{\kopa}_S}
\big\vert \wh{\bsd}_{\Pp\mskip -2mu ,F}^{\mskip 2mu Q_0}(H,\kopa;Y,V';\nu)\big\vert \dd \kopa\ptf\end{equation*}
Ici $\TT - \XX$ est la famille orthogonale $(\brTX{\Pp})$. 
Elle est rationnelle si $T\in \ag_{0,\QM}$. Fixons un $\Pp\in \EuScript{F}^{Q_0}({_tM})$. 
Rappelons que $\mathfrak{e}_{\Pp}$ est l'orthogonal de 
$\mathfrak{d}_{\Pp}= \mathfrak{d}_{_tS} \cap \ag_{\Pp}$ dans $\ag_{\Pp}$, et qu'on 
a not\'e $V'\mapsto V'_e$ la projection orthogonale de $\ag_{\Pp}= 
\mathfrak{d}_{\Pp}\oplus \mathfrak{e}_{\Pp}$ sur $\mathfrak{e}_{\Pp}$. 
Comme pour (D), l'expression $I^T_{(A)}(\Pp)$ se r\'ecrit
\begin{align*}
\lefteqn{
I^T_{(A)}(\Pp)=\sum_{Z\in\ESC_Q^\tG}\wt{\sigma}_Q^R(Z{-T}) \sum_{X\in\mathcal{C}_F(Q,Q_0; T)}}\\
&& \hspace{3cm}\times
\sum_{V_1 \in \ESD_\ttS^{(\Pp)}}\big\vert \Gamma_\ttM^{\Pp}(V_1^{\Pp}\mskip -2mu ,\TT-\XX)\big\vert
\psi_e(H_Z^{T-X},\brTX{\Pp},V_1)
\end{align*}
avec
\begin{equation*}\psi_e(H,Y,V_1) \bydef \sum_{V\in \ESD_{\Pp}(V_1)}\psi(H,Y,V_{\Pp}) \ptf\end{equation*}
D'apr\`es le lemme \ref{TMGMFvariante}, pour tout r\'eel $r$ on a une majoration
\begin{equation*}\psi_e(H,Y,V_1) \ll \big(1 + \lVert  V_{1,e} - (H + Y^{Q_0})_e\rVert \big)^{-r}\ptf\end{equation*}
Pour $H=H_Z^{T-X}= Z +(T-X)_\Qo^Q$ et $Y = \brTX{\Pp}$, on a
\begin{equation*}H + Y^{Q_0}= Z + \brTX{\Pp}^Q = Z_{T-X} + \brTX{\Pp} \end{equation*}
avec $Z_{T-X}= Z- (T-X)_Q$. Comme $X$ appartient \`a $\mathcal{C}_F(Q,Q_0;T)\subset \ag_0^Q$, on a $Z_{T-X}= Z_T$. 
Notons $\mathfrak{d}_\Qo^\flat \subset \ag_\Qo$ l'image de $\mathfrak{d}_{_tS}$ par 
la projection $V \mapsto V_\Qo$, et soit $\mathfrak{h}$ l'orthogonal de $\mathfrak{d}_\Qo^\flat$ dans $\ag_\Qo$. Puisque
\begin{equation*}\mathfrak{d}_\Qo= \mathfrak{d}_{P'} \cap \ag_\Qo \subset \mathfrak{d}_\Qo^\flat\vg\end{equation*}
on a l'inclusion $\mathfrak{h} \subset \mathfrak{e}_{P'}$. 
Notons $\mathfrak{h}^\perp$ l'orthogonal de $\mathfrak{h}$ dans $\mathfrak{e}_{\Pp}$, et 
$V \mapsto V_h=V_{P'\mskip -2mu ,h}$ la projection orthogonale de
\begin{equation*}\ag_0= \ag_0^{\Pp} \oplus \mathfrak{d}_{\Pp} \oplus \mathfrak{h} \oplus \mathfrak{h}^\perp\end{equation*}
sur $\mathfrak{h}$. Pour $V\in \mathfrak{d}_{_tS}$, on a $V_h=0$. 
D'autre part puisque la projection $V \mapsto V_h$ se factorise 
\`a travers $V \mapsto V_\Qo$, on a $\brTX{\Pp\mskip -2mu ,h}= (T-X)_h = T_h -X_h$. 
Pour tout r\'eel $r$, on obtient une majoration
\begin{equation*}\psi_e(H_Z^{T-X},\brTX{P'},V_1) \ll \big(1 + \lVert  Z_{T,h} + T_h-X_h\rVert \big)^{-r}\ptf\end{equation*}
Or d'apr\`es \cite[13.6.(13), p.~204]{LW}, pour tout $Z\in \ESC_Q^\tG$ tel que 
$\wt{\sigma}_Q^R(Z_T)=1$ et tout $X\in \mathcal{C}(Q,Q_0)$, on a une majoration
\begin{equation*}\lVert  Z_T^\tG \rVert  + \lVert  X \rVert  \ll \lVert  Z_{T,h} + T_h-X_h\rVert \ptf \leqno{(7)}\end{equation*}
Comme $\lVert  Z^\tG \rVert  \ll 1 + \lVert  Z_T^\tG \rVert  $ (la constante implicite d\'ependant de $T$), pour tout r\'eel $r$ 
on obtient une majoration 
\begin{equation*}I^T_{(A)}(\Pp)\ll \sum_{Z\in\ESC_Q^\tG} \sum_{X\in\mathcal{C}_F(Q,Q_0; T)}
\big(1 + \lVert  Z^\tG \rVert  + \lVert  X \rVert \big)^{-r}
\sum_{V_1 \in \ESD_\ttS^{(\Pp)}}\big\vert \Gamma_\ttM^{\Pp}(V_1^{\Pp}\mskip -2mu ,\TT-\XX)\big\vert \ptf\end{equation*}
Puisque l'application $V_1 \mapsto V_1^{P'}$ est injective, la somme en $V_1$ est essentiellement major\'ee par 
\begin{equation*}\lVert  T \rVert ^D + (1 + \lVert  X \rVert  )^D\end{equation*}
pour un entier $D$ convenable. On en d\'eduit que pour pour tout r\'eel $r$, on a une majoration
\begin{equation*}I^T_{(A)}(\Pp)\ll \sum_{Z\in\ESC_Q^\tG} 
\sum_{X\in\mathcal{C}_F(Q,Q_0; T)}\lVert  T \rVert ^D (1+ \lVert  Z^\tG \rVert )^{-r} (1+ \lVert  X \rVert )^{-r}\ptf \leqno{(8)}\end{equation*}
Cela prouve la convergence de l'expression (A).

Quant aux deux expressions restantes ((B) et (C)), leur convergence se d\'eduit 
des raisonnements pr\'ec\'edents comme dans la preuve de 
du lemme \cite[13.6.3]{LW}. Idem pour la majoration du point (ii) de l'\'enonc\'e. 
Cela ach\`eve la preuve du lemme \ref{major}.\hfill \qed

%%%%%%%%%%%%%%%%%%%%%%%%
\vskip2mm
\ni \textit{D\'emonstration du lemme \ref{remajo}.} 
Le sous-groupe parabolique \begin{equation*}S''\in \ESP^Q(\ttM)\smallsetminus \ESP^{Q_0}(\ttM)\end{equation*}\'etant fix\'e, 
on consid\`ere la transform\'ee de Fourier inverse
\begin{equation*}V \mapsto \wh{\bsd}(\chi; V,S'' ; \nu)\ptf\end{equation*}
C'est une fonction \`a d\'ecroissance rapide en $V\in \ESA_{_tM}$, uniform\'ement en $\chi$. Par cons\'equent la fonction
\begin{equation*}V \mapsto \xi(V)=\int_{\bs{\kopa}_S}\vert\wh{\bsd}(\chi; V,S'' ; \nu)\vert \dd \kopa\end{equation*}
sur $\ESA_{_tM}$ est encore \`a d\'ecroissance rapide, et on a une majoration
\begin{equation*}E^T_{S''}\ll \sum_{Z\in\ESC_Q^\tG}\wt{\sigma}_Q^R(Z-T)
\sum_{V\in \ESD_\ttS} \Kappa{\rho T}(V) 
\hspace{-0.3cm}\sum_{V_2\in \mathcal{C}_F^Q(-\ZTSs;S'')}\hspace{-0.5cm}\xi(V+V_2)\ptf\leqno{(9)}\end{equation*}
Rappelons que pour $Y\in \ag_\ttM^{Q}+\ESA_Q$, on a pos\'e
\begin{equation*}\mathcal{C}^{Q}_F(Y;S'') = \big(Y +\mathcal{C}^{Q}(S'')\big)\cap 
\ESA_\ttM\subset \ESA_\ttM^{Q}(Y_Q)\ptf\end{equation*}
On note $\mathcal{C}^Q(S'')_\Qo$ et $\mathcal{C}^{Q}_F(Y;S'')_\Qo$ les images (projections orthogonales) 
de $\mathcal{C}^Q(S'')$ et $\mathcal{C}_F^Q(Y;S'')$ dans 
$\ag_\Qo$. Par d\'efinition $\mathcal{C}^Q(S'')_\Qo$ est un sous-ensemble de $\ag_\Qo^Q$, 
$Y_\Qo$ appartient \`a $\ag_\Qo^Q + \ESA_Q$, et on a les inclusions
\begin{equation*}\mathcal{C}^{Q}_F(Y;S'')_\Qo \subset 
\big( Y_\Qo + \mathcal{C}^Q(S'')_\Qo\big)\cap \ESA_\Qo \subset \ESA_\Qo^Q(Y_Q)\ptf\end{equation*}
Pour $X\in \mathcal{C}^{Q}_F(Y;S'')_\Qo$, on note 
$\mathcal{C}^{Q}_F(Y;S'')_X^{Q_0}\subset \mathcal{C}^{Q}_F(Y;S'')$ la fibre au-dessus de $X$. Cette fibre est contenue 
dans $\ESA_{_tM}^{Q_0}(X)$. On peut 
donc d\'ecomposer la somme $\sum_{V_2\in \mathcal{C}_F^Q(-\ZTSs;S'')}$ en une double somme
\begin{equation*}\sum_{X\in \mathcal{C}_F^Q(-\ZTSs;S'')_\Qo}\sum_{V_2 \in \mathcal{C}_F^Q(-\ZTSs;S'')_X^{Q_0}}\end{equation*}
puis majorer brutalement la seconde somme par $\sum_{V_2 \in \ESA_{_tM}^{Q_0}(X)}$. On obtient
\begin{equation*}E^T_{S''}\ll \sum_{Z\in\ESC_Q^\tG}\wt{\sigma}_Q^R(Z-T)
\sum_{V\in \ESD_\ttS} \Kappa{\rho T}(V) 
\hspace{-0.3cm}\sum_{X\in \mathcal{C}_F^Q(-\ZTSs;S'')_\Qo}\hspace{-0.5cm}
\overline{\xi}(V_\Qo+X)\ptf\leqno{(10)}\end{equation*}
avec, pour $\overline{V}\in \ESA_\Qo$,
\begin{equation*}\overline{\xi}(\overline{V})= \sum_{V_2\in \ESA_{_tM}^{Q_0}(\overline{V})} \xi(V)\ptf\end{equation*}
La fonction $\overline{\xi}$ est \`a d\'ecroissance rapide en $\overline{V}\in \ESA_\Qo$. 

Notons $\mathfrak{k}$ le noyau de l'application $q_Q: \ag_0 \rightarrow \ag_Q^\tG$ d\'efinie en \ref{qqq}, et 
$\mathfrak{k}_t$ sa projection sur $\ag_{_tS}$ ou ce qui revient au mme 
(puisque $\ag_0^{_tS} \subset \ag_0^{Q_0} \subset \mathfrak{k}$) 
son intersection avec cet espace. On note $\mathfrak{f}$ l'orthogonal de $\mathfrak{k}_t$ dans $\ag_{_tS}$. Puisque 
$\mathfrak{k}= \mathfrak{k}_t \oplus \ag_0^{_tS}$, c'est aussi l'orthogonal de 
$\mathfrak{k}$ dans $\ag_0$. C'est donc un sous-espace de $\ag_\Qo$. Pour $V\in \ag_{_tS}=
 \mathfrak{k}_t \oplus \mathfrak{f}$, on note $V_f=V_{Q_0,f}$ la projection orthogonale de 
$V$ sur $\mathfrak{f}$. D'apr\`es l'inclusion (3), on a $\mathfrak{d}_{_tS}\subset \mathfrak{k}_t$, 
par cons\'equent $V_f=0$ pour tout $V\in \mathfrak{d}_{_tS}$. 
D'apr\`es (10), pour tout r\'eel $r$, on obtient une majoration
\begin{equation*}E^T_{S''}\ll \sum_{Z\in\ESC_Q^\tG}\wt{\sigma}_Q^R(Z_T)
\sum_{V\in \ESD_\ttS} \Kappa{\rho T}(V) 
\hspace{-0.3cm}\sum_{X\in \mathcal{C}_F^Q(-\ZTSs;S'')_\Qo}\hspace{-0.5cm}
(1 + \lVert  X_f \rVert )^{-r}\ptf\leqno{(11)}\end{equation*}
La somme sur $V$ est essentiellement major\'ee par $ \lVert  T\rVert ^D$ pour un $D$ convenable. L'\'el\'ement
$\ZTSs$ est par d\'efinition \'egal \`a $Z + \brT{S''}^Q= Z_T + \brT{S''}$. 
Tout \'el\'ement $X\in \mathcal{C}_F^Q(-\ZTSs;S'')_\Qo$ s'\'ecrit 
$X= - H_{Z;S''}^T + X'$ avec $X' \in \mathcal{C}^Q(S'')_\Qo$, et l'on a
\begin{equation*}X_f = -Z_{T,f} - T_f + X'_f\ptf\end{equation*}
D'apr\`es \cite[13.7~(4)]{LW}, pour $Z\in \ESA_Q$ tel que $\wt{\sigma}_Q^R(Z_T)=1$ et 
$X'\in \mathcal{C}_F^Q(S'')_\Qo$, on a une majoration absolue
\begin{equation*}\lVert  T \rVert  + \lVert Z_T^\tG \rVert  + \lVert X' \rVert  \ll 1 + \lVert  -Z_{T,f}- T_f + X'_f \rVert \ptf\end{equation*}
On en d\'eduit que pour $Z\in \ESA_Q$ tel que $\wt{\sigma}_Q^R(Z_T)=1$ et 
$X\in \mathcal{C}_F^Q(-\ZTSs;S'')_\Qo$, on a une majoration absolue
\begin{equation*}\lVert  T \rVert  + \lVert Z^\tG \rVert  + \lVert X\rVert  \ll 1 + \lVert  X_f \rVert \ptf\end{equation*}
D'apr\`es (11), pour tout r\'eel $r$, on obtient une majoration
\begin{equation*}E^T_{S''}\ll \lVert  T \rVert  ^{D-r}
\sum_{Z\in\ESC_Q^\tG}(1 + \lVert  Z^\tG\rVert )^{-r}
\hspace{-0.3cm}\sum_{X\in \mathcal{C}_F^Q(-\ZTSs;S'')_\Qo}\hspace{-0.5cm}
(1+ \lVert  X \rVert )^{-r}\ptf\leqno{(12)}\end{equation*}
Ceci est essentiellement major\'e par $\bsd_0(T)^{-r}$, ce qui d\'emontre le lemme. \hfill\qed

%%%%%%%%%%%%%%

 \section{\'Elargissement des sommations}
\label{\'elargissement}

D'apr\`es \cite[13.8.1]{LW}, on a l'inclusion
\begin{equation*}\bfW^{Q'}(\ag_S,Q_0)\subset \bfW^G(\ag_S,Q)\ptf \leqno{(1)}\end{equation*}
On rel‰che les hypoth\`eses sur $Q$ et $R$: on suppose seulement $P_0\subset Q\subset R$ et 
on abandonne l'hypoth\`ese $\wt{\eta}(Q,R)\neq 0$. Pour $t\in W^G(\ag_S,Q)$, on pose
\begin{equation*}\wt{\eta}(Q,R;t)=\sum_\tP(-1)^{a_\tP - a_\tG}\end{equation*}
o la somme porte sur l'ensemble des $\tP\in\wt\ESP_\st$ tels que 
$Q\subset P\subset R$ et $t\in W^P$. Cet ensemble peut tre vide. 
S'il est non vide, alors il existe deux espaces paraboliques 
standards $\tP_1\subset \tP_2$ tel que ce soit l'ensemble des 
$\tP\in \wt\ESP_\st$ v\'erifiant $\tP_1 \subset \tP \subset \tP_2$ 
(on a alors $P_2 = R^-$). On en d\'eduit que $\wt{\eta}(Q,R;t)\neq 0$ 
si et seulement s'il existe un \textit{unique} $\tP\in \wt\ESP_\st$ 
tel que $Q\subset P\subset R$ et $t\in W^P$, auquel cas on a
\begin{equation*}\wt{\eta}(Q,R;t) = (-1)^{a_{\widetilde{P}} - a_\tG}\ptf\end{equation*}
Rappelons que pour $\Pp\in \EuScript{F}^Q(\ttM)$ et $w= s\theta_0(t)^{-1}$ on a pos\'e:
\begin{equation*}\mathfrak{d}_\ttS= \ker\Big(1-w\theta_0: \ag_\ttS \rightarrow \ag_{\theta_0(\ttS)})\Big)\ptf\end{equation*}
Rappelons aussi que pour $V\in \ag_{\Pp}=\mathfrak{d}_{\Pp}\oplus \mathfrak{e}_{\Pp}$, on a not\'e $V_e$ 
la projection orthogonale de $V$ sur $\mathfrak{e}_{\Pp}$. 

% lemme
\begin{lemma}\label{majorations page 211}
On suppose $\wt{\eta}(Q,R;t)\neq 0$. 
Soient $Z\in \ESA_Q$ et $V\in \ESD_{\ttS}$
tels que \begin{equation*}\wt{\sigma}_Q^R(Z-T)\Gamma_\ttM^{\Pp}(V^{\Pp}\mskip -2mu ,\TT)=1\ptf\end{equation*}
Alors:
\begin{enumerate}[(i)]
\item $\lVert  (Z-T_Q)^\tG \rVert  \ll 1 + \lVert V_{\Pp\mskip -2mu ,e} -(Z-T_Q)_e - \brT{\Pp\mskip -2mu ,e} \rVert $;
\item et, si $t\notin \bfW^{Q'}(\ag_S ,Q_0)$,
\begin{equation*} \lVert  T \rVert  + \lVert  (Z-T_Q)^\tG\rVert  \ll 1 + \lVert V_{\Pp\mskip -2mu ,e} -(Z-T_Q)_e
 - \brT{\Pp\mskip -2mu ,e} \rVert \ptf\end{equation*} 

\end{enumerate}
\end{lemma}

% preuve
\begin{proof}
Ce sont les analogues des assertions (3)(i) et (3)(ii) en bas de la page~211 de \cite{LW}, 
dont la preuve occupe les pages 212 \`a 215 de \textit{loc.~cit}. 
\end{proof}

% proposition
\begin{proposition}\label{convprop}
Soient $t\in \bfW^G(\ag_S,Q)$ et $s\in \bfW^Q(\theta_0(\ag_S),t(\ag_S))$. 
On pose
\begin{equation*}\bsA^T_{s,t}=\sum_{Z\in\ESC_Q^\tG}\wt{\sigma}_Q^R(Z-T)\int_{\bsmu_S}
\brabso^{\mskip 2mu T,Q}_{ s, t} (Z;\mu)\vtt(\mu)\dd \mu\ptf\end{equation*}
On suppose $\wt{\eta}(Q,R;t)\neq 0$. 
\begin{enumerate}[(i)]
\item L'expression $\bsA^T_{s,t}$ est convergente dans l'ordre indiqu\'e.
\item Supposons $t\notin \bfW^{Q'}(\ag_S,Q_0)$. Alors pour tout r\'eel $r$, on a une majoration
\begin{equation*}\vert\bsA^T_{s,t}\vert\ll \bsd_0(T)^{-r}\ptf\end{equation*}
\end{enumerate}
\end{proposition}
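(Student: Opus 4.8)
Proposition \ref{convprop} asserts convergence and, in the "exotic" case $t\notin \bfW^{Q'}(\ag_S,Q_0)$, rapid decay of the quantity $\bsA^T_{s,t}$. My plan is to imitate the analysis already carried out for the five expressions (A)--(E) in the lemma \ref{major}, which is exactly the mechanism that governs convergence of the inner Fourier-dual sums. The key observation is that $\brabso^{\,T,Q}_{s,t}(Z;\mu)$ is built from the same periodic $(Q,\ttM)$-family $\bsc(\kopa;\nu)$ (and its translate $\bsd(\kopa;\nu)=\bsc(\YY,\kopa;\nu)$) that appeared in the proof of \ref{crux}. So my first step is to re-express $\bsA^T_{s,t}$ via the change of variables $(\kopa,\Lambda)\mapsto \mu(\kopa,\Lambda)$ and the inversion-of-Fourier identity \ref{fourbso}\,$(ii)$, writing it, up to the compact $\bs\kopa_S$-integration and the sum over $\nu\in\bsESEsigma$, as a sum over $Z\in\ESC_Q^\tG$ and $V\in\ESD_\ttS$ of $\wh{\bsd}_{\ttM,F}^{\,Q,T}(Z,\kopa;V;\nu)$. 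This is precisely the object controlled by expression (D) of \ref{major}.

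First I would establish $(i)$. By the decomposition
$$\Gamma_\ttM^{Q}(V,\UU(T)) = \sum_{\Pp\in \EuScript{F}^Q(\ttM)}
\Gamma_\ttM^{\Pp}(V,\TT)\Gamma_{\Pp}^Q(V_{\Pp}-\brT_{\Pp},U_{\Pp})\ptf$$
(\cite[1.8.6]{LW}) together with lemma \ref{TMGMFvariante}, the sum reduces to a finite sum over $\Pp\in\EuScript{F}^Q(\ttM)$ of terms $I^T_{(D)}(\Pp)$ of the shape already treated. The convergence of $I^T_{(D)}(\Pp)$ was proved in the course of \ref{major} via the splitting $\ESD_\ttS=\ESD_{\Pp}\oplus\ESD_\ttS^{(\Pp)}$, the rapid decay of $\phi_e^\flat(Z,X)$ in $Z_e$, the finiteness of the sum over $V_1\in\ESD_\ttS^{(\Pp)}$ (injectivity of $V\mapsto V^{\Pp}$ on $\mathfrak{d}_\ttS^{(\Pp)}$), and the crucial inclusion $\mathfrak{d}_\ttS\subset\ker(q_Q)$ which yields $\| (Z-T_Q)^\tG \| \ll \| (Z-T_Q)_e \|$ on the support of $\wt{\sigma}_Q^R(Z-T)$. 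Since $\brabso^{\,T,Q}_{s,t}$ is the scalar pairing against $\Phi,\Psi$ of exactly the same $\bsd$-family, the identical bookkeeping applies here, and I would invoke lemma \ref{majorations page 211}\,$(i)$ to supply the required inequality $\| (Z-T_Q)^\tG \| \ll 1 + \|V_{\Pp\!,e} -(Z-T_Q)_e - \brT_{\Pp\!,e} \|$. This gives the absolute convergence claimed in $(i)$.

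For $(ii)$, the extra hypothesis $t\notin \bfW^{Q'}(\ag_S,Q_0)$ is what produces decay, and this is where lemma \ref{majorations page 211}\,$(ii)$ does the real work: it upgrades the bound to include the factor $\| T \|$, giving $\| T \| + \| (Z-T_Q)^\tG\| \ll 1 + \|V_{\Pp\!,e} -(Z-T_Q)_e - \brT_{\Pp\!,e} \|$. Plugging this, via the rapid-decay estimate of \ref{TMGMFvariante} for $\wh{\bsd}_{\Pp\!,F}^{\,Q}$, into $I^T_{(D)}(\Pp)$ means every summand is bounded by $(1+\|T\|+\|Z^\tG\|)^{-r}$ for arbitrary $r$; summing the polynomially-many $V_1$-terms (bounded by $\|T\|^D$) and the geometrically-decaying $Z$-sum then yields $\vert\bsA^T_{s,t}\vert\ll \bsd_0(T)^{-r}$ for every $r$, using the equivalence of $\bsd_0(T)$ and $\|T\|$ in the cone fixed in \ref{convergence d'une int�grale it�r�e}.

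The main obstacle, as in \cite[13.8]{LW}, is entirely contained in lemma \ref{majorations page 211}: the dichotomy between $(i)$ and the strictly stronger $(ii)$ rests on the combinatorial fact that $t\notin\bfW^{Q'}(\ag_S,Q_0)$ forces the relevant Weyl-chamber geometry to separate $V_{\Pp,e}$ from $(Z-T_Q)_e$ by a distance proportional to $\|T\|$, and the delicate part is tracking how the subspace $\mathfrak{d}_\ttS=\ker(1-w\theta_0)$ interacts with $\ker(q_Q)$ in the twisted setting (where $q_Q$ projects onto $\ag_Q^\tG$ rather than $\ag_Q^G$). Everything else is the formal transcription of the already-verified machinery of \ref{crux} and \ref{major}, so once lemma \ref{majorations page 211} is in hand the proof is a short assembly.
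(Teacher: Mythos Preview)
Your proposal is correct and follows essentially the same approach as the paper: reduce to fixed $\nu$, rewrite $\bsA^T_{s,t,\nu}$ via \ref{fourbso}\,$(ii)$ and Fourier inversion as a $\sum_{Z,V}\wh{\bsd}_{\ttM,F}^{\;Q,T}(Z,\kopa;V;\nu)$, observe that the treatment of expression~(D) in \ref{major} is valid up to the point where the hypothesis $t\in\bfW^{Q'}$ enters, and then invoke lemma~\ref{majorations page 211}\,$(i)$ and $(ii)$ respectively to control $I^T_{(D)}(\Pp)$ in the two cases. The paper's proof is the compressed version of exactly this argument.
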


% preuve
\begin{proof} 
Puisque $\bs{A}_{s,t}= \vert \wh\bsbbc_S \vert^{-1}\sum_{\nu \in \bsESEsigma}
\bs{A}_{s,t,\nu}$ avec \begin{equation*}\bsA^T_{s,t,\nu}=\sum_{Z\in\ESC_Q^\tG}\wt{\sigma}_Q^R(Z-T)\int_{\bsmu_S}
\bso^{\mskip 2mu T,Q}_{ s, t} (Z;\mu;\nu)\vtt(\mu)\dd \mu\vg\end{equation*}il suffit de prouver les r\'esultats pour $\bs{A}_{s,t,\nu}^{T}$ 
avec $\nu \in \bsESEsigma$ fix\'e. Le lemme \ref{fourbso}~(ii) s'applique ici encore et 
on en d\'eduit l'analogue de \ref{[LW,13.6.2]}:
\begin{equation*}\bsA^T_{s,t,\nu}=\sum_{Z\in\ESC_Q^\tG}\wt{\sigma}_Q^R(Z{-T})
\bigg( \int_{\bs{\kopa}_S}
\sum_{V\in \ESD_\ttS} \wh{\bsd}_{\ttM,F}^{\;Q,T}(Z ,\kopa; V; \nu)\dd \kopa \bigg)\ptf\end{equation*}
L'expression est convergente dans l'ordre indiqu\'e. Il s'agit de prouver 
qu'elle est absolument convergente puis de la majorer lorsque $t\notin \bfW^{Q'}(\ag_S,Q_0)$.
On observe que dans la preuve de 
la convergence de l'expression (D) du lemme \ref{major},
ce n'est qu'\`a partir de la relation (3) que l'hypoth\`ese $t\in \bfW^{Q'}$ est utilis\'ee. 
On a donc ici aussi la majoration
\begin{equation*}\bsA^T_{s,t,\nu} \ll \sum_{\Pp\in \EuScript{F}^Q(\ttS)} I^T_{(D)}(\Pp)\end{equation*}
avec
\begin{equation*}I^T_{(D)}(\Pp)=\sum_{Z\in\ESC_Q^\tG}\wt{\sigma}_Q^R(Z{-T})
\sum_{V_1\in \ESD_\ttS^{(\Pp)}} \Gamma_\ttM^{\Pp}( V_1^{\Pp}\mskip -2mu ,\TT) \phi_e (Z,\brT{\Pp},V_1)\ptf\end{equation*}
D'apr\`es \ref{preuve-fin}.(5) et le lemme \ref{majorations page 211}, pour tout r\'eel $r$, 
en posant $C_r=1$ sans hypoth\`ese sur $t$ et $C_r = \lVert  T \rVert ^{-r}$ sous l'hypoth\`ese de (ii), on a 
une majoration
\begin{equation*}I^T_{(D)}(\Pp) \ll C_r \sum_{Z\in\ESC_Q^\tG} \big(1 + \lVert  (Z-T_Q)^\tG \rVert  \big)^{-r} 
\sum_{V_1\in \ESD_\ttS^{(\Pp)}} \Gamma_\ttM^{\Pp}( V_1^{\Pp}\mskip -2mu ,\TT)\end{equation*}
o la constante implicite est absolue. La somme en $V_1$ est essentiellement major\'ee par $\lVert  T \rVert ^D$ 
pour un certain entier $D$. La somme en $Z$ 
est convergente ce qui d\'emontre le point (i). Sous l'hypoth\`ese de (ii) on obtient 
$I^T_{(D)}(\Pp) \ll \lVert  T \rVert ^{-r}$ pour tout r\'eel $r$, ce qui d\'emontre (ii). 
\end{proof}

On pose
\begin{equation*}\bsA^T=\sum_{t\in \bfW^G(\ag_S,Q)}\wt{\eta}(Q,R;t)
\sum_{s\in \bfW^Q(\theta_0(\ag_S),t(\ag_S))}\bsA^T_{s,t}\ptf\end{equation*}

% corollaire
\begin{corollary}\label{lastcor} 
Pour tout r\'eel $r$, on a les majorations suivantes:
\begin{enumerate}[(i)]
\item Si $\wt{\eta}(Q,R)\neq 0$ alors
$\vert\wt{\eta}(Q,R)A^T -\bsA^T\vert\ll\bsd_0(T)^{-r}$.
\item Si $\wt{\eta}(Q,R)= 0$ alors $
\vert\bsA^T\vert \ll \bsd_0(T)^{-r} $.
\end{enumerate}
\end{corollary}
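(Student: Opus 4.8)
La preuve est une combinaison formelle des r\'esultats \ref{convprop}, \ref{crux} et \ref{pureb} avec un calcul combinatoire des coefficients $\tilde\eta(Q,R;t)$. Le plan est d'abord de d\'ecouper $\bsA^T$ selon l'inclusion $\bfW^{Q'}(\ag_S,Q_0)\subset\bfW^G(\ag_S,Q)$ \'etablie au d\'ebut de cette section, en
$$\bsA^T = \bsA^T_{\rm princ}+\bsA^T_{\rm rest}$$
o\`u $\bsA^T_{\rm princ}$ d\'esigne la sous-somme index\'ee par les $t\in\bfW^{Q'}(\ag_S,Q_0)$ et $\bsA^T_{\rm rest}$ celle index\'ee par les $t\in\bfW^G(\ag_S,Q)\smallsetminus\bfW^{Q'}(\ag_S,Q_0)$. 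Les ensembles d'indices \'etant finis et ind\'ependants de $T$, et chaque terme $\bsA^T_{s,t}$ avec $\tilde\eta(Q,R;t)\neq0$ et $t\notin\bfW^{Q'}(\ag_S,Q_0)$ \'etant major\'e par $\bsd_0(T)^{-r}$ d'apr\`es \ref{convprop}\,$(ii)$, on obtient $\vert\bsA^T_{\rm rest}\vert\ll\bsd_0(T)^{-r}$ pour tout r\'eel $r$, et ceci dans les deux cas $(i)$ et $(ii)$.

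L'\'etape cl\'e est le calcul de $\tilde\eta(Q,R;t)$ pour $t\in\bfW^{Q'}(\ag_S,Q_0)$. Je montrerais d'abord l'inclusion $Q'=\theta_0^{-1}(Q)\subset Q^+$: en effet $Q^+\in\wt\ESP_\st$ est $\theta_0$-stable, d'o\`u $\theta_0^{-1}(Q^+)=Q^+$, et $Q\subset Q^+$ donne $Q'\subset Q^+$. Comme $t\in\bfW^{Q'}\subset\bfW^{Q^+}$ et que tout $\tP\in\wt\ESP_\st$ v\'erifiant $Q\subset P\subset R$ satisfait $Q^+\subset P\subset R^-$ (d'apr\`es \cite[2.11.1]{LW}), on a $t\in W^P$ pour tous ces $\tP$. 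Par cons\'equent la condition $t\in W^P$ est automatique et
$$\tilde\eta(Q,R;t)=\sum_{\{\tP\,\vert\,Q^+\subset P\subset R^-\}}(-1)^{a_\tP-a_\tG}\ptf$$
Cette somme altern\'ee sur l'intervalle de sous-espaces paraboliques standards entre $Q^+$ et $R^-$ (param\'etr\'e par les parties de l'ensemble des racines simples correspondant) se r\'eduit \`a l'unique terme $\tP=Q^+=R^-$ lorsque $Q^+=R^-$, et vaut $0$ sinon par inversion de M\"obius. On reconna\^it dans chaque cas exactement $\tilde\eta(Q,R)$, d'o\`u l'identit\'e $\tilde\eta(Q,R;t)=\tilde\eta(Q,R)$ valable pour tout $t\in\bfW^{Q'}(\ag_S,Q_0)$.

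Il reste \`a conclure. Dans le cas $(ii)$, on a $\tilde\eta(Q,R)=0$, donc $\tilde\eta(Q,R;t)=0$ pour tout $t\in\bfW^{Q'}(\ag_S,Q_0)$, ce qui annule $\bsA^T_{\rm princ}$; joint \`a la majoration de $\bsA^T_{\rm rest}$, cela fournit $\vert\bsA^T\vert\ll\bsd_0(T)^{-r}$. Dans le cas $(i)$, o\`u $\tilde\eta(Q,R)\neq0$ et donc $Q^+=R^-$, l'identit\'e pr\'ec\'edente donne
$$\bsA^T_{\rm princ}=\tilde\eta(Q,R)\sum_{t\in\bfW^{Q'}(\ag_S,Q_0)}\,\,\sum_{s\in\bfW^Q(\theta_0(\ag_S),t(\ag_S))}\bsA^T_{s,t}\ptf$$
D'apr\`es \ref{crux}, chaque $\bsA^T_{s,t}$ diff\`ere de $A^T_{s,t}$ d'une quantit\'e $\ll\bsd_0(T)^{-r}$; or la somme finie des $A^T_{s,t}$ sur ces m\^emes indices est pr\'ecis\'ement $A^T_{\rm pure}$, lequel diff\`ere de $A^T$ d'une quantit\'e $\ll\bsd_0(T)^{-r}$ d'apr\`es \ref{pureb}. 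On en d\'eduit $\vert\bsA^T_{\rm princ}-\tilde\eta(Q,R)A^T\vert\ll\bsd_0(T)^{-r}$, puis, en ajoutant la majoration de $\bsA^T_{\rm rest}$, l'assertion $(i)$.

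Le point d\'elicat est l'argument combinatoire de la deuxi\`eme \'etape: il s'agit de v\'erifier avec soin que l'appartenance $t\in\bfW^{Q'}(\ag_S,Q_0)$ force $t\in W^P$ sur tout l'intervalle $[Q^+,R^-]$, ce qui rend la somme d\'efinissant $\tilde\eta(Q,R;t)$ ind\'ependante de $t$ et la ram\`ene \`a une inversion de M\"obius; tout le reste n'est qu'un agencement formel des estimations \ref{convprop}, \ref{crux} et \ref{pureb} d\'ej\`a \'etablies.
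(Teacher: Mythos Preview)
Your proof is correct and follows the natural approach implicit in the paper (which states the corollary without proof). The key combinatorial step, showing that $\tilde\eta(Q,R;t)=\tilde\eta(Q,R)$ for every $t\in\bfW^{Q'}(\ag_S,Q_0)$ via the chain $Q'\subset Q^+\subset P$ and the M\"obius-type cancellation on $[Q^+,R^-]$, is exactly what is needed to reduce the two cases to the already established estimates \ref{convprop}, \ref{crux} and \ref{pureb}.
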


%%%%%%%%%%%%%%%%%%%%%

\begin{definition}\label{pose}
On consid\`ere $Q,\mskip 2mu S\in\ESP_\st$ tels que $S\subset Q' =\theta_0^{-1}(Q)$. Soient 
$t\in \bfW^G(\ag_S,Q)$, $s\in \bfW^Q(\theta_0(\ag_S), t(\ag_S))$, $Z\in \ESA_Q$, 
$\mu\in \bsmu_S$ et $\lambda\in \bsmu_{\theta_0(\ag_S)}$. Pour
$\sigma\in\Pi_\disc(M_S)$ on d\'efinit l'op\'erateur
\begin{align*}
\lefteqn{\bsO^{\mskip 2mu T,Q}_{s,t}(Z; S,\sigma;\lambda,\mu)
=\sum_{S''\in\ESP^Q(\ttM)}
\varepsilon_{S''}^{\mskip 2mu Q,\brT{S''}}(Z; s\lambda - t\mu)e^{\langle s\lambda - t\mu, \Y_{S''}\rangle}}\\
&&\hspace{3,8cm}\times 
\bfM(t,\mu)^{-1}\bfM_{S''\vert \ttS}(t\mu)^{-1}\bfM_{S''\vert \ttS}(s\lambda)
\bfM(s,\lambda)\ptf
\end{align*}
\end{definition}
C'est une fonction lisse de $\lambda$ et $\mu$. 
On a introduit en \ref{exisigma} l'ensemble $\bsESEsigma$ qui, s'il est non vide,
est un espace principal homog\`ene sous $\wh\bsbbc_M$. Pour $\mu\in \bsmu_S$, on pose:
\begin{equation*}\brabsO^{\mskip 2mu T,Q}_{s,t}(Z; S,\sigma;\mu)=
\vert \wh\bsbbc_S \vert^{-1}\sum_{\nu \in \bsESEsigma}\bfD_\nu\mskip 2mu 
\bsO^{\mskip 2mu T,Q}_{s,t}(Z; S,\sigma;\theta_0(\mu),\mu+\nu)\ptf\end{equation*}
%\end{definition}
La fonction $\mu\mapsto\brabsO^{\mskip 2mu T,Q}_{s,t}$ est lisse.
On rappelle que l'on a d\'efini en \ref{cot\'especb} une expression 
$\Jres^{\tG,T}= \Jres^{\tG,T}(f,\omega)$. Nous allons en introduire une variante. 
Pour all\'eger un peu les notations nous aurons recours au lemme suivant:

% lemme
\begin{lemma}\label{spur}
Consid\'erons deux espaces pr\'e-hilbertiens $\bs{\mathcal E}\subset\bs{\mathcal F}$
o $\bs{\mathcal E}$ est un facteur direct et un op\'erateur $A: \bs{\mathcal E} \rightarrow \bs{\mathcal F}$
de rang fini.
On suppose que $\bs{\mathcal E}$ est muni d'une base (au sens alg\'ebrique) orthonormale $\Base$. 
L'expression \begin{equation*}\spur(A)=\sum_{\Psi\in\Base}\langle A\Psi,\Psi\rangle_{\bs{\mathcal F}}\end{equation*}
donn\'ee par une s\'erie convergente, est ind\'ependante du choix de la base.
Si de plus $A$ stabilise $\bs{\mathcal E}$, c'est-\`a-dire si $A$ est un endomorphisme de $\bs{\mathcal E}$,
alors
\begin{equation*}\spur(A)=\trace(A)\ptf\end{equation*}
\end{lemma}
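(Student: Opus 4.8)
The plan is to reduce everything to the elementary fact that a finite-rank endomorphism of a (pre-)Hilbert space has a well-defined trace, computable on any finite-dimensional subspace containing its image. Since $\bs{\mathcal E}$ is a direct factor of $\bs{\mathcal F}$, the orthogonal projection $P\colon \bs{\mathcal F}\rightarrow \bs{\mathcal E}$ is defined, and for $\Psi\in\bs{\mathcal E}$ one has $\langle A\Psi,\Psi\rangle_{\bs{\mathcal F}}=\langle PA\Psi,\Psi\rangle_{\bs{\mathcal E}}$, because $(1-P)A\Psi$ is orthogonal to $\bs{\mathcal E}\ni\Psi$. Writing $B=PA$, which is a finite-rank endomorphism of $\bs{\mathcal E}$, the series defining $\spur(A)$ becomes $\sum_{\Psi\in\Base}\langle B\Psi,\Psi\rangle_{\bs{\mathcal E}}$.

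First I would show that this series is in fact a finite sum. Let $V=\mathrm{Im}(B)$, a finite-dimensional subspace of $\bs{\mathcal E}$. Because $\Base$ is an \emph{algebraic} orthonormal basis, each vector of $V$ is a finite linear combination of elements of $\Base$; choosing a basis of $V$ and taking the (finite) union of the supports of these combinations, one gets a finite subset $\{\Psi_1,\dots,\Psi_N\}\subset\Base$ with $V\subset W:=\mathrm{span}(\Psi_1,\dots,\Psi_N)$. For any $\Psi\in\Base$ the vector $B\Psi$ lies in $V\subset W$, so $B\Psi=\sum_{j=1}^{N}c_j(\Psi)\Psi_j$; since $\Base$ is orthonormal, $\langle B\Psi,\Psi\rangle_{\bs{\mathcal E}}=\sum_j c_j(\Psi)\langle\Psi_j,\Psi\rangle$ vanishes unless $\Psi\in\{\Psi_1,\dots,\Psi_N\}$. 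Hence only finitely many terms are nonzero, the series converges trivially, and
$$\spur(A)=\sum_{j=1}^{N}\langle B\Psi_j,\Psi_j\rangle_{\bs{\mathcal E}}=\trace(B|_W),$$
the last equality because $B$ maps $W$ into $V\subset W$ and $\{\Psi_j\}$ is an orthonormal basis of $W$.

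For the independence of the basis, I would invoke the standard fact that $\trace(B|_W)=\trace(B|_V)$ whenever $V\subset W$ and $B(W)\subset W$: in a basis of $W$ extending one of $V$, the diagonal entries attached to the supplementary vectors vanish, since $B$ sends them into $V$. As $B(V)\subset B(\bs{\mathcal E})=V$, the operator $B|_V$ is a genuine endomorphism of the finite-dimensional space $V$, and its trace depends on no choice whatsoever. Thus $\spur(A)=\trace(B|_V)$ is intrinsic, hence independent of $\Base$. Finally, if $A$ stabilizes $\bs{\mathcal E}$ then $A\Psi\in\bs{\mathcal E}$ forces $PA\Psi=A\Psi$, so $B=A$ and $\spur(A)=\trace(A)$ is the ordinary trace of the finite-rank endomorphism $A$.

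The computations here are all routine; the only point requiring genuine care is the use of the \emph{algebraic} orthonormality of $\Base$. This is exactly what turns the a priori infinite series into a finite sum: a merely topological orthonormal basis would force an argument through Parseval and completeness, whereas the Hamel condition guarantees that the finite-dimensional image $V$ meets only finitely many basis directions. Once this reduction is in place, both the independence of $\Base$ and the identity $\spur(A)=\trace(A)$ are formal consequences of finite-dimensional linear algebra.
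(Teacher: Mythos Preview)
Your proof is correct and follows essentially the same approach as the paper's: project onto $\bs{\mathcal E}$ using the direct-factor decomposition, use the algebraic nature of the basis together with the finite rank of $A$ to reduce the series to a finite sum, and then invoke finite-dimensional linear algebra for the basis-independence. The paper's proof is much more terse (three sentences), but the underlying mechanism is identical.
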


\begin{proof} Puis que $\bs{\mathcal E}$ est un facteur direct, 
tout  $\Phi\in {\bs{\mathcal F}}$ peut s'\'ecrire $\Phi=\Phi_1+\Phi_2$
avec $\Phi_1\in \bs{\mathcal E}$ et $\Phi_2$ est orthogonal \`a $\bs{\mathcal{E}}$.
 La s\'erie
\begin{equation*}\sum_{\Psi\in\Base}\langle \Phi,\Psi\rangle_{\bs{\mathcal F}}=\sum_{\Psi\in\Base}\langle \Phi_1,\Psi\rangle_{\bs{\mathcal E}}\end{equation*}
se r\'eduit \`a une somme finie et il en est de mme de la s\'erie 
d\'efinissant $\spur(A)$
puisque $A$ est de rang fini.  L'ind\'ependance du choix de la base se ram\`ene au cas de la dimension finie.
\end{proof}

Nous appliquerons ce lemme au cas o $\bs{\mathcal E}=\Automd(\bsX_S,\sigma)$ et o $\bs{\mathcal F}$ 
est l'espace engendr\'e par $\Automd(\bsX_S,\sigma)$ et
 les $\Automd(\bsX_S,\tu(\sigma\otimes\omega)\star\nu)$
pour $\nu\in\bsESEsigma$. Nous poserons
\begin{equation*}\spurs(A)=\sum_{\Psi\in\Base_S(\sigma)}\langle A\Psi,\Psi\rangle_S\ptf\end{equation*}

% proposition
\begin{proposition}\label{\'ecriture finale}
On consid\`ere l'expression 
\begin{align*}
\lefteqn{\bJres_\spec^{\tG,T}(f,\omega)=\sum_{\substack{Q,R\in\ESP_\st\\ Q\subset R}}
\sum_{S\in\ESP_\st^{Q'}}\frac{1}{n^{Q'}(S)}
\sum_{\bsigma\in\bsPi_\disc(M_S)}\cMsig
}\\&&\hspace{1cm}\times\sum_{t\in \bfW^G(\ag_S,Q)} 
\sum_{s\in \bfW^Q(\theta_0(\ag_S),t(\ag_S))}\wt{\eta}(Q,R;t) 
\sum_{Z\in\ESC_Q^\tG}\wt{\sigma}_Q^R(Z-T_Q)\\
&&\hspace{4cm}\times\int_{\bsmu_S}
\spurs\mskip -2mu \bigg(\brabsO_{s,t}^{\mskip 2mu T,Q}(Z; S,\sigma;\mu)\tRho_{S,\sigma,\mu}(f,\omega)\bigg)
\dd \mu\ptf
\end{align*}
\begin{enumerate}[(i)]
\item L'expression $\bJres_\spec^{\tG,T}=\bJres_\spec^{\tG,T}(f,\omega)$ est convergente.
\item Pour tout r\'eel $r$, on a une majoration
\begin{equation*}\big\vert \Jres_\spec^{\tG,T} -\bJres_\spec^{\tG,T}\big\vert\ll\bsd_0(T)^{-r}\ptf\end{equation*}
\end{enumerate}
\end{proposition} 

% preuve
\begin{proof} On observe que,
d'apr\`es \ref{thmfinitude}, l'op\'erateur $\tRho_{S,\sigma,\mu}(f,\omega)$ est de rang fini;
l'assertion (i) 
r\'esulte alors de \ref{convprop} (en utilisant la remarque \ref{somfi}). Compte tenu de l'expression pour $\Jres^{\tG,T}$
donn\'ee en \ref{defat}, on voit que
la majoration (ii) r\'esulte de la conjonction des in\'egalit\'es \ref{pureb}, \ref{crux} et \ref{lastcor}.
\end{proof}

%%%%%%%%%%%%%%%%%%%%%%
 \chapter{Formules explicites}\label{formules explicites} 

%%%%%%%%%%%%%%%%%%%%
 \section{Combinatoire finale: \'etape 1}\label{combfin}
Soient $S,\mskip 2mu S_0,\mskip 2mu Q\in\ESP_\st$ tels que $S_0=\theta_0(S)\subset Q$. 
On a donc, comme pr\'ec\'edemment, $S\subset\theta_0^{-1}(Q)=Q'$. 
Soit aussi $\tu\in \bfW^\tG(\ag_S,\ag_S)$. D'apr\`es \cite[14.1.1]{LW}, 
$\tu$ s'\'ecrit d'une mani\`ere et d'une seule sous la forme
\begin{equation*}\tu= u\theta_0,\quad u=t^{-1}s\in \bfW^G(\ag_{S_0},\ag_S)\end{equation*}
avec $t\in \bfW^G(\ag_S,Q)$ et $s\in \bfW^Q(\ag_{S_0}, t(\ag_S))$. Soit $S''\in\ESP_\st$ tel que 
$\ag_{S''}= t(\ag_S)$, et soit $S_1=t^{-1}(S'')$. On a donc $S''\subset Q$. 
On consid\`ere des param\`etres $\mu$ et $\nu$ dans $\bsmu_S$, et l'on pose
$\Lambda=\tu\mu -\nu$. Rappelons que l'on a pos\'e en \ref{l'\'el\'ement T_0}
\begin{equation*}\Y_u=T_0-u^{-1}T_0= \bfH_0(w_u^{-1})\ptf\end{equation*}
On introduit une variante tordue:
\begin{equation*}\Y_{\tu}=\theta_0^{-1}\Y_u =\theta_0^{-1}T_0 -\tu^{-1}T_0\end{equation*}
ainsi que le scalaire
\begin{equation*}a_S(\mu,\tu)= 
e^{\langle\mu +\rho_S,\Y_{\tu}\rangle}= e^{\langle \theta_0\mu+\rho_{S_0},Y_u\rangle}\ptf\end{equation*}
On pose enfin
\begin{equation*}M=M_S\vgq Q_1= t^{-1}Q = t^{-1}sQ =\tu Q'\in\ESF(M)\ptf\end{equation*}
Soit $H\in\ESA_{Q_1}$. Pour $S_1\in\ESP^{Q_1}(M)$, 
on a d\'efini en \ref{GMspec} et \ref{SETR}
\begin{equation*}\ESM({\YY};S,\mu;\Lambda, S_1)= e^{\langle\Lambda,\Y_{S_1}\rangle}
\bfM_{S_1\vert S}(\mu)^{-1} \bfM_{S_1\vert S}(\mu+\Lambda)\end{equation*}
et
\begin{equation*}\ESM_{M,F}^{\mskip 2mu Q_1,T}(H,{\YY};S,\mu;\Lambda)=
\sum_{S_1\in\ESP^{Q_1}(M)}\varepsilon_{S_1}^{\mskip 2mu Q_1,\brT{S_1}}(H;\Lambda)\mskip 2mu 
\ESM({\YY};S,\mu;\Lambda, S_1)\ptf\end{equation*}
Pour $\mu\in \bsmu_{M}$ et $\lambda\in \bsmu_{\theta_0(M)}$,
on a d\'efini en 
 \ref{pose} l'op\'erateur
\begin{align*}
\lefteqn{\bsO^{\mskip 2mu T,Q}_{s,t}(tH;S,\sigma;\lambda,\mu)
%}\\&& 
=\sum_{S''\in\ESP^Q(\ttM)}
\varepsilon_{S''}^{\mskip 2mu Q,\brT{S''}}(tH; s\lambda - t\mu)e^{\langle s\lambda - t\mu,\Y_{S''}\rangle}
}\\&&\hspace{4,3cm}\times \hspace{0,3cm}
\bfM(t,\mu)^{-1}\bfM_{S''\vert \ttS}(t\mu)^{-1}\bfM_{S''\vert \ttS}(s\lambda)\bfM(s,\lambda)\ptf
\end{align*}
%

% proposition 1
\begin{proposition}\label{aths}
Soient $\nu \in \bsmu_M$ et $\Lambda = u\lambda -\mu$. Avec les notations de \ref{rhotu} on a
\begin{align*}
\lefteqn{
\bsO^{\mskip 2mu T,Q}_{s,t}(tH; S,\sigma;\lambda,\mu+\nu)\tRho_{S,\sigma,\mu}(f,\omega)}\\
&&=\frac{a_S(\theta_0^{-1}\lambda,\tu)}{ a_S(\mu,\tu)}
\ESM_{M,F}^{\mskip 2mu Q_1,T}(H,{\YY};S,\mu+\nu;\Lambda-\nu)
\bfM_{S\vert\tu S}(\mu+\Lambda)\Rho_{S,\sigma,\mu}(\tu,f,\omega)\ptf
\end{align*}
\end{proposition}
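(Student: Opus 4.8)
The statement is an identity between two explicit operator-valued expressions: on the left, the operator $\bsO^{\,T,Q}_{s,t}(tH; S,\sigma;\lambda,\mu+\nu)$ composed with the twisted induced representation $\tRho_{S,\sigma,\mu}(f,\omega)$, and on the right, a scalar factor $a_S(\theta_0^{-1}\lambda,\tu)/a_S(\mu,\tu)$ times the spectral $(G,M)$-family operator $\ESM_{M,F}^{\,Q_1,T}$ composed with $\bfM_{S\vert\tu S}(\mu+\Lambda)\Rho_{S,\sigma,\mu}(\tu,f,\omega)$. My plan is to unfold both sides into their defining sums over parabolics and match them term by term after a change of summation variable induced by the substitutions $u=t^{-1}s$, $S''\mapsto S_1=t^{-1}(S'')$, and $\Lambda=u\lambda-\mu$. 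This is exactly the combinatorial device already used in \ref{GMspec} to pass from $\bsO_{R\vert Q}^T$ to the spectral $(G,M)$-family, so the skeleton of the argument is available; the task is to carry it out in the present twisted, ``function field'' setting keeping track of the scalar normalizations.

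\emph{First steps.} I would begin by writing out $\bsO^{\,T,Q}_{s,t}(tH;S,\sigma;\lambda,\mu+\nu)$ using \ref{pose}, and substituting $\mu+\nu$ for $\mu$ and $\lambda$ via the relation $\Lambda=u\lambda-\mu$. The intertwining operators factor as $\bfM_{S''\vert\ttS}(t\mu)^{-1}\bfM_{S''\vert\ttS}(s\lambda)$, and I would transport these through $\bfM(t,\mu)^{-1}$ and $\bfM(s,\lambda)$ using the cocycle/composition relations for intertwining operators (the analogue here of \cite[5.2.2, 5.3.4]{LW}). The key algebraic manoeuvre is \cite[4.3.5]{LW}, already invoked in \ref{d�composition de A_pure}, which rewrites $\bfM(t',t\mu)^{-1}\bfM(t',s\lambda)$ as $e^{\langle s\lambda-t\mu,\Y_{S''}\rangle}\bfM_{S'\vert S''}(t\mu)^{-1}\bfM_{S'\vert S''}(s\lambda)$; applying its twisted variant transforms the $\varepsilon_{S''}^{\,Q,\brT_{S''}}(tH;s\lambda-t\mu)$ factors together with the exponential $e^{\langle s\lambda-t\mu,\Y_{S''}\rangle}$ into the corresponding $\varepsilon_{S_1}^{\,Q_1,\brT_{S_1}}(H;\Lambda-\nu)$ factors of $\ESM_{M,F}^{\,Q_1,T}$, using $Q_1=t^{-1}Q=\tu Q'$ and $S_1=t^{-1}(S'')$. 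The compatibility $[T]_{S_1}=t^{-1}(T_{S''})$ and the behaviour of $\Y_{S''}$ under $t^{-1}$ must be checked here, precisely as in \ref{d�composition de A_pure}.

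\emph{Producing the scalar and the boundary operator.} Once the $\varepsilon$-sum has been matched, the leftover intertwining factors should regroup into $\bfM_{S\vert\tu S}(\mu+\Lambda)$ on the one hand, and into the definition of $\Rho_{S,\sigma,\mu}(\tu,f,\omega)$ on the other, via the twisted avatar of \ref{romu}: the equation $(2)$ of \ref{rhotu} relates the two realizations, and the operator $\bfD_\nu$ intertwines them according to \ref{romu}$(3)$. The scalar $a_S(\theta_0^{-1}\lambda,\tu)/a_S(\mu,\tu)$ is the tracking constant arising from the functional equation for $\bfM_{s(P)\vert P}(s,\lambda)=e^{\langle\lambda+\rho_P,\Y_s\rangle}\bs s$ (see \ref{OE et SE}), together with the definition $a_S(\mu,\tu)=e^{\langle\mu+\rho_S,\Y_{\tu}\rangle}$ and $\Y_{\tu}=\theta_0^{-1}\Y_u$; collecting all the $e^{\langle\cdot,\Y\rangle}$ factors that do not get absorbed into the $\varepsilon_{S_1}$-terms yields exactly this ratio.

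\emph{Main obstacle.} I expect the principal difficulty to be the precise bookkeeping of the scalar normalizations and the identification of the $\Y$-terms, rather than any conceptual issue: one must verify that the exponentials $e^{\langle s\lambda-t\mu,\Y_{S''}\rangle}$ split correctly between the part that feeds the $\varepsilon_{S_1}^{\,Q_1}(H;\Lambda-\nu)$ factors and the part that becomes the ratio $a_S(\theta_0^{-1}\lambda,\tu)/a_S(\mu,\tu)$, and that the twisted action of $\theta_0$ on the Weyl group elements ($s,t,u$ related by $u=t^{-1}s$, and $\tu=u\theta_0$) is compatible with the passage $s\lambda\leftrightarrow\theta_0\mu$ at every step. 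The subtlety specific to function fields is that the lattices $\ESA$, $\ESB$ and the finite groups $\bsbbc$ enter through the $\varepsilon$-functions, but since $\nu\in\bsmu_M$ is fixed and the operators $\bfD_\nu$ behave as in \ref{romu}, this does not change the formal structure of the manipulation; it only requires care that the projection relation $\Lambda=u\lambda-\mu$ be read correctly modulo $\ESA_S^\vee$.
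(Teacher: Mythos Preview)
Your plan is correct and matches the paper's proof: change of summation variable $S''\mapsto S_1=t^{-1}(S'')$, transport the $\varepsilon$-factors and the $\Y$-exponentials through $t^{-1}$ using $t^{-1}\brT_{S''}=\brT_{S_1}$ and $t^{-1}\Y_{S''}=\Y_{S_1}-\Y_{t^{-1}(S)}$, then rewrite the intertwining-operator block to extract the scalar $a_S(\theta_0^{-1}\lambda,\tu)/a_S(\mu,\tu)$ and the operator $\bfM_{S\vert\tu S}(\mu+\Lambda)\Rho_{S,\sigma,\mu}(\tu,f,\omega)$.

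One small correction on the references: the identity from \cite[4.3.5]{LW} that you cite was the right tool in \ref{d�composition de A_pure} (where an auxiliary Weyl element $t'$ was present), but here it is not what is used. The $\varepsilon$-factors transform under $t^{-1}$ by a direct change of variable in the lattice sum, not via any intertwining identity; and for the operator part the paper invokes \cite[6.1.1, 14.1.2]{LW}, which govern how $\bfM(s,\lambda)\tRho_{S,\sigma,\mu}(f,\omega)$ (with the $\delta_0$-twist) converts to $\bfM_{S\vert\tu S}(\mu+\Lambda)\Rho_{S,\sigma,\mu}(\tu,f,\omega)$ (with the $\tu$-twist) together with the scalar. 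Your identification of the source of the scalar --- the functional equation $\bfM_{s(P)\vert P}(s,\lambda)=e^{\langle\lambda+\rho_P,\Y_s\rangle}\bs{s}$ and the definition of $a_S$ --- is correct; just be aware that the bookkeeping passes through those two lemmas rather than \cite[4.3.5]{LW}.
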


\begin{proof}
Posons $\mu'= \mu+\nu$ et $\Lambda'= u\lambda - \mu'=\Lambda -\nu$. Puisque
\begin{equation*}\Lambda'= t^{-1}(s\lambda - t\mu')\vg\end{equation*}
pour $S''\in\ESP^Q(\ttM)$ et $S_1= t^{-1}(S'')\in\ESP^{Q_1}(M)$, on a
\begin{equation*}\varepsilon_{S''}^{\mskip 2mu Q,\brT{S''}}(tH; s\lambda - t\mu')e^{\langle s\lambda - t\mu'\mskip -2mu ,\Y_{S''}\rangle}= 
\varepsilon_{S_1}^{\mskip 2mu Q_1, t^{-1}\brT{S''}}(H;\Lambda')e^{\langle \Lambda'\mskip -2mu ,t^{-1}\Y_{S''}\rangle}\ptf\end{equation*}
Or on a $t^{-1}\brT{S''}= \brT{S_1}$, et
\begin{equation*}t^{-1}\Y_{S''}=\Y_{S_1} -\Y_{t^{-1}(S)}\end{equation*}
o $\Y_{t^{-1}(S)}$ est la projection de $\Y_t= T_0 - t^{-1}T_0$ sur $\ag_M$. 
Gr‰ce \`a \cite[6.1.1, 14.1.2]{LW}, on obtient, comme dans la preuve de \cite[14.1.3]{LW}, que
\begin{equation*}\bfM(t,\mu')^{-1}\bfM_{S''\vert \ttS}(t\mu')^{-1}\bfM_{S''\vert \ttS}(s\lambda)
\bfM(s,\lambda)\tRho_{S,\sigma,\mu}(f,\omega)\end{equation*}
est \'egal \`a
\begin{equation*}\frac{a_S(\theta_0^{-1}\lambda,\tu)}{ a_S(\mu,\tu)}e^{\langle \Lambda'\mskip -2mu , Y_{t^{-1}(S)}\rangle}
\bfM_{S_1\vert S}(\mu')^{-1}
\bfM_{S_1\vert S}(\mu'+\Lambda') \bfM_{S\vert \tu S}(\mu'+\Lambda')
\Rho_{S,\sigma,\mu}(\tu,f,\omega)\ptf\end{equation*}
D'o le r\'esultat. 
\end{proof}
Pour $\nu \in \bsmu_M$ et $\Lambda=(\tu-1)\mu$, on pose 
\begin{align*}
\lefteqn{
\bsA^{T,Q_1}_M(H;\sigma,\tu,\mu;\nu)}\\
&& \hspace{0,8cm}
=\spurs\bigg(\bfD_\nu\mskip 2mu \ESM_{M,F}^{\mskip 2mu Q_1,T}(H,{\YY};S,\mu+\nu;\Lambda-\nu)
\bfM_{S\vert\tu S}(\mu +\Lambda)\Rho_{S,\sigma,\mu}(\tu,f,\omega) \bigg)
\ptf
\end{align*}
Pour $\nu\in \bsESEsigma$, 
l'espace principal homog\`ene sous $\wh\bsbbc_M$ introduit en \ref{exisigma}, 
on a $\nu\vert_{\ESB_\tG}=0$ et $\Lambda\vert_{\ESB_\tG}=0$. L'expression 
\begin{equation*}\bsA^{T,Q_1}_M(H;\sigma,\tu,\mu;\nu)\end{equation*}
ne d\'epend donc que de l'image de 
$H$ dans $\ESC_{Q_1}=\ESB_\tG\backslash \ESA_{Q_1}$.
 On pose alors
\begin{equation*}[\bsA]^{T,Q_1}_M(H;\sigma,\tu,\mu)=
\vert\wh\bsbbc_M\vert^{-1}\sum_{\nu \in \bsESEsigma}
\bsA^{T,Q_1}_M(H;\sigma,\tu,\mu;\nu)\ptf\end{equation*}
Rappelons que $S$ est l'unique \'el\'ement de $\ESP_\st$ tel que $M_S=M$.
%Alors on a la variante de \cite[14.1.4]{LW}:
% lemme 1
\begin{lemma}\label{indepconj}
On a l'\'egalit\'e
\begin{enumerate}[(i)]
\item 
$[\bsA]^{T,Q_1}_M(H;\sigma,\tu,\mu) =
\spurs\mskip -2mu \bigg(\brabsO^{\mskip 2mu T,Q}_{s,t}(tH; S,\sigma;\mu)\tRho_{S,\sigma,\mu}(f,\omega)\bigg)\ptf$
\item Cette expression 
est invariante si l'on remplace $M$, $Q_1$, $S$, $\tu$ et $H$ par leurs conjugu\'es sous l'action 
d'un \'el\'ement $w\in \bfW^G$, et simultan\'ement $\sigma$ et $\mu$ par $w\sigma= \sigma \circ \mathrm{Int}_{w}^{-1}$ et 
$w\mu = \mu \circ \mathrm{Int}_{w}^{-1}$.
\end{enumerate}
\end{lemma}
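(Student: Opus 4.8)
Pour �tablir $(i)$, le plan est de sp�cialiser la variable spectrale $\lambda$ en $\theta_0(\mu)$ dans l'identit� de la proposition \ref{aths}. Avec ce choix, puisque $\tu= u\theta_0$, l'�l�ment $\Lambda= u\lambda -\mu$ devient $u\theta_0(\mu)-\mu= (\tu-1)\mu$, qui est exactement celui intervenant dans la d�finition de $\bsA^{T,Q_1}_M(H;\sigma,\tu,\mu;\nu)$; de plus $a_S(\theta_0^{-1}\lambda,\tu)= a_S(\mu,\tu)$, de sorte que le facteur scalaire $a_S(\theta_0^{-1}\lambda,\tu)/a_S(\mu,\tu)$ de la proposition \ref{aths} vaut $1$. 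On disposerait alors de l'�galit�
\begin{eqnarray*}
\lefteqn{\bsO^{\,T,Q}_{s,t}(tH; S,\sigma;\theta_0(\mu),\mu+\nu)\tRho_{S,\sigma,\mu}(f,\omega)}\\
&& =\ESM_{M,F}^{\,Q_1,T}(H,{\YY};S,\mu+\nu;\Lambda-\nu)\bfM_{S\vert\tu S}(\mu+\Lambda)\Rho_{S,\sigma,\mu}(\tu,f,\omega)\ptf
\end{eqnarray*}

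La suite consiste � appliquer l'op�rateur $\bfD_\nu$ � gauche de chaque membre, � sommer sur $\nu\in \bsESEsigma$, � diviser par $\vert\wh\bsbbc_S\vert$, puis � prendre $\spurs$. Comme $M= M_S$, on a $\wh\bsbbc_M= \wh\bsbbc_S$, et le membre de gauche se reconna�t, gr�ce � la d�finition de $\brabsO^{\,T,Q}_{s,t}$, comme $\spurs(\brabsO^{\,T,Q}_{s,t}(tH; S,\sigma;\mu)\tRho_{S,\sigma,\mu}(f,\omega))$, tandis que chaque terme du membre de droite est, par d�finition, $\bsA^{T,Q_1}_M(H;\sigma,\tu,\mu;\nu)$. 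On obtiendrait ainsi l'identit� $(i)$, qui n'est donc qu'une reformulation de la proposition \ref{aths}; aucun obstacle s�rieux n'est � pr�voir ici, tout reposant sur la bonne annulation du facteur scalaire.

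Pour $(ii)$, le plan est d'exploiter l'�quivariance de chaque facteur sous l'action par conjugaison de $w\in \bfW^G$. On remplacerait simultan�ment $(M,Q_1,S,\tu,H)$ par leurs conjugu�s et $(\sigma,\mu)$ par $(w\sigma,w\mu)$, puis on v�rifierait que, via les �quations fonctionnelles des op�rateurs d'entrelacement $\bfM$ sous le groupe de Weyl (cf. \cite[5.2, 6.1]{LW}) et l'�quivariance des fonctions g�om�triques $\varepsilon_{S_1}^{\,Q_1,\brT_{S_1}}$, la $(G,M)$-famille spectrale $\ESM_{M,F}^{\,Q_1,T}$, l'op�rateur $\bfM_{S\vert\tu S}$ et l'avatar tordu $\Rho_{S,\sigma,\mu}(\tu,f,\omega)$ se conjuguent tous par l'isom�trie $\bs{w}:\Automd(\bsX_S,\sigma)\rightarrow \Automd(\bsX_{w(S)},w\sigma)$ introduite en \ref{OE et SE}, l'ensemble $\bsESEsigma$ �tant par ailleurs envoy� bijectivement sur $\bs{\ESE}(w\sigma)$ par $w$. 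L'op�rateur compos� plac� sous $\spurs$ serait alors lui-m�me conjugu� par $\bs{w}$; comme $\spurs$ ne d�pend pas du choix de la base orthonormale (lemme \ref{spur}) et que $\bs{w}$ transforme une telle base en une base orthonormale, l'invariance de $[\bsA]^{T,Q_1}_M(H;\sigma,\tu,\mu)$ en r�sulterait, la somme sur $\nu$ �tant elle aussi pr�serv�e. Le principal obstacle r�side pr�cis�ment dans ce suivi combinatoire: apparier correctement les �l�ments de Weyl $s,t,\tu$ et les donn�es paraboliques $Q,Q_1,S''$ sous conjugaison, et v�rifier la coh�rence des �quations fonctionnelles des op�rateurs $\bfM$ le long de ce transport de structure — c'est le c\oe ur technique de l'argument.
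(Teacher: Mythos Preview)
Your proposal is correct and follows the same approach as the paper, which simply specializes $\lambda=\theta_0(\mu)$ in proposition \ref{aths} for $(i)$ and invokes the isometry $\bs{w}$ together with the functional equations of the intertwining operators for $(ii)$. The only point the paper makes explicit and you leave implicit is the justification for the specialization in $(i)$: the individual terms $\varepsilon_{S_1}^{\,Q_1,\brT_{S_1}}(H;\Lambda)$ in the definition of $\ESM_{M,F}^{\,Q_1,T}$ are singular along the walls, so one needs the smoothness of the full expression (lemma \ref{lissgm}) at imaginary values to be allowed to set $\Lambda=(\tu-1)\mu-\nu$, which may well lie on a wall.
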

\begin{proof}
Pour (i), puisque $\ESM_{M,F}^{\mskip 2mu Q_1,T}(H,{\YY};S,\mu;\Lambda)$ est lisse pour les valeurs 
imaginaires pures de $\mu$ et $\Lambda$, on peut prendre $\lambda=\theta_0(\mu)$ dans la proposition \ref{aths}. 
Pour (ii), rappelons que $w$ d\'efinit un op\'erateur
\begin{equation*}\bs{w}: \Automd(\bsX_S,\sigma)\rightarrow \Autom(\bsX_{ wS},w\sigma).\end{equation*}
Cet op\'erateur est une isom\'etrie et (ii) est une cons\'equence des \'equations fonctionnelles satisfaites 
par les op\'erateurs d'entrelacement. 
\end{proof}

On observe que $\brT{Q_1}= t^{-1}(T_Q)$ puisque $Q_1= t^{-1}Q$. 
Soient aussi $R\in\ESP_\st$ tel que $Q'\subset R$, et $R_1= t^{-1}R$. On pose 
\begin{equation*}\bJres_{ M,Q_1}^{T,R_1}(\sigma,\tu) 
=\sum_{H\in\ESC_{Q_1}^\tG}\widetilde{\sigma}_{Q_1}^{R_1}(H-\brT{Q_1})
\int_{\bsmu_M}[\bsA]^{T,Q_1}_M(H;\sigma,\tu,\mu)\dd \mu\ptf \end{equation*}
On a d\'efini cette expression pour $M=M_S$ avec $S\in\ESP_\st$. 
Plus g\'en\'eralement, elle est bien d\'efinie pour 
tout $M\in\ESL$, tout $S\in\ESP$ tel que $M_S=M$, 
tout $Q_1$ et tout $R_1$ dans $\ESP$ tels que $M\subset Q_1\subset R_1$, 
et tout $\tu\in \bfW^\tG(\ag_M,\ag_M)$. 
On introduit alors l'expression
\begin{equation*}\bJres_M^{\tG,T}(\sigma,f,\omega,\tu)\index{JrestGTM@$\bJres_M^{\tG,T}$}
=\sum_{\substack{Q_1,\mskip 2mu  R_1\in\ESP\\ M\subset Q_1\subset R_1}}
\wt{\eta}(Q_1,R_1;u)\bJres_{ M,Q_1}^{T,R_1}(\sigma,\tu)\end{equation*}
et on a l'analogue de la proposition \cite[14.1.5]{LW}:

% proposition 2
\begin{proposition}\label{rejres}
L'expression $\bJres_\spec^{\tG,T}(f,\omega)$ de \ref{\'ecriture finale} se r\'ecrit
\begin{equation*}\bJres_\spec^{\tG,T}(f,\omega)
=\sum_{M\in\ESL^G/\bfW^G}\frac{1}{w^G(M)}
\sum_{\bsigma\in\bsPi_\disc(M)}\cMsig\mskip -5mu
\sum_{\tu\in \bfW^\tG(\ag_M,\ag_M)}\mskip -5mu
\bJres_M^{\tG,T}(\sigma,f,\omega,\tu)\ptf\end{equation*}
\end{proposition}
Maintenant, on d\'efinit une $(G,M)$-famille $\bs{c} = \bs{c}(\sigma,\tu,\mu;\nu)$ par
\begin{equation*}\bsc(\Lambda,S_1)=
\spurs\bigg(\bfD_\nu\mskip 2mu \ESM({\YY};S,\mu+\nu;\Lambda, S_1)
\bfM_{S\vert\tu S}(\mu+\nu+\Lambda) 
\Rho_{S,\sigma,\mu}(\tu,f,\omega)\bigg)\mskip -2mu \ptf\end{equation*}
Elle est p\'eriodique car la famille orthogonale $\YY$ est enti\`ere. Pour 
 $\Lambda = (\tu-1)\mu$, on a donc
\begin{equation*}\bsA^{T,Q_1}_M(H;\sigma,\tu,\mu;\nu)= \bscMF^{Q_1,T}(H;\Lambda-\nu)\ptf\end{equation*}
D'apr\`es \ref{invfour}, il existe une fonction \`a d\'ecroissance rapide
\begin{equation*}\UU\mapsto\varphi(\UU)=\varphi(\sigma,\tu,\mu;\nu;\UU)\end{equation*}
sur $\ESH_M$ telle que $\bsc=\bsc_{\varphi}$.
D'apr\`es la formule d'inversion de Fourier \ref{formule d'inversion pour les (G,M)-familles}, 
on a
\begin{equation*}\bsA^{T,Q_1}_M(H;\sigma,\tu,\mu;\nu)= 
\sum_{\UU\in\ESH_M}\varphi(\UU)\gammaMF^{Q_1,T}(H,\UU;\Lambda-\nu)\end{equation*}
avec 
\begin{equation*}\gammaMF^{Q_1,T}(H,\UU;\Lambda-\nu)=\gammaMF^{Q_1,\UU(T)}(H+U_{Q_1}; \Lambda-\nu)\ptf\end{equation*}

% lemme
\begin{lemma}\label{supportTF}
Le support de
$\varphi$ est contenu dans le r\'eseau \begin{equation*}\ESH_M^G= \ESH_M \cap \HH_M^G\end{equation*}du sous-espace 
$\HH_M^G$ de $\HH_M$ form\'e des familles orthogonales $\UU=(U_P)$ telles que $U_G=0$. 
\end{lemma}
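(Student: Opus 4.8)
The plan is to reduce the statement to a translation‐invariance property of the periodic $(G,M)$‑family $\bsc=\bsc(\sigma,\tu,\mu;\nu)$ in the central direction, and then to read that property off the three building blocks of $\bsc$. By construction $\bsc=\bsc_\varphi$, so $\bsc(\Lambda,S_1)=\wh\varphi(\iota_{S_1}(\Lambda))$ with $\wh\varphi(\LL)=\sum_{\UU\in\ESH_M}e^{\langle\LL,\UU\rangle}\varphi(\UU)$. For $s\in\wh{\ag}_G$ one has $\langle\iota_G(s),\UU\rangle=\langle s,U_G\rangle$, hence $\wh\varphi(\LL+\iota_G(s))=\sum_{\UU}e^{\langle s,U_G\rangle}e^{\langle\LL,\UU\rangle}\varphi(\UU)$; so $\varphi$ is supported on $\ESH_M^G=\{U_G=0\}$ exactly when $\wh\varphi$ is invariant under translation by $\iota_G(\wh{\ag}_G)$. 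First I would prove that each $\Lambda\mapsto\bsc(\Lambda,P)$, $P\in\ESP(M)$, is invariant under translation by $\bsmu_G$; since $\ag_G\subset\ag_{S_1}$ centrally, $\iota_{S_1}(\Lambda+t)=\iota_{S_1}(\Lambda)+\iota_G(t)$ for $t\in\bsmu_G$, and I would then transport this to $\wh\varphi$ through the globalization built in the proof of \ref{invfour}: there $\wh\varphi$ is written as $\sum_P(-1)^{a_P-a_M}\bsc(\lambda_P,P)\chi_P(\lambda^P)$ with $\mathcal{B}_G\subset\mathcal{B}_P$ for every $P$, so the central direction occurs only in the arguments $\lambda_P$ and never in the $\lambda^P$ feeding the factors $\chi_P$. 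Consequently the invariance of each $\bsc(\cdot,P)$ upgrades to full $\iota_G(\wh{\ag}_G)$‑invariance of $\wh\varphi$, which is what is wanted.

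It then remains to show, for $t\in\bsmu_G$, that $\bsc(\Lambda+t,S_1)=\bsc(\Lambda,S_1)$. The function $\Lambda\mapsto\bsc(\Lambda,S_1)$ depends on $\Lambda$ only through three factors of its defining expression: the scalar $e^{\langle\Lambda,\Y_{S_1}\rangle}$ sitting inside $\ESM(\YY;S,\mu+\nu;\Lambda,S_1)$, and the two intertwining operators $\bfM_{S_1\vert S}(\mu+\nu+\Lambda)$ and $\bfM_{S\vert\tu S}(\mu+\nu+\Lambda)$; the remaining factors $\bfD_\nu$, $\bfM_{S_1\vert S}(\mu+\nu)^{-1}$ and $\Rho_{S,\sigma,\mu}(\tu,f,\omega)$ do not involve $\Lambda$. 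For the scalar, the orthogonal family $\YY$ lies in $\ag_0^G$ (see \ref{l'�l�ment T_0}), so $\Y_{S_1}\in\ag_{S_1}^G$ and $\langle t,\Y_{S_1}\rangle=0$, whence $e^{\langle\Lambda+t,\Y_{S_1}\rangle}=e^{\langle\Lambda,\Y_{S_1}\rangle}$.

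The crux is that each of $\bfM_{S_1\vert S}$ and $\bfM_{S\vert\tu S}$ is attached to the Weyl element $1$ — for the second this uses $\tu(\ag_S)=\ag_S$, so that $1\in\bfW(\ag_{\tu S},\ag_S)$ — and an operator $\bfM_{Q\vert P}(1,\lambda)$ is independent of the central component of $\lambda$. Indeed, replacing $\lambda$ by $\lambda+t$ with $t\in\bsmu_G$ multiplies the integrand of the defining integral by $x\mapsto e^{\langle t,\bfH_G(x)\rangle}$, which is constant along the unipotent integration (as $\bfH_G$ is trivial on $U$ and the representative is the identity) and is precisely the factor already carried by $\Phi(\,\cdot\,,\lambda+t)$ on the other side, so that $\bfM_{Q\vert P}(1,\lambda+t)=\bfM_{Q\vert P}(1,\lambda)$; equivalently this follows from the functional equation \ref{eqf} specialized to $s=1$, together with the fact that $\bfD_t$ commutes with an $s=1$ intertwining operator. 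Hence $\bfM_{S_1\vert S}(\mu+\nu+\Lambda+t)=\bfM_{S_1\vert S}(\mu+\nu+\Lambda)$ and likewise for $\bfM_{S\vert\tu S}$. Combining with the invariance of the scalar, every $\Lambda$‑dependent factor of $\bsc(\Lambda,S_1)$ is unchanged under $\Lambda\mapsto\Lambda+t$, so $\bsc(\Lambda+t,S_1)=\bsc(\Lambda,S_1)$ and the lemma follows. The one step deserving genuine care is this central‑independence of the $s=1$ intertwining operators; granting it, the argument is purely formal and, in particular, requires no cyclicity property of the twisted trace $\spurs$, which one might otherwise have been tempted to invoke.
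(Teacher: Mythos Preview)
Your argument is correct and follows the paper's strategy: establish $\bsmu_G$-invariance of $\Lambda\mapsto\bsc(\Lambda,S_1)$ and deduce the support condition on $\varphi$. For the reduction step the paper is more direct: once invariance is known, one simply writes
\[
\bsc(\Lambda,S_1)=\int_{\bsmu_G}\bsc(\Lambda+\nu,S_1)\,\dd\nu
=\sum_{\UU}\Bigl(\int_{\bsmu_G}e^{\langle\nu,U_G\rangle}\dd\nu\Bigr)e^{\langle\Lambda,U_{S_1}\rangle}\varphi(\UU)
\]
and reads off that only terms with $U_G=0$ survive, without ever invoking the explicit globalization of Lemma~\ref{invfour}. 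Your route through that globalization is longer but has the virtue of pinning down a specific $\varphi$ for which the support claim holds literally. As for the invariance itself, the paper merely records the definitions of $\bsc$ and $\ESM(\YY;\cdot)$ and leaves the check implicit; your factor-by-factor verification (using $\Y_{S_1}\in\ag_{S_1}^G$ for the exponential, and central independence of the $s=1$ intertwining operators $\bfM_{S_1\vert S}$ and $\bfM_{S\vert\tu S}$) is exactly what that check amounts to.
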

\begin{proof}
Il suffit de voir que la $(G,M)$-famille p\'eriodique $\bsc$ est invariante par translations 
par les \'el\'ements de $\widehat{\ag}_G$ (en fait de $\bsmu_G$). Par d\'efinition
\begin{equation*}\bsc(\Lambda,S_1)=
\spurs\mskip -2mu \bigg( \bfD_\nu\mskip 2mu \ESM({\YY};S,\mu+\nu;\Lambda, S_1)
\bfM_{S\vert\tu S}(\mu+\nu+\Lambda) 
\Rho_{S,\sigma,\mu}(\tu,f,\omega)\bigg)\end{equation*}
avec
\begin{equation*}\ESM({\YY};S,\mu;\Lambda, S_1)= e^{\langle\Lambda,\Y_{S_1}\rangle}
\bfM_{S_1\vert S}(\mu)^{-1} \bfM_{S_1\vert S}(\mu+\Lambda)\ptf\end{equation*}
D'autre part on a
\begin{equation*}\bsc(\Lambda,S_1)= \int_{\bsmu_G} \bsc(\Lambda+\nu,S_1) \dd \nu= 
\sum_{\UU \in \ESH_M} \bigg( \int_{\bsmu_G} e^{\langle \nu,U_G\rangle} \dd \nu\bigg)
e^{\langle \Lambda,U_{S_1} \rangle} \varphi(\UU)\ptf\end{equation*}
On en d\'eduit que
\begin{equation*}\varphi(\UU)= \bigg( \int_{\bsmu_G} e^{\langle \nu,U_G\rangle}\dd \nu \bigg) \varphi(\UU)\ptf\end{equation*}
Cela prouve le lemme.\end{proof}

Conform\'ement \`a nos conventions on pose 
$\ESC_M^\tG = \ESB_\tG\backslash \ESA_M$.
Soit $\tL\in\ESL^\tG$ le sous-ensemble de Levi minimal contenant $M\tu$. En particulier
\begin{equation*}\tu\in \bfW^\tL(\ag_M,\ag_M)\ptf\end{equation*}

% proposition 
\begin{proposition}\label{LW,14.1.7}
On a l'\'egalit\'e
\begin{align*}
\lefteqn{\bJres_M^{\tG,T}(\sigma,f,\omega,\tu)=\vert\wh\bsbbc_M\vert^{-1}
\sum_{\nu \in \bsESEsigma} \sum_{H\in\ESC_M^\tG}
e^{-\langle \nu, Y_{\tu}+H\rangle} }\\&&\hspace{3cm}
 \times\sum_{\UU\in\ESH_M}\int_{\bsmu_{M}} e^{\langle (\tu -1)\mu,H\rangle} 
\Gamma_\tL^\tG(H,\UU(T)){\varphi}(\sigma,\tu,\mu;\nu;\UU) \dd \mu.
\end{align*}
\end{proposition}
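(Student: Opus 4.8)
The plan is to start from the expression for $\bJres_M^{\tG,T}(\sigma,f,\omega,\tu)$ furnished by Proposition~\ref{rejres}, namely
$$\bJres_M^{\tG,T}(\sigma,f,\omega,\tu)=\sum_{\substack{Q_1,\,R_1\in\ESP\\ M\subset Q_1\subset R_1}}\tilde{\eta}(Q_1,R_1;u)\,\bJres_{M,Q_1}^{T,R_1}(\sigma,\tu)\ptf$$
Unwinding the definition of $\bJres_{M,Q_1}^{T,R_1}(\sigma,\tu)$ and applying Lemma~\ref{indepconj}\,$(i)$, each summand is expressed through the quantities $[\bsA]^{T,Q_1}_M(H;\sigma,\tu,\mu)$, which in turn decompose via the periodicity relation (Corollary~\ref{formule d'inversion pour les (G,M)-familles}) applied to the periodic $(G,M)$-family $\bsc=\bsc(\sigma,\tu,\mu;\nu)$ introduced just above the statement. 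The key input is the inversion formula
$$\bsA^{T,Q_1}_M(H;\sigma,\tu,\mu;\nu)=\sum_{\UU\in\ESH_M}\varphi(\UU)\,\gammaMF^{Q_1,T}(H,\UU;\Lambda-\nu)$$
with $\Lambda=(\tu-1)\mu$, which reduces the problem to a purely combinatorial manipulation of the $\gamma$-functions indexed by the parabolic data.

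First I would substitute this inversion formula into the summand, carry out the sum over $Q_1$ and $R_1$ subject to the weight $\tilde{\eta}(Q_1,R_1;u)$, and recognise that the combinatorial identity of Arthur (the analogue of \cite[14.1.7]{LW}) collapses the double sum over $(Q_1,R_1)$ into a single $\Gamma_\tL^\tG$-factor. Concretely, the sum $\sum_{Q_1\subset R_1}\tilde{\eta}(Q_1,R_1;u)\,\gammaMF^{Q_1,\UU(T)}(H+U_{Q_1};\Lambda-\nu)$ telescopes: the alternating signs carried by $\tilde{\eta}$, together with the recursive structure of the $(G,M)$-families encoded in Lemma~\ref{gamma-fini} and the factorisation \cite[1.8.6]{LW}, leave only the contribution attached to the minimal Levi $\tL$ containing $M\tu$. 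Here the support lemma (Lemma~\ref{supportTF}) is essential: since $\varphi$ is supported on $\ESH_M^G$, the outer sum over $\ESA_M$ can be organised as a sum over $\ESC_M^\tG=\ESB_\tG\backslash\ESA_M$ without loss, and the $\ESB_\tG$-invariance of the whole expression (which follows because $\nu\vert_{\ESB_\tG}=0$ and $\Lambda\vert_{\ESB_\tG}=0$ for $\nu\in\bsESEsigma$) makes the passage to $\ESC_M^\tG$ legitimate.

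Next I would extract the exponential factors. The scalar $a_S(\mu,\tu)$ and its companion $a_S(\theta_0^{-1}\lambda,\tu)$ appearing in Proposition~\ref{aths} combine, upon setting $\lambda=\theta_0(\mu)$, into the factor $e^{-\langle\nu,\Y_{\tu}+H\rangle}$ and $e^{\langle(\tu-1)\mu,H\rangle}$ of the target formula; tracking these carefully is where the bookkeeping with the decalage operators $\bfD_\nu$ and the definition of $\Y_{\tu}=\theta_0^{-1}T_0-\tu^{-1}T_0$ must be done with care. The averaging $\vert\wh\bsbbc_M\vert^{-1}\sum_{\nu\in\bsESEsigma}$ is simply inherited from the definition of $[\bsA]^{T,Q_1}_M$. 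Assembling these pieces gives precisely
$$\vert\wh\bsbbc_M\vert^{-1}\sum_{\nu\in\bsESEsigma}\sum_{H\in\ESC_M^\tG}e^{-\langle\nu,\Y_{\tu}+H\rangle}\sum_{\UU\in\ESH_M}\int_{\bsmu_M}e^{\langle(\tu-1)\mu,H\rangle}\Gamma_\tL^\tG(H,\UU(T))\,\varphi(\sigma,\tu,\mu;\nu;\UU)\,\dd\mu\ptf$$

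The main obstacle, I expect, is the telescoping step itself: establishing rigorously that $\sum_{Q_1\subset R_1}\tilde{\eta}(Q_1,R_1;u)$ acting on the family of $\gamma$-functions produces exactly $\Gamma_\tL^\tG(H,\UU(T))$, with $\tL$ the minimal Levi containing $M\tu$. This is the twisted, lattice-theoretic analogue of \cite[14.1.7]{LW}, and as the introduction warns (cf. the discussion of Proposition~\ref{propcombfin}), the subtlety is that the pairing governing the Fourier inversion is not perfect, so one cannot simply transcribe the number-field argument. I would follow the strategy of \cite{W} for the local case, using the combinatorial lemmas of Section~\ref{les fonctions sigma} on the functions $\sigma_Q^R$ and $\wt\sigma_Q^R$ together with \cite[2.11.2]{LW} identifying $(\ag_Q^R)^{\theta_0}$ with $\ag_{\tQ^+}^{\tR^-}$, to verify that the alternating sum over the fibres of the imperfect pairing still telescopes to the single boundary term indexed by $\tL$. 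The convergence of all the rearranged series is guaranteed throughout by the estimates already established in Lemma~\ref{major} and Proposition~\ref{convprop}, so no new analytic input is needed beyond this combinatorial identity.
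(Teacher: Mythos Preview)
Your overall strategy matches the paper's: start from Proposition~\ref{rejres}, apply the Fourier inversion formula \ref{formule d'inversion pour les (G,M)-familles} to the periodic $(G,M)$-family $\bsc(\sigma,\tu,\mu;\nu)$, then collapse the combinatorial sum over $(Q_1,R_1)$ weighted by $\tilde\eta(Q_1,R_1;u)$ into the single factor $\Gamma_\tL^\tG(H,\UU(T))$. However, several of your supporting claims are misdirected.

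First, the exponential factors do \emph{not} come from the scalars $a_S(\mu,\tu)$ in Proposition~\ref{aths}. When one sets $\lambda=\theta_0(\mu)$ (as in Lemma~\ref{indepconj}\,$(i)$), one has $\theta_0^{-1}\lambda=\mu$ and the ratio $a_S(\theta_0^{-1}\lambda,\tu)/a_S(\mu,\tu)$ equals $1$. The factor $e^{-\langle\nu,H\rangle}e^{\langle(\tu-1)\mu,H\rangle}$ arises directly from expanding $\gammaMF^{Q_1,T}(H,\UU;\Lambda-\nu)$ with $\Lambda=(\tu-1)\mu$; no appeal to $a_S$ is needed.

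Second, you have imported the wrong difficulty. The imperfect-pairing subtlety you invoke is the obstacle for Proposition~\ref{propcombfin}, not for the present statement. Here the combinatorial collapse is exactly the argument of \cite[14.1.7]{LW}: one rewrites $\tilde\eta(Q_1,R_1;u)$ as an alternating sum over $\tP\in\wt\ESP$ with $\tu\in\bfW^\tP$ and $Q_1\subset P\subset R_1$, interchanges the order of summation so that $\tP$ ranges over $\ESF(\tL)$, then applies \cite[2.11.5]{LW} to collapse the $R_1$-sum of $\wt\sigma_{Q_1}^{R_1}$ to $\hat\tau_\tP\,\tau_{Q_1}^P$, then \cite[1.8.4\,(3)]{LW} to collapse the $Q_1$-sum of $\tau_{Q_1}^P\Gamma_M^{Q_1}$, and finally \cite[2.9.3]{LW} to recognise $\sum_{\tP\in\ESF(\tL)}(-1)^{a_\tP-a_\tG}\hat\tau_\tP$ as $\Gamma_\tL^\tG$. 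Neither Section~\ref{les fonctions sigma} nor \cite[2.11.2]{LW} plays a role.

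Third, the convergence is not covered by Lemma~\ref{major} or Proposition~\ref{convprop}; those concern the expressions of Chapter~\ref{simplification}. The paper establishes absolute convergence of the rearranged triple sum separately, reducing it (after the change of variable $H\mapsto H-U_S$) to the convergence of $\sum_{H_1\in\ESC_{Q_1}^\tG}\wt\sigma_{Q_1}^{R_1}(H_1-\brT_{Q_1})\,\xi((\tu^*-1)H_1)$ for a suitable smooth $\xi$, which follows from \cite[2.12.2]{LW}. This step is what legitimises collapsing the iterated sums over $H_1\in\ESC_{Q_1}^\tG$ and $H\in\ESA_M^{Q_1}(H_1)$ into a single sum over $H\in\ESC_M^\tG$, and you should not skip it.
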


% d\'emonstration
\begin{proof}La preuve ci-apr\`es reprend pour l'essentiel les argument de \cite[14.1.7]{LW}.
On rappelle que par d\'efinition on a
\begin{equation*}\bJres_{ M,Q_1}^{T,R_1}(\sigma,\tu)=\vert\wh\bsbbc_M\vert^{-1}\sum_{\nu \in \bsESEsigma}
e^{-\langle \nu, Y_{\tu}\rangle}\bJres_{ M,Q_1}^{T,R_1}(\sigma,\tu;\nu)\end{equation*}
avec
\begin{equation*}\bJres_{ M,Q_1}^{T,R_1}(\sigma,\tu;\nu)=
\sum_{H_1\in\ESC_{Q_1}^\tG}\widetilde{\sigma}_{Q_1}^{R_1}(H_1-\brT{Q_1})
\int_{\bsmu_M}\bsA^{T,Q_1}_M(H_1;\sigma,\tu,\mu+\nu;\nu)\dd \mu\ptf\end{equation*}
Fixons $\nu \in \bsESEsigma$ et posons $\varphi= \varphi(\sigma,\tu,\mu;\nu)$. 
Pour $H_1\in \ESA_{Q_1}$ et $\UU\in \ESH_M$, on a (par d\'efinition)
\begin{equation*}\gammaMF^{Q_1,T}(H_1,\UU;\Lambda)= \sum_{H\in \ESA_M^{Q_1}(H_1+ U_{Q_1})}
\Gamma_M^{Q_1}(H,\UU(T))e^{\langle\Lambda,H\rangle}\ptf\end{equation*}
Par cons\'equent, en rappelant que $\bsc$ est la $(G,M)$-famille $\bsc_{\varphi}=\bs{c}(\sigma,\tu,\mu;\nu)$, on a
\begin{equation*}\bscMF^{Q_1,T}(H_1;\Lambda)=\sum_{\UU\in\ESH_M}
\sum_{H\in\ESA_M^{Q_1}(H_1 +U_{Q_1})}\varphi(\UU)
\Gamma_M^{Q_1}(H,\UU(T))e^{\langle\Lambda,H\rangle}\ptf\leqno{(1)}\end{equation*}
Pour $H\in \ESA_M^{Q_1}(H_1 +U_{Q_1})$, on a $H_{Q_1}= H_1+U_{Q_1}$, 
et il existe une constante $c>0$ (ind\'ependante de $\UU$) 
telle que si $\Gamma_M^{Q_1}(H,\UU(T))\neq 0$, on ait 
\begin{equation*}\lVert  H^{Q_1}\rVert \leq c\sup_{P\in\ESP^{Q_1}(M)}\lVert  (U_{P}+\brT{P})^{Q_1}\rVert \ptf\end{equation*}
Par cons\'equent la somme 
\begin{equation*}\sum_{H\in\ESA_M^{Q_1}(H_1+U_{Q_1})}\vert\Gamma_M^{Q_1}(H,\UU(T))\vert\end{equation*}
est finie, et puisque $\varphi$ est \`a d\'ecroissance rapide sur $\ESH_M$, on en d\'eduit que l'expression (1) 
est absolument convergente. En prenant $\Lambda = (\tu -1)\mu -\nu$, on obtient que
\begin{align*}
\lefteqn{\int_{\bsmu_S}\bsA^{T,Q_1}_M(H_1;\sigma,\tu,\mu;\nu) \dd \mu=
\sum_{\UU\in\ESH_M^G}
\sum_{H\in\ESA_M^{Q_1}(H_1 +U_{Q_1})}}\\
&&\hspace{3cm}
\times 
\int_{\bsmu_{M}}\varphi(\sigma,\tu,\mu;\nu;\UU)
\Gamma_M^{Q_1}(H,\UU(T))e^{\langle (\tu -1)\mu - \nu,H\rangle}\dd \mu.
\end{align*}
On a donc
\begin{equation*}\bJres_{ M,Q_1}^{T,R_1}(\sigma,\tu;\nu)
=\sum_{H_1\in\ESC_{Q_1}^\tG}\sum_{\UU\in\ESH_M}
\sum_{H\in\ESA_M^{Q_1}(H_1 +U_{Q_1})}\int_{\bsmu_{M}}\bs{G}_{Q_1}^{R_1}(H,\mu,\nu,\UU)\dd \mu\end{equation*}
avec
\begin{equation*}\bs{G}_{Q_1}^{R_1}(H,\mu,\nu ,\UU)
= e^{\langle (\tu-1)\mu -\nu,H\rangle}\widetilde{\sigma}_{Q_1}^{R_1}(H- \UU(T))
\Gamma_M^{Q_1}(H,\UU(T))\varphi(\sigma,\tu,\mu;\nu;\UU)\ptf\end{equation*}
L'expression
\begin{equation*}\sum_{\UU\in\ESH_M}\sum_{H\in\ESA_M^{Q_1}(H_1 +U_{Q_1})}
\int_{\bsmu_{M}}\bs{G}_{Q_1}^{R_1}(H,\mu,\nu,\UU)\dd \mu\leqno{(2)}\end{equation*}
est absolument convergente. Pour chaque $\UU\in \ESH_M$, puisque $(U_S)_{Q_1}= U_{Q_1}$, 
on a $\ESA_M^{Q_1}(H_1+U_{Q_1})= \ESA_M^{Q_1}(H_1)+ U_S$. 
L'expression (2) est donc \'egale \`a
\begin{equation*}\sum_{\UU\in\ESH_M}\sum_{H\in\ESA_M^{Q_1}(H_1)}
\int_{\bsmu_{M}}\bs{G}_{Q_1}^{R_1}(H+U_S,\mu,\nu,\UU)\dd \mu\end{equation*}
et $\bJres_{ M,Q_1}^{T,R_1}(\sigma,\tu;\nu)$, c'est-\`a-dire la somme sur $H_1\in \ESC_{Q_1}^\tG$ 
des expressions (2), se r\'ecrit
\begin{equation*}\sum_{H\in \ESC_M^\tG}
\sum_{\UU\in\ESH_M}\int_{\bsmu_{M}}\bs{G}_{Q_1}^{R_1}(H+U_S,\mu,\nu,\UU)\dd \mu\ptf\leqno{(3)}\end{equation*}
Pour cela, il suffit de d\'emontrer la convergence absolue d'une somme it\'er\'ee de la forme
\begin{equation*}\sum_{H\in \ESC_M^\tG}
\sum_{\UU\in\ESH_M}\widetilde{\sigma}_{Q_1}^{R_1}(H-\brT{Q_1})\Gamma_M^{Q_1}(H+U_S,\UU(T))
\psi ((\tu^*-1)(H+U_S),\UU)\end{equation*}
o $\tu^*$ est le transpos\'e de $\tu$ (identifi\'e \`a $\tu^{-1}$) et
\begin{equation*}\psi(X,\UU)=\int_{\bsmu_{M}}e^{\langle \mu,X\rangle}\varphi(\sigma,\tu,\mu;\nu;\UU)\dd \mu\ptf\end{equation*}
D'apr\`es ce qui pr\'ec\`ede, c'est-\`a-dire la convergence absolue de l'expression (2), pour $H_1\in\ESA_{Q_1}$, l'expression
\begin{equation*}\sum_{H\in\ESA_M^{Q_1}(H_1)}\sum_{\UU\in\ESH_M}\Gamma_M^{Q_1}(H+U_S,\UU(T))
\psi ((\tu^*-1)(H+U_S),\UU)\end{equation*}
est absolument convergente. De plus sa valeur en $H_1$ est donn\'ee par $\xi((\tu^*-1)H_1)$ pour une fonction 
$\xi$ sur $\ESA_{Q_1}$ qui est la transform\'ee anti-Laplace d'une fonction lisse sur 
$\bsmu_{Q_1}=\widehat{\ESA}_{Q_1}$.
Il suffit donc d'\'etablir la convergence absolue de la somme
\begin{equation*}\sum_{H_1\in\ESC_{Q_1}^\tG}\widetilde{\sigma}_{Q_1}^{R_1}(H_1-\brT{Q_1})\xi((\tu^*-1)H_1)\end{equation*}
qui r\'esulte de \cite[2.12.2]{LW} (voir aussi \ref{qqq}). 
On peut donc effectuer le changement de variable $H\mapsto H-U_S$ dans (3). On obtient que
\begin{equation*}\bJres_{ M,Q_1}^{T,R_1}(\sigma,\tu;\nu)= 
\sum_{H\in\ESC_M^\tG}\sum_{\UU\in\ESH_M^G} 
\bigg(\int_{\bsmu_{M}}\bs{G}_{M,Q_1}^{T,R_1}(H,\mu,\nu,\UU)\dd \mu\bigg)\ptf\end{equation*}
On a
\begin{equation*}\bJres_M^{\tG,T}(\sigma,f,\omega,\tu)=\sum_{\substack{Q_1,\mskip 2mu  R_1\in\ESP\\ M\subset Q_1\subset R_1}}
\wt{\eta}(Q_1,R_1;u)\bJres_{ M,Q_1}^{T,R_1}(\sigma,\tu)\end{equation*}
avec
\begin{equation*}\wt{\eta}(Q_1,R_1;u)\bJres_{ M,Q_1}^{T,R_1}(\sigma,\tu)=
\sum_{\substack{\tP\in\wt\ESP,\mskip 2mu \tu\in \bfW^\tP\\ Q_1\subset P\subset R_1}}
(-1)^{a_\tP -a_\tG}\bJres_{ M,Q_1}^{T,R_1}(\sigma,\tu)\ptf\end{equation*}
Donc 
\begin{align*}
\lefteqn{\bJres^{\tG,T}_{ M}(\sigma,f,\omega,\tu)= \vert\wh\bsbbc_M\vert^{-1}\sum_{\nu \in \wh{\bsbbc}_M}
e^{-\langle \nu, Y_{\tu}\rangle} 
\sum_{H\in\ESC_M^\tG}\sum_{\UU\in\ESH_M^G}}\\
&&\times\sum_{\substack{Q_1,\mskip 2mu  R_1\in\ESP\\ M\subset Q_1\subset R_1}}
\sum_{\substack{\tP\in\wt\ESP,\mskip 2mu \tu\in \bfW^\tP\\ 
Q_1\subset P\subset R_1}}(-1)^{a_\tP -a_\tG}
\widetilde{\sigma}_{Q_1}^{R_1}(H -\UU(T)_{Q_1})\Gamma_M^{Q_1}(H,\UU(T))\\
&&\times\int_{\bsmu_{M}}e^{\langle (\tu -1)\mu-\nu,H\rangle}
\varphi(\sigma,\tu,\mu;\nu;\UU)\dd \mu.
\end{align*}
La double somme \begin{equation*}\sum_{\substack{Q_1,\mskip 2mu  R_1\in\ESP\\ M\subset Q_1\subset R_1}}
\sum_{\substack{\tP\in\wt\ESP,\mskip 2mu \tu\in \bfW^\tP\\ Q_1\subset P\subset R_1}}\end{equation*}
s'\'ecrit aussi 
\begin{equation*}\sum_{\substack{\tP\in\wt\ESP,\mskip 2mu \tu\in \bfW^\tP\\ 
M\subset P}}\sum_{\substack{Q_1,\mskip 2mu  R_1\in\ESP\\ M\subset Q_1\subset P\subset R_1}}=
\sum_{\tP\in\ESF(\tL)}\sum_{\substack{Q_1,\mskip 2mu  R_1\in\ESP\\ M\subset Q_1\subset P\subset R_1}}\end{equation*}
et d'apr\`es \cite[2.11.5]{LW}, la double somme
\begin{equation*}\sum_{\tP\in\ESF(\tL)}\sum_{\substack{Q_1,\mskip 2mu  R_1\in\ESP\\ M\subset Q_1\subset P\subset R_1}}
(-1)^{a_\tP -a_\tG}\widetilde{\sigma}_{Q_1}^{R_1}(H -\UU(T)_{Q_1})\Gamma_M^{Q_1}(H,\UU(T))\end{equation*}
est \'egale \`a
\begin{equation*}\sum_{\tP\in\ESF(\tL)}\sum_{\substack{Q_1\in\ESP\\ M\subset Q_1\subset P}}
(-1)^{a_\tP -a_\tG}\hat{\tau}_\tP(H-\UU(T)_{P})
\tau_{Q_1}^P(H-\UU(T)_{Q_1})\Gamma_M^{Q_1}(H,\UU(T))\ptf \leqno{(4)}\end{equation*}
D'apr\`es \cite[1.8.4.(3)]{LW} et \cite[2.9.3]{LW}, cette double somme (4) est \'egale \`a
\begin{equation*}\sum_{\tP\in\ESF(\tL)}
(-1)^{a_\tP -a_\tG}\hat{\tau}_\tP(H-\UU(T)_{P})=\Gamma_\tL^\tG(H,\UU(T))\ptf\end{equation*}
Cela ach\`eve la preuve la proposition. \end{proof}

%%%%%%%%%%%%%%%
 \section{Combinatoire finale: \'etape 2}\label{combfin2}

Ce paragraphe n'a pas d'\'equivalent pour les corps de nombres: 
il s'agit, \`a l'aide du lemme d'inversion de Fourier \ref{[W,3.20]}, 
de remplacer la somme sur les $H\in \ESC_M^\tG$ dans la 
proposition \ref{LW,14.1.7} par une somme sur les $\tZ \in \wh{\bsbbc}_G$.

Rappelons que 
$\tL\in\ESL^\tG$ est le sous-espace de Levi minimal (dans $\tG$) contenant $M\tu$. 
L'espace $\ag_\tL$ 
est le sous-espace de $\ag_M$ form\'e des points fixes sous $\tu= u\theta_0$. 
On note $\ag_M^\tL$ l'orthogonal de $\ag_\tL$ dans $\ag_M$:
\begin{equation*}\ag_M =\ag_\tL\oplus \ag_M^\tL\quad\hbox{et l'on a}\quad
\ag_M^\tL= (\tu -1)\ag_M\ptf\end{equation*}
Par d\'efinition, $\ESA_\tL$ est l'image et $\ESA_M^\tL$ le noyau 
de la projection de $\ESA_M$ sur $\ag_\tL$. On a donc une suite exacte
\begin{equation*}0 \rightarrow \ESA_M^\tL \rightarrow \ESA_M \rightarrow \ESA_\tL\rightarrow 0\ptf\end{equation*}
La dualit\'e de Pontryagin fournit alors la suite exacte
\begin{equation*}0 \rightarrow \bsmu_\tL \rightarrow \bsmu_M \rightarrow \bsmu_M^\tL \rightarrow 0\ptf\end{equation*}
Conform\'ement \`a nos conventions, 
les groupes compacts $\bsmu_M$, $\bsmu_\tL$ et $\bsmu_M^\tL$
sont munis de la mesure de Haar qui leur donne le volume $1$.
Consid\'erons le morphisme entre groupes de Lie ab\'eliens compacts connexes
\begin{equation*}\phi: \bsmu_M^\tL\times\bsmu_\tL\to\bsmu_M\quad\hbox{d\'efini par}\quad
\phi(\dmu,\lambda)=(\tu -1)\mu-\lambda\end{equation*}
o $\mu\in\bsmu_M$ est un rel\`evement de $\dmu\in\bsmu_M^\tL$. 

% lemma
\begin{lemma}\label{ku} Le morphisme  $\phi$ est surjectif et son noyau  
$\mathfrak{K}_{\tu}$ est un groupe fini dont le cardinal est \'egal \`a la valeur absolue du jacobien de $\phi$:
\begin{equation*}\mid \mathrm{Jac}(\phi)\mid =\vert \mathfrak{K}_{\tu}\vert = \vert \det(\tu-1\mskip 2mu \vert\mskip 2mu  \ag_M^\tL)\vert\ptf\end{equation*}
\end{lemma}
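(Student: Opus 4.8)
The plan is to treat $\phi$ as an isogeny between two compact tori of the same dimension $\dim\ag_M=\dim\ag_{\tL}+\dim\ag_M^{\tL}$, and to compute both its kernel and its Jacobian by splitting off the $\tu$-fixed part. First I would record how $\tu$ acts dually. The automorphism $\tu=u\theta_0$ preserves $\ESA_M$ together with the decomposition $\ag_M=\ag_{\tL}\oplus\ag_M^{\tL}$, acting as the identity on $\ag_{\tL}=\ag_M^{\tu}$ and with $\tu-1$ invertible on $\ag_M^{\tL}$. By Pontryagin duality it therefore preserves the subtorus $\bsmu_{\tL}=\wh\ESA_{\tL}$ and the quotient torus $\bsmu_M^{\tL}=\wh{\ESA_M^{\tL}}$, acting trivially on $\bsmu_{\tL}$ (since $\tu$ fixes the quotient lattice $\ESA_{\tL}=\ESA_M/\ESA_M^{\tL}$) and with $\tu-1$ invertible on $\bsmu_M^{\tL}$. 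In particular $(\tu-1)$ annihilates $\bsmu_{\tL}$; this is exactly what is needed for $\phi(\dmu,\lambda)=(\tu-1)\mu-\lambda$ to be independent of the lift $\mu\in\bsmu_M$ of $\dmu$, since two lifts differ by an element of $\bsmu_{\tL}$, which is killed by $\tu-1$.

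Next I would exhibit $\phi$ as a morphism from the split short exact sequence $0\to\bsmu_{\tL}\to\bsmu_M^{\tL}\times\bsmu_{\tL}\to\bsmu_M^{\tL}\to 0$ (inclusion $\lambda\mapsto(0,\lambda)$, projection $(\dmu,\lambda)\mapsto\dmu$) to the exact sequence $0\to\bsmu_{\tL}\to\bsmu_M\to\bsmu_M^{\tL}\to 0$ recalled just above. On the subobjects $\phi$ induces $\phi(0,\lambda)=-\lambda$, i.e. $-\mathrm{id}_{\bsmu_{\tL}}$, and on the quotients it induces $\dmu\mapsto(\tu-1)\dmu$ on $\bsmu_M^{\tL}$ (well defined because changing the lift of $\dmu$ alters $(\tu-1)\mu$ by $(\tu-1)\bsmu_{\tL}=0$, and then projecting); a direct check shows both squares commute. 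The left vertical arrow $-\mathrm{id}$ is an automorphism ($-1$ preserves the lattice), so it has trivial kernel and cokernel; the right vertical arrow $\tu-1$ on $\bsmu_M^{\tL}$ is the torus endomorphism induced by the lattice-preserving operator $\tu-1$ on $(\ESA_M^{\tL})^{\vee}$, hence is surjective with kernel of cardinality $[(\ESA_M^{\tL})^{\vee}:(\tu-1)(\ESA_M^{\tL})^{\vee}]=|\det(\tu-1\mid\ag_M^{\tL})|$ (the determinant being unchanged under transpose). By the snake lemma $\phi$ is surjective and $\mathfrak{K}_{\tu}=\ker\phi$ maps isomorphically onto $\ker(\tu-1\mid\bsmu_M^{\tL})$, giving $\vert\mathfrak{K}_{\tu}\vert=\vert\det(\tu-1\mid\ag_M^{\tL})\vert<\infty$.

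Finally the equality with the Jacobian follows formally from the measure normalizations of Convention \ref{convention-mesures}. Being a surjective homomorphism of tori with finite kernel, $\phi$ is a covering map of degree $\vert\mathfrak{K}_{\tu}\vert$ and a local diffeomorphism with constant Jacobian; applying the degree/change-of-variables identity to the constant function $1$ and using $\vol(\bsmu_M^{\tL}\times\bsmu_{\tL})=\vol(\bsmu_M)=1$ yields $1=\vert\mathfrak{K}_{\tu}\vert/\vert{\rm Jac}(\phi)\vert$, whence $\vert{\rm Jac}(\phi)\vert=\vert\mathfrak{K}_{\tu}\vert$. The only delicate point is the bookkeeping of the three dual actions and lattices, namely verifying that the vertical arrows of the diagram are genuinely $-\mathrm{id}$ and $\tu-1$ and that the Haar measures split compatibly along the exact sequences; there is no hard estimate involved, and once the diagram is seen to commute, surjectivity, the kernel count, and the Jacobian identity all follow mechanically.
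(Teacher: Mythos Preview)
Your proof is correct. It differs in approach from the paper's, which is considerably more terse: the paper simply observes that $\phi$ induces an isomorphism on Lie algebras, because the tangent space at the origin of the image of $(\tu-1)$ and that of $\bsmu_\tL$ are orthogonal complements in $i\ag_M^*$ (this uses the identification of $\tu$ with its transpose inverse and the relation $\tu^{-1}(\tu-1)=-(\tu^{-1}-1)$, which forces $\mathrm{im}(\tu-1)=i(\ag_M^\tL)^*$). From this, $\phi$ is a local diffeomorphism between compact connected abelian Lie groups of equal dimension, hence an isogeny; the Jacobian is the absolute value of the determinant of the differential, namely $\vert\det(\tu-1\mid\ag_M^\tL)\vert$, and the kernel count follows from the volume identity exactly as you argue.

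Your route via the snake lemma is more explicitly algebraic: rather than passing to tangent spaces, you work directly with the tori and the two exact sequences, reducing everything to the endomorphism $\tu-1$ of the single torus $\bsmu_M^\tL$. This has the advantage of giving a concrete description of $\mathfrak{K}_{\tu}$ as (isomorphic to) $\ker(\tu-1\mid\bsmu_M^\tL)$, and it avoids any appeal to differential-geometric facts about covering maps of Lie groups. The paper's argument, on the other hand, is quicker once one accepts the orthogonality statement, and it makes transparent why the Jacobian is exactly $\vert\det(\tu-1\mid\ag_M^\tL)\vert$ without the intermediate step of counting lattice indices. Both approaches are standard for this kind of statement; yours is perhaps more robust in that it would adapt immediately to a setting where the tori are replaced by finitely generated abelian groups.
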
 

\begin{proof}
Le morphisme $\phi$ induit un isomorphisme pour les alg\`ebres de Lie
car les espaces tangents \`a l'origine, de l'image de $(\tu-1)$ et de $\bsmu_\tL$, 
sont des suppl\'ementaires orthogonaux 
(on identifie $\tu$ et son transpos\'e inverse et on utilise 
que $\tu^{-1}(\tu -1)= -(\tu -1)$). 
\end{proof}

Pour $\nu\in \bsmu_M$, on note
\begin{equation*}\xi_{\tu,\nu}:\bsmu_M \rightarrow \bsmu_M\qquad
\hbox{l'application}\qquad\mu\mapsto (\tu-1)\mu-\nu\ptf\end{equation*}
Consid\'erons le sous-ensemble des $\mu\in\bsmu_M$ dont l'image par 
$\xi_{\tu,\nu}$ appartient \`a $\bsmu_\tL$:
\begin{equation*}\bmMtunu= \{\mu\in\bsmu_M\mskip 2mu \mid \mskip 2mu (\tu-1)\mu-\nu\in\bsmu_\tL\}\ptf\end{equation*}
C'est une union finie de translat\'es de $\bsmu_\tL$. En effet, 
son quotient 
\begin{equation*}\bsmu_{M,\tu}^\tL(\nu)= \bmMtunu/\bsmu_\tL\end{equation*}est un espace principal homog\`ene 
sous le groupe $\bsmu_{M,\tu}^\tL(0)\simeq \mathfrak{K}_\tu$. On a donc
\begin{equation*}\vert\bsmu_{M,\tu}^\tL(\nu)\vert =\vert \mathfrak{K}_{\tu}\vert \ptf\end{equation*}
On munit $\bmMtunu$ de la mesure $\bsmu_\tL$-invariante induite par celle sur $\bsmu_\tL$.

% lemme 
\begin{lemma}\label{[W,3.20]} Pour $X\in \ESA_\tL$, on note $\ESA_M^\tL(X)$ 
l'ensemble des $H\in \ESA_M$ tels que $H_\tL=X$. 
Soient $\nu\in \bsmu_M$ et $\psi$ une fonction lisse sur $\bsmu_M$. On a l'identit\'e
\begin{equation*}\sum_{H\in\ESA_M^\tL(X)}\bigg(\int_{\bsmu_M} 
e^{\langle (\tu -1)\mu - \nu,H\rangle}\psi (\mu)\dd \mu\bigg)=\vert \mathfrak{K}_{\tu}\vert\mun 
\int_{\bmMtunu}e^{\langle \xi_{\tu,\nu}(\mu),X\rangle}\psi(\mu) \dd \mu\ptf\end{equation*}
\end{lemma}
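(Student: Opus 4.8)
The plan is to reduce the identity to the orthogonality of characters on the compact torus $\bsmu_M^\tL$, combined with the Jacobian computation of Lemma \ref{ku}. First I would dispose of convergence: since $\psi$ is smooth on the compact group $\bsmu_M$ its Fourier coefficients decay rapidly, so for fixed $H$ the inner integral $\int_{\bsmu_M}e^{\langle(\tu-1)\mu-\nu,H\rangle}\psi(\mu)\dd\mu$ is a rapidly decreasing function of $(\tu-1)H$. As $H$ runs over $\ESA_M^\tL(X)=H_0+\ESA_M^\tL$ (with $H_0$ a fixed lift of $X$) and $\tu-1$ is invertible on $\ag_M^\tL$, the arguments $(\tu-1)H$ fill a full-rank lattice in $\ag_M^\tL$, so the $H$-sum converges absolutely and every interchange below is legitimate.

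Next I would fix the lift $H_0$ and write the sum over $H\in\ESA_M^\tL(X)$ as $e^{\langle\xi_{\tu,\nu}(\mu),H_0\rangle}\sum_{K\in\ESA_M^\tL}e^{\langle\xi_{\tu,\nu}(\mu),K\rangle}$, where $\xi_{\tu,\nu}(\mu)=(\tu-1)\mu-\nu$. For $K\in\ESA_M^\tL$ the pairing $\langle\xi_{\tu,\nu}(\mu),K\rangle$ depends only on the image $\bar\Lambda$ of $\xi_{\tu,\nu}(\mu)$ in $\bsmu_M^\tL=\wh{\ESA_M^\tL}$. By orthogonality of characters, together with the normalization $\vol(\bsmu_M^\tL)=1$ dictated by Convention \ref{convention-mesures}, the comb $\sum_{K\in\ESA_M^\tL}e^{\langle\bar\Lambda,K\rangle}$ is the Dirac mass $\delta_0$ at the origin of $\bsmu_M^\tL$. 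Thus the $H$-sum concentrates the $\mu$-integral on the zero locus of the affine submersion $g\colon\bsmu_M\to\bsmu_M^\tL$, $\mu\mapsto\bar\Lambda$, which by definition is exactly $\bmMtunu$.

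The heart of the matter is the measure bookkeeping when pulling $\delta_0$ back through $g$. The differential of $g$ is $(\tu-1)$, which annihilates the tangent directions to the fibre (the $\ag_\tL$-directions) and is invertible transversally, with transverse determinant $\vert\det((\tu-1)\mid\ag_M^\tL)\vert=\vert\mathfrak{K}_{\tu}\vert$ by Lemma \ref{ku}. Consequently $g^{*}\delta_0$ equals $\vert\mathfrak{K}_{\tu}\vert^{-1}$ times the induced measure on the fibre $g^{-1}(0)=\bmMtunu$, and the compatibility of the Haar measures with the exact sequence $0\to\bsmu_\tL\to\bsmu_M\to\bsmu_M^\tL\to 0$ (Convention \ref{convention-mesures}) identifies that induced measure with the $\bsmu_\tL$-invariant one normalized as in the statement. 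I expect this determinant-versus-fibre-measure matching to be the only genuine obstacle; the cleanest way to secure the constant is to model the pullback on the one-dimensional case, where the comb $\sum_{k\in\ZM}e^{2\pi i n x k}$ on $\RM/\ZM$ pushes forward to $n^{-1}\sum_{j}\delta_{j/n}$, a normalization that globalizes to precisely the factor $\vert\mathfrak{K}_{\tu}\vert^{-1}$ here.

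Finally I would simplify the exponent. On $\bmMtunu$ the element $\xi_{\tu,\nu}(\mu)$ lies in $\bsmu_\tL=\wh{\ESA_\tL}$, hence is trivial on $\ESA_M^\tL$; therefore $\langle\xi_{\tu,\nu}(\mu),H_0\rangle$ depends only on $(H_0)_\tL=X$ and equals $\langle\xi_{\tu,\nu}(\mu),X\rangle$. Substituting this into the restricted integral produces exactly $\vert\mathfrak{K}_{\tu}\vert^{-1}\int_{\bmMtunu}e^{\langle\xi_{\tu,\nu}(\mu),X\rangle}\psi(\mu)\dd\mu$, which is the right-hand side of the lemma.
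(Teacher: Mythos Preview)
Your argument is correct and is exactly the Fourier inversion the paper has in mind; the paper merely writes ``C'est imm\'ediat par inversion de Fourier (cf.\ \cite[3.20]{W})'' and leaves the details you have supplied. Your identification of the measure ratio as the one nontrivial point is accurate: the disintegration of the normalized Haar measure on $\bsmu_M$ through the surjection $\mu\mapsto\overline{(\tu-1)\mu-\nu}$ gives each fibre total mass~$1$, whereas the $\bsmu_\tL$-induced measure on $\bmMtunu$ gives it mass $\vert\mathfrak{K}_{\tu}\vert$, whence the factor $\vert\mathfrak{K}_{\tu}\vert^{-1}$ via Lemma~\ref{ku}.
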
 
\begin{proof}
C'est imm\'ediat par inversion de Fourier (cf. \cite[3.20]{W}).
\end{proof}

Rappelons que
\begin{equation*}\bsESEsigma\bydef
\{\nu\in \bsmu_M\mskip 2mu \vert\mskip 2mu  \xi_{\tu(\omega\otimes \sigma)}\star\nu\vert_{\ESB_M}= \xi_\sigma\}\ptf\end{equation*}
En particulier, tout $\nu\in\bsESEsigma$ est trivial sur $\ESB_\tG$, 
et donc, pour tout $\mu \in \bmMtunu$, le 
caract\`ere $\xi_{\tu,\nu}(\mu)$ de $\ESA_\tL$ est lui aussi trivial sur $\ESB_\tG$.
Soit $\tZ\in \ESA_\tG$. Pour $\nu,\mskip 2mu \mu \in \bsmu_M$ et $\Lambda=(\tu-1)\mu$, on pose
\begin{align*}
\lefteqn{
\bsA^{T,\tG}_\tL(\tZ;\sigma,\tu,\mu;\nu)}\\&& \hspace{1cm} =
\spurs\bigg(\bfD_\nu\mskip 2mu \ESM_{\tL,F}^{\tG,T}(\tZ,{\YY};S,\mu+\nu;\Lambda-\nu)
\bfM_{S\vert\tu S}(\mu +\Lambda)\Rho_{S,\sigma,\mu}(\tu,f,\omega)\bigg)
\ptf
\end{align*} 
Pour $\nu\in \bsESEsigma$, puisque $\Lambda-\nu$ est trivial sur $\ESB_\tG$, cette expression ne d\'epend que de l'image de 
$\tZ$ dans $\bsbbc_\tG = \ESB_\tG \backslash \ESA_\tG$.

% proposition 1
\begin{proposition}\label{propcombfin}\label{admnu}
On a
\begin{equation*}\bJres^{\tG,T}_{ M}(\sigma,f,\omega,\tu)=
\vert\wh\bsbbc_M\vert^{-1}\vert \mathfrak{K}_{\tu}\vert\mun\sum_{\nu \in \bsESEsigma}
\sum_{\tZ\in \bsbbc_\tG}
\int_{\bmMtunu}\bsA^{T,\tG}_\tL(\tZ;\sigma,\tu,\mu;\nu)\dd \mu\ptf\end{equation*}
\end{proposition}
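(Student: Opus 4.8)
The plan is to start from the expression for $\bJres^{\tG,T}_M(\sigma,f,\omega,\tu)$ furnished by Proposition \ref{LW,14.1.7} and to convert the summation over $H\in\ESC_M^\tG$ into one over $\tZ\in\bsbbc_\tG$ by means of the Fourier inversion lemma \ref{[W,3.20]}, applied fibre by fibre along the projection $\ESA_M\to\ESA_\tL$. Fix $\nu\in\bsESEsigma$ and set $\Lambda=(\tu-1)\mu$. In the summand of \ref{LW,14.1.7} the two exponentials combine into $e^{-\langle\nu,\Y_\tu\rangle}\,e^{\langle(\tu-1)\mu-\nu,\,H\rangle}=e^{\langle\xi_{\tu,\nu}(\mu),H\rangle}$, the scalar $e^{-\langle\nu,\Y_\tu\rangle}$ being trivial since $\Y_\tu\in\ESA_0$ has integral projection in $\ESA_M$ while $\nu\in\bsmu_M=\wh\ESA_M$.

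First I would decompose $\sum_{H\in\ESC_M^\tG}$ along the exact sequence $0\to\ESA_M^\tL\to\ESA_M\to\ESA_\tL\to0$ of \ref{combfin2}, reduced modulo $\ESB_\tG$ (which meets $\ag_M^\tL$ trivially and embeds in $\ESA_\tL$), obtaining $\sum_{H\in\ESC_M^\tG}=\sum_{X\in\ESC_\tL^\tG}\ \sum_{H\in\ESA_M^\tL(X)}$. The point that makes the inversion applicable is that $\Gamma_\tL^\tG(H,\UU(T))$ depends on $H$ only through its projection to $\ag_\tL^\tG$, hence is constant equal to $\Gamma_\tL^\tG(X,\UU(T))$ on each fibre $\ESA_M^\tL(X)$ and can be pulled out of the inner sum. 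Applying \ref{[W,3.20]} with $\psi(\mu)=\varphi(\sigma,\tu,\mu;\nu;\UU)$ then replaces $\sum_{H\in\ESA_M^\tL(X)}\int_{\bsmu_M}e^{\langle\xi_{\tu,\nu}(\mu),H\rangle}\psi(\mu)\dd\mu$ by $\vert\mathfrak{K}_\tu\vert^{-1}\int_{\bmMtunu}e^{\langle\xi_{\tu,\nu}(\mu),X\rangle}\psi(\mu)\dd\mu$, and, after interchanging the absolutely convergent sums over $\UU$ and $X$ with the integral (the convergence being controlled as in \ref{convprop} and \ref{major} by the rapid decay of $\varphi$ and the compact support of $\Gamma_\tL^\tG$), one is left with $\vert\mathfrak{K}_\tu\vert^{-1}\int_{\bmMtunu}\bigl(\sum_{X\in\ESC_\tL^\tG}\sum_{\UU\in\ESH_M}\varphi(\UU)\,\Gamma_\tL^\tG(X,\UU(T))\,e^{\langle\xi_{\tu,\nu}(\mu),X\rangle}\bigr)\dd\mu$.

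It then remains to recognise the bracketed expression as $\sum_{\tZ\in\bsbbc_\tG}\bsA^{T,\tG}_\tL(\tZ;\sigma,\tu,\mu;\nu)$. Exactly as in \ref{combfin} one checks that $\bsA^{T,\tG}_\tL(\tZ;\sigma,\tu,\mu;\nu)=\bsc_{\tL,F}^{\tG,T}(\tZ;\Lambda-\nu)$ for the periodic $(\tG,\tL)$-family obtained by restricting the periodic $(G,M)$-family $\bsc=\bsc(\sigma,\tu,\mu;\nu)$; the Fourier inversion formula for $(G,M)$-families (\ref{formule d'inversion pour les (G,M)-familles} together with \ref{inversion (cas tordu)}) then gives $\bsA^{T,\tG}_\tL(\tZ;\sigma,\tu,\mu;\nu)=\sum_{\UU\in\ESH_M}\varphi(\UU)\,\gamma_{\tL,F}^{\tG,\UU(T)}(\tZ+U_\tG;\Lambda-\nu)$, where $\gamma_{\tL,F}^{\tG,\UU(T)}(\tZ+U_\tG;\Lambda-\nu)=\sum_{H'\in\ESA_\tL^\tG(\tZ+U_\tG)}\Gamma_\tL^\tG(H',\UU(T))\,e^{\langle\Lambda-\nu,H'\rangle}$ and $\Lambda-\nu=\xi_{\tu,\nu}(\mu)$ for $\mu\in\bmMtunu$. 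Summing over $\tZ\in\bsbbc_\tG$ and using the exact sequence $0\to\ESA_\tL^\tG\to\ESA_\tL\to\ESA_\tG\to0$ together with the $\ESB_\tG$-invariance of $\Gamma_\tL^\tG(\cdot,\UU(T))\,e^{\langle\xi_{\tu,\nu}(\mu),\cdot\rangle}$ (both $\nu$ and $(\tu-1)\mu$ being trivial on $\ESB_\tG$, and $\Gamma_\tL^\tG$ factoring through $\ag_\tL^\tG$), the double sum $\sum_{\tZ}\sum_{H'\in\ESA_\tL^\tG(\tZ+U_\tG)}$ collapses to $\sum_{H'\in\ESC_\tL^\tG}$, which reproduces the bracketed expression verbatim. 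Restoring the prefactor $\vert\wh\bsbbc_M\vert^{-1}$ and summing over $\nu\in\bsESEsigma$ yields the asserted identity.

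The main obstacle is precisely the Fourier inversion step \ref{[W,3.20]}: since $\tu-1$ is not invertible on $\ag_M$ and the relevant pairing is imperfect, the morphism $\phi$ of \ref{combfin2} is only surjective with finite kernel $\mathfrak{K}_\tu$, so the inversion produces the normalising factor $\vert\mathfrak{K}_\tu\vert^{-1}$ and forces the integration to run over the union of translates $\bmMtunu$ rather than over $\bsmu_\tL$. Carrying this factor correctly, and keeping careful track of the various $\ESB_\tG$-quotients so that the final collapses to $\bsbbc_\tG$ and $\ESC_\tL^\tG$ are exact, is the delicate part of the argument; it is exactly the feature absent from the number-field treatment \cite[14.1.8]{LW}.
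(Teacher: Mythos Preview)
Your proposal is correct and follows essentially the same route as the paper's proof: start from \ref{LW,14.1.7}, split $\sum_{H\in\ESC_M^\tG}$ along $\ESA_M\to\ESA_\tL$, apply \ref{[W,3.20]} fibrewise, then identify the remaining sum over $\ESC_\tL^\tG$ with $\sum_{\tZ\in\bsbbc_\tG}\bsA^{T,\tG}_\tL(\tZ;\cdots)$ via \ref{inversion (cas tordu)}. Two small points: your remark that $e^{-\langle\nu,\Y_\tu\rangle}=1$ (which the paper uses silently) is correct since $\Y_\tu=\theta_0^{-1}\Y_u\in\ESA_0$ projects into $\ESA_M$; and in your final collapse $\sum_{\tZ}\sum_{H'\in\ESA_\tL^\tG(\tZ+U_\tG)}\to\sum_{H'\in\ESC_\tL^\tG}$ you should invoke \ref{supportTF} to ensure $U_G=0$ hence $U_\tG=0$, exactly as the paper does.
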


\begin{proof}
D'apr\`es \ref{LW,14.1.7}:
\begin{align*}
\lefteqn{\bJres^{\tG,T}_{ M}(\sigma,f,\omega,\tu)=
\vert\wh\bsbbc_M\vert^{-1}\sum_{\nu \in \bsESEsigma} \sum_{H\in\ESC_M^\tG}
e^{-\langle \nu, Y_{\tu}+H\rangle}
 }\\&&\hspace{2cm} \times
 \sum_{\UU\in\ESH_M}\int_{\bsmu_{M}} e^{\langle (\tu -1)\mu,H\rangle} 
\Gamma_\tL^\tG(H,\UU(T)){\varphi}(\sigma,\tu,\mu;\nu;\UU) \dd \mu.
\end{align*}
On d\'ecompose la somme sur les ${H\in\ESC_M^\tG}$ en une double somme sur 
${X\in\ESC_\tL^\tG}$ pr\'ec\'ed\'ee de la somme en ${H\in\ESA_M^\tL(X)}$
o
$\ESC_\tL^\tG\bydef \ESB_\tG\backslash \ESA_\tL$
et on obtient
\begin{align*}
\lefteqn{\bJres^{\tG,T}_{ M}(\sigma,f,\omega,\tu)={\vert\wh\bsbbc_M\vert^{-1}}
\sum_{\nu\in \bsESEsigma }\sum_{X\in\ESC_\tL^\tG}
\sum_{\UU\in\ESH_M}}\\&&\hspace{2cm}\times
\sum_{H\in\ESA_M^\tL(X)}\int_{\bsmu_M} 
e^{\langle (\tu -1)\mu-\nu,H\rangle} 
\Gamma_\tL^\tG(H,\UU(T))\varphi(\sigma,\tu,\mu;\nu;\UU) \dd\mu\ptf
\end{align*}
En utilisant \ref{[W,3.20]} on voit que
\begin{align*}
\lefteqn{\bJres^{\tG,T}_{ M}(\sigma,f,\omega,\tu)=
\vert\wh\bsbbc_M\vert^{-1}\vert \mathfrak{K}_{\tu}\vert\mun 
\sum_{\nu\in \bsESEsigma}\sum_{X\in\ESC_\tL^\tG}
}\\ &&\hspace{2,5cm}\times 
\int_{\bmMtunu} \sum_{\UU\in\ESH_M}
e^{\langle \xi_{\tu,\nu}(\mu),X\rangle}\Gamma_\tL^\tG(X,\UU(T))
\varphi(\sigma,\tu,\mu;\nu;\UU) \dd \mu.
\end{align*}
On d\'ecompose la somme sur $X\in \ESC_\tL^\tG$ 
en une double somme sur ${\tZ\in \bsbbc_\tG}$ pr\'ec\'ed\'ee de la somme sur
$X\in \ESA_\tL^\tG(\tZ)$.
Pour $\nu\in \bsESEsigma$, $\tZ\in \bsbbc_\tG$ et $\mu\in \bmMtunu$, on a
\begin{equation*}\sum_{X\in \ESA_\tL^\tG(\tZ)} 
e^{\langle \xi_{\tu,\nu}(\mu),X\rangle}\Gamma_\tL^\tG(X;\UU(T))=
\gamma_{\tL,F}^{\tG,\UU(T)}(\tZ; \xi_{\tu,\nu}(\mu))\ptf\end{equation*}
Rappelons que la somme sur $\ESH_M$ est en fait une somme sur $\ESH_M^G$ (cf. \ref{supportTF}). 
D'apr\`es 
la formule d'inversion de Fourier pour les $(\tG,\tL)$-familles 
(\ref{inversion (cas tordu)}~(1)), 
pour $\mu\in \bmMtunu$, 
on a
\begin{equation*}\sum_{\UU\in \ESH_M^G}
\gamma_{\tL,F}^{\tG,\UU(T)}(\tZ;\xi_{\tu,\nu}(\mu))\varphi(\sigma,\tu,\mu;\nu;\UU) 
= \bsc_{\tL,F}^{\tG,T}(\tZ;\xi_{\tu,\nu}(\mu))\end{equation*}
o $\bsc$ est la $(G,M)$-famille d\'efinie par la fonction \`a d\'ecroissance 
rapide sur $\ESH_M$:
$\UU\mapsto\varphi(\sigma,\tu,\mu ;\nu;\UU)$.
Puisque $\Lambda-\nu=\xi_{\tu,\nu}(\mu)$ 
le terme $\bsc_{\tL,F}^{\tG,T}(\tZ;\xi_{\tu,\nu}(\mu))$ est \'egal \`a l'expression
$\bsA^{T,\tG}_\tL(\tZ;\sigma,\tu,\mu;\nu)$.
\end{proof}

On rappelle que l'on est parti de la formule \ref{cot\'espec}
pour $\Jres^{\tG,T}_\spec (f,\omega)$
 r\'ecrite sous la forme \ref{cot\'especb}. Puis on a introduit une variante $\bJres_\spec^{\tG,T}(f,\omega)$ 
 en \ref{\'ecriture finale}~(i) que l'on a d\'ecompos\'e en une somme 
de termes $\bJres^{\tG,T}_{ M}(\sigma,f,\omega,\tu)$ (cf. \ref{rejres})
pour lesquels on obtient une nouvelle expression en \ref{propcombfin}. 
Ces formules ont \'et\'e obtenues pour $T\in \ag_0$ dans le translat\'e d'un c™ne ouvert non vide. 
Pour $T\in \ag_{0,\QM}$, nous pouvons maintenant les comparer. 
% proposition
\begin{proposition}\label{polasymp}
On a l'\'egalit\'e de fonctions dans $\mathrm{PolExp}$:
\begin{equation*}\Jres^{\tG,T}_\spec(f,\omega)=\sum_{M\in\ESL^G/\bfW^G}\frac{1}{w^G(M)}
\sum_{\bsigma\in\bsPi_\disc(M)}\cMsig\mskip -5mu
\sum_{\tu\in \bfW^\tG(\ag_M,\ag_M)}\mskip -5mu
\bJres^{\tG,T}_{ M}(\sigma,f,\omega,\tu)\ptf\end{equation*}
\end{proposition}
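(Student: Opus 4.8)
The plan is to recognize this identity as a formal consequence of the estimates already assembled, by appealing to the uniqueness principle for PolExp. First I would observe that, by Proposition \ref{rejres}, the right-hand side is exactly the variant $\bJres_\spec^{\tG,T}(f,\omega)$ of the spectral expression introduced in Proposition \ref{�criture finale}. Thus the statement reduces to the equality of two functions in PolExp,
$$\Jres^{\tG,T}_\spec(f,\omega) = \bJres_\spec^{\tG,T}(f,\omega)\ptf$$

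Next I would verify that both members genuinely belong to PolExp. For the left-hand side this is Theorem \ref{propform} applied with $\bullet = \spec$. For the right-hand side I would argue term by term: by Proposition \ref{propcombfin}, each $\bJres^{\tG,T}_M(\sigma,f,\omega,\tu)$ is, up to the finite sums over $\nu \in \bsESEsigma$ and $\tZ \in \bsbbc_\tG$ and the integral over the compact group $\bmMtunu$, assembled from the functions $\bsc_{\tL,F}^{\tG,T}$, whose dependence on $T$ lies in PolExp by the twisted Lemma \ref{gammaMpol}. Since PolExp is a vector space and the sums over $M$, $\bsigma$ and $\tu$ are finite, it follows that $\bJres_\spec^{\tG,T}$ is an element of PolExp.

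With both sides in PolExp, I would then invoke the key estimate \ref{�criture finale}\,(ii): for every real $r$ one has $\vert \Jres_\spec^{\tG,T} -\bJres_\spec^{\tG,T}\vert\ll\bsd_0(T)^{-r}$ for $T$ sufficiently regular, that is, for $T$ ranging in the translate of a nonempty open cone of $(\ag_0^G)^{\theta_0}$. Their difference is therefore a PolExp function tending to $0$ faster than any power of $\bsd_0(T)^{-1}$ as $\|T\|\to +\infty$ inside this cone; here I would use that on such a cone $\bsd_0(T)$ and $\|T\|$ are equivalent, as recorded in \ref{convergence d'une int�grale it�r�e}. The uniqueness Lemma \ref{unicit�}, applied to each lattice $\ESR$ of the relevant $\theta_0$-invariant rational subspace, then forces all the polynomials $p_{\ESR,\nu}$ attached to this difference to vanish; by Definition \ref{polexp} the difference is the zero function, which gives the claimed identity.

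The substantive work is already contained in the preceding propositions, so the only points demanding care are the PolExp membership of the right-hand side and the matching of cones. The main obstacle will be ensuring that the cone on which \ref{�criture finale}\,(ii) was established coincides with the $\theta_0$-invariant cone on which \ref{unicit�} is applied, and that both sides, viewed as PolExp functions, depend only on the projection of $T$ onto $(\ag_{0,\QM}^G)^{\theta_0}$, so that the constraint $\theta_0(T)=T$ imposed throughout Chapter \ref{estim�es uniformes} does not obstruct the uniqueness argument.
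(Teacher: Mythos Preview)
Your proposal is correct and follows essentially the same approach as the paper's own proof: identify the right-hand side with $\bJres_\spec^{\tG,T}(f,\omega)$ via \ref{rejres}, check that both sides lie in {\rm PolExp} (the paper cites \ref{propform} for the left and \ref{lissgm} together with \ref{propcombfin} for the right, where your appeal to \ref{gammaMpol} is an equally valid and perhaps more direct reference), invoke the estimate \ref{�criture finale}\,$(ii)$, and conclude by \ref{unicit�}. Your added remarks on the $\theta_0$-invariant cone are a reasonable caution but are not treated as an obstacle in the paper.
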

% preuve
\begin{proof} Les deux membres de cette \'equation sont, comme fonctions
de $T\in \ag_{0,\QM}$, des \'el\'ements de {\nobreak PolExp}: on invoque \ref{propform} pour $\Jres^{\tG,T}$
et \ref{lissgm} pour les $\bJres^{\tG,T}_{ M}$ 
vu leur d\'efinition en \ref{propcombfin}. Compte tenu de la d\'ecomposition
\ref{rejres}, la majoration \ref{\'ecriture finale}~(ii) montre que les deux membres ne diff\`erent que
par une quantit\'e qui tend vers z\'ero lorsque $T$ tend vers l'infini dans un c™ne ouvert.
Leur \'egalit\'e r\'esulte alors de \ref{unicit\'e}.
\end{proof}
Cette proposition peut tre vue comme
 l'analogue de \cite[14.1.11]{LW} (raffin\'e en \cite[14.2.1]{LW}). Toutefois 
 c'est une \'egalit\'e de fonctions dans PolExp et non une \'egalit\'e de polyn™mes 
 et par ailleurs les sommes sur $\nu$ et $\tZ$ viennent compliquer 
 l'expression \ref{propcombfin}; une nouvelle \'etape est n\'ecessaire ici.

%%%%%%%%%%%%%%%
 \section{Le polyn™me limite au point central}\label{le polyn™me limite}
Les preuves dans cette section sont inspir\'ees de celle du lemme de \cite[3.23]{W}.
Pour $S\in \ESP(M)$ et $\mu\in \ag_{M,\CM}^*$, rappelons que $\ESM(S,\mu)$ est la $(G,M)$-famille spectrale 
\`a valeurs op\'erateurs d\'efinie par 
\begin{equation*}\ESM(S,\mu;\Lambda, S_1)= 
\bfM_{S_1\vert S}(\mu)^{-1} \bfM_{S_1\vert S}(\mu+\Lambda)\end{equation*}
et qu'on lui a associ\'e l'op\'erateur 
\begin{equation*}\ESM_\tL^\tG(S,\mu;\Lambda)=
\sum_{\tS_1\in \wt\ESP(\tL)}\epsilon_{\tS_1}^\tG(\Lambda)\mskip 2mu 
\ESM(S,\mu;\Lambda, S_1)\ptf\end{equation*}
C'est une variante de l'op\'erateur $\ESM_{M,F}^{G,T}(Z,\YY;P,\lambda;\Lambda)$  
introduit en \ref{lissgm}. Ici, comme dans le cas des corps de nombres,
on int\`egre les fonctions caract\'eristiques 
de c™nes sur l'espace vectoriel plut™t que de sommer sur le r\'eseau (cf. \ref{epspq}). 
Les fonctions $\epsilon_{\tS_1}^\tG(\Lambda)$ sont d\'efinies via le choix d'une mesure de Haar sur l'espace vectoriel 
$\ag_{\tL}^{\tG}$ mais le produit 
\begin{equation*}\mathrm{vol}(\ESA_{\tL}^{\tG}\backslash \ag_{\tL}^{\tG})^{-1} \ESM_{\tL}^{\tG}(S,\mu;\Lambda)\end{equation*}
est ind\'ependant de ce choix.
Pour all\'eger l'\'ecriture, posons (comme dans \cite[14.1]{LW})
\begin{equation*}\ESM_\tL^\tG(S,\mu)\bydef\ESM_\tL^\tG(S,\mu;0)\ptf\end{equation*}

% d\'efinition
\begin{definition}\label{defA}
On pose
\begin{equation*}\bsA^{\tG}_\tL(\sigma,\tu,\mu;\nu,\Lambda)=\spurs\bigg(\bfD_\nu\mskip 2mu \ESM_\tL^\tG(S,\mu+\nu;\Lambda)
\bfM_{S\vert\tu S}(\mu+\nu)\Rho_{S,\sigma,\mu}(\tu,f,\omega)\bigg)\ptf\end{equation*}
\end{definition}

Fixons $\nu\in \bsESEsigma$,
$\tZ\in \ESA_\tG$ et $\mu\in \bmMtunu$.
Consid\'erons la fonction (introduite juste avant \ref{admnu})
\begin{equation*}T\mapsto a_{\tZ,\mu}(T)=\bsA^{T,\tG}_\tL(\tZ;\sigma,\tu,\mu;\nu)\end{equation*}
sur $\ag_{0,\QM}$. D'apr\`es l'analogue tordu de \ref{passlim}, 
cette fonction appartient \`a $\mathrm{PolExp}$. 
Pour tout r\'eseau $\ESR$ de $\ag_{0,\QM}$, on peut \'ecrire 
\begin{equation*}a_{\tZ,\mu}(T)=\sum_{\tau}e^{\langle \tau\mskip -2mu ,T\rangle} p_{\ESR,\tau}(\azx T)\com{pour tout} T\in \ESR\end{equation*}
o la somme porte sur un sous-ensemble fini de caract\`eres $\tau\in \wh\ESR$. 
D'apr\`es l'analogue tordu de \ref{passlim},  la limite
\begin{equation*}\bs{\alpha}_{\tZ,\mu}\bydef \lim_{k\rightarrow +\infty}p_{\ESR_k,0} (\azx T_0)\end{equation*}
existe et  est ind\'ependante du r\'eseau $\ESR$. 

% lemma
\begin{lemma}\label{limite}On a
\begin{equation*}\bs{\alpha}_{\tZ,\mu}=\left\{\begin{array}{ll}\mathrm{vol}
 \Big(\ESA_\tL^\tG\backslash \ag_\tL^\tG\Big)\mun
e^{\langle \xi_{\tu,\nu}(\mu),\tZ\rangle}\bsA^{\tG}_\tL(\sigma,\tu,\mu;\nu,\xi_{\tu,\nu}(\mu))& 
\hbox{si $\xi_{\tu,\nu}(\mu)\in \wh{\bsbbc}_\tG$}\\
\mskip 5mu 0 & \hbox{sinon}
\end{array}\right..\end{equation*}
\end{lemma}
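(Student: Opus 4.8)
Le résultat à établir est l'évaluation explicite, au point $T=T_0$, de la limite des polynômes $p_{\ESR_k,0}$ associés à la fonction $T\mapsto a_{\tZ,\mu}(T)=\bsA^{T,\tG}_\tL(\tZ;\sigma,\tu,\mu;\nu)$, considérée comme élément de {\rm PolExp}. Il s'agit essentiellement d'appliquer l'analogue tordu de la proposition \ref{passlim} (fourni par \ref{gammaMpol}) à l'opérateur $\bsA^{T,\tG}_\tL$. Rappelons que, par définition (juste avant \ref{admnu}), on a
$$\bsA^{T,\tG}_\tL(\tZ;\sigma,\tu,\mu;\nu)=
\spurs\left(\bfD_\nu\,\ESM_{\tL,F}^{\tG,T}(\tZ,{\YY};S,\mu+\nu;\Lambda-\nu)
\bfM_{S\vert\tu S}(\mu +\Lambda)\Rho_{S,\sigma,\mu}(\tu,f,\omega)\right)$$
avec $\Lambda=(\tu-1)\mu$. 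L'expression $\ESM_{\tL,F}^{\tG,T}(\tZ,{\YY};S,\cdot;\cdot)$ est précisément du type $\bscMF^{\tG,T}$ attaché à une $(\tG,\tL)$-famille périodique à valeurs opérateurs, de sorte que la fonction $T\mapsto a_{\tZ,\mu}(T)$ relève de \ref{gammaMpol}.

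Je procéderais d'abord en observant que $\spurs$ et $\Rho_{S,\sigma,\mu}(\tu,f,\omega)$ ne dépendent pas de $T$, ni l'opérateur $\bfM_{S\vert\tu S}(\mu+\Lambda)$ puisque $\Lambda=(\tu-1)\mu$ est indépendant de $T$ ; toute la dépendance en $T$ est concentrée dans le facteur $\ESM_{\tL,F}^{\tG,T}(\tZ,{\YY};S,\mu+\nu;\Lambda-\nu)$. On applique alors directement le point $(iii)$ de l'analogue tordu de \ref{passlim} (avec $Q=\tG$, le Levi $\tL$ en place de $M$, et le paramètre spectral $\Lambda-\nu=\xi_{\tu,\nu}(\mu)$). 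Ce point distingue deux cas selon que l'image de $\xi_{\tu,\nu}(\mu)$ dans $\bsmu_\tL^\tG=\bsmu_\tL/\bsmu_\tG$ est nulle ou non. L'image est nulle exactement lorsque $\xi_{\tu,\nu}(\mu)\in \ESA_\tL^{\tG,\vee}$, c'est-à-dire (après identification compatible aux mesures de la convention \ref{convention-mesures}) lorsque $\xi_{\tu,\nu}(\mu)\in\wh\bsbbc_\tG$, ce qui donne la dichotomie de l'énoncé.

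Dans le cas où $\xi_{\tu,\nu}(\mu)\notin\wh\bsbbc_\tG$, le point $(ii)$ de \ref{passlim} (version tordue) assure $p_{\ESR_k,0}=0$ pour $k$ assez grand, d'où $\bs{\alpha}_{\tZ,\mu}=0$. Dans le cas $\xi_{\tu,\nu}(\mu)\in\wh\bsbbc_\tG$, on applique la formule limite du point $(iii)$ : après extraction du facteur exponentiel $e^{\langle\xi_{\tu,\nu}(\mu),\tZ\rangle}$ (comme dans le traitement du cas $\Lambda_Q\ne 0$ à la fin de la preuve de \ref{passlim}, où l'on translate la $(\tG,\tL)$-famille périodique par $\xi_{\tu,\nu}(\mu)$), la limite vaut
$${\rm vol}(\ESA_\tL^\tG\backslash\ag_\tL^\tG)^{-1}
e^{\langle\xi_{\tu,\nu}(\mu),\tZ\rangle}
\bscM^\tG(\tu;\xi_{\tu,\nu}(\mu))\ptf$$
Il reste à identifier $\bscM^\tG(\cdots)$ avec $\bsA^{\tG}_\tL(\sigma,\tu,\mu;\nu,\xi_{\tu,\nu}(\mu))$ de \ref{defA}. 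Cette identification est immédiate : la limite remplace la somme sur le réseau $\ESA_\tL^\tG$ par l'intégrale sur l'espace vectoriel $\ag_\tL^\tG$, donc les fonctions $\varepsilon$ (sommes sur réseaux) deviennent les fonctions $\epsilon$ (intégrales sur l'espace vectoriel), ce qui transforme précisément $\ESM_{\tL,F}^{\tG}$ en $\ESM_\tL^\tG$ ; appliquer ensuite $\spurs(\bfD_\nu\,\cdots\,\bfM_{S\vert\tu S}\Rho)$ donne exactement $\bsA^{\tG}_\tL(\sigma,\tu,\mu;\nu,\xi_{\tu,\nu}(\mu))$. La principale difficulté sera de vérifier proprement cette dernière identification, en s'assurant que l'opérateur de décalage $\bfD_\nu$ et l'opérateur $\bfM_{S\vert\tu S}(\mu+\Lambda)$ (évalué en la valeur $T$-indépendante $\Lambda=(\tu-1)\mu$) passent bien à la limite sans perturber l'argument scalaire $\spurs$, ce qui résulte de la lissité en les paramètres imaginaires purs établie en \ref{lissgm}, et de suivre la normalisation des volumes imposée par \ref{convention-mesures} pour obtenir le facteur ${\rm vol}(\ESA_\tL^\tG\backslash\ag_\tL^\tG)^{-1}$ correct.
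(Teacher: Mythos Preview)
Your overall strategy is the paper's: recognise $a_{\tZ,\mu}(T)$ as the scalar $\bsd_{\tL,F}^{\tG,\YY(T)}(\tZ;\xi_{\tu,\nu}(\mu))$ attached to a periodic $(\tG,\tL)$-family, then invoke the twisted analogue of \ref{passlim}. The dichotomy is also the right one, though note that the kernel of $\bsmu_\tL\to\bsmu_\tL^\tG$ is $\bsmu_\tG$, not $\wh{\bsbbc}_\tG$ directly; one needs in addition that $\xi_{\tu,\nu}(\mu)$ is already trivial on $\ESB_\tG$ (a fact recorded just before \ref{admnu}, and used by the paper via the exact sequence $0\to\wh{\bsbbc}_\tG\to\wh{\ESC}_\tL^\tG\to\bsmu_\tL^\tG\to 0$).

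There is, however, a genuine gap in the final identification. The limit produced by \ref{passlim}\,(iii), evaluated at $T=T_0$, is not $\ESM_\tL^\tG(S,\mu+\nu;\xi_{\tu,\nu}(\mu))$ but rather the shifted quantity
\[
\bsd_\tL^\tG(\YY+\TT_0;\xi_{\tu,\nu}(\mu))
=\sum_{\tP\in\ESP(\tL)} e^{\langle \xi_{\tu,\nu}(\mu),\,Y_\tP+[T_0]_\tP\rangle}\,\epsilon_\tP^\tG(\xi_{\tu,\nu}(\mu))\,\bsd(\xi_{\tu,\nu}(\mu),\tP),
\]
because the orthogonal family carried by $a_{\tZ,\mu}(T)$ is $\YY(T)=\YY+\TT$, not $\TT$ alone. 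The passage from $\varepsilon$ to $\epsilon$ that you describe does not by itself remove the exponential prefactor $e^{\langle \xi_{\tu,\nu}(\mu),\,Y_\tP+[T_0]_\tP\rangle}$. The paper disposes of it by the observation $Y_\tP+[T_0]_\tP=T_{0,\tL}$ (the $\ag_\tL$-component of $T_0$), which is \emph{independent of $\tP$}; since $T_0\in\ag_0^G$ one has $T_{0,\tL}\in\ag_\tL^\tG$, and any element of $\wh{\bsbbc}_\tG\subset\bsmu_\tG$ pairs trivially with $\ag_\tL^\tG$, whence $e^{\langle\xi_{\tu,\nu}(\mu),T_{0,\tL}\rangle}=1$. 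Without this step the identification with $\bsA^\tG_\tL(\sigma,\tu,\mu;\nu,\xi_{\tu,\nu}(\mu))$ of \ref{defA} fails. Your stated ``principale difficult\'e'' (that $\bfD_\nu$ and $\bfM_{S\vert\tu S}(\mu+\Lambda)$ survive the limit) is not the issue at all: as you yourself observed, those operators are $T$-independent, so they pass through trivially.
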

\begin{proof}
La $(G,M)$-famille p\'eriodique $\bs{c}(\sigma,\tu,\mu;\nu)$ introduite en \ref{combfin} est de la forme
$\bs{c}(\sigma,\tu,\mu;\nu) = \bsd(\mathfrak{Y})$
pour une $(G,M)$-famille p\'eriodique $\bsd$ donn\'ee par
\begin{equation*}\bsd(\Lambda,S_1)=
\spurs\Big( \bfD_\nu\mskip 2mu \ESM(S,\mu+\nu;\Lambda,S_1)
\bfM_{S\vert\tu S}(\mu+\nu+\Lambda)\Rho_{S,\sigma,\mu}(\tu,f,\omega)\Big)\end{equation*}
et 
\begin{equation*}\bsd({\YY};\Lambda,S_1)=e^{\langle \Lambda, Y_{S_1}\rangle}\bsd(\Lambda,S_1)\ptf\end{equation*}
On a donc
\begin{equation*}a_{\tZ,\mu}(T)=\bsd_{\tL,F}^{\tG,T}(\mathfrak{Y};\xi_{\tu,\nu}(\mu))\ptf\end{equation*}
Rappelons que $\xi_{\tu,\nu}(\mu)$ est un \'el\'ement de $ \bsmu_\tL$, et plus 
pr\'ecis\'ement un caract\`ere de $\ESC_\tL^\tG =\ESB_\tG \backslash \ESA_\tL$. 
Le dual de Pontryagin $\wh{\ESC}_\tL^\tG$ s'ins\`ere dans la suite exacte courte
\begin{equation*}0 \rightarrow \wh{\bsbbc}_\tG \rightarrow \wh{\ESC}_\tL^\tG 
\rightarrow \bsmu_\tL^\tG \rightarrow 0\ptf\end{equation*}
L'image de $\xi_{\tu,\nu}(\mu)$ dans $\bsmu_\tL^\tG$ est nulle
 si et seulement si $\xi_{\tu,\nu}(\mu) \in \wh{\bsbbc}_\tG$.
On d\'eduit alors de \ref{passlim} que
 \begin{equation*}\bs{\alpha}_{\tZ,\mu}= \left\{\begin{array}{ll} 
\mathrm{vol} \Big(\ESA_\tL^\tG\backslash \ag_\tL^\tG \Big)^{-1}\hspace{-0.2cm} e^{\langle \xi_{\tu,\nu}(\mu),\tZ\rangle}
\bsd_\tL^\tG(\mathfrak{Y}+ \mathfrak{T}_0;\xi_{\tu,\nu}(\mu)) & \hbox{si $\xi_{\tu,\nu}(\mu)\in \wh{\bsbbc}_\tG$}\\
0 & \hbox{sinon}\\
\end{array}
\right.\end{equation*}
o, pour $\Lambda\in \wh{\ag}_\tL$ en dehors des murs,
\begin{equation*}\bsd_\tL^\tG(\mathfrak{Y}+ \mathfrak{T}_0;\Lambda)
= \sum_{\tP\in\ESP(\tL)}
e^{\langle \Lambda,Y_\tP +\brTo\tP \rangle}\epsilon_\tP^\tG(\Lambda) \bsd(\Lambda,\tP)\ptf\end{equation*}
Puisque $ \Y_\tP+ \brTo\tP$ est \'egal \`a l'image $T_{\smash{0,\tL}}$ de $T_0$ dans $\ag_\tL$, on a
\begin{align*}
\lefteqn{%
e^{\langle \Lambda, Y_\tP+\brTo\tP\rangle} \bsd(\Lambda,\tP)=
e^{\langle \Lambda,T_{\smash{0,\tL}}\rangle}
}\\&& 
\hspace{1cm}\times\spurs\Big(\bfD_\nu\mskip 2mu \ESM(S,\mu+\nu; \Lambda,\tP)
\bfM_{S\vert\tu S}(\mu+\nu +\Lambda)\Rho_{S,\sigma,\mu}(\tu,f,\omega)\Big)
\ptf
\end{align*}
Comme $T_{\smash{0,\tL}}$ appartient \`a $\ag_\tL^\tG$, si $\xi_{\tu,\nu}(\mu)\in \wh{\bsbbc}_\tG$ alors  
$e^{\langle \xi_{\tu,\nu}(\mu), T_{\smash{0,\tL}}\rangle}=1$ et, 
compte tenu de  de la d\'efinition \ref{defA},
\begin{equation*}\bs{\alpha}_{\tZ,\mu}=\mathrm{vol}\Big(\ESA_\tL^\tG\backslash \ag_\tL^\tG\Big)\mun
e^{\langle \xi_{\tu,\nu}(\mu),\tZ\rangle}\bsA^{\tG}_\tL(\sigma,\tu,\mu;\nu,\xi_{\tu,\nu}(\mu))
\ptf\end{equation*}  
\end{proof}

% d\'efinition
\begin{definition}\label{defJ}
Introduisons les notations suivantes: 
\begin{equation*}v_\tL = \mathrm{vol}\Big(\ESB_\tL^\tG\backslash \ag_\tL^\tG \Big)\vgq
V_M(\tu)\mskip 2mu \mskip 2mu  ={v_\tL^{-1}\vert\wh\bsbbc_\tL\vert \vert\wh\bsbbc_M\vert^{-1}}\vert \mathfrak{K}_{\tu}\vert\mun
\end{equation*}
et
\begin{equation*}\bmMtunuo =\{\mu\in\bmMtunu\mskip 2mu \mskip 2mu \mid \mskip 2mu \mskip 2mu \xi_{\tu,\nu}(\mu)=0\mskip 2mu \}\ptf\end{equation*}
L'ensemble $\bmMtunuo$ est stable par translations par $\bsmu_\tL$ et muni de la mesure $\bsmu_\tL$-invariante
induite par celle sur $\bsmu_\tL$. 
On pose
\begin{equation*}\bsJ_{ M}^\tG(\sigma,f,\omega,\tu;\nu)=V_M(\tu)\int_{\bmMtunuo}\bsA^{\tG}_\tL(\sigma,\tu,\mu;\nu)
\dd \mu\ptf\end{equation*}
avec
\begin{equation*}\bsA^{\tG}_\tL(\sigma,\tu,\mu;\nu)=\bsA^{\tG}_\tL(\sigma,\tu,\mu;\nu,0)\end{equation*}
c'est-\`a-dire
\begin{equation*}\bsA^{\tG}_\tL(\sigma,\tu,\mu;\nu)=\spurs\Big(\bfD_\nu\mskip 2mu \ESM_\tL^\tG(S,\mu+\nu)
\bfM_{S\vert\tu S}(\mu+\nu)\Rho_{S,\sigma,\mu}(\tu,f,\omega)\Big)\ptf\end{equation*}
Enfin on pose
\begin{equation*}\bsJ_{ M}^\tG(\sigma,f,\omega,\tu)=\sum_{\nu \in \bsESEsigma}
\bsJ_M^\tG(\sigma,f,\omega,\tu;\nu)\ptf\end{equation*}
\end{definition}

% proposition 
\begin{proposition} \label{expspec}
La fonction
\begin{equation*}T\mapsto h(T)=\bJres^{\tG,T}_{ M}(\sigma,f,\omega,\tu)\end{equation*}
est dans $\mathrm{PolExp}$ et pour tout r\'eseau $\ESR$ de $\ag_{0,\QM}$, on a l'\'egalit\'e
\begin{equation*}\lim_{k\rightarrow +\infty}p_{\ESR_k,0}(h,T_0)=\bsJ^\tG_{M}(\sigma,f,\omega,\tu)\ptf\end{equation*}
\end{proposition}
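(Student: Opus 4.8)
La stratégie est d'appliquer l'analogue tordu de la proposition \ref{passlim} (point $(iii)$) à la fonction $T\mapsto h(T)=\bJres^{\tG,T}_{M}(\sigma,f,\omega,\tu)$. D'abord, je rappellerais que $h$ appartient à {\rm PolExp}: cela résulte de l'expression donnée en \ref{propcombfin}, puisque chaque terme $\bsA^{T,\tG}_\tL(\tZ;\sigma,\tu,\mu;\nu)$ est, en tant que fonction de $T$, un élément de {\rm PolExp} (c'est l'analogue tordu de \ref{passlim} invoqué juste avant \ref{limite}), et que la somme sur $\nu\in\bsESEsigma$ et $\tZ\in\bsbbc_\tG$ ainsi que l'intégrale sur $\bmMtunu$ sont finies (ou à support compact en $\mu$, $\bmMtunu$ étant une union finie de translatés de $\bsmu_\tL$). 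Donc $h\in{\rm PolExp}$ et la limite $\lim_{k}p_{\ESR_k,0}(h,T_0)$ existe et est indépendante de $\ESR$.

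**Calcul de la limite.** Le point crucial est que, la limite et les sommations finies commutant, on a
$$\lim_{k\rightarrow +\infty}p_{\ESR_k,0}(h,T_0)=
\vert\wh\bsbbc_M\vert^{-1}\vert \mathfrak{K}_{\tu}\vert\mun\sum_{\nu \in \bsESEsigma}
\sum_{\tZ\in \bsbbc_\tG}
\int_{\bmMtunu}\bs{\alpha}_{\tZ,\mu}\dd \mu$$
où $\bs{\alpha}_{\tZ,\mu}=\lim_{k}p_{\ESR_k,0}(a_{\tZ,\mu},T_0)$ est la limite calculée dans le lemme \ref{limite}. La justification de cette interversion repose sur la majoration uniforme fournie par l'analogue tordu de \ref{passlim}\,$(iii)$: l'écart $\vert p_{\ESR_k,0}(a_{\tZ,\mu},T_0)-\bs{\alpha}_{\tZ,\mu}\vert$ est borné par $b\,N_d(\bsc)\,k\mun$ où la $(G,M)$-famille $\bsc=\bsd(\mathfrak{Y})$ dépend continûment de $\mu$ et $\nu$, de sorte que le terme d'erreur total reste contrôlé par $k\mun$ fois une constante finie, et tend donc vers $0$.

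**Insertion du lemme \ref{limite} et simplification.** D'après \ref{limite}, le terme $\bs{\alpha}_{\tZ,\mu}$ est nul sauf si $\xi_{\tu,\nu}(\mu)\in\wh{\bsbbc}_\tG$, auquel cas il vaut ${\rm vol}(\ESA_\tL^\tG\backslash \ag_\tL^\tG)\mun e^{\langle \xi_{\tu,\nu}(\mu),\tZ\rangle}\bsA^{\tG}_\tL(\sigma,\tu,\mu;\nu,\xi_{\tu,\nu}(\mu))$. La somme sur $\tZ\in\bsbbc_\tG$ du facteur $e^{\langle \xi_{\tu,\nu}(\mu),\tZ\rangle}$ est nulle à moins que $\xi_{\tu,\nu}(\mu)$ soit trivial sur $\ESA_\tG$, c'est-à-dire nul comme caractère de $\bsbbc_\tG=\ESB_\tG\backslash\ESA_\tG$; dans ce cas cette somme vaut $\vert\bsbbc_\tG\vert$. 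La condition conjointe $\xi_{\tu,\nu}(\mu)\in\wh{\bsbbc}_\tG$ et $\xi_{\tu,\nu}(\mu)$ trivial sur $\bsbbc_\tG$ équivaut exactement à $\xi_{\tu,\nu}(\mu)=0$, c'est-à-dire à $\mu\in\bmMtunuo$. L'intégrale sur $\bmMtunu$ se réduit donc à une intégrale sur $\bmMtunuo$, et sur ce domaine $\bsA^{\tG}_\tL(\sigma,\tu,\mu;\nu,\xi_{\tu,\nu}(\mu))=\bsA^{\tG}_\tL(\sigma,\tu,\mu;\nu,0)=\bsA^{\tG}_\tL(\sigma,\tu,\mu;\nu)$.

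**Recollement des constantes.** Il reste à vérifier que la constante produite coïncide avec $V_M(\tu)$ de \ref{defJ}, sommée sur $\nu$. En rassemblant les facteurs $\vert\wh\bsbbc_M\vert^{-1}\vert \mathfrak{K}_{\tu}\vert\mun$ de \ref{propcombfin}, le facteur ${\rm vol}(\ESA_\tL^\tG\backslash \ag_\tL^\tG)\mun$ du lemme \ref{limite} et le cardinal $\vert\bsbbc_\tG\vert$ issu de la sommation en $\tZ$, on doit retrouver $v_\tL^{-1}\vert\wh\bsbbc_\tL\vert\vert\wh\bsbbc_M\vert^{-1}\vert\mathfrak{K}_{\tu}\vert\mun$. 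Ceci demande la relation entre les volumes mesurés avec $\ESA_\tL^\tG$ et avec $\ESB_\tL^\tG$: d'après les conventions de \ref{convention-mesures} et la discussion suivant \ref{passlim}, on a ${\rm vol}(\ESA_\tL^\tG\backslash\ag_\tL^\tG)={\rm vol}(\ESB_\tL^\tG\backslash\ag_\tL^\tG)\cdot\vert\bsbbc_\tL\vert\mun\vert\bsbbc_\tG\vert$, soit ${\rm vol}(\ESA_\tL^\tG\backslash\ag_\tL^\tG)\mun\vert\bsbbc_\tG\vert=v_\tL\mun\vert\wh\bsbbc_\tL\vert$, ce qui donne précisément le facteur $V_M(\tu)$ une fois intégré sur $\bmMtunuo$. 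En sommant sur $\nu\in\bsESEsigma$ on obtient $\bsJ^\tG_M(\sigma,f,\omega,\tu)$. La principale difficulté sera la justification rigoureuse de l'interversion limite-sommation (contrôle uniforme en $\mu,\nu,\tZ$ via la norme $N_d$ de \ref{normes}) et la vérification soigneuse du recollement des facteurs de volume, point délicat propre au cas des corps de fonctions où $\ESA_\bullet\neq\ESB_\bullet$ en général.
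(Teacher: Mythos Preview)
Your proposal is correct and follows essentially the same approach as the paper: both start from the expression of \ref{propcombfin}, pass to the limit termwise using the uniform error bound from (the twisted analogue of) \ref{passlim}\,$(iii)$ to justify the interchange with the integral over $\bmMtunu$, insert the value of $\bs{\alpha}_{\tZ,\mu}$ from lemma~\ref{limite}, and then sum over $\tZ\in\bsbbc_\tG$ to reduce to $\xi_{\tu,\nu}(\mu)=0$ before matching the volume factors via $\vert\wh\bsbbc_\tG\vert\,{\rm vol}(\ESA_\tL^\tG\backslash\ag_\tL^\tG)^{-1}=\vert\wh\bsbbc_\tL\vert\,v_\tL^{-1}$.
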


\begin{proof}
Rappelons que pour $\tZ\in \bsbbc_\tG$, la fonction  $T\mapsto a_{\wt{Z},\mu}(T)$ 
introduite avant le lemme \ref{limite} appartient \`a 
PolExp et que $\bs{\alpha}_{\wt{Z},\mu}$ en est le polyn™me limite en $T=T_0$. 
Comme l'expression $a_{\wt{Z},\mu}(T)$ est lisse en $\mu$ 
lorsque $\mu$ varie dans le compact $\bsmu_\tL$,
on en d\'eduit que la fonction 
\begin{equation*}h_{\tZ,\nu}:T\mapsto \int_{\bmMtunu}a_{\tZ,\mu}(T)\dd \mu\end{equation*}
appartient elle aussi \`a PolExp. 
Le contr™le du terme d'erreur dans le passage \`a la limite (cf. \ref{passlim}) montre qu'il 
est uniforme et on obtient que
\begin{equation*}\lim_{k\rightarrow +\infty}p_{\ESR_k,0}(h_{\tZ,\nu},T_0)=
\int_{\bmMtunu}\lim_{k\rightarrow +\infty}p_{\ESR_k,0} (\azx T_0)\dd\mu=
\int_{\bmMtunu}\bs{\alpha}_{\tZ,\mu}\dd\mu\ptf\end{equation*}
La somme sur les $\tZ\in \bsbbc_\tG$ de l'expression ci-dessus \'etant finie,
on peut permuter la somme et l'int\'egrale.  Mais
la somme sur $\tZ\in \bsbbc_\tG$ des $ e^{\langle \xi_{\tu,\nu}(\mu),\tZ \rangle}$
 vaut $\vert \wh{\bsbbc}_\tG\vert$ si $ \xi_{\tu,\nu}(\mu)= 0$, et $0$ sinon. 
 On a donc montr\'e que 
 \begin{equation*}\sum_{\tZ\in \bsbbc_\tG}\lim_{k\rightarrow +\infty}p_{\ESR_k,0}(h_{\tZ,\nu},T_0)\end{equation*}
 est \'egal \`a
 \begin{equation*}
 \vert \wh\bsbbc_\tG \vert 
\mathrm{vol}\Big(\ESA_\tL^\tG\backslash \ag_\tL^\tG \Big)^{-1}
 \int_{\bmMtunuo}\bsA^{\tG}_\tL(\sigma,\tu,\mu;\nu)
\dd \mu\ptf\end{equation*}
Compte tenu de la d\'efinition \ref{defJ}, on a
\begin{equation*}
\vert \wh\bsbbc_\tG \vert 
\mathrm{vol}\Big(\ESA_\tL^\tG\backslash \ag_\tL^\tG \Big)^{-1}= 
\vert \wh\bsbbc_\tL\vert \mathrm{vol}\Big(\ESB_\tL^\tG\backslash \ag_\tL^\tG \Big)^{-1}
 = \vert \wh\bsbbc_\tL\vert v_\tL^{-1}\ptf\end{equation*}
 D'o le lemme puisque d'apr\`es \ref{propcombfin}, on a 
 \begin{equation*}h(T)= \vert\wh\bsbbc_M\vert^{-1}\vert \mathfrak{K}_{\tu}\vert\mun 
 \sum_{\nu \in \bsESEsigma}\sum_{\wt{Z}\in\bsbbc_\tG} h_{\tZ,\nu}(T)\ptf\end{equation*}
\end{proof}
%}
%%%%%%%%%%%%%%%%
%%%%%%%%%%%%%%%
%%%%%%%%%%%
\section{D\'eveloppement spectral fin}\label{spefin}
Soient $\nu\in \bsmu_M$ et $\mu\in \bmMtunu$ tels que $\xi_{\tu,\nu}(\mu)=0$. 
Puisque $\nu=(\tu-1){\mu}$ on a
\begin{equation*}\bfD_\nu=\bfD_{-{\mu}} \bfD_{\tu{\mu}}\ptf\end{equation*}
D'apr\`es l'\'equation fonctionnelle \ref{eqf}, on a
\begin{equation*}\bfD_{\tu{\mu}}\mskip 2mu \ESM_\tL^\tG(S,\tu{\mu})\bfM_{S\vert\tu S}(\tu{\mu})= \ESM_\tL^\tG(S,0)
\bfM_{S\vert\tu S}(0)\bfD_{\tu{\mu}}\ptf\end{equation*}
On en d\'eduit que 
\begin{equation*}\bfD_\nu\mskip 2mu \ESM_\tL^\tG(S,\mu+\nu)\bfM_{S\vert\tu S}(\mu+\nu) 
\Rho_{S,\sigma,\mu}(\tu,f,\omega)\leqno{(1)}\end{equation*}
est \'egale \`a
\begin{equation*}\bfD_{-{\mu}}\ESM_\tL^\tG(S,0)\bfM_{S\vert\tu S}(0)\bfD_{\tu{\mu}}\mskip 2mu 
\Rho_{S,\sigma,\mu }(\tu,f,\omega)\ptf\end{equation*}
Puisque d'apr\`es \ref{romu} on a
\begin{equation*}\bfD_{\tu{\mu}}\mskip 2mu \Rho_{S,\sigma,{\mu}}(\tu,f,\omega)= 
\Rho_{S,\sigma{\star {\mu}},0}(\tu,f,\omega)\bfD_{\mu} \end{equation*}
l'expression (1) est encore \'egale \`a 
\begin{equation*}\bfD_{\mu}\mun \bsA(\sigma\star\mu,\tu,0)\mskip 2mu \mskip 2mu  \bfD_{\mu}\leqno\end{equation*}
o l'on a pos\'e
\begin{equation*}\bsA(\sigma,\tu,\mu)=\ESM_\tL^\tG(S,\mu)\bfM_{S\vert\tu S}(\mu)
\Rho_{S,\sigma,\mu}(\tu,f,\omega)\ptf\leqno\end{equation*}
En observant que
\begin{equation*}\spurs\mskip -2mu \Big(\bfD_{\mu}\mun \bsA(\sigma\star\mu,\tu,0)\mskip 2mu \mskip 2mu  \bfD_{\mu}\Big)=
\spur_{\sigma\star\mu}( \bsA(\sigma\star\mu,\tu,0))\end{equation*}
l'expression de $\bsJ(\sigma,f,\omega,\tu)$  donn\'ee dans \ref{defJ} peut donc encore s'\'ecrire
\begin{equation*}\bsJ^\tG_{ M}(\sigma,f,\omega,\tu)=V_M(\tu)\mskip 2mu \mskip 2mu  \sum_{\nu \in \bsESEsigma}
\int_{\bmMtunuo}\spur_{\sigma\star\mu}(\bsA(\sigma\star\mu,\tu,0))\dd \mu\ptf\end{equation*}

% lemme
\begin{lemma}\label{translation}
Pour que l'expression $\spur_{\sigma\star\mu}(\bsA(\sigma\star{\mu},\tu,0))$ soit non nulle, 
il faut que
\begin{equation*}\tu(\omega \otimes (\sigma\star\mu))\simeq \sigma\star\mu\leqno(2)\end{equation*}
c'est-\`a-dire que $\sigma\star\mu$ se prolonge en une repr\'esentation de $(M\tu,\omega)$. 
En ce cas on a
\begin{equation*}\spur_{\sigma\star\mu}(\bsA(\sigma\star{\mu},\tu,0))=\trace(\bsA(\sigma\star{\mu},\tu,0))\ptf\end{equation*}
\end{lemma}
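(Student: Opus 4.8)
Lemma \ref{translation} asserts that the expression $\spur_{\sigma\star\mu}(\bsA(\sigma\star\mu,\tu,0))$ vanishes unless $\sigma\star\mu$ is fixed (up to the twist by $\omega$) by the automorphism $\tu$, and that in the non-vanishing case the functional $\spur_{\sigma\star\mu}$ coincides with the genuine trace.

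Let me think about what this lemma is really saying and how I would attack it.

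The object $\bsA(\sigma\star\mu,\tu,0)$ is, by definition, the composite operator $\ESM_\tL^\tG(S,\mu)\bfM_{S\vert\tu S}(\mu)\Rho_{S,\sigma,\mu}(\tu,f,\omega)$, which acts on the pre-Hilbert space $\Automd(\bsX_S,\sigma\star\mu)$. The point is to track its source and target spaces. The twisted operator $\Rho_{S,\sigma,\mu}(\tu,f,\omega)$ sends $\Automd(\bsX_S,\sigma\star\mu)$ into $\Automd(\bsX_{\tu(S)},\tu(\omega\otimes(\sigma\star\mu)))$; the intertwining operator $\bfM_{S\vert\tu S}(\mu)$ then brings us back from a $\tu S$-type space toward an $S$-type space, and $\ESM_\tL^\tG(S,\mu)$ is an endomorphism-valued $(G,M)$-family term attached to $S$. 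So the whole composite is an operator whose natural source is $\Automd(\bsX_S,\sigma\star\mu)$ and whose natural target is the space attached to $\tu(\omega\otimes(\sigma\star\mu))$.

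**Plan of proof.** First I would make explicit the source and target of $\bsA(\sigma\star\mu,\tu,0)$ by composing the three factors and reading off which $\Automd$-spaces appear, exactly as in the proofs of \ref{romu} and the lemma preceding \ref{redecal}. The functional $\spurs$ was set up in \ref{spur} precisely to handle operators $A\colon \bs{\mathcal E}\to\bs{\mathcal F}$ of finite rank, where here $\bs{\mathcal E}=\Automd(\bsX_S,\sigma\star\mu)$ and $\bs{\mathcal F}$ is the pre-Hilbert space generated by $\bs{\mathcal E}$ together with the spaces $\Automd(\bsX_S,\tu(\omega\otimes(\sigma\star\mu))\star\tau)$ for $\tau\in\bsESEsigma$. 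The key observation is that $\spur_{\sigma\star\mu}(A)=\sum_{\Psi\in\Base_S(\sigma\star\mu)}\langle A\Psi,\Psi\rangle_S$ involves the inner product of $A\Psi$, which lives in the target space, against $\Psi\in\bs{\mathcal E}$. Since $\bs{\mathcal E}$ is an orthogonal direct summand of $\bs{\mathcal F}$, this inner product picks out only the component of $A\Psi$ lying in $\bs{\mathcal E}$. Thus if the target space $\Automd(\bsX_S,\tu(\omega\otimes(\sigma\star\mu)))$ is orthogonal to $\bs{\mathcal E}=\Automd(\bsX_S,\sigma\star\mu)$, every term vanishes and $\spur_{\sigma\star\mu}(\bsA)=0$. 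That orthogonality holds precisely when the two irreducible data are inequivalent, i.e.\ unless $\tu(\omega\otimes(\sigma\star\mu))\simeq\sigma\star\mu$, which is equation $(2)$ — equivalently the condition that $\sigma\star\mu$ extends to a representation of $(M\tu,\omega)$.

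**The equality of $\spur$ and $\trace$.** For the second assertion, suppose $(2)$ holds. Then the target space $\Automd(\bsX_S,\tu(\omega\otimes(\sigma\star\mu)))$ is isomorphic to $\bs{\mathcal E}$, and in fact under the stated hypothesis the operator $\bsA(\sigma\star\mu,\tu,0)$ stabilizes $\bs{\mathcal E}$: it becomes an endomorphism of $\Automd(\bsX_S,\sigma\star\mu)$. This is exactly the situation covered by the final assertion of Lemma \ref{spur}, which says that for a finite-rank operator $A$ stabilizing $\bs{\mathcal E}$ one has $\spur(A)=\trace(A)$. Invoking that lemma with $A=\bsA(\sigma\star\mu,\tu,0)$ gives $\spur_{\sigma\star\mu}(\bsA(\sigma\star\mu,\tu,0))=\trace(\bsA(\sigma\star\mu,\tu,0))$, as claimed. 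The finite rank is guaranteed by \ref{thmfinitude} (admissibility) applied to the factor $\Rho_{S,\sigma,\mu}(\tu,f,\omega)$.

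**The main obstacle.** The only genuinely delicate point is verifying carefully that the composite operator really does land in $\Automd(\bsX_S,\tu(\omega\otimes(\sigma\star\mu)))$ with the correct central character data, and in particular that under condition $(2)$ the identification of target with source is compatible with the summation over $\bsESEsigma$ built into the definition of $\bs{\mathcal F}$ in \ref{spur}. One must check that the non-vanishing terms are exactly those indexed by the $\tau$ for which $\tu(\omega\otimes(\sigma\star\mu))\star\tau\simeq\sigma\star\mu$, and match this with the definition of $\bsESEsigma$ (Definition \ref{exisigma}). This is a bookkeeping verification rather than a conceptual difficulty, and it parallels the argument already given just before Definition \ref{redecal}, where the analogous vanishing criterion $\nu\in\bsESEsigma$ was established for $\bso^{\,T,Q_0}(H;\lambda,\mu;\nu)$.
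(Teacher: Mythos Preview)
Your proposal is correct and follows the same line as the paper's proof: the operator $\bsA(\sigma\star\mu,\tu,0)$ sends $\Automd(\bsX_S,\sigma\star\mu)$ into $\Automd(\bsX_S,\tu(\omega\otimes(\sigma\star\mu)))$, so the pairing $\langle A\Psi,\Psi\rangle_S$ vanishes unless these two isotypic spaces coincide, i.e.\ unless $(2)$ holds, in which case the operator is an endomorphism and Lemma~\ref{spur} gives $\spur=\trace$. The paper's proof is a two-line version of exactly this; your discussion of the $\bsESEsigma$ bookkeeping is extra caution not needed here, since at this stage one is working with a single operator on a single isotypic component rather than with the full sum over $\nu$.
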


% preuve
\begin{proof}
L'op\'erateur $\bsA(\sigma\star\mu,\tu,0)$ envoie l'espace $\Automd(\bsX_S,\sigma\star \mu)$ dans l'espace 
$\Automd(\bsX_S,\tu(\omega\otimes(\sigma\star \mu)))$ et pour que ces deux espaces 
soient d'intersection non nulle, il faut qu'ils soient \'egaux c'est-\`a-dire que la condition (2) soit v\'erifi\'ee.
\end{proof}

D'apr\`es \ref{translation}, la contribution de $\bsigma$ (l'orbite de $\sigma$ sous torsion par $\bsmu_M$)
est nulle sauf peut-tre si
\begin{equation*}\bsigma\in\bsPi_\disc(M;\tu,\omega)\subset\bsPi_\disc(M)\end{equation*}
o $\bsPi_\disc(M;\tu,\omega)$ est le sous-ensemble des $\bsigma$ ayant un repr\'esentant $\sigma$ 
v\'erifiant la condition
\begin{equation*}\tu(\omega \otimes \sigma) \simeq \sigma\ptf \leqno{(3)}\end{equation*}
On supposera d\'esormais que $\sigma$ est un tel repr\'esentant\footnote{Observons que 
l'existence d'un tel repr\'esentant exige que le 
caract\`ere $\omega$ soit trivial sur le groupe $A_{M\tu}(\adef)=A_\tL(\adef)$.
En g\'en\'eral il n'est pas possible de 
r\'ealiser la condition (3) pour tous les $\tu$ simultan\'ement.}.
Consid\'erons l'ensemble
\begin{equation*}\fetu\bydef\left\{\lambda\in\bsmu_M\mskip 2mu \mskip 2mu \mid \mskip 2mu \mskip 2mu \tu(\omega \otimes (\sigma\star\lambda))\simeq \sigma\star\lambda\right\}\end{equation*}
qui, compte tenu de (3), est \'egal \`a \begin{equation*}\left\{\lambda\in\bsmu_M\mskip 2mu \mskip 2mu \mid \mskip 2mu \mskip 2mu (\tu-1)\lambda\in\Stab\right\}\ptf\end{equation*}
On observe que $\Stab\subset\bsESEsigma$.
On a donc une injection
\begin{equation*}\fetu\to \bigcup_{\nu\in\bsESEsigma} \bmMtunuo\end{equation*}
On note $\Fetu$ l'ensemble fini quotient de $\fetu$ par l'action de $\bsmu_\tL$. 
En choisissant des repr\'esentants $\lambda$ dans $\fetu\subset\bsmu_M$ pour les \'el\'ements $\dot\lambda\in\Fetu$
 on d\'efinit une injection
\begin{equation*}\Fetu\times\bsmu_\tL\to\bigcup_{\nu\in\bsESEsigma}\bmMtunuo\quad
\hbox{par}\quad (\dot\lambda,\mu)\mapsto \lambda+\mu\ptf\end{equation*}
Dans la d\'efinition \ref{defJ} de $\bsJ(\sigma,f,\omega,\tu)$ 
et compte tenu de \ref{translation}
on peut donc remplacer 
la somme sur les $\nu\in\bsESEsigma$ pr\'ec\'ed\'ee de l'int\'egrale 
sur les ${\mu} \in \bmMtunuo$ 
par la somme sur les ${{\dot\lambda} \in \Fetu}$ pr\'ec\'ed\'ee de l'int\'egrale sur $\bsmu_\tL$:
\begin{equation*}\bsJ^\tG_{ M}(\sigma,f,\omega,\tu)=
V_M(\tu)\sum_{{\dot\lambda} \in \Fetu} \int_{\bsmu_\tL}
\trace(\bsA(\sigma\star \lambda,\tu,\mu))\dd \mu \ptf\leqno(4)\end{equation*}
L'expression  ne d\'epend pas du choix des rel\`evements puisque, 
d'apr\`es (1) 
et \ref{romu}, pour $\mu,\mskip 2mu \mu' \in \bsmu_\tL$ on a
\begin{equation*}\bsA(\sigma\star \mu',\tu,\mu)= \bfD_{\mu'} \bsA(\sigma,\tu, \mu+\mu')\bfD_{\mu'}^{-1}\end{equation*}
et donc
\begin{equation*}\trace(\bsA(\sigma\star \mu',\tu,\mu))= \mathrm{trace}(\bsA(\sigma,\tu, \mu+\mu') )\ptf\end{equation*}
Pour chaque classe $\bsigma$ on a choisi un repr\'esentant $\sigma$ v\'erifiant
la condition (3). 
D'apr\`es (4) et compte tenu de \ref{defA} on a 
\begin{equation*}\bsJ^\tG_{ M}(\sigma,f,\omega,\tu)
=V_M(\tu)\mskip -7mu \sum_{{\dot\lambda} \in \Fetu}
\int_{\bsmu_\tL}\mskip -7mu\trace\Big(\ESM_\tL^\tG(S,\mu)
\bfM_{S\vert\tu S}(\mu)\Rho_{S,\sigma\star \lambda,\mu}(\tu,f,\omega)
\Big)\mskip -4mu \dd \mu\ptf\end{equation*}
Cette expression ne d\'epend pas du choix de $S\in \ES{P}(M)$ ni du choix du repr\'esentant $\sigma$ de $\bsigma$.

%%%%%%%%%%%%%%%%%%%%
%%%%%%%%%%%%%%%%%%%%

On rappelle que $\bsPi_\disc(M;\tu,\omega)$ est l'ensemble des classes modulo torsion 
par les \'el\'ements de $\bsmu_M$ des (classes d'isomorphisme de) repr\'esentations $\sigma$ v\'erifiant la condition (1).
Notons $\bsPi_\disc(M\tu,\omega)$ l'ensemble des classes modulo torsion par les \'el\'ements de $\bsmu_\tL$
 des (classes d'isomorphisme de) repr\'esentations verifiant (1). On pose
\begin{equation*}\bsJ^\tG_{M,\tu}(f,\omega) = \sum_{\bsigma\in\bsPi_\disc(M;\tu,\omega)}\cMsig
\bsJ^\tG_{ M}(\sigma,f,\omega,\tu)\end{equation*}
et 
\begin{equation*}\wh{c}_\tL(\sigma)\bydef \frac{\vert \wh{\bsbbc}_\tL\vert}{ \vert \mathrm{Stab}_\tL(\sigma) \vert}\ptf\end{equation*}

% lemme
 \begin{proposition}\label{Jtu}
 L'expression $\bsJ^\tG_{M,\tu}(f,\omega)$ se r\'ecrit
\begin{align*}
\lefteqn{\bsJ^\tG_{M,\tu}(f,\omega)=v_\tL^{-1} \sum_{\bsigma\in \bsPi_\disc(M\tu,\omega)}
 \wh{c}_\tL(\sigma) \vert\mskip -2mu \det (\tu -1\vert \ag_M^\tL)\vert^{-1}}\\
&& \hspace{3cm}\times \int_{\bsmu_\tL}\trace\Big(
\ESM_\tL^\tG(S,\mu)
\bfM_{S\vert\tu S}(0)\Rho_{S,\sigma,\mu}(\tu,f,\omega)\Big) \dd \mu\ptf
\end{align*}
 \end{proposition}
 \begin{proof}
 L'application naturelle
\begin{equation*}\bsPi_\disc(M\wt{u},\omega)\rightarrow \bsPi_\disc(M;\tu,\omega)\end{equation*}
est surjective. On peut donc remplacer, dans l'expression $\bsJ^\tG_{M,\tu}(f,\omega)$, la somme sur 
les $\bs{\sigma}\in \bs\Pi(M;\tu,\omega)$ suivie de la somme sur les $\lambda\in \Fetu$ par une simple somme sur les 
$\bs{\sigma}\in \bs\Pi(M\tu,\omega)$ multipli\'ee par le cardinal de l'image 
dans $\bsmu_M^\tL$ du stabilisateur $\mathrm{Stab}_M(\sigma)$. Cette image est le groupe quotient
\begin{equation*}\mathrm{Stab}_M(\sigma)/\mathrm{Stab}_\tL(\sigma) \quad \hbox{o} \quad 
\mathrm{Stab}_\tL(\sigma) \bydef \mathrm{Stab}_M(\sigma)\cap \bsmu_\tL\ptf\end{equation*}
Compte tenu de \ref {defJ} et de \ref{ku} on a 
\begin{equation*}V_M(\tu) \frac{\vert \mathrm{Stab}_M(\sigma)\vert }{\vert \mathrm{Stab}_\tL(\sigma)\vert } = 
v_\tL^{-1} \wh{c}_\tL(\sigma) \wh{c}_M(\sigma)^{-1} \vert\mskip -2mu \det (\tu -1\vert \ag_M^\tL)\vert^{-1}\ptf\end{equation*}
On observe enfin que pour $\mu\in\bsmu_\tL$ on a
\begin{equation*}\bfM_{S\vert\tu S}(\mu)=\bfM_{S\vert\tu S}(0)\ptf\end{equation*}
\end{proof}

Rappelons que $\bfW^\tG(\ag_M,\ag_M)$ est l'ensemble des restrictions \`a $\ag_M$ des \'el\'ements 
$\tu\in \wt\bfW=\bfW^\tG$ tels que $\tu(\ag_M)=\ag_M$. C'est aussi le quotient 
\begin{equation*}\bfW^\tG(M)=N^{\wt\bfW}(M)/\bfW^M\end{equation*}
de l'ensemble $N^{\wt\bfW}(M)$ des $\tu\in \wt\bfW$ tels que $\tu(M)=M$ 
par le groupe de Weyl de $M$: $\bfW^M=N^M(M_0)/M_0$.

% proposition
\begin{proposition}\label{speclimK}
La valeur en $T_0$ du polyn™me limite introduite en \ref{grossb} v\'erifie l'identit\'e 
\begin{equation*}\bsJ^\tG_{\mathrm{spec}}(f,\omega)=\sum_{M\in\ESL^G/\bfW^G}\frac{1}{w^G(M)}
 \sum_{\tu\in \bfW^\tG(M)} \bsJ^\tG_{M,\tu}(f,\omega)\end{equation*}
\end{proposition}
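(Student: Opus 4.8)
The plan is to apply to both sides of the identity of Proposition \ref{polasymp} the linear functional on ${\rm PolExp}$ that sends a function $\phi$ to the limit in $T=T_0$ of its polynomial coefficient at $\nu=0$, that is, $\phi\mapsto\lim_{k\to+\infty}p_{\ESR_k,0}(\phi,T_0)$. By the very definition of $\bsJ^\tG_\spec(f,\omega)$ (cf. \ref{grossb}), the value of this functional on the left-hand member $T\mapsto\Jres^{\tG,T}_\spec(f,\omega)$ is exactly $\bsJ^\tG_\spec(f,\omega)$. It therefore remains only to evaluate it on the right-hand member of \ref{polasymp}.

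First I would observe that the right-hand side of \ref{polasymp} is, for fixed $f$, a \emph{finite} linear combination of ${\rm PolExp}$ functions: the sums over $M\in\ESL^G/\bfW^G$ and over $\tu\in\bfW^\tG(\ag_M,\ag_M)$ are finite, while for each $M$ the operator $\tRho_{S,\sigma,\mu}(f,\omega)$ — and hence the function $\bJres^{\tG,T}_M(\sigma,f,\omega,\tu)$ — is zero for all but finitely many $\bsigma\in\bsPi_\disc(M)$, thanks to the admissibility result \ref{thmfinitude} together with the compactness of the support of $f$. Since the coefficient-extraction map $\phi\mapsto p_{\ESR_k,0}(\phi,\cdot)$ is linear (the ${\rm PolExp}$ decomposition being unique), the functional commutes with this finite sum. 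Applying it termwise and invoking Proposition \ref{expspec}, which gives $\lim_{k\to+\infty}p_{\ESR_k,0}(\bJres^{\tG,\cdot}_M(\sigma,f,\omega,\tu),T_0)=\bsJ^\tG_M(\sigma,f,\omega,\tu)$, I would obtain
\[
\bsJ^\tG_\spec(f,\omega)=\sum_{M\in\ESL^G/\bfW^G}\frac{1}{w^G(M)}\sum_{\bsigma\in\bsPi_\disc(M)}\cMsig\sum_{\tu\in\bfW^\tG(\ag_M,\ag_M)}\bsJ^\tG_M(\sigma,f,\omega,\tu).
\]

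It then remains to recognize the inner double sum. After interchanging the (finite) sums over $\bsigma$ and $\tu$, I would invoke Lemma \ref{translation}, according to which $\bsJ^\tG_M(\sigma,f,\omega,\tu)$ vanishes unless $\bsigma\in\bsPi_\disc(M;\tu,\omega)$; this permits restricting the sum over $\bsigma$ to $\bsPi_\disc(M;\tu,\omega)$ without altering its value, so that $\sum_{\bsigma\in\bsPi_\disc(M;\tu,\omega)}\cMsig\,\bsJ^\tG_M(\sigma,f,\omega,\tu)=\bsJ^\tG_{M,\tu}(f,\omega)$ by the definition introduced in \ref{spefin}. Since $\bfW^\tG(\ag_M,\ag_M)=\bfW^\tG(M)$, this yields precisely the asserted formula. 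The whole substance of the argument is carried by the earlier statements \ref{polasymp}, \ref{expspec} and \ref{translation}; the only genuine point requiring care — and the one I would record explicitly before passing to the limit — is the interchange of the limit functional with the summation, which rests on the finiteness of the set of contributing $\bsigma$ and on the linearity of the coefficient extraction.
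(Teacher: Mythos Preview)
Your proof is correct and follows essentially the same approach as the paper's: pass to the limit in \ref{polasymp} termwise (using \ref{expspec}), then regroup via \ref{translation} and the definition of $\bsJ^\tG_{M,\tu}(f,\omega)$. The paper's proof is terser---it cites \ref{Jtu}, by which it really means the definition of $\bsJ^\tG_{M,\tu}$ displayed just before that lemma together with the vanishing established via \ref{translation}---whereas you have made the finiteness justification and the role of \ref{expspec} explicit.
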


\begin{proof} Par passage \`a la limite, \ref{polasymp} fournit l'identit\'e
\begin{equation*}\bsJ^\tG_\mathrm{spec}(f,\omega)=\sum_{M\in\ESL^G/\bfW^G}\frac{1}{w^G(M)}
\sum_{\bsigma\in\bsPi_\disc(M)}\cMsig
\sum_{\tu\in \bfW^\tG(M)}\bsJ^\tG_{ M}(\sigma,f,\omega,\tu)\end{equation*}
puis on invoque \ref{Jtu}.
\end{proof}
 
Pour $\tL \in \wt{\ESL}$ et $M\in \ESL^L$,
un \'el\'ement $\tu\in \bfW^\tL(M)$ est dit \textit{r\'egulier} si l'espace de Levi (dans $\tL$) 
qui lui est associ\'e est $\tL$ lui-mme. Ceci \'equivaut \`a demander que \begin{equation*}\det(\tu-1 \vert \ag_M^L)\neq 0\ptf\end{equation*}
On note $\bs{W}^\tL(M)_\mathrm{reg}$ l'ensemble des $\tu\in \bfW^\tL(M)$ qui sont r\'eguliers. 
Observons que pour $\tu \in \bs{W}^\tL(M)_\mathrm{reg}$, $\tL$ est le sous-espace de Levi (dans $\tG$) associ\'e \`a $\tu$ 
c'est-\`a-dire le plus petit sous-espace de Levi contenant $M\tu$; par cons\'equent d'apr\`es \ref{ku} on a aussi 
\begin{equation*}\det(\tu-1\vert \ag_M^\tL)\neq 0 \com{et} \ag_M^\tL = (\tu-1)\ag_M\end{equation*}o $\ag_\tL$ 
est le sous-espace de $\ag_M$ form\'e des points fixes sous $\tu$ et 
$\ag_M^\tL$ est l'orthogonal de $\ag_\tL$ dans $\ag_M$.

L'expression $J^\tG_{M,\tu}(f,\omega)$ a d\'ej\`a \'et\'e d\'efinie pour $M\in \ES{L}$ et $\tu\in \bfW^\tG(M)$. 
Pour $\tL\in \wt{\ESL}$, posons
\begin{equation*}\bsJ^\tG_\tL(f,\omega)= \sum_{M\in \ESL^L} \frac{\vert \bfW^M\vert }{\vert \bfW^L\vert}
\sum_{\tu\in \bfW^\tL(M)_\mathrm{reg}}\bsJ^\tG_{M,\tu}(f,\omega)\ptf\end{equation*}
Notons $\wt\bfW^L= N_L(\tM_0)/M_0$
le quotient du normalisateur de $\tM_0$ dans $L$ par $M_0$. Avec ces notations, on a la
forme finale du d\'eveloppement spectral:

% th\'eor\`eme
\begin{theorem}
On a
\begin{equation*}\bsJ^\tG_{\mathrm{spec}}(f,\omega)= \sum_{\tL\in \wt{\ESL}}
\frac{\vert \wt\bfW^L \vert}{\vert \wt\bfW^G\vert} \bsJ^\tG_\tL(f,\omega)\ptf\end{equation*}
\end{theorem}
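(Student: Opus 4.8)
Le plan consiste à réorganiser l'égalité de \ref{speclimK}, qui exprime $\bsJ^\tG_{\rm spec}(f,\omega)$ comme une somme sur les classes $M\in\ESL^G/\bfW^G$ et les éléments $\tu\in\bfW^\tG(M)$ des termes $\bsJ^\tG_{M,\tu}(f,\omega)$, en une somme indexée par les sous-espaces de Levi $\tL\in\wt\ESL$. La clé est la notion de \emph{sous-espace de Levi associé} à un couple $(M,\tu)$ : pour chaque $\tu\in\bfW^\tG(M)$, il existe un unique $\tL$ minimal contenant $M\tu$ (introduit avant \ref{LW,14.1.7}), caractérisé par le fait que $\ag_\tL=\ag_M^{\tu}$ est l'espace des points fixes de $\tu$ dans $\ag_M$. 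D'abord je voudrais observer que $\tu$ est régulier dans $\tL$ au sens défini juste avant l'énoncé, \cad $\det(\tu-1\,\vert\,\ag_M^\tL)\neq 0$, de sorte que l'application $(M,\tu)\mapsto (\tL,M,\tu)$ réalise une bijection entre les couples $(M,\tu)$ de \ref{speclimK} et les triplets $(\tL,M,\tu)$ où $\tL\in\wt\ESL$, $M\in\ESL^L$ et $\tu\in\bfW^\tL(M)_{\rm reg}$.

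Ensuite je regrouperais les termes de \ref{speclimK} selon la valeur de $\tL$. La somme sur $M\in\ESL^G/\bfW^G$ et $\tu\in\bfW^\tG(M)$ doit être convertie en une somme externe sur $\tL\in\wt\ESL$, suivie d'une somme interne sur les $M\in\ESL^L$ et $\tu\in\bfW^\tL(M)_{\rm reg}$. Le point délicat sera le traitement des facteurs combinatoires : dans \ref{speclimK} apparaît le poids $\frac{1}{w^G(M)}=\frac{1}{\vert\bfW^G(M)\vert}$, alors que la définition de $\bsJ^\tG_\tL(f,\omega)$ fait intervenir $\frac{\vert\bfW^M\vert}{\vert\bfW^L\vert}$ et qu'à la fin apparaît le facteur $\frac{\vert\wt\bfW^L\vert}{\vert\wt\bfW^G\vert}$. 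Il s'agit donc de vérifier une identité de comptage sur les groupes de Weyl : la façon dont un couple $(M,\tu)$, considéré à $\bfW^G$-conjugaison près dans $\tG$, se répartit dans les différents $\tL$ et y est compté à $\bfW^L$-conjugaison près. C'est exactement l'analogue de l'argument combinatoire de \cite[14.1]{LW} adapté au cas tordu.

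Concrètement, la vérification reposera sur la relation $w^G(M)=\vert\bfW^G(M)\vert$ et sur le fait que le stabilisateur dans $\wt\bfW$ de la classe de $(M,\tu)$ se décompose de manière compatible aux inclusions $\tM_0\subset M\subset L\subset G$. Je voudrais exploiter que tout $\tu\in\bfW^\tG(M)$ appartient à un unique $\bfW^\tL(M)_{\rm reg}$ pour son $\tL$ associé, puis que la double somme sur $(\tL,M)$ avec les poids $\frac{\vert\bfW^M\vert}{\vert\bfW^L\vert}$ puis $\frac{\vert\wt\bfW^L\vert}{\vert\wt\bfW^G\vert}$ reconstitue exactement le poids global $\frac{1}{\vert\bfW^G(M)\vert}$ de \ref{speclimK}. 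L'obstacle principal sera précisément de mener soigneusement ce décompte sur les orbites de $\wt\bfW$ agissant sur les paires $(M,\tu)$, en tenant compte du passage des classes $\ESL^G/\bfW^G$ aux classes $\ESL^L/\bfW^L$ à l'intérieur de chaque $L$ : il faut s'assurer qu'aucun terme n'est compté en trop ni oublié, et que les indices de sous-groupes de Weyl se simplifient correctement. Une fois cette identité combinatoire établie, le théorème résultera immédiatement de la substitution dans \ref{speclimK}.
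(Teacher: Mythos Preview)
Your approach is correct and is exactly the one the paper follows (in two sentences): associate to each $(M,\tu)$ the unique $\tL$ in which $\tu$ is regular, then regroup the sum of \ref{speclimK} accordingly.

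One remark on the combinatorics you flag as the ``obstacle principal'': it becomes almost trivial if you first convert the orbit sum into a full sum. Using the conjugation--invariance established in \ref{indepconj}\,$(ii)$, one rewrites \ref{speclimK} as
$$
\bsJ^\tG_{\rm spec}(f,\omega)=\sum_{M\in\ESL^G}\frac{|\bfW^M|}{|\bfW^G|}\sum_{\tu\in\bfW^\tG(M)}\bsJ^\tG_{M,\tu}(f,\omega)
$$
(this is the same passage made explicit for the discrete part just below the theorem). Since for each pair $(M,\tu)$ there is exactly one $\tL\in\wt\ESL$ with $M\subset L$ and $\tu\in\bfW^\tL(M)_{\rm reg}$, the double sum over $(M,\tu)$ is simply partitioned by $\tL$, and factoring $\frac{|\bfW^M|}{|\bfW^G|}=\frac{|\wt\bfW^L|}{|\wt\bfW^G|}\cdot\frac{|\bfW^M|}{|\bfW^L|}$ yields the stated formula directly. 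There is no delicate orbit--counting beyond this: your planned detour through stabilizers of pairs $(M,\tu)$ in $\wt\bfW$ is unnecessary.
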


\begin{proof} 
Pour tout $\tu\in\bfW^\tG(M)$ il existe un $\tL$
tel que $\tu$ appartienne \`a $\bs{W}^\tL(M)_\mathrm{reg}$
et $\bsJ^\tG_{M,\tu}(f,\omega)$ est donn\'e par \ref{Jtu}.
On invoque enfin \ref{speclimK}.
\end{proof}

%%%%%%%%%%%%%%%
%%%%%%%%%%%%%%%
\section{Partie discr\`ete modulo le centre}

Pour $M\in \ESL$, on s'int\'eresse \`a la partie discr\`ete (modulo le centre) c'est-\`a-dire aux contributions $\bsJ^\tG_{M,\tu}(f,\omega)$ 
avec $\tL=\tG$. 

On pose
\begin{equation*}\bsJ^\tG_{M,\disc}(f,\omega)= \sum_{\tu\in \bfW^\tG(M)_\mathrm{reg}}\bsJ^\tG_{M,\tu}(f,\omega)\ptf\end{equation*}
Pour $\tu\in \bfW^\tG(M)_\mathrm{reg}$, on a
\begin{align*}
\lefteqn{\bsJ^\tG_{M,\tu}(f,\omega)= \sum_{\bsigma\in \bsPi_\disc(M\tu,\omega)}
\wh{c}_\tG(\sigma) \vert\mskip -2mu \det (\tu -1\vert \ag_M^\tG)\vert^{-1}
}\\&& \hspace{5cm}\times 
\int_{\bsmu_\tG}\trace\Big(\bfM_{S\vert\tu S}(0)\Rho_{S,\sigma,\mu }(\tu,f,\omega)\Big)\dd \mu\ptf
\end{align*}

On souhaite permuter la somme et l'int\'egrale dans l'expression $\bsJ^\tG_{M,\tu}(f,\omega)$ ci-dessus. 
On commence par regrouper les termes suivant le groupe $\Xi(\tG)$
des caract\`eres unitaires automorphes de $A_{\smash\tG}(\adef)$. On le munit 
d'une mesure de Haar suivant les conventions de \ref{convention-mesures};
elle v\'erifie $\mathrm{vol}(\wh{\ESB}_\tG)=1$. 
Pour $\xi \in \Xi(\tG)$, on note
\begin{equation*}\Pi_\disc(M\tu,\omega)_\xi\end{equation*}
le sous-ensemble de $ \Pi_\disc(M\tu,\omega)$ form\'e de des $\sigma$ tels que 
$\xi_\sigma\vert_{A_\tG(\adef)}= \xi$, et on pose
\begin{equation*}\trace\Big(\bfM_{S\vert\tu S}(0)\Rho_{S,\disc,\xi}(\tu,f,\omega)\Big)
=\mskip -2mu \mskip -2mu \sum_{\sigma \in \Pi_\disc(M\tu,\omega)_\xi} \mskip -5mu\mskip -5mu\trace\Big(
\bfM_{S\vert\tu S}(0)\Rho_{S,\sigma,0}(\tu,f,\omega)\Big)\mskip -2mu \ptf\end{equation*}
Ici encore, l'expression est ind\'ependante du choix de $S\in \ESP(M)$. 
Posons
\begin{equation*}\bsJ^\tG_{M,\tu}(f,\omega,\xi)= \vert \det (\tu -1; \ag_M^{{G}})\vert^{-1} 
 \trace\Big(\bfM_{S\vert\tu S}(0)\Rho_{S,\disc,\xi}(\tu,f,\omega)\Big) \ptf\end{equation*}
Puisque
\begin{equation*}\vert \det (\tu -1; \ag_M^\tG)\vert = j(\tG)\mskip 2mu  \vert \det (\tu -1; \ag_M^{G})\vert \end{equation*}
on obtient le

% lemme 
\begin{lemma}\label{JMdiscu1}
Pour $M\in \ESL$ et $\tu\in\bfW^\tG(M)_\mathrm{reg}$, on a
\begin{equation*}\bsJ^\tG_{M,\tu}(f,\omega)= j(\tG)\mun\mskip 2mu \int_{\Xi(\tG)}\bsJ^\tG_{M,\tu}(f,\omega,\xi)\dd\xi \ptf\end{equation*}
\end{lemma}

Rappelons qu'on a not\'e $\Xi(G,\theta,\omega)$ le sous-ensemble de $\Xi(\G)$ form\'e des caract\`eres $\xi$ tels que,
en notant $\omega_{A_G}$ la restriction de $\omega$ \`a $A_G(\adef)$, on ait
\begin{equation*}\xi\circ\theta=\omega_{A_G}\otimes\xi\ptf\end{equation*}
Si $\Xi(G,\theta,\omega)$ est non vide on le munit, comme 
en \ref{le cas compact}, de la mesure $\Xi(G)^\theta$-invariante d\'eduite de la mesure de Haar 
sur $\Xi(G)^\theta$ telle que $\mathrm{vol}(\wh{\ESB}_G^\theta)=1$. 

% lemme
\begin{lemma}\label{JMdiscu2}
Pour $M\in \ESL$ et $\tu\in\bfW^\tG(M)_\mathrm{reg}$, on a
\begin{equation*}\bsJ^\tG_{M,\tu}(f,\omega)= \int_{\Xi(G,\theta,\omega)}\bsJ^\tG_{M,\tu}(f,\omega,\xi)\dd \xi \ptf\end{equation*}
\end{lemma}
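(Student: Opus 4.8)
Le plan est de déduire le lemme \ref{JMdiscu2} du lemme \ref{JMdiscu1} en reproduisant, au niveau de l'intégrale sur les caractères unitaires de $A_\tG(\AM)$, exactement le changement de variables spectral déjà effectué dans le cas compact (cf. la preuve de \ref{cascomp1}) et codifié par le lemme \ref{chgtvar}. L'idée est la suivante. Dans $\bsJ^\tG_{M,\tu}(f,\omega,\xi)$ l'intégrale sur $\Xi(\tG)$ porte sur les caractères unitaires de $A_\tG(\AM)$ triviaux sur $A_\tG(F)$, tandis que la condition d'existence d'un prolongement de $\sigma$ à $(M\tu,\omega)$ force la donnée discrète à se transformer, sous $\theta$, suivant $\omega_{A_G}$. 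La contribution de $\xi$ est donc régie par la même obstruction $\zeta_\xi = \omega_{A_G}\otimes\xi^{1-\theta}$ que celle rencontrée en \ref{le cas compact}, et l'intégrale se concentre asymptotiquement sur le sous-ensemble $\Xi(G,\theta,\omega)$.

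\textbf{Les étapes.} D'abord, je partirais de l'identité \ref{JMdiscu1}, qui exprime $\bsJ^\tG_{M,\tu}(f,\omega)$ comme $j(\tG)^{-1}$ fois l'intégrale sur $\Xi(\tG)$ de $\bsJ^\tG_{M,\tu}(f,\omega,\xi)$. Ensuite je relèverais l'intégration sur $\Xi(\tG)$ en une intégration sur $\Xi(G)$, en observant que la trace tordue $\trace\left(\bfM_{S\vert\tu S}(0)\Rho_{S,\disc,\xi}(\tu,f,\omega)\right)$ n'est non nulle que si $\sigma$ admet un prolongement à $(M\tu,\omega)$, ce qui impose $\xi\circ\theta=\omega_{A_G}\otimes\xi$, autrement dit $\xi\in\Xi(G,\theta,\omega)$. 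La troisième étape consiste à introduire, comme dans \ref{cascomp1}, une famille $\{\phi\}$ de fonctions à support compact sur $A_G(F)A_\tG(\AM)\backslash A_G(\AM)$ tendant vers la fonction $1$, dont les transformées de Fourier $\{\wh\phi\}$ tendent vers la masse de Dirac à l'origine sur $\Xi(G,\tG)$. En insérant $\wh\phi(\zeta_\xi)$ dans l'intégrale et en passant à la limite grâce au lemme \ref{chgtvar}, l'intégrale sur $\Xi(\tG)$ (renormalisée par $j(\tG)^{-1}$) se transforme en intégrale sur $\Xi(G,\theta,\omega)$. La compatibilité des mesures provient du lemme \ref{jacobien}, qui identifie le jacobien $j(\tG)=\vert\det(1-\theta\vert\ag_G^\tG)\vert$ au cardinal du noyau du morphisme $\wh{\ESB}_G^\theta\rightarrow\wh{\ESB}_\tG$, exactement le facteur nécessaire pour compenser le $j(\tG)^{-1}$ de \ref{JMdiscu1} et obtenir la normalisation sans constante annoncée dans \ref{JMdiscu2}.

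\textbf{Le point délicat.} La principale difficulté sera de justifier rigoureusement la permutation de la limite en $\phi$ avec l'intégration sur $\bsmu_\tG$ et avec la somme (finie) sur les données discrètes apparaissant dans $\Rho_{S,\disc,\xi}(\tu,f,\omega)$, ainsi que le fait que le facteur $\wh\phi(\zeta_\xi)$ se comporte bien vis-à-vis de l'action de translation par $\zeta_\xi\in\Xi(G,\tG)$ sur $\xi$. Ici le caractère $\tu$-régulier joue un rôle : l'hypothèse $\tu\in\bfW^\tG(M)_{\rm reg}$ garantit que $\det(\tu-1\vert\ag_M^\tG)\neq0$, donc que les facteurs $\vert\det(\tu-1\vert\ag_M^\tG)\vert^{-1}$ et $\vert\det(\tu-1\vert\ag_M^{G})\vert^{-1}$ diffèrent précisément par $j(\tG)$, comme utilisé dans \ref{JMdiscu1}. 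La finitude de $\Pi_\disc(M\tu,\omega)_\xi$ (conséquence du théorème \ref{thmfinitude} d'admissibilité) assure que toutes les sommes en jeu sont finies, ce qui légitime les interversions. Une fois ces points contrôlés, le changement de variables de \ref{chgtvar} s'applique mot pour mot et fournit l'énoncé.
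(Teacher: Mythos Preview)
Your plan brings in machinery that does not fit the present setting. The approximation argument of \ref{cascomp1} (the family $\{\phi\}$ with $\wh\phi\to\delta$) and the lemma \ref{chgtvar} were designed to pass from an integral over the adelic quotient $\bsY_G$ to an integral over the character space $\Xi(G,\theta,\omega)$, via Fourier inversion on $A_G(F)A_\tG(\AM)\backslash A_G(\AM)$. Here there is no adelic integral left to unwind: lemma \ref{JMdiscu1} already hands you an integral over the character group $\Xi(\tG)$. Your step~2 proposes to ``lift to $\Xi(G)$'', but a function on $\Xi(G)$ supported on the measure-zero subset $\Xi(G,\theta,\omega)$ integrates to zero, and inserting $\wh\phi(\zeta_\xi)$ does not repair this. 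In any case $\zeta_\xi=\omega_{A_G}\otimes\xi^{1-\theta}$ is only meaningful for $\xi\in\Xi(G)$, not for $\xi\in\Xi(\tG)$, so the proposed insertion into the integral of \ref{JMdiscu1} is ill-defined.

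The paper's argument is a direct change of variables, with no limiting procedure. Restriction of characters from $A_G(\AM)$ to $A_\tG(\AM)$ gives a map $\Xi(G,\theta,\omega)\to\Xi(\tG)$; on the identity components this is the surjection $\wh{\ESB}_G^\theta\to\wh{\ESB}_\tG$ of lemma \ref{jacobien}, with kernel of cardinal $j(\tG)$ and both sides normalized to volume~$1$. The support condition you correctly identified --- that every $\sigma\in\Pi_\disc(M\tu,\omega)$ has $\xi_\sigma\vert_{A_G(\AM)}\in\Xi(G,\theta,\omega)$ --- lets one refine the partition of $\Pi_\disc(M\tu,\omega)$ indexed by $\Xi(\tG)$ into one indexed by $\Xi(G,\theta,\omega)$, and the factor $j(\tG)^{-1}$ coming from \ref{JMdiscu1} is exactly absorbed by the measure comparison furnished by \ref{jacobien}. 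You had the two ingredients right; the route through \ref{chgtvar} is a detour that does not connect them.
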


\begin{proof}
Cela r\'esulte de \ref{JMdiscu1} et \ref{jacobien}.
\end{proof}
On peut reformuler ce qui pr\'ec\`ede en une
% proposition
\begin{proposition}
La partie {\og discr\`ete modulo le centre \fg} de la formule des traces est donn\'ee par
\begin{equation*}\bsJ^\tG_\disc(f,\omega)= \sum_{M\in \ESL^G/\bfW^G} \frac{1}{w^G(M)}\bsJ^\tG_{M,\disc}(f,\omega)\end{equation*}
avec
\begin{equation*}\bsJ^\tG_{M,\disc}(f,\omega)=\sum_{\tu\in \bfW^\tG(M)_\mathrm{reg}} \int_{\Xi(G,\theta,\omega)}
\bsJ^\tG_{M,\tu}(f,\omega,\xi)\dd \xi\ptf\end{equation*}
Cela s'\'ecrit aussi
\begin{equation*}\bsJ^\tG_\disc(f,\omega)= 
\sum_{M\in \ESL^G} \frac{\vert \bfW^M \vert}{\vert \bfW^G \vert}J^\tG_{M,\disc}(f,\omega)\ptf \end{equation*}
\end{proposition}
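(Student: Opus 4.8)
The statement is essentially an unwinding of the decomposition of $\bsJ^\tG_{\rm spec}$ obtained in \ref{spefin}, specialized to the stratum $\tL=\tG$. The plan is first to observe that the discrete part $\bsJ^\tG_\disc(f,\omega)$ is by construction the summand indexed by $\tL=\tG$ in the final spectral decomposition: since $\wt\bfW^G/\wt\bfW^G$ is trivial, the coefficient $\vert\wt\bfW^L\vert/\vert\wt\bfW^G\vert$ equals $1$ for $\tL=\tG$, so that $\bsJ^\tG_\disc(f,\omega)=\bsJ^\tG_{\tG}(f,\omega)$. Specializing the definition of $\bsJ^\tG_\tL$ to $L=G$ — where $\ESL^L=\ESL^G$, $\bfW^L=\bfW^G$ and $\bfW^\tL(M)=\bfW^\tG(M)$ — yields immediately
$$\bsJ^\tG_\disc(f,\omega)=\sum_{M\in\ESL^G}\frac{\vert\bfW^M\vert}{\vert\bfW^G\vert}\sum_{\tu\in\bfW^\tG(M)_{\rm reg}}\bsJ^\tG_{M,\tu}(f,\omega)\ptf$$
Recognizing the inner sum as the definition of $\bsJ^\tG_{M,\disc}(f,\omega)$ gives the last displayed formula of the statement.

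Next I would record the stated expression for $\bsJ^\tG_{M,\disc}(f,\omega)$. By definition this is $\sum_{\tu\in\bfW^\tG(M)_{\rm reg}}\bsJ^\tG_{M,\tu}(f,\omega)$, and it suffices to insert into each term the integral representation supplied by Lemma \ref{JMdiscu2}, namely $\bsJ^\tG_{M,\tu}(f,\omega)=\int_{\Xi(G,\theta,\omega)}\bsJ^\tG_{M,\tu}(f,\omega,\xi)\dd\xi$; summing over $\tu$ produces exactly the middle display. No convergence issue arises here, since the sum over $\tu$ is finite and the integrand is continuous on $\Xi(G,\theta,\omega)$.

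Finally I would pass from the last form to the first by the usual orbit-counting. The key input is that $M\mapsto\bsJ^\tG_{M,\disc}(f,\omega)$ and $M\mapsto\vert\bfW^M\vert$ are constant along $\bfW^G$-orbits in $\ESL^G$: for $w\in\bfW^G$ the conjugation $\tu\mapsto w\tu w^{-1}$ carries $\bfW^\tG(M)_{\rm reg}$ bijectively onto $\bfW^\tG(wMw^{-1})_{\rm reg}$, and the $\bfW^G$-equivariance of the operators $\ESM_\tL^\tG$, $\bfM_{S\vert\tu S}$ and $\Rho_{S,\sigma,\mu}$ established in \ref{indepconj}\,(ii) gives $\bsJ^\tG_{M,\tu}(f,\omega)=\bsJ^\tG_{wMw^{-1},w\tu w^{-1}}(f,\omega)$, whence $\bsJ^\tG_{M,\disc}=\bsJ^\tG_{wMw^{-1},\disc}$. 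Grouping the sum over $M\in\ESL^G$ into $\bfW^G$-orbits, where the orbit of $M$ has $\vert\bfW^G\vert/(w^G(M)\vert\bfW^M\vert)$ elements each carrying the weight $\vert\bfW^M\vert/\vert\bfW^G\vert$, collapses the weight to $1/w^G(M)$ and produces the first form. The only point genuinely requiring care is this conjugation invariance; it is the main (and essentially the sole) obstacle, and it reduces to the functional equations for the intertwining operators already used in \ref{indepconj}.
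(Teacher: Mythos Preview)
Your proposal is correct and follows the same line as the paper, which in fact offers no proof at all: the proposition is introduced with \og On peut reformuler ce qui pr\'ec\`ede en une\fg\ and the identification $\bsJ^\tG_\disc(f,\omega)=\bsJ^\tG_\tG(f,\omega)$ is recorded only as an observation just after the statement. You have correctly supplied the details the paper leaves implicit --- the specialization $\tL=\tG$, the use of \ref{JMdiscu2}, and the orbit-counting passage between $\ESL^G$ and $\ESL^G/\bfW^G$.
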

 En particulier, pour $M=G$, on a
\begin{equation*}\bsJ_{G,\disc}^\tG(f,\omega)= \int_{\Xi(G,\theta,\omega)}
\trace\Big(\tbsrho(f,\omega)\vert L^2_\disc(\bsX_G)_\xi\Big) \dd \xi\end{equation*}
et on retrouve la formule de la proposition \ref{cascomp1} si $G_\mathrm{der}$ est anisotrope. On observera que 
$\bsJ^\tG_{\disc}(f,\omega)$ n'est autre que $\bsJ^\tG_\tG(f,\omega)$.

%%%%%%%%%%%%%%%%%%%%%%%%%%%%%%
%\part{Appendice}
%%%%%%%%%%%%%%%%%%%%%%%%%%%%%%
\vfill\eject
\appendix\bigskip
%%%%%%%%%%%%%%%%
\chapter{Erratum pour \cite{LW}}\label{Erratum}

\renewcommand{\adef}{\AM_F}

\newtheorem{Eproposition}[section]{\textsc{Proposition}}
\newtheorem{Elemma}[section]{\textsc{Lemme}}
\newtheorem{Ecorollary}[section]{\textsc{Corollaire}}

Dans cette annexe, les notations sont celles de \cite{LW}. 

\newcor 
Il a \'et\'e observ\'e par P.-H. Chaudouard que la preuve de \cite[1.8.1]{LW} pages~22-23
n'est valable que si les  sous-espaces $\ag_Q^R$ et $\ag_S^R$ sont orthogonaux.
Un argument diff\'erent est donn\'e ci-dessous.
On commence par \'etablir un r\'esultat auxiliaire. 
Soit $V$ un espace vectoriel r\'eel euclidien de dimension finie
On note $(\mskip 2mu ,\mskip 2mu )$ le produit scalaire, $\lVert \mskip 2mu \rVert $ 
la norme et $V^*$ l'espace dual (que l'on peut identifier \`a $V$). 
Soit $\oD$ une base de $V^*$ (que l'on ne suppose pas n\'ecessairement obtuse) 
et soit $\oD^*$ la base de $V^*$ formŽe des $\xi_\alpha\in V^*$ tels que
$(\alpha,\xi_\beta)=\delta_{\alpha,\beta}$ o $\delta_{\alpha,\beta}$ est le symbole de Kronecker.
En d'autres termes $\Delta^*$ est la base duale de $\Delta$ et
  $\alpha\mapsto\xi_\alpha$ est la bijection naturelle entre $\oD$ et 
$\oD^*$.  
\begin{Elemma} \label{ocd}
Soit $X\in V$. On note $C(\oD,X)$ l'ensemble des
$H\in V$ tels que pour tout $\alpha \in \oD$ on ait 
\begin{equation*}\alpha(H)\ge0 \com{et}
\xi_\alpha(H-X) \le0\ptf\end{equation*}
Il existe une constante $c>0$ telle que
\begin{equation*}\mid \mid H\mid \mid \le c\mid \mid X\mid \mid \ptf\end{equation*}
\end{Elemma}

\begin{proof}Pour $H\in C(\oD,X)$ on a
\begin{equation*}( H,H )=\sum_{\alpha\in\oD} \alpha(H) \xi_\alpha(H)\le \sum_{\alpha\in\oD} \alpha(H) \xi_\alpha(X)\end{equation*}
ce qui implique l'existence d'une constante $c>0$ telle que
\begin{equation*}\mid \mid H\mid \mid ^2\le c\mid \mid H\mid \mid \mskip 2mu \mid \mid X\mid \mid 
\comm{et donc}\mid \mid H\mid \mid \le c\mid \mid X\mid \mid \ptf\end{equation*}
\end{proof}

\begin{Ecorollary}\label{cor}
L'ensemble $C(\oD,X)$ est compact et si $X=0$ alors $C(\oD,X)$ est r\'eduit \`a $\{0\}$.
\end{Ecorollary}

On consid\`ere maintenant trois sous-groupes paraboliques $P\subset Q\subset R$ 
dans $G$. Soit $V=\ag_P^R$. On dispose de la base $\oD_P^R$ de $V^*=\ag_P^{R,*}$.
On note encore $\alpha\mapsto\xi_\alpha$ la bijection naturelle entre $\oD_P^R$ et 
sa base duale\footnote{La base  $(\oD_P^R)^*$ formŽe des $\xi_\alpha$
n'est pas en g\'en\'eral identique ˆ la base $\hat\Delta_P^R$ formŽe des $\varpi_\alpha$. 
En effet les $\xi_\alpha$ et $\varpi_\alpha$ 
sont colin\'eaires avec coefficients de proportionnalit\'e positifs
mais non n\'ecessairement \'egaux \`a 1.}
D'apr\`es \cite[1.2.2]{LW}, il existe un sous-groupe parabolique $S$ tel 
que $\oD_P^R$ soit l'union disjointe  
de $\oD_P^Q$ et $\oD_P^S$. Soit $X\in\V$. On note $C(P,Q,R,X)$ l'ensemble des $H\in V$ tels que
\begin{equation*}\alpha(H)>0 \com{pour tout} \alpha\in\oD_P^Q\;,\quad
\alpha(H)\le0 \com{pour tout} \alpha\in\oD_P^S\end{equation*}
et
\begin{equation*}\xi_\alpha(H-X)>0 \com{pour tout} \alpha\in\oD_P^S\;, \quad
\xi_\alpha(H-X)\le0 \com{pour tout} \alpha\in\oD_P^Q\ptf\end{equation*}
Le lemme \cite[1.8.1]{LW} affirme que

\begin{Elemma} \label{cpqrx}
On a les assertions suivantes:
\begin{enumerate}[(i)]
\item L'ensemble $C(P,Q,R,X)$ est relativement compact
et est contenu dans une boule de rayon $c\mid \mid X\mid \mid $ pour une constante $c>0$. 
\item $C(P,Q,R,0)$ est vide sauf si $P=R$.
\item Si $X$ est dans l'adh\'erence la chambre de Weyl positive, 
i.e. si $\alpha(X)\ge0$ pour tout $\alpha\in\oD_P^R$, alors $C(P,Q,R,X)$ est vide si $Q\subsetneq R$.
\end{enumerate}
\end{Elemma}

\begin{proof}  
On observe que l'adh\'erence de $C(P,Q,R,X)$ est un ferm\'e dans
$C(\oD,X)\subset\V=\ag_P^R$ o $\oD$ est la base de $V$ form\'ee des $\alpha\in\oD_P^Q$
et des $-\alpha$ pour $\alpha\in\oD_P^S$.
L'assertion (i) r\'esulte de \ref{ocd}. Lorsque $X=0$ il r\'esulte de \ref{cor} que $C(P,Q,R,0)\subset\{0\}$ et
 si $P\ne R$ les in\'egalit\'es strictes excluent $H=0$. Pour \'etablir (iii) on observe que
\begin{equation*}\alpha(H)= \alpha(X)+
\sum_{\beta\in\oD_P^Q}(\alpha,\beta)\xi_\beta(H-X) 
+\sum_{\beta\in\oD_P^S}(\alpha,\beta)\xi_\beta(H-X)\ptf
\end{equation*}
Consid\'erons $H\in C(P,Q,R,X)$,
$\alpha\in\oD_P^S$ et $X$  dans l'adh\'erence la chambre de Weyl positive.
Comme $\oD_P^R$ est une base obtuse, on a
\begin{equation*}\alpha(H) \le0\;,\quad \alpha(X)\ge0\com{et}
\sum_{\beta\in\oD_P^Q}(\alpha,\beta)\xi_\beta(H-X)\ge0\end{equation*}
ce qui implique 
\begin{equation*}\sum_{\beta\in\oD_P^S}(\alpha,\beta)\langle \xi_\beta,H-X\rangle\le0\ptf\end{equation*}
Pour toute famille de scalaires positifs $\{c_\alpha\mskip 2mu \vert\mskip 2mu  \alpha \in \Delta_P^S\}$ on a donc
\begin{equation*}\sum_{\alpha\in\oD_P^S}(\eta,\alpha) \xi_\alpha(H-X)\le0\com{avec}
\eta=\sum_{\alpha\in\oD_P^S}c_\alpha\alpha\ptf\end{equation*}
Les restrictions $\overline\xi_\alpha$ des $\xi_\alpha$ au sous-espace
$\ag_P^S\subset \ag_P^R$ forment la base duale  de $\oD_P^S$ qui est aig\"ue puisque 
$\oD_P^S$ est obtuse \cite[1.2.5]{LW}. On  choisit
\begin{equation*}\eta=\sum_{\alpha\in\oD_P^S}\overline\xi_\alpha \com{c'est-\`a-dire}
c_\alpha=\sum_{\beta\in\oD_P^S}(\overline\xi_\alpha,\overline\xi_\beta)>0\ptf\end{equation*}
Dans ce cas
 $(\eta,\alpha)=1$ pour tout $\alpha\in\oD_P^S$, ce qui fournit l'in\'egalit\'e
 \begin{equation*}\sum_{\alpha\in\oD_P^S}\xi_\alpha(H-X)\le0\ptf\end{equation*}
Comme $\xi_\alpha(H-X)>0$ pour tout $\alpha\in\oD_P^S$, 
c'est une contradiction si $\oD_P^S$ est non vide.  
\end{proof}

\newcor Dans l'ŽnoncŽ du corollaire \cite[3.6.7]{LW} page 74, lire  $n, n'\in N_0(\adef)$.

\newcor Dans l'\'enonc\'e de \cite[4.2.2]{LW} page 86 appara"t un signe $(-1)^{a_Q-a_G}$ qui est erron\'e:
 il faut supprimer ce signe. L'erreur est engendr\'ee
 par une faute de frappe dans la preuve: dans la formule (1) en haut de la page 87 
 on doit lire $(-1)^{a_R-a_Q}$ au lieu de $(-1)^{a_R-a_G}$.

\newcor 
Il a \'et\'e observ\'e par Delorme que
dans la proposition \cite[4.3.3]{LW} il faut ajouter une condition sur $T$. Non seulement
$T$ doit tre {\og r\'egulier\fg} mais il doit tre {\og loin des murs\fg}.
De fait, il est suppos\'e implicitement que ${\mathbf d}_{P_0}(T)> 0$, mais il faut de plus fixer 
une constante $c$ (positive assez grande) et se limiter \`a des $T$ v\'erifiant \begin{equation*}\lVert  T \rVert  \le c\mskip 2mu  {\mathbf d}_{P_0} (T )\end{equation*}ou, ce qui 
revient au mme, imposer (pour une autre constante)
\begin{equation*}\langle\alpha,T \rangle \le c \langle \beta,T \rangle\end{equation*}
pour tous les $\alpha$ et $\beta \in \Delta$. %
L'argument qui suit corrigeant la preuve de [LW, 4.3.3] est emprunt\'e \`a un message de 
Waldspurger \`a Delorme.

\begin{proof}On reprend la preuve et les notations de la proposition \cite[4.3.2]{LW}. 
On doit majorer \begin{equation*}\lVert x\rVert ^B \mid A_{Q,R}(x)\mid \end{equation*}pour $Q \subsetneq R$. L'expression $A_{Q,R}(x)$ est donn\'ee 
par une somme sur $\xi\in Q(F)\backslash G(F)$, mais pour $x$ fix\'e il y a au plus un $\xi$ modulo $Q(F)$ tel que
\begin{equation*}F^Q_{P_0}(\xi x, T)\sigma_Q^R(\bfH_0(\xi x -T)=1\ptf\end{equation*}
Quitte \`a remplacer $x$ par $\xi x$, on peut supposer
\begin{equation*}F_{P_0} ^Q (x, T ) = 1\vg \leqno{(1)}\end{equation*}
\begin{equation*}\sigma_Q^R(\bfH_0(x) - T) = 1\ptf\leqno{(2)}\end{equation*}
D'apr\`es \cite[3.6.2]{LW}, on peut aussi supposer $x = n_1ak$ avec $n_1 \in N_Q(\adef)$, 
$a \in \AA^G_0$ et $k \in \Omega$ un compact 
(qui ne d\'epend que de du choix du sous-groupe parabolique $Q$). 
Pour les majorations, on peut aussi bien supposer $k = 1$. On impose aussi que $n_1$ est dans un compact, 
ce que l'on peut assurer en le multipliant \`a gauche par un \'el\'ement de $N_Q(F)$. 
D'apr\`es \cite[4.3.1]{LW}, pour tout entier $r \ge 1$, il existe un op\'erateur diff\'erentiel $X_{Q,R}$ de degr\'e $r$, qui ne
d\'epend que de $Q$, $R$, $r$, tel que
\begin{equation*}\mid A_{Q,R}(x)\mid  \le \sup_{n'_1\in N_Q(\adef)} \mid \bs{\rho}(\mathrm{Ad}(a\mun)X_{Q,R})\varphi(n'_1a)\mid \ptf\end{equation*}
L'op\'erateur $X_{Q,R}$ est de la forme $X_{Q,R}= \sum_m Z_m$ avec
\begin{equation*}\mathrm{Ad}(e^{H_1})Z_m = e^{\lambda_m (H_1)} Z_m \quad \hbox{pour} \quad H_1\in \ag_Q^R\end{equation*}
o $\lambda_m$ est une combinaison lin\'eaire \`a coefficients entiers $\geq r$ des racines dans $\Delta_Q^R$.
On en d\'eduit bien une majoration
\begin{equation*}\mid A_{Q,R}(x)\mid \ll \sup_{n'_1,m}e^{-r \sum_{\alpha\in \Delta_Q^R} \langle \alpha,\bfH_0(a)\rangle }
\mid \bs{\rho}(Z_m)\varphi(n'_1a)\mid \end{equation*}
o les $Z_m$ ne d\'ependent que de $Q$, $R$, $r$. 
D'o 
\begin{equation*}\qquad \mid x\mid ^B \mid A_{Q,R}(x)\mid  \ll e^{C(a)}\sup_{n'_1,m}\mid \bs{\rho}(Z_m)\varphi(n'_1a)\mid \mid x\mid ^{-A}\leqno(3)\end{equation*}
\pni
o on a pos\'e
\begin{equation*}C(a)=c(B+A) \lVert  H_0(a) \rVert  -r \sum_{\alpha\in \Delta_Q^R} \langle \alpha,H_0(a)\rangle\vg\end{equation*}
$c$ \'etant une constante telle que $\mid a\mid  \le e^{c\lVert  H_0(a)\rVert  }$. La proposition 4.3.3 r\'esulte de (3)
\`a condition de prouver que 
\begin{equation*}C(a) \le - A\cdot{\mathbf d}_{P_0}(T)\ptf\end{equation*}
Ecrivons
\begin{equation*}H_0(a)=H_Q +H_Q^R +H_R\end{equation*}
avec $H_Q \in \AA_Q$, $H_Q^R \in \AA^R_Q$ et $H_R \in \AA_R$. La condition (1) impose $\lVert  H_Q\rVert  \ll \lVert  T\rVert  $. La
condition (2) impose (d'apr\`es \cite[2.11.6]{LW}) que $\lVert  H_R - T_R\rVert  \ll \lVert  H_Q^R - T_Q^R\rVert  $, 
avec des d\'efinitions \'evidentes de $T_R$ et $T_Q^R$. D'o $\lVert  H_R\rVert  \ll \lVert  H_Q \rVert +\lVert  T \rVert $. Elle impose
aussi que $\langle\alpha,H_Q^R -T_Q \rangle>0$ pour tout $\alpha\in \Delta_Q^R $. On en d\'eduit
\begin{equation*}C(a) \le c_1(A+B)\lVert  H_Q^R \rVert  +c_2(A+B)\lVert  T \rVert  -\frac{r}{2}
\sum_{\alpha\in \Delta_Q^R} \langle \alpha, H_Q+T_Q \rangle\end{equation*}
o $c_1$, $c_2$ sont des constantes positives qui ne d\'ependent de rien. 
En choisissant $r \ge c_3(A+B)$ pour une constante $c_3$ convenable, on a
\begin{equation*}c_1(A+B)\lVert  H_Q \rVert  - \sum_{\alpha\in \Delta_Q^R} <\langle\alpha, H_Q \rangle\le 0\ptf\end{equation*}
Pour $\alpha \in \Delta^R_Q$, soit $\beta \in \Delta$ se projetant sur $\alpha$. On sait que $\alpha$ est la somme de 
$\beta$ et d'une combinaison lin\'eaire des $\gamma \in \Delta_Q$ 
\`a coefficients positifs ou nuls. Donc \begin{equation*}\langle \alpha,T_Q \rangle\ge \langle \beta,T \rangle\ge {\mathbf d}_{P_0}(T)\ptf\end{equation*}
 Alors \begin{equation*}C(a) \le c_2(A+B)\lVert  T\rVert  -\mid \Delta^R_Q\mid \frac{r}{2}{\mathbf d}_{P_0}(T)\end{equation*}
Si on impose, comme dit plus haut, une condition $\lVert T\rVert  \le c\mskip 2mu {\mathbf d}_{P_0}(T)$, alors pour 
$r \ge c4(2A+B)$, on a bien
\begin{equation*}C(a) \le - A\cdot{\mathbf d}_{P_0}(T)\ptf\end{equation*}
\end{proof} 

\newcor La formule pour le produit scalaire $\langle \Phi,\Psi \rangle_P$,
en bas de la page 93, donn\'ee par une int\'egrale sur 
${\mathbf X}_P=\AA_PP(F)N_P(\adef)\backslash G(\adef)$ n'a pas de sens si $P\neq G$. En effet le groupe 
$\AA_PP(F)N_P(\adef)$ n'est pas unimodulaire et il n'y a pas de mesure $\G(\adef)$-invariante \`a droite sur 
${\mathbf X}_{P}$ (cf. \ref{formesautom}). 
Il convient de poser
\begin{equation*}\langle \Phi,\Psi \rangle_P
= \int_{\mathbf K}\int_{{\mathbf X}_M} \Phi(mk) \overline{\Psi(mk)} \dd m \dd k\ptf\end{equation*}
En revanche sur ${\mathbf X}_{P,G}= \AA_G P(F)N_P(\adef)\backslash G(\adef)$ on dispose de la mesure quotient
et on a la formule d'int\'egration
\begin{equation*}\int_{{\mathbf X}_{P,G}}e^{\langle 2\rho_P,\bfH_P(x) \rangle} \phi(x)\dd x 
= \int_{\mathbf K}\int_{{\mathbf X}_{M,G}} \phi(mk) \dd m \dd k\end{equation*}
 qui est utilis\'ee dans la preuve du th\'eor\`eme \cite[5.4.2]{LW}. 

\newcor Page 104.
Dans la d\'efinition de l'op\'erateur $\bs\rho_{P,\sigma,\mu}(\delta,y,\omega)$
il manque le caract\`ere $\omega$ dans la d\'efinition de l'espace d'arriv\'ee.

\newcor Page 106, ligne 16 il faut lire
\begin{equation*}K_\tG(f,\omega;x,y)= \sum_i \int_{{\mathbf X}_G}K_\tG(g_i,\omega;x,z) 
K_G^*(\omega h_i;z,y)\dd z \ptf\end{equation*}

\newcor Page 111.
Dans l'\'enonc\'e de la proposition \cite[7.2.2]{LW} 
il faut prendre pour $K_1$ et $K_2$ des noyaux sur $\bsX_{\theta(P),G}\times\bsX_{P,G}$ et 
sur ${\mathbf X}_{P,G}\times{\mathbf X}_{P,G}$ et remplacer les int\'egrales sur ${\mathbf X}_P$ par des int\'egrales sur ${\mathbf X}_{P,G}$. 

\newcor Page 112. Il faut remplacer $\mathcal{B}^P(\sigma)$ par $\mathcal{B}^S(\sigma)$ dans 
la d\'efinition de $H_\sigma$: ici $M$ est un sous-groupe de Levi de $S$ avec $S\subset P$. 
Cette erreur due \`a un copier-coller intempestif s'est propag\'ee dans toute la suite de \cite{LW}.

\newcor Page 126, ligne 2. Il faut remplacer $\mathfrak{n}(\adef)$ par $\mathfrak{n}_P(\adef)$ o $\mathfrak{n}_P$ est l'alg\`ebre de Lie 
de $N=N_P$ (rappel: on a not\'e $\mathfrak{n}$ l'alg\`ebre de Lie de $N_{Q^+}$).

\newcor Page 128, ligne -15. La d\'efinition de $j_{\tP\mskip -2mu ,\oo}(x)$ n'a pas de sens. 
Il faut la remplacer par 
\begin{equation*}j_{\tP\mskip -2mu ,\oo}(x)= 
\sum_{\delta \in \oo \cap \tM_P(F)} \int_{N(\delta,\adef)}\omega(x)f^1(x^{-1}\delta n x) \dd n\ptf\end{equation*}
De mme, dans l'\'enonc\'e du lemme 9.2.2 page 129 il faut remplacer la premi\`ere expression par
\begin{equation*}\int_{N(F)\backslash N(\adef)} \sum_{\delta \in \oo\cap \tM_P(F)} 
\int_{N(\delta,\adef)} \phi(n^{-1} \delta n_1 n)\dd n_1 \dd n \ptf\end{equation*}

\newcor Page 128, ligne -13. Lire:
\textit{la partie quasi semi-simple $\delta_s$ de $\delta$.}

\newcor Page 129. Dans la preuve du lemme 9.2.2, il convient de dire (mme si c'est implicite) 
que l'application du lemme \cite[9.2.1]{LW} induit une application surjective sur les groupes des points ad\'eliques.

\newcor Page 130, ligne -10. Lire $s(\delta)$ au lieu de $\delta_s$.

\newcor Page 130, ligne -11. Lire $I_{s(\delta)}(F)$ au lieu de $I_{\delta_s}$.

\newcor Page 130, ligne -6. Pour une classe de conjugaison 
quasi semi-simple $\cc$ on doit pr\'eciser que $k_\cc^T$ est 
d\'efinie en remplaant, dans la d\'efinition de $k_{\oo}^T$ \cite[9.2]{LW}, la somme sur $P\supset P_0$
 par la somme sur les $P\supset P_0$ tels que $\tM_P$ 
contienne un conjugu\'e de $\tM_\delta$ et l'ensemble $\oo$ par l'orbite $\cc$.

\newcor Page 150. Dans la formule pour $\hat{\tau}_\tP(H-X)$ 
il faut sommer sur les $\tQ\in\wt\ESP_\st$ tels que $\tP\subset\tQ$, i.e. prendre $\tR=\tG$.

\newcor Page 168, ligne 2. Lire: \textit{et sera \`a l'\oe uvre...}

\newcor Page 182. Dans l'\'enonc\'e du point (ii) du lemme 13.1.1, il faut lire: \textit{Pour tout $\lambda \in \ag_0^*$, etc}.

\newcor Pages 183--186: l'indice $Q_0$ devrait tre en exposant dans les termes \`a droite des \'equations (2) 
page 183, (7) page 185 et (8) page 186 de [LW]. Elles correspondent ici aux \'equations (3) et (5) de la section 13.2.

\newcor Page 194, ligne 11, il faut lire: \textit{holomorphe en $\lambda$ et $\nu$}.

\newcor Page 197, lignes 2 et 6: les $+Y$ devraient tre des $-Y$.

\newcor Page 203. Dans l'in\'egalit\'e ligne~12 et dans celle ligne~-3, il faut remplacer $Y_{P'\mskip -2mu ,e}(T)$ par $-Y_{P'\mskip -2mu ,e}(T)$.

\newcor Page 203, ligne -9. L'in\'egalit\'e est stricte: il faut lire $\lVert  U+V \rVert  > \rho \lVert  T \rVert $.

\newcor Page 205, majoration (1): il faut lire $\vert A^T_{s,t} - E^T_1\vert$.
 
\newcor Page 207: $\wt{Y}_{S'}(X)$ est la projection de $u^{-1}X$ sur $t(\ag_S)^Q$.

\newcor Page 219, ligne -6: l'expression pour $\bfA_M^{Q_1}(T,\sigma,\tu,\mu)$ faisant intervenir une trace
n'a de sens que si l'op\'erateur est un endomorphisme de l'espace ${\mathcal A}({\mathbf X}_S,\sigma)$. 
La formulation correcte est celle du point ($i$) du lemme \cite[14.1.4]{LW}.

%%%%%%%%%%%%%%%%%%%%%
%%%%%. indexation %%%%%%%%%
%%%%%%%%%%%%%%%%%%%%%

\renewcommand{\indexname}{Index des notations}
\printindex

\backmatter

%%%%%%%%%%%%%%%%%%%%%
%%%%%. bibliographie %%%%%%%%%
%%%%%%%%%%%%%%%%%%%%%

\vfill\eject\bigskip

\end{document}